\newtheorem{thm}{Theorem}[section]
\newtheorem{prop}[thm]{Proposition}
\newtheorem{lem}[thm]{Lemma}
\newtheorem{cor}[thm]{Corollary}
\theoremstyle{definition}
\newtheorem{defn}[thm]{Definition}
\theoremstyle{remark}
\newtheorem{remk}[thm]{Remark}
\newtheorem{remks}[thm]{Remarks}
\newtheorem{exm}[thm]{Example}
\newtheorem{exms}[thm]{Examples}
\newtheorem{notat}[thm]{Notation}
\numberwithin{equation}{section}
\newcommand{\thmref}{Theorem~\ref}
\newcommand{\propref}{Proposition~\ref}
\newcommand{\corref}{Corollary~\ref}
\newcommand{\lemref}{Lemma~\ref}
\newcommand{\refone}{{\ensuremath 1}}
\newcommand{\reftwo}{{\ensuremath 2}}
\newcommand{\refthree}{{\ensuremath 3}}
\newcommand{\reffour}{{\ensuremath 4}}
\newcommand{\reffive}{{\ensuremath 5}}
\newcommand{\refsix}{{\ensuremath 6}}
\newcommand{\refseven}{{\ensuremath 7}}
\newcommand{\cdh}{{\rm cdh}}
\newcommand{\h}{{\rm H}}
\newcommand{\dm}{{\mathbf{DM}}}
\newcommand{\dr}{{\mathbf{D}}}
\newcommand{\sC}{{\mathcal C}}
\newcommand{\sF}{{\mathcal F}}
\newcommand{\sH}{{\mathcal H}}
\newcommand{\sK}{{\mathcal K}}
\newcommand{\sL}{{\mathcal L}}
\newcommand{\sM}{{\mathcal M}}
\newcommand{\sO}{{\mathcal O}}
\newcommand{\sU}{{\mathcal U}}
\newcommand{\sX}{{\mathcal X}}
\newcommand{\A}{{\mathbb A}}
\newcommand{\F}{{\mathbb F}}
\newcommand{\G}{{\mathbb G}}
\renewcommand{\L}{{\mathbb L}}
\newcommand{\M}{{\mathbb M}}
\newcommand{\N}{{\mathbb N}}
\renewcommand{\P}{{\mathbb P}}
\newcommand{\Q}{{\mathbb Q}}
\newcommand{\T}{{\mathbb T}}
\newcommand{\Z}{{\mathbb Z}}
\renewcommand{\1}{{\mathbb{S}}}
\newcommand{\fm}{{\mathfrak m}}
\newcommand{\ff}{{\mathfrak f}}
\newcommand{\hs}{\heartsuit}
\newcommand{\Ker}{{\rm Ker}}
\newcommand{\Alb}{{\rm Alb}}
\newcommand{\CH}{{\rm CH}}
\newcommand{\surj}{\twoheadrightarrow}
\newcommand{\inj}{\hookrightarrow}
\newcommand{\red}{{\rm red}}
\newcommand{\Hom}{{\rm Hom}}
\newcommand{\Spec}{{\rm Spec \,}}
\newcommand{\sing}{{\rm sing}}
\newcommand{\Char}{{\rm char}}
\newcommand{\ab}{\rm ab}
\newcommand{\nr}{\rm nr}
\newcommand{\Gal}{{\rm Gal}}
\newcommand{\Sch}{{\operatorname{\mathbf{Sch}}}} 
\newcommand{\Reg}{{\operatorname{\mathbf{Reg}}}}
\newcommand{\<}{\langle}
\renewcommand{\>}{\rangle}
\newcommand{\Sm}{{\mathbf{Sm}}}
\newcommand{\Ab}{{\mathbf{Ab}}}
\newcommand{\et}{{\textnormal{\'et}}}
\newcommand{\ds}{{/\kern-3pt/}}
\newcommand{\Nsch}{{\operatorname{\mathbf{NSch}}}}
\newcommand{\tr}{{\operatorname{tr}}}
\newcommand{\un}{\underline}
\newcommand{\ov}{\overline}
\renewcommand{\dim}{\text{\rm dim}}
\newcommand{\tuborg}{\left\{\begin{array}{ll}}
\newcommand{\sluttuborg}{\end{array}\right.}
\newcommand{\zar}{{\rm zar}}
\newcommand{\nis}{{\rm nis}}
\newcommand{\reg}{{\rm reg}}
\newcommand{\tor}{{\rm tor}}
\newcommand{\cf}{{\rm cf}}
\newcommand{\tm}{{\rm t}}
\newcommand{\sh}{{\mathbf{SH}}}
\newcommand{\wt}{\widetilde}
\newcommand{\wh}{\widehat}
\newcommand{\coker}{{\rm Coker}}
\newcommand{\Fil}{{\rm fil}}
\newcommand{\Tab}{{\mathbf {Tab}}}
\newcounter{elno}
\newcounter{elno-abc}   
\newcounter{elno-abc-prime}   
\begin{document}
\title{Tame class field theory over local fields}
\author{Rahul Gupta, Amalendu Krishna, Jitendra Rathore}
\address{Institute of Mathematical Sciences, 4th Cross St, CIT Campus, Tharamani, Chennai,
	600113, India.} 
\email{rahulgupta@imsc.res.in}
\address{Department of Mathematics, South Hall, Room 6607, University of California
  Santa Barbara, CA, 93106-3080, USA.}
\email{amalenduk@math.ucsb.edu}
\address{Department of Mathematics, South Hall, Room 6607, University of California
  Santa Barbara, CA, 93106-3080, USA.} 
\email{jitendra@math.ucsb.edu}


\keywords{Motivic cohomology, Milnor $K$-theory, Class field theory}        

\subjclass[2010]{Primary 14C25; Secondary 14F42, 19E15}

\maketitle

\begin{quote}\emph{Abstract.}
	In this paper, we extend the hitherto known tame class field theory for smooth
	curves over local fields to all quasi-projective schemes admitting smooth compactifications over such fields.
  \end{quote}
\setcounter{tocdepth}{1}
\tableofcontents

\section{Introduction}\label{sec:Intro}
\subsection{Background}\label{sec:Background}
The class field theory for smooth varieties over finite fields is now well
understood. However, this problem over a local field (i.e., a complete discrete
valuation field with finite residue field) is significantly more
challenging and not yet fully understood.
The objective of this paper is to establish the class field theory for
tamely ramified coverings of smooth quasi-projective schemes over such fields. 

We fix a local field $k$. The class field theory for smooth projective curves over
$k$ is now complete, thanks to the works of  Bloch \cite{Bloch-cft}, Saito
\cite{Saito-JNT}, Kato-Saito \cite{Kato-Saito-1} and Yoshida \cite{Yoshida03}.
The tame class field theory for open subschemes of a smooth projective curve over
$k$ is also now completely understood by the works of Hiranouchi
(cf. \cite{Hiranouchi-1}, \cite{Hiranouchi-2}). We shall therefore focus on
the case of higher dimensional schemes.

Jannsen-Saito (cf. \cite{JS-Doc}, \cite{JS-JAG}) showed that if $X$ is either
a smooth projective scheme over $k$ with good reduction or an arbitrary
smooth projective surface over $k$, 
then the reciprocity map $\rho_X \colon SK_1(X) \to \pi^{\ab}_1(X)$
from the idele class group to the abelian {\'e}tale fundamental group has the
property that its kernel is the direct sum of a finite group and another group
which is divisible by every integer invertible in $k$. 
A generalization of
the results of Jannsen-Saito to higher dimension were proven by Forr{\'e} \cite{Forre-Crelle}. He showed that for any smooth projective
scheme $X$ over $k$, the reciprocity map $\rho_X \colon SK_1(X) \to \pi^{\ab}_1(X)$
has the property that its kernel is the direct sum a finite group and another group
which is divisible by every integer prime to the residue characteristic of $k$. 

Let us now assume that $X$ is an open subscheme of a smooth projective scheme
$\ov{X}$ over $k$. Yamazaki \cite{Yamazaki} introduced a tame
class group for $X$, and constructed a reciprocity map if
$\Char(k)$ is zero. His reciprocity
map coincides with that of Hiranouchi for curves.
Uzun \cite{Uzun} showed that Yamazaki's characteristic zero
reciprocity map is an isomorphism with finite
coefficients if $\ov{X}$ admits a good reduction.
The objective of this paper is to study the tame class field theory
of $X$ in full generality. We define a tame idele class group $C^{\tm}(X)$ and
show that the results of Jannsen-Saito \cite{JS-Doc}
and Forr{\'e} \cite{Forre-Crelle} extend
verbatim to the setting of tame class field theory.

\subsection{Main results}\label{sec:MR}
Let $\ff$ be the residue field of $k$
such that $\Char(\ff) = p$. Assume that $X$ is an open subscheme of a smooth
projective scheme $\ov{X}$ over $k$.
The first result of this paper is the following extension of the structure
theorems of Grothendieck \cite{SGA-1} and Yoshida \cite[Thm.~1.1]{Yoshida03} to the tame
{\'e}tale fundamental group (cf. \thmref{thm:J-fin} for
a general statement). Grothendieck proved the
prime-to-$p$ part and Yoshida proved the $p$-part of this result
for smooth projective geometrically connected schemes over $k$.
Let $\pi^{\ab, \tm}_1(X)_0$ denote the kernel of the canonical map 
$\pi_X \colon \pi^{\ab, \tm}_1(X) \to G_k$,
where $G_k$ is the abelianized absolute Galois group of $k$.

\begin{thm}\label{thm:Main-2}
    If $X$ is geometrically connected over $k$, then
    \[
      \pi^{\ab, \tm}_1(X)_0 = F \oplus \wh{\Z}^r,
    \]
    where $F$ is a finite group and $r$ is the $\ff$-rank of the special fiber of
    the N{\'e}ron model of $\Alb(\ov{X})$.
 \end{thm}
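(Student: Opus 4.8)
The plan is to reduce the statement to the already-established case of the smooth projective model $\ov{X}$---that is, to the theorem of Grothendieck \cite{SGA-1} (the prime-to-$p$ part) and Yoshida \cite[Thm.~1.1]{Yoshida03} (the $p$-part)---and to check that the passage from $\ov{X}$ to $X$ only introduces a finite summand. Let $D_1,\dots,D_m$ be the codimension-one irreducible components of $\ov{X}\setminus X$. The open immersion $X\hookrightarrow\ov{X}$ induces a canonical surjection $\alpha\colon\pi^{\ab, \tm}_1(X)\twoheadrightarrow\pi^{\ab}_1(\ov{X})$: every finite {\'e}tale cover of $\ov{X}$ restricts to a finite {\'e}tale---hence tame---cover of $X$, and this functor is surjective on $\pi_1$ because $\ov{X}$ is normal. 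Since $\pi_X$ factors through $\alpha$, we have $\ker(\alpha)\subseteq\pi^{\ab, \tm}_1(X)_0$, and as $\alpha$ is compatible with the structure maps to $G_k$ it restricts to a surjection $q\colon\pi^{\ab, \tm}_1(X)_0\twoheadrightarrow\pi^{\ab}_1(\ov{X})_0$ with $\ker(q)=\ker(\alpha)=:K$. The crux of the proof is to show that $K$ is finite.

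To this end, observe that $K$ is the image in $\pi^{\ab, \tm}_1(X)$ of $\ker\big(\pi^{\tm}_1(X)\to\pi_1(\ov{X})\big)$, which by the basic structure of the tame fundamental group (cf.\ \cite{SGA-1} and \S~\ref{sec:TEC}) is topologically normally generated by the tame inertia subgroups $I_1,\dots,I_m$ at the generic points of $D_1,\dots,D_m$; hence $K$ is topologically generated by the images $\ov{I}_1,\dots,\ov{I}_m$. Each $I_i$ is procyclic, canonically $\varprojlim_{(n,\Char(k))=1}\mu_n$, and its normalizer acts on it by conjugation through the cyclotomic character of the residue field $\kappa(D_i)$, which factors through $\Gamma_{k_i}:=\Gal(\ov{k}/k_i)$, where $k_i$ is the field of constants of $D_i$. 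After abelianization, $\ov{I}_i$ is therefore a quotient of the $\Gamma_{k_i}$-coinvariants $\big(\varprojlim_n\mu_n\big)_{\Gamma_{k_i}}\cong\mu(k_i)$; and since $k_i$ is a finite extension of $k$, hence again a local field, $\mu(k_i)$ is finite. Thus $K$ is generated by finitely many finite subgroups and is finite. (Alternatively, one may base change to $\ov{k}$, where the boundary inertia are literally copies of $\wh{\Z}(1)$ permuted by $\Gamma_k$ with finite $\Gamma_k$-coinvariants, and descend via the five-term exact sequences attached to $1\to\pi^{\tm}_1(X_{\ov{k}})\to\pi^{\tm}_1(X)\to\Gamma_k\to1$ and its analogue for $\ov{X}$. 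This is the one point where the \emph{tame} hypothesis is indispensable: for the full $\pi_1$ the inertia along $D_i$ in residue characteristic $p>0$ contains an infinitely generated wild part, and the analogue of $K$ need not be finite.)

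Granting this, by Grothendieck \cite{SGA-1} and Yoshida \cite[Thm.~1.1]{Yoshida03} applied to the smooth projective geometrically connected scheme $\ov{X}$ we have $\pi^{\ab}_1(\ov{X})_0\cong F'\oplus\wh{\Z}^r$ with $F'$ finite and $r$ the $\ff$-rank of the special fiber of the N{\'e}ron model of $\Alb(\ov{X})$. In particular $\pi^{\ab}_1(\ov{X})_0$ is topologically finitely generated, hence so is $\pi^{\ab, \tm}_1(X)_0$, being an extension of it by the finite group $K$; so $\pi^{\ab, \tm}_1(X)_0=\prod_\ell\pi^{\ab, \tm}_1(X)_0\{\ell\}$ decomposes as the product of its pro-$\ell$ parts, each a finitely generated $\Z_\ell$-module. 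Taking pro-$\ell$ parts in $0\to K\to\pi^{\ab, \tm}_1(X)_0\xrightarrow{q}\pi^{\ab}_1(\ov{X})_0\to0$ yields, for every prime $\ell$, an exact sequence $0\to K\{\ell\}\to\pi^{\ab, \tm}_1(X)_0\{\ell\}\to F'\{\ell\}\oplus\Z_\ell^r\to0$ with $K\{\ell\}$ finite; hence $\pi^{\ab, \tm}_1(X)_0\{\ell\}$ has $\Z_\ell$-rank $r$, so $\pi^{\ab, \tm}_1(X)_0\{\ell\}\cong\Z_\ell^r\oplus F_\ell$ with $F_\ell$ finite (the free part splits off as $\Z_\ell^r$ is $\Z_\ell$-projective), and $F_\ell=0$ for every $\ell$ not dividing $|K|\cdot|F'|$. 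Therefore
\[
\pi^{\ab, \tm}_1(X)_0=\prod_\ell\big(\Z_\ell^r\oplus F_\ell\big)=\wh{\Z}^r\oplus\Big(\prod_\ell F_\ell\Big)=\wh{\Z}^r\oplus F,
\]
with $F:=\prod_\ell F_\ell$ finite, which is the assertion. The principal obstacle is the finiteness of $K$ established in the second paragraph---controlling the boundary contribution, where tameness enters essentially---after which everything is formal and rests only on the cited structure theorem for $\ov{X}$.
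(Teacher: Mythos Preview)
Your proof is correct. Both you and the paper reduce \thmref{thm:Main-2} to Grothendieck--Yoshida for $\ov{X}$ via the finiteness of $K = J^\tm(X) := \ker(\pi^{\ab,\tm}_1(X)\to\pi^{\ab}_1(\ov{X}))$, and the final extension argument is the same. The substantive difference is in how $|K|<\infty$ is established. The paper's route (\thmref{thm:J-fin}(1)) is cohomological: via \propref{prop:Rec-Real} one identifies ${\pi^{\ab}_1(X)}/m \cong H^{2d+1}_{\et,c}(X,\Z/m(d+1))$ and then bounds $\ker({\pi^{\ab,\tm}_1(X)}/m \to {\pi^{\ab}_1(\ov{X})}/m)$ uniformly in $m$ by the localization sequence together with the \'etale-cohomology finiteness results \corref{cor:Boundedness-0} and \propref{prop:Fin-char-0}, so it draws on essentially all of \S\ref{sec:RR}--\S\ref{sec:BMC}. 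Your route is group-theoretic and more elementary: purity of the branch locus makes $\ker(\pi_1(X)\to\pi_1(\ov{X}))$ normally generated by the divisorial inertia, the wild part dies in the tame quotient (since the paper's notion of tameness in particular forces tame ramification along the chosen compactification $\ov{X}$), and each tame inertia $\wh{\Z}'(1)$ has finite image in the abelianization because the cyclotomic character of $G_{\kappa(D_i)}$---a field finitely generated over a local field, hence containing only finitely many roots of unity---has open image. This is precisely the higher-dimensional analogue of the paper's own one-dimensional computation (\lemref{lem:Local-case}, \corref{cor:Tame-closed}) and bypasses the cohomological machinery entirely; the paper does not take this shortcut, presumably because the cohomological bounds are needed anyway for the reciprocity theorems. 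One small tightening: invoke \propref{prop:ZNP} at the outset to reduce to the case where $\ov{X}\setminus X$ is purely of codimension one, so that your $D_1,\dots,D_m$ exhaust the boundary and the generation claim is clean.
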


 \vskip .2cm

The next set of results establishes a class field theory for the tame abelian {\'e}tale
 fundamental groups of $X$ as follows.

\begin{thm}\label{thm:Main-1}
  There exist reciprocity homomorphisms
\[
  \rho_X \colon C(X) \to \pi^{\ab}_1(X); \ \ \ 
\rho^\tm_X \colon C^{\tm}(X) \to \pi^{\ab, \tm}_1(X)
  \]
such that the following hold.

\begin{enumerate}
\item
  The maps $\rho_X$ and $\rho^\tm_X$ are compatible with the canonical maps
  $C(X) \surj C^\tm(X)$ and $\pi^{\ab}_1(X) \surj \pi^{\ab, \tm}_1(X)$. 
\item
  $\Ker(\rho_X) = F' \oplus D$, where $|F'| < \infty$ and $D$ 
  is divisible
by every integer prime to $p$.
\item
  If $\dim(X) \le 2$, then $D$ is divisible by every integer invertible in $k$.
\item
  (2) and (3) hold also for $\Ker(\rho^\tm_X)$.
\item
  If $\dim(X) \le 1$, then $\Ker(\rho^\tm_X)$ is divisible.
\end{enumerate}
\end{thm}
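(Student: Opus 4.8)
The plan is to reduce to the case of a geometrically connected smooth curve and then to compare its tame class field theory with the (classical) unramified class field theory of a smooth compactification, where the divisibility of the kernel is already known. If $\dim(X) = 0$ the reciprocity map is a finite product of local reciprocity maps, which are injective, so $\Ker(\rho^\tm_X) = 0$; and if $\dim(X) = 1$ but $X$ is not geometrically connected, one reduces to the geometrically connected case by the standard descent along the finite separable extension of constants. So assume $X$ is a geometrically connected smooth curve over $k$, let $\ov{X}$ be a smooth compactification, and put $S = \ov{X} \setminus X$, a finite reduced closed subscheme. Since $\ov{X}$ is proper, $C^\tm(\ov{X}) = SK_1(\ov{X})$, $\pi^{\ab, \tm}_1(\ov{X}) = \pi^{\ab}_1(\ov{X})$, and the associated reciprocity map is the classical one $\rho_{\ov{X}} \colon SK_1(\ov{X}) \to \pi^{\ab}_1(\ov{X})$. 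By parts (2)--(4), already established, $\Ker(\rho^\tm_X) = F' \oplus D$ with $F'$ finite and $D$ divisible by every integer prime to $\Char(k)$; hence it suffices to prove that $\Ker(\rho^\tm_X)$ is $\ell$-divisible for every prime $\ell$, which in view of (2)--(4) amounts to proving that $F' = 0$ and, when $\Char(k) = p$, that $D$ is also $p$-divisible.

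The next step is to compare $X$ with $\ov{X}$ via the localization sequences for the tame idele class group and for the tame fundamental group: there are exact sequences
\[
\bigoplus_{x \in S} C^\tm_x \longrightarrow C^\tm(X) \longrightarrow SK_1(\ov{X}) \longrightarrow 0
\]
and
\[
\bigoplus_{x \in S} \pi^{\ab, \tm}_{1,x} \longrightarrow \pi^{\ab, \tm}_1(X) \longrightarrow \pi^{\ab}_1(\ov{X}) \longrightarrow 0,
\]
fitting into a commutative ladder whose vertical arrows are the reciprocity maps, with $\rho_S := \bigoplus_{x \in S} \rho_x$ on the left and $\rho_{\ov{X}}$ on the right. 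Here $C^\tm_x$ is the local tame class group at the closed point $x \in S$ (built from $K_1(\kappa(x)) = \kappa(x)^\times$ together with a valuation term, $\kappa(x)$ being itself a local field) and $\pi^{\ab, \tm}_{1,x}$ is the abelianized tame fundamental group of the punctured henselian trait at $x$. Applying the snake lemma, $\Ker(\rho^\tm_X)$ is an extension of the kernel of $\Ker(\rho_{\ov{X}}) \to \bigoplus_{x \in S} \coker(\rho_x)$ by a quotient of $\bigoplus_{x \in S} \Ker(\rho_x)$. At this point I would invoke two inputs: first, for the proper curve $\ov{X}$ over a local field, $\Ker(\rho_{\ov{X}})$ is divisible, by \cite[Cor.~5.2]{Saito-JNT} together with \cite[Prop.~3]{Kato-Saito-1} — this is exactly the property that fails already for surfaces, and is what makes the one-dimensional case special; second, the local tame class field theory of $\kappa(x)$ and of the two-dimensional local field $\wh{K}_x$ shows that each $\Ker(\rho_x)$ is divisible and each $\coker(\rho_x)$ is reduced (indeed finite, once the unramified contribution, already recorded in $\pi^{\ab}_1(\ov{X})$, is separated off). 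Granting these, any homomorphism from the divisible group $\Ker(\rho_{\ov{X}})$ to the reduced group $\bigoplus_{x} \coker(\rho_x)$ vanishes, so $\Ker(\rho^\tm_X) \twoheadrightarrow \Ker(\rho_{\ov{X}})$; being an extension of a divisible group by a divisible group, $\Ker(\rho^\tm_X)$ is divisible.

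The main obstacle is the analysis of the $p$-primary parts of the local terms when $\Char(k) = p$: for primes $\ell \neq p$ the divisibility is already contained in parts (2)--(4), so the real work lies at $\ell = p$. There one must verify that the pro-$p$ and wild contributions — the groups of principal units $1 + \fm_{\kappa(x)}$, and the $p$-primary part of $SK_1(\ov{X})$ coming from $K_2$ of two-dimensional local fields — are matched on the two sides of each $\rho_x$ and of $\rho_{\ov{X}}$, so that they cancel in the kernel. The decisive simplification in equal characteristic $p$ is that the residue fields $\kappa(x)$ then contain no nontrivial $p$-th roots of unity, which removes any finite $p$-torsion from $\Ker(\rho_x)$, while the divisibility of $\Ker(\rho_{\ov{X}})$ quoted above holds over local fields of characteristic $p$ as well. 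A secondary, more technical point — needed to make the snake-lemma bookkeeping clean — is to fix the precise shape of the two localization sequences, in particular to ensure that $\rho_S$ has controllable kernel and reduced cokernel; this is where the explicit description of $C^\tm(X)$ via relative Milnor $K$-theory developed earlier in the paper enters.
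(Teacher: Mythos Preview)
Your snake-lemma approach has a genuine gap that the paper explicitly flags as an open problem. Once you replace the vaguely-described local terms by the correct ones --- for a curve, $E^\tm(X) = \Ker(C^\tm(X) \to C(\ov{X}))$ is a quotient of $\bigoplus_{x \in S} K^M_2(\kappa(x))$ (not of $K_1(\kappa(x))$), and $J^\tm(X) = \Ker(\pi^{\ab,\tm}_1(X) \to \pi^{\ab}_1(\ov{X}))$ is a quotient of $\bigoplus_{x \in S} G^{(0)}_{K_x}/G^{(1)}_{K_x}$ --- the snake lemma does yield
\[
0 \to \Ker\bigl(E^\tm(X) \to J^\tm(X)\bigr) \to \Ker(\rho^\tm_X) \to \Ker(\rho_{\ov{X}}) \to 0.
\]
Divisibility of $\Ker(\rho_{\ov{X}})$ is indeed known for curves. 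But the divisibility of the left-hand term is precisely \conjref{conj:ET-JT}, which the paper states as open and verifies only for rational curves (\thmref{thm:RCurve-main}). Your claim that ``each $\Ker(\rho_x)$ is divisible'' amounts to asking that the induced map $\mu(\kappa(x)) = K^M_2(\kappa(x))_\tor \to G^{(0)}_{K_x}/G^{(1)}_{K_x}$ be injective; the paper only proves surjectivity (in the proof of \thmref{thm:Main-5-1}), and injectivity is not established. So you are assuming a statement that is a priori \emph{stronger} than part~(5) itself.

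The paper's proof of (5) (\lemref{lem:Curve*-0}) avoids this entirely by a different mechanism: it shows directly that $\rho^{\tm,m}_X \colon C^\tm(X)/m \to \pi^{\ab,\tm}_1(X)/m$ is \emph{injective} for every $m \ge 1$. For $m$ prime to $\Char(k)$ this comes from identifying $\rho^{\tm,m}_X$ with the {\'e}tale realization $\epsilon^*_X$ on $H^3_c(X, \Z/m(2))$ (\propref{prop:Rec-Real}) and then invoking Beilinson--Lichtenbaum in the range $j \ge i-1$ (\corref{cor:Real-iso-sing-0}). For $m = p^n$ in characteristic $p$, one uses \corref{cor:Rec-tame-unr} to identify $\rho^{\tm,p^n}_X$ with $\rho^{p^n}_{\ov{X}}$, and then \cite[Prop.~3]{Kato-Saito-1}. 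Once $\rho^{\tm,m}_X$ is injective for all $m$, $\Ker(\rho^\tm_X) = \bigcap_{m>0} m\,C^\tm(X)$, and \lemref{lem:JS-0} (whose hypothesis is furnished by \lemref{lem:Open-compl}) identifies this as the maximal divisible subgroup.
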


Note that part (5) resolves the problem left open by Hiranouchi \cite{Hiranouchi-2} for
curves.

\vskip .2cm

We let $E^\tm(X) = \Ker(C^\tm(X) \to C(\ov{X}))$ and $J^\tm(X) = \Ker(\pi^{\ab, \tm}_1(X)
\to \pi^{\ab}_1(\ov{X}))$.

\begin{thm}\label{thm:Main-5}
  The group $J^\tm(X)$ is finite and the reciprocity map induces a surjection
  \[
    \rho^\tm_X \colon E^\tm(X)_\tor \surj J^\tm(X).
  \]
\end{thm}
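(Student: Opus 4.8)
The plan is to compare the tame class field theory of $X$ with the (already established, via Forré/Jannsen–Saito and its tame extension in Theorem~\ref{thm:Main-1}) class field theory of the smooth compactification $\ov{X}$, using the long exact sequences that define $E^\tm(X)$ and $J^\tm(X)$ together with the commutativity of $\rho^\tm$ with the maps to $C(\ov{X})$ and $\pi^{\ab}_1(\ov{X})$. Write $D = \ov{X}\setminus X$ with its reduced structure; the groups $E^\tm(X)$ and $J^\tm(X)$ should, by the very definitions of $C^\tm(X)$ and $\pi^{\ab,\tm}_1(X)$, be controlled by local contributions along the branches of $D$, namely by tame symbols / wild inertia quotients at the generic points of $D$ and at the points of its normalization. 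First I would identify $J^\tm(X)$ explicitly: since $\pi^{\ab,\tm}_1(X)$ classifies abelian étale covers of $\ov{X}$ tamely ramified along $D$, the kernel $J^\tm(X)$ of restriction to $\pi^{\ab}_1(\ov{X})$ is the subgroup generated by the tame inertia at the codimension-one points of $D$; each such contributes a cyclic group which is a quotient of $\wh{\Z}(1)(\kappa(v))$ for the residue field $\kappa(v)$ at a generic point $v$ of $D$, and since $k$ is a local field these residue fields are of finite type of transcendence degree $\le d-1$ over a local field, so the relevant tame quotient at each branch is finite. As $D$ has finitely many irreducible components, finiteness of $J^\tm(X)$ follows.

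Next I would set up the surjectivity. The key is a commutative diagram with exact rows
\[
\begin{CD}
E^\tm(X) @>>> C^\tm(X) @>>> C(\ov{X}) \\
@V{\rho^\tm_X}VV @V{\rho^\tm_X}VV @V{\rho_{\ov{X}}}VV \\
J^\tm(X) @>>> \pi^{\ab,\tm}_1(X) @>>> \pi^{\ab}_1(\ov{X}),
\end{CD}
\]
where the left vertical arrow is the induced map. A diagram chase shows that $\rho^\tm_X \colon E^\tm(X) \to J^\tm(X)$ is surjective provided $\rho_{\ov{X}}$ is surjective onto the image of $\pi^{\ab,\tm}_1(X)$ in $\pi^{\ab}_1(\ov{X})$ — equivalently, provided every class in $\pi^{\ab}_1(\ov{X})$ hit by the image of $C^\tm(X)$ lifts along $\rho_{\ov{X}}$; this is the content of the surjectivity part of the reciprocity theorem for $\ov{X}$ (the image of $\rho_{\ov{X}}$ is described by Forré's finiteness statement and the degree map), so $\rho^\tm_X(E^\tm(X)) = J^\tm(X)$. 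To upgrade this to $E^\tm(X)_\tor \surj J^\tm(X)$ I would use that $J^\tm(X)$ is finite, hence torsion: given $j \in J^\tm(X)$, pick $e \in E^\tm(X)$ with $\rho^\tm_X(e) = j$ and $n \ge 1$ with $nj = 0$; then $ne \in \Ker(\rho^\tm_X) \cap E^\tm(X)$, and by Theorem~\ref{thm:Main-1}(4) we may write $\Ker(\rho^\tm_X) = F' \oplus D'$ with $F'$ finite and $D'$ divisible by all integers prime to $p$. Replacing $e$ by $e - e'$ for a suitable $e' \in D'$ with $ne' = (\text{prime-to-}p\text{ part of }ne)$ reduces $ne$ to the $p$-primary torsion plus finite part; iterating and absorbing the remaining finite ambiguity into $F'$ shows $e$ can be chosen so that $me = 0$ for some $m$, i.e.\ $e \in E^\tm(X)_\tor$. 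One must check the divisibility manipulation stays inside $E^\tm(X)$, which holds because $E^\tm(X)$ is a direct summand-type subgroup compatible with the decomposition of $\Ker(\rho^\tm_X)$, or more directly because $E^\tm(X)$ itself is (prime-to-$p$)-divisible modulo a finitely generated part, a structural fact that follows from the explicit local description of $E^\tm(X)$ via tame symbols on the branches of $D$.

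The main obstacle I expect is precisely this last point: establishing that elements of $E^\tm(X)$ can be corrected by divisible elements of $\Ker(\rho^\tm_X)$ \emph{within} $E^\tm(X)$, rather than merely within $C^\tm(X)$. Handling it cleanly requires knowing that the divisible part $D'$ of $\Ker(\rho^\tm_X)$ from Theorem~\ref{thm:Main-1}(4) actually sits inside $E^\tm(X)$ — which is plausible since the kernel of the reciprocity map is built from norm-type divisible subgroups that are "supported near the boundary" — but verifying this needs the explicit construction of $C^\tm(X)$ and of $\rho^\tm_X$ from the body of the paper, in particular the compatibility of the Bloch–Kato–Saito-style filtrations with the decomposition along $X \subset \ov{X}$. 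An alternative route, if that compatibility is awkward, is to argue directly with the finiteness of $J^\tm(X)$: since $J^\tm(X)$ is finite it has bounded exponent $N$, so it suffices to produce torsion preimages, and one can instead work with $E^\tm(X)[N']$ for $N'$ a high power of $N$ and invoke the (prime-to-$p$)-divisibility of $E^\tm(X)$ plus a Nakayama-type argument to split off the $p$-part and the finite part, again reducing to Theorem~\ref{thm:Main-1}(4). Either way, once the structural input on $E^\tm(X)$ and $\Ker(\rho^\tm_X)$ is in place, the surjectivity onto $J^\tm(X)$ is a short diagram chase.
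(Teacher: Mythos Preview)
Your proposal has genuine gaps at each of its three steps.

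\textbf{Finiteness of $J^\tm(X)$.} Your claim that the tame inertia contribution at a generic point $v$ of $D$ is finite is not justified: the residue field $\kappa(v)$ is a function field of transcendence degree $d-1$ over the local field $k$, and the tame inertia group there is (a quotient of) $\wh{\Z}^{(p')}(1)$, which is an infinite profinite group. Nothing in ``finite type over a local field'' forces this to be finite. The paper proves finiteness by an entirely different route (\thmref{thm:J-fin}): one identifies ${\pi^{\ab,\tm}_1(X)}/m$ with $H^{2d+1}_{\et,c}(X,\Z/m(d+1))$ via Saito--Tate duality, compares with $\ov{X}$ through the localization sequence, and invokes the uniform finiteness bound on $H^{2d}_\et(Z,\Z/m(d+1))$ for the boundary $Z=\ov X\setminus X$ (\corref{cor:Boundedness-0}, \propref{prop:Fin-char-0}); passing to the inverse limit then gives $|J^\tm(X)|<\infty$.

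\textbf{Surjectivity of $E^\tm(X)\to J^\tm(X)$.} Your diagram chase appeals to ``the surjectivity part of the reciprocity theorem for $\ov{X}$'', but $\rho_{\ov X}\colon C(\ov X)\to \pi^{\ab}_1(\ov X)$ is \emph{not} surjective (Forr{\'e} only controls its image and cokernel). So the snake-lemma argument does not give what you need. The paper instead obtains explicit presentations of both $E^\tm(X)$ and $J^\tm(X)$ as quotients of sums over curves $C\in\sC(X)$ and points $x\in\Delta(C_n)$: namely $E^\tm(X)$ is a quotient of $\bigoplus K^M_2(k(x))$ (\corref{cor:ML-main-3}, proved via Bloch's moving lemma), and $J^\tm(X)$ is a quotient of $\bigoplus G^{(0)}_{K_x}/G^{(1)}_{K_x}$ (\thmref{thm:Tame-nr-main}, whose algebraic---not merely topological---exactness crucially uses the finiteness of $J^\tm(X)$ just established). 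These presentations are compatible via the local reciprocity maps, reducing the problem to the $2$-dimensional local fields $K_x$.

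\textbf{Lifting to torsion.} You correctly flag that your correction argument may not stay inside $E^\tm(X)$; this gap is real, and in addition your use of \thmref{thm:Main-1}(4) only handles divisibility by primes $\ne p$, so the $p$-primary part of $J^\tm(X)$ is not covered. The paper sidesteps both issues: for each $x$ one has $U'_0K^M_2(K_x)/U'_1K^M_2(K_x)\cong K^M_2(k(x))$, Kato's reciprocity (\propref{prop:Kato-REC}) gives that $K^M_2(k(x))\to G^{(0)}_{K_x}/G^{(1)}_{K_x}$ has dense image, hence is surjective onto this finite group (\lemref{lem:Local-case}), and by Tate--Merkurjev $K^M_2(k(x))$ is (uniquely divisible)$\oplus$(finite), so already $K^M_2(k(x))_\tor$ surjects. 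This gives $E^\tm(X)_\tor\twoheadrightarrow J^\tm(X)$ directly, with no need for the delicate correction step.
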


As a part of proof of \thmref{thm:Main-5}, we show that $J^\tm(X)$
can be written as a quotient of the direct sum of the 0-th ramification subgroups of
the function fields of integral curves lying in $X$.
We also obtain a similar description of $E^\tm(X)$ (cf.
\propref{prop:ML-main-3}) which plays a key role in 
proving \thmref{thm:Main-5}. 

\vskip .2cm

\vskip .2cm

Let $C^\tm(X)_0$ denote the kernel of the norm map $N_{X}\colon C^\tm(X) \to k^\times$.
Let $\rho^\tm_{X,0}$ denote the induced reciprocity map
$C^\tm(X)_0 \to \pi^{\ab, \tm}_1(X)_0$.
We let $\coker_{\rm top}(\rho^\tm_X)$ denote the topological cokernel of $\rho^\tm_X$.
Let $r$ be as in \thmref{thm:Main-2}.

\begin{thm}\label{thm:Main-3}
  The groups $({\rm Image}(\rho^\tm_X))_\tor$ and ${\rm Image}(\rho^\tm_{X,0})$
  satisfy the following.
  \begin{enumerate}
  \item
    They are finite if $\Char(k) = 0$.
  \item
    They have finite exponents and their prime-to-$p$ parts are finite if
    $\Char(k) = p$.
    \end{enumerate}
If $X$ is geometrically connected over $k$, then we have the following.
\begin{enumerate}
\item
${\rm Image}(\rho^\tm_{X,0})$ is finite. 
\item
$\coker_{\rm top}(\rho^\tm_X)$ is direct sum of $\wh{\Z}^r$ and a finite group.
 \end{enumerate}
\end{thm}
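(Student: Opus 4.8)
\textbf{Proof proposal for Theorem~\ref{thm:Main-3}.}
The plan is to deduce the statement from the already-established class field theory results, principally Theorems~\ref{thm:Main-2} and~\ref{thm:Main-5}, together with the structure of the reciprocity map relative to the base field. First I would unwind the definitions: the canonical map $N_X \colon C^\tm(X) \to k^\times$ fits into a commutative square with $\pi_X \colon \pi^{\ab, \tm}_1(X) \to G_k$ via the reciprocity map of $k$, so that $\rho^\tm_X$ carries $C^\tm(X)_0$ into $\pi^{\ab, \tm}_1(X)_0$ and one gets a morphism of short exact sequences relating $(C^\tm(X)_0, C^\tm(X), k^\times)$ to $(\pi^{\ab, \tm}_1(X)_0, \pi^{\ab, \tm}_1(X), G_k)$. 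Since local reciprocity $k^\times \to G_k$ has dense image with cokernel $\wh{\Z}/\Z$ and finite kernel control, the snake lemma reduces the computation of $\coker_{\rm top}(\rho^\tm_X)$ and of the torsion/finiteness of the image essentially to the corresponding questions for $\rho^\tm_{X,0}$ on the ``geometric'' part $C^\tm(X)_0 \to \pi^{\ab, \tm}_1(X)_0$.

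Next I would handle the geometric part. By Theorem~\ref{thm:Main-2}, when $X$ is geometrically connected $\pi^{\ab, \tm}_1(X)_0 \cong F \oplus \wh{\Z}^r$ with $F$ finite. The subgroup $J^\tm(X) = \Ker(\pi^{\ab, \tm}_1(X) \to \pi^{\ab}_1(\ov X))$ is finite by Theorem~\ref{thm:Main-5}, and modulo $J^\tm(X)$ the target is $\pi^{\ab}_1(\ov X)_0$, for which the Jannsen--Saito/Forr\'e theory (which the paper has set up and which Theorem~\ref{thm:Main-2} rests on) already describes the image of the reciprocity map $SK_1(\ov X)_0 \to \pi^{\ab}_1(\ov X)_0$ as having finite prime-to-$p$ part and finite exponent torsion, with topological cokernel $\wh{\Z}^r$ up to a finite group. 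Since $C^\tm(X)_0 \surj C(\ov X)_0 = SK_1(\ov X)_0$ is surjective (by the definitions of $C^\tm$ and the fact that it agrees with $SK_1$ on the compactification, cf. Theorem~\ref{thm:Main-5} and the surrounding constructions) and $E^\tm(X)_\tor \surj J^\tm(X)$, I would combine these: the image of $\rho^\tm_{X,0}$ surjects onto the image of the compactified reciprocity map in $\pi^{\ab}_1(\ov X)_0$, with kernel a subquotient of the finite group $J^\tm(X)$; hence $\mathrm{Image}(\rho^\tm_{X,0})$ has finite prime-to-$\Char(k)$ part and finite exponent torsion, and in fact is \emph{finite} because the only non-torsion part of $\pi^{\ab}_1(\ov X)_0$ detected is $\wh{\Z}^r$, and the closure of the image of $SK_1(\ov X)_0$ is exactly that $\wh{\Z}^r$ while the image itself (not its closure) meets it in something finite — this is the Jannsen--Saito finiteness of $\mathrm{Image}(\rho_{\ov X, 0})$, which is part (1). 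For part (2), the topological cokernel: take closures everywhere; $\overline{\mathrm{Image}(\rho^\tm_X)}$ maps onto $\overline{\mathrm{Image}(\rho_{\ov X})}$ with finite kernel ($\subseteq J^\tm(X)$), and the cokernel of $\rho_{\ov X}$ topologically is $\wh{\Z}^r$ plus finite by the unramified theory, so $\coker_{\rm top}(\rho^\tm_X)$ is an extension of $\wh{\Z}^r$-plus-finite by a finite group, hence itself $\wh{\Z}^r \oplus (\text{finite})$ after absorbing the finite pieces (using that $\wh{\Z}^r$ has no finite-index-with-finite-kernel deformations beyond adding finite summands).

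For the general (not-necessarily-geometrically-connected) statement — that $(\mathrm{Image}(\rho^\tm_X))_\tor$ and $\mathrm{Image}(\rho^\tm_{X,0})$ are torsion of finite exponent with finite prime-to-$\Char(k)$ part — I would not need geometric connectedness: decompose $X$ into its geometrically connected components over the finite extensions of $k$ corresponding to the connected components, reduce to the geometrically connected case over those, and note that finite exponent and finiteness of prime-to-$\Char(k)$ parts are preserved under finite direct sums and under the finite-degree base change (the exponent may grow by a bounded factor). The torsion-of-finite-exponent claim then follows because $\mathrm{Image}(\rho^\tm_X)_\tor$ maps to $\mathrm{Image}(\rho_{\ov X})_\tor$ with finite kernel, and the latter has finite exponent by Forr\'e's theorem (the torsion of the image of $SK_1(\ov X)$ lives in a finite group up to $\Char(k)$-issues, and the $\Char(k)$-part has bounded exponent from the structure of $\pi^{\ab}_1$).

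The main obstacle I anticipate is the passage from the closure of the image to the image itself in part~(1): proving $\mathrm{Image}(\rho^\tm_{X,0})$ is genuinely \emph{finite} rather than merely ``dense in $\wh{\Z}^r$-plus-finite.'' This is exactly the subtle point in Jannsen--Saito and Forr\'e, where one must show the reciprocity image inside the pro-$\ell$ or pro-$p$ tower does not accumulate — it requires the structure of $SK_1(\ov X)_0$ as (a quotient related to) the group of rational points of a semi-abelian variety over $k$, whose divisibility forces the image in $\wh{\Z}^r$ to be divisible and hence either trivial or infinitely divisible, and the latter is excluded by continuity/compactness. I expect to either quote this directly from \cite{JS-Doc}, \cite{JS-JAG}, \cite{Forre-Crelle} for $\ov X$ and then only do the (finite) comparison work between $X$ and $\ov X$ via Theorem~\ref{thm:Main-5}, or else to reprove the relevant accumulation-avoidance using the tame idele class group's own structure theory developed earlier in the paper. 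A secondary, more bookkeeping obstacle is carefully tracking which finite groups are prime-to-$\Char(k)$ versus prime-to-$p$ (recall $p = \Char(\ff)$, and $\Char(k)$ is either $0$ or $p$), so that the statements distinguishing ``prime-to-$p$'' in Theorem~\ref{thm:Main-1} from ``prime-to-$\Char(k)$'' here come out consistently.
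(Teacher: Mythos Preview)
Your proposal is correct and follows essentially the same route as the paper: reduce to the degree-zero part via the norm square (Corollary~\ref{cor:REC-T-Norm}), use the finiteness of $J^\tm(X)$ to compare with $\ov{X}$, invoke Forr{\'e}'s results \cite[Cor.~2.3]{Forre-Crelle} for the projective geometrically connected case, and handle the non-geometrically-connected case by passing to a finite extension $k'/k$ and using push-forward/pull-back.

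One point of difference is worth flagging. For the claim that the prime-to-$\Char(k)$ part of ${\rm Image}(\rho^\tm_{X,0})$ is finite \emph{without} assuming geometric connectedness, the paper does not reduce to the geometrically connected case as you propose. Instead it proves directly (\lemref{lem:p-tor-fin}) that $\pi^{\ab,\tm}_1(X)\{\ell\}$ is finite for every prime $\ell \neq \Char(k)$: one reduces via \propref{prop:Pi-decom} and the finiteness of $J^\tm(X)$ to the projective case, then observes that ${}_{\ell^n}\pi^{\ab}_1(\ov{X})_\ell$ is dual to a quotient of $H^2_\et(\ov{X}, \Z/\ell^n)$, which is finite by Hochschild--Serre and $cd(k)=2$. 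Once one knows this and the finite-exponent statement, the prime-to-$\Char(k)$ finiteness of the image is immediate. Your base-change reduction also works, but to close it you need the extra observation that $(G_{k'})_\tor$ is finite (so that the torsion of $\pi^{\ab,\tm}_1(X)_0$ over $k$ differs from that over $k'$ only by a finite amount); the paper's cohomological argument sidesteps this bookkeeping and is independent of any connectedness hypothesis.
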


The above theorem was proven by
Hiranouchi when $X$ is a (geometrically connected smooth) curve. But the first claim
of the theorem is new even for curves.

\vskip .2cm

\begin{thm}\label{thm:Main-4}
  If $X$ is birational to a smooth projective $k$-scheme which
  admits a good reduction, then the reciprocity map induces an isomorphism
  \[
    \rho_X \colon {C(X)}/m \xrightarrow{\cong} {\pi^{\ab}_1(X)}/m
    \]
    for every integer $m$ invertible in $k$. In particular, $\Ker(\rho_X)$
    is divisible by every integer invertible in $k$.
    The same holds also for $\rho^\tm_X$.
    \end{thm}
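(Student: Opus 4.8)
The plan is to reduce the statement to the two structure theorems already in hand, namely \thmref{thm:Main-1} and \thmref{thm:Main-5}, together with the good-reduction input. The key observation is that both $C^\tm(X)/m$ and $\pi^{\ab,\tm}_1(X)/m$ are birational invariants of $X$ up to the relevant torsion: for $m$ prime to $\Char(k)$, the tame fundamental group with $\Z/m$-coefficients depends only on the function field (this is essentially tame/étale descent, since a smooth compactification is covered by the birational model), and on the cycle side the surjection $C^\tm(X) \surj C(\ov{X})$ of \thmref{thm:Main-5} has kernel $E^\tm(X)$ which is torsion by that theorem, hence $E^\tm(X)/m \to E^\tm(X)[m]$-type arguments show $C^\tm(X)/m \xrightarrow{\cong} C(\ov{X})/m$ once we know $E^\tm(X)$ is divisible prime-to-$\Char(k)$ — but in fact it suffices that $J^\tm(X) = \pi^{\ab,\tm}_1(X) \to \pi^{\ab}_1(\ov{X})$ is an isomorphism mod $m$, which again follows because $J^\tm(X)$ is finite (\thmref{thm:Main-5}) and killed by the comparison. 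So after these reductions one is left to prove the statement for a smooth \emph{projective} $k$-scheme $\ov{X}$ admitting a good reduction, where $C(\ov{X}) = SK_1(\ov{X})$ and the map in question is the classical Bloch--Kato--Saito reciprocity $SK_1(\ov{X})/m \to \pi^{\ab}_1(\ov{X})/m$.

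For the projective good-reduction case, I would argue as follows. Let $\sX$ be a smooth proper model of $\ov{X}$ over the ring of integers $\sO_k$, with special fiber $Y$ smooth projective over the finite residue field $\ff$. The isomorphism $SK_1(\ov{X})/m \xrightarrow{\cong} \pi^{\ab}_1(\ov{X})/m$ for $m$ prime to $p$ is exactly the content of the finite-coefficient comparison proved in this circle of ideas: both sides sit in exact sequences relating them to the corresponding groups for the special fiber $Y$ over $\ff$ (where Kato--Saito unramified class field theory over finite fields gives $SK_1(Y)^\wedge \cong \pi^{\ab}_1(Y)$) and to the Galois cohomology of $k$, via proper base change and the smooth-proper comparison of étale cohomology $H^*_{\et}(\ov{X}_{\bar k},\Z/m) \cong H^*_{\et}(Y_{\bar\ff},\Z/m)$. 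One sets up the two descent spectral sequences / coniveau arguments — one computing $SK_1$ via Bloch's formula $SK_1(\ov{X}) \cong H^d_{\Zar}(\ov{X},\sK^M_{d+1})$ and Gersten resolution, the other computing $\pi^{\ab}_1(\ov{X})/m$ via duality with $H^{d+1}_{\et}(\ov{X},\Z/m(d))$ — and checks that the reciprocity map induces a morphism between them which is an isomorphism on each graded piece by the good-reduction comparison and local duality over $k$. This is the argument already available for the unramified case (cf.\ the Jannsen--Saito and Forré framework invoked in the introduction, in the good-reduction specialization, and in the form used by Uzun \cite{Uzun}); I would cite it rather than reprove it, adapting only the bookkeeping to land in $C(X)$ rather than $SK_1$ of a compactification.

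The main obstacle is making precise and rigorous the claim that passing from $\ov{X}$ to an open $X$ (and then to any birational model) changes neither side modulo $m$. On the fundamental-group side one must verify that the tame quotient, with $\Z/m$-coefficients and $m$ prime to $\Char(k)$, really is insensitive to removing a divisor with normal crossings from a smooth projective model — this is where Abhyankar's lemma and the absolute purity / tameness results enter, and it needs the hypothesis $m$ prime to $p$ (not just prime to $\Char(k)$) to be handled carefully when $\Char(k) = p$ versus $\Char(k) = 0$; in equal characteristic $p$ one genuinely needs the tame (as opposed to full) fundamental group here. On the cycle side one must show $C(X)/m \xrightarrow{\cong} C(\ov{X})/m$, which I expect to extract from the definition of $C(X)$ (Definitions~\ref{defn:TCG-0}, \ref{defn:Idele-X-1}) together with the finiteness of $E^\tm(X)$ and $J^\tm(X)$: the snake lemma applied to the ladder relating the $X$-sequence and the $\ov X$-sequence, after tensoring with $\Z/m$, reduces everything to the vanishing of $E^\tm(X)/m$ and the injectivity of $E^\tm(X)[m] \to \pi^{\ab,\tm}_1(X)$, both of which should follow from \thmref{thm:Main-5} once one knows (from the good-reduction hypothesis, via \thmref{thm:Main-3}) that the relevant torsion is divisible prime-to-$\Char(k)$. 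Once the birational reduction is secured, the good-reduction projective case is, as indicated, a citation plus routine diagram chase.
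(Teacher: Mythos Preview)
Your reduction to the projective case is not sound. You want to pass from $X$ to $\ov{X}$ by showing that the kernels $E^\tm(X)$ and $J^\tm(X)$ vanish modulo $m$, and you propose to extract this from \thmref{thm:Main-5}. But \thmref{thm:Main-5} only gives that $J^\tm(X)$ is finite and that $E^\tm(X)_\tor \surj J^\tm(X)$; it does \emph{not} say $E^\tm(X)$ is $m$-divisible, nor that $E^\tm(X)/m \to J^\tm(X)/m$ is an isomorphism. Indeed, the divisibility of $\Ker(E^\tm(X)\to J^\tm(X))$ is precisely Conjecture~\ref{conj:ET-JT}, which the paper does not prove in general (only for rational curves, \S\ref{sec:R-curves}). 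Since $J^\tm(X)$ is a nonzero finite group in general (see \S\ref{sec:R-curves} for explicit examples), the maps $C^\tm(X)/m \to C(\ov X)/m$ and $\pi^{\ab,\tm}_1(X)/m \to \pi^{\ab}_1(\ov X)/m$ are genuinely \emph{not} isomorphisms, and your snake-lemma argument cannot close. Your ``birational invariance of the tame fundamental group mod $m$'' is likewise false as stated: removing a divisor changes $\pi_1^{\ab}/m$ by exactly $J^\tm(X)/m$.

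The paper's proof takes an entirely different route that avoids this trap. It first translates the statement, via \propref{prop:Tame-MCCS} and \propref{prop:Rec-Real}, into the assertion that the {\'e}tale realization map
\[
\epsilon^*_X \colon H^{2d+1}_c(X,\Z/m(d+1)) \to H^{2d+1}_{\et,c}(X,\Z/m(d+1))
\]
is an isomorphism. This is then attacked directly with the localization sequences for compact-support cohomology (\lemref{lem:Ex-seq}), comparing $X$ with $\ov X$ and with the good-reduction model $\ov X'$ through the birational open $X'$. The terms coming from the complements $Z$, $Z'$, $W'$ are handled by \corref{cor:Real-iso-sing-0} (Beilinson--Lichtenbaum in the right range). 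The crucial new input is \lemref{lem:GR-Surface}: for smooth projective $\ov X'$ with good reduction, the realization $H^{2d}(\ov X',\Z/m(d+1)) \to H^{2d}_\et(\ov X',\Z/m(d+1))$ is \emph{surjective}. This is proved by induction on $d$ via Bertini (\cite{Ghosh-Krishna-Bertini}) down to surfaces, where it reduces (\lemref{lem:KH-Surface}) to a Kato-homology vanishing established in \cite{JS-Doc} using the good-reduction hypothesis. The isomorphism in degree $2d+1$ for $\ov X'$ itself is taken from \cite[Thm.~4.1]{JS-JAG}. A diagram chase on the two localization ladders then gives the result for $X$.
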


The above theorem was shown by Yamazaki \cite[Thm.~6.5]{Yamazaki}
under the assumption that $\Char(k) = 0$ and
$\ov{X}$ is a geometrically connected rational surface over $k$.

\vskip .2cm

If $\Char(k) = p$, then the previous results do not explain the $p$-part of
the tame class field theory of $X$. However, the following
result shows that this part actually
coincides with the $p$-part of the class field theory of $\ov{X}$.

\begin{thm}\label{thm:Main-6}
  Suppose that $\Char(k) = p$ and $m \ge 1$ is an integer. Then the canonical
  maps
  \[
    {C^\tm(X)}/{p^m} \to {C(\ov{X})}/{p^m}; \ \ \ {\pi^{\ab, \tm}_1(X)}/{p^m} \to
    {\pi^{\ab}_1(\ov{X})}/{p^m}
  \]
  are isomorphisms.
\end{thm}

\subsection{An application to motivic cohomology}
\label{sec:HMC}
Let $X$ be as in \S~\ref{sec:MR}. As a byproduct of our proofs, we obtain the
following result about the
motivic cohomology groups $H^{2d+n}_c(X, \Z(d+n))$ for $n \ge 2$.
To motivate it, recall that a result of Akhtar \cite[Thm.~1.1]{Akhtar} says that if
$Y$ is a smooth projective scheme of pure dimension $d$ over a finite field, then
$H^{2d+m}(Y, \Z(d+m))$ is zero for $m \ge 2$. The key point of the proof is the well
known fact that the Milnor $K$-group of a finite field is
zero in degrees more than one. 
Since this is not true for a local field $k$, we can not expect Akhtar's result
over $k$.
On the other hand, Tate \cite{Tate-Kyoto} showed that $\CH^{2}(k,2) \cong K^M_2(k)$ is
a direct sum of a finite and a divisible group. The
following result therefore should be the optimal generalization of
Akhtar's result to varieties over local fields.

\begin{thm}\label{thm:Main-9}
We have the following.
  \begin{enumerate}
    \item
  There is a decomposition $H^{2d+2}_c(X, {\Z}(d+2)) \cong F \bigoplus D$, where
  $F$ is a finite group whose order is invertible in $k$ and $D$ is divisible by
  every integer invertible in $k$.
  \item
$H^{2d+n}_c(X, {\Z}(d+n))$ is divisible by
  every integer invertible in $k$ if $n \ge 3$.
  \item
  $\CH^{d+n}(X, n)$ is divisible if $n \ge 3$ and $X$ is projective over $k$.
\end{enumerate}
\end{thm}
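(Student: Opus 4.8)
\emph{Strategy.} The plan is to exploit the following principle: via the coniveau (Gersten) spectral sequence of the smooth $d$-dimensional scheme $X$, the group $H^{2d+n}_c(X,\Z(d+n))$ is built from the Milnor $K$-groups $K^{M}_{n}(k(x)) = H^{n}(k(x),\Z(n))$ of the residue fields of the points of $\ov{X}$ (together with a boundary contribution from $D = \ov{X}\setminus X$), in exactly the way that $H^{2d+1}_c(X,\Z(d+1)) = C(X)$ --- the idele class group underlying our reciprocity map --- is built from $K^{M}_{1}(k(x)) = k(x)^{\times}$. A closed point of $X$ has local residue field, so the analysis carried out for $C(X)$ in the earlier sections will apply here with the structure of $k(x)^{\times}$ replaced by Tate's theorem \cite{Tate-Kyoto}: for a local field $L$, $K^{M}_{2}(L)$ is the direct sum of a finite and a divisible group, and $K^{M}_{m}(L)$ is divisible for $m\ge 3$. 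This is the sense in which Theorem~\ref{thm:Main-9} is a byproduct of the preceding results.

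\emph{Prime-to-$\Char(k)$ divisibility.} One passes to finite coefficients through the Bockstein sequences $0\to H^{j}_c(X,\Z(i))/\ell\to H^{j}_c(X,\Z/\ell(i))\to H^{j+1}_c(X,\Z(i))[\ell]\to 0$, and for $\ell\nmid\Char(k)$ invokes Beilinson--Lichtenbaum: $\Z/\ell(m)\simeq\tau_{\le m}R\epsilon_{*}\mu_{\ell}^{\otimes m}$ on the small {\'e}tale site. As $\dim\ov{X}=d$, the hypercohomology spectral sequence collapses in top total degree to $H^{2d+n}(\ov{X},\Z/\ell(d+n))\cong H^{d}_{\zar}\bigl(\ov{X},\mathcal H^{d+n}_{\et}(\mu_{\ell}^{\otimes(d+n)})\bigr)$. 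By Bloch--Ogus the generic stalk of $\mathcal H^{d+n}_{\et}(\mu_{\ell}^{\otimes(d+n)})$ is $H^{d+n}_{\et}(k(\ov{X}),\mu_{\ell}^{\otimes(d+n)})$, which vanishes for $n\ge 3$ since $k(\ov{X})$ has $\ell$-cohomological dimension $d+2$; hence this sheaf is zero, $H^{2d+n}(\ov{X},\Z/\ell(d+n))=0$, and the Bockstein gives part~(2) for $\ov{X}$. For $n=2$, the vanishing $\mathcal H^{t}_{\et}(\mu_{\ell}^{\otimes(d+2)})=0$ for $t\ge d+3$ keeps all differentials away from the corner, so $H^{2d+2}(\ov{X},\Z/\ell(d+2))\cong H^{2d+2}_{\et}(\ov{X},\mu_{\ell}^{\otimes(d+2)})$, which by Hochschild--Serre, properness and Tate local duality is $H^{2}(G_{k},\mu_{\ell}^{\otimes 2})$: finite, killed by $\ell$, and zero for all but finitely many $\ell$. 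Feeding this into the Bockstein, together with the fact --- immediate from the Gersten description $H^{2d+2}(\ov{X},\Z(d+2))=H^{d}_{\zar}(\ov{X},\mathcal K^{M}_{d+2})$ and Tate's theorem --- that $H^{2d+2}(\ov{X},\Z(d+2))$ is an extension of a torsion group by a divisible one, yields the splitting $F\oplus D$ of part~(1) with $|F|$ prime to $\Char(k)$.

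\emph{Passage to non-proper $X$ and to $\CH^{d+n}(X,d)$.} For general $X$ one takes $D$ to be a reduced simple normal crossing divisor (after blowing up $\ov{X}$ away from $X$) and applies the localization exact sequence relating $H^{*}_c(X,\Z(i))$, $H^{*}(\ov{X},\Z(i))$ and $H^{*}(D,\Z(i))$, together with cdh-descent expressing $H^{*}(D,\Z(i))$ in terms of the smooth projective strata of $D$, of dimension $\le d-1$. A degree count gives $H^{2d+2}(D,\Z(d+2))=0$, while $H^{2d+1}(D,\Z(d+2))=H^{2(d-1)+3}_c(D,\Z((d-1)+3))$ is divisible prime to $\Char(k)$ by the $(d-1)$-dimensional case with exponent $3$; hence the localization sequence transports parts~(1) and~(2) from $\ov{X}$ to $X$ (for $n\ge 3$ the bookkeeping is easier still, all the relevant groups being divisible prime to $\Char(k)$). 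For part~(3), $\CH^{d+n}(X,d)=H^{d+2n}(X,\Z(d+n))$ vanishes when $n>\dim X$, and for $n=\dim X$ the coniveau spectral sequence has graded pieces that are either $\Q$-vector spaces --- using that $H^{j}(F,\Z(i))$ is uniquely divisible for $j\ge i+1$, a Bockstein consequence of $H^{>i}(F,\Z/m(i))=0$ (Beilinson--Lichtenbaum and Geisser--Levine) --- or quotients of $\bigoplus_{x\in X^{(n)}}K^{M}_{n}(k(x))$ with $k(x)$ local and $n\ge 3$, i.e.\ quotients of divisible groups; as extensions and quotients of divisible groups are divisible, $\CH^{d+n}(X,d)$ is divisible. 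The remaining case $n<\dim X$ follows by the same spectral sequence, the contributions of points of codimension $>n$ being controlled separately.

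\emph{The main obstacle.} The only genuinely non-formal ingredients are the finiteness in the case $n=2$ (governed by Tate local duality) and the control of the codimension-$>n$ contributions in part~(3): these encode the arithmetic duality over $k$ refining Tate's theorem, and in the present framework they are supplied by the finiteness of $J^{\tm}(X)$ and the structural description of $C^{\tm}(X)$ proved in the earlier sections; one should accordingly regard Theorem~\ref{thm:Main-9} as the translation of those results into the language of motivic cohomology with compact support.
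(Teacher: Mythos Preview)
Your approach for (1)--(2) is workable in outline but considerably more elaborate than the paper's, and leaves gaps. The paper's proof simply invokes the uniform finiteness already established as \corref{cor:Boundedness-0} and \propref{prop:Fin-char-0}: there is an $M$ with $|H^{2d+2}_c(X,\Z/m(d+2))|\le M$ for every $m$ prime to $\Char(k)$, and then \corref{cor:DIV-ab} (a direct consequence of the Jannsen--Saito lemmas) immediately yields the splitting $F\oplus D$. For (2) the paper shows $H^{2d+n}_c(X,\Z/m(d+n))=0$ via the localization triangle for $(X,\ov{X},Z)$: the {\'e}tale realization of \lemref{lem:Real-iso-sing} is an isomorphism in the relevant bidegrees (since $d+n\ge d+cd(k)$), and one is reduced to the bounds $cd_k(Z)\le 2d$ and $cd_k(\ov{X})\le 2d+2<2d+n$. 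You instead rederive pieces of this through Bloch--Ogus and Hochschild--Serre, but you only bound $H^{2d+2}(\ov{X},\Z/\ell(d+2))$ for \emph{primes} $\ell$, and the step ``extension of torsion by divisible'' $+$ ``Bockstein bounds'' $\Rightarrow$ ``$F\oplus D$ with $F$ finite'' is not justified as written: one needs either the uniform bound on $A/m$ for all $m$ (the paper's route) or a further structure argument for $\ell$-primary groups of finite $\ell$-rank that you do not supply. Your passage from $\ov{X}$ to $X$ via an SNC complement also presupposes resolution of singularities; the paper avoids this because \lemref{lem:Real-iso-sing} already handles a singular $Z$ via cdh-motives.

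For (3), first note that the statement carries a typo: the paper's restatement \thmref{thm:Main-9**} and its proof concern $\CH^{d+n}(X,n)$, not $\CH^{d+n}(X,d)$. The paper's argument is then three lines: Akhtar's isomorphism $\CH^{d+n}(X,n)\cong H^d(X_\zar,\sK^M_{d+n,X})$, the Gersten surjection $\bigoplus_{x\in X_{(0)}}K^M_n(k(x))\surj H^d(X_\zar,\sK^M_{d+n,X})$, and the divisibility of $K^M_n$ of a local field for $n\ge 3$ \cite[Thm.~IX.4.11]{Fesenko-Vostokov}. Your coniveau case analysis is aimed at the wrong target and left incomplete (``the remaining case $n<\dim X$ follows by the same spectral sequence'' is not an argument).

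Finally, your ``main obstacle'' paragraph misidentifies the inputs: the proof of this theorem nowhere uses the finiteness of $J^\tm(X)$ or the structure of $C^\tm(X)$. What is actually used is the general finiteness theorem for motivic and {\'e}tale cohomology over local fields (\thmref{thm:Boundedness} and \propref{prop:Fin-char-0}), which is established independently of the reciprocity map, together with Tate's theorem on $K^M_2$ and the cited divisibility of $K^M_{\ge 3}$.
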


The following result generalizes Tate's theorem.

\begin{cor}\label{cor:Tate-higher-dim}
  If $X$ is smooth projective of pure dimension $d$ over a $p$-adic field, then
  $\CH^{d+2}(X,2) \cong F \bigoplus D$, where
  $F$ is a finite group and $D$ is a divisible group.
\end{cor}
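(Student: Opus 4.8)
The plan is to deduce Corollary~\ref{cor:Tate-higher-dim} directly from \thmref{thm:Main-9} by specializing to the projective case over a $p$-adic field, where all the characteristic-zero simplifications apply. First I would recall that over a $p$-adic field $k$ we have $\Char(k) = 0$, so every integer $m$ is prime to $\Char(k)$, and in particular ``divisible by every integer prime to $\Char(k)$'' simply means divisible. Moreover, since $k$ is perfect (being of characteristic zero), Bloch's higher Chow groups agree with motivic cohomology: $\CH^{d+2}(X, 2) \cong H^{2d+2}(X, \Z(d+2))$ under the standard reindexing $\CH^{j}(X, i) \cong H^{2j-i}(X, \Z(j))$, which gives $2(d+2) - 2 = 2d+2$ and weight $d+2$. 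Since $X$ is projective (hence proper), compactly supported motivic cohomology $H^{2d+2}_c(X, \Z(d+2))$ coincides with ordinary motivic cohomology $H^{2d+2}(X, \Z(d+2))$.

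Next I would invoke \thmref{thm:Main-9}(1), which gives a decomposition $H^{2d+2}_c(X, \Z(d+2)) \cong F \bigoplus D$ with $F$ finite of order prime to $\Char(k)$ and $D$ divisible by every integer prime to $\Char(k)$. Over a $p$-adic field the condition ``order prime to $\Char(k)$'' is vacuous, so $F$ is simply a finite group, and ``divisible by every integer prime to $\Char(k)$'' becomes ``divisible'', so $D$ is simply a divisible group. Combining this with the identification $\CH^{d+2}(X,2) \cong H^{2d+2}(X,\Z(d+2)) = H^{2d+2}_c(X,\Z(d+2))$ from the previous step yields $\CH^{d+2}(X,2) \cong F \bigoplus D$ with $F$ finite and $D$ divisible, which is exactly the assertion of the corollary.

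There is essentially no obstacle here: the corollary is a routine translation of \thmref{thm:Main-9}(1) into the classical language of higher Chow groups in the characteristic-zero case. The only points requiring (minor) care are (i) verifying the index bookkeeping in the isomorphism $\CH^{d+2}(X,2)\cong H^{2d+2}(X,\Z(d+2))$ between Bloch's higher Chow groups and motivic cohomology, for which I would cite the standard comparison theorem over perfect fields; and (ii) noting that properness of $X$ makes compactly supported and ordinary motivic cohomology coincide, so that \thmref{thm:Main-9}(1) applies to $H^{2d+2}(X,\Z(d+2))$ directly. With these in hand the proof is a two-line deduction, so I would present it as such.
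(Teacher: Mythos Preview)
Your proposal is correct and matches the paper's intended argument: the corollary is indeed an immediate specialization of \thmref{thm:Main-9}(1) to the projective case over a $p$-adic field, using that $\Char(k)=0$ makes ``prime to $\Char(k)$'' vacuous and that motivic cohomology with compact support agrees with higher Chow groups for smooth projective varieties over perfect fields. The paper does not give a separate proof for this corollary, presenting it exactly as the routine translation you describe.
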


\vskip .3cm

\subsection{Outline of proofs}\label{sec:Outline} 
We briefly outline the proofs.
The first key step to prove our main results is to identify the tame class group and the
abelian tame fundamental group with finite coefficients with the motivic cohomology
({\`a} la Suslin-Voevodsky) and {\'e}tale cohomology with compact support ({\`a} la
Grothendieck), respectively.

Although the identification of the abelian tame fundamental group with the
{\'e}tale cohomology with compact support is fairly
straightforward, the proof of the isomorphism between the tame class group and
the motivic cohomology with compact support takes a significant amount of work
(cf. \propref{prop:Tame-MCCS}). Two technical albeit key steps are the existence of the
pull-back map between tame class groups (cf. \propref{prop:TCG-PB}),
and a continuity property of the tame class group (cf. \lemref{lem:Continuity}).
We also establish the existence of the push-forward map  between tame class groups
(cf. \propref{prop:TCG-PF}).
Apart from being vital for our proofs, these results are of independent interest
even if their proofs are long.

The second key step is to show that the above identification
is compatible with the reciprocity map between the class group and fundamental
group on one hand, and the realization map between the motivic cohomology and
{\'e}tale cohomology on the other hand (cf. \propref{prop:Rec-Real}).
This is especially delicate in positive characteristic. We achieve this by means
of purity theorems for motivic cohomology groups 
({\`a} la Cisinski-D{\'e}glise \cite{CD-Doc}) and the theory of Gysin morphisms and 
fundamental classes of regular closed immersions over imperfect fields
({\`a} la D{\'e}glise-Jin-Khan \cite{DJK} and Navarro \cite{Navarro}).
We use these results and a descent argument to eventually reduce the identification
problem to perfect base fields, where we use a result of Yamazaki \cite{Yamazaki}.

The third key step is to uniformly bound the motivic and {\'e}tale
cohomology (with and without compact support) as one varies the finite coefficients
(but fixes the twist).
We have to do this for singular schemes as well because we apply this to the
complement of the underlying smooth scheme in a chosen smooth compactification,
and this complement is generally singular. In order to achieve this, we prove
a general finiteness result for motivic and {\'e}tale cohomology over local fields,
which is of independent interest. This was proven  earlier over finite fields by
Colliot-Th{\'e}l{\`e}ne-Sansuc-Soul{\'e} \cite{CSS} and Kahn \cite{Kahn-03}
(who also proved some finiteness results over $p$-adic fields).
We prove the finiteness over arbitrary
local fields by reducing to the case of finite fields.
This boundedness theorem allows us to reduce the proof of \thmref{thm:Main-1} to the
known case of smooth projective schemes by means of localization sequences.

The above argument only proves parts (1) and (2) of
\thmref{thm:Main-1}. To prove the stronger claims for surfaces, we use our
theorem on the boundedness of {\'e}tale cohomology to reduce the problem to the
case of a projective surface which is a result of Jannsen-Saito \cite{JS-Doc}.
Another key step in our proof of the full divisibility of the kernel of the reciprocity
map for surfaces in characteristic zero
is a result of Tate \cite{Tate-Kyoto} saying that $K^M_2(k)$ is a
direct sum of a divisible and a finite group if $k$ is a local field.

We prove the $p$-divisibility of the kernel of the reciprocity
map for curves in characteristic $p > 0$ by reducing to the case of projective curves
(which was considered by Kato-Saito \cite{Kato-Saito-1})
using \corref{cor:Rec-tame-unr}. 
The proof of \thmref{thm:Main-2} is reduced to the earlier results of
Grothendieck and Yoshida \cite{Yoshida03} for projective schemes
using the finiteness of $J^\tm(X)$. We prove the latter result using the
boundedness theorem for {\'e}tale cohomology discussed in the previous paragraph.

The fourth key step (which is especially essential for Theorems~\ref{thm:Main-5} and
~\ref{thm:Main-6}) is to show that $E^\tm(X)$ and $J^\tm(X)$ 
can be written as quotients of similar groups for smooth curves. To prove this claim
for $E^\tm(X)$, we first replace $C(\ov{X})$ by a (a priori different) group
which we denote by $C^{\nr}(X)$. It is easy to show that
the kernel of the map $C^\tm(X) \to C^{\nr}(X)$ has the desired property. We then
use a moving lemma for Bloch's cycle complex to show that
$C(\ov{X})$ is isomorphic to $C^{\nr}(X)$.  To prove \thmref{thm:Main-5} for curves,
we bound $E^\tm(X)$ and $J^\tm(X)$ by the subquotients of $K^M_2(K)$ and $G_K$,
where $K$ runs through 2-local fields. We then use Kato's reciprocity theorem for
such fields (cf. \cite{Kato-cft-1} and \cite{Kato80})
plus the divisibility theorem of Tate to complete the proof.

To prove \thmref{thm:Main-4}, we use the Lefschetz arguments and Bertini theorems of
Jannsen-Saito \cite{JS-JAG} and Ghosh-Krishna \cite{Ghosh-Krishna-Bertini}
to reduce to the case of surfaces.
To prove the latter case, the key step is to prove a surjectivity statement for the
{\'e}tale realization map for certain motivic cohomology of a smooth projective
surface over a local field. Using a spectral sequence argument, the proof of
this surjectivity is reduced to showing that in certain indices, the
specialization map between the Kato homology of the generic and special fibers of
a smooth model of the projective surface is an isomorphism, part of a set of
conjectures of Kato. But this special case of Kato's conjectures was already solved
in \cite{JS-Doc}. This is the step where we use the good reduction hypothesis.

\subsection{Notations}\label{sec:Notn}
For a Noetherian scheme $X$, we shall denote the category of
separated and finite type schemes over $X$ by $\Sch_X$.
We shall let $\Sm_X$ denote the category of smooth schemes over $X$.
If $X$ is reduced, $X_\sing$ will denote the singular locus of $X$ with reduced closed
subscheme structure and $X_\reg$ the complementary open subscheme.

Throughout the paper, we shall use characteristic exponent for fields. However,
the notation $\Char(k)$ will be used for the actual characteristic of a
field $k$.
For a field extension ${k'}/k$ and $X \in \Sch_k$, we let
$X_{k'} = X \times_{\Spec(k)} \Spec(k')$.
For a reduced Noetherian scheme $X$, we shall let $X_n$ denote the normalization of $X$
and $k(X)$ the total ring of fractions for $X$. We shall let $X_\red$ denote the
largest reduced closed subscheme of $X$. We shall let $X^{(i)}$ (resp. $X_{(i)}$) denote
the set of codimension (resp. dimension) $i$ points on $X$. 
For any $X \in \Sch_k$, we let $\sC(X)$ be the set of closed integral curves in $X$.

For a scheme $X$ endowed with a topology $\tau \in \{\zar, \nis, \et\}$,
a closed subset $Y \subset X$, and a
$\tau$-sheaf $\sF$, we let $H^*_{\tau, Y}(X, \sF)$ denote the $\tau$-cohomology groups
of $\sF$ with support in $Y$. If $\tau$ is fixed in a subsection, we shall drop it
from the notations of cohomology groups. We shall write $H^*_{\tau, Y}(X, \sF)$ as
$H^*_{\tau}(X, \sF)$ whenever $Y = X_\red$.
If $X = \Spec(A)$ is affine, we may write $H^*_{\tau}(X, \sF)$ as
$H^*_{\tau}(A, \sF)$. We shall write $H^q_\et(A, {\Q}/{\Z}(q-1))$ (resp.
$H^q_\et(X, {\Q}/{\Z}(q-1))$) as $H^q(A)$ (resp. $H^q(X)$) for
any $q \ge 0$ (cf. \cite[\S~3.2, Defn.~1]{Kato80}).

We shall let $\Ab$ denote the category of abelian groups.
For $A, B \in \Ab$, the notation $A \otimes B$ will indicate
tensor product of $A$ and $B$ over $\Z$.
For an abelian group $A$ and $m\in \N$, we let 
${}_m A$ (resp. $A/m$) denote the kernel (resp. cokernel)
of the multiplication map $A \xrightarrow{m} A$. For a prime $p$,
we shall let $A\{p\}$ (resp. $A\{p'\}$) denote the $p$-primary (resp. prime-to-$p$
primary) component of $A$. 
For a set $\mathbb{M}$ of prime numbers, the $\M$-completion of $A$
is the inverse limit $A_{\mathbb{M}} = \varprojlim_m A/{m}$, where the limit runs
through all natural numbers $m$ whose prime divisors lie in $\mathbb{M}$.  

For a Noetherian scheme $X$, we shall let $\pi^{\ab}_1(X)$
denote the abelian {\'e}tale fundamental group of $X$.
We shall write $\pi^{\ab}_1(\Spec(F))$
interchangeably as $G_F$, where the latter is the abelianized absolute Galois group
of a field $F$.
We shall let $\Tab$ denote the category of topological abelian groups with
continuous homomorphisms.

\section{The tame fundamental group}\label{sec:TFG}
In this section, we shall recall the definitions of the tame {\'e}tale fundamental
group and tame idele class groups and prove some initial results.
We fix some notations which will be used in this paper.
For a local ring $A$, we let $A^h$ (resp. $\wh{A}$)
denote the Henselization (resp. completion) of $A$ with respect to its maximal ideal.
For a discrete valuation field $K$, we shall let
$\sO_K, \fm_K, \ff$ and $v_K$ denote the ring of integers of $K$, the maximal ideal of
$\sO_K$, the residue field of $\sO_K$ and the normalized valuation of 
$K$, respectively.

\subsection{Recollection of ramification filtration}\label{RF**}
Recall from \cite[\S~7.4]{Gupta-Krishna-REC} that for a topological abelian group $G$,
the Pontryagin dual of $G$ is $G^\vee := \Hom_\Tab(G, \T)$ is a 
topological abelian group with the compact-open topology, where $\T$ is the circle
group. 
When $G$ is either profinite or discrete torsion, then 
$G^\vee \cong \Hom_\Tab(G, {\Q}/{\Z})$, where ${\Q}/{\Z} = \T_\tor$ has the
discrete topology. In this case, the Pontryagin duality says that the evaluation map
${\rm ev}_G \colon G \to (G^\vee)^\vee$ is a topological isomorphism.

Let $K$ be a Henselian discrete valuation field with a separable closure $\ov{K}$.
Let $K^{\ab}$ be the maximal abelian extension of $K$ inside $\ov{K}$ and
$K^{\nr}$ the maximal unramified subextension of $K^{\ab}$.
Let $p \ge 1$ denote the characteristic exponent of $K$.
Recall from \cite{Abbes-Saito} (see also
 \cite{Saito20} and \cite[\S~6.1]{Gupta-Krishna-REC}) that $G_K$ has an upper numbering
 non-logarithmic decreasing filtration $\{G^{(r)}_K\}_{r \in \Q_{\ge -1}}$ by closed
 subgroups
 such that  $G^{(-1)}_K = G_K$ and $G^{(0)}_K$ (resp. $G^{(1)}_K$) is the inertia group
  $\Gal({K^{\ab}}/{K^{\nr}})$ (resp. the wild inertia group) of $G_K$.
  We shall call $G^{(\bullet)}_K$ the ramification filtration of $G_K$.

For any integer $n \ge -1$, we let $\Fil_n H^1(K) =
\{\chi \in H^1(K)| \chi(a) = 0 \ \textnormal{for all } a\in  G^{(n)}_K\}$.
The filtration
$\Fil_{\bullet} H^1(K)$ of $H^1(K)$ is exhaustive such that
(cf. \cite[Lem.~7.11]{Gupta-Krishna-REC})
\begin{equation} \label{eqn:fil_n-dual}
({G_K}/{G^{(n)}_K})^\vee \cong \Fil_n H^1(K) \inj H^1(K) \cong G^\vee_K.
\end{equation}

For any prime $\ell$, we let $\Fil_n H^1_\et(K)\{\ell\} =
H^1(K)\{\ell\} \cap \Fil_n H^1(K) = H^1_\et(K, {\Q_\ell}/{\Z_\ell}) \cap
\Fil_n H^1(K)$.
It follows from \cite[Thm.~6.1]{Gupta-Krishna-REC} that for $n \geq 1$, we have 
\begin{equation}\label{eqn:MS-AS}
\Fil_n H^1(K) =  ({\underset{\ell \neq p}\bigoplus}
    H^1_\et(K, {{\Q}_{\ell}}/{\Z_\ell})) \bigoplus
  \left(\Fil_n H^1_\et(K)\{p\}\right).
\end{equation}

\subsection{Definition of $\pi^{\ab, \tm}_1(X)$}\label{sec:RFil}
For the rest of \S~\ref{sec:TFG},
we fix a field $k$ (of characteristic exponent $p$)
and an integral regular $k$-scheme $X$ of dimension
$d \ge 1$.
We fix a normal compactification (cf. \cite{Nagata}) $\ov{X}$ of $X$ such
that $\ov{X} \setminus X$ is the support of a Weil divisor
$D_0 := \stackrel{r}{\underset{i =1}\sum}D_i$, where $\{D_1, \ldots , D_r\}$ is
the set of irreducible components of $\ov{X} \setminus X$, all of dimension $d-1$.
We allow $D_0$ to be empty. We let $j \colon X \inj \ov{X}$ be the inclusion.
Let $K$ denote the function field of $X$.
For $x \in \ov{X}^{(1)}$, let $A_x = \wh{\sO_{X,x}}$ and let $K_x$ denote the fraction
field of $A_x$.

For an integral normal curve $C$ over $k$ with the unique
normal compactification $\ov{C}$ and function field $k(C)$,
we let $\Delta(C) = \ov{C} \setminus C$
with the reduced induced closed subscheme structure.
For an effective divisor $E =\stackrel{r}{\underset{i =1}\sum} n_i E_i$ having
support on $\ov{C} \setminus C$, we let 
$\Fil_E H^1(k(C))$ be the subgroup of characters $\chi$ of $G_{k(C)}$ satisfying the
property that $\chi \in H^1_\et(C, {\Q}/{\Z}) \subset H^1(k(C))$, and
the image of $\chi$ under the canonical map $H^1(k(C)) \to H^1(k(C)_i)$ lies in
$\Fil_{n_i} H^1(k(C)_i)$ for every
$1 \le i \le r$, where $k(C)_i$ is the completion of $k(C)$ along the discrete
valuation defined by $E_i$.
 It follows from \eqref{eqn:MS-AS}
 that 
 \begin{equation}\label{eqn:MS-AS-2}
\Fil_E H^1(k(C)) =   H^1(C) \{p'\} \bigoplus
  \Fil_E H^1(k(C))\{p\}.
\end{equation}


For any $C \in \sC(X)$, we let
$H^1_\tm(C_n) := \frac{H^1(C_n)}{\Fil_{\Delta(C_n)} H^1(k(C_n))}$ and
$H^1_{\nr}(C_n) := \frac{H^1(C_n)}{\Fil_{0} H^1(k(C_n))}$ and consider them
as discrete topological abelian groups (recall that $C_n$ denotes the normalization
of $C$).
We let 
$H^1_\tm(X) := {\underset{C \in \sC(X)}\prod} \ H^1_\tm(C_n)$ and
$H^1_{\nr}(X) = {\underset{C \in \sC(X)}\prod} \ H^1_{\nr}(C_n)$, considered as
topological abelian groups with the product topology.

\begin{defn}\label{defn:Tame-fil}
 We let $\Fil^{\tm} H^1(K) = \Ker(H^1(X) \xrightarrow{\Phi^\tm_X}
 H^1_\tm(X))$ and define the abelianized tame fundamental group of $X$ to be the
 topological abelian group $\pi^{\ab, \tm}_1(X) = \left(\Fil^{\tm} H^1(K)\right)^\vee$
 with the profinite topology.
   \end{defn}

   \begin{remk}\label{remk:Tame-pi*}
     The proof of \cite[Thm.~7.17]{Gupta-Krishna-REC} shows mutatis mutandis that
   $\pi^{\ab, \tm}_1(X)$ describes the Galois category of finite abelian covers of
   $X$ whose pull-back to $C$ is tamely ramified along $\ov{C} \setminus C$
   in the sense of \cite[Defn.~5.7.15]{Szamuely} for every regular curve
   $C$ mapping to $X$ (such covers are called curve-tame).
   By \cite[Thm.~4.4]{Kerz-Schmidt-JNT},
   we get that $\pi^{\ab, \tm}_1(X)$ describes the category of finite
   abelian covers of $X$ which are tamely ramified along $D_0$ in the sense
   of \cite[Defn.~5.7.15]{Szamuely}. 
 \end{remk}

 \begin{prop}\label{prop:Tame-cov}
   Given a morphism of integral regular $k$-schemes $f \colon X' \to X$, the
   push-forward map
   $f_* \colon \pi^{\ab}_1(X') \to \pi^{\ab}_1(X)$ descends to a continuous homomorphism
  \[
    f_* \colon \pi^{\ab,\tm}_1(X') \to \pi^{\ab,\tm}_1(X).
  \]
  \end{prop}
  \begin{proof}
    This is easily deduced from Definition~\ref{defn:Tame-fil}. Alternatively,
    this also follows from the above description of $\pi^{\ab,\tm}_1(X)$ using
    the machinery of Galois category of curve-tame covers as in
    Remark~\ref{remk:Tame-pi*}.
    \end{proof}

 It follows from ~\eqref{eqn:MS-AS-2} that
\begin{equation}\label{eqn:Tame-fil-1}
  \Fil^{\tm} H^1(K) = \Fil^{\tm} H^1(K)\{p\} \bigoplus H^1(X)\{p'\}.
\end{equation}
 In particular, $\Fil^{\tm} H^1(K) \cong H^1(X)$ if $p =1$. We let
\begin{equation}\label{eqn:Tame-fil-0}
  \Fil^{\nr} H^1(K) = \Ker(H^1(X) \xrightarrow{\Phi^{\nr}_X} H^1_{\nr}(X)).
\end{equation}
It follows from \cite[Lem.~8.7]{Gupta-Krishna-Duality} that
$\pi^{\ab}_1(\ov{X}) \cong \left(\Fil^{\nr} H^1(K)\right)^\vee$ if $\ov{X}$ is regular.

It follows from the definition that there are canonical surjections
 of topological abelian groups $\pi^{\ab}_1(X) \surj \pi^{\ab, \tm}_1(X) \surj
 \pi^{\ab}_1(\ov{X})$. We let $W^\tm({X})$ be the kernel of the first surjection.
The structure map $\pi_X \colon X \to \Spec(k)$ induces a continuous homomorphism
$\pi_X \colon \pi^{\ab, \tm}_1(X) \to G_k$. 
We let $\pi^{\ab, \tm}_1(X)_0$ denote the kernel of this map.
An immediate consequence of Definition~\ref{defn:Tame-fil} is the following.

 \begin{lem}\label{lem:Tame-fil-curve}
   Assume that $\dim(X) = 1$. Then there is a canonical isomorphism of
    topological abelian groups $W^\tm(X) \xrightarrow{\cong} 
    \left(\frac{H^1_{\et}(X, {\Q}/{\Z})}{\Fil_{\Delta(X)} H^1(K)}\right)^\vee$.
    In particular, there is an exact sequence of topological abelian groups
    \[
      {\bigoplus}_{x \in \Delta(X)} G^{(1)}_{K_x} \to \pi^{\ab}_1(X) \to
      \pi^{\ab, \tm}_1(X) \to 0.
\]
\end{lem}

\subsection{Relation with $\pi^{\ab}_1(X)$ and
  $\pi^{\ab}_1(\ov{X})$}\label{sec:Reln}
In this paper, we shall endow a direct sum
$G = {\underset{\lambda \in I}\bigoplus} G_\lambda$ of topological abelian groups
$\{G_\lambda\}_{\lambda \in I}$ with the direct sum topology
(cf. \cite[\S~2.6]{Gupta-Krishna-REC}). The following is an easy exercise whose proof is
omitted.

\begin{lem}\label{lem:direct-sum}
  With the direct sum topology, $G$ is the coproduct of
  $\{G_\lambda\}_{\lambda \in I}$ in the category of topological abelian groups.
  The canonical homomorphism of abelian groups
   \[
     \theta_I \colon \Hom_{\Tab}({\underset{\lambda \in I}\bigoplus} G_\lambda, A) \to
     {\underset{\lambda \in I}\prod} \Hom_\Tab(G_\lambda, A)
   \]
   is an isomorphism for any topological abelian group $A$.
\end{lem}

For an integral regular curve $C$ over $k$ and integer $i \ge 0$, we let
$G^{(i)}_{\Delta(C)} = {\underset{x \in \Delta(C)}\bigoplus} G^{(i)}_{x}$,
where $G_x$ denotes the abelian absolute Galois group of
the completion $k(C)_x$ of the function field $k(C)$ at the point $x$.
Recall that a sequence of topological abelian groups 
$A' \xrightarrow{\alpha} A \xrightarrow{\beta} A''$ is {\sl topologically} exact if $A''$ is Hausdorff and
$\Ker(\beta)$ coincides with the closure
of ${\rm Image}(\alpha)$.

\begin{prop}\label{prop:TFG-3}
  There is a topologically exact sequence of topological abelian groups
  \begin{equation}\label{eqn:TFG-3-00}
    {\underset{C \in \sC(X)}\bigoplus} G^{(1)}_{\Delta(C_n)} \to \pi^{\ab}_1(X)
    \xrightarrow{\tau_X}
    \pi^{\ab, \tm}_1(X) \to 0.
  \end{equation}
\end{prop}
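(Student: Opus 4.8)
The plan is to dualize and use the results just established. By Proposition~\ref{prop:TFG-2}, there is a canonical topological isomorphism $\pi^{\ab,\tm}_1(X)^\vee \cong \Fil^{\tm}H^1(K)$, and since the functor $(-)^\vee$ on $\pfd$ is exact (see \cite[Lem.~7.11]{Gupta-Krishna-REC}) it carries a short exact sequence of profinite and discrete torsion groups to one going the other way. So the first step is to produce, on the dual side, the exact sequence
\[
0 \to \Fil^{\tm} H^1(K) \to H^1_\et(X, {\Q}/{\Z}) \xrightarrow{\Phi} {\underset{C \in \sC(X)}\prod}\ \Bigl(\frac{H^1_\et(C_n, {\Q}/{\Z})}{\Fil_{\Delta(C_n)}H^1(k(C_n))}\Bigr),
\]
which is exactly the definition ~\eqref{eqn:Tame-fil} of $\Fil^{\tm}H^1(K)$, together with an identification of the target factors with duals of the groups $G^{(1)}_{\Delta(C_n)}$.

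Next I would identify each factor. For a single $C\in\sC(X)$, by \eqref{eqn:fil_n-dual} we have $\bigl(G_x/G_x^{(1)}\bigr)^\vee \cong \Fil_1 H^1(k(C)_x)$ for each $x\in\Delta(C_n)$, and exactness of Pontryagin duality turns the finite direct sum $G^{(1)}_{\Delta(C_n)} = \bigoplus_{x\in\Delta(C_n)} G_x^{(1)}$ into a statement about the quotient $\bigl(\bigoplus_x G_x\bigr)/G^{(1)}_{\Delta(C_n)}$, whose dual is $\bigoplus_x \Fil_1 H^1(k(C)_x)$. On the other hand the quotient $H^1_\et(C_n,{\Q}/{\Z})/\Fil_{\Delta(C_n)}H^1(k(C_n))$ dualizes, by \lemref{lem:Tame-fil-curve} applied to the curve $C_n$, to the kernel $W^\tm(C_n)$ of $\pi^{\ab}_1(C_n)\to\pi^{\ab,\tm}_1(C_n)$, and this $W^\tm(C_n)$ is precisely the image of (a suitable quotient of) $G^{(1)}_{\Delta(C_n)}$ under the sum of the local-to-global maps $G_x\to\pi^{\ab}_1(C_n)$ — this is the standard description of the ramification kernel for a curve, which also underlies \lemref{lem:Tame-fil-curve}. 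Putting $\bigoplus_C$ over all $C\in\sC(X)$ and using Corollary~\ref{cor:direct-sum-0} to commute $\Hom_\Tab(-, {\Q}/{\Z})$ with the direct sum (turning $\prod_C$ of duals into the dual of $\bigoplus_C$), I get that the target of $\Phi$ is the Pontryagin dual of $\bigoplus_{C\in\sC(X)} G^{(1)}_{\Delta(C_n)}$.

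With these identifications in place, the map $\Phi$ in ~\eqref{eqn:Tame-fil} is (up to the duality isomorphisms) the dual of the composite $\bigoplus_C G^{(1)}_{\Delta(C_n)} \to \bigoplus_C \pi^{\ab}_1(C_n) \xrightarrow{\sum \nu_*} \pi^{\ab}_1(X)$; here $\Phi$ being built from pullbacks $\nu^*$ on \'etale cohomology is dual to the pushforwards $\nu_*$ on fundamental groups. The exact sequence $0\to\Fil^\tm H^1(K)\to H^1_\et(X,{\Q}/{\Z})\xrightarrow{\Phi}(\bigoplus_C G^{(1)}_{\Delta(C_n)})^\vee$ of discrete torsion groups then dualizes, using that $(-)^\vee$ is exact and that $\mathrm{ev}$ is an isomorphism, to a right-exact sequence $\bigoplus_C G^{(1)}_{\Delta(C_n)} \to (\Fil^\tm H^1(K))^\vee \to 0$ with the middle identified via Proposition~\ref{prop:TFG-2} with $\pi^{\ab}_1(X)$ — but one must be careful: dualizing $\Phi$ recovers the map $\bigoplus_C G^{(1)}_{\Delta(C_n)}\to\pi^{\ab}_1(X)$ only up to the closure of its image, because the image of $\Phi$ need not be closed, hence the kernel of $\tau_X$ is the \emph{closure} of the image of $\bigoplus_C G^{(1)}_{\Delta(C_n)}$, and $\pi^{\ab,\tm}_1(X)$ being Hausdorff (it is profinite) gives the required Hausdorff hypothesis. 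This is exactly the assertion of topological exactness. Surjectivity of $\tau_X$ is the canonical surjection $\pi^{\ab}_1(X)\surj\pi^{\ab,\tm}_1(X)$ from \S~\ref{sec:TEC}, and it is compatible with the duality identifications since $\Fil^\tm H^1(K)\subseteq H^1_\et(X,{\Q}/{\Z})$ corresponds to this surjection.

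The main obstacle I expect is the bookkeeping around exactness of Pontryagin duality for \emph{infinite} direct sums/products and the non-closedness of $\mathrm{Image}(\Phi)$: one has to stay inside the category $\pfd$ (profinite and discrete torsion) where \cite[Lem.~7.11]{Gupta-Krishna-REC} applies, check that each $G^{(1)}_{\Delta(C_n)}$ is profinite (it is a finite product of closed subgroups of profinite Galois groups) so that $\bigoplus_C G^{(1)}_{\Delta(C_n)}$ has discrete torsion dual, and carefully track that the dual of an injection with possibly non-closed image is a map onto a dense subgroup — which is precisely why the conclusion is only \emph{topological} exactness rather than exactness. Everything else is a formal assembly of \lemref{lem:Tame-fil-curve}, Proposition~\ref{prop:TFG-2}, Corollary~\ref{cor:direct-sum-0}, and \eqref{eqn:fil_n-dual}.
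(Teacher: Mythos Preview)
Your overall strategy matches the paper's: use Proposition~\ref{prop:TFG-2} and the very definition~\eqref{eqn:Tame-fil} of $\Fil^{\tm}H^1(K)$, then dualize. The paper also first reduces to the curve case (where it is algebraically exact, not merely topologically) and then assembles the general case from curves via $\nu_*$ and Corollary~\ref{cor:Tame-cov}, exactly as you suggest.

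There is, however, a genuine gap in the way you execute the dualization. You write the sequence
\[
0\to\Fil^\tm H^1(K)\to H^1_\et(X,{\Q}/{\Z})\xrightarrow{\Phi}\Bigl(\bigoplus_C G^{(1)}_{\Delta(C_n)}\Bigr)^\vee
\]
and call it a sequence ``of discrete torsion groups'', then invoke \cite[Lem.~7.11]{Gupta-Krishna-REC}. But the third term is an infinite \emph{product} of discrete torsion groups and carries the product (or compact-open) topology; it is neither discrete nor profinite, so it lies outside the category $\pfd$ where that lemma applies. For the same reason, the biduality $\bigl((\bigoplus_C G^{(1)}_{\Delta(C_n)})^\vee\bigr)^\vee\cong\bigoplus_C G^{(1)}_{\Delta(C_n)}$ that you need to recover the left-hand group after dualizing twice is not available: the infinite direct sum of profinite groups is neither profinite nor discrete. (You flag this as the ``main obstacle'' at the end, but the body of the argument uses exactly the step that fails.) There is also a minor imprecision: the target of $\Phi$ is $\prod_C W^\tm(C_n)^\vee$, not the full $(\bigoplus_C G^{(1)}_{\Delta(C_n)})^\vee$; these differ because $W^\tm(C_n)$ is only a \emph{quotient} of $G^{(1)}_{\Delta(C_n)}$.

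The paper sidesteps this as follows. Let $A\subset\prod_C W^\tm(C_n)^\vee$ be the image of $\Phi^\tm_X$, endowed with the \emph{discrete} quotient topology coming from $H^1_\et(X,{\Q}/{\Z})$; then $A$ is discrete torsion, so $A^\vee$ is profinite and $0\to A^\vee\to\pi^{\ab}_1(X)\to\pi^{\ab,\tm}_1(X)\to 0$ is exact in $\pfd$. Let $H\subset A^\vee$ be the closure of the image of $\bigoplus_C W^\tm(C_n)\to\pi^{\ab}_1(X)$. One must show $H=A^\vee$. By \cite[Lem.~7.12]{Gupta-Krishna-REC} it suffices to show the dual map $A\to H^\vee$ is injective. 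Now Corollary~\ref{cor:direct-sum-0} is used in the \emph{correct} direction --- dualizing a map \emph{out of} a direct sum --- to factor $A\to H^\vee\to\prod_C W^\tm(C_n)^\vee$, and the composite is just $\Phi^\tm_X$ restricted to its image, hence injective by construction. Finally one precomposes with the surjection $\bigoplus_C G^{(1)}_{\Delta(C_n)}\surj\bigoplus_C W^\tm(C_n)$ coming from the curve case. This argument stays entirely inside $\pfd$ and never needs biduality for the infinite direct sum; that is the missing idea in your write-up.
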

\begin{proof}
  We can assume $\dim(X) \ge 2$ else the lemma follows from
  \lemref{lem:Tame-fil-curve}.
  Now, it follows from Definition~\ref{defn:Tame-fil}  and the
    dimension one case that we have a complex
\begin{equation}\label{eqn:TFG-3-2}
  {\underset{C \in \sC(X)}\bigoplus} W^\tm(C_n) \xrightarrow{\tau'_X} \pi^{\ab}_1(X)
  \xrightarrow{\tau_X} \pi^{\ab, \tm}_1(X) \to 0,
\end{equation}
in which $\tau'_X$ and $\tau_X$ are continuous and there is a surjection
${\underset{C \in \sC(X)}\bigoplus}  G^{(1)}_{\Delta(C_n)} \surj 
{\underset{C \in \sC(X)}\bigoplus} W^\tm(C_n)$.
Hence, it suffices to show that ~\eqref{eqn:TFG-3-2} is topologically exact.

We let $H$ be the closure of the  image of $\tau'_X$.
We let $A$ be the image of $\Phi^\tm_X$ (cf. Definition~\ref{defn:Tame-fil})
with the quotient topology (which is discrete) so that
\cite[Lem.~7.11]{Gupta-Krishna-REC} yields a short exact sequence of continuous
  homomorphisms
  \begin{equation}\label{eqn:TFG-3-01}
0 \to A^\vee \to \pi^{\ab}_1(X) \xrightarrow{\tau_X} \pi^{\ab, \tm}_1(X) \to 0.
\end{equation}
It remains to show that the inclusion $H\inj A^\vee$ is actually an isomorphism. 

We endow $H$ with the
subspace topology induced from the profinite topology of $A^{\vee}$ so that
$H^\vee \cong \Hom_\Tab(H, {\Q}/{\Z})$.
By \cite[Lem.~7.12]{Gupta-Krishna-REC}, it suffices to show that the dual
map $A \to H^\vee$ is injective. 
Since the maps ${\underset{C \in \sC(X)}\bigoplus} W^\tm(C_n) \to H \to A^\vee$
are continuous, we have the dual homomorphisms (not necessarily continuous)
$A \to  H^\vee \to
{\underset{C \in \sC(X)}\prod} W^\tm(C_n)^\vee$ by \lemref{lem:direct-sum}.
It suffices therefore to show that the composite map is injective.
Equivalently, we need to show that the right vertical arrow in the diagram
\begin{equation}\label{eqn:TFG-3-02}
  \xymatrix@C.8pc{
    0 \ar[r] & \Fil^\tm H^1(K) \ar[r] & H^1_\et(X, {\Q}/{\Z}) \ar[r]
    \ar[d]^-{\Phi^\tm_X} &
    A \ar[d]^-{(\tau'_X)^\vee} \ar[r] & 0 \\
    & & {\underset{C \in \sC(X)}\prod}
    \frac{H^1_\et(C_n, {\Q}/{\Z})}{\Fil_{\Delta(C_n)} H^1(k(C_n))} \ar[r]^-{\cong} &
    {\underset{C \in \sC(X)}\prod} W^\tm(C_n)^\vee}
\end{equation}
is injective.
But this follows from Definition~\ref{defn:Tame-fil} because one easily
checks (by projecting to each factor of the products on the bottom) that this diagram  
is commutative.
\end{proof}

 We now let $\ov{X}$ be a regular compactification of $X$ and let
  $j \colon X \inj \ov{X}$ be the inclusion. 
  Let $J^\tm(X)$ denote the kernel of the canonical surjection
  $\tau_{\ov{X}} \colon \pi^{\ab, \tm}_1(X) \surj \pi^{\ab}_1(\ov{X})$.
  Then $J^\tm(X)$ is a closed subgroup of $\pi^{\ab, \tm}_1(X)$ and hence is a profinite
  group. The structure of $J^\tm(X)$ will play a crucial role in our
  exposition. The following result provides a partial understanding of
  this structure. We shall obtain a refined statement in a later section.
  We let $\P$ the set of prime numbers different from $p$.

\begin{prop}\label{prop:TFG-5}
There is a topologically exact sequence of topological
  abelian groups
    \begin{equation}\label{eqn:TFG-5-00}
    {\underset{C \in \sC(X)}\bigoplus} \frac{G^{(0)}_{\Delta(C_n)}}{G^{(1)}_{\Delta(C_n)}} \to
    \pi^{\ab, \tm}_1(X) \xrightarrow{\tau_{\ov{X}}}
    \pi^{\ab}_1(\ov{X}) \to 0.
  \end{equation}
\end{prop}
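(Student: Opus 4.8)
The plan is to follow the template of the proof of \propref{prop:TFG-3}: first settle the case $\dim(X)=1$, then reduce the general case to it by pulling back to normalizations of curves, and finally deduce topological exactness by Pontryagin duality. Suppose $\dim(X)=1$. Then $\sC(X)=\{X\}$, so $\Fil^\tm H^1(K)=\Fil_{\Delta(X)}H^1(K)$ and $\Fil^{\nr}H^1(K)=\Fil_0 H^1(K)$. By \lemref{lem:Tame-fil-curve} and \propref{prop:TFG-2} (applicable since $\ov{X}$ is regular), the surjection $\tau_{\ov{X}}$ is the Pontryagin dual of the inclusion $\Fil_0 H^1(K)\inj\Fil_{\Delta(X)}H^1(K)$, so exactness of the dual on $\pfd$ (\cite[Lem.~7.11]{Gupta-Krishna-REC}) gives $J^\tm(X)\cong(\Fil_{\Delta(X)}H^1(K)/\Fil_0 H^1(K))^\vee$. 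Restricting characters to the completions $K_x$, $x\in\Delta(X)$, yields an injection of discrete torsion groups $\Fil_{\Delta(X)}H^1(K)/\Fil_0 H^1(K)\inj\bigoplus_{x\in\Delta(X)}\Fil_1 H^1(K_x)/\Fil_0 H^1(K_x)$: the sum is finite, and injectivity holds because a character of $K$ unramified at every $x\in\Delta(X)$ lies in $\Fil_0 H^1(K)$. Dualizing and using \eqref{eqn:fil_n-dual} to identify $(\Fil_1 H^1(K_x)/\Fil_0 H^1(K_x))^\vee\cong G^{(0)}_x/G^{(1)}_x$ produces a surjection $G^{(0)}_{\Delta(X)}/G^{(1)}_{\Delta(X)}\surj J^\tm(X)$; since $\pi^{\ab}_1(\ov{X})$ is profinite, this is \eqref{eqn:TFG-5-00} in dimension one.

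For general $X$, fix $C\in\sC(X)$ with $\nu\colon C_n\to X$ and its extension $\ov{\nu}\colon\ov{C}_n\to\ov{X}$, where $\ov{C}_n$ (the normalization of the closure of $C$ in $\ov{X}$) is regular and proper over $k$. By \corref{cor:Tame-cov} there is $\nu_*\colon\pi^{\ab,\tm}_1(C_n)\to\pi^{\ab,\tm}_1(X)$, and naturality of the surjections onto the $\pi^{\ab}_1$ of compactifications (all induced from ordinary $\pi^{\ab}_1$) gives $\tau_{\ov{X}}\circ\nu_*=\ov{\nu}_*\circ\tau_{\ov{C}_n}$, whence $\nu_*(J^\tm(C_n))\subseteq J^\tm(X)$. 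Composing the dimension-one surjection $G^{(0)}_{\Delta(C_n)}/G^{(1)}_{\Delta(C_n)}\surj J^\tm(C_n)$ with $\nu_*$ for each $C$ and invoking \lemref{lem:direct-sum} for continuity on the coproduct, I obtain a continuous homomorphism $\lambda\colon\bigoplus_{C\in\sC(X)}G^{(0)}_{\Delta(C_n)}/G^{(1)}_{\Delta(C_n)}\to\pi^{\ab,\tm}_1(X)$ with image in $J^\tm(X)=\Ker(\tau_{\ov{X}})$. As $\tau_{\ov{X}}$ is surjective, it remains to prove that the closure $H$ of the image of $\lambda$ equals $J^\tm(X)$.

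By \cite[Lem.~7.12]{Gupta-Krishna-REC} it suffices to show that the restriction $J^\tm(X)^\vee\to H^\vee$ is injective. By \corref{cor:direct-sum-0}, dualizing $\lambda$ gives a homomorphism $J^\tm(X)^\vee\to\prod_{C}\prod_{x\in\Delta(C_n)}\Fil_1 H^1(K_x)/\Fil_0 H^1(K_x)$ that factors through $H^\vee$. Under the identification $J^\tm(X)^\vee\cong\Fil^\tm H^1(K)/\Fil^{\nr}H^1(K)$ of \propref{prop:TFG-2} and the curve-wise identities $J^\tm(C_n)^\vee\cong\Fil_{\Delta(C_n)}H^1(k(C_n))/\Fil_0 H^1(k(C_n))$, this homomorphism sends $\chi$ to the tuple of local ramification classes obtained by restricting $\nu^*\chi=\chi|_{C_n}$ to each $K_x$, exactly as the diagram \eqref{eqn:TFG-3-02} does in the tame case. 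If these classes all vanish, then $\chi|_{C_n}\in\Fil_0 H^1(k(C_n))$ for every $C$, i.e.\ $\Phi^{\nr}_X(\chi)=0$, so $\chi\in\Fil^{\nr}H^1(K)$; hence the composite — and therefore $J^\tm(X)^\vee\to H^\vee$ — is injective, giving $H=J^\tm(X)$.

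I expect the main obstacle to be precisely the last identification: verifying that the dual of $\lambda$ really is the ``tuple of local ramification classes'' map, equivalently that the square pairing $\Phi^{\nr}_X$ with the products of the completion maps $H^1(k(C_n))\to H^1(K_x)$ commutes — the exact analogue of \eqref{eqn:TFG-3-02}. Once that compatibility is recorded, the only further input is the elementary ramification fact that a character of a curve unramified at every boundary point lies in $\Fil_0$.
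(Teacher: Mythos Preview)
Your proof is correct and follows essentially the same strategy as the paper: both arguments rest on \propref{prop:TFG-2}, the curve case via local Pontryagin duality \eqref{eqn:fil_n-dual}, and the reduction to curves exactly as in the proof of \propref{prop:TFG-3} (the commutative square you flag as ``the main obstacle'' is precisely the analogue of \eqref{eqn:TFG-3-02}, and it commutes for the same reason). The only organizational difference is that the paper first proves the parallel topologically exact sequence $\bigoplus_{C}G^{(0)}_{\Delta(C_n)}\to\pi^{\ab}_1(X)\to\pi^{\ab}_1(\ov{X})\to 0$ by literally repeating the proof of \propref{prop:TFG-3} with $\Fil^{\nr}$ in place of $\Fil^{\tm}$, and then combines it with \eqref{eqn:TFG-3-00} to descend to the quotient $G^{(0)}/G^{(1)}$ and to $\pi^{\ab,\tm}_1(X)$; you instead run the whole argument directly at the tame level. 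The paper's packaging is a bit more economical since the duality verification is done once in \propref{prop:TFG-3}, but your version has the mild advantage of making the identification $J^\tm(X)^\vee\cong\Fil^\tm H^1(K)/\Fil^{\nr}H^1(K)$ explicit.
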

\begin{proof}
Using Definition~\ref{defn:Tame-fil} and repeating the proof of
  \propref{prop:TFG-3}, we get a topologically  exact sequence
  \begin{equation}\label{eqn:TFG-5-0}
    {\underset{C \in \sC(X)}\bigoplus} G^{(0)}_{\Delta(C_n)} \to \pi^{\ab}_1(X) \to
    \pi^{\ab}_1(\ov{X}) \to 0.
  \end{equation}
  We combine this with ~\eqref{eqn:TFG-3-00} to
  conclude the proof.
\end{proof}

\begin{remk}\label{remk:Top-ex}
  The proofs of Propositions~\ref{prop:TFG-3} and~\ref{prop:TFG-5} show that
  ~\eqref{eqn:TFG-3-00} and ~\eqref{eqn:TFG-5-00} are algebraically
  exact if $\dim(X) =1$. We do not expect this to be true in general although we do
  not know of a counterexample. We shall in fact show in
  \thmref{thm:Tame-nr-main} that ~\eqref{eqn:TFG-5-00} is algebraically exact
  in all dimensions if $k$ is a local field.
\end{remk}

For an abelian group $G$ and a prime $\ell$, we let ${G}_\ell =
{\varprojlim}_n \ {G}/{\ell^n}$.
Suppose now that $G$ is a profinite abelian group. Then ${G}/{m}$ is a profinite
abelian group for every integer $m$, and hence so is ${G}_\ell$ for every prime
$\ell$ under the inverse limit topology. Moreover, the completion map
$G \to {G}_\ell$ is continuous. The following result is an elementary exercise
(cf. \cite[Prop.~2.3.8]{Pro-fin}).

\begin{lem}\label{lem:Prod-dec}
  Let $G$ be a profinite abelian group. Then the canonical map
  $G \to {\underset{\ell}\prod} {G}_\ell$ is a topological isomorphism,
  where the product runs through all primes. The canonical
  map ${G}/{\ell^n} \to {G_{\ell}}/{\ell^n}$ is a topological isomorphism for every
  prime $\ell$ and integer $n \ge 1$.
\end{lem}

\begin{lem}\label{lem:Fil-p}
    For $p \ge 2$,
    the inclusion $H^1_\et(\ov{X}, {\Q}/{\Z}) \inj H^1_\et(X, {\Q}/{\Z})$ induces
    a bijection
    \[
      H^1_\et(\ov{X}, {\Q_p}/{\Z_p}) \xrightarrow{\cong} \Fil^\tm H^1(K)\{p\}.
      \]
\end{lem}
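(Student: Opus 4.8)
The plan is to compare the two filtrations $\Fil^\tm H^1(K)\{p\}$ and $H^1_\et(\ov{X},\Q_p/\Z_p)$ by working one prime-power level at a time, exploiting the fact that a $p$-torsion character of $G_K$ lies in $H^1_\et(X,\Q_p/\Z_p)$ precisely when it is unramified along every codimension-one point of $\ov X$ lying in $X$, and it lies in $\Fil^\tm H^1(K)\{p\}$ precisely when it is moreover ``tame away from the curve boundary''. So the content is: for a character unramified on $X$, being wildly unramified is equivalent to extending to all of $\ov X$. First I would reduce to showing both groups agree as subgroups of $H^1_\et(X,\Q_p/\Z_p)$. The inclusion $H^1_\et(\ov X,\Q_p/\Z_p)\subseteq \Fil^\tm H^1(K)\{p\}$ is the easy direction: a class on $\ov X$ is unramified along every $D_i$, hence a fortiori its pullback to each $\ov C_n$ is unramified (here one uses that $\ov C_n\to\ov X$ maps $\Delta(C_n)$ into $D_0$ since $C\subset X$), so it maps to zero in each quotient $H^1_\et(C_n,\Q_p/\Z_p)/(H^1_\et(C_n,\Q_p/\Z_p)\cap\Fil_{\Delta(C_n)}H^1(k(C_n)))$ by ~\eqref{eqn:Tame-fil-1}; this was in fact already recorded in the discussion preceding \lemref{lem:T-Fil-0} with $L\in I^{\ab,\nr}_X$.

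For the reverse inclusion, the key mechanism is that in characteristic exponent $p$, at a discrete valuation field the wild inertia group $G^{(1)}_{K_i}$ is pro-$p$, while the tame quotient $G^{(0)}_{K_i}/G^{(1)}_{K_i}$ is prime-to-$p$. Hence for a $p$-primary character $\chi$, the condition ``$\chi\in\Fil_{D_i}H^1(K)$ with multiplicity $0$'' (i.e. $\chi|_{K_i}\in\Fil_0 H^1(K_i)$, unramified) is equivalent to ``$\chi|_{K_i}\in\Fil_1 H^1(K_i)$'' — being tame forces being unramified for a $p$-power-order character. Concretely I would argue: given $\chi\in\Fil^\tm H^1(K)\{p\}$, it already lies in $H^1_\et(X,\Q_p/\Z_p)$, so $\chi$ is unramified along every codimension-one point of $\ov X$ that lies in $X$. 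It remains to show $\chi$ is unramified along each $D_i$. Pick $C\in\sC(X)$ whose closure $\ov C$ meets $D_i$ properly in a point $x$ lying over the generic point of $D_i$ (such $C$ exists by choosing a general curve through a suitable point, e.g. via a Bertini-type or general linear section argument, or more elementarily by intersecting with general hyperplanes in a projective embedding of $\ov X$ so that $\ov C_n\to\ov X$ is unramified at $x$ with $k(C_n)_x$ the completion having residue field separable over that of $D_i$). Then the tameness of $\chi|_{C_n}$ at $x$, combined with the pro-$p$ nature of wild inertia, forces $\chi|_{k(C_n)_x}$ to be unramified; by compatibility of the ramification filtration under the map $K_i\hookrightarrow k(C_n)_x$ of Henselian DVFs (the ramification index at such a general point being coprime to $p$, indeed $1$), this pulls back to $\chi|_{K_i}$ being unramified. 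Since $i$ was arbitrary, $\chi\in H^1_\et(\ov X,\Q_p/\Z_p)$.

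The main obstacle I anticipate is the curve-selection step: producing, for each boundary component $D_i$, enough curves $C\in\sC(X)$ whose normalized closures detect ramification along $D_i$ with controlled (prime-to-$p$, or trivial) ramification index, so that tameness along the curve boundary genuinely transfers to tameness — hence, for $p$-characters, to being unramified — along $D_i$ itself. This is exactly the kind of input packaged in \thmref{thm:KS-MA} (Kerz--Schmidt): tame ramification along $\ov X\setminus X$ for a regular $\ov X$ with $D_0$ a simple normal crossing divisor is equivalent to curve-tameness. So the cleanest route is probably to avoid hand-picking curves and instead invoke \thmref{thm:KS-MA} directly: $\Fil^\tm H^1(K)$ is by \remref{remk:GS-tame} the first tame cohomology $H^1_t(X,\Q/\Z)$, which by \lemref{lem:Adiv-tame} and \thmref{thm:KS-MA} is the Pontryagin dual of $\pi^{\adiv}_1(\ov X,D_0)\cong\pi^{\ab,\tm}_1(X)$ when $\ov X$ is regular with $D_0$ a simple normal crossing divisor. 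After a blow-up reduction making $D_0$ a simple normal crossing divisor (which does not change $X$, hence not $\Fil^\tm H^1(K)$, nor $H^1_\et(\ov X,\Q_p/\Z_p)$ if one is careful — or one just uses the generic point description of $H^1_\et(\ov X,\Q_p/\Z_p)$ as characters on $X$ unramified along all codimension-one points of $\ov X$, which is a birational-type invariant of the boundary divisor up to this), the $p$-part of $\pi^{\adiv}_1(\ov X,D_0)$ is by the non-log Abbes--Saito conductor formalism (cf. ~\eqref{eqn:MS-AS} and \cite[Thm.~6.1]{Gupta-Krishna-REC}) the $p$-part of $\pi^{\ab}_1(\ov X)$, since a $p$-power character bounded by $D_0$ in the non-log conductor is unramified. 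Dualizing gives the claimed bijection. I would present the argument in this second form, as it is the shortest and uses only results already available in the excerpt.
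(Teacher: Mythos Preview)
Your underlying mechanism --- that for a Henselian discrete valuation field the tame quotient $G^{(0)}/G^{(1)}$ has no $p$-part, so a $p$-power tame character is automatically unramified --- is correct and is the same local fact the paper uses. But your preferred (second) route passes through the identification $\Fil^\tm H^1(K)=\Fil_{D_0} H^1(K)$ via $\pi^{\adiv}_1(\ov X,D_0)\cong\pi^{\ab,\tm}_1(X)$, which needs $D_0$ to be a simple normal crossing divisor; you propose achieving this by blow-up, yet the lemma is stated for an arbitrary regular compactification $\ov X$ over a field of characteristic exponent $p\ge 2$, and embedded resolution of the boundary is not available in positive characteristic. This is a genuine gap as written. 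It is easily repaired: you only need the \emph{inclusion} $\Fil^\tm H^1(K)\subseteq\Fil_{D_0} H^1(K)$, which holds for any regular $\ov X$ simply because a cover tame along every normal compactification is in particular tame along the chosen $\ov X$; together with your local computation $\Fil_{D_0} H^1(K)\{p\}=H^1_\et(\ov X,\Q_p/\Z_p)$ at each $K_i$ and the easy reverse inclusion, this finishes the proof without any SNC hypothesis. Your first route (selecting a curve through each $D_i$) would also work but, as you anticipate, essentially re-proves part of \thmref{thm:KS-MA}.

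The paper organizes things differently and avoids the codimension-one points of $\ov X$ altogether. Rather than comparing $\Fil^\tm H^1(K)$ to $\Fil_{D_0}H^1(K)$, it invokes the \emph{second} isomorphism of \propref{prop:TFG-2}, identifying $H^1_\et(\ov X,\Q/\Z)$ directly with $\Fil^{\nr} H^1(K)$ --- a subgroup of $H^1_\et(X,\Q/\Z)$ defined, like $\Fil^\tm H^1(K)$, as the kernel of a map into a product indexed by the \emph{same} set $\sC(X)$ of curves (see \eqref{eqn:Tame-fil-0} and \eqref{eqn:Tame-fil-1}). Comparing the two kernels curve by curve reduces the statement at once to the case $\dim X=1$, i.e., to $H^1_\et(\ov C_n,\Q_p/\Z_p)=H^1_\et(C_n,\Q_p/\Z_p)\cap\Fil_{\Delta(C_n)}H^1(k(C_n))$ for each curve. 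There the paper dualizes to the surjection ${\pi^{\ab,\tm}_1(C_n)}/{p^n}\to{\pi^{\ab}_1(\ov C_n)}/{p^n}$, uses the exact sequence relating $\pi^{\ab,\tm}_1$ and $\pi^{\ab}_1(\ov C_n)$ (algebraically exact for curves by \remref{remk:Top-ex}), and concludes from the unique $p$-divisibility of $G^{(0)}/G^{(1)}$ cited from \cite[\S 3, p.~650]{Kerz-Schmidt-MA}. This route needs no geometry of the boundary $D_0$ whatsoever.
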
 
\begin{proof}
  By ~\eqref{eqn:Tame-fil-0}, ~\eqref{eqn:Tame-fil-1} and
  Definition~\ref{defn:Tame-fil}, it
  suffices to show that, if $\dim(X) = 1$, then
  $H^1_{\et}(X, {\Q_p}/{\Z_p}) \cap \Fil_{\Delta(X)} H^1(K) =
  H^1_{\et}(\ov{X}, {\Q_p}/{\Z_p})$. 
  Equivalently,
  \begin{equation}\label{eqn:Fil-p-0}
    \Ker\left({\pi^{\ab}_1(X)}_p \surj {\pi^{\ab, \tm}_1(X)}_p\right) =
    \Ker\left({\pi^{\ab}_1(X)}_p \surj {\pi^{\ab}_1(\ov{X})}_p\right).
    \end{equation}
    This in turn is equivalent to showing that 
    ${\pi^{\ab, \tm}_1(X)}_p \to {\pi^{\ab}_1(\ov{X})}_p$ is bijective.
    To show this, it suffices to show that
    the map ${\pi^{\ab, \tm}_1(X)}/{p^n} \surj {\pi^{\ab}_1(\ov{X})}/{p^n}$ is
  bijective for all integers $n \ge 1$.
  Using \propref{prop:TFG-3} and Remark~\ref{remk:Top-ex},
  it suffices to show that $\frac{G^{(0)}_{\Delta(X)}}{G^{(1)}_{\Delta(X)}}$ is
  $p$-divisible. But the latter group is in fact uniquely $p$-divisible
  by \cite[\S~3, p.~650]{Kerz-Schmidt-MA}. 
\end{proof}

\begin{prop}\label{prop:Pi-decom}
  The canonical maps
  ${\pi^{\ab}_1(X)}_{\P} \to  {\pi^{\ab, \tm}_1(X)}_{\P}$ and
  ${\pi^{\ab, \tm}_1(X)}_{p} \to  {\pi^{\ab}_1(\ov{X})}_p$
  are topological isomorphisms.
  In particular, there is a canonical topological isomorphism between profinite groups
  \[
    \pi^{\ab, \tm}_1(X) \xrightarrow{\cong}   {\pi^{\ab}_1(X)}_{\P} \bigoplus
  {\pi^{\ab}_1(\ov{X})}_p.
\]
\end{prop}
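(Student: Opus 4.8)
The plan is to deduce the three claims from the material developed in \S\ref{sec:Dual-Tame} and \S\ref{sec:open-tame}, working one prime at a time and then reassembling. First I would treat the prime-to-$p$ statement. For a prime $\ell \neq p$, the wild inertia group $G^{(1)}_{K_x}$ is pro-$p$, so $G^{(1)}_{\Delta(C_n)} = \bigoplus_{x} G^{(1)}_x$ has trivial $\ell$-completion. Applying $(-)_\ell$ to the topologically exact sequence \eqref{eqn:TFG-3-00} of \propref{prop:TFG-3} and using that $(-)_\ell$ is exact on profinite abelian groups (or, more safely, applying $(-)/{\ell^n}$ and invoking \corref{cor:Prod-dec-0}), the map ${\pi^{\ab}_1(X)}/{\ell^n} \to {\pi^{\ab,\tm}_1(X)}/{\ell^n}$ becomes an isomorphism since the contribution of the left term dies. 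Passing to the inverse limit over $n$ and then taking the product over all $\ell \neq p$ gives that ${\pi^{\ab}_1(X)}_\P \to {\pi^{\ab,\tm}_1(X)}_\P$ is a topological isomorphism; one has to check the inverse limit of topological isomorphisms of profinite groups is a topological isomorphism, which is routine since everything is compact Hausdorff.

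Next I would handle the $p$-part. This is where \lemref{lem:Fil-p} does the real work: it identifies $\Fil^\tm H^1(K)\{p\}$ with $H^1_\et(\ov{X},{\Q_p}/{\Z_p})$. Dualizing via \propref{prop:TFG-2} (for the left side) and \propref{prop:TFG-2}'s second isomorphism $\pi^{\ab}_1(\ov{X}) \cong (\Fil^{\nr} H^1(K))^\vee$ — or, more directly, by tracking how $\tau_{\ov{X}}$ interacts with the $p$-primary decompositions — one concludes that the surjection $\tau_{\ov{X}}\colon \pi^{\ab,\tm}_1(X) \to \pi^{\ab}_1(\ov{X})$ induces an isomorphism on $p$-adic completions. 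Concretely: the kernel $J^\tm(X)$ of $\tau_{\ov{X}}$ is, by \propref{prop:TFG-5}, topologically generated by the images of $G^{(0)}_{\Delta(C_n)}/G^{(1)}_{\Delta(C_n)}$, and this group is uniquely $p$-divisible (tame inertia quotient, cf.\ \cite[\S~3, p.~650]{Kerz-Schmidt-MA}), so it contributes nothing to $p$-adic completions. Hence ${\pi^{\ab,\tm}_1(X)}_p \to {\pi^{\ab}_1(\ov{X})}_p$ is a topological isomorphism.

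For the final ``in particular'' statement I would invoke \lemref{lem:Prod-dec}: since $\pi^{\ab,\tm}_1(X)$ is profinite, it is canonically the product of its $\ell$-completions over all primes $\ell$. Splitting off the $p$-factor from the rest, $\pi^{\ab,\tm}_1(X) \cong {\pi^{\ab,\tm}_1(X)}_\P \bigoplus {\pi^{\ab,\tm}_1(X)}_p$, where ${\pi^{\ab,\tm}_1(X)}_\P$ denotes $\prod_{\ell \neq p}{\pi^{\ab,\tm}_1(X)}_\ell$; then substitute the two isomorphisms just proved. The only point needing a line of care is that the direct sum here is really a product of the $p$- and prime-to-$p$ parts and that ``$\bigoplus$'' in the statement is being used for this finite-type decomposition, so \lemref{lem:Prod-dec} applies verbatim.

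The main obstacle I expect is not conceptual but bookkeeping: ensuring that the topological (as opposed to merely algebraic) exactness in Propositions~\ref{prop:TFG-3} and~\ref{prop:TFG-5} is preserved under the completion functors $(-)/\ell^n$ and under the inverse limit, and that the isomorphisms produced are genuinely topological. The cleanest route is to avoid applying $(-)_\ell$ directly to the (only topologically) exact sequences and instead argue at the level of the finite quotients ${\pi^{\ab}_1(X)}/{\ell^n}$, where exactness is automatic because the relevant groups are finite; \corref{cor:Prod-dec-0} then bridges back to the completions, and compactness upgrades the resulting continuous bijections to homeomorphisms.
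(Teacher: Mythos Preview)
Your proposal is correct and matches the paper's approach: the paper's (very terse) proof cites exactly \eqref{eqn:Tame-fil-1}, \propref{prop:TFG-2}, \lemref{lem:Prod-dec}, and \lemref{lem:Fil-p}, which amounts to establishing both isomorphisms on the Pontryagin-dual side simultaneously and then dualizing. Your prime-to-$p$ argument via \propref{prop:TFG-3} and the pro-$p$ nature of wild inertia is simply the group-side mirror of \eqref{eqn:Tame-fil-1} (and the topological-exactness worry you flag dissolves because $\ell^n G$ is closed in a profinite $G$, so a dense $\ell$-divisible subgroup forces $G/\ell^n=0$); the difference is organizational rather than substantive.
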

\begin{proof}
  The $p = 1$ case of the proposition is ~\eqref{eqn:Tame-fil-1}.
  The $p \ge 2$ case follows by 
  combining ~\eqref{eqn:Tame-fil-1}, Definition~\ref{defn:Tame-fil},
  \lemref{lem:Prod-dec} and
  \lemref{lem:Fil-p}.
\end{proof}

\begin{cor}\label{cor:Pi-decom-0}
  Let $m, n \ge 1$ be integers such that $(p,m) =1$.
  Then the canonical maps
  \[
    {\pi^{\ab}_1(X)}/m \to  {\pi^{\ab, \tm}_1(X)}/m; \ \ \
{\pi^{\ab, \tm}_1(X)}/{p^n} \to  {\pi^{\ab}_1(\ov{X})}/{p^n}
  \]
  are isomorphisms of topological abelian groups.
\end{cor}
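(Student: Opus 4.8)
Corollary \ref{cor:Pi-decom-0} asserts that for $m$ prime to $p$ and $n\ge 1$, the canonical maps ${\pi^{\ab}_1(X)}/m \to {\pi^{\ab, \tm}_1(X)}/m$ and ${\pi^{\ab, \tm}_1(X)}/{p^n} \to {\pi^{\ab}_1(\ov{X})}/{p^n}$ are isomorphisms of topological abelian groups.

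The plan is to deduce the corollary directly from \propref{prop:Pi-decom}, which already exhibits the $\P$-completion of the surjection $\tau_X \colon \pi^{\ab}_1(X) \to \pi^{\ab, \tm}_1(X)$ and the $p$-completion of $\tau_{\ov{X}} \colon \pi^{\ab, \tm}_1(X) \to \pi^{\ab}_1(\ov{X})$ as topological isomorphisms. The only extra ingredient needed is the elementary fact that for a profinite abelian group $G$, reduction modulo an integer $m$ prime to $p$ is computed entirely by the $\P$-completion $G_\P$, while reduction modulo $p^n$ is computed by the $p$-completion $G_p$; the latter is precisely \corref{cor:Prod-dec-0}.

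First I would record the following: if $G$ is a profinite abelian group and $m$ is an integer all of whose prime divisors lie in $\P$, then the canonical map $G/m \to G_\P/m$ is a topological isomorphism. To prove it, write $m = \prod_i \ell_i^{n_i}$ with the $\ell_i \in \P$ distinct; since $\Z/m \cong \prod_i \Z/{\ell_i^{n_i}}$ one gets $G/m \cong \prod_i G/{\ell_i^{n_i}}$, and by \corref{cor:Prod-dec-0} each factor equals $G_{\ell_i}/{\ell_i^{n_i}}$. By \lemref{lem:Prod-dec} (applied after noting that $G_\P$ is profinite with trivial pro-$p$ part, so $G_\P \cong \prod_{\ell \in \P} G_\ell$), and using that $m$ acts invertibly on the pro-$\ell$ group $G_\ell$ whenever $\ell \nmid m$, the reduction $G_\P/m$ retains only the factors $G_{\ell_i}/{\ell_i^{n_i}}$; hence $G_\P/m \cong \prod_i G_{\ell_i}/{\ell_i^{n_i}} \cong G/m$. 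All maps in sight are continuous bijections between profinite groups, hence topological isomorphisms.

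With this in hand the two assertions are immediate and natural. For the first, apply the observation to the profinite groups $G = \pi^{\ab}_1(X)$ and $G = \pi^{\ab, \tm}_1(X)$ and to $m$ prime to $p$: the map ${\pi^{\ab}_1(X)}/m \to {\pi^{\ab, \tm}_1(X)}/m$ is identified, via a commutative square, with the reduction modulo $m$ of the topological isomorphism ${\pi^{\ab}_1(X)}_\P \to {\pi^{\ab, \tm}_1(X)}_\P$ from \propref{prop:Pi-decom}, so it is a topological isomorphism. For the second, apply \corref{cor:Prod-dec-0} to $G = \pi^{\ab, \tm}_1(X)$ and $G = \pi^{\ab}_1(\ov{X})$: the map ${\pi^{\ab, \tm}_1(X)}/{p^n} \to {\pi^{\ab}_1(\ov{X})}/{p^n}$ is the reduction modulo $p^n$ of the topological isomorphism ${\pi^{\ab, \tm}_1(X)}_p \to {\pi^{\ab}_1(\ov{X})}_p$, hence is itself one. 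There is essentially no obstacle here; the one point to keep in view is purely topological — every group involved is profinite, so a continuous bijection is automatically a homeomorphism, and quotients by $m$ (a closed subgroup, by compactness) stay inside the category of profinite groups — both of which are facts already used in the proofs of \lemref{lem:Prod-dec} and \corref{cor:Prod-dec-0}.
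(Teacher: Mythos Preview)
Your proof is correct and follows essentially the same approach as the paper, whose proof is the single line ``Combine \corref{cor:Prod-dec-0} and \propref{prop:Pi-decom}.'' You have simply unpacked what this combination means, including the routine passage from prime powers (to which \corref{cor:Prod-dec-0} is literally stated) to a general integer $m$ prime to $p$ via the Chinese remainder decomposition --- a detail the paper leaves implicit.
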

\begin{proof}
  Combine \lemref{lem:Prod-dec} and \propref{prop:Pi-decom}.
\end{proof}

\subsection{Structure of $J^\tm(X)$ when $\dim(X) = 1$}\label{sec:J-1}
In order to understand $J^\tm(X)$ via \propref{prop:TFG-5}, we need to
have some information about ${G^{(0)}_F}/{G^{(1)}_F}$ for a two-dimensional
local field $F$. \lemref{lem:Local-case} below serves this purpose.

For any commutative ring $A$, we let $K^M_*(A)$ denote the direct sum
${\underset{n \ge 0} \oplus} K^M_n(A)$ of Milnor $K$-groups of $A$
(cf. \cite[Defn.~1.3]{Kato-Saito-2}). For an ideal $I \subset A$, we let
$K^M_n(A,I) = \Ker(K^M_n(A) \to K^M_n(A/I))$. Recall that if
$F$ is a discrete valuation field, then $F^\times$ is a topological abelian group with
its `canonical topology' for which $\{1 + \fm^m_F\}_{m \ge 0}$ form a base of
open neighborhoods of 0 (where $1 + \fm^0_F := \sO^\times_F$).
In this paper, $F^\times$ will always be considered a topological group with
its canonical topology.
For any integers $m \ge 0$ and $n \ge 1$, let
$U'_mK^M_n(F)$ be the subgroup $\{1 + \fm^m_F, \sO^\times_F, \cdots , \sO^\times_F\}
\subset K^M_n(F)$. Note that $U'_0 K^M_n(F) = K^M_n(\sO_F)$. We let
$U'_{-1} K^M_n(F) = K^M_n(F)$.

\begin{lem}\label{lem:Local-case}
  Let $F$ be a complete discrete valuation field whose residue field is a local
  field $\ff$. Then the following hold.
  \begin{enumerate}
  \item
    If ${\Char}(\ff) > 0$, then ${G^{(0)}_F}/{G^{(1)}_F}$ is a finite group of order
    invertible in $\ff$.
  \item
    If ${\Char}(\ff) = 0$, then $G^{(1)}_F = 0$ and $G^{(0)}_F$ is a finite group.
    \end{enumerate}
\end{lem}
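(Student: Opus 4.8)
\textbf{Proof strategy for Lemma~\ref{lem:Local-case}.}
The plan is to analyze the structure of the ramification quotient ${G^{(0)}_F}/{G^{(1)}_F}$ by passing through its Pontryagin dual and relating it to subquotients of the Milnor $K$-theory of $F$ via Kato's higher local class field theory. By \eqref{eqn:fil_n-dual}, we have $({G^{(0)}_F}/{G^{(1)}_F})^\vee \cong \Fil_1 H^1(F)/\Fil_0 H^1(F)$, so it suffices to understand this quotient of character groups. The key input is that, by Kato's reciprocity theorem for two-dimensional local fields (see \cite{Kato-cft-1}, \cite{Kato80}), there is a reciprocity isomorphism relating $G^{\ab}_F$ (suitably completed) to $K^M_2(F)$, under which the ramification filtration on the Galois side corresponds, at least on prime-to-$p$ parts, to the filtration $U'_\bullet K^M_2(F)$ on the $K$-theory side. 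Thus ${G^{(0)}_F}/{G^{(1)}_F}$ is controlled by $U'_0 K^M_2(F)/U'_1 K^M_2(F) = K^M_2(\sO_F)/U'_1K^M_2(F)$, and a standard symbol computation identifies the associated graded piece $U'_0/U'_1$ with (a quotient of) $K^M_1$ of the residue field $\ff$, i.e.\ essentially $\ff^\times$ up to the prime-to-$p$ versus $p$-part subtleties.

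First I would treat case (1), where $\Char(\ff) = p > 0$ and $\ff$ is itself a (one-dimensional) local field. Here the tame quotient $G^{(0)}_F/G^{(1)}_F$ classifies tamely ramified abelian extensions, and by the structure of tame ramification its prime-to-$p$ part is a quotient of $\varprojlim_n \mu_n(\ov{\ff})$ type data; more precisely, the tame inertia quotient is $\prod_{\ell \ne p} \Z_\ell(1)$ as a module, but what we need is that the \emph{image} in the abelianized tame fundamental group of $F$ is finite. The finiteness comes from the fact that the residue field $\ff$, being a local field, has finite $\ell$-cohomological data and in particular $\ff^\times/(\ff^\times)^n$ is finite for each $n$; combined with the fact that the wild part $G^{(1)}_F$ absorbs all the $p$-ramification, one concludes ${G^{(0)}_F}/{G^{(1)}_F}$ is finite of order prime to $p = \Char(\ff)$. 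The cleanest route is to invoke the Pontryagin dual identification above and the known structure of $\Fil_1 H^1(F)\{p'\}/\Fil_0 H^1(F)\{p'\}$, which by \eqref{eqn:MS-AS} is all of $H^1_\et(\ff)\{p'\}$-type and is finite because $\ff$ is a local field with finite $\mu_n$.

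For case (2), where $\Char(\ff) = 0$, the field $\ff$ is a $p$-adic field (finite extension of some $\Q_\ell$), so $\Char(F) = 0$ as well and $F$ contains a field of characteristic zero; here there is no wild ramification in the relevant sense, so $G^{(1)}_F = 0$ follows from the fact that over a field of residue characteristic zero every abelian extension is tamely ramified (the wild inertia is a pro-$p$ group which must be trivial when no such $p$ is the residue characteristic — this is the standard fact that $G^{(1)}_F$ is the pro-(residue characteristic) part of inertia). Then $G^{(0)}_F = G^{(0)}_F/G^{(1)}_F$ is the full inertia quotient, and its finiteness again reduces, via the dual picture and Kato's theory, to the finiteness of $K^M_1(\ff)/n = \ff^\times/n$ for all $n$, together with the fact that only finitely many roots of unity live in $\ff$ — both of which hold because $\ff$ is a $p$-adic field.

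The main obstacle I anticipate is pinning down precisely \emph{why} the ramification filtration on $G_F$ matches the $U'_\bullet$-filtration on $K^M_2(F)$ under Kato's reciprocity map, and handling the $p$-part bookkeeping carefully in case (1): Kato's comparison of filtrations is delicate and the statement we need is really about the \emph{tame} quotient, so one must be careful that "tamely ramified" on the Galois side (order of inertia prime to $\Char\ff$) corresponds to the $U'_0/U'_1$ layer and that nothing from deeper in the filtration leaks in. I expect the resolution to be a citation to the relevant compatibility in \cite{Kato80} or \cite{Kato-cft-1} (or to \cite{Gupta-Krishna-REC} where the ramification filtration conventions are fixed), combined with the elementary observation that the residue field being a local field forces the relevant cohomology groups to be finite.
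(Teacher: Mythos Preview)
Your overall strategy is right---Pontryagin duality plus Kato's reciprocity to reduce $(G^{(0)}_F/G^{(1)}_F)^\vee \cong \Fil_1 H^1(F)/\Fil_0 H^1(F)$ to a statement about graded pieces of the $U'_\bullet$-filtration on $K^M_2(F)$ is exactly what the paper does. But there is a genuine error in your identification of that graded piece.

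You write that ``a standard symbol computation identifies the associated graded piece $U'_0/U'_1$ with (a quotient of) $K^M_1$ of the residue field $\ff$, i.e.\ essentially $\ff^\times$.'' This is wrong: the symbol map gives $U'_0 K^M_2(F)/U'_1 K^M_2(F) \cong K^M_2(\ff)$, the \emph{same} Milnor degree in the residue field. (You may be thinking of the tame symbol $K^M_2(F) \to K^M_1(\ff)$, but that computes $K^M_2(F)/U'_0 K^M_2(F)$, a different layer.) This matters critically: for a local field $\ff$, the group $K^M_1(\ff) = \ff^\times$ is infinite with $\Hom(\ff^\times,\Q/\Z)$ containing a copy of $\Q/\Z$, so your argument that ``$\ff^\times/(\ff^\times)^n$ is finite for each $n$'' does not yield the needed finiteness. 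The paper's argument works precisely because one lands in $\Hom(K^M_2(\ff),\Q/\Z)$ and invokes Merkurjev/Tate: $K^M_2(\ff)$ decomposes as a uniquely divisible group plus $\mu(\ff) \cong \Z/(q-1)$, so the torsion subquotient $\Fil_1 H^1(F)/\Fil_0 H^1(F)$ injects into $\Z/(q-1)$. The same confusion propagates into your case~(2), where the paper instead uses Kato's exact sequence $0 \to H^1(\ff) \to H^1(F) \to H^0(\ff) \to 0$ to identify $G^{(0)}_F$ with $\Hom(\mu_\infty(\ff),\Q/\Z)^\vee$, finite since $\mu_\infty(\ff)$ is finite for a $p$-adic field.
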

\begin{proof}
We first assume that ${\Char}(\ff)  > 0$.
  Using the Pontryagin duality, it suffices to show that
  $\left({G^{(0)}_F}/{G^{(1)}_F}\right)^\vee \cong \frac{\Fil_1 H^1(F)}{\Fil_0 H^1(F)}$
  is finite of order invertible in $\ff$.

By \cite[\S~3.1, 3.5]{Kato80}, the cup product in Galois cohomology and the
  Norm-residue map from Milnor $K$-theory to the Galois cohomology give rise
  to a pairing
  \begin{equation}\label{eqn:Local-case-0}
    K^M_2(F) \times H^1(F) \to H^3_\et(F, {\Q}/{\Z}(2)) \xrightarrow{\cong} {\Q}/{\Z}.
    \end{equation}

By the main result and Remark~4 of \cite[\S~3.5]{Kato80}, 
    ~\eqref{eqn:Local-case-0} induces homomorphisms 
    \begin{equation}\label{eqn:Local-case-1}
      H^1(F) \xrightarrow{\cong}  \Hom_\Tab(K^M_2(F), {\Q}/{\Z})
      \inj \Hom_\Ab(K^M_2(F), {\Q}/{\Z}).
    \end{equation}
    Let $\psi_F$ denote the composite homomorphism.
    By \cite[Thm.~6.1, 6.3]{Gupta-Krishna-REC} and \cite[Chap.~I, Thm.~3.1]{Saito-JNT},
    it follows that $\psi_F$ induces
    a compatible system of homomorphisms of abelian groups
  \begin{equation}\label{eqn:Local-case-2}
      \psi_F \colon \Fil_n H^1(F) \to    
      \Hom_\Ab(\frac{K^M_2(F)}{U'_n K^M_2(F)}, {\Q}/{\Z}); \ \ n \ge 0.
    \end{equation}
    Taking the graded quotients, we get a homomorphism
    \begin{equation}\label{eqn:Local-case-3}
      \ov{\psi}_F \colon \frac{\Fil_{n+1} H^1(F)}{\Fil_n H^1(F)} \to    
      \Hom_\Ab(\frac{U'_n K^M_2(F)}{U'_{n+1} K^M_2(F)}, {\Q}/{\Z}); \ \ n \ge 0.
    \end{equation}
    Moreover, \cite[Thm.~6.3]{Gupta-Krishna-REC} and  \cite[Chap.~I, Thm.~3.1]{Saito-JNT} imply
    that this map is injective.
    Since $\frac{U'_0 K^M_2(F)}{U'_{1} K^M_2(F)} \xrightarrow{\cong} K^M_2(\ff)$, we
  conclude that $\frac{\Fil_{1} H^1(F)}{\Fil_0 H^1(F)}$ is a subgroup of
  $\Hom_\Ab(K^M_2(\ff), {\Q}/{\Z})$.

Next, the main result of
    \cite{Merkurjev} implies that  $K_2^M(\ff) = H_1 \oplus H_2 $, where $H_1$ is a
  uniquely divisible group and $H_2$ is the group of roots of unity in $\ff$.
  Furthermore, it is well known that $H_2 \cong {\Z}/(q-1)$, where $q$ is the
  cardinality of the residue field of $\ff$
  (cf. \cite[Prop.~5.7 (ii)]{Neukirch}). We thus get an inclusion
  \[
  \frac{\Fil_{1} H^1(F)}{\Fil_0 H^1(F)} \inj   
\Hom_\Ab(H_1, {\Q}/{\Z}) \oplus \Hom_\Ab(H_2, {\Q}/{\Z})
\cong \Hom_\Ab(H_1, {\Q}/{\Z}) \oplus {\Z}/(q-1).
\]
Since $\frac{\Fil_{1} H^1(F)}{\Fil_0 H^1(F)}$ is a torsion group and
  $\Hom_\Ab(H_1, {\Q}/{\Z})$ is uniquely divisible, it follows that
  $\frac{\Fil_{1} H^1(F)}{\Fil_0 H^1(F)} \inj {\Z}/(q-1)$.
  This finishes the proof of part (1) of the lemma.

We now assume that ${\Char}(\ff) = 0$. In this case, one knows that
  $G^{(1)}_F = 0$ (cf. \cite[Chap.~IV, Cor.~2]{Serre-LF}). 
  To understand the structure of $G^{(0)}_F$, we use the short exact sequence
  \begin{equation}\label{eqn:Local-case-4}
    0 \to H^1(\ff) \to H^1(F) \to H^0(\ff) \to 0
    \end{equation}
    of discrete abelian groups by \cite[\S~3, Lem.~2]{Kato80}.
    This sequence implies that $G^{(0)}_F = \Ker(G_F \surj G_\ff)
    = (H^0(\ff))^\vee$.
On the other hand, we have $H^0(\ff) = H^0_\et(\ff, {\Q}/{\Z}(-1)) \cong
    \Hom_\Ab(\mu_\infty(\ff), {\Q}/{\Z})$, where $\mu_\infty(\ff)$ is the group of roots of
    unity in $\ff$. Since $\mu_\infty(\ff)$ is a finite
    group (cf. \cite[Prop.~5.7(i)]{Neukirch}),
    it follows that $G^{(0)}_F$ is finite. 
\end{proof}

\begin{cor}\label{cor:Tame-closed}
   Let $X$ be as in \propref{prop:TFG-5} with $\dim(X) =1$. Then $J^\tm(X)$ is
  a finite group of order prime to the 
  characteristic exponent of $k$.
\end{cor}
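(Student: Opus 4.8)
The plan is to deduce Corollary~\ref{cor:Tame-closed} directly from the structural results already available for curves, namely the topologically exact sequence of \propref{prop:TFG-5} together with the local computation in \lemref{lem:Local-case}. First I would specialize \eqref{eqn:TFG-5-00} to the case $\dim(X) = 1$. Here $\sC(X)$ reduces to the single curve $X$ itself (the only closed integral curve in $X$ is $X$), so $C_n = X$ (as $X$ is already regular, hence normal) and $\Delta(C_n) = \Delta(X) = \ov{X} \setminus X$ is a finite set of closed points. By Remark~\ref{remk:Top-ex}, the sequence \eqref{eqn:TFG-5-00} is in fact algebraically exact when $\dim(X) = 1$, so we get a genuine short exact sequence
\[
  \frac{G^{(0)}_{\Delta(X)}}{G^{(1)}_{\Delta(X)}} \to \pi^{\ab, \tm}_1(X)
  \xrightarrow{\tau_{\ov{X}}} \pi^{\ab}_1(\ov{X}) \to 0,
\]
which identifies $J^\tm(X) = \Ker(\tau_{\ov{X}})$ with a quotient of
$G^{(0)}_{\Delta(X)}/G^{(1)}_{\Delta(X)} = \bigoplus_{x \in \Delta(X)} G^{(0)}_x/G^{(1)}_x$.

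Next I would analyze each summand $G^{(0)}_x/G^{(1)}_x$. For a point $x \in \Delta(X)$, the completion $k(X)_x = \Frac(\wh{\sO_{\ov{X},x}})$ is a complete discrete valuation field whose residue field is the residue field $k(x)$ of the closed point $x$ on $\ov{X}$; since $\ov{X}$ is a scheme of finite type over the local field $k$ and $x$ is a closed point, $k(x)$ is a finite extension of $k$, hence itself a local field. So \lemref{lem:Local-case} applies to $F = k(X)_x$: if $\Char(k(x)) = p > 0$ then $G^{(0)}_x/G^{(1)}_x$ is finite of order prime to $p$, and if $\Char(k(x)) = 0$ then $G^{(1)}_x = 0$ and $G^{(0)}_x$ is finite (so again $G^{(0)}_x/G^{(1)}_x$ is finite, and its order is prime to $\Char(k)$ since in characteristic zero every integer is invertible). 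In either case the order of $G^{(0)}_x/G^{(1)}_x$ is prime to the characteristic exponent $p$ of $k$. Since $\Delta(X)$ is finite, the direct sum $\bigoplus_{x \in \Delta(X)} G^{(0)}_x/G^{(1)}_x$ is a finite group of order prime to $p$.

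Finally, $J^\tm(X)$ is a quotient (as a topological group, but since the source is finite, just as an abstract group) of this finite group of order prime to $p$, hence $J^\tm(X)$ is itself finite of order prime to the characteristic exponent of $k$. This completes the argument. I do not anticipate any serious obstacle here: the only minor point to check carefully is that the residue field of the completion $k(X)_x$ is a local field, which follows from $x$ being a closed point of a finite-type $k$-scheme together with $k$ being local (so $k(x)/k$ finite, whence $k(x)$ is a local field with the same characteristic exponent as $k$). Everything else is immediate bookkeeping combining \propref{prop:TFG-5} (in the algebraically exact form of Remark~\ref{remk:Top-ex}) with \lemref{lem:Local-case}.
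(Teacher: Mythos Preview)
Your proposal is correct and follows exactly the same approach as the paper's proof, which simply says ``Combine Remark~\ref{remk:Top-ex} and \lemref{lem:Local-case}.'' You have spelled out the details that the paper leaves implicit, including the identification $\sC(X)=\{X\}$ when $\dim(X)=1$ and the verification that the residue field $k(x)$ of each $x\in\Delta(X)$ is a local field, but the core logic is identical.
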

\begin{proof}
 Combine Remark ~\ref{remk:Top-ex} and 
 \lemref{lem:Local-case}.
 \end{proof}

\section{The tame class group}\label{sec:TCG}
 In this section, we shall introduce the tame class group 
 and prove its functorial properties. These properties will play key roles in our
 proofs. We fix an arbitrary infinite field $k$ throughout this section.

\subsection{Lower Milnor $K$-theory of semilocal rings}\label{sec:SLK}
  For a Noetherian scheme $X$, we let $K_j(X)$ denote the
Quillen $K$-groups of the exact category of locally free sheaves on $X$.
We shall usually write $K_j(X) = K_j(A)$ when $X = \Spec(A)$ is an affine scheme.
We let $\sK^M_{j,X}$ denote the sheaf of $j$-th Milnor $K$-theory on $X$ with
respect to Zariski (or Nisnevich) topology (cf. \cite[Defn.~1]{Kerz-JAG}).

\begin{lem}\label{lem:Semilocal-K}
  Let $A$ be a Noetherian semilocal ring containing $k$. Then the canonical map
  $K^M_j(A) \to K_j(A)$ is an isomorphism for $j \le 2$.
\end{lem}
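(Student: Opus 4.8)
The plan is to deal with $j \in \{0,1,2\}$ in turn; the cases $j \le 1$ are essentially formal, and only $j = 2$ requires serious input.

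For $j = 0$: a finitely generated projective module over a semilocal ring is free of locally constant rank, so $K_0(A)$ is free abelian on the set of connected components of $\Spec(A)$, which is exactly $K^M_0(A)$, and the comparison map is the evident one (so it reduces to the connected case, where both sides are $\Z$). For $j = 1$: a semilocal ring has stable range one, so Bass's theorem gives $\GL_n(A) = \GL_1(A)\cdot E_n(A)$ for every $n$; hence $SK_1(A) = 0$, the determinant identifies $K_1(A)$ with $A^\times$, and this is $K^M_1(A)$ compatibly with the comparison map.

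The remaining (and only substantial) case is $j = 2$. Here the decisive point is that $A$ contains the infinite field $k$, so each residue field of $A$ is infinite and $A$ has ``many units'' in the sense of van der Kallen. Granting this, I would invoke two facts. First, $K_2(A)$ is generated by Steinberg symbols $\{u,v\}$ with $u,v\in A^\times$; this already gives surjectivity of $K^M_2(A)\to K_2(A)$. Second, $K_2(A)$ admits a Matsumoto-type presentation on these symbols in which the only relations are bimultiplicativity and the Steinberg relation $\{u,1-u\}=0$ for $u,1-u\in A^\times$ --- precisely the presentation defining $K^M_2(A)$ --- which upgrades surjectivity to bijectivity. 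These statements for (semi)local rings with many units go back to Dennis--Stein and van der Kallen, with later refinements by Nesterenko--Suslin and Kerz; I would cite whichever version is most convenient.

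The main obstacle is this last input: that over a semilocal ring with infinitely many units the Steinberg relation already accounts for all relations among Steinberg symbols in $K_2$. This is the genuinely hard, computational ingredient, and it is exactly where the hypothesis ``$A$ contains the infinite field $k$'' is used; the rest of the argument is bookkeeping.
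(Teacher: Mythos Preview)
Your argument is correct. For $j \le 1$ you and the paper do the same thing modulo choice of reference. For $j = 2$ the routes diverge: you invoke directly the Matsumoto-type presentation of $K_2$ for rings with many units (van der Kallen, Dennis--Stein, with later refinements), which immediately identifies $K_2(A)$ with $K^M_2(A)$. The paper instead factors the comparison through group homology,
\[
K^M_2(A) \to K_2(A) \to H_2(\GL(A),\Z)/H_2(\GL_1(A),\Z),
\]
and observes that the composite is an isomorphism by a theorem of Suslin (extended to the semilocal case by Levine), while the second map --- the Hurewicz map for $B\GL^+(A)$ --- is an isomorphism by Nesterenko--Suslin together with $SK_1(A) = 0$; hence so is the first. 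Your approach is more direct and packages the hard input into a single citation; the paper's approach is more structural and makes visible exactly where each ingredient (Suslin/Levine on the Milnor side, Nesterenko--Suslin on the Quillen side) enters. Either is fine for the application.
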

\begin{proof}
  The lemma is trivial for $j =0$ and follows from \cite[Chap.~III, Lem.~1.4]{K-book}
  for $j =1$. To prove it for $j =2$, we consider the maps
  \begin{equation}\label{eqn:Semilocal-K-0}
    K^M_2(A) \to K_2(A) \to \frac{H_2(GL(A), \Z)}{H_2(GL_1(A), \Z)},
  \end{equation}
  where the right arrow is the 2nd Hurewicz map for $BGL^+(A)$. A theorem of
  Suslin (which was shown for semilocal rings by Levine in
  \cite[\S~3, Thm.~1]{Levine-K})
  says that the composite map is bijective. On the other hand,
  the Hurewicz map is an isomorphism by \cite[Chap.~III, Lem.~1.4]{K-book} and
  \cite[Lem.~4.2]{Nest-Suslin}. The lemma follows.
  \end{proof}

\begin{lem}\label{lem:Semilocal-K-1}
Let $A$ be the semilocal ring at the closed points
  $\{x_1, \ldots , x_r\}$ on a regular integral curve over $k$. Let $F$ denote the
  quotient field of $A$. Then the Gersten sequence
  \[
    0 \to  K^M_j(A) \to K^M_j(F) \xrightarrow{\partial_F} \
    \stackrel{r}{\underset{i=1}\bigoplus} K^M_{j-1}(k(x_i)) \to 0
  \]
  is exact for every $j \le 2$.
  \end{lem}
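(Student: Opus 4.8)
The plan is to deduce the claimed Gersten exactness from the known Gersten–Quillen resolution in algebraic $K$-theory together with the comparison isomorphism between Milnor and Quillen $K$-theory in degrees $\le 2$ established in \lemref{lem:Semilocal-K}. First I would recall that for a regular integral curve over $k$, the Gersten conjecture for $K$-theory is known (Quillen's resolution for regular schemes of dimension $1$, which is unconditional), so that for each $j$ there is an exact sequence
\[
  0 \to K_j(A) \to K_j(F) \xrightarrow{\partial} \bigoplus_{i=1}^r K_{j-1}(k(x_i)) \to 0,
\]
where exactness on the right uses that $A$ is semilocal of dimension $1$ (so the cokernel, which a priori is $K_{j-1}$ of the residue fields modulo the image, actually vanishes because $K_{j-1}(A) \to \bigoplus K_{j-1}(k(x_i))$ can be arranged to split off, or more directly because $\Spec(A)$ has no higher codimension and the next term of the Gersten complex is zero). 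For $j \le 2$ I would then invoke \lemref{lem:Semilocal-K} to replace $K_j(A)$ by $K^M_j(A)$, $K_j(F)$ by $K^M_j(F)$ (a field, so this is classical — Nesterenko–Suslin–Totaro), and each $K_{j-1}(k(x_i))$ by $K^M_{j-1}(k(x_i))$ (again fields).

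The one point requiring care is that the boundary map $\partial_F$ in the Milnor $K$-theory Gersten complex agrees, under these identifications, with the residue/boundary map in the Quillen Gersten complex. For this I would cite the compatibility of the tame symbol (the Milnor $K$-theory boundary) with the localization boundary in Quillen $K$-theory — this is standard and appears e.g. in work of Bass–Tate, Suslin, or Kato; in the present setting it follows because both boundary maps are characterized by the same formula on symbols $\{u_1, \dots, u_{j-1}, \pi\}$ with $\pi$ a uniformizer. Once the two complexes are identified term by term and the boundary maps are matched, exactness of the Milnor sequence is literally the exactness of the Quillen sequence.

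The main obstacle, such as it is, is organizing the right-exactness (surjectivity of $\partial_F$). For $j=1$ this is the statement that the divisor map $F^\times \to \bigoplus_i \Z$ is surjective, which holds because $A$ is a semilocal Dedekind domain hence a PID, so every divisor is principal. For $j=2$ surjectivity of $\partial_F \colon K^M_2(F) \to \bigoplus_i K^M_1(k(x_i)) = \bigoplus_i k(x_i)^\times$ amounts to: given units $a_i \in k(x_i)^\times$, find $\beta \in K^M_2(F)$ with prescribed tame symbols. Since $A$ is a PID one can choose a uniformizer $\pi_i$ at $x_i$ that is a unit at the other points; lifting $a_i$ to a unit $\tilde a_i \in A^\times$ (possible by the Chinese Remainder Theorem / semilocality, after adjusting by something congruent to $a_i$ mod $\fm_{x_i}$ and to $1$ mod the other maximal ideals), the symbol $\{\tilde a_i, \pi_i\}$ has tame symbol $a_i$ at $x_i$ and is a unit symbol (hence has trivial tame symbol) elsewhere; summing over $i$ gives the desired preimage. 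I would present this explicit construction rather than appealing to the $K$-theoretic statement, since it is short and self-contained, and use it to conclude the exactness of the displayed sequence for all $j \le 2$.
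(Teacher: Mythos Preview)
Your proposal is correct and follows essentially the same route as the paper: invoke Quillen's Gersten resolution for Quillen $K$-theory of a regular semilocal ring over a field, then use \lemref{lem:Semilocal-K} to identify Milnor and Quillen $K$-groups in degrees $\le 2$, together with the compatibility of the Milnor tame symbol with the Quillen localization boundary. The paper dispatches the last point with a citation (to \cite[Lem.~A1]{Gupta-Krishna-AKT-1}), while you sketch the standard symbol-level argument; and your explicit construction of preimages for right-exactness, while pleasant and self-contained, is not needed once one accepts that Quillen's theorem already yields the full exact sequence (the long localization sequence plus injectivity of $K_{j-1}(A) \to K_{j-1}(F)$ forces the connecting map to be surjective).
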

  \begin{proof}
   Combine \lemref{lem:Semilocal-K} and
    \cite[Thm.~5.11]{Quillen} and \cite[Lem.~A1]{Gupta-Krishna-AKT-1}.
\end{proof}

\begin{cor}\label{cor:Semilocal-K-2}
  Let $A$ be as in \lemref{lem:Semilocal-K-1} and let $I \subset A$ be a nonzero
  radical ideal.
  Let $A_i$ be the localization of $A$ at $x_i$ and let
  $\fm_i$ be the maximal ideal of $A_i$ for $1 \le i \le r$.
  Then we have the following relations between the subgroups of $K^M_j(F)$ for
  $j \le 2$.
  \[
    K^M_j(A) = \ \stackrel{r}{\underset{i =1}\bigcap} K^M_j(A_i) \ \mbox{and} \
    K^M_j(A,I) = \ \stackrel{r}{\underset{i =1}\bigcap} K^M_j(A_i, IA_i).
  \]
\end{cor}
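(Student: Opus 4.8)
The plan is to deduce both identities from the exactness of the Gersten sequences for $A$ and for each $A_i$, established in Lemma~\ref{lem:Semilocal-K-1}. First I would observe that since $A$ is semilocal of dimension one and $F$ is its quotient field, all the rings $A$, $A_i$ embed into $F$, and each localization map $K^M_j(A) \to K^M_j(A_i)$ is compatible with the inclusions $K^M_j(A_i) \hookrightarrow K^M_j(F)$ from the Gersten sequence (injectivity being part of Lemma~\ref{lem:Semilocal-K-1}). For the first identity, the boundary map $\partial_F \colon K^M_j(F) \to \bigoplus_{i=1}^r K^M_{j-1}(k(x_i))$ decomposes as the sum of the local boundary maps $\partial_{F,x_i}$, and $K^M_j(A_i)$ is precisely the kernel of the single component $\partial_{F,x_i}$. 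Hence $\bigcap_i K^M_j(A_i) = \bigcap_i \Ker(\partial_{F,x_i}) = \Ker(\partial_F) = K^M_j(A)$, where the last equality is again Lemma~\ref{lem:Semilocal-K-1}. This is the routine part.

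For the second identity the plan is to run the same argument ``relative to $I$.'' Write $I = \bigcap_{j : x_j \in V(I)} \fm_j$ since $I$ is a nonzero radical ideal in the one-dimensional semilocal ring $A$ (so $V(I)$ is a subset of the closed points, and $IA_i = \fm_i$ if $x_i \in V(I)$ and $IA_i = A_i$ otherwise). By definition $K^M_j(A,I) = \Ker\!\big(K^M_j(A) \to K^M_j(A/I)\big)$. The key step is to identify $A/I$ with $\prod_{x_i \in V(I)} k(x_i)$ (this uses that $I$ is radical and $A$ is semilocal, so by CRT $A/I$ is the finite product of the residue fields at the points of $V(I)$), and then to compare the residue maps. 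Using the commutative diagram relating the Gersten sequences of $A$ and of $A/\!\!\sim$ — or more directly, the fact that $K^M_j(A) \to K^M_j(A_i) \to K^M_j(A_i/\fm_i) = K^M_j(k(x_i))$ factors the projection to the $i$-th factor of $K^M_j(A/I)$ — one gets $K^M_j(A,I) = \{\alpha \in K^M_j(A) : \alpha \mapsto 0 \text{ in } K^M_j(k(x_i)) \ \forall x_i \in V(I)\}$. Intersecting with the first identity and noting $K^M_j(A_i, IA_i) = K^M_j(A_i)$ when $x_i \notin V(I)$ while $K^M_j(A_i,\fm_i) = \Ker(K^M_j(A_i) \to K^M_j(k(x_i)))$ otherwise, the claim follows by a straightforward diagram chase inside $K^M_j(F)$.

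The main obstacle I anticipate is making the second part fully rigorous: one must check that the natural map $K^M_j(A) \to K^M_j(A/I)$ really does decompose, after the CRT isomorphism $A/I \cong \prod k(x_i)$, as the product of the composites $K^M_j(A) \to K^M_j(A_i) \to K^M_j(k(x_i))$, and in particular that $K^M_j(\prod k(x_i)) \cong \prod K^M_j(k(x_i))$. The latter compatibility of Milnor $K$-theory with finite products of fields is elementary from the presentation by symbols but should be cited or stated carefully; the factorization through the localizations is functoriality of $K^M_j$ applied to $A \to A_i \to k(x_i)$ together with the commutativity of $A \to A/I$ with the projections $A \to A_i \to k(x_i)$. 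Once these bookkeeping points are settled, both identities are immediate.
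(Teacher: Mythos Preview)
Your proposal is correct and follows essentially the same route as the paper: both deduce the first identity by decomposing the boundary map in the Gersten sequence of \lemref{lem:Semilocal-K-1} into its local components, and both handle the relative case by combining the first identity with the Chinese remainder isomorphism $A/I \xrightarrow{\cong} \prod_i A_i/IA_i$ (the paper writes the product over all $i$, but the factors with $x_i \notin V(I)$ vanish, which is exactly your observation). The bookkeeping points you flag as obstacles---compatibility of $K^M_j$ with finite products and the factorization through the localizations---are precisely what the paper uses without further comment, so your caution is appropriate but no new idea is needed.
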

\begin{proof}
  The first equality follows directly by comparing the exact sequence of
  \lemref{lem:Semilocal-K-1} for $A$ with similar exact sequences for $A_i$'s.
  To prove the other equality, note that it already follows from
  \lemref{lem:Semilocal-K-1} that $K^M_j(A,I)
  \subseteq \ \stackrel{r}{\underset{i =1}\bigcap} K^M_j(A_i, IA_i)$.
  It is also clear that $\stackrel{r}{\underset{i =1}\bigcap} K^M_j(A_i, IA_i)
  \subset  \ \stackrel{r}{\underset{i =1}\bigcap} K^M_j(A_i) = K^M_j(A)$.
  To finish the proof, we only need to observe that the canonical map
  $A/I \to \stackrel{r}{\underset{i =1}\prod} {A_i}/{IA_i}$ is a ring
  isomorphism.
\end{proof}

\begin{lem}\label{lem:K2-dvr}
  Let $A$ be a discrete valuation ring containing an infinite field
  whose quotient field is $F$ and the maximal ideal is $\fm$.
  Let $\wh{A}$ denote the $\fm$-adic completion of $A$ and $\wh{F}$ the fraction field
  of $\wh{A}$.  Let $I \subset \fm$
  be a nonzero ideal. Then the commutative squares
  \begin{equation}\label{eqn:K2-dvr-0}
    \xymatrix@C.8pc{
      K^M_j(A) \ar[r] \ar[d] & K^M_j(F) \ar[d] & & K^M_j(A, I) \ar[r] \ar[d] & K^M_j(A)
      \ar[d] \\
K^M_j(\wh{A}) \ar[r] & K^M_j(\wh{F}) & & K^M_j(\wh{A}, I\wh{A}) \ar[r] & K^M_j(\wh{A})}
\end{equation}
are Cartesian for every $j \le 2$.
\end{lem}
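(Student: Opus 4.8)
The goal is to show that the two commutative squares in \eqref{eqn:K2-dvr-0} are Cartesian. I would treat the two squares together since they are formally related: once one understands the left square for all relevant ideals, the right square follows by a diagram chase using that $K^M_j(A,I)$ and $K^M_j(\wh A, I\wh A)$ are defined as kernels of the reduction maps to $K^M_j(A/I)$ and $K^M_j(\wh A/I\wh A)$, and that $A/I \cong \wh A/I\wh A$ since $I$ is $\fm$-primary (as $A$ is a DVR and $I \neq 0$, we have $I = \fm^m$ for some $m$, whence $A/I \to \wh A/I\wh A$ is an isomorphism). So the heart of the matter is the left-hand square.

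For the left square, the assertion is that $K^M_j(A) = K^M_j(F) \cap K^M_j(\wh A)$ inside $K^M_j(\wh F)$, and moreover that the square is a pullback of abelian groups, i.e. that the map $K^M_j(A) \to K^M_j(F) \times_{K^M_j(\wh F)} K^M_j(\wh A)$ is an isomorphism. For $j \le 2$ — which is the case of interest in this paper — I would use the Gersten resolution: by \lemref{lem:Semilocal-K-1} (applied with $r=1$, or its evident DVR analogue) there is a short exact sequence $0 \to K^M_j(A) \to K^M_j(F) \xrightarrow{\partial} K^M_{j-1}(k(\fm)) \to 0$, and likewise $0 \to K^M_j(\wh A) \to K^M_j(\wh F) \xrightarrow{\wh\partial} K^M_{j-1}(k(\wh\fm)) \to 0$. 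Since the residue fields $k(\fm)$ and $k(\wh\fm)$ coincide and the boundary maps are compatible with the inclusion $F \hookrightarrow \wh F$ (tame symbols are computed from the valuation and residue data, which are preserved under completion), one gets a commutative ladder of short exact sequences whose right-hand vertical arrow is the identity on $K^M_{j-1}(k(\fm))$. A standard diagram chase (the four-lemma, or just the snake) then yields that the left square is Cartesian. For the case $j = 2$ one needs the comparison of $K^M$ with Quillen $K_2$ from \lemref{lem:Semilocal-K} to know that the Gersten sequence is exact; for $j = 0, 1$ it is elementary ($K^M_0 = \Z$ everywhere, and $K^M_1(A) = A^\times = F^\times \cap \wh A^\times$).

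For $j \ge 3$ the statement is still claimed, and there the Gersten argument is not available in the form quoted. Here I would instead argue directly with symbols: an element of $K^M_j(F)$ that lies in $K^M_j(\wh A)$ must have trivial tame symbol $\partial_{\wh F}$, hence trivial $\partial_F$ (compatibility of residues under completion), hence — using that $K^M_j(F) = K^M_j(A) \oplus \{\pi\} \cdot K^M_{j-1}(k(\fm))$ for a uniformizer $\pi$, which holds for all $j$ by the computation of Milnor $K$-theory of a discrete valuation field — it lies in the image of $K^M_j(A)$. The injectivity of $K^M_j(A) \to K^M_j(F)$ for all $j$ follows from this same direct-sum decomposition (the projection $K^M_j(F) \to K^M_j(A)$ splits the inclusion). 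Combining these gives both the equality of subgroups and the pullback property for all $j \ge 0$. The right square is then deduced as indicated above: an element of $K^M_j(\wh A)$ that maps into $K^M_j(A)$ under $K^M_j(\wh A) \cap K^M_j(F)$ and that is killed in $K^M_j(\wh A/I\wh A) = K^M_j(A/I)$ is automatically in $K^M_j(A, I)$, and conversely.

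\textbf{Main obstacle.} The delicate point is the compatibility of the residue/tame-symbol maps and the splitting decomposition of $K^M_j(F)$ with the completion $F \hookrightarrow \wh F$; one must check that a uniformizer of $A$ remains a uniformizer of $\wh A$ (clear) and that units of $A$ map to units of $\wh A$ with the same residues (clear), so that the two Gersten sequences — or the two direct-sum decompositions — fit into a commutative diagram with identity maps on the residue-field factors. Once that compatibility is pinned down, the rest is a routine diagram chase; for $j = 2$ the only non-formal input is the already-cited theorem of Suslin–Levine underlying \lemref{lem:Semilocal-K}.
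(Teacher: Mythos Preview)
Your overall strategy---compare Gersten sequences for $A$ and $\wh A$ with their common residue term, then deduce the right square from $A/I \cong \wh A/I\wh A$---is exactly the paper's approach, and your handling of the right square is correct. However, the case split on $j$ is unnecessary, and the $j \ge 3$ branch has a genuine gap.

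The paper simply invokes Kerz's Gersten resolution for Milnor $K$-theory of regular local rings containing an infinite field \cite{Kerz-Invent}, which yields the short exact sequence $0 \to K^M_j(A) \to K^M_j(F) \xrightarrow{\partial} K^M_{j-1}(k(\fm)) \to 0$ for \emph{every} $j \ge 0$ (and likewise for $\wh A$). \lemref{lem:Semilocal-K-1} was restricted to $j \le 2$ only because its proof routed through Quillen $K$-theory; Kerz's theorem applies directly to Milnor $K$-theory in all degrees, so a single uniform argument suffices.

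Your alternative route for $j \ge 3$ is not self-contained: the asserted decomposition $K^M_j(F) = K^M_j(A) \oplus \{\pi\}\cdot K^M_{j-1}(k(\fm))$ and the claimed ``projection $K^M_j(F) \to K^M_j(A)$'' already presuppose that $K^M_j(A) \hookrightarrow K^M_j(F)$ with image exactly $\ker(\partial)$---which is precisely the Gersten statement. Milnor's classical computation gives $K^M_j(\wh F) \cong K^M_j(k(\fm)) \oplus K^M_{j-1}(k(\fm))$ for a complete discrete valuation field via a coefficient field, but this involves the \emph{residue field} $k(\fm)$, not the ring $A$; there is no natural retraction $K^M_j(F) \to K^M_j(A)$. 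The fix is simply to cite \cite{Kerz-Invent} uniformly and drop the case split.
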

\begin{proof}
Follows from the Gersten resolution for Quillen $K$-theory (cf. \cite{Quillen}).
\end{proof}

Let $C$ be an integral regular curve over $k$. 
For an effective Cartier divisor $D \subset {C}$ and a closed point
$x \in  {C}$, we let $\wh{I_{D,x}} \subset \wh{\sO_{{C},x}}$ be the 
ideal of the scheme theoretic inverse image of $D$ under the
canonical composite map $\Spec(\wh{\sO_{C,x}}) \to \Spec({\sO_{{C},x}}) \to C$.
We let $I_{D,x}$ be the extension of the ideal of $D$ in $\sO_{C,x}$.
Consider the map $K^M_j(k(C)) \to 
        {\underset{x \in D}\bigoplus} \left(K^M_{j-1}(k(x)) \oplus K^M_j(k(x))\right)$,
        whose $x$-component is defined by $a \mapsto (\partial_x(a),
        \partial_x(\{\pi_x\}  \cup a))$ for a fixed choice of uniformizing parameter
        $\pi_x$ at $x$. Let $A_D := \sO_{C,D}$.
     
\begin{lem}\label{lem:Fil-Milnor-comp}
Let $I_D \subset A_D$ be the ideal of $D$ and let $j \le 2$.
    Then the canonical maps
    \[
      \begin{array}{lll}
        K^M_j(A_D, I_D) 
        & \to &
        \Ker \left(K^M_j(k(C)) \to
    {\underset{x \in D}\bigoplus}
   \frac{K^M_j(k(C))}{K^M_j({\sO_{{C},x}}, I_{D,x})}\right) \\
   & \to &      
\Ker \left(K^M_j(k(C)) \to
    {\underset{x \in D}\bigoplus}
           \frac{K^M_j(k(C)_x)}{K^M_j(\wh{\sO_{{C},x}}, \wh{I_{D.x}})}\right)
      \end{array}
    \]
    are isomorphisms. If $D$ is reduced, then we also have
    \[
K^M_j(A_D, I_D) \xrightarrow{\cong} \Ker \left[K^M_j(k(C)) \to
  {\underset{x \in D}\bigoplus} \left(K^M_{j-1}(k(x)) \oplus K^M_j(k(x))\right)\right].
\]
\end{lem}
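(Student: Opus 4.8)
The plan is to realize every group in the statement as an explicit subgroup of $K^M_j(k(C))$ and to match them using the Gersten sequence of \lemref{lem:Semilocal-K-1}, the Cartesian squares of \lemref{lem:K2-dvr}, and \corref{cor:Semilocal-K-2}. Write $A_D = \sO_{C,D}$, and for $x \in \Supp(D)$ abbreviate $A_x = \sO_{C,x}$ and write $a_x$ for the image of $a \in K^M_j(k(C))$ in $K^M_j(k(C)_x)$. By \lemref{lem:Semilocal-K-1} the maps $K^M_j(A_D) \to K^M_j(k(C))$ and $K^M_j(A_x) \to K^M_j(k(C))$ are injective, and by the Gersten resolution for Milnor $K$-theory of the regular local ring $\wh{A_x}$ (see \cite{Kerz-Invent}) so is $K^M_j(\wh{A_x}) \to K^M_j(k(C)_x)$; hence every relative group $K^M_j(R,I)$ below is unambiguously a subgroup of the relevant fraction field, and $\wh{I_{D,x}} = I_{D,x}\wh{A_x}$.

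For the first isomorphism, unwinding the definition of the kernel shows $\Ker\big(K^M_j(k(C)) \to \bigoplus_{x \in D} K^M_j(k(C))/K^M_j(A_x, I_{D,x})\big) = \bigcap_{x \in D} K^M_j(A_x, I_{D,x})$, and functoriality of Milnor $K$-theory places $K^M_j(A_D, I_D)$ inside this intersection; the reverse inclusion is \corref{cor:Semilocal-K-2} when $D$ is reduced, and for general $D$ one repeats its proof, the only input being the ring isomorphism $A_D/I_D \cong \prod_{x \in D} A_x/I_{D,x}$, which holds because $V(I_D)$ is the full set of closed points of $\Spec(A_D)$. For the second isomorphism it suffices to prove, for each $x \in \Supp(D)$, that $a \in K^M_j(A_x, I_{D,x})$ if and only if $a_x \in K^M_j(\wh{A_x}, \wh{I_{D,x}})$; this follows by pasting the two Cartesian squares of \lemref{lem:K2-dvr} (with $A = A_x$ and $I = I_{D,x}$), which makes the square with corners $K^M_j(A_x, I_{D,x})$, $K^M_j(k(C))$, $K^M_j(\wh{A_x}, \wh{I_{D,x}})$ and $K^M_j(k(C)_x)$ Cartesian, together with the injectivity of $K^M_j(\wh{A_x}, \wh{I_{D,x}}) \hookrightarrow K^M_j(k(C)_x)$. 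Intersecting over $x$ recovers the second kernel.

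Finally, suppose $D$ is reduced, so $I_{D,x} = \fm_x$ and $A_D/I_D \cong \prod_{x \in D} k(x)$; then the first isomorphism (now literally \corref{cor:Semilocal-K-2}) gives $K^M_j(A_D, I_D) = \bigcap_{x \in D} \Ker\big(K^M_j(A_x) \to K^M_j(k(x))\big)$. For $a \in K^M_j(k(C))$, \lemref{lem:Semilocal-K-1} gives that $\partial_x(a) = 0$ if and only if $a \in K^M_j(A_x)$, and for such $a$ a standard tame-symbol computation — reducing to symbols of units, on which $\partial_x(\{\pi_x, u_1, \dots, u_j\}) = \{\bar u_1, \dots, \bar u_j\}$ — identifies $\partial_x(\{\pi_x\}\cup a)$ with the image of $a$ under $K^M_j(A_x) \to K^M_j(k(x))$ (in particular $\partial_x(\{\pi_x\}\cup a)$ is independent of the choice of $\pi_x$ when $a \in K^M_j(A_x)$). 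Hence, for $a \in K^M_j(k(C))$, membership in $\Ker(K^M_j(A_x) \to K^M_j(k(x)))$ is equivalent to $\partial_x(a) = 0$ and $\partial_x(\{\pi_x\}\cup a) = 0$, and intersecting over $x \in D$ identifies $K^M_j(A_D, I_D)$ with the last kernel in the statement.

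The argument is essentially bookkeeping with results already in hand. The only two points needing a word of care are the non-reduced case of the first isomorphism — where \corref{cor:Semilocal-K-2} is stated only for radical ideals, but its proof is insensitive to that hypothesis — and the classical identity $\partial_x(\{\pi_x\}\cup a) = \bar a$ for $a \in K^M_j(A_x)$ used in the reduced case. I do not anticipate a substantial obstacle beyond these.
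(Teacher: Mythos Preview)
Your proof is correct and follows essentially the same route as the paper's own argument, which simply says the first part ``follows directly by combining \corref{cor:Semilocal-K-2} and \lemref{lem:K2-dvr}'' and that the second part is ``an easy exercise using the Gersten exact sequence for Milnor $K$-theory and some properties of the boundary map.'' You have unpacked exactly these ingredients: the identification of the first kernel with $\bigcap_x K^M_j(A_x,I_{D,x})$ via \corref{cor:Semilocal-K-2}, the passage to completions via the Cartesian squares of \lemref{lem:K2-dvr}, and the identification of the reduction map $K^M_j(A_x)\to K^M_j(k(x))$ with $a\mapsto\partial_x(\{\pi_x\}\cup a)$ for the reduced case. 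Your observation that \corref{cor:Semilocal-K-2} is literally stated only for radical ideals, while its proof only uses $A_D/I_D\cong\prod_x A_x/I_{D,x}$, is a valid point of care that the paper leaves implicit.
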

\begin{proof}
  The first part follows directly by combining
  \corref{cor:Semilocal-K-2} and \lemref{lem:K2-dvr}.
  For the second part, we can use
  $\Ker \left(K^M_j(k(C)) \to {\underset{x \in D}\bigoplus}
    \frac{K^M_j(k(C))}{K^M_j({\sO_{{C},x}}, I_{D,x})}\right)$ instead of
  $ K^M_j(A_D, I_D)$. The desired isomorphism is then
   an easy exercise using the Gersten sequence. 
\end{proof}

For an effective Cartier divisor $D \subset {C}$ and integer $j \ge 0$, we let
\begin{equation}\label{eqn:Fil-Miln}
  \Fil_D K^M_j(k(C)) = \Ker \left(K^M_j(k(C)) \to
    {\underset{x \in D}\bigoplus}
    \frac{K^M_j(k(C)_x)}{K^M_j(\wh{\sO_{{C},x}}, \wh{I_{D,x}})}\right).
  \end{equation}

  It follows from \lemref{lem:Fil-Milnor-comp} that
  $\Fil_D K^M_j(k(C)) = K^M_j(A_D, I_D)$ if $j \le 2$.

\subsection{Tame class group}
 \label{sec:IG}
Let $X \in \Sch_k$ be an integral normal scheme.
For $C \in \sC(X), \ x \in C_{(0)}$ and $j \ge 0$, there is a boundary homomorphism
$\partial_{C,x} \colon K^M_{j+1}(k(C)) \to K^M_j(k(x))$ (cf. \cite{Kato86})
which is defined to be the composite map
$K^M_{j+1}(k(C)) \cong K^M_{j+1}(k(C_n)) \xrightarrow{\partial} {\underset{y \in
    \pi^{-1}(x)}
  \oplus} K^M_j(k(y)) \xrightarrow{N}  K^M_j(k(x))$, where $\pi \colon C_n \to C$
is the normalization map, $\partial$ is the
classical tame symbol and $N$ is the sum of norm maps.
Since $\partial_{C,x}(\alpha) = 0$ for almost all $x$ (depending on $\alpha$), we get
a map $\partial_C \colon K^M_{j+1}(k(C)) \to {\underset{x \in X_{(0)}}\oplus} 
K^M_j(k(x))$.
Taking the sum over all $C \in \sC(X)$, we get a boundary homomorphism
$\partial^j_X \colon {\underset{C \in \sC(X)}\oplus} K^M_{j+1}(k(C)) \to
{\underset{x \in X_{(0)}}\oplus} K^M_j(k(x))$. 
The following definition is due to Yamazaki \cite[Defn.~1.1]{Yamazaki}.

\begin{defn}\label{defn:TCG-0}
  The tame class group of $X$ is defined to be the cokernel of the boundary homomorphism
  \[
    \partial^1_X \colon {\underset{C \in \sC(X)}\bigoplus} \Fil_{\Delta(C_n)} K^M_{2}(k(C))
      \longrightarrow
 {\underset{x \in X_{(0)}}\bigoplus} K^M_1(k(x))
\]
and it is denoted by $C^\tm(X)$. 
\end{defn}

\begin{remk}\label{remk:Yamazaki*}
  Yamazaki uses a group $UK^M_2(k(C))$ instead of $ \Fil_{\Delta(C_n)} K^M_{2}(k(C))$.
  But the two groups coincide by \lemref{lem:Fil-Milnor-comp}.
  \end{remk}

In the sequel, we shall use the following notations:
\begin{equation}\label{eqn:Gen-Reln}
F^0_1(X) = {\underset{x \in X_{(0)}}\bigoplus} K^M_1(k(x)), \ F^1_2(X) = 
{\underset{C \in \sC(X)}\bigoplus} \Fil_{\Delta(C_n)} K^M_{2}(k(C_n)).
\end{equation}

\subsection{Contravariance of tame class group}\label{sec:Tame-functorial*}
Let $k$ be an infinite field of characteristic exponent $p \ge 2$
and $\ov{X}$ a complete integral normal $k$-scheme. Let
 ${k'}/k$ be a purely inseparable algebraic extension of fields. Suppose that
there is an increasing sequence of finite field extensions ${k_i}/k$ inside $k'$ such
that $k = k_0$ and $k' = \cup_i k_i$. For any $i \ge 1$, we let
$\ov{X}_i$ be the normalization of $(\ov{X}_{k_i})_\red$ and  let 
$f_i \colon \ov{X}_i \to \ov{X}$ denote the projection morphism.
We let $\ov{X}'$ denote the normalization of $(\ov{X}_{k'})_\red$
and let $f \colon \ov{X}' \to \ov{X}$ be the
projection morphism. We fix a nonempty open immersion $u \colon X \inj \ov{X}$.
We define $X_i$ and $X'$ analogous to the definitions of $\ov{X}_i$ and $\ov{X}'$.
Let $f'_i \colon \ov{X}' \to \ov{X}_i$ be the projection morphism.
The following lemma is an elementary exercise. We refer to \cite[Tag~01YV]{SP}
for a proof.

\begin{lem}\label{lem:Inv-lim}
  The inverse system of schemes $\{\ov{X}_{k_i}\}_{i \ge 0}$ satisfies the following.
  \begin{enumerate}
  \item
    $\{\ov{X}_{k_i}\}$ is an inverse system of complete irreducible 
    $k$-schemes with finite transition maps whose inverse limit is 
    the irreducible $k'$-scheme $\ov{X}_{k'}$.
  \item
    $\{(\ov{X}_{k_i})_\red\}$ is an inverse system of complete
    integral $k$-schemes with finite
    transition maps whose inverse limit is the integral $k'$-scheme
    $(\ov{X}_{k'})_\red$.
  \item
    $\{\ov{X}_i\}$ is an inverse system of complete integral normal $k$-schemes with
    finite transition maps whose inverse limit is the $k'$-scheme $\ov{X}'$.
  \item
    The canonical map $X' \to \ov{X}' \times_{\ov{X}} X$ is an isomorphism.
   \end{enumerate}
\end{lem}

Let $f \colon \ov{X}' \to \ov{X}$ be as above. Assume that the transition map
$f_{ij} \colon X_j \to X_i$ is flat for each $j \ge i \ge 0$. 
For $x \in X_{(0)}$, let $S_x$ be the scheme theoretic
inverse image of $x$ under $f$ and let $S'_x$ be the set theoretic inverse image
(with the reduced subscheme structure). Let $\iota_x \colon S'_x \inj S_x$ be the
inclusion and $f_x \colon S_x \to \Spec(k(x))$ be the projection.
Our assumption implies that $S_x$ is a 0-dimensional $k'$-scheme. In particular,
$S'_x = \{x_1, \ldots , x_s\}$ is a finite set. We let
$S_x = \stackrel{s}{\underset{i = 1}\coprod} S^i_x$, where $(S^i_x)_\red =
\Spec(k'(x_i))$ for $1 \le i \le s$.

We have a pull-back map
$f^*_x \colon K_n(k(x)) \to G_n(S_x)$ and the push-forward map
$\iota_{x *} \colon K_n(S'_x) \xrightarrow{\cong} G_n(S_x)$.
We let $f^* \colon K_n(k(x)) \to K_n(S'_x) \cong \
\stackrel{s}{\underset{i =1}\bigoplus} K_n(k'(x_i))$ be the map $\iota^{-1}_{x *} \circ
f^*_x$.  For $n = 0$, a straightforward application  of the Devissage theorem
shows that $f^*_x(1) = f^*_x([k(x)]) = [\sO_{S_x}] =
\stackrel{s}{\underset{i =1}\sum} \ell({S^i_x})[k'(x_i)]=
 \stackrel{s}{\underset{i =1}\sum} \ell({S^i_x}) \iota_{x*} ([k'(x_i)]) = 
 \iota_{x*}( \stackrel{s}{\underset{i =1}\sum} \ell({S^i_x})  f_{x, i}^*(1))
 $, where $f_{x,i} \colon \Spec(k'(x_i)) \to \Spec(k(x))$ is the projection.
 Here, for an Artinian local ring $R$, we let $\ell(\Spec(R))$ denote the length of
 $R$. 

As $f^*_x, f^*_{x,i}$ and $\iota_{x *}$ are $K_*(k(x))$-linear, it follows
that for $n \ge 0$ and $\alpha \in K_n(k(x))$, one has
$f^*_x(\alpha) =  f^*_x(\alpha \cdot 1) = \alpha \cdot f^*_x(1) 
=  \iota_{x*}(\stackrel{s}{\underset{i =1}\sum} \ell({S^i_x}) f^*_{x,i}(\alpha))$.
We conclude that
\begin{equation}\label{eqn:Defn-PB}
  f^* = (\ell({S^1_x}) f^*_{x,1}, \cdots ,  \ell({S^s_x}) f^*_{x,s}).
  \end{equation}
Taking $n = 1$ and summing over $x \in X_{(0)}$, we get a pull-back map
$f^* \colon F^0_1(X) \to F^0_1(X')$.

\begin{prop}\label{prop:TCG-PB}
  $f^*$ descends to a homomorphism $f^* \colon C^\tm(X) \to C^\tm(X')$
 such that $f^* = f'^*_i \circ f^*_i$ for every $i \ge 0$.
 \end{prop}
\begin{proof}
We let $C \in \sC(X)$ and let $\ov{C}$ be the closure of $C$ in $\ov{X}$.
We let $\nu \colon \ov{C}_n \to \ov{X}$ be the canonical map from the normalization
of $\ov{C}$. We let $\ov{C}' = \ov{C} \times_{\ov{X}} \ov{X}'$ and let 
  $\ov{C}'_1, \ldots , \ov{C}'_r$ be the irreducible components of $\ov{C}'$.
Since $\ov{X}'$ is an inverse limit of integral schemes with finite transition maps,
  it is an easy exercise to check that each $C'_i \subset \ov{C}'_i$ is dense, if
  we let $C'_i = \ov{C}'_i \cap X'$. We set
  $\ov{Y} = \ov{C}_n \times_{\ov{X}} \ov{X}' \cong \ov{C}_n \times_{\ov{C}} \ov{C}'$ and
  let $\wt{g} \colon \ov{Y} \to \ov{C}_n$ and $\nu' \colon \ov{Y} \to \ov{X}'$ be the
  projections. Let $\nu'_i \colon \ov{Y}_i := (\ov{C}'_i)_n \to \ov{X}'$ and
  $\wt{g}_i \colon \ov{Y}_i \to \ov{C}_n$ be the canonical maps. It is clear that
  each $\wt{g}_i$ is flat. We let
  $Z = \coprod_i \ov{Y}_i =  (\ov{C}'_\red)_n \xleftarrow{\cong} (\ov{Y}_\red)_n$.
The last isomorphism holds because $\ov{Y} \to \ov{C}'$ is a finite birational
morphism of curves over $k'$.

We let $W$ (resp. $W_i$) be the spectrum of the semilocal ring
$\sO_{\ov{C}_n, T}$ (resp. $\sO_{\ov{Y}_i, T_i}$), where $T = \nu^{-1}(\ov{X} \setminus X)$ and
$T_i = \nu'^{-1}_i(\ov{X}' \setminus X') = \wt{g}^{-1}_i(T)$.
We let $W' = \Spec(k(C)) \times_{\ov{C}_n} \ov{Y}$ so
that $\Spec(k(Z)) \xrightarrow{\cong} W'_\red$ via the map $Z \to \ov{Y}$.
Let $I \subset \sO_{\ov{C}_n, T}$ be the ideal of $T$ and $I_i \subset
\sO_{\ov{Y}_i, T_i}$ the ideal of $T_i$. Note that $W'_\red = W' \times_{\ov{Y}} \ov{Y}_\red$
because the inclusion $W' \inj \ov{Y}$ is a limit of open immersions.
We write $W' = \stackrel{r}{\underset{i =1}\coprod} W'_i$, where $(W'_i)_\red =
\Spec(k'(\ov{C}'_i))$. We let $\pi_i \colon \Spec(k'(\ov{C}'_i)) \inj W'$ be the
inclusion. Let $g'$ (resp. $g'_i$) be the restriction of $\wt{g}$ to
$W'_\red$ (resp. $\Spec(k'(\ov{C}'_i))$).

We fix $\alpha \in K^M_2(\sO_{\ov{C}_n, T}, I)$ and let $\Sigma \subset C_n$ be the
smallest 0-dimensional reduced closed subscheme such that $\partial(\alpha) \in
K_1(\Sigma) \subset F^0_1(C_n)$. Let $S = \nu(\Sigma)$ and $S' = S \times_{X} X'$. We let
$\Sigma' = \Sigma \times_{C_n} \nu'^{-1}(X')$. We let 
$\pi \colon W'_\red \inj W', \ \tau' \colon \Sigma'_\red \inj \Sigma'$ and
$\tau \colon S'_\red \inj S'$ be the inclusion maps.
To show that $f^* \colon F^0_1(X) \to F^0_1(X')$ takes $\partial^1_X(F^1_2(X))$
into $\partial^1_{X'}(F^1_2(X'))$, we need to show that
$\tau^{-1}_* \circ f^*_S \circ \nu_*(\partial(\alpha))$ lies in the image of
$\partial^1_{X'} \colon F^1_2(X') \to F^0_1(X')$, where $f_S \colon S' \to S$ is
the restriction of $f$ to $S'$.

To show the above, we consider the diagram
\begin{equation}\label{eqn:TCG-PB-1}
  \xymatrix@C.6pc{
    \stackrel{r}{\underset{i =1}\bigoplus} K_2(W_i, T_i) \ar@{->>}[r]
    \ar@{}[dr] | {\refone} &
 \stackrel{r}{\underset{i =1}\bigoplus} K^M_2(\sO_{\ov{Y}_i, T_i}, I_i) \ar@{^{(}->}[r]
 \ar@{}[dr] | {\reftwo} &
    \stackrel{r}{\underset{i =1}\bigoplus} K^M_2(k(\ov{C}'_i))
    \ar[r]^-{\cong} \ar@{}[dr] | {\refthree} &
    K_2(W'_\red)  \ar[d]^-{\sum_i \ell({W'_i}) \pi_{i *}} & \\
    K_2(W, T) \ar@{->>}[r] \ar[u]^-{g'^*} &
    K^M_2(\sO_{\ov{C}_n, T}, I) \ar@{^{(}->}[r]
    \ar[u]^-{\oplus_i g'^*_i} \ar[dr]_-{\partial}
    & K^M_2(k(C)) \ar[r]^-{\wt{g}^*} \ar[d]^-{\partial} \ar[u]^-{\oplus_i g'^*_i}
    \ar@{}[dr] | {\reffour}
& G_2(W') \ar[d]^-{\partial} \ar@{}[dr] | {\reffive} &
K_2(W'_\red) \ar[l]_-{\pi_*}^-{\cong} \ar[d]^-{\partial} \\
& & K_1(\Sigma) \ar[r]^-{\wt{g}^*} \ar[d]^-{\nu_*} \ar@{}[dr] | {\refsix} & G_1(\Sigma')
\ar[d]^-{\nu'_*} \ar@{}[dr] | {\refseven} &
K_1(\Sigma'_\red) \ar[l]_-{\tau'_*}^-{\cong} \ar[d]^-{\nu'_*} \\
& & K_1(S) \ar[r]^-{f^*_S} & G_1(S') & K_1(S'_\red) \ar[l]_-{\tau_*}^-{\cong}.}
\end{equation}

\enlargethispage{2pt}

The squares (1) and (2) in this diagram commute by the contravariant functoriality of
$K$-theory. The computations of ~\eqref{eqn:Defn-PB} show that (3) commutes.
The squares (4)  and (5) commute because the localization sequence in $G$-theory
commutes
with the flat pull-back and proper push-forward. Note that all horizontal arrows
in (3), (4) and (6) are flat pull-back maps (because $\Sigma$ and $S$ are spectra of
products of fields). The square (6) commutes because the flat pull-back
commutes with the proper push-forward in $G$-theory induced by
a Cartesian square of schemes (cf. \cite[Prop.~2.11]{Quillen}). The square (7) clearly
commutes because all its arrows are the proper push-forward maps in $G$-theory.
The lone triangle clearly commutes. The horizontal arrows in (1) are surjective by
\lemref{lem:Semilocal-K}.

It follows from the commutativity of (6) that
$f^*_S \circ \nu_*(\partial(\alpha)) = \nu'_* \circ \wt{g}^*(\partial(\alpha))$.
On the other hand, the commutativity of other squares in ~\eqref{eqn:TCG-PB-1} shows
that $\nu'_* \circ \wt{g}^*(\partial(\alpha)) = \nu'_* \circ \partial \circ
\wt{g}^*(\alpha) =
   \nu'_* \circ \partial (\sum_i \pi_{i *}(\ell({W'_i}) g'^*_i(\alpha)))$.
   Letting $\beta_i = \ell({W'_i}) g'^*_i(\alpha) \in  K^M_2(\sO_{\ov{Y}_i, T_i}, I_i) \subset
   K^M_2(k(\ov{C}'_i))$, we conclude that
     $f^*_S \circ \nu_*(\partial(\alpha)) = \sum_i  \nu'_* \circ \partial \circ \pi_{i *}
     (\beta_i)$.

     Hence, we get
\[
  \begin{array}{lllll}
\tau^{-1}_* \circ  f^*_S \circ \nu_*(\partial(\alpha)) & = &
    \sum_i   \tau^{-1}_* \circ \nu'_* \circ \partial \circ \pi_{i *}(\beta_i)
    & = & \sum_i \nu'_* \circ \tau'^{-1}_*  \circ \partial \circ \pi_{i *}(\beta_i)  \\
     & = & \sum_i \nu'_* \circ \partial \circ \pi^{-1}_* \circ \pi_{i *}(\beta_i)  
    & = & \sum_i \nu'_* \circ \partial (\beta_i) \\      
& = &   \sum_i (\nu'_i)_* \circ \partial (\beta_i). 
\end{array}
  \]   
Since $\beta_i \in \Fil_{\Delta((C'_i)_n)} K^M_2(k(C'_i)) \subset F^1_2(X')$ and
  $(\nu'_i)_* \circ \partial (\beta_i)$ lies in the image of the map
  $\partial^1_{X'} \colon F^1_2(X') \to F^0_1(X')$ for every $1 \le i \le r$, we are done.

To show that $f^* = f'^*_i \circ f^*_i$ for each $i$, one easily reduces to checking it
on $K^M_1(k(x))$ for a closed point $x \in X$. In the latter case, we have to only
check that for a fixed $i \ge 1$, a closed point $x_i \in f^{-1}_i(x)$ and
a closed point $x' \in f'^{-1}_i(x_i)$, one has $\ell(S_x(x')) =
\ell(W_x(x_i)) \ell(T_{x_i}(x'))$, where $S_x(x')$ is the factor of the scheme theoretic
inverse image $S_x$ of $\Spec(k(x))$ in $X'$ whose support is $\{x'\}$, 
$W_x(x_i)$ is the factor of the scheme theoretic
inverse image $W_x$ of $\Spec(k(x))$ in $X_i$ whose support is $\{x_i\}$ and
$T_{x_i}(x')$ is the factor of the scheme theoretic
inverse image $T_{x_i}$ of $\Spec(k_i(x_i))$ in $X'$ whose support is $\{x'\}$.
But this follows from \lemref{lem:length}.
\end{proof}

\begin{lem}\label{lem:length}
    Let $A_1 \to A_2$ be a flat homomorphism of
    Artinian local algebras over a field such that $J_1A_2 \subset J_2$, where $J_i$ 
is the maximal ideal of $A_i$. Let $A'_2 = {A_2}/{J_1A_2}$. Then
    $\ell(A_2) = \ell(A_1) \ell(A'_2)$.
  \end{lem}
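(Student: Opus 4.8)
The plan is to reduce the statement to the classical multiplicativity of lengths in a short filtration argument, using flatness to control how the fiber behaves. First I would recall the setup: we have a flat local homomorphism $A_1 \to A_2$ of Artinian local algebras over a field, with $J_1 A_2 \subset J_2$, and we set $A'_2 = A_2/J_1 A_2$. Since $A_1$ is Artinian local, it has a finite composition series $0 = I_0 \subset I_1 \subset \cdots \subset I_n = A_1$ with $n = \ell(A_1)$ and each successive quotient $I_{j}/I_{j-1} \cong A_1/J_1$ as an $A_1$-module (here I use that $A_1/J_1$ is the residue field). Tensoring this filtration with $A_2$ over $A_1$ and invoking flatness of $A_2$ over $A_1$, I obtain a filtration $0 = I_0 A_2 \subset I_1 A_2 \subset \cdots \subset I_n A_2 = A_2$ of $A_2$ as an $A_2$-module (flatness ensures each inclusion $I_{j-1}A_2 \hookrightarrow I_j A_2$ is injective and that $I_j A_2 / I_{j-1} A_2 \cong (I_j/I_{j-1}) \otimes_{A_1} A_2 \cong (A_1/J_1) \otimes_{A_1} A_2 = A_2/J_1 A_2 = A'_2$).

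Next I would compute lengths. The length is additive on short exact sequences, so from the filtration above,
\[
\ell_{A_2}(A_2) = \sum_{j=1}^{n} \ell_{A_2}(I_j A_2 / I_{j-1} A_2) = \sum_{j=1}^{n} \ell_{A_2}(A'_2) = n \cdot \ell_{A_2}(A'_2) = \ell(A_1) \cdot \ell(A'_2),
\]
where $\ell(A_2) = \ell_{A_2}(A_2)$ and $\ell(A'_2) = \ell_{A_2}(A'_2) = \ell_{A'_2}(A'_2)$ since $A'_2$ is a quotient of $A_2$ and the $A_2$-module length of an $A'_2$-module equals its $A'_2$-module length (the $A_2$-action factors through $A'_2$ and the simple modules are the same, namely copies of the common residue field $A_2/J_2 = A'_2/J'_2$, because $J_1 A_2 \subset J_2$ forces the residue field of $A'_2$ to equal that of $A_2$). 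This completes the computation.

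The one point that genuinely needs the hypothesis $J_1 A_2 \subset J_2$ is the identification of simple modules: without it, $A'_2$ could be zero or fail to be local in the relevant sense, but the containment guarantees $A'_2 \neq 0$ (since $J_1 A_2 \subset J_2 \subsetneq A_2$) and that its unique maximal ideal is the image of $J_2$. The only mild subtlety — which I would address in one line — is checking that $A_2$ is indeed a finite-length $A_2$-module, i.e. Artinian; this is given. So the main (and essentially only) obstacle is the careful bookkeeping that flatness transports a composition series of $A_1$ to a filtration of $A_2$ with the predicted graded pieces; everything after that is the standard additivity of length. I do not expect any serious difficulty here, as this is a routine commutative-algebra fact, and I would keep the written proof short.
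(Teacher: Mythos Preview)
Your proof is correct and is in fact the standard textbook argument (essentially \cite[Lem.~A.4.1]{Fulton}, which the paper cites elsewhere). The paper, however, takes a different route: it invokes the Cohen structure theorem to identify $\ell(A_i) = \dim_{k_i}(A_i)$ over the residue field $k_i$, then filters $A_2$ by the powers $I_2^n = J_1^n A_2$ of the extended ideal (rather than by a composition series of $A_1$), uses flatness to get $I_2^n/I_2^{n+1} \cong (J_1^n/J_1^{n+1}) \otimes_{k_1} A'_2$, and finishes by a $k_2$-dimension count. Your approach is cleaner and more elementary: it avoids the Cohen structure theorem entirely and therefore does not actually need the hypothesis that the $A_i$ are algebras over a field---it works for any flat local homomorphism of Artinian local rings. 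The paper's approach, by contrast, trades the abstract length bookkeeping for concrete linear algebra over the residue fields, which is perhaps more in keeping with how the lemma is applied in the surrounding text.
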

  \begin{proof}
The lemma follows from  \cite[Ex.~A.1.1]{Fulton}. We omit the details. 
\end{proof}

  We now let $k$ be an infinite field of characteristic exponent $p \ge 2$ and let
  $X$ be a smooth and integral $k$-scheme. Let ${k'}/k$ be a purely inseparable
  algebraic extension of fields. If we let $u \colon X \inj \ov{X}$ be
  the inclusion of $X$ into any of its integral normal compactifications
  (which always exist), then the map
  $f^* \colon F^0_1(X) \to F^0_1(X')$ as defined in ~\eqref{eqn:Defn-PB}
  does not depend on the choice of $\ov{X}$. This is because of the fact that
  under the smoothness condition on $X$, one has that $X_i = X_{k_i}$ for
  every $i \ge 0$ and $X' = X_{k'}$ in the notations used above. Furthermore,
   $f_{ij} \colon X_j \to X_i$ is finite and flat.
\propref{prop:TCG-PB} therefore implies the following.

 \begin{cor}\label{cor:Sm-PB-insep}
    For $X$ and ${k'}/k$ as above, the map $f^* \colon F^0_1(X) \to F^0_1(X_{k'})$ is
    well-defined and induces a pull-back map $f^* \colon C^\tm(X) \to C^\tm(X_{k'})$.
    \end{cor}

 We next note that if $k$ is any infinite field
 and $f \colon X'\to X$ is a finite and flat morphism of
 integral normal $k$-schemes, then the discussion before \propref{prop:TCG-PB} holds
 for $f$, and ~\eqref{eqn:Defn-PB} defines a map $f^* \colon F^0_1(X) \to F^0_1(X')$
 without the need for compactifications.
 Indeed, on $x \in X_{(0)}$-component, the map $f^* $ is defined as follows. 
\[
 f^* = (\ell({S^1_x}) f^*_{x,1}, \cdots ,  \ell({S^s_x}) f^*_{x,s}),
\]
where the scheme-theoretic inverse image is
$f^{-1}(x) = \stackrel{s}{\underset{i = 1}\coprod} S^i_x$ with
$\Spec(k(x_i)) = (S^i_x)_{{\rm red}} $ and $f_{x,i} \colon \Spec(k(x_i)) \to \Spec(k(x))$
the projection. 
In the proof of \propref{prop:TCG-PB}, we have in fact shown the following.

\begin{prop}\label{prop:TCG-PF-fin}
  Let $f \colon X' \to X$ be a finite and flat map between integral normal $k$-schemes.
  Then the above map $f^* \colon F^0_1(X) \to F^0_1(X')$ descends to a homomorphism
  \[
    f^* \colon C^\tm(X) \to C^\tm(X').
  \]
  \end{prop}

\subsection{Covariance of tame class group}\label{sec:Tame-functorial}
Let $k$ be any infinite field.
To prove the covariance property of the tame class group, we shall need the
  following local result.

\begin{lem}\label{lem:Norm-rel}
  Let $A$ be the local ring at a closed point of an integral regular curve over $k$
  and let $f \colon A \inj A'$ be a finite morphism of semilocal Dedekind domains.
  Let $J$ (resp. $J'$) denote the Jacobson radical of $A$ (resp. $A'$).
  Let $F$ (resp. $F'$) denote the quotient field of $A$ (resp. $A'$). Then the norm
  map $f_* \colon K^M_2(F') \to K^M_2(F)$ restricts to
  \[
    f_* \colon K^M_2(A') \to K^M_2(A)  \ \ \mbox{and} \ \
    f_* \colon K^M_2(A', J') \to K^M_2(A,J).
  \]
\end{lem}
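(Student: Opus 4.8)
The plan is to reduce both assertions to the case of complete discrete valuation rings and then invoke the standard compatibilities of the Milnor $K$-theory norm with residue homomorphisms. Since everything is insensitive to localization of $A$, I would first take $A$ to be a discrete valuation ring with maximal ideal $\mathfrak m = J$. Let $\mathfrak m'_1,\dots,\mathfrak m'_s$ be the maximal ideals of $A'$; as $A'$ is finite over the local ring $A$, each lies over $\mathfrak m$ and $J' = \bigcap_j \mathfrak m'_j = \prod_j \mathfrak m'_j$. Because $A'$ is Dedekind (hence normal) and module-finite over $A$, it is the normalization of $A$ in $F'$, so it is again the semilocal ring at a finite set of closed points of a regular integral curve over $k$; in particular Lemma~\ref{lem:Semilocal-K-1} and Corollary~\ref{cor:Semilocal-K-2} apply to $A'$ as well.

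For $f_*(K^M_2(A')) \subseteq K^M_2(A)$ I would use the compatibility of the norm with the tame symbols: the square with vertical maps $f_*$ and $\sum_j \Nm_{k(\mathfrak m'_j)/k(\mathfrak m)}$ and horizontal maps the Gersten boundaries $\partial' \colon K^M_2(F') \to \bigoplus_j K^M_1(k(\mathfrak m'_j))$ and $\partial \colon K^M_2(F) \to K^M_1(k(\mathfrak m))$ commutes, this being the projection formula for residues (see \cite{Kato86} or \cite{Kato-Saito-2}). Since $K^M_2(A) = \Ker(\partial)$ and $K^M_2(A') = \Ker(\partial')$ by Lemma~\ref{lem:Semilocal-K-1}, the inclusion is immediate.

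For the relative statement, let $\beta \in K^M_2(A', J')$, so $f_*(\beta) \in K^M_2(A)$ by the previous step. By the Cartesian square of Lemma~\ref{lem:K2-dvr} for the ideal $\mathfrak m \subset A$, it suffices to show that the image of $f_*(\beta)$ in $K^M_2(\wh A)$ lies in $K^M_2(\wh A, \mathfrak m\wh A) = \Ker\big(K^M_2(\wh A) \to K^M_2(l)\big)$, with $l = \wh A/\mathfrak m\wh A$. Now $\wh A \otimes_A A'$ is the $\mathfrak m$-adic (equivalently $J'$-adic) completion of $A'$, hence equals $\prod_{j} \wh{B_j}$ with $\wh{B_j} := \widehat{A'_{\mathfrak m'_j}}$ a complete DVR whose maximal ideal is $J'\wh{B_j}$, residue field $l_j := k(\mathfrak m'_j)$, and ramification index $e_j$ over $A$. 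The norm is compatible with base change along $F \to \wh F$, which here means $F'\otimes_F \wh F \cong \prod_j \wh{F'_j}$ and $f_*(\beta) = \sum_j (f_j)_*(\beta_j)$ in $K^M_2(\wh A)$, where $f_j \colon \wh A \to \wh{B_j}$ and $\beta_j$ is the image of $\beta$ in $K^M_2(\wh{B_j}, J'\wh{B_j}) = \Ker\big(K^M_2(\wh{B_j}) \to K^M_2(l_j)\big)$. Finally, the norm is compatible with reduction to residue fields: the square with verticals $(f_j)_*$ and $e_j\,\Nm_{l_j/l}$ and horizontals $K^M_2(\wh{B_j}) \to K^M_2(l_j)$, $K^M_2(\wh A) \to K^M_2(l)$ commutes (see \cite{Kato86, Kato80}), so from $\beta_j \mapsto 0$ we get $(f_j)_*(\beta_j) \mapsto e_j\Nm_{l_j/l}(0) = 0$; summing over $j$ yields $f_*(\beta) \in K^M_2(\wh A, \mathfrak m\wh A)$, as required.

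The commutative-algebra steps (identifying $A'$ with a normalization; $\wh A \otimes_A A' = \prod_j \wh{B_j}$; $J'\wh{B_j}$ being the maximal ideal) are routine, and the two compatibility statements for the Milnor $K$-norm are classical — I would cite Kato rather than reprove them. The step I expect to need the most care is the base-change formula $f_*(\beta) = \sum_j (f_j)_*(\beta_j)$ after completing at $\mathfrak m$: one must make sure that $F' \otimes_F \wh F$ really is the product of the completions $\wh{F'_j}$ and that the transfer distributes over this product. Once that is pinned down, the problem is genuinely reduced to a finite extension of complete discrete valuation fields, where the residue-compatibility of the norm closes the argument.
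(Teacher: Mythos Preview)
Your proposal is correct and follows the same overall strategy as the paper: use norm--residue compatibility for the first inclusion, then for the relative statement pass to completions and invoke the compatibility of the norm with reduction to the residue field (up to the ramification factor $e_j$). The execution differs in one respect worth noting. You stay entirely within Milnor $K$-theory and cite Kato for both compatibilities, whereas the paper first identifies $K^M_2$ with Quillen $K_2$ via \lemref{lem:Semilocal-K} and then handles the step you flagged---the base-change identity $f_*(\beta)|_{\wh A} = \sum_j (f_j)_*(\beta_j)$---by Quillen's flat base-change theorem \cite[\S7, Prop.~2.11]{Quillen} applied to the co-Cartesian square $A \to \wh A$, $A' \to \prod_j \wh{B_j}$. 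The paper also isolates the complete-DVR compatibility as a separate lemma (\lemref{lem:Norm-rel-2}), proved by cupping with a uniformizer and using the projection formula, which is essentially the content of the Kato reference you invoke. Your route is slightly more self-contained in Milnor $K$-theory; the paper's buys a clean citation for the base-change step at the cost of the Milnor--Quillen comparison.
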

\begin{proof}
 We can replace $K^M_2(-)$ by $K_2(-)$ using
  \lemref{lem:Semilocal-K}. 
 It follows from \cite[Lem.~3.4]{Gupta-Krishna-REC} that $f_*$ induces the norm map
 $f_* \colon K_2(A') \to K_2(A)$. Using Lemmas~\ref{lem:Semilocal-K}, 
 ~\ref{lem:Semilocal-K-1} and \cite[Lem.~11.3]{Gupta-Krishna-AKT-1},
 we see that these norm maps coincide with the push-forward maps between
 Quillen $K$-groups associated to the finite morphisms of rings and fields.

 To show that $f_*$ preserves the relative $K$-groups,
 we let $\wh{A}$ and $\wh{A'}$ be the completions with respect to the Jacobson
 ideals. Let $\ff$ be the residue field of $A$ and $\ff_1, \ldots , \ff_r$ the
 residue fields of $A'$. Let $A'_i$ be the localization of $A'$ at the maximal
 ideal corresponding to the residue field $\ff_i$. Let $e_i$ denote the ramification
 index of the extension $A \to A'_i$. 

 Since $J' = \sqrt{JA'}$, it follows that $\wh{A'}$ is the $J$-adic completion
 of $A'$ when treated as an $A$-module. Since $A'$ is finite over $A$, it
 follows from \cite[Thm.~8.7, 8.15]{Matsumura} that the maps
 $\wh{A} \otimes_A A' \xrightarrow{\lambda_1} \wh{A'} \xrightarrow{\lambda_2}
 \stackrel{r}{\underset{i =1}\prod} \wh{A'_i}$
 are isomorphisms, where $\wh{A'_i}$ is the completion of $A'_i$ with respect to its
 maximal ideal. Let $\wh{f}_i \colon \wh{A} \inj \wh{A'_i}$ and
 $f'_i \colon \ff \inj \ff_i$ be the inclusions.

 We now consider the diagram
 \begin{equation}\label{eqn:Norm-rel-1}
   \xymatrix@C1pc{
     K_j(A') \ar[d]_-{f_*} \ar[r] & \stackrel{r}{\underset{i =1}\oplus}
     K_j(\wh{A'_i}) \ar[d]^-{\wh{f}_*} \ar[r] & \stackrel{r}{\underset{i =1}\oplus}
     K_j(\ff_i) \ar[d]^-{\sum_i e_i (f'_i)_*} \\
     K_j(A) \ar[r] & K_j(\wh{A}) \ar[r] & K_j(\ff)}
 \end{equation}
 of $K_*(A)$-modules for $0 \le j \le 2$, in which the horizontal arrows are the
 canonical pull-back maps in $K$-theory. To finish the proof of the lemma,
 it suffices to show that the outer rectangle in ~\eqref{eqn:Norm-rel-1} is
 commutative. Using the isomorphisms $\lambda_1$ and $\lambda_2$ 
 between the complete rings, it
 follows from \cite[\S~7, Prop.~2.11]{Quillen} that the left square is commutative.
 It remains to show that the right square in ~\eqref{eqn:Norm-rel-1} is commutative.
 But this follows from \cite[\S~3, Lem.~13]{Kato80} 
since the middle vertical arrow is the same as the map ${\sum_i (\wh{f}_i)_*}$.
\end{proof}

Let $f \colon X' \to X$ be a morphism of integral normal $k$-schemes.
For every $x \in X'_{(0)}$, we have the norm map $f_* \colon K^M_1(k(x)) \to
K^M_1(k(f(x)))$. This induces a homomorphism $f_* \colon F^0_1(X')  \to F^0_1(X)$.

\begin{prop}\label{prop:TCG-PF}
  $f_*$ descends to a push-forward homomorphism
  \[
    f_* \colon C^\tm(X') \to C^\tm(X).
  \]
\end{prop}
\begin{proof}
  The proof reduces to showing that if $f \colon X' \to X$ is a finite morphism
  of integral normal curves, and  $\alpha \in \Fil_{\Delta(X')} K^M_2(k(X'))$
  (where $f_* \colon K^M_2(k(X')) \to K^M_2(k(X))$ is the norm map), then
$f_*(\alpha) \in \Fil_{\Delta(X)} K^M_2(k(X))$. But this follows from
Lemmas~\ref{lem:Fil-Milnor-comp} and ~\ref{lem:Norm-rel}.
\end{proof}

\begin{cor}\label{cor:Sm-PB}
  Let $X$ be an integral smooth $k$-scheme and let $f \colon X' := X_{k'} \to X$ be
  the induced morphism where ${k'}/k$ is a finite field
  extension. Then there exists a pull-back map $f^* \colon C^\tm(X) \to C^\tm(X')$
  such that $f_* \circ f^*$ is multiplication by $[k':k]$ on $C^\tm(X)$.
  \end{cor}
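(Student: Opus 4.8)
The plan is to produce $f^*$ as a special case of \propref{prop:TCG-PF-fin} and then to verify the identity $f_* \circ f^* = [k':k]\cdot \id$ by a direct computation on the explicit presentation of $C^\tm(X)$ as a cokernel of a boundary map.

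First I would observe that, $k'/k$ being a finite field extension, the morphism $\Spec(k') \to \Spec(k)$ is finite and flat (a finite field extension is a free $k$-module of rank $[k':k]$), hence so is its base change $f \colon X' = X_{k'} \to X$. Since $X$ is smooth over $k$, the scheme $X'$ is smooth over $k'$, hence regular, hence a normal $k$-scheme; it may fail to be connected (which can happen when $k'/k$ is not purely inseparable), but each of its finitely many connected components is an integral normal $k$-scheme and the restriction of $f$ to it is again finite and flat and surjects onto $X$. Therefore \propref{prop:TCG-PF-fin} (applied to each connected component, using that $C^\tm(-)$, $F^0_1(-)$, the pull-back of \eqref{eqn:Defn-PB} and the push-forward are all additive over connected components) provides the pull-back homomorphism $f^* \colon C^\tm(X) \to C^\tm(X')$, described on the level of $F^0_1$ by \eqref{eqn:Defn-PB}; and \propref{prop:TCG-PF} (again componentwise) provides the push-forward $f_* \colon C^\tm(X') \to C^\tm(X)$.

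To compute $f_* \circ f^*$, I would use that $C^\tm(X)$ is by \defref{defn:TCG-0} a quotient of $F^0_1(X) = {\underset{x \in X_{(0)}}\bigoplus} K^M_1(k(x))$ and that both $f^*$ and $f_*$ are induced by the corresponding maps on $F^0_1$, so it suffices to evaluate $f_* \circ f^*$ on each summand $K^M_1(k(x)) = k(x)^\times$, $x \in X_{(0)}$. For such $x$ the scheme-theoretic fibre is $S_x = X' \times_X \Spec(k(x)) = \Spec(k(x)\otimes_k k')$, a finite $k(x)$-algebra with $\dim_{k(x)} \sO_{S_x} = \dim_k(k') = [k':k]$; writing $S_x = \coprod_{i=1}^{s} S^i_x$ with each $S^i_x$ Artinian local of residue field $k'(x_i)$ ($x_1,\dots,x_s$ being the points of $X'$ over $x$), \eqref{eqn:Defn-PB} reads $f^*(\alpha) = (\ell(S^i_x)\, f^*_{x,i}(\alpha))_i$, where $f^*_{x,i} \colon k(x)^\times \inj k'(x_i)^\times$ is the inclusion induced by $k(x) \inj k'(x_i)$. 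Since $f_*$ acts on the $i$-th summand by the norm $N_{k'(x_i)/k(x)}$ and $N_{k'(x_i)/k(x)}(a) = a^{[k'(x_i):k(x)]}$ for $a \in k(x)^\times$, I get $f_*(f^*(\alpha)) = \left(\sum_{i=1}^{s} \ell(S^i_x)\,[k'(x_i):k(x)]\right)\cdot \alpha = \left(\sum_{i=1}^{s} \dim_{k(x)} \sO_{S^i_x}\right)\cdot \alpha = \dim_{k(x)}(k(x)\otimes_k k')\cdot\alpha = [k':k]\cdot\alpha$, where the second equality is $\dim_{k(x)} \sO_{S^i_x} = \ell(S^i_x)\,[k'(x_i):k(x)]$. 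Hence $f_* \circ f^* = [k':k]$ on $F^0_1(X)$, and therefore on $C^\tm(X)$.

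I do not anticipate a genuine obstacle: the statement is essentially formal once the pull-back is in hand. The two points needing a little care are the bookkeeping of the local multiplicities $\ell(S^i_x)$ in \eqref{eqn:Defn-PB} — precisely where the identity $\dim_{k(x)}(k(x)\otimes_k k') = [k':k]$ makes the local contributions add up to $[k':k]$ — and, in the inseparable case, confirming that \propref{prop:TCG-PF-fin} and \propref{prop:TCG-PF} still apply (they do, after passing to connected components) and that the norm of an element of the base field is still its $[k'(x_i):k(x)]$-th power (true, since $N_{L/K}$ is the determinant of multiplication, which on $K$ acts by scalars).
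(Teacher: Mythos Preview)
Your proposal is correct and follows essentially the same approach as the paper: both obtain $f^*$ from \propref{prop:TCG-PF-fin} (since $f$ is finite flat) and verify $f_*\circ f^* = [k':k]$ by reducing to the summand $K^M_1(k(x))$ for $x\in X_{(0)}$, where the identity $\dim_{k(x)}(k(x)\otimes_k k') = [k':k]$ does the work. The only cosmetic difference is that the paper packages the local computation as the $K$-theoretic projection formula ($K_1(k(x)) \xrightarrow{f^*} G_1(S_x) \xrightarrow{f_*} K_1(k(x))$ is multiplication by $[f_*\sO_{S_x}]\in K_0(k(x))\cong\Z$), whereas you unpack it explicitly via norms and the length--dimension identity; your extra care with connected components of $X'$ is a point the paper leaves implicit.
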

  \begin{proof}
    The existence of $f^*$ follows from \propref{prop:TCG-PF-fin} because $f$ is
    necessarily flat.
    To prove the other assertion of
    the corollary, we let $x \in X_{(0)}$ and let $S_x = \Spec(k(x)) \times_X X'$.
    Then one knows that the composite $K_1(k(x)) \xrightarrow{f^*} G_1(S_x)
    \xrightarrow{f_*} K_1(k(x))$ is multiplication by $[f_*(\sO_{S_x})] \in
    K_0(k(x)) \cong \Z$. But this is multiplication by
    $\dim_{k(x)}(S_x) = [k':k]$. 
\end{proof}

  \subsection{Continuity of tame class group}\label{sec:Cont}
  Recall from
  \cite[Defn.~5]{Kerz-JAG} that a functor $F$ from the category of (unital) commutative
  rings to the category of abelian groups is said to be continuous if
  for every filtering direct limit of rings $A = \varinjlim A_i$, the natural
  map $\varinjlim F(A_i) \to F(A)$ is an isomorphism. It is shown in
  Proposition~6 of op. cit. that the Milnor $K$-theory of rings is a continuous
  functor. This implies the same for the relative Milnor $K$-theory.
  
Let $k$ now be an imperfect field of characteristic exponent $p \ge 2$.
Assume further that $[k: k^p] < \infty$.
Let $k^{\heartsuit}$ denote a fixed
perfect closure of $k$. For $X \in \Sch_k$, we let
$X^{\heartsuit} = X_{k^\hs}$. Note that
$\pi \colon X^{\heartsuit} \xrightarrow{\cong} \lim X_\bullet$,
where the pro-scheme $X_\bullet = \{X_i\}_{i \ge 0}$ is defined by taking 
$k_i = k^{1/{p^i}}$ and $X_i = X \times_{\Spec(k)} \Spec(k_i)$.
For every $j \ge i$, the projection map $f_{ij} \colon X_j \to X_i$
is a finite flat radicial morphism whose degree is a power of $p$.
We let $g_i \colon X_i \to X, \ f_i \colon X^{\heartsuit} \to X_i$ and
$f \colon X^{\heartsuit} \to X$ be the projection maps. If $X \in \Sm_k$ is integral,
then
$X_i$ is an integral smooth $k_i$-scheme for each $i$ and $X^{\heartsuit}$ is an
integral smooth $k^{\heartsuit}$-scheme. In particular, for $i \geq 0$, we have homomorphisms
$f_i^* \colon C^t(X_i) \to C^t(X^{\hs})$ by \corref{cor:Sm-PB-insep}. Taking the limit, we get a homomorphism 
$\pi^* \colon {\varinjlim}_{i\ge 0} \ C^\tm(X_i) \to C^\tm(X^{\heartsuit})$. The following will be a key
lemma for us.

\begin{lem}\label{lem:Continuity}
  Assume that $X$ is a smooth and integral $k$-scheme. 
  Then the pull-back 
  \[
    \pi^* \colon \ {\varinjlim}_{i \ge 0} \ C^\tm(X_i) \to C^\tm(X^{\heartsuit})
  \]
  is an isomorphism.
\end{lem}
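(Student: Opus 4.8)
The plan is to reduce the statement to the finitary nature of Milnor $K$-theory, checking along the way that the ``multiplicity'' factors which appear in positive characteristic (as in ~\eqref{eqn:Defn-PB}) become harmless over the perfect field $k^{\heartsuit}$. Recall that $C^\tm(Y)$ is the cokernel of $\partial^1_Y\colon F^1_2(Y)\to F^0_1(Y)$ in the notation of ~\eqref{eqn:Gen-Reln}, and that $\pi^*$ is the map induced on these cokernels by the pull-backs $f^*_i\colon C^\tm(X_i)\to C^\tm(X^{\heartsuit})$ of \corref{cor:Sm-PB-insep}. Since filtered colimits of abelian groups are exact, I would first reduce to showing that the two natural maps
\[
  {\underset{i\ge 0}\varinjlim}\ F^0_1(X_i)\longrightarrow F^0_1(X^{\heartsuit})\qquad\text{and}\qquad {\underset{i\ge 0}\varinjlim}\ F^1_2(X_i)\longrightarrow F^1_2(X^{\heartsuit})
\]
are isomorphisms; the squares relating them via the boundary maps commute because the pull-backs commute with $\partial^1$ (see the proof of \propref{prop:TCG-PB}), so $\pi^*$ is exactly the resulting map on cokernels.

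For the map on $F^0_1$, I would use that $X^{\heartsuit}\to X_i\to X$ are radicial, hence homeomorphisms, so the sets of closed points are identified, say $x^{\heartsuit}\mapsto x_i\mapsto x$, and, since $(-)_{\red}$ commutes with filtered colimits, $k(x^{\heartsuit})=(k(x)\otimes_k k^{\heartsuit})_{\red}={\varinjlim}_i\,k_i(x_i)$ is a field finite over $k^{\heartsuit}$, hence perfect, with $\bigcup_i k_i(x_i)=k(x^{\heartsuit})$. By ~\eqref{eqn:Defn-PB}, on the $x$-summand the transition maps are field inclusions followed by multiplication by the length of the corresponding Artinian fibre, a power of $p$; by \lemref{lem:length} these lengths multiply, so they divide a fixed integer and hence stabilize, and the transition maps become honest inclusions for $i\gg 0$. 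Thus the colimit, computed over this cofinal tail, is $(\bigcup_i k_i(x_i))^\times=k(x^{\heartsuit})^\times$, and the comparison map to $K^M_1(k(x^{\heartsuit}))$ is $u\mapsto u^m$ for the stabilized $p$-power $m$, which is an automorphism of $k(x^{\heartsuit})^\times$ because $k(x^{\heartsuit})$ is perfect. Summing over $x\in X_{(0)}$ gives the first isomorphism.

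For the map on $F^1_2$, I would argue analogously: fixing a normal compactification $\ov{X}$ of $X$, \lemref{lem:Inv-lim} shows that for each $C\in\sC(X)$ the normalized curves $(C_i)_n$ and their canonical compactifications form inverse systems with finite transition maps whose limits compute $(C^{\heartsuit})_n$ and its compactification, so the semilocal rings along the boundary and their boundary ideals are the filtered colimits of the $\sO_{\ov{(C_i)_n},\Delta((C_i)_n)}$ and their ideals. Using the identification $\Fil_{\Delta}K^M_2=K^M_2(\sO_{\ov{C}_n,\Delta},I_{\Delta})$ of \lemref{lem:Fil-Milnor-comp} together with the fact that Milnor $K$-theory of rings — and hence relative Milnor $K_2$, being a kernel — commutes with filtered colimits, I obtain that $\Fil_{\Delta((C^{\heartsuit})_n)}K^M_2(k((C^{\heartsuit})_n))$ is the colimit of the $\Fil_{\Delta((C_i)_n)}K^M_2(k((C_i)_n))$, once the multiplicity factors of the curve pull-backs (as in the proof of \propref{prop:TCG-PB}) are controlled. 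These are again powers of $p$ which multiply by \lemref{lem:length} and hence stabilize; and since $C^{\heartsuit}$ is a curve over the perfect field $k^{\heartsuit}$, its normalization $(C^{\heartsuit})_n$ is smooth over $k^{\heartsuit}$, so for $i\gg 0$ the generic fibre of $(C^{\heartsuit})_n\to (C_i)_n$ is reduced, i.e.\ $k((C_i)_n)/k_i$ is separable and the stabilized multiplicity is $1$. Summing over $C\in\sC(X)$ gives the second isomorphism, and hence the lemma.

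The main obstacle I expect is precisely this bookkeeping with multiplicities: verifying that the abstract colimit of Milnor $K$-groups taken along the ``twisted'' transition maps — inclusions composed with $p$-power multiplications coming from non-reduced fibres over the imperfect fields $k_i$ — genuinely recomputes the Milnor $K$-groups of the function fields over $k^{\heartsuit}$. The two ingredients that make it go through are the multiplicativity of fibre lengths (\lemref{lem:length}), which forces these $p$-powers to stabilize, and the good behaviour over the perfect base $k^{\heartsuit}$ — perfectness of the residue fields for $F^0_1$, smoothness of normalized curves for $F^1_2$ — which makes the stabilized multiplicities act invertibly, respectively become trivial.
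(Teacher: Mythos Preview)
Your approach is correct and takes a genuinely different route from the paper's. The paper argues surjectivity and injectivity of $\pi^*$ directly at the level of $C^\tm$: for surjectivity it descends an arbitrary closed point $y\in X^{\heartsuit}_{(0)}$ to some $X_j$ via fppf descent (so that $\Spec(k(y_j))\times_{X_j}X^{\heartsuit}$ is already reduced, whence $f^*_l$ is the honest inclusion for $l\ge j$), and for injectivity it takes a class $\alpha\in C^\tm(X)$ dying in $C^\tm(X^{\heartsuit})$, represents $f^*(\alpha)$ as a boundary $\sum_i\partial(\beta'_i)$ with $\beta'_i$ in the relative $K^M_2$ of finitely many curves on $X^{\heartsuit}$, and then descends both the curves and the elements $\beta'_i$ to some $X_j$ by noetherian approximation and continuity of Milnor $K$-theory.

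Your route instead lifts the comparison to the presentations $F^1_2\to F^0_1\to C^\tm\to 0$ and uses exactness of filtered colimits to reduce to two separate isomorphisms on $F^0_1$ and $F^1_2$. This is conceptually cleaner and makes explicit that the whole statement is an instance of continuity of Milnor $K$-theory, once the radicial multiplicities are controlled. The price is that you must define the pull-back on $F^1_2$ (which the paper never does directly; it only shows in the proof of \propref{prop:TCG-PB} that the boundary of such a pull-back lands in the right place) and carry out the attendant multiplicity bookkeeping there. Two remarks: first, your stabilization argument for points is a little less sharp than needed --- the same fppf descent the paper uses shows the stabilized multiplicity is exactly $1$, so the appeal to perfectness of $k(x^{\heartsuit})$ and the $p$-power automorphism is unnecessary (though not wrong). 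Second, for curves the cleanest justification that the stabilized generic multiplicity is $1$ is purely algebraic: once the transition lengths $m_{ij}$ are all $1$ (which your divisibility argument gives), the ring $k(C_i)\otimes_{k_i}k^{\heartsuit}=\varinjlim_{j\ge i}k(C_i)\otimes_{k_i}k_j=\varinjlim_{j\ge i}k(C_j)=k(C^{\heartsuit})$ is already a field, so $m_{i\infty}=1$; you do not need to invoke smoothness of $(C^{\heartsuit})_n$ spreading out.
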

\begin{proof}
Suppose that $y \in X^{\heartsuit}_{(0)}$ and $y_j = f_j(y)$ for any $j \ge 0$.
Then there will exist an integer $j \gg 0$ and a closed subscheme $Y_j \subset X_j$
such that $Y_j \times_{X_j} X^{\heartsuit} = \Spec(k(y))$. The support of such $Y_j$ must
be the closed point $y_j$ of $X_j$. Furthermore, the fppf descent
implies that $Y_j$ must be reduced (e.g., use
\cite[Tag~033F]{SP} and note that $f_j$ is an affine morphism).
  In other words, $Y_j = \Spec(k(y_j))$. The same argument implies that
  $\Spec(k(y_j)) \times_{X_j} X_l = \Spec(k(y_l))$ and
  $\Spec(k(y_j)) \times_{X_j} X^{\heartsuit}
  = \Spec(k(y))$ for every $l \ge j$. It follows from this and the definition of
  $f^*_l$ 
  that $f^*_l(a_l) = a_l$ for every $l \ge j$ and $a_l \in K^M_1(k(y_l))$.
By the continuity of Milnor $K$-theory, it follows that
  every $a \in K^M_1(k(y))$ has the property that there exists $l \ge j$ and
  $a_l \in K^M_1(k(y_l))$ such that $f^*_l(a_l) = a$. This shows that $\pi^*$ is
  surjective.

To prove that $\pi^*$ is injective, suppose there exists $\alpha
  \in \varinjlim_j \ C^\tm(X_j)$ such that $\pi^*(\alpha) = 0$.
  We can then find an integer $j \ge 0$, a 0-dimensional reduced closed subscheme
  $S_j \subset X_j$ such that $\alpha$ is represented by an element of $K_1(S_j)$.
  We let $S'$ be the set theoretic inverse image of $S_j$ in $X^{\heartsuit}$ 
(with the reduced closed subscheme structure) so that
  $f^*_j(\alpha) \in K_1(S')$. As explained in the proof of surjectivity of
  $\pi^*$, we can find $l \gg j$ and a reduced closed subscheme $S_l \subset X_l$ such
  that $S_l \times_{X_l} X^{\heartsuit} = S'$. We can replace $\alpha$ by
  $f^*_{jl}(\alpha) \in K_1(S_l)$ (note that $S' \to S_l \to S_j$ are bijections). We
  can also replace $k$ by any $k_l$ and $\alpha$ by $f^*_{jl}(\alpha)$
  with $l \gg j$. We can thus assume that there
  exists a 0-dimensional reduced closed subscheme
  $S_0 \subset X$ such that $\alpha$ is represented by an element of $K_1(S_0)$.
  Furthermore, $S' = S_0 \times_X X^{\heartsuit}$. In particular, if we let
  $S_j = f_j(S')$, then $S_j = S_0 \times_X X_j$ and
  $S' = S_j \times_{X_j} X^{\heartsuit}$ for every $j \ge 0$.

  We fix an integral compactification $u \colon X \inj \ov{X}$ and
  let $u' \colon X^\hs \inj \ov{X}^\hs$ be the induced open immersion.
  Since $X^\hs \in \Sm_{k^\hs}$, we see that $u'$ has factorization
  $X^\hs \inj (\ov{X}^\hs)_\red \inj \ov{X}^\hs$. We let $\ov{X}_j := \ov{X}_{k_j}$.
Since $f^*(\alpha)$ lies in the image of $\partial^1_{X^{\heartsuit}} \colon
F^1_2(X^{\heartsuit}) \to F^0_1(X^{\heartsuit})$, we can find integral curves
$C_1, \ldots , C_r \subset X^{\heartsuit}$ satisfying the following.
Let $\ov{C}_i$ be the closure of $C_i$ in $\ov{X}^\hs$ with the reduced (hence integral)
closed subscheme structure and let
$\nu'_i \colon \ov{Y}_i := (\ov{C}_i)_n \to \ov{X}^\hs$ be the canonical map.
Let $T'_i = (\nu'_i)^{-1}(\ov{X}^\hs \setminus X^{\heartsuit})$ and
$A'_i = \sO_{\ov{Y}_i, T'_i}$. Let $J'_i$ be the Jacobson radical of
$A'_i$. Then, there exist elements $\beta'_i \in K^M_2(A'_i, J'_i)
\subset K^M_2(k^\hs(C_i))$ (in view of \lemref{lem:Fil-Milnor-comp}) such that
$f^*(\alpha) = \stackrel{r}{\underset{i = 1}\sum} \partial(\beta'_i) \in K_1(S')$.


We can now find an integer $j \gg 0$ and closed subschemes $\ov{Z}^j_i \subset \ov{X}_j$
such that
\begin{equation}\label{eqn:Continuity-0}
  \ov{C}_i \cong \ov{Z}^j_i \times_{\ov{X}_j} \ov{X}^\hs \ \textnormal {for all } 1 \le i \le r.
\end{equation}
In particular, $\ov{C}_i \to \ov{Z}^j_i$ is faithfully flat.
By the fppf descent, each $\ov{Z}^j_i$ must be an integral
curve in $\ov{X}_j$. By the same argument, it follows that
$\ov{Z}^l_i := \ov{Z}^j_i \times_{\ov{X}_j} \ov{X}_l$ is an integral curve in $\ov{X}_l$
for every $l \ge j$ and $1 \le i \le r$. Moreover,
$\ov{C}_i \xrightarrow{\cong} {\underset{l \ge j}\varprojlim} \ \ov{Z}^l_i$.
We let $Z^l_i = \ov{Z}^l_i \bigcap X_l$. Since $\ov{Z}^l_i \subset (\ov{X}_l)_\red
\subset ((\ov{X}_j)_\red)_l$
  for each $l$ and $i$, and similarly $\ov{C}_i \subset (\ov{X}^\hs)_\red
  \subset  ((\ov{X}_j)_\red)^\hs$,
we can pass from $\ov{X}$ to $(\ov{X}_j)_\red$ which allows us to assume that
$j = 0$. Note that $(\ov{X}_j)_\red$ is an integral compactification of $X_j$
for every $j \ge 0$.

Let $\ov{Y}^j_i$ (resp.  $Y^j_i$) be the normalization of $\ov{Z}^j_i$ (resp. $Z^j_i$)
and let $\nu^j_i \colon \ov{Y}^j_i \to \ov{Z}^j_i$ be the normalization maps.
Since the normalization is a functor in the category of integral schemes with dominant
morphisms which commutes with projective limits of integral schemes with
affine transition morphisms, it follows that there is a commutative diagram
\begin{equation}\label{eqn:Continuity-1}
  \xymatrix@C1pc{
    \ov{Y}_i \ar[r]^-{\phi_l} \ar[d]_-{\nu'_i} & \ov{Y}^l_i \ar[r]^-{\phi_{jl}}
    \ar[d]^-{\nu^l_i} & \ov{Y}^j_i \ar[d]^-{\nu^j_i} \\
    \ov{C}_i \ar[r]^-{\phi_l} & \ov{Z}^l_i \ar[r]^-{\phi_{jl}} & \ov{Z}^j_i}
\end{equation}
for every $l \ge j \ge 0$ and $\ov{Y}_i \xrightarrow{\cong}
{\varprojlim}_j \ \ov{Y}^j_i$.
If we let $T^j_i = (\nu^j_i)^{-1}(\ov{Z}^j_i \setminus Z^j_i) = 
(\nu^j_i)^{-1}(\ov{X}_j \setminus X_j)$ and $A^j_i = \sO_{\ov{Y}^j_i, T^j_i}$, then we
moreover have
$T^l_j = (\phi_{jl})^{-1}(T^j_i)$ and $T'_i = (\phi_j)^{-1}(T^j_i)$ for every
$l \ge j \ge 0$. This implies that ${\varinjlim}_j \ A^j_i
\xrightarrow{\cong} A'_i$ for each $1 \le i \le r$.

Since the transition maps of $\{A^j_i\}_{j \ge 0}$ are faithfully flat and $J'_i$ is
finitely generated, we can find
$j \gg 0$ such that $J'_i = J^j_i A'_i$ for each $i$. In particular,
$J'_i = \varinjlim_j J^j_i$.
By the continuity of relative Milnor $K$-theory, it follows
that $K^M_2(A'_i, J'_i) = \varinjlim_j K^M_2({A}^j_i, {J}^j_i)$ for every $i$.
By passing from $X$ to $X_j$ with $j \gg 0$, we can therefore assume that
there exist 
\begin{equation}\label{eqn:Continuity-2}
\beta^0_i \in K^M_2(A^0_i, J^0_i) \subset K^M_2(k(Z^0_i)) \ \mbox{such \ that} \
\beta'_i = f^*(\beta^0_i) \ \textnormal{for all } i.
\end{equation}

We now observe that the map $\phi_0 \colon C_i \to Z^0_i$ is flat by
~\eqref{eqn:Continuity-0}. Hence, for every open subscheme $U \subset Z^0_i$ and
$U' = (\phi_0)^{-1}(U)$, there is a commutative diagram of Quillen $K$-theory
localization sequences
\[
\xymatrix@C1pc{
  K_2(Z^0_i) \ar[r] \ar[d]_-{\phi^*_0} & K_2(U) \ar[r]^-{\partial} \ar[d]^-{\phi^*_0} &
  K_1(Z^0_i \setminus U) \ar[d]^-{\phi^*_0} \\
  K_2(C_i) \ar[r] & K_2(U') \ar[r]^-{\partial} & K_1(C_i \setminus U'),}
\]
where the vertical arrows are flat pull-back maps.
Taking the limit over $U$ and noting that $\Spec(k^\hs(C_i)) = C_i \times_{Z^0_i}
\Spec(k(Z^0_i))$, we get a commutative diagram
\begin{equation}\label{eqn:Continuity-3}
  \xymatrix@C1pc{
    K^M_2(k(Z^0_i)) \ar[r]^-{\partial} \ar[d]_-{\phi^*_0} & F^0_1(Z^0_i)
    \ar[d]^-{\phi^*_0} \ar@{^{(}->}[r] & F^0_1(X) \ar[d]^-{f^*} \\
    K^M_2(k^\hs(C_i))  \ar[r]^-{\partial} & F^0_1(C_i)
    \ar@{^{(}->}[r]  & F^0_1(X^\hs),}
  \end{equation}
  where we have identified the Milnor $K_i$ and their boundary maps for fields with
  Quillen $K_i$ and their boundary maps for $i \le 2$.
We conclude from this that 
\[
  f^*\left(\alpha - \stackrel{r}{\underset{i = 1}\sum}
(\partial(\beta^0_i))\right) = \phi^*_0\left(\alpha - \stackrel{r}{\underset{i = 1}\sum}
(\partial(\beta^0_i))\right) = 0 \ \mbox{in} \ K_1(S').
\]

Using the continuity property of $K_1(-)$ again, it follows that
$g^*_j(\alpha - \stackrel{r}{\underset{i = 1}\sum}
(\partial(\beta^0_i))) = \phi^*_{0j}((\alpha - \stackrel{r}{\underset{i = 1}\sum}
(\partial(\beta^0_i)))) = 0$ in $K_1(S_j)$ for all $j \gg 0$.
Equivalently, $g^*_j(\alpha) =  \stackrel{r}{\underset{i = 1}\sum}
g^*_j(\partial(\beta^0_i))$ in $F^0_1(X_j)$ for all $j \gg 0$.
Since we have shown in ~\eqref{eqn:Continuity-2} that
$\beta^0_i \in K^M_2(A^0_i, J^0_i) \subset K^M_2(k(Z^0_i))$ for every $i$, it follows from
\propref{prop:TCG-PB} that $g^*_j(\alpha)$ dies in $C^\tm(X_j)$.
We have thus shown that $\pi^*$ is injective.
This concludes the proof.
\end{proof}

\begin{cor}\label{cor:Continuity-*}
  Keep the assumptions of \lemref{lem:Continuity}. Then we have an isomorphism
  \[
      \pi^* \colon C^\tm(X)[\tfrac{1}{p}] \to  C^\tm(X^\hs)[\tfrac{1}{p}].
    \]
    \end{cor}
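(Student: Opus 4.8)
The plan is to deduce this from \lemref{lem:Continuity} by a formal colimit argument, the only genuine extra input being a comparison of the two composites of the pull-back and push-forward maps along the finite flat radicial transition morphisms $f_{ij} \colon X_j \to X_i$.

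First I would note that inverting $p$ commutes with filtered colimits, so \lemref{lem:Continuity} upgrades to an isomorphism $\varinjlim_{i \ge 0} C^\tm(X_i)[\tfrac1p] \xrightarrow{\cong} C^\tm(X^\hs)[\tfrac1p]$, and under this identification the map $\pi^*$ of the statement becomes the canonical map $C^\tm(X)[\tfrac1p] = C^\tm(X_0)[\tfrac1p] \to \varinjlim_i C^\tm(X_i)[\tfrac1p]$ (recall $X_0 = X$ and $g_0 = \id$). Since the colimit is filtered over $\N$, it therefore suffices to prove that each transition map $f_{ij}^* \colon C^\tm(X_i)[\tfrac1p] \to C^\tm(X_j)[\tfrac1p]$ (which exists by \propref{prop:TCG-PF-fin}, as $f_{ij}$ is finite and flat) is an isomorphism.

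Next I would establish that $f_{ij}^* \circ f_{ij*}$ and $f_{ij*} \circ f_{ij}^*$ are both multiplication by $[k_j:k_i] = p^{j-i}$. One direction, $f_{ij*} \circ f_{ij}^* = p^{j-i}$, is exactly \corref{cor:Sm-PB}. For the other, $f_{ij}^* \circ f_{ij*} = p^{j-i}$, I would argue directly on $F^0_1(X_j) = \bigoplus_{y \in (X_j)_{(0)}} K^M_1(k(y))$, from which the statement on $C^\tm$ follows since both maps are induced from maps on $F^0_1$. Because $f_{ij}$ is radicial, each closed point $x_i \in X_i$ has a unique preimage $x_j \in X_j$ with $k(x_j)/k(x_i)$ purely inseparable of some degree $d$, the scheme-theoretic fibre $S_{x_i} = \Spec(k(x_i)) \times_{X_i} X_j$ is local Artinian with residue field $k(x_j)$, and flatness of degree $p^{j-i}$ gives $\ell(S_{x_i}) \cdot d = p^{j-i}$. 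On $K^M_1$ the push-forward is the norm $N_{k(x_j)/k(x_i)}$, which for a purely inseparable extension of degree $d$ is the $d$-th power map (the $d$-th power of any element already lies in $k(x_i)$), and the pull-back is given by \eqref{eqn:Defn-PB} as the $\ell(S_{x_i})$-th power of the extension-of-scalars map; composing gives multiplication by $d \cdot \ell(S_{x_i}) = p^{j-i}$, as wanted.

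Finally, with both composites equal to $p^{j-i}$, I would conclude that after inverting $p$ the maps $f_{ij}^*$ and $p^{-(j-i)} f_{ij*}$ are mutually inverse, so every transition map in the system $\{C^\tm(X_i)[\tfrac1p]\}_{i \ge 0}$ is an isomorphism; hence the canonical map $C^\tm(X)[\tfrac1p] \to \varinjlim_i C^\tm(X_i)[\tfrac1p]$ is an isomorphism, and composing with \lemref{lem:Continuity} gives the claim. I do not anticipate a serious obstacle: \lemref{lem:Continuity} carries all the weight, and the only point requiring a little care is the identity $f_{ij}^* \circ f_{ij*} = p^{j-i}$, which is not recorded earlier but reduces, via \eqref{eqn:Defn-PB}, to the elementary fact that the norm along a purely inseparable field extension is a power map together with the length-degree identity for the finite flat fibres.
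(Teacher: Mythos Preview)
Your proposal is correct and follows essentially the same route as the paper: reduce via \lemref{lem:Continuity} to showing each transition map $f_{ij}^*[\tfrac1p]$ is an isomorphism, and then use that composing with $f_{ij*}$ gives multiplication by a $p$-power. The paper packages the surjectivity half slightly differently---it cites \lemref{lem:Milnor-K-perf} rather than computing $f_{ij}^*\circ f_{ij*}$ directly---but the underlying content (the norm along a purely inseparable extension is a power map, together with the length--degree identity for the fibres) is the same.
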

  \begin{proof}
    In view of \lemref{lem:Continuity}, it suffices to show that if ${k'}/k$ is a finite
    purely inseparable extension and $X' = X \times_{\Spec(k)} \Spec(k')$, then
    the pull-back map $\pi^* \colon C^\tm(X)[\tfrac{1}{p}] \to  C^\tm(X')[\tfrac{1}{p}]$
    is an isomorphism if $\pi \colon X' \to X$ is the projection.
    In this case, the surjectivity follows from \lemref{lem:Milnor-K-perf} and the
    injectivity from \corref{cor:Sm-PB}.
    \end{proof}

\begin{lem}\label{lem:Milnor-K-perf}
  Let ${k'}/k$ be a purely inseparable field extension. Let $m , n \ge 0$ be integers
  such that $(p,m) =1$. Then we have isomorphisms
  \[
    \pi^* \colon {K^M_n(k)}[\tfrac{1}{p}] \to {K^M_n(k')}[\tfrac{1}{p}] \ \mbox{and} \
    \pi^* \colon {K^M_n(k)}/m \to {K^M_n(k')}/m.
  \]
\end{lem}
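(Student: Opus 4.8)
The plan is to reduce both claims to the single assertion that the kernel $K$ and the cokernel $C$ of $\pi^* \colon K^M_n(k) \to K^M_n(k')$ are $p$-primary torsion abelian groups, i.e. every element is annihilated by some power of $p$. Granting this, both isomorphism statements follow formally. For $R = \Z[\tfrac1p]$ the functor $-\otimes_{\Z} R$ is exact and kills $K$ and $C$, so $\pi^*\otimes R$ is an isomorphism. For $R = \Z/m$ with $m$ prime to $p$, multiplication by $m$ is an automorphism of every $p$-primary torsion group, so $K/m = {}_m K = 0$ and $C/m = {}_m C = 0$; feeding the short exact sequences $0 \to K \to K^M_n(k) \to \im(\pi^*) \to 0$ and $0 \to \im(\pi^*) \to K^M_n(k') \to C \to 0$ into the $\Tor_1(-,\Z/m)$ six-term exact sequences then shows that $\pi^*/m$ is both injective and surjective.

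To prove that $K$ and $C$ are $p$-primary torsion, I would first reduce to the case $[k':k] < \infty$. Since Milnor $K$-theory commutes with filtered colimits, $K^M_n(k') = \varinjlim_\alpha K^M_n(k'_\alpha)$, where $k'_\alpha$ runs over the finite purely inseparable subextensions of $k'/k$; hence $K = \bigcup_\alpha \ker(\pi^*_\alpha)$ and $C = \varinjlim_\alpha \coker(\pi^*_\alpha)$, so it suffices to treat each finite $k'_\alpha/k$.

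So assume $[k':k] = p^s$, hence $(k')^{p^s} \subseteq k$. The $s$-fold Frobenius $x \mapsto x^{p^s}$ of $k'$ is a ring homomorphism whose image lies in $k$, so it factors as $k' \xrightarrow{\psi} k \hookrightarrow k'$ with $\psi$ a field homomorphism. Write $\iota \colon k \hookrightarrow k'$ for the inclusion, so that $\pi^* = \iota_*$ on Milnor $K$-theory. Then $\psi\circ\iota$ is the $s$-fold Frobenius of $k$ and $\iota\circ\psi$ is the $s$-fold Frobenius of $k'$. Since the $s$-fold Frobenius induces multiplication by $p^{sn}$ on $K^M_n$ of any field, because $\{x_1^{p^s},\dots,x_n^{p^s}\} = p^{sn}\{x_1,\dots,x_n\}$, functoriality of Milnor $K$-theory gives $\psi_*\circ\pi^* = p^{sn}\cdot\id$ on $K^M_n(k)$ and $\pi^*\circ\psi_* = p^{sn}\cdot\id$ on $K^M_n(k')$. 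The first identity shows $K$ is annihilated by $p^{sn}$, and the second shows $C$ is annihilated by $p^{sn}$, which completes the argument.

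I do not expect a serious obstacle: the only point needing care is the passage to infinite purely inseparable extensions, handled by the colimit argument above, together with the routine $\Tor$-bookkeeping in the $\Z/m$ case. Everything else is the standard Frobenius-factorization trick; in particular the transfer (norm) map on Milnor $K$-theory is not needed, since $\psi_*$ plays its role.
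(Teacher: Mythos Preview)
Your argument is correct, and takes a somewhat different route from the paper's. Both proofs begin with the same colimit reduction to finite $k'/k$. For injectivity, however, the paper invokes the norm map $N_{k'/k}$ on Milnor $K$-theory and the Bass--Tate fact that $N_{k'/k}\circ\pi^*$ is multiplication by $[k':k]=p^r$, whereas you avoid the norm entirely by using the Frobenius factorisation $\psi\colon k'\to k$. For surjectivity, the paper reduces to $n=1$ via the symbol presentation of $K^M_n(k')$ and then observes that $(k')^\times/k^\times$ is $p$-power torsion; your identity $\pi^*\circ\psi_*=p^{sn}$ packages the same underlying fact (namely $(k')^{p^s}\subseteq k$) into a single step that works uniformly in $n$. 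Your approach is slightly more elementary in that it sidesteps the construction of the transfer on Milnor $K$-theory, at the cost of being specific to the purely inseparable situation; the paper's norm argument is the standard template that would also apply, say, to showing injectivity of $\pi^*$ after inverting $[k':k]$ for an arbitrary finite extension.
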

\begin{proof}
  By a limit argument, we can easily reduce to the case when ${k'}/k$ is a
  finite extension.
  In this case, one knows that the composite map
${K^M_n(k)} \to {K^M_n(k')} \xrightarrow{N_{{k'}/k}} {K^M_n(k)}$ is
  multiplication by $p^r$ for some $r \ge 0$ by \cite[\S~5.4]{Bass-Tate}.
  This implies that $\pi^*$ are injective. For surjectivity,
  we can use the surjection $K^M_1(k') \otimes \cdots \otimes K^M_1(k') \surj K^M_n(k')$
  (where the tensor product is taken $n$ times) to reduce to showing the surjectivity
  for $n = 1$. On the other hand, it is easy to see that ${K^M_1(k')}/{K^M_1(k)}$ is
  annihilated by $p^r$ for some $r \ge 0$. In particular, it is uniquely $m$-divisible.
\end{proof}

\subsection{Relation with Wiesend class groups}\label{sec:Wiesend*}
We let $k$ be a local field and $X \in \Sch_k$ an integral normal $k$-scheme.
For $C \in \sC(X)$,
we let $K^M_2(k(C_n)_{\infty}) = {\underset{x \in \Delta(C_n)}\bigoplus} K^M_2(k(C_n)_x)$.
We let $U'_1K^M_2(k(C_n)_{\infty})  =
{\underset{x \in \Delta(C_n)}\bigoplus} U'_1K^M_2(k(C_n)_x)$ (cf. \S~\ref{sec:J-1}).
\begin{defn}\label{defn:Idel-X-0}
  We let  $I(X) = \left({\underset{C \in \sC(X)}\bigoplus} K^M_2(k(C_n)_{\infty})\right)
  \bigoplus \left({\underset{x \in X_{(0)}}\bigoplus} K^M_1(k(x))\right)$ and call it the idele group of $X$.
\end{defn}

For $C \in \sC(X)$, the product of inclusions into the completions along the points of
$\Delta(C_n)$ induces $\iota_C \colon K^M_2(k(C_n)) \to K^M_2(k(C_n)_{\infty})$.
We also have the boundary map 
$\partial_C \colon  K^M_2(k(C_n)) \to F^0_1(C)$ (cf. \S~\ref{sec:IG}).
We thus get a homomorphism $K^M_2(k(C_n)) \xrightarrow{(\iota_C, \partial_C)}
K^M_2(k(C_n)_{\infty}) \bigoplus F^0_1(C) \inj K^M_2(k(C_n)_{\infty}) \bigoplus  F^0_1(X)$.
We let $F^{1,W}_2(X) = {\underset{C \in \sC(X)}\bigoplus} K^M_2(k(C))$.

\begin{defn}\label{defn:Idele-X-1}
  The idele class group $C(X)$ is defined to be the cokernel of the homomorphism
  \[
    \sum_{C \in \sC(X)} (\iota_C, \partial_C) \colon F^{1,W}_2(X) \to I(X).
  \]
 \end{defn}

 Let $\vartheta_X \colon {\underset{C \in \sC(X)}\bigoplus} U'_1 K^M_2(k(C_n)_{\infty})
 \inj {\underset{C \in \sC(X)}\bigoplus} K^M_2(k(C_n)_{\infty}) \to C(X)$ denote the
 composite map.
 
  \begin{lem}\label{lem:Approximation}
    There is a canonical exact sequence
    \[
      {\underset{C \in \sC(X)}\bigoplus} U'_1 K^M_2(k(C_n)_{\infty})
        \xrightarrow{\vartheta_X} C(X) \xrightarrow{\kappa_X} C^\tm(X) \to 0.
      \]
    \end{lem}
    \begin{proof}
It is easy to check that the identity map of $F^0_1(X)$ induces canonical a 
homomorphism $\delta_X \colon C^\tm(X) \to \coker(\vartheta_X)$.
The surjectivity of $\delta_X$ is an easy consequence of the weak approximation
theorem for Milnor $K$-theory (cf. \cite[\S~2, Prop.~3]{Kato86}).
To show that $\delta_X$ is injective, let $C_1, \ldots , C_r \in \sC(X)$ be distinct
curves and $\alpha_i \in K^M_2(k(C_i))$ such that
  $\sum_i \iota_{C_i}(\alpha_i) + \sum_i \partial_{C_i}(\alpha_i) =  \beta \in F^0_1(X)$.
  Then we must have $\iota_{C_i}(\alpha_i)  \in U'_1 K^M_2(k((C_i)_n)_{\infty}) \subset
  K^M_2(k(C_i)_\infty)$ for every $i$.
  Using \lemref{lem:K2-dvr} and \corref{cor:Semilocal-K-2}, this implies that
  $\alpha_i \in \Fil_{\Delta((C_i)_n)} K^M_2(k(C_i))$ for every $i$.
  It follows from this that $\alpha := \sum_i \alpha_i \in F^1_2(X)$ and
  $\partial^1_X(\alpha) = \beta$.  We have thus shown that $\delta_X$
  is injective.
\end{proof}

Henceforth, we shall make no distinction between $C^\tm(X)$ and $\coker(\vartheta_X)$.
The norm map $N_{{k(x)}/k} \colon k(x)^\times \to k^\times$ for $x \in \ov{C}_n$
and $C \in \sC(X)$
define a homomorphism $N_X \colon I(X) \to k^\times$. It clearly annihilates the
image of ${\underset{C \in \sC(X)}\bigoplus} U'_1 K^M_2(k(C_n)_{\infty})$.
Suslin's reciprocity law for Milnor $K$-theory \cite{Suslin-Izv} implies that
$N_X$ factors through $C^\tm(X)$. We let $C^\tm(X)_0$ be the kernel of
$N_X \colon C^\tm(X) \to k^\times$. We define $C(X)_0$ similarly.

\subsection{Relation between $C^\tm(X)$ and $C(X)$}
\label{sec:Comp-CG}
Let $k$ be a local field.
By \lemref{lem:Approximation}, we have canonical homomorphisms
$C(X) \xrightarrow{\kappa_X} C^\tm(X) \xrightarrow{\varpi_X} C(\ov{X})$.
The following result provides a comparison between the first two groups.
This is the analogue of the first isomorphism of
\corref{cor:Pi-decom-0} for class groups.

  \begin{lem}\label{lem:Idele-Tame-prime-to-p}
    Let $X \in \Sch_k$ be an integral normal scheme and $m \in k^\times$ an integer.
    Then we have an isomorphism 
    \begin{equation}\label{eqn:Idele-Tame-prime-to-p-0}
      \kappa_X  \colon {C(X)}/m \xrightarrow{\cong} {C^{\tm}(X)}/m.
    \end{equation}
    \end{lem}
    \begin{proof}
      Combine Lemmas~\ref{lem:Approximation} and ~\ref{lem:Rigidity-Milnor}.
     \end{proof}

\begin{lem}\label{lem:Rigidity-Milnor}
    Let $A$ be a Henselian local Noetherian ring containing an infinite field having
    the maximal ideal $\fm$. Let $I \subset \fm$ be an $\fm$-primary ideal
    and let $m$ be an integer invertible in $A$. Then
    ${K^M_n(A, I)}/m = 0$ for all integers $n \ge 0$.
  \end{lem}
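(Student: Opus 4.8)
The plan is to reduce the statement to the known rigidity/continuity properties of relative Milnor $K$-theory of local Henselian rings together with a graded-pieces argument. First I would observe that since $A$ is Henselian local with maximal ideal $\fm$ and $I$ is $\fm$-primary, the quotient $A/I$ is an Artinian local ring, and the filtration $I \supset I^2 \supset \cdots \supset I^N = 0$ for suitable $N$ (using that $I$ is $\fm$-primary and $A$ Noetherian, so $I^N \subseteq$ any given power of $\fm$, hence eventually we may work modulo $\fm^M$ for large $M$; more precisely one passes to $A/\fm^M$ where the filtration by powers of $\fm$ is finite). So the first reduction is: it suffices to treat the case where $I = \fm$ and $A$ is Artinian local, by devissage along the filtration $\{I^i\}$ — the general case follows from the exact sequences $K^M_n(A, I^{i+1}) \to K^M_n(A, I^i) \to K^M_n(A/I^{i+1}, I^i/I^{i+1})$ and induction, provided each graded piece $K^M_n(A/I^{i+1}, I^i/I^{i+1})/m$ vanishes.

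Next, for the Artinian local case with $I$ nilpotent, the key input is the standard fact (already invoked in the excerpt in the proof of \lemref{lem:Fin-coeff-SK1}, citing \cite[Thm.~1.3]{Weibel-80}) that $K^M_n(A,I) \otimes {\Z}[\tfrac1p] = 0$ when $I$ is nilpotent — but here we need the analogous statement with $\Z/m$-coefficients, $m$ prime to the residue characteristic. The cleanest route is to reduce to the square-zero case $I^2 = 0$ by devissage as above, and then use that for a square-zero ideal $I$ in a ring containing $\tfrac1m$, the relative group $K^M_n(A,I)$ is a module over the relative group in degree $1$, namely $K^M_1(A,I) = 1 + I \cong I$ (as abelian groups, via $1+a \mapsto a$ when $I^2 = 0$); and $I$, being a module over the residue field $\ff = A/\fm$ which has characteristic prime to $m$, is uniquely $m$-divisible as an abelian group. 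Hence $K^M_1(A,I)/m = 0$. For higher $n$, one uses that $K^M_n(A,I)$ is generated by symbols $\{1+a, u_2, \ldots, u_n\}$ with $a \in I$, $u_i \in A^\times$; multiplicativity and the $m$-divisibility of the $1+a$ factor (since we can extract $m$-th roots: $1 + a = (1 + a/m)^m$ when $I^2 = 0$ and $m$ is invertible, because $(1+b)^m = 1 + mb$ modulo $I^2$) force $K^M_n(A,I)/m = 0$.

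Putting it together: the main steps in order are (i) reduce to $A$ Artinian local and $I$ nilpotent by passing to $A/\fm^M$; (ii) reduce to $I^2 = 0$ by devissage along $I \supset I^2 \supset \cdots$; (iii) in the square-zero case, use the isomorphism $K^M_1(A,I) \cong I$ and the unique $m$-divisibility of the $\ff$-vector space $I$ (as $m$ is invertible in $\ff$) to kill degree one, then bootstrap to all $n$ via the symbol presentation and the fact that $1+a$ admits $m$-th roots in $1+I$. I expect step (iii), specifically the clean verification that every symbol in $K^M_n(A,I)$ is $m$-divisible — i.e.\ that $m$-th roots of $1+a$ stay in the relative group and that the symbol relations are compatible with this — to be the main obstacle; this is where one must be slightly careful about what ``relative Milnor $K$-group'' means (the excerpt uses the definition via $\Ker(K^M_n(A) \to K^M_n(A/I))$), but the Henselian hypothesis guarantees $A^\times \to (A/I)^\times$ is surjective with kernel $1+I$, which makes the symbol bookkeeping go through. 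An alternative, perhaps more robust, route for step (iii) would be to cite the continuity and structure results for relative Milnor $K$-theory already used elsewhere in the paper, e.g.\ reducing to the associated graded $\mathrm{gr}^i = I^i/I^{i+1}$ which is an $\ff$-module and invoking that $K^M_n$ of such an infinitesimal thickening is $m$-divisible — but the square-zero computation above is self-contained and suffices.
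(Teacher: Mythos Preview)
Your step (i) has a genuine gap: the filtration $I \supset I^2 \supset \cdots$ does not terminate, since $A$ is not assumed Artinian. Your exact sequences only show that the map $K^M_n(A, I^{N})/m \to K^M_n(A, I)/m$ is surjective for every $N$, which yields nothing without independent control of $K^M_n(A, I^{N})/m$; and ``passing to $A/\fm^M$'' computes $K^M_n(A/\fm^M, I/\fm^M)$, not $K^M_n(A,I)$. To see that no purely formal devissage can work here, observe that your argument never invokes the Henselian hypothesis in an essential way (surjectivity of $A^\times \to (A/I)^\times$ holds for any local ring), yet the conclusion fails for the non-Henselian local ring $A = k[t]_{(t)}$ with $I = \fm = (t)$: there $K^M_1(A,\fm) = 1 + t\,k[t]_{(t)}$ is not $m$-divisible, as $1+t$ has no $m$-th root in $A$.

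The paper supplies exactly the missing ingredient. It first uses the single exact sequence
\[
0 \to K^M_n(A,I) \to K^M_n(A,\fm) \to K^M_n(A/I,\fm/I) \to 0
\]
to split off the Artinian piece $K^M_n(A/I,\fm/I)$, which is uniquely $m$-divisible by results cited from \cite{Gupta-Krishna-BF} and \cite{GT}; your square-zero devissage in steps (ii)--(iii) actually gives a pleasant self-contained alternative for this Artinian piece. But the remaining term $K^M_n(A,\fm)$ is attacked \emph{directly}, not by further devissage: the surjection $K^M_1(A,\fm) \otimes (A^\times)^{\otimes(n-1)} \surj K^M_n(A,\fm)$ from \cite[Lem.~1.3.1]{Kato-Saito-2} reduces to $n=1$, and then Hensel's lemma applied to $X^m - (1+a)$ (using $m \in A^\times$) produces the required $m$-th root inside $1+\fm$. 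This is where the Henselian hypothesis enters, and it cannot be avoided.
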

  \begin{proof}
    We can assume $n \ge 1$ because the lemma is trivial otherwise.
    We have a short exact sequence
    \begin{equation}\label{eqn:Rigidity-Milnor-0}
      0 \to K^M_n(A, I) \to K^M_n(A, \fm) \to K^M_n(A/I, \fm/I) \to 0.
      \end{equation}
      Since $\fm = \sqrt{I}$, it follows from \cite[Lem.~2.6]{Gupta-Krishna-BF} that
      $K^M_n(A/I, \fm/I)$ is uniquely $m$-divisible if the residue characteristic of
      $A$ is positive. The same also holds if the residue characteristic of
      $A$ is zero as a consequence of \cite[Theorem~2.12]{GT}. Indeed,
      since $A/I$ is $\fm$-adic complete local ring 
       with residue       
       field $A/\fm$, the quotient map $A/I \to A/\fm$ admits a splitting by Cohen
       structure theorem. Since a local ring
       containing an infinite field is weakly $n$-stable for all $n \geq1$, we can now
       apply \cite[Theorem~2.12]{GT} to conclude that 
      $K^M_n(A/I, \fm/I)$ is uniquely $m$-divisible.
      We can therefore assume that
      $I = \fm$.

      Now, there is a surjection
      $K^M_1(A,\fm) \otimes (A^\times \otimes \cdots \otimes A^\times) \surj K^M_n(A, \fm)$
    by \cite[Lem.~1.3.1]{Kato-Saito-2}, where the tensor product of $A^\times$ is taken
    $(n-1)$ times. It suffices therefore to show that ${K^M_1(A,\fm)}/m = 0$.
    Since $K^M_1(A,\fm) \cong (1 + \fm) \subset A^\times$, the desired statement follows
    easily from Hensel's lemma, which is applicable since $m \in A^\times$. We skip its
    proof.
\end{proof}

  \subsection{Tame vs. unramified class groups}
  \label{sec:CFT-tame-unr}
 The goal of this section is to prove \thmref{thm:Main-6}.
  We let $k$ be an infinite field.
  Let $\ov{X}$ be a smooth projective connected $k$-scheme of dimension $d \ge 1$. Let
  $j \colon X \inj \ov{X}$ be the inclusion of a nonempty open subscheme with the
  complement $Z$ (having the reduced closed subscheme structure).
Since the proof is based on a moving lemma for the higher Chow groups, we recall the
  necessary definitions.

We let $\square^n_k = (\P^1_k \setminus \{1\})^n$ and $\ov{\square}^n_k = (\P^1_k)^n$.
  For any closed subscheme $Y \subset \ov{X} \times \ov{\square}^n_k$, we let
  $g^i_Y \colon Y \to \ov{\square}^1_k$ be the projection to the $i$-th factor of
  $\ov{\square}^n_k$ and let $f_Y \colon Y \to \ov{X}$ be the projection to $\ov{X}$.
We recall the following from \cite[\S~1]{Krishna-Levine}.
Let $z^{d+1}(\ov{X}, \bullet)$ denote the cubical cycle complex of $\ov{X}$
{\`a} la Bloch of codimension $(d+1)$-cycles.
The differential of this complex at level $n$ is given by
$\partial = \stackrel{n}{\underset{i = 1}\sum} (-1)^i (\partial^i_\infty - \partial^i_0)$.
The homology of this complex (modulo the subcomplex of degenerate cycles)
at level $n$ is the higher Chow group $\CH^{d+1}(\ov{X}, n)$.

Let $(z^{d+1}_Z(\ov{X}, \bullet), \partial_Z)$ be the subcomplex of
$(z^{d+1}(\ov{X}, \bullet), \partial)$
  such that $z^{d+1}_Z(\ov{X}, n)$ is the free abelian group generated by
  admissible integral cycles which intersect $Z \times F$ properly for each face
  $F$ of $\square^n_k$.
  Since there can be no degenerate 0-cycles and degenerate cycles are preserved under
  the boundary maps, one checks that the homology of $z^{d+1}_Z(\ov{X}, \bullet)$
  (modulo the subcomplex of degenerate cycles) at level one is the cokernel of
  the map $\partial_Z \colon z^{d+1}_Z(\ov{X}, 2) \to z^{d+1}_Z(\ov{X}, 1)$, where
  $z^{d+1}_Z(\ov{X}, 1)$ is the free abelian group generated by closed points
  of $\ov{X} \times \square^1_k$ which neither lie in $Z \times \square^1_k$ nor in
  any proper face of $\ov{X} \times \square^1_k$
  and $z^{d+1}_Z(\ov{X}, 2)$ is the free abelian group generated by integral
  curves in $\ov{X} \times \square^2_k$ which intersect all faces of
  $X \times \square^2_k$ and $Z \times \square^2_k$ properly. In particular, they
  do not meet any proper face of $Z \times \square^2_k$.
  By Bloch's  moving lemma \cite{Bloch-JAG} (cf. \cite[Thm.~1.10]{Krishna-Levine}
  for cubical version), the canonical map
\begin{equation}\label{eqn:ML-0}
  \frac{z^{d+1}_Z(\ov{X}, 1)}{\partial_Z(z^{d+1}_Z(\ov{X}, 2))} \to \CH^{d+1}(\ov{X},1)
\end{equation}
is an isomorphism.

Recall from \S~\ref{sec:IG} that $C(\ov{X})$ is the cokernel of the map
$\partial^1_{\ov{X}} \colon F^{1}_2(\ov{X}) = {\underset{C \in \sC(\ov{X})}\bigoplus}
K^M_2(k(C)) \to F^0_1(\ov{X})$. 
It follows from the Gersten complex for the Milnor $K$-theory of regular local rings
(cf. \lemref{lem:Semilocal-K-1}) that for any $C \in \sC({X})$, one has
$\partial^1_{\ov{C}_n}(K^M_2(\sO_{\ov{C}_n, \Delta(C_n)})) \subset F^0_1(C_n)$. In particular,
$\partial^1_{\ov{X}}(K^M_2(\sO_{\ov{C}_n, \Delta(C_n)})) \subset F^0_1(X)$.
We write $K^M_2(\sO_{\ov{C}_n, \Delta(C_n)})$ as $\Fil_{\nr} K^M_2(k(C))$ for
$C \in \sC({X})$. 
We let $F^{1, \nr}_2({X}) =  {\underset{C \in \sC({X})}\bigoplus} 
\Fil_{\nr} K^M_2(k(C))$ and let $C^{\nr}({X})$ be the cokernel of the map
$\partial^1_{X} \colon F^{1, \nr}_2({X}) \to  F^0_1(X)$.
Our key result is the following.

\begin{lem}\label{lem:ML-main}
  The canonical map $C^{\nr}({X}) \to C(\ov{X})$ is an isomorphism.
\end{lem}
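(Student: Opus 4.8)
The plan is to identify both $C^{\nr}(X)$ and $C(\ov X)$ with the same quotient of higher Chow cycle groups, namely the group $\CH^{d+1}(\ov X,1)$ presented via the complex $z^{d+1}_Z(\ov X,\bullet)$ together with the isomorphism \eqref{eqn:ML-0}. First I would recall the description of $C(\ov X)$ from Definition~\ref{defn:Idele-X-1}: it is the cokernel of $\partial^1_{\ov X} \colon F^1_2(\ov X) \to F^0_1(\ov X)$, and by the Nesterenko--Suslin--Totaro identification of $K^M_2$ of a field with weight-two Milnor $K$-theory together with Bloch's cycle complex in dimension one, one has a natural identification of $F^0_1(\ov X) = {\underset{x \in \ov X_{(0)}}\bigoplus} k(x)^\times$ with the degree-one cycle group of the Bloch complex for $\ov X$, and of the image of $\partial^1_{\ov X}$ with the image of the Bloch differential coming from curves. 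Concretely, a generator $\{f,g\} \in K^M_2(k(C))$ for $C \in \sC(\ov X)$ maps under $\partial^1_{\ov X}$ to exactly the $1$-cycle cut out by the graph of $(f,g)$ inside $C \times \square^2_k$ after applying the Bloch differential. Thus $C(\ov X) \cong z^{d+1}(\ov X,1)/\partial(z^{d+1}(\ov X,2)) \cong \CH^{d+1}(\ov X,1)$, where the last step uses Bloch's moving lemma in the form stated before \eqref{eqn:ML-0} (taking $Z=\emptyset$).

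Next I would give the analogous description of $C^{\nr}(X)$. By definition $C^{\nr}(X)$ is the cokernel of $\partial^1_X \colon F^{1,\nr}_2(X) \to F^0_1(X)$, where $F^{1,\nr}_2(X) = {\underset{C \in \sC(X)}\bigoplus} K^M_2(\sO_{\ov C_n, \Delta(C_n)})$ and $F^0_1(X) = {\underset{x \in X_{(0)}}\bigoplus} k(x)^\times$. The point of the ``nr'' (non-ramified) condition is precisely that a class in $K^M_2(\sO_{\ov C_n, \Delta(C_n)})$ is, by the Gersten sequence of Lemma~\ref{lem:Semilocal-K-1}, one whose tame symbol at every point of $\Delta(C_n)$ vanishes; equivalently, after passing to $\square^2_k$-coordinates, the corresponding curve in $\ov X \times \square^2_k$ is admissible and meets $Z \times \square^2_k$ (with $Z = \ov X \setminus X$) in proper dimension and avoids the proper faces of $Z\times\square^2_k$. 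So one gets an identification of $F^{1,\nr}_2(X)$ together with its boundary onto $F^0_1(X)$ with the degree-$(2,1)$ part of the subcomplex $z^{d+1}_Z(\ov X,\bullet)$: the target $z^{d+1}_Z(\ov X,1)$ is the free group on closed points of $\ov X \times \square^1_k$ avoiding $Z$ and the proper faces, which is canonically $F^0_1(X)$, and $z^{d+1}_Z(\ov X,2)$ is the free group on the relevant curves, whose image under $\partial_Z$ is the image of $\partial^1_X$ on $F^{1,\nr}_2(X)$. Hence $C^{\nr}(X) \cong z^{d+1}_Z(\ov X,1)/\partial_Z(z^{d+1}_Z(\ov X,2))$, which by \eqref{eqn:ML-0} (Bloch's moving lemma with support conditions along $Z$) is isomorphic to $\CH^{d+1}(\ov X,1)$.

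Finally I would check that the canonical map $C^{\nr}(X) \to C(\ov X)$ in the statement corresponds, under the two identifications above, to the map on higher Chow groups induced by the inclusion of complexes $z^{d+1}_Z(\ov X,\bullet) \inj z^{d+1}(\ov X,\bullet)$; since both sides compute $\CH^{d+1}(\ov X,1)$ and \eqref{eqn:ML-0} asserts that this inclusion induces an isomorphism on $H_1$, the map $C^{\nr}(X) \to C(\ov X)$ is an isomorphism. The main obstacle I expect is the bookkeeping in matching the Milnor-$K$-theoretic boundary maps $\partial^1$ (tame symbols, norms along $C_n \to \ov X$) with Bloch's cubical differential $\partial = \sum (-1)^i(\partial^i_\infty - \partial^i_0)$ and the moving-lemma support conditions: one must verify on generators $\{f,g\}$ that the symbol $\partial_{C,x}\{f,g\}$ equals the intersection-with-faces contribution of the graph curve, including sign conventions and the reduction from $\ov C_n$ to $\ov X$, and one must be careful that the ``nr'' filtration condition is exactly equivalent to the proper-intersection-with-$Z$ condition after a change of coordinates from $(f,g)$ to the closure of the graph in $\square^2$. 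Once this dictionary is in place, invoking \eqref{eqn:ML-0} twice finishes the proof.
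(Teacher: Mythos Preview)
Your overall strategy is exactly the paper's: compare both $C^{\nr}(X)$ and $C(\ov X)$ with $\CH^{d+1}(\ov X,1)$ via the subcomplex $z^{d+1}_Z(\ov X,\bullet)$ and invoke the moving lemma isomorphism~\eqref{eqn:ML-0}. However, there is a genuine gap in your execution: the asserted ``canonical identification'' of $z^{d+1}_Z(\ov X,1)$ with $F^0_1(X)$, and of $z^{d+1}_Z(\ov X,2)$ with $F^{1,\nr}_2(X)$, does not exist. The group $z^{d+1}_Z(\ov X,1)$ is the free abelian group on closed points of $X\times\square^1_k$ not meeting the faces, whereas $F^0_1(X)=\bigoplus_{x\in X_{(0)}} k(x)^\times$; the only natural comparison is Landsburg's map $\Psi_1([x]) = N_{k(x)/k(y)}(g^1_x)$, and this is not an isomorphism of groups. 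Likewise for $\Psi_2$ in degree~$2$. So you cannot literally identify the two complexes; you only get a morphism of complexes $\Psi$.

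The paper's proof supplies precisely what your sketch is missing. It shows that $\Psi$ restricts to a morphism from $(z^{d+1}_Z(\ov X,2)\xrightarrow{\partial_Z} z^{d+1}_Z(\ov X,1))$ to $(F^{1,\nr}_2(X)\xrightarrow{\partial^1_X} F^0_1(X))$ --- this is where one checks, as you anticipated, that proper intersection with $Z$ forces $\Psi_2([Y])$ to land in $\Fil_{\nr}K^M_2(k(Y'))$. Then one must prove that the induced map on cokernels is an isomorphism. Injectivity follows formally from the commutative square~\eqref{eqn:ML-main-2}, since the bottom and left arrows are isomorphisms by Landsburg and~\eqref{eqn:ML-0}. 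Surjectivity, however, is a separate step: the paper produces, for each $x\in X_{(0)}$, an explicit graph-cycle map $K^M_1(k(x))\to z^{d+1}_Z(\ov X,1)$ whose composite with $\Psi_1$ is the identity on $K^M_1(k(x))$. Without this, you have not shown that every class in $C^{\nr}(X)$ arises from a cycle with good intersection along $Z$, and your argument does not close.
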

\begin{proof}
  For any closed point $x \in z^{d+1}(\ov{X}, 1)$, we let $y$ be the image of
  $x$ in $\ov{X}$ and let $\Psi_1([x]) = N_{{k(x)}/{k(y)}}(g^1_x) \in K^M_1(k(y)) \subset
  F^0_1(\ov{X})$. For an integral curve $Y \in z^{d+1}(\ov{X}, 2)$, we let
  $\Psi_2([Y]) = 0$ if $f_Y \colon Y \to \ov{X}$ is constant.
  If the scheme-theoretic image $f_Y(Y)$ is an integral curve $\ov{Y'}$, we let $Y'= \ov{Y'} \cap X$. In this case, we let 
  $\Psi_2([Y]) = N_{{k(Y)}/{k(Y')}}(\{g^1_Y, g^2_Y\}) \in
  K^M_2(k(Y')) \subset F^1_2(\ov{X})$.
  By \cite[Thm.~2.5]{Landsburg}, $\Psi_1$ and $\Psi_2$ define a morphism of complexes
\begin{equation}\label{eqn:ML-main-0}
  \Psi \colon \left(z^{d+1}(\ov{X}, 2) \xrightarrow{\partial} z^{d+1}(\ov{X}, 1)\right)
  \to \left(F^1_2(\ov{X}) \xrightarrow{\partial^1_{\ov{X}}} F^0_1(\ov{X})\right).
\end{equation}
such that the induced map $\Psi \colon
\CH^{d+1}(\ov{X},1) = {\rm Coker}(\partial) \to {\rm Coker}(\partial^1_{\ov{X}}) =
C(\ov{X})$ is an isomorphism.

Since a 0-cycle in $z^{d+1}_Z(\ov{X}, 1)$ is supported in $X \times \square^1_k$,
we see that $\Psi_1(z^{d+1}_Z(\ov{X}, 1)) \subset F^0_1(X)$.
If $Y$ is an integral 1-cycle in $z^{d+1}_Z(\ov{X}, 2)$, then $Y$ does not lie in
$Z \times \square^2_k$ and does not meet any proper face of $X \times \square^2_k$
inside $Z \times \square^2_k$.
This implies that the image of $\{g^1_Y, g^2_Y\}$ dies in $K^M_1(k(y))$ for every
$y \in \nu^{-1}(Z)$, where $\ov{Y}$ is the closure of $Y$ in
$\ov{X} \times \ov{\square}^2_k$ and $\nu \colon \ov{Y}_n \to
\ov{X} \times \ov{\square}^2_k \to \ov{X}$ is the composite (finite) map.
It follows from \cite[Lem.~3.4]{Gupta-Krishna-REC} that the image of 
$\Psi_2([Y]) =  N_{{k(Y)}/{k(Y')}}(\{g^1_Y, g^2_Y\})$ dies in
$K^M_1(k(y'))$ for every $y' \in \Delta(Y'_n)$.
An application of \lemref{lem:Semilocal-K-1} shows  
that $\Psi_2([Y]) \in \Fil_{\nr} K^M_2(k(Y'))$.
We conclude that $\Psi_2(z^{d+1}_Z(\ov{X}, 2)) \subset F^{1,\nr}_2({X})$.
It follows that 
~\eqref{eqn:ML-main-0} induces a commutative diagram 
\begin{equation}\label{eqn:ML-main-1}
  \xymatrix@C1pc{
   \left(z^{d+1}_Z(\ov{X}, 2) \xrightarrow{\partial_Z} z^{d+1}_Z(\ov{X}, 1)\right)
   \ar[r]^-{\Psi} \ar@{^{(}->}[d] &
   \left(F^{1,\nr}_2({X}) \xrightarrow{\partial^1_{X}} F^0_1({X})\right) \ar@{^{(}->}[d] \\
    \left(z^{d+1}(\ov{X}, 2) \xrightarrow{\partial} z^{d+1}(\ov{X}, 1)\right)
    \ar[r]^-{\Psi} & \left(F^1_2(\ov{X}) \xrightarrow{\partial^1_{\ov{X}}}
      F^0_1(\ov{X})\right)}
  \end{equation}
of complexes, where the vertical arrows are the canonical inclusions.

Taking the homology, we get a commutative diagram
\begin{equation}\label{eqn:ML-main-2}
  \xymatrix@C1pc{
    {\rm Coker}(\partial_Z) \ar[r]^-{\Psi} \ar[d] & {\rm Coker}(\partial^1_X) \ar[d] \\
    {\rm Coker}(\partial) \ar[r]^-{\Psi} & {\rm Coker}(\partial^1_{\ov{X}}).}
\end{equation}

The left vertical arrow in ~\eqref{eqn:ML-main-2}
is an isomorphism by ~\eqref{eqn:ML-0} and the bottom
horizontal arrow is an isomorphism by ~\eqref{eqn:ML-main-0}. It follows that
the top horizontal arrow is injective.
To prove that this arrow is surjective, we fix $x \in X_{(0)}$.
Taking the graph of a rational function defines a homomorphism
$\Gamma_x \colon K^M_1(k(x)) \to \CH^1(k(x), 1)$ (cf. \cite[\S~2]{Totaro}).
Bloch's push-forward map on the cycle complexes
corresponding to inclusion $\Spec(k(x)) \inj \ov{X}$ induces a morphism
$z^1(k(x), \bullet) \to z^{d+1}(\ov{X}, \bullet)$ which clearly factors through
$z^{d+1}_Z(\ov{X}, \bullet)$. We thus have homomorphisms
$K^M_1(k(x)) \xrightarrow{\cong} \CH^1(k(x),1) \to {\rm Coker}(\partial_Z)
\xrightarrow{\Psi} {\rm Coker}(\partial^1_X)$ whose composition is identity on
$K^M_1(k(x))$.
This shows that the top horizontal arrow in ~\eqref{eqn:ML-main-2} is surjective.
We have thus shown that all arrows in ~\eqref{eqn:ML-main-2} are isomorphisms.
This concludes the proof.
\end{proof}

\begin{prop}\label{prop:ML-main-3}
  There is an exact sequence
  \[
   {\underset{C \in \sC(X), x \in \Delta(C_n)}\bigoplus} {K^M_2(k(x))}
   \to {C^\tm(X)} \to {C(\ov{X})} \to 0.
 \]
\end{prop}
\begin{proof}
  This is a direct consequence of Definition~\ref{defn:TCG-0} and
  \lemref{lem:ML-main}.
\end{proof}

The following result proves \thmref{thm:Main-6} (cf. \S~\ref{sec:Comp-CG} for
the definition of $\varpi_X$). Note that the second isomorphism of \thmref{thm:Main-6}
is already shown in \corref{cor:Pi-decom-0}.

\begin{thm}\label{thm:TCG-UTCG}
  Let $k$ be a local field of characteristic exponent $p \ge 2$. 
  Then the map $\varpi_{X} \colon C^\tm(X) \to C(\ov{X})$ is surjective. This map is
  bijective modulo $p^n$, for every integer $n \ge 1$.
\end{thm}
\begin{proof}
 In view of \propref{prop:ML-main-3}, it suffices to show that ${K^M_2(k(x))}/{p^n} = 0$ if $x$ is a closed point
of a curve over $k$. But this follows from 
\cite[Chap.~9, Prop.~4.2]{Fesenko-Vostokov}.
\end{proof}

\begin{cor}\label{cor:Rec-tame-unr}
  Under the notations and assumptions of \thmref{thm:TCG-UTCG},
  one has a commutative diagram 
  \begin{equation}\label{eqn:Rec-tame-unr-0}
    \xymatrix@C1pc{
      {C^\tm(X)}/{p^n} \ar[r]^-{\rho^\tm_{X}} \ar[d]_-{j_*} & {\pi^{\ab,\tm}_1(X)}/{p^n}
      \ar[d]^-{j_*} \\
      {C(\ov{X})}/{p^n} \ar[r]^-{\rho_{\ov{X}}} & {\pi^{\ab}_1(\ov{X})}/{p^n}}
  \end{equation}
  for every integer $n \ge 1$ such that the vertical arrows are isomorphisms.
\end{cor}
\begin{proof}
Combine \corref{cor:Pi-decom-0}, \propref{prop:TCG-PF} and \thmref{thm:TCG-UTCG}.
\end{proof}

\section{The reciprocity maps}\label{sec:REC}
In this section, we shall define the reciprocity homomorphisms from various class groups
of integral normal schemes over a local field to the corresponding {\'e}tale fundamental
groups. For topological abelian groups $G$ and $H$, we let
$\Hom_{\cf}(G, H) = \Hom_{\Tab}(G, H)_\tor$.

Let $K$ be a complete discrete valuation field whose residue field $\ff$ is a local
field. Recall from \cite[\S~7]{Kato-cft-1} that $K^M_1(K)$ and $K^M_2(K)$ are
endowed with the structure of topological abelian groups. We shall refer to their
underlying topology as the `Kato topology'.
The cup product pairing $K^M_2(K) \otimes H^1(K) \to H^3(K) \cong {\Q}/{\Z}$
induces a continuous homomorphism $\rho_K \colon K^M_2(K) \to G_K$.
We shall use the following result.

\begin{prop}\label{prop:Kato-REC}
We have the following.
\begin{enumerate}
\item
  The dual of $\rho_K$ induces isomorphisms
      \[
        \rho^\vee_K \colon H^1(K) \xrightarrow{\cong}
    \left\{\begin{array}{ll}\Hom_\cf(K^M_2(K), {\Q}/{\Z}) & \mbox{if $\Char(K) = 0$} \\
             \Hom_\Tab(K^M_2(K), {\Q}/{\Z}) & \mbox{if $\Char(K) > 0$}.
           \end{array}
         \right.
         \]
       \item
         These isomorphisms induce isomorphisms
         \[
         \rho^\vee_K \colon \Fil_0 H^1(K) \xrightarrow{\cong}    
         \Hom_\cf\left(\frac{K^M_2(K)}{U'_0 K^M_2(K)}, {\Q}/{\Z} \right) \  \mbox{if} \
           \Char(K) = 0.
       \]
       \[
       \rho^\vee_K \colon \Fil_m H^1(K) \xrightarrow{\cong}    
       \Hom_\Tab\left(\frac{K^M_2(K)}{U'_m K^M_2(K)}, {\Q}/{\Z} \right) \  \textnormal{for all } m
       \ge 0 \  \mbox{if} \  \Char(K) > 0.
     \]
\end{enumerate}
\end{prop}
\begin{proof}
Combine \cite[\S~3.2, \S~3.5]{Kato80}, \cite[Thm.~2, p.~304]{Kato-cft-1} and 
  \cite[Thm.~6.3]{Gupta-Krishna-REC}.
\end{proof}

\subsection{Reciprocity for idele class group}\label{sec:REC-0}
We now let $k$ be a local field and 
$X \in \Sch_k$ an integral normal scheme with function field $K$.
For $x \in X$, let $\iota^x \colon \Spec(k(x)) \to X$ be the inclusion.
Since the desired reciprocity
homomorphism is classical when $\dim(X)=0$, we assume that $\dim(X) \ge 1$.

By taking sum of the composite maps $K^M_1(k(x)) \xrightarrow{\rho_{k(x)}} G_{k(x)} \to
\pi^{\ab}_1(X)$ over $x \in X_{(0)}$, we get a homomorphism
$\rho^1_X \colon F^0_1(X) \to \pi^{\ab}_1(X)$. 
For $C \in \sC(X)$, \propref{prop:Kato-REC} yields 
a continuous reciprocity homomorphism
$\rho^\eta_\infty \colon K^M_2(k(\eta)_\infty) \to \pi^{\ab}_1(\Spec(k(\eta)_\infty))$,
where $\eta \in C$ is the generic point. Composing this with 
$\pi^{\ab}_1(\Spec(k(\eta)_\infty)) \to
\pi^{\ab}_1(k(\eta)) \to \pi^{\ab}_1(X)$ and summing over $C \in \sC(X)$, we get a
homomorphism
$\rho^2_X \colon {\underset{C \in \sC(X)}\bigoplus} K^M_2(k(C_n)_\infty) \to 
\pi^{\ab}_1(X)$. We let $\rho_X = (\rho^2_X + \rho^1_X) \colon I(X) \to \pi^{\ab}_1(X)$.

\begin{lem}\label{lem:REC-Idele}
  $\rho_X$ factors through a group homomorphism
  \[
    \rho_X \colon C(X) \to \pi^{\ab}_1(X).
  \]
\end{lem}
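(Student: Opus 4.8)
The plan is to show that the continuous homomorphism $\rho_X = \rho^2_X + \rho^1_X \colon I(X) \to \pi^{\ab}_1(X)$ kills the image of $\sum_{C \in \sC(X)} (\iota_C, \partial_C) \colon F^{1,W}_2(X) \to I(X)$; once this is established, the induced map on the quotient $C(X)$ is automatically continuous, because $I(X) \to C(X)$ is a topological quotient map. So fix $C \in \sC(X)$ and $\alpha \in K^M_2(k(C_n))$; one must prove that $\rho^2_X(\iota_C(\alpha)) + \rho^1_X(\partial_C(\alpha)) = 0$ in $\pi^{\ab}_1(X)$.

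First I would reduce to the case of a curve. The morphisms $\Spec k(x) \to C_n \to X$ (for $x \in (C_n)_{(0)}$) and $\Spec k(C_n) \to C_n \to X$ (generic point), together with the covariant functoriality of $\pi^{\ab}_1$, show that every summand occurring in $\rho^1_X(\partial_C(\alpha))$ and in $\rho^2_X(\iota_C(\alpha))$ factors through $\pi^{\ab}_1(C_n) \to \pi^{\ab}_1(X)$. Hence it suffices to treat $X = C_n$, a regular integral curve over $k$ with complete regular model $\ov{C}_n$ and $\Delta(C_n) = \ov{C}_n \setminus C_n$ (a finite set). Next I would rewrite the expression as a single sum over the closed points of $\ov{C}_n$. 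For $x \in \Delta(C_n)$, the $x$-contribution of $\rho^2_{C_n}(\iota_C(\alpha))$ is, by construction, the image of $\rho_{k(C_n)_x}(\alpha_x) \in G_{k(C_n)_x}$ in $\pi^{\ab}_1(C_n)$ along $G_{k(C_n)_x} \to G_{k(C_n)} \to \pi^{\ab}_1(C_n)$. For $x \in (C_n)_{(0)}$, the commutative square of \propref{prop:Kato-REC}~(1) gives $\rho_{k(x)}(\partial_{C_n,x}(\alpha)) = \partial\bigl(\rho_{k(C_n)_x}(\alpha_x)\bigr)$, with $\partial \colon G_{k(C_n)_x} \to G_{k(x)}$ the canonical projection; combined with the elementary fact that the two maps $G_{k(C_n)_x} \to \pi^{\ab}_1(C_n)$ arising from $\Spec k(C_n)_x \to \Spec \wh{\sO_{C_n,x}} \to C_n$ and from $\Spec k(C_n)_x \to \Spec k(C_n) \to C_n$ coincide, this identifies the $x$-contribution of $\rho^1_{C_n}(\partial_C(\alpha))$ with the image of $\rho_{k(C_n)_x}(\alpha_x)$ in $\pi^{\ab}_1(C_n)$ as well. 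Since $\partial_{C_n,x}(\alpha) = 0$ for all but finitely many $x$ and $\Delta(C_n)$ is finite, the resulting sum is finite, and the statement to be proved becomes the reciprocity law
\[
\sum_{x \in (\ov{C}_n)_{(0)}} \bigl[\,\text{image of } \rho_{k(C_n)_x}(\alpha_x)\ \text{in}\ \pi^{\ab}_1(C_n)\,\bigr] = 0 .
\]

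This reciprocity law is the step I expect to be the main obstacle. By Pontryagin duality and \lemref{lem:TFG-0} (using \cite[Lem.~7.11]{Gupta-Krishna-REC}), which identify $\pi^{\ab}_1(C_n)^\vee$ with the group $H^1_\et(C_n, \Q/\Z)$ of characters of $G_{k(C_n)}$ unramified on $C_n$, it is equivalent to the sum formula $\sum_x \langle \alpha_x, \chi_x\rangle = 0$ for every $\alpha \in K^M_2(k(C_n))$ and every such $\chi$, where $\langle -,-\rangle$ is the pairing of \propref{prop:Kato-REC}~(2) and $\chi_x$ denotes the restriction of $\chi$ along $G_{k(C_n)_x} \to G_{k(C_n)}$. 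For characters in $\Fil_0 H^1(k(C_n))$ — those unramified on all of $\ov{C}_n$ — this is the classical reciprocity law for complete curves over local fields of Bloch \cite{Bloch-cft} and Saito \cite{Saito-JNT} (see also \cite[Prop.~3]{Kato-Saito-1}), and its image in $G_k$ is controlled in general by Suslin's reciprocity law for $K^M_2$ of function fields \cite{Suslin-Izv} via the residue compatibility of \propref{prop:Kato-REC}~(1). The general case, allowing ramification along $\Delta(C_n)$, is the open-curve reciprocity law of Hiranouchi \cite{Hiranouchi-1}, whose reciprocity map coincides with the one constructed here; it may be invoked directly. Granting it, $\rho_X$ descends to a homomorphism $C(X) \to \pi^{\ab}_1(X)$, which is continuous by the quotient-topology remark above.
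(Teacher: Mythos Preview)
Your proof is correct and follows the same overall strategy as the paper: both reduce to the case $\dim(X)=1$ via the push-forward $\pi^{\ab}_1(C_n)\to\pi^{\ab}_1(X)$, and both settle the curve case by invoking the reciprocity law for curves over local fields.

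The packaging differs in one respect worth noting. You unwind the sum over closed points of $\ov{C}_n$ and, via Pontryagin duality, reduce to the sum formula $\sum_x\langle\alpha_x,\chi_x\rangle=0$ for $\chi\in H^1_\et(C_n,\Q/\Z)$; you then split into the unramified case (Bloch--Saito) and the general case (Hiranouchi). The paper instead introduces the restricted product $I'(\ov{X})=\prod'_{x\in\ov{X}_{(0)}}K^M_2(K_x)$, defines a single map $\gamma\colon I'(\ov{X})\to G_K$ via the global pairing $H^1(K)\otimes I'(\ov{X})\to\Q/\Z$, and observes that the composite $K^M_2(K)\to I'(\ov{X})\to G_K$ is zero directly by \cite[Thm.~1.1, 2.1]{Saito-JNT}. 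The point is that Saito's reciprocity already gives the vanishing of $\sum_x\langle\alpha_x,\chi_x\rangle$ for \emph{every} $\chi\in H^1(K)$, not just those unramified on $C_n$; your case distinction and the appeal to Hiranouchi are therefore unnecessary, and the Suslin-reciprocity aside can be dropped. Citing Saito directly streamlines your argument to match the paper's.
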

\begin{proof}
  By definitions of $\rho_X$ above and $C(X)$,
  it suffices to prove the lemma when $\dim(X) =1$. 
  We assume this to be the case and
  let $\ov{X}$ be the unique normal compactification of $X$. We let
$I'(\ov{X}) = {\underset{x \in \ov{X}_{(0)}}{\prod'}} K^M_2(K_x)$
be the restricted product with
respect to the subgroups $U'_0 K^M_2(K_x)$.  We let $I'(X) =
{\underset{x \in {X}_{(0)}}{\prod'}} K^M_2(K_x)$ and consider the diagram
\begin{equation}\label{eqn:REC-2}
  \xymatrix@C1pc{
    K^M_2(K) \ar[dr]_-{(\iota_X, \partial_X)} \ar[r]^-{\alpha} & I'(\ov{X})
  \ar[r]^-\gamma \ar[d]^-{\beta} & G_K \ar@{->>}[d] \\
    & I(X) \ar[r]^-{\rho_X} & \pi^{\ab}_1(X),}
  \end{equation}
  where the right vertical arrow is the canonical surjection and $\alpha$
  is induced by taking product of completions.
  It is easy to check that this arrow is well-defined. Our task is to show that
  $\rho_X \circ (\iota_X, \partial_X) = 0$.

To show the above, we first define $\beta$ and $\gamma$ such that
  ~\eqref{eqn:REC-2} commutes. To define $\beta$,
  note that we can write $I'(\ov{X}) =
  K^M_2(K_\infty) \bigoplus I'(X)$.
  We set $\beta$ to be the identity map on the first summand and 
  $\beta(a_x) = \partial_{X,x}(a_x)$ if $a_x \in K^M_2(K_x)$ with $x \in X_{(0)}$.
  This is well-defined because $\Ker(\partial_{X,x}) = U'_0 K^M_2(K_x)$.

To define $\gamma$, let $\chi \in H^1(K)$ and $(a_x) \in I'(\ov{X})$.
  We can find a dense open subscheme $U \subset X$
  such that $\chi$ is unramified along $U$. That is, $\chi \in H^1_\et(U, {\Q}/{\Z})
  \subset H^1(K)$. Equivalently, $\chi$ factors through $\pi^{\ab}_1(U)$.
It follows therefore from \propref{prop:Kato-REC}(3) that
$\sum_{x \in X_{(0)}} \< \chi_x, a_x\>$ is a finite sum. In other words, the cup product
pairing which defines the reciprocity for 2-local fields induces a pairing
\begin{equation}\label{eqn:REC-5}
  \<,\> \colon   H^1(K) \otimes I'(\ov{X}) \to {\Q}/{\Z}.
\end{equation}

Equivalently, we get a homomorphism $\gamma \colon  I'(\ov{X})\to G_K$ which is
compatible with the reciprocity homomorphisms of the completions of $K$.
 Furthermore,
\cite[\S~3.2, Lem.~6(2)]{Kato80} implies that ~\eqref{eqn:REC-2} is commutative. 
Since the composite horizontal arrow on the top in ~\eqref{eqn:REC-2} is zero by
\cite[\S~5, p.~120]{Kato-Saito-83} (see also \cite[Chap~II, Prop.~1.2]{Saito-JNT}), it
follows that $\rho_X \circ (\iota_X, \partial_X)$
is zero. This implies that $\rho_X$ factors through $C(X)$, as desired.
\end{proof}
  
\subsection{Reciprocity for tame class group}\label{sec:REC-Tame}
Let $k$ be a local field and $X \in \Sch_k$ an integral normal scheme with function
field $K$. 
\begin{prop}\label{prop:REC-T}
$\rho_X$ induces a homomorphism
  $\rho^\tm_X \colon C^{\tm}(X) \to \pi^{\ab, \tm}_1(X)$ such that we have a
  commutative diagram
  \begin{equation}\label{eqn:REC-T-0}
    \xymatrix@C1pc{
      C(X) \ar[r]^-{\rho_X} \ar[d]_-{\kappa_X} & \pi^{\ab}_1(X) \ar[d]^-{\tau_X} \\
      C^{\tm}(X) \ar[r]^-{\rho^{\tm}_X} & \pi^{\ab, \tm}_1(X).}
  \end{equation}
\end{prop}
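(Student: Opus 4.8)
The plan is to factor the continuous composite $\tau_X \circ \rho_X \colon C(X) \to \pi^{\ab, \tm}_1(X)$ (with $\rho_X$ as in \lemref{lem:REC-Idele}) through the quotient map $\kappa_X$. By Definition~\ref{defn:Idele-X-1} the kernel of the canonical surjection $\kappa_X$ is the image in $C(X)$ of $\bigoplus_{C \in \sC(X)} U'_1 K^M_2(k(C_n)_\infty) \subset I(X)$, and $C^{\tm, W}(X)$ carries the quotient topology induced by $\kappa_X$. Hence it suffices to show that $\tau_X \circ \rho_X$ annihilates this subgroup; the induced map $\rho^\tm_X$ is then automatically continuous and is the unique homomorphism making ~\eqref{eqn:REC-T-0} commute. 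On the summand $\bigoplus_C K^M_2(k(C_n)_\infty)$ the homomorphism $\rho_X$ restricts to $\rho^2_X$, whose component at a point $x \in \Delta(C_n)$ is $K^M_2(k(C_n)_x) \xrightarrow{\rho_{k(C_n)_x}} G_{k(C_n)_x} \to \pi^{\ab}_1(k(C_n)) \to \pi^{\ab}_1(X)$. So everything reduces to showing: for each $C \in \sC(X)$, each $x \in \Delta(C_n)$ and each $u \in U'_1 K^M_2(k(C_n)_x)$, the image of $\rho_{k(C_n)_x}(u)$ in $\pi^{\ab, \tm}_1(X)$ vanishes.

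To prove this I would pass to Pontryagin duals using \propref{prop:TFG-2}, which identifies $\pi^{\ab, \tm}_1(X)$ with $(\Fil^\tm H^1(K))^\vee$ in such a way that $\tau_X$ becomes restriction of functionals along the inclusion $\Fil^\tm H^1(K) \subseteq H^1_\et(X, {\Q}/{\Z}) = \pi^{\ab}_1(X)^\vee$. Let $\iota \colon \Spec k(C_n)_x \to X$ be the canonical morphism (through $\Spec k(C_n) \to C_n \to X$) and, for $\chi \in H^1_\et(X, {\Q}/{\Z})$, write $\chi_x := \iota^* \chi \in H^1(k(C_n)_x)$. Using the compatibility of pullback on {\'e}tale $H^1$ with pushforward on $\pi^{\ab}_1$, together with the definition of the reciprocity maps via the cup pairing (\propref{prop:Kato-REC}(1),(2)), one sees that the image of $\rho_{k(C_n)_x}(u)$ in $\pi^{\ab, \tm}_1(X)$ is the functional $\chi \mapsto \langle u, \chi_x \rangle_{k(C_n)_x}$ on $\Fil^\tm H^1(K)$. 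If $\chi \in \Fil^\tm H^1(K)$, then, unwinding the definitions in ~\eqref{eqn:Tame-fil} and of $\Fil_{\Delta(C_n)}$ (note that $\Delta(C_n)$ is a \emph{reduced} divisor, hence of conductor exponent $1$ at each of its points), the restriction $\chi_x$ lies in $\Fil_1 H^1(k(C_n)_x)$. It then remains to observe that $\langle u, \psi \rangle_{k(C_n)_x} = 0$ whenever $u \in U'_1 K^M_2(k(C_n)_x)$ and $\psi \in \Fil_1 H^1(k(C_n)_x)$: for $\Char(k) > 0$ this is precisely the $m=1$ instance of the filtered duality in \propref{prop:Kato-REC}(3); for $\Char(k) = 0$ the residue field of $k(C_n)_x$ has characteristic $0$, so $G^{(1)}_{k(C_n)_x} = 0$ by \lemref{lem:Local-case}(2) and $\rho_{k(C_n)_x}(u) = 0$ already (equivalently: $U'_1 K^M_2(k(C_n)_x)$ is divisible, since $1 + \fm_{k(C_n)_x}$ is uniquely divisible, while $H^1(k(C_n)_x)$ is torsion). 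This yields the required vanishing.

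The step needing most care — more a piece of bookkeeping than a genuine obstacle — is the middle one: translating the explicit description of $\rho^2_X$ into the functional picture supplied by \propref{prop:TFG-2}, i.e. checking that pullback of characters along $\Spec k(C_n)_x \to X$ is dual to the pushforward $G_{k(C_n)_x} \to \pi^{\ab}_1(X)$, that this is compatible with the various local cup product pairings, and that the filtration index that surfaces is $1$ — which is exactly the reason the definition of $C^{\tm, W}(X)$ divides out by $U'_1 K^M_2$. Once these compatibilities are recorded the factorization is in place, and continuity of $\rho^\tm_X$ together with commutativity of ~\eqref{eqn:REC-T-0} follow formally from $C^{\tm,W}(X)$ carrying the quotient topology and from $\kappa_X$ being surjective.
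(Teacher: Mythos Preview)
Your proof is correct and follows essentially the same approach as the paper's: both reduce to the local vanishing of the pairing between $U'_1 K^M_2(k(C_n)_x)$ and $\Fil_1 H^1(k(C_n)_x)$, settled via \propref{prop:Kato-REC}(3) in positive characteristic and via divisibility of $U'_1 K^M_2$ in characteristic zero. The paper first reduces to $\dim X = 1$ using \corref{cor:Tame-cov} and then invokes \propref{prop:TFG-3} to conclude that $\rho_{K_x}(U'_1 K^M_2(K_x)) \subset G^{(1)}_{K_x}$ maps to zero in $\pi^{\ab,\tm}_1(X)$, whereas you work directly in arbitrary dimension via the dual description of \propref{prop:TFG-2}; these are equivalent organizations of the same argument.
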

\begin{proof}
 Using \propref{prop:Tame-cov} and \lemref{lem:Approximation}, it suffices to prove
  the proposition when $\dim(X) = 1$. We assume this to be the case and let $\ov{X}$
  be the unique normal compactification of $X$. Let $\eta$ denote the generic point of
  $X$. Using \propref{prop:TFG-3}, Definition~\ref{defn:Idele-X-1} and
  \lemref{lem:REC-Idele}, it suffices to show that if $x \in \Delta(X)$, then the
  image of $U'_1K^M_2(K_x)$ under the reciprocity map $\rho_{K_x} \colon K^M_2(K_x) \to
  G_{K_x}$ lies in the subgroup $G^{(1)}_{K_x}$. If $\Char(k) = 0$, then
  $U'_1K^M_2(K_x)$ is a divisible group by \lemref{lem:Rigidity-Milnor}. Hence,
  its image under $\rho_{K_x}$ must be zero as $G_{K_x}$ is profinite. If
  $\Char(k) > 0$, then using the Pontryagin duality and isomorphism 
~\eqref{eqn:fil_n-dual}, the
  assertion is equivalent to showing that $\chi \circ \rho_{K_x}(a) = 0$ for every
  $\chi \in \Fil_1 H^1(K_x)$ and $a \in U'_1K^M_2(K_x)$.
But this follows from \propref{prop:Kato-REC}(2).
\end{proof}

\begin{cor}\label{cor:REC-T-PF}
  Let $f \colon X' \to X$ be a  morphism of regular $k$-schemes.
  Then we have a commutative diagram
  \begin{equation}\label{eqn:REC-T-PF-0}
    \xymatrix@C1pc{
      C^\tm(X') \ar[r]^-{\rho^\tm_{X'}} \ar[d]_-{f_*} & \pi^{\ab,\tm}_1(X') \ar[d]^-{f_*} \\
      C^\tm(X) \ar[r]^-{\rho^\tm_{X}} & \pi^{\ab, \tm}_1(X).}
  \end{equation}
 \end{cor}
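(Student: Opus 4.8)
\textbf{Proof proposal for Corollary~\ref{cor:REC-T-PF}.}
The plan is to reduce the commutativity of \eqref{eqn:REC-T-PF-0} to the corresponding statement for the ordinary abelian {\'e}tale fundamental groups, namely that for a morphism $f\colon X'\to X$ of integral normal $k$-schemes one has a commutative square with $C(X'),C(X),\pi^{\ab}_1(X'),\pi^{\ab}_1(X)$ and the reciprocity maps $\rho_{X'},\rho_X$ of \lemref{lem:REC-Idele}. First I would check this ``ordinary'' square. By the construction of $\rho_X$ in \S~\ref{sec:REC-0}, the reciprocity map is built out of the local reciprocity maps $\rho_{k(x)}\colon K^M_1(k(x))\to G_{k(x)}\to \pi^{\ab}_1(X)$ on closed points and $\rho^{\eta}_{\infty}$ on the semilocal completions of curves in $X$. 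Since $f_*$ on $C$ (defined in \S~\ref{sec:Cov**}) is induced by the norm maps $N_{k(x')/k(x)}$ on residue fields of closed points and by the norm maps $N_{C'/C}$ on curves, the commutativity of the ordinary square reduces to the following two statements: (i) for a finite extension of local fields $F'/F$, the square relating $N_{F'/F}\colon K^M_1(F')\to K^M_1(F)$, the reciprocity maps $\rho_{F'},\rho_F$, and the induced map $G_{F'}\to G_F$ commutes; and (ii) the analogous compatibility of $\rho^{\eta}_\infty$ with norms on semilocal completions of curves. Statement (i) is the classical functoriality of the reciprocity map in local class field theory, and (ii) follows from \propref{prop:Kato-REC}(1) together with the compatibility of the norm maps with residues (\cite[\S~3.2, Lem.~6]{Kato80}), exactly as in the construction of $\rho_X$ itself; alternatively one invokes that, by the moving-lemma identification \lemref{lem:ML-main} (or just directly from the definitions), $\rho_X$ is functorial for push-forward on the Wiesend-style presentation.

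Next, granting the ordinary square, I would deduce the tame version by a diagram chase using \propref{prop:REC-T} and \corref{cor:Tame-cov}. We have the cube whose front and back faces are the ordinary squares for $X'$ and $X$, whose left and right faces are the squares \eqref{eqn:REC-T-0} (expressing $\rho^\tm=\tau\circ\rho$ followed by $\kappa$), and whose top and bottom faces are the squares we wish to establish, together with the functoriality square for $\tau$ (which exists by \corref{cor:Tame-cov}, since $X$ and $X'$ are regular). Chasing around: starting from $C(X')$, applying $\kappa_{X'}$ then $\rho^\tm_{X'}$ then $f_*$ equals (by \propref{prop:REC-T}) $\tau_{X}\circ f_*\circ\rho_{X'}$, which by the ordinary square equals $\tau_X\circ\rho_X\circ f_*=\rho^\tm_X\circ\kappa_X\circ f_*=\rho^\tm_X\circ f_*\circ\kappa_{X'}$, where the last equality uses that $\kappa$ is natural in $X$ (immediate from the definition of $f_*$ on $C$ and $C^{\tm}$, since both are quotients of the idele group by compatible subgroups). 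Since $\kappa_{X'}\colon C(X')\surj C^{\tm,W}(X')$ is surjective and $C^\tm(X')\xrightarrow{\cong}C^{\tm,W}(X')$ (\lemref{lem:Approximation}), we may cancel $\kappa_{X'}$ on the right, obtaining $f_*\circ\rho^\tm_{X'}=\rho^\tm_X\circ f_*$ on all of $C^\tm(X')$. The regularity hypothesis on $X$ and $X'$ is used precisely to have the tame push-forward $f_*\colon\pi^{\ab,\tm}_1(X')\to\pi^{\ab,\tm}_1(X)$ available via \corref{cor:Tame-cov}.

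The proper case is a special instance of the same argument: for $f$ proper we have the push-forward $f_*\colon C(X')\to C(X)$ from \propref{prop:ICG-fin-PF} (no regularity needed), and the ordinary reciprocity square commutes for the same reason as above, since $f_*$ on $C$ is again assembled from norm maps on residue fields of closed points and on function fields of curves, and $\rho_X$ is compatible with these norms.

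The main obstacle I expect is verifying the base case, i.e.\ that the ordinary reciprocity square commutes under $f_*$; this is where one must be careful about the Wiesend-style presentation of $C(X)$ and the fact that $f_*$ on generators has several cases (the image of a curve may be a point, or a curve over which $f$ is finite). One checks each case: when $f$ maps a curve $C'$ to a closed point the relevant composite vanishes by Suslin's reciprocity law (as in the proof of \propref{prop:ICG-fin-PF}), and when $f(C')=C$ is a curve one uses the Cartesian square \eqref{eqn:ICG-fin-PF-0} together with the compatibility of the reciprocity map with the norm on completions, which is \cite[\S~3.2, Lem.~6]{Kato80} and \propref{prop:Kato-REC}(1). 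Everything else is a formal diagram chase.
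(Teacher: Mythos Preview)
Your approach is correct but more elaborate than the paper's. The paper observes that $C^\tm(X')$ is by definition a quotient of $F^0_1(X')=\bigoplus_{x'\in X'_{(0)}}K^M_1(k(x'))$, and on each such generator both $f_*$ (the norm $N_{k(x')/k(f(x'))}$, via \propref{prop:TCG-PF}) and $\rho^\tm_{X'}$ (the local reciprocity $\rho_{k(x')}$ followed by the canonical map to $\pi^{\ab,\tm}_1$, via \S~\ref{sec:REC-0} and \propref{prop:REC-T}) are explicit; commutativity then reduces at once to the norm compatibility of local reciprocity, which is \cite[\S~3.2, Cor.~1]{Kato80}. No cube, no surjectivity cancellation, and no passage through $C(X')$ is needed.

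Your route through the idele class group $C(X')$ carries two costs. First, for non-proper $f$ you are invoking $f_*\colon C(X')\to C(X)$, which the paper only proves for proper $f$ in \propref{prop:ICG-fin-PF} (the general case is left as \remref{remk:No-proper}); so strictly speaking your reduction to the ``ordinary'' square in the general case depends on filling in that remark. Second, it forces you to treat the curve-at-infinity summands of $I(X')$, which is unnecessary since $\delta_{X'}$ lands $C^\tm(X')$ in the image of $F^0_1(X')$ inside $C^{\tm,W}(X')$. A minor correction: the compatibility you invoke in your case (ii)---reciprocity commuting with the norm on $K^M_2$ of 2-dimensional local fields---is again \cite[\S~3.2, Cor.~1]{Kato80}; the references \cite[\S~3.2, Lem.~6]{Kato80} and \propref{prop:Kato-REC}(1) concern compatibility with residues, not norms.
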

\begin{proof}
  All arrows in the diagram exist by Propositions~\ref{prop:Tame-cov},
  ~\ref{prop:TCG-PF} and ~\ref{prop:REC-T}.
  Its commutativity follows from the
explicit construction of these maps using \cite[\S~3.2, Cor.~1]{Kato80}.
\end{proof}

\section{Tame class group and motivic cohomology}\label{sec:TCG-MC}
The motivic cohomology of schemes with compact support ({\`a} la Voevodsky)
will be a key player in our proofs of the main results of this paper.
The purpose of this section is to prove some necessary results related to
motivic cohomology over possibly imperfect fields.
We shall use these results to establish an isomorphism
between the tame class group with finite coefficients (which are invertible in
the ground field) and certain motivic cohomology group with compact support over
local fields. 
Throughout this section, we fix a prime field $\kappa$  of characteristic exponent
$p \ge 1$. We let $\Nsch_{\kappa}$ denote the category of Noetherian $\kappa$-schemes of
finite Krull dimension. We let $\Reg_\kappa \subset \Nsch_{\kappa}$ denote the full
subcategory of consisting of regular $\kappa$-schemes.

\subsection{Review of Voevodsky's motives}\label{sec:DM}
In \cite[Thm.~11.4.5]{CD-Springer}, Cisinski-D{\'e}glise showed that
given any $X \in \Nsch_{\kappa}$ and any (unital) commutative ring $R$,
there is a monoidal triangulated category of
integral mixed motives $\dm(X, R)$ which coincides with the original construction of
Voevodsky \cite{Voe00} when $X$ is the spectrum of a perfect field and of Suslin
\cite{Suslin-AKT} when $X$ is the spectrum of an arbitrary field. The assignment
$X \mapsto \dm(X, R)$ satisfies many (but not necessarily all) of the axioms which
together constitute the so-called `Grothendieck six functor formalism'
(cf. \cite[Thm.~1, p.~xviii]{CD-Springer}).

We now let $\Lambda$ be a commutative ring which we take to be $\Z$ if $p = 1$ or
any $\Z[\tfrac{1}{p}]$-algebra if $p \ge 2$.
For $X$ as above, one defines $\un{\dm}_\cdh(X, \Lambda)$ in the same
manner as $\dm(X, \Lambda)$ by replacing the Nisnevich topology on $\Sm_X$ by the cdh
topology on $\Sch_X$. We let $\dm_\cdh(X, \Lambda)$ be the full
localizing (i.e., thick) triangulated subcategory of $\un{\dm}_\cdh(X, \Lambda)$
generated by motives of the form $M_X(Y)(n)$ for $Y \in \Sm_X$ and $n \in \Z$.
If $X = \Spec(A)$ is affine, we shall use $\dm_\cdh(A, \Lambda)$ as another notation
for $\dm_\cdh(X, \Lambda)$. Similar convention will be followed for other functors on
the category of affine $\kappa$-schemes. The constant  Nisnevich (resp. cdh) sheaf with
transfer associated to the ring $\Lambda$ is the identity
object for the monoidal structure of $\dm(X, \Lambda)$
(resp. $ \dm_\cdh(X, \Lambda)$). We shall denote both of these by $\Lambda_X$.

In \cite{CD-Doc}, 
Cisinski-D{\'e}glise showed that ${\dm}_\cdh(X, \Lambda)$ has better behavior for
singular schemes than
$\dm(X, \Lambda)$ in that it satisfies all axioms of the six functor formalism.
Their main result is the following.

\begin{thm}$($Cisinski-D{\'e}glise, \cite{CD-Doc}$)$\label{thm:CD-cdh}
  $(1)$ \ The assignment $X \mapsto \dm_\cdh(X, \Lambda)$ defines a presheaf of monoidal
  triangulated categories on $\Nsch_{\kappa}$ which satisfies all conditions listed in
  \cite[\S~A.5]{CD-Springer}. In particular, it satisfies the six functor formalism and
  the localization axiom (cf. \cite[Defn.~2.3.2]{CD-Springer}).
  Additionally, given any separated and finite type morphism $f \colon X \to Y$ in
  $\Nsch_{\kappa}$, the functor $f^* \colon \dm_\cdh(Y, \Lambda) \to \dm_\cdh(X, \Lambda)$
  admits a left adjoint $f_{\sharp} \colon \dm_\cdh(X, \Lambda) \to \dm_\cdh(Y, \Lambda)$
  if either $f$ is smooth or $Y$ is the spectrum of a field. \\
  $(2)$ \ If $X \in \Reg_\kappa$, then the change of topology functor
  $a^*_\cdh \colon \dm(X, \Lambda) \to \dm_\cdh(X, \Lambda)$
  is an equivalence of monoidal triangulated categories. In particular,
  the presheaf of monoidal triangulated categories $X \mapsto \dm(X, \Lambda)$
  on $\Reg_{\kappa}$ satisfies all conditions listed in \cite[\S~A.5]{CD-Springer}.
\end{thm}
\begin{proof}
  The fact that the functor $X \mapsto \dm_\cdh(X, \Lambda)$ on $\Nsch_{\kappa}$ admits
  the six functors having the properties listed as conditions (1) to (6) in
  \cite[\S~A.5]{CD-Springer} are stated in \cite[\S~1.6, 6.1]{CD-Doc} most of which
  were already shown in \cite[Thm.~11.4.5]{CD-Springer}. The absolute purity
  is shown in Proposition~6.2 and the duality is shown in Theorem~7.3 of
  \cite{CD-Doc}. The continuity and localization axioms are shown in
  Theorem~5.11 of op. cit.. 

  To prove the existence of $f_\sharp$, we let $f \colon X \to Y$ be a
  separated and finite type morphism. It is then well known that the functor
  $f^* \colon \un{\dm}_\cdh(Y, \Lambda) \to \un{\dm}_\cdh(X, \Lambda)$ admits
  a left adjoint $f_{\sharp} \colon \un{\dm}_\cdh(X, \Lambda) \to
  \un{\dm}_\cdh(Y, \Lambda)$
  which is characterized by the property that for any $Z \in \Sch_X$ with the
  structure map $g \colon Z \to X$, one has $f_{\sharp}(M_X(Z)) = M_Y(Z)$.
  Here, $M_X(Z) \in \un{\dm}_\cdh(X, \Lambda)$ is the motive of $Z$ when the
  latter is considered as an object of $\Sch_X$ via $g$ and
  $M_Y(Z) \in \un{\dm}_\cdh(Y, \Lambda)$ is the motive of $Z$ when the
  latter is considered as an object of $\Sch_Y$ via $f \circ g$. Note that this
  is where one needs $X$ to be separated and finite type over $Y$.

  If $f$ is smooth, then $f_\sharp$ as defined above carries $\dm_\cdh(X, \Lambda)$ to
  $\dm_\cdh(Y, \Lambda)$ because $\dm_\cdh(X, \Lambda)$ is generated by the
  (twists and shifts of) motives of the objects of $\Sm_X$, and the compositions of
  such objects with $f$ define objects of $\Sm_Y$ whose motives lie in
  $\dm_\cdh(Y, \Lambda)$.
  If $Y = \Spec(k)$ with $k$ a field, then we have the functors
  \[
  \dm_\cdh(X, \Lambda) \inj \un{\dm}_\cdh(X, \Lambda) \xrightarrow{f_\sharp}
  \un{\dm}_\cdh(Y, \Lambda) \hookleftarrow {\dm}_\cdh(Y, \Lambda).
  \]
  Since the last inclusion on the right is an equivalence by
  \cite[Prop.~8.1(c)]{CD-Doc}, we see that $f_\sharp$ takes $\dm_\cdh(X, \Lambda)$
  to ${\dm}_\cdh(Y, \Lambda)$ in this case too.  The first claim of part (2) of the
  theorem is shown in Corollary~5.9 of op. cit. and the second claim follows by
  combining it with part (1). This concludes the proof.
\end{proof}

We now let $k$ be a field containing $\kappa$. 
For a closed immersion $\iota \colon W \inj X$ with structure maps
$f \colon X \to \Spec(k)$ and $h = f \circ \iota$ in $\Sch_k$, consider the following
objects in $\dm_\cdh(k, \Lambda) \simeq \dm(k, \Lambda)$.
\begin{equation}\label{eqn:Motive-def}
  M_k(X) = f_\sharp \Lambda_X, \
  M^c_k(X) = f_* f^! \Lambda_k \ \mbox{and} \
  M^W_k(X) = h_! \iota^* f^! \Lambda_k,
\end{equation}
where all functors are considered on the premotivic category
$X \mapsto \dm_\cdh(X, \Lambda)$. Recall that $\Z_k(1) =
{\Z_{\rm tr}(\P^1_k)}/{\Z_{\rm tr}(\Spec(k))}$ as a presheaf with transfer
via the identification of $\Spec(k)$ with the residue field of $\infty \in \P^1_k$.
In particular, $\Lambda_k(1) \cong M_k({\P^1_k}/{\Spec(k)}) \in  \dm(k, \Lambda)$. 
We shall drop the field $k$ from the notations of motives of objects in $\Sch_k$ once
it is fixed in a context.

\begin{lem}$($Cisinski-D{\'e}glise, \cite{CD-Doc}$)$\label{lem:Motive-0}
  There is a canonical isomorphism $M(X) \cong f_{!} f^{!} \Lambda_k$ in
  $\dm_\cdh(k, \Lambda)$. In particular, $M^c(X) \cong M(X)$ if $X$ is complete.
\end{lem}
\begin{proof}
  This is proven in \cite[(8.7.1)]{CD-Doc} but we reproduce the proof here to show
  that it does not require perfectness assumption on $k$.
  We first note that for any object $M \in \dm_\cdh(k, \Lambda)$, we have
  the functorial isomorphisms (of internal homs) 
  \begin{equation}\label{eqn:Motive-0-0}
    \begin{array}{lll}
      \un{\Hom}_{\dm_\cdh(k, \Lambda)}(M(X), M) & \cong & 
      \un{\Hom}_{\dm_\cdh(k, \Lambda)}(f_{\sharp} \Lambda_X, M) \\
      & \cong & f_* \un{\Hom}_{\dm_\cdh(X, \Lambda)}(\Lambda_X, f^* M) \\
 & \cong & f_* \un{\Hom}_{\dm_\cdh(X, \Lambda)}(f^* \Lambda_k, f^* M) \\
      & \cong & \un{\Hom}_{\dm_\cdh(k, \Lambda)}(\Lambda_k, f_*f^* M) \\
      & \cong &  f_*f^* M, \\
    \end{array}
  \end{equation}
  where all isomorphisms follow using the adjointness of pairs $(f_\sharp, f^*)$
  and $(f^*, f_*)$ as well as the isomorphism $f^* \Lambda_k \cong \Lambda_X$.

  By the adjointness of $(f_{!}, f^{!})$, we have the isomorphism
  $\un{\Hom}_{\dm_\cdh(k, \Lambda)}(f_{!} f^{!} \Lambda_k, \Lambda_k) \cong 
  f_* \un{\Hom}_{\dm_\cdh(X, \Lambda)}(f^{!} \Lambda_k, f^{!} \Lambda_k)$.
  Combining this with ~\eqref{eqn:Motive-0-0} and the duality isomorphism
  $\Lambda_X \cong \un{\Hom}_{\dm_\cdh(X, \Lambda)}(f^{!} \Lambda_k, f^{!} \Lambda_k)$ (cf.
  \cite[Thm.~7.3]{CD-Doc}), we get
  \[
  \un{\Hom}_{\dm_\cdh(k, \Lambda)}(f_{!} f^{!} \Lambda_k, \Lambda_k) \cong f_* \Lambda_X
  \cong f_* f^* \Lambda_k \cong \un{\Hom}_{\dm_\cdh(k, \Lambda)}(M(X), \Lambda_k).
  \]
  Since the natural map $M \to \un{\Hom}_{\dm_\cdh(k, \Lambda)}(\un{\Hom}_{\dm_\cdh(k, \Lambda)}
  (M, \Lambda_k), \Lambda_k)$ is an isomorphism by \cite[Thm.~7.3]{CD-Doc},
  we get $\un{\Hom}_{\dm_\cdh(k, \Lambda)}(f_{!} f^{!} \Lambda_k, M)
  \cong \un{\Hom}_{\dm_\cdh(k, \Lambda)}(M(X), M)$ for every object $M \in
  \dm_\cdh(k, \Lambda)$. But this clearly implies 
  $f_{!} f^{!} \Lambda_k \cong M(X)$.
\end{proof}

\begin{defn}\label{defn:MC-defn}
Given a closed immersion $\iota \colon W \inj X$ in $\Sch_k$, we let 
\begin{equation}\label{eqn:Coh-defn**}
  H_i(X, \Lambda(j)) = \Hom_{\dm_\cdh(k, \Lambda)}(\Lambda_k(j)[i], M(X));
\end{equation}
\[
H^i(X, \Lambda(j)) = \Hom_{\dm_\cdh(k, \Lambda)}(M(X), \Lambda_{k}(j)[i]);
\]
\[
  H^i_c(X, \Lambda(j)) = \Hom_{\dm_\cdh(k, \Lambda)}(M^c(X),  \Lambda_{k}(j)[i]);
\]
\[
  H^i_W(X, \Lambda(j)) =   \Hom_{\dm_\cdh(k, \Lambda)}(M^W(X),  \Lambda_{k}(j)[i]).
\]
\end{defn}
The above groups are respectively called the (motivic) homology, cohomology,
cohomology with compact support and cohomology with support in $W$  of $X$.
If $X = \Spec(A)$ is affine, we shall often use
the notation $A$ instead of $X$ while using these groups.

\begin{lem}\label{lem:Ex-seq}
  Given a closed immersion $\iota \colon W \inj X$ in $\Sch_k$ with the inclusion
  $u \colon U \inj X$ of the complement, there are long exact sequences
  \[
    (1) \hspace*{1cm}     \cdots \to H^i_W(X, \Lambda(j)) \xrightarrow{\iota_*}
    H^i(X, \Lambda(j)) 
    \xrightarrow{u^*}  H^i(U, \Lambda(j)) \xrightarrow{\partial}
    H^{i+1}_W(X, \Lambda(j)) \to \cdots .
  \]
  \[
    (2) \hspace*{1cm}   \cdots \to H^i_c(U, \Lambda(j)) \xrightarrow{u_*}
    H^i_c(X, \Lambda(j)) 
    \xrightarrow{\iota^*}  H^i_c(W, \Lambda(j)) \xrightarrow{\partial}
    H^{i+1}_c(U, \Lambda(j)) \to \cdots .   
  \]
 \end{lem}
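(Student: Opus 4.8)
\textbf{Proof plan for \lemref{lem:Ex-seq}.}
Both long exact sequences are instances of the localization triangle in the premotivic category $X \mapsto \dm_\cdh(X, \Lambda)$, which holds in full generality over Noetherian $k$-schemes of finite Krull dimension by the six functor formalism of Cisinski--D{\'e}glise \cite{CD-Doc}. Concretely, for the closed immersion $\iota \colon W \inj X$ with open complement $u \colon U \inj X$ over $k$, one has the two standard localization distinguished triangles in $\dm_\cdh(X, \Lambda)$,
\[
  u_! u^! \to \id \to \iota_* \iota^* \xrightarrow{+1}, \qquad
  \iota_! \iota^! \to \id \to u_* u^* \xrightarrow{+1}.
\]
The first I would pull back to $\dm_\cdh(k,\Lambda)$ by applying the structure map $f_*$ (for cohomology with support) and the second by applying $f_!$ (for cohomology with compact support), after identifying the relevant composite functors with the motives of \eqref{eqn:Motive-def}. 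This reduces everything to rotating a distinguished triangle into a long exact sequence of $\Hom$-groups.

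For part (1), I would start from the localization triangle $u_! u^!\Lambda_X \to \Lambda_X \to \iota_*\iota^*\Lambda_X \xrightarrow{+1}$ in $\dm_\cdh(X,\Lambda)$ and apply $f_\sharp$ to obtain a distinguished triangle relating $M_k(X)$, $M_k(U)$ (using $f_\sharp u_! \simeq (f u)_\sharp$ and $u^!\Lambda_X \simeq \Lambda_U$ since $u$ is an open immersion), and the object $f_\sharp \iota_* \iota^* \Lambda_X = M^W_k(X)$; here I use $h = f\circ\iota$ and $\iota^*\Lambda_X \simeq \Lambda_W$. Applying $\Hom_{\dm_\cdh(k,\Lambda)}(-, \Lambda_k(j)[i])$ and using that $\Hom(-, -[\bullet])$ turns a distinguished triangle into a long exact sequence then yields exactly sequence (1), with the maps $\iota_*$, $u^*$, $\partial$ induced by the morphisms in the triangle. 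For part (2), I would instead apply $f_!$ to the localization triangle $\iota_! \iota^! \Lambda_X \to \Lambda_X \to u_* u^* \Lambda_X \xrightarrow{+1}$; using $f_! u_* \simeq$ (the appropriate push-forward giving $M^c_k(U)$ via $u_* u^* f^!$, together with base change and $f_! \simeq f_*$-type identifications packaged in the six functors), $f_! \Lambda_X$ computing $M^c_k(X)$, and $f_! \iota_! \iota^! \Lambda_X = (f\iota)_! \iota^! \Lambda_X$ computing $M^c_k(W)$ (closed immersions are proper, so $\iota_! = \iota_*$ and $\iota^!$ is the exceptional inverse image), one gets a distinguished triangle which after applying $\Hom(-, \Lambda_k(j)[i])$ gives sequence (2).

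The genuinely delicate point is not the triangle-rotation bookkeeping but the precise identification of the composite six-functor expressions with the three flavours of motive in \eqref{eqn:Motive-def}, especially keeping track of which base-change and projection-formula isomorphisms are being invoked and checking they are available for the (possibly singular, possibly non-smooth over an imperfect $k$) schemes $X$, $U$, $W$. Since $\dm_\cdh(-,\Lambda)$ satisfies the full six functor formalism over all Noetherian finite-dimensional $k$-schemes by \cite{CD-Doc}, no regularity or smoothness hypothesis is needed and these identifications are formal; the main obstacle is purely one of careful citation of the relevant compatibilities (localization, $f_!\simeq f_*$ for proper maps, $f_\sharp$ for smooth maps, and the exchange isomorphisms) rather than any new input. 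I would therefore present this as a short deduction from the localization property, relegating the functor identifications to references, with the definitions in \eqref{eqn:Motive-def} and \defref{defn:MC-defn} doing essentially all the work.
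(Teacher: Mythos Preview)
Your plan for part~(1) is correct and essentially matches the paper: the paper applies $f_!$ to the localization triangle $u_! u^* f^! \Lambda_k \to f^! \Lambda_k \to \iota_* \iota^* f^! \Lambda_k$ to obtain $M_k(U) \to M_k(X) \to M^W_k(X)$, while you apply $f_\sharp$ to $u_! u^! \Lambda_X \to \Lambda_X \to \iota_* \iota^* \Lambda_X$. These are the same triangle once one uses $f_\sharp \Lambda_X \cong f_! f^! \Lambda_k$ from \eqref{eqn:Motive-def}, and your identification $\Hom(f_\sharp \iota_* \Lambda_W, \Lambda_k(j)[i]) \cong H^i_W(X,\Lambda(j))$ follows from the $(f_\sharp,f^*)$ adjunction (this is exactly the content of \lemref{lem:MC-et}(3)).

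There is, however, a genuine mix-up in part~(2). Applying $f_!$ to the triangle $\iota_! \iota^! \Lambda_X \to \Lambda_X \to u_* u^* \Lambda_X$ does \emph{not} give what you claim: the term $f_! \iota_! \iota^! \Lambda_X = h_! \iota^! \Lambda_X$ (with $h = f\iota$) does not represent $H^i_c(W, \Lambda(j))$, because $\iota^! \Lambda_X$ is not $\Lambda_W$ in general (there is no purity when $\iota$ is not a regular immersion), and the term $f_! u_* \Lambda_U$ does not simplify since $f_!$ and $u_*$ do not compose to anything useful. The paper instead applies $f_*$ to the localization triangle $\iota_* \iota^! f^! \Lambda_k \to f^! \Lambda_k \to u_* u^* f^! \Lambda_k$; since $u^* \cong u^!$ for the open immersion and $\iota^! f^! = h^!$, $u^! f^! = g^!$ (with $g = fu$), this yields $h_* h^! \Lambda_k \to f_* f^! \Lambda_k \to g_* g^! \Lambda_k$, i.e.\ the triangle $M^c_k(W) \to M^c_k(X) \to M^c_k(U)$ directly from \eqref{eqn:Motive-def}. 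Alternatively, your $f_!$ approach can be salvaged by applying $f_!$ to the \emph{other} localization triangle $u_! u^* \Lambda_X \to \Lambda_X \to \iota_* \iota^* \Lambda_X$ (the same one you used for part~(1)): this gives $g_! \Lambda_U \to f_! \Lambda_X \to h_! \Lambda_W$, and now applying the covariant functor $\Hom_{\dm_\cdh(k,\Lambda)}(\Lambda_k, -(j)[i])$ together with the description $H^i_c(Y,\Lambda(j)) = \Hom(\Lambda_k, f_{Y!} \Lambda_Y(j)[i])$ of \lemref{lem:MC-et}(2) produces sequence~(2).
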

\begin{proof}
  We let $f \colon X \to \Spec(k)$ denote the structure map of $X$.
  We let $h = f \circ \iota$ and $g = f \circ u$.
We have a distinguished triangle
$\iota_* \iota^! f^! \Lambda_k \to f^! \Lambda_k \to u_* u^* f^! \Lambda_k$
   in $\dm_\cdh(X, \Lambda)$ by a combination of \cite[Prop.~1.4.9]{Ayoub-1}
   and \cite[Thm.~5.1]{CD-Doc}. Using the isomorphism $u^* \cong u^!$ and
   subsequently applying $f_*$, we get a distinguished triangle
   \begin{equation}\label{eqn:Ex-seq-0}
     M^c(W) \to M^c(X) \to M^c(U)
   \end{equation}
   in $\dm_\cdh(k, \Lambda)$.
   We also have a distinguished triangle
   $u_! u^* f^! \Lambda_k \to f^! \Lambda_k \to \iota_* \iota^* f^! \Lambda_k$
   in $\dm_\cdh(X, \Lambda)$ by \cite[Thm.~5.11]{CD-Doc} (see also p.~147 of op. cit.).
We again proceed as in the previous case to get a distinguished triangle
   \begin{equation}\label{eqn:Ex-seq-1}
     M(U) \to M(X) \to M^W(X)
   \end{equation}
   in $\dm_\cdh(k, \Lambda)$. The distinguished triangles of motives
   ~\eqref{eqn:Ex-seq-0} and ~\eqref{eqn:Ex-seq-1} yield the long exact sequences
   of cohomology in the standard way.
   \end{proof}

We shall use the following description of motivic cohomology groups.  

\begin{lem}\label{lem:MC-et}
  In the notations of \lemref{lem:Ex-seq},
  there are canonical isomorphisms
  \[
    H^i(X, \Lambda(j)) \cong \Hom_{\dm_\cdh(k, \Lambda)}(\Lambda_k, f_* \Lambda_{X}(j)[i]);
\]
\[
  H^i_c(X, \Lambda(j)) \cong  \Hom_{\dm_\cdh(k, \Lambda)}(\Lambda_k,  f_! \Lambda_{X}(j)[i]);
\]
\[
H^i_W(X, \Lambda(j)) \cong
\Hom_{\dm_\cdh(X, \Lambda)}(\iota_* \Lambda_W,  \Lambda_{X}(j)[i]).
\] 
\end{lem}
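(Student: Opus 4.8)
The three claimed formulas are just the standard ``adjunction'' unwindings of the definitions in Definition~\ref{defn:MC-defn}, so the plan is to compare the two expressions for each cohomology group by applying the appropriate adjunction from the six-functor formalism for $X \mapsto \dm_\cdh(X,\Lambda)$. Throughout, write $f \colon X \to \Spec(k)$ for the structure map, $\iota \colon W \inj X$ for the closed immersion, and recall from \eqref{eqn:Motive-def} that $M_k(X) = f_! f^! \Lambda_k$, $M^c_k(X) = f_* f^! \Lambda_k$, while $\Lambda_X = f^* \Lambda_k$ since $\Lambda_X$ is the unit object and $f^*$ is monoidal.

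\emph{The cohomology case.} First I would handle $H^i(X,\Lambda(j))$. By definition this is $\Hom_{\dm_\cdh(k,\Lambda)}(M_k(X), \Lambda_k(j)[i]) = \Hom(f_! f^! \Lambda_k, \Lambda_k(j)[i])$. Using the adjunction $(f_!, f^!)$ one rewrites this as $\Hom(f^! \Lambda_k, f^!(\Lambda_k(j)[i]))$; but this is not quite what is wanted. The correct route is instead: $M_k(X) = f_\sharp \Lambda_X$ by the first isomorphism in \eqref{eqn:Motive-def} (valid since $f$ need not be smooth here, one should use that in $\dm_\cdh$ one still has $f_!f^! \Lambda_k \cong f_\sharp f^* \Lambda_k$ for the relevant class of morphisms, or simply invoke that $M_k(X) = f_\sharp\Lambda_X$ is part of the cited setup). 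Then by the $(f^*, f_*)$-adjunction and $\Lambda_X = f^*\Lambda_k$,
\[
\Hom_{\dm_\cdh(k)}(f_\sharp \Lambda_X, \Lambda_k(j)[i]) \cong \Hom_{\dm_\cdh(k)}(\Lambda_k, f_*(\Lambda_X(j)[i])),
\]
after using the projection/base-change isomorphism $f_*(\Lambda_X(j)[i]) \cong f_*(f^*\Lambda_k(j)[i])$ and $f_\sharp \dashv f^*$ to move $f^*$ across. This gives the first formula.

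\emph{The compact-support and support cases.} For $H^i_c$, start from $M^c_k(X) = f_* f^! \Lambda_k$ and compute $\Hom(f_* f^! \Lambda_k, \Lambda_k(j)[i])$; dualizing appropriately (or using the self-duality $\Hom(M^c_k(X), \Lambda_k(j)[i]) \cong \Hom(\Lambda_k, (M^c_k(X))^\vee(j)[i])$ together with $(M^c)^\vee \cong f_! f^*$) one obtains $\Hom(\Lambda_k, f_! \Lambda_X(j)[i])$ — more cleanly, use the $(f_!, f^!)$-adjunction: $\Hom(f_* f^! \Lambda_k, \Lambda_k(j)[i])$ is computed by first noting that $M^c_k(X)$ is the dual of $M_k(X)(-d)[-2d]$ when $X$ is smooth of dimension $d$, but for the general statement the honest argument is the one with $f_! \dashv f^!$ applied after the duality pairing. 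For $H^i_W$, start from $M^W_k(X) = h_! \iota^* f^! \Lambda_k$ with $h = f\circ\iota$, use $(h_!, h^!) = (f_! \iota_!, \iota^! f^!)$ with $\iota_! = \iota_*$ (closed immersion), so $\Hom_{\dm_\cdh(k)}(f_! \iota_* \iota^* f^! \Lambda_k, \Lambda_k(j)[i]) \cong \Hom_{\dm_\cdh(X)}(\iota_* \iota^* f^! \Lambda_k, f^!\Lambda_k(j)[i])$; then another $(\iota^*, \iota_*)$ or $(\iota_!, \iota^!)$ adjunction plus $\iota^* f^! \Lambda_k$ manipulations reduce this to $\Hom_{\dm_\cdh(X)}(\iota_* \Lambda_W, \Lambda_X(j)[i])$, using $\Lambda_X = f^*\Lambda_k$ and absolute purity to identify the twist.

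\textbf{Main obstacle.} The routine adjunction chases are unproblematic; the one point requiring genuine care is the identification in the $H^i_W$ formula of $\iota^* f^! \Lambda_k$ (equivalently the correct placement of Tate twists and shifts), and similarly making sure that in passing between $M_k(X)=f_!f^!\Lambda_k$ and $f_\sharp\Lambda_X$ one is entitled to the isomorphism for the (possibly non-smooth, over an imperfect base) $X$ at hand. I expect this to be handled by citing the absolute purity and the six-functor formalism of Cisinski--Déglise \cite{CD-Doc} — specifically the identifications already recorded in \cite[\S 8.5]{CD-Doc} and \eqref{eqn:Motive-def} — rather than by any new computation, so the proof will be short: essentially ``apply the adjunctions $(f_\sharp, f^*)$, $(f^*, f_*)$, $(f_!, f^!)$, $(\iota_*, \iota^!)$ in turn, using $\Lambda_X = f^*\Lambda_k$.''
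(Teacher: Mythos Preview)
Your treatment of part (1) matches the paper's: both use the adjunctions $(f_\sharp, f^*)$ and $(f^*, f_*)$ together with $f^*\Lambda_k \cong \Lambda_X$.

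For parts (2) and (3), however, your direct adjunction/duality approach does not go through, and the paper takes a different route. For $H^i_c$ you need to pass from $\Hom(f_* f^!\Lambda_k, \Lambda_k(j)[i])$ to $\Hom(\Lambda_k, f_! f^*\Lambda_k(j)[i])$. No single adjunction accomplishes this: $(f_!,f^!)$ is irrelevant since $f_*$ sits on the outside, and your duality move $(M^c_k(X))^\vee \cong f_! f^*\Lambda_k$ presupposes that $M^c_k(X)$ is strongly dualizable, which is not available for arbitrary $X \in \Sch_k$. You yourself note the argument only works ``when $X$ is smooth of dimension $d$'' and then trail off into ``the honest argument is the one with $f_!\dashv f^!$ applied after the duality pairing'' without saying what that is. The paper instead chooses a compactification $X \inj \ov X$ with complement $Z$, writes down the two distinguished triangles
\[
M_k(Z) \to M_k(\ov X) \to M^c_k(X), \qquad f_!\Lambda_X \to g_*\Lambda_{\ov X} \to h_*\Lambda_Z,
\]
and applies the already-proved part (1) to the proper schemes $\ov X$ and $Z$ (where $f_! = f_*$) to identify the two long exact sequences termwise, hence the middle terms.

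For $H^i_W$ your chain of adjunctions lands at $\Hom_{\dm_\cdh(X)}(\iota_*\iota^* f^!\Lambda_k, f^!\Lambda_k(j)[i])$, whereas the target is $\Hom_{\dm_\cdh(X)}(\iota_*\iota^* f^*\Lambda_k, f^*\Lambda_k(j)[i])$. Invoking ``absolute purity to identify the twist'' would replace $f^!$ by a twist of $f^*$ only when $X$ is smooth over $k$, which is not assumed here; and even then there is no residual twist to identify, so the phrase signals confusion rather than a fix. The paper again argues by comparing the localization triangles $M_k(U) \to M_k(X) \to M^W_k(X)$ and $h_*\iota^!\Lambda_X \to f_*\Lambda_X \to g_*\Lambda_U$, using part (1) for $X$ and $U$, and then the adjunction $(\iota_*, \iota^!)$ together with $h^*\Lambda_k \cong \Lambda_W$ to reach the stated form.
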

\begin{proof}
  This is well known to experts and we give a sketch. We shall use the notations
  and distinguished triangles which appeared in the proof of \lemref{lem:Ex-seq}.
  For brevity, we shall use the notation $\sC_k$ (resp. $\Lambda^i_j$) for
  $\dm_\cdh(k, \Lambda)$ (resp. $\Lambda_k(j)[i])$ throughout the proof.
  We choose a compactification $v \colon X \inj \ov{X}$ of $X$ (cf. \cite{Nagata}) and
  let $\iota' \colon Z \inj \ov{X}$ be the complementary reduced closed subscheme.
  We let ${f'} \colon \ov{X} \to \Spec(k)$ and
  ${h'} = {f'} \circ \iota' \colon Z \to \Spec(k)$
  denote the structure maps.
 The first isomorphism follows directly from ~\eqref{eqn:Motive-0-0} if we take
  $M = \Lambda^i_j$ and use external homs instead of internal homs. 

  For the second isomorphism, one first applies the functorial isomorphism of
  ~\eqref{eqn:Motive-0-0} to $X$ and $Z$ to get a commutative diagram of motives
  \begin{equation}\label{eqn:MC-et-0}
    \xymatrix@C1pc{
      \un{\Hom}_{\sC_k}(M(\ov{X}), \Lambda^i_j) \ar[r]^-{\cong} \ar[d] &
      \un{\Hom}_{\sC_k}(\Lambda_k, f'_* \Lambda_{\ov{X}}(j)[i]) \ar[d] \\
\un{\Hom}_{\sC_k}(M(Z), \Lambda^i_j) \ar[r]^-{\cong} &
\un{\Hom}_{\sC_k}(\Lambda_k, h'_* \Lambda_Z(j)[i]),}
  \end{equation}
  in which the horizontal arrows are isomorphisms and
  the vertical arrows are obtained by the canonical maps
  $M(Z) \to M(\ov{X})$ and $f'_* \Lambda_{\ov{X}} \to h'_* \Lambda_Z$ in $\sC_k$.

  Considering the homotopy fibers of the two columns in ~\eqref{eqn:MC-et-0}
  and using the distinguished triangles 
  \[
    M(Z) \to M(\ov{X}) \to M^c(X); \ \
    f_! \Lambda_X \to f'_* \Lambda_{\ov{X}} \to h'_* \Lambda_Z,
  \]
  one gets a canonical map $\theta_X \colon \un{\Hom}_{\sC_k}(M^c(X), \Lambda^i_j) \to
  \un{\Hom}_{\sC_k}(\Lambda_k, f_{!} \Lambda_X(j)[i])$ which fits into the
  commutative diagram of distinguished triangles
  \begin{equation}\label{eqn:MC-et-1}
    \xymatrix@C.8pc{
      \un{\Hom}_{\sC_k}(M^c(X), \Lambda^i_j) \ar[r] \ar[d]_-{\theta_X} &
     \un{\Hom}_{\sC_k}(M(\ov{X}), \Lambda^i_j) \ar[r] \ar[d]^-{\cong}_-{\theta_{\ov{X}}} & 
     \un{\Hom}_{\sC_k}(M(Z), \Lambda^i_j) \ar[d]^-{\cong}_-{\theta_Z} \\
     \un{\Hom}_{\sC_k}(\Lambda_k, f_{!} \Lambda_X(j)[i]) \ar[r] &
     \un{\Hom}_{\sC_k}(\Lambda_k, f'_* \Lambda_{\ov{X}}(j)[i]) \ar[r] &
     \un{\Hom}_{\sC_k}(\Lambda_k, h'_* \Lambda_Z(j)[i]).}
  \end{equation}
It follows that $\theta_X$ is an isomorphism.

Applying the external hom functor $\Hom_{\sC_k}(\Lambda_k, -)$ to $\theta_X$
  and using the isomorphisms
  $\Hom_{\sC_k}(\Lambda_k, \un{\Hom}_{\sC_k}(M^c(X), \Lambda^i_j))
  \cong \Hom_{\sC_k}(M^c(X), \Lambda^i_j)$ and
  $\Hom_{\sC_k}(\Lambda_k, \un{\Hom}_{\sC_k}(\Lambda_k, A))
  \cong \Hom_{\sC_k}(\Lambda_k, A)$ (where we write $A$ as shorthand for
  $f_{!} \Lambda_X(j)[i]$),
  obtained via the adjointness of $(\otimes_{\Lambda_k}, \un{\Hom}_{\sC_k})$
  (cf. \cite[\S~A.5.1 (1)]{CD-Springer}), we get an isomorphism of abelian groups
  $\theta'_X \colon \Hom_{\sC_k}(M^c(X), \Lambda^i_j) \xrightarrow{\cong}
  \Hom_{\sC_k}(\Lambda_k, A)$ and this proves the second
  isomorphism.

To prove the third isomorphism, we look at the commutative diagram
  of motives
\begin{equation}\label{eqn:MC-et-2}
    \xymatrix@C1pc{
\un{\Hom}_{\sC_k}(M({X}), \Lambda^i_j) \ar[r]^-{\cong} \ar[d] &
      \un{\Hom}_{\sC_k}(\Lambda_k, f_* \Lambda_{X}(j)[i]) \ar[d] \\
\un{\Hom}_{\sC_k}(M(U), \Lambda^i_j) \ar[r]^-{\cong} &
\un{\Hom}_{\sC_k}(\Lambda_k, g_* \Lambda_U(j)[i]),}
  \end{equation}
in which the horizontal arrows are isomorphisms (cf. ~\eqref{eqn:Motive-0-0}) and
  the vertical arrows are obtained by the canonical maps
  $M(U) \to M(X)$ and $f_* \Lambda_{X} \to g_* \Lambda_U$ in $\sC_k$.

 Considering the homotopy fibers of the two columns in ~\eqref{eqn:MC-et-2}
  and using the distinguished triangles 
  \[
    M(U) \to M({X}) \to M^W(X); \ \
    h_* \iota^{!}\Lambda_X \to f_* \Lambda_{X} \to g_* \Lambda_U,
  \]
one gets a canonical map $\theta^W_X \colon \un{\Hom}_{\sC_k}(M^W(X), \Lambda^i_j) \to
  \un{\Hom}_{\sC_k}(\Lambda_k, h_* \iota^{!}\Lambda_X(j)[i])$ which fits into the
  commutative diagram of distinguished triangles
  \begin{equation}\label{eqn:MC-et-3}
    \xymatrix@C.8pc{
      \un{\Hom}_{\sC_k}(M^W(X), \Lambda^i_j) \ar[r] \ar[d]_-{\theta^W_X} &
     \un{\Hom}_{\sC_k}(M(X), \Lambda^i_j) \ar[r] \ar[d]^-{\cong}_-{\theta_X} & 
     \un{\Hom}_{\sC_k}(M(U), \Lambda^i_j) \ar[d]^-{\cong}_-{\theta_U} \\
     \un{\Hom}_{\sC_k}(\Lambda_k, h_* \iota^{!} \Lambda_X(j)[i]) \ar[r] &
     \un{\Hom}_{\sC_k}(\Lambda_k, f_* \Lambda_{{X}}(j)[i]) \ar[r] &
     \un{\Hom}_{\sC_k}(\Lambda_k, g_* \Lambda_U(j)[i]).}
  \end{equation}
  It follows that $\theta^W_X$ is an isomorphism. We now argue exactly as in the
  proof of the second isomorphism to conclude that $\theta^W_X$ induces an
  isomorphism of abelian groups $(\theta^W_X)' \colon {\Hom}_{\sC_k}(M^W(X), \Lambda^i_j)
  \xrightarrow{\cong} {\Hom}_{\sC_k}(\Lambda_k, h_* \iota^{!} \Lambda_X(j)[i])$.

  To finish the proof, we now use the isomorphisms
\begin{equation}\label{eqn:MC-et-4}
  \begin{array}{lll}
    {\Hom}_{\sC_k}(\Lambda_k, h_* \iota^{!} \Lambda_X(j)[i]) & \cong &
    {\Hom}_{\dm_\cdh(W, \Lambda)}(h^*\Lambda_k, \iota^{!} \Lambda_X(j)[i]) \\
    & \cong &  {\Hom}_{\dm_\cdh(W, \Lambda)}(\Lambda_W, \iota^{!} \Lambda_X(j)[i]) \\
    & \cong &  {\Hom}_{\dm_\cdh(X, \Lambda)}(\iota_{*} \Lambda_W, \Lambda_X(j)[i]), \\
\end{array}
\end{equation}
where the last isomorphism uses the adjoint pair $(\iota_{!}, \iota^{!})$
and the identity $\iota_{!} = \iota_*$ (as $\iota$ is proper).
The proof of the lemma is now complete.
\end{proof}

\begin{cor}\label{cor:MC-et-5}
  Let $X, W \in \Sch_k$ be as in \lemref{lem:Ex-seq}. If $X$ and $W$ are regular, then
  \[
  H^i(X, \Lambda(j)) \cong \Hom_{\dm(k, \Lambda)}(\Lambda_k, f_* \Lambda_{X}(j)[i]);
\]
\[
H^i_c(X, \Lambda(j)) \cong {\Hom}_{\dm(k, \Lambda)}(\Lambda_k,
  f_{!}\Lambda_X(j)[i]);
  \]
  \[
H^i_W(X, \Lambda(j)) \cong
\Hom_{\dm(X, \Lambda)}(\iota_* \Lambda_W,  \Lambda_{X}(j)[i]).
\]
\end{cor}
\begin{proof}
 Combine \thmref{thm:CD-cdh} and \lemref{lem:MC-et}. 
  \end{proof}

\subsection{Motivic cohomology via stable homotopy category}\label{sec:MSH}
For our purpose, we need to represent the motivic cohomology groups (and their
variants) for regular $\kappa$-schemes in their motivic stable homotopy categories
{\`a} la Voevodsky (cf. \cite{Ayoub-1},
\cite{Ayoub}). We recall from op. cit. that for a scheme $X \in \Nsch_\kappa$,
$\sh(X)$ is the monoidal triangulated category of $T$-spectra over $X$
obtained by the $T$-stabilization of the $\A^1$-localization of the category of pointed
Nisnevich simplicial sheaves of sets on $\Sm_X$, where $\Sigma_s = (S^1, 1)$
(resp. $\Sigma_t = (\G_m, 1)$) is the simplicial (resp. Tate) circle and
$T = \Sigma_s \wedge \Sigma_t$. In this paper, we shall work with the localization
of the monoidal triangulated category $\sh(X)$ in which all hom-groups are endowed
with the structure of  $\Z[\tfrac{1}{p}]$-modules. From here onward, $\sh(X)$ will
always mean this localized category.

By \cite{Ayoub-1} and \cite{Ayoub},
$X \mapsto \sh(X)$ defines a presheaf of monoidal triangulated categories on
$\Nsch_\kappa$ which satisfies the six functor formalism.
Another property that we shall use is that $\sh(X)$ is a spectral category
(cf. \cite[\S~2.2]{DJK}), i.e., it is a category enriched in spectra. In particular,
there is an $S^1$-spectrum
${\rm Maps}_{\sh(X)}(A, B)$ for any $A, B \in \sh(X)$ such that
one has $\pi_n({\rm Maps}_{\sh(X)}(A, B)) \cong \Hom_{\sh(X)}(\Sigma^{n}_sA,B)$.
We let $\1_X = \Sigma^\infty_T X_{+}$ denote
the identity object for the monoidal structure of $\sh(X)$ and let $\Sigma^{i,j} = \Sigma^{i-j}_s \circ \Sigma^{j}_t$ as an automorphism of $\sh(X)$ for
$i, j \in \Z$.
%

By \cite[\S~2.b]{CD-Doc}, forgetting the transfer structure and then taking the
$T$-suspension defines a functor $\phi_X \colon {\dm}(X, \Lambda) \to
{\sh}(X)$ (in op. cit., this functor is denoted by $\phi_*$) which has a strictly monoidal (cf. \cite[\S~11.2.16]{CD-Springer})
left adjoint $\psi_X \colon {\sh}(X) \to  {\dm}(X, \Lambda)$ (denoted by $\phi^*$ in op. cit.). 
In particular, $\psi_X(\1_X) \xrightarrow{\cong} \Lambda_X$. 
We let $\h\Lambda_{X} = \phi_X(\Lambda_X)$ and let
$\h\Lambda_{X}$-Mod be the monoidal triangulated subcategory of
 $\sh(X)$ consisting of module spectra over ring spectrum $\h\Lambda_{X}$.
If $v \colon X \to Y$ is a morphism
in $\Nsch_\kappa$, then the exchange transformation $v^* \phi_Y \to \phi_X v^*$ induces
a canonical map $v^* \h\Lambda_{Y} \to \h\Lambda_{X}$. We recall the following result from 
\cite[Cor.~3.6]{CD-Doc}. 

\begin{thm}\label{thm:CD-5.11}
  The map
  $v^* \h\Lambda_{Y} \to \h\Lambda_{X}$ is an isomorphism if $v \colon X \to Y$
  is a morphism in $\Reg_\kappa$.
\end{thm}


The following is well-known to experts.

\begin{lem}\label{lem:DM-SH-1}
  For $X \in \Reg_\kappa$ and $i, j \in \Z$, one has $\phi_X(\Lambda_X(j)[i]) \cong
  \Sigma^{i,j}\h\Lambda_{X}$.
\end{lem}
\begin{proof}
 As shown in \cite[\S~2.b]{CD-Doc}, there is a canonical
 monoidal functor $t_* \colon 
   \dm(X, \Lambda) \to \h\Lambda_{X}$-Mod such that $\phi_X = \sO_{\h\Lambda_{X}} \circ
  t_*$, where $\sO_{\h\Lambda_{X}} \colon \h\Lambda_{X}$-Mod $\to \sh(X)$ is the
  canonical inclusion (i.e., the forgetful functor). Furthermore,
  $t_*$ is an equivalence of monoidal triangulated categories.
  Hence, it suffices the prove that
  $t_*(\Lambda_X(j)[i]) \cong \Sigma^{i,j}\h\Lambda_{X}$.
Now, it is well known that $t_*(\Lambda_X(1)) = \Sigma^{-1}_s \wedge \Sigma_t
  \wedge \h\Lambda_{X}$.
  As the automorphism $\Sigma_s$ in $\h\Lambda_{X}$-Mod
  corresponds to the shift functor
  in $\dm(X, \Lambda)$ and $t_*$ is monoidal, one
  gets
  \[
  t_*(\Lambda_X(j)[i]) =  \Sigma^{i}_s \wedge (t_*(\Lambda_X(1)))^j\wedge \h\Lambda_{X}
  \cong \Sigma^{i}_s \wedge \Sigma^{-j}_s \wedge \Sigma^j_t \wedge \h\Lambda_{X}
  \cong \Sigma^{i,j}\h\Lambda_{X}.
  \]
  This proves the lemma.
  \end{proof}

We now let $k$ be a field containing $\kappa$.
Let $X \in \Sch_k$ with the structure map $f \colon X \to \Spec(k)$.
We let $\h\Lambda_{X/k} = f^* \h\Lambda_{k}$ and
$\h\Lambda_{X/{\kappa}} = f^*_\kappa \h\Lambda_{\kappa}$, where
$f_\kappa \colon X \xrightarrow{f} \Spec(k) \to \Spec(\kappa)$ is the composite map.
We then have the canonical maps $\h\Lambda_{X/{\kappa}} \xrightarrow{\cong}
\h\Lambda_{X/k} \to \h\Lambda_{X}$ in which the first map is isomorphism by
\thmref{thm:CD-5.11}.

For a closed immersion $\iota \colon W \inj X$ in $\Sch_k$ with structure map
$f \colon X \to \Spec(k)$, we let
\begin{equation}\label{eqn:Coh-defn}
  \begin{array}{lll}
    E^i(X, \Lambda(j)) & = & \Hom_{\sh(X)}(\1_X, \Sigma^{i,j} \h\Lambda_{X/k}); \\
    E^i_c(X, \Lambda(j)) & = & \Hom_{\sh(k)}(\1_k, \Sigma^{i,j} f_! \h\Lambda_{X/k}); \\
  E^i_W(X, \Lambda(j)) & = &\Hom_{\sh(X)}(\iota_* \1_W, \Sigma^{i,j}\h\Lambda_{X/k}).
  \end{array}
  \end{equation}

\begin{lem}\label{lem:SH-DM-Maps}
If $X, W \in \Sch_k$ are regular, there are canonical isomorphisms:
  \begin{enumerate}
  \item
$H^i(X, \Lambda(j)) \ {\cong} \ E^i(X, \Lambda(j))$.
\item
$H^i_c(X, \Lambda(j))  \ {\cong} \ E^i_c(X, \Lambda(j))$.
\item
  $H^i_W(X, \Lambda(j)) \ {\cong} \ E^i_W(X, \Lambda(j))$.
  \end{enumerate}
\end{lem}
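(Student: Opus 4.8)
\textbf{Proof plan for \lemref{lem:SH-DM-Maps}.}
The strategy is to compare the two incarnations of motivic invariants—one via Voevodsky's category $\dm_\cdh(k,\Lambda)$ and one via the stable homotopy category $\sh(k)$—using the adjoint pair $(\psi_X,\phi_X)$ together with the identifications $\h\Lambda_{X/k}=\phi_X(\Lambda_X)$ and the cdh-realization results of the previous subsections. For part (1), I would start from \lemref{lem:MC-et}(1), which expresses $H^i(X,\Lambda(j))$ as $\Hom_{\dm_\cdh(k,\Lambda)}(\Lambda_k, f_*\Lambda_X(j)[i])$. Applying the adjunction $(f^*,f_*)$ this becomes $\Hom_{\dm_\cdh(X,\Lambda)}(\Lambda_X,\Lambda_X(j)[i])$; then the fully faithful embedding $a^*_\cdh\colon\dm(X,\Lambda)\hookrightarrow\dm_\cdh(X,\Lambda)$ (not needed if we work cdh throughout) and, crucially, the adjunction $(\psi_X,\phi_X)$ translate the $\dm$-hom into $\Hom_{\sh(X)}(\1_X,\Sigma^{i,j}\phi_X(\Lambda_X))=\Hom_{\sh(X)}(\1_X,\Sigma^{i,j}\h\Lambda_X)$. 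Finally \lemref{lem:EM-sp} is invoked only when $X\in\Reg_k$ to replace $\h\Lambda_X$ by $\h\Lambda_{X/k}$; but part (1) is stated for arbitrary $X$, so here I would instead use $\h\Lambda_{X/k}=f^*\h\Lambda_k=f^*\phi_k(\Lambda_k)\cong\phi_X(f^*\Lambda_k)\cong\phi_X(\Lambda_X)$, which holds for all $X\in\Sch_k$ because $\phi$ commutes with $f^*$ on compact objects and $f^*\Lambda_k\cong\Lambda_X$; this gives the identification without any regularity hypothesis.

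For part (3), I would run the same argument for cohomology with support: by \lemref{lem:MC-et}(3), $H^i_W(X,\Lambda(j))=\Hom_{\dm_\cdh(X,\Lambda)}(\iota_*\Lambda_W,\Lambda_X(j)[i])$, and the adjunction $(\psi_X,\phi_X)$ turns this into $\Hom_{\sh(X)}(\psi_X^{-1}(\iota_*\Lambda_W),\Sigma^{i,j}\h\Lambda_{X/k})$ provided one knows $\psi_X(\iota_*\1_W)\cong\iota_*\Lambda_W$. This is where the hypothesis $W,X\in\Reg_k$ enters: for a closed immersion of regular schemes, the absolute purity theorems (Cisinski–Déglise in $\dm_\cdh$, and its $\sh$-counterpart) identify $\iota_*\1_W$ with a Tate twist/shift of $\iota_\sharp$ of the unit, and $\psi$ being monoidal and commuting with $\iota_\sharp$ on smooth pieces propagates the comparison; alternatively one uses that $\psi_X(\iota_*\1_W)$ and $\iota_*\Lambda_W$ both satisfy the same purity distinguished triangle relating them to $\Lambda_X$ and $u_\sharp\Lambda_U$, forcing the isomorphism. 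I expect this purity-of-the-support-motive comparison to be the main obstacle—everything else is formal adjunction bookkeeping.

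For part (2), cohomology with compact support, I would use \lemref{lem:MC-et}(2): pick a compactification $j\colon X\inj\ov X$ with complementary reduced closed subscheme $\iota\colon Z\inj\ov X$, and the distinguished triangle $f_!\Lambda_X\to g_*\Lambda_{\ov X}\to h_*\Lambda_Z$ in $\dm_\cdh(k,\Lambda)$, matched against the analogous triangle $f_!\h\Lambda_{X/k}\to g_*\h\Lambda_{\ov X/k}\to h_*\h\Lambda_{Z/k}$ in $\sh(k)$ obtained by applying $\phi$ (which commutes with $g_*,h_*$ on constructible objects by the six-functor compatibility of $\phi$). Comparing $H^i$ of $\ov X$ and $Z$ via part (1), the five lemma then yields the isomorphism on $H^i_c$. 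The hypothesis $X\in\Reg_k$ is used to guarantee, via \lemref{lem:EM-sp} and \thmref{thm:CD-Voev}, that $\phi$ is compatible with $f^!$ (equivalently, that $\h\Lambda_{X/k}\cong\h\Lambda_X$ and the analogous statement holds for $\ov X$ after resolving/alterating if needed)—in practice one reduces, by continuity and the descent argument of \propref{prop:CD}, to $k$ perfect and $\ov X$ smooth, where $f^!\cong\Sigma^{d,2d}f^*$ and the comparison is immediate. The only delicate point is ensuring the two distinguished triangles are identified \emph{compatibly} with the realization functor $\phi$, which follows from the fact that $\phi$ is a morphism of premotivic categories commuting with the exceptional functors on the subcategory of constructible motives.
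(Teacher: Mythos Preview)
Your argument for part (1) has a genuine gap. You claim that $\h\Lambda_{X/k}=f^*\phi_k(\Lambda_k)\cong\phi_X(f^*\Lambda_k)=\h\Lambda_X$ for every $X\in\Sch_k$, justifying it by ``$\phi$ commutes with $f^*$ on compact objects''. But $\phi$ is a \emph{right} adjoint; the mate calculus only gives the exchange transformation $f^*\phi_k\to\phi_Xf^*$, not an isomorphism. In fact this exchange map being an isomorphism on the unit is precisely the content of \lemref{lem:EM-sp}, which the paper only asserts (and proves, citing \cite[Prop.~3.10]{CD-Doc}) for $X\in\Reg_k$. So your route computes $\Hom_{\sh(X)}(\1_X,\Sigma^{i,j}\h\Lambda_X)$, not $\Hom_{\sh(X)}(\1_X,\Sigma^{i,j}\h\Lambda_{X/k})=E^i(X,\Lambda(j))$, and you cannot close the gap for singular $X$. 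There is also a Nisnevich/cdh mismatch: the adjunction $(\psi_X,\phi_X)$ you invoke is between $\sh(X)$ and $\dm(X,\Lambda)$, whereas you started in $\dm_\cdh(X,\Lambda)$; identifying these again needs $X\in\Reg_k$ (\thmref{thm:CD-Voev}).

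The paper circumvents this by never working over $X$ at all. It uses a result of \cite[Thm.~2.14]{Krishna-Pelaez-AKT} (cdh descent for $\h\Lambda$) together with Lemmas~\ref{lem:EM-sp} and~\ref{lem:EM-sp-0} to rewrite $E^i(X,\Lambda(j))$ as $\Hom_{\sh_\cdh(k)}(f_\sharp\1_X,\Sigma^{i,j}\h\Lambda^\cdh_k)$, and only then applies the adjunction $(\psi_k,\phi_k)$ over the \emph{base} $k$, where no exchange transformation is needed, to land in $\Hom_{\dm_\cdh(k,\Lambda)}(f_\sharp\Lambda_X,\Lambda_k(j)[i])=H^i(X,\Lambda(j))$. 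This is what makes (1) work for arbitrary $X$. For (2) and (3) the paper again stays in $\sh_\cdh(k)$ and uses that $\psi_k$ commutes with $f_*,f^!,\iota_!,\iota^*$ on regular schemes via \cite[Cor.~3.2]{CD-Doc}; this is exactly where the regularity hypotheses enter. Your five-lemma approach to (2) could be made to work once (1) is correctly established, but your matching of the two distinguished triangles (``$\phi$ commutes with the exceptional functors on constructible motives'') again presupposes compatibilities that fail over the singular complement $Z$.
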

\begin{proof}
  To prove (1), we compute
  \begin{equation}\label{eqn:SH-DM-Maps-0}
    \begin{array}{lll}
     H^i(X, \Lambda(j)) &{\cong}^1& \Hom_{\dm(k, \Lambda)}(\Lambda_k, f_* \Lambda_{X}(j)[i])\\
     & {\cong} &  \Hom_{\dm(X, \Lambda)}(\Lambda_X,  \Lambda_X(j)[i]) \\
     & {\cong} &  \Hom_{\dm(X, \Lambda)}(\psi_X(\1_X), \Lambda_{X}(j)[i])\\
    & {\cong} &  \Hom_{\sh(X)}(\1_X, \phi_X(\Lambda_{X}(j)[i]))\\
     & {\cong}^2 & \Hom_{\sh(X)}(\1_X, \Sigma^{i,j}\h\Lambda_{X}) \\
     & {\cong}^3 & \Hom_{\sh(X)}(\1_X, \Sigma^{i,j}\h\Lambda_{X/k}) \\
     & = &  E^i(X, \Lambda(j)),
    \end{array}
    \end{equation}
  where ${\cong}^{1}$ follows from \corref{cor:MC-et-5}, ${\cong}^2$ follows from
  \lemref{lem:DM-SH-1} and ${\cong}^{3}$ follows from  \thmref{thm:CD-5.11}.

  To prove (2), we compute
  \begin{equation}\label{eqn:SH-DM-Maps-1}
    \begin{array}{lll}
      H^i_c(X, \Lambda(j)) & {\cong}^1 &
      \Hom_{\dm(k, \Lambda)}(\Lambda_k, f_{!} \Lambda_X(j)[i]) \\
      & {\cong} & \Hom_{\dm(k, \Lambda)}(\psi_k(\1_k), f_{!} \Lambda_X(j)[i]) \\
      & {\cong} & \Hom_{\sh(k)}(\1_k, \phi_k ( f_{!} \Lambda_X(j)[i] ) ) \\
      & {\cong}^2 & \Hom_{\sh(k)}(\1_k, f_{!}(\phi_X(\Lambda_X(j)[i]))) \\
       & {\cong}^3 & \Hom_{\sh(k)}(\1_k, f_{!}(\Sigma^{i,j}\h\Lambda_{X})) \\
      & {\cong}^4 & \Hom_{\sh(k)}(\1_k, \Sigma^{i,j} f_{!} \h\Lambda_X) \\
 & {\cong}^5 & \Hom_{\sh(k)}(\1_k, \Sigma^{i,j} f_{!} \h\Lambda_{X/k}) \\     
& = & E^i_c(X, \Lambda(j)),
    \end{array}
  \end{equation}
  where ${\cong}^1$ follows from \corref{cor:MC-et-5},  ${\cong}^2$ follows from
  \cite[Cor.~3.2(1)]{CD-Doc}, ${\cong}^3$ follows from \lemref{lem:DM-SH-1},
  ${\cong}^4$ follows from the projection formula for $f_{!}$ in $\sh(-)$
  (cf. \cite[Thm.~2.4.50(5)]{CD-Springer})  and
  ${\cong}^5$ follows from \thmref{thm:CD-5.11}.
  
To prove (3), we compute
  \begin{equation}\label{eqn:SH-DM-Maps-2}
    \begin{array}{lll}
      H^i_W(X, \Lambda(j))  & {\cong}^1 &
      \Hom_{\dm(X, \Lambda)}(\iota_* \Lambda_W, \Lambda_X(j)[i]) \\
      & {\cong} &  \Hom_{\dm(X, \Lambda)}(\iota_*(\psi_W (\1_W)), \Lambda_X(j)[i]) \\
         & {\cong}^2 &  \Hom_{\dm(X, \Lambda)}(\psi_X( \iota_* \1_W), \Lambda_X(j)[i]) \\
      & {\cong} & \Hom_{\sh(X)}( \iota_* \1_W,
     \phi_X (\Lambda_X(j)[i]) ) \\
       & {\cong}^3 & \Hom_{\sh(X)}(\iota_* \1_W, \Sigma^{i,j}\h\Lambda_{X}) \\
      & {\cong}^4 & \Hom_{\sh(X)}(\iota_* \1_W, \Sigma^{i,j}\h\Lambda_{X/k}) \\
      & = & E^i_W(X, \Lambda(j)), 
    \end{array}
  \end{equation}
  where ${\cong}^1$ follows from \corref{cor:MC-et-5}, ${\cong}^{2}$ follows from  \cite[Cor.~3.2(1)]{CD-Doc},
 ${\cong}^{3}$  follows from \lemref{lem:DM-SH-1}, and ${\cong}^4$  follows from \thmref{thm:CD-5.11}.
\end{proof}

\subsection{Gysin homomorphisms for $E^*(-, \Lambda(\bullet))$}
\label{sec:Gysin**}
We let $k$ be a field containing $\kappa$.
Let $\iota \colon Z \inj X$ be a regular closed
immersion of pure codimension $n$ in $\Sch_k$. Let $W \subset Z$ be a closed subset.
Since $\h\Lambda_{{(-)}/{k}}$ is an oriented absolute spectrum,
it follows from \cite[Defn.~2.8]{Navarro} (see also \cite[Thm.~3.2.21]{DJK})
that there is a fundamental class $\eta^X_Z \in E^{2n}_Z(X, \Lambda(n))$
and a product
$\bigcup \colon E^i_W(Z, \Lambda(j)) \times E^{2n}_Z(X, \Lambda(n))
\to E^{2n+i}_W(X, \Lambda(n+j))$ (cf. \cite[Defn.~1.6]{Navarro}) which together give
rise to the refined Gysin homomorphism
\begin{equation}\label{eqn:Gysin-0}
\iota_* \colon  E^i_W(Z, \Lambda(j)) \to E^{2n+i}_W(X, \Lambda(n+j)); \ \
\alpha \mapsto \alpha \cup \eta^X_Z.
\end{equation}
This map is linear with respect to the action of the (relative) motivic cohomology
ring of $X$. Furthermore, for a flat morphism $f \colon X' \to X$ of $k$-schemes
with $Z' = Z \times_X X'$,
one has $\eta^{X'}_{Z'} = f^*(\eta^X_Z)$ (cf. \cite[Prop.~2.10]{Navarro}).
In particular, the refined Gysin map commutes with the flat
pull-back map $f^*$ on the cohomology of $Z$ and $X$ with support in $W$.

Recall that when $k$ is a perfect field and $X$ is a  regular $k$-scheme, then
there is a natural duality isomorphism
$\Phi_X \colon M^c(X)^*(d)[2d] \xrightarrow{\cong} M(X)$
(cf. \cite[Thm.~5.3.18(iii)]{Kelly}).
Furthermore, if $\iota \colon Z \inj X$ is a closed immersion of regular $k$-schemes of
pure codimension $n$, then one has a purity isomorphism $M_Z(X) \cong M(Z)(n)[2n]$
(cf. \cite[(2.4.39.1)]{CD-Springer}).

When $Z$ is moreover complete (e.g., a closed point of $X$),
the duality and purity isomorphisms yield the Gysin homomorphism
$\iota_* \colon H^i(Z, \Lambda(j)) \to H^{2n+i}_c(X, \Lambda(n+j))$ which is dual to the
push-forward homomorphism $\iota_* \colon H_{d-n-i}(Z, \Lambda(d-n-j)) \to
H_{d-n-i}(X, \Lambda(d-n-j))$. In other  words, there is a commutative diagram
\begin{equation}\label{eqn:Gysin-PF}
  \xymatrix@C1pc{
    H^i(Z, \Lambda(j)) \ar[r]^-{\Phi_Z}_-{\cong} \ar[d]_-{\iota_*} &
    H_{2d-2n-i}(Z, \Lambda(d-n-j)) \ar[d]^-{\iota_*} \\
    H^{2n+i}_c(X, \Lambda(n+j)) \ar[r]^-{\Phi_X}_-{\cong} &
    H_{2d-2n-i}(X, \Lambda(d-n-j)).}
\end{equation}

We shall need a version of Gysin map when $k$ is not perfect and 
$Z$ is not regular. The purity theorem does not hold in this case. However, the
fundamental class and refined Gysin map of ~\eqref{eqn:Gysin-0} allow us
to show the existence of the Gysin homomorphism for $E$-theory without assuming
purity.
This is the context of the following result.

\begin{lem}\label{lem:Gysin-CS}
  Let $\iota \colon Z \inj X$ be a regular closed immersion of pure codimension $n$ in
  $\Sch_k$. Assume that $Z$ is complete. Then there is a natural Gysin homomorphism
  \[
    \iota_* \colon E^i(Z, \Lambda(j)) \to E^{2n+i}_c(X, \Lambda(n+j)).
  \]
\end{lem}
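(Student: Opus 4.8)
The plan is to combine the refined Gysin map of~\eqref{eqn:Gysin-0} with a compactification argument, exactly in the spirit of the proof of Lemma~\ref{lem:SH-DM-Maps}(2). First I would choose an open immersion $u \colon X \inj \ov{X}$ with $\ov{X}$ proper over $k$, and let $\ov{Z}$ be the closure of $Z$ in $\ov{X}$ with its reduced structure. Since $Z$ is already complete, $Z$ is closed in $\ov{X}$ and in fact $\ov{Z} = Z$; thus $\bar\iota \colon Z \inj \ov{X}$ is a closed immersion and $Z = \bar\iota^{-1}(X)$, i.e.\ $Z$ is disjoint from $\ov{X} \setminus X$. The point is that $Z$, being complete, sits inside the open part $X$ and its closure adds nothing. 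Now $\bar\iota \colon Z \inj \ov{X}$ need not be a regular immersion globally, but over the open subscheme $X$ it agrees with $\iota$, which is regular of codimension $n$; so after replacing $\ov{X}$ by a neighbourhood of $Z$ if necessary (or simply working with support on $Z$, which lies in $X$), all constructions take place where the immersion is regular.

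The key steps, in order, are: (i) apply the refined Gysin map~\eqref{eqn:Gysin-0} with the closed set $W = Z$ itself, giving $\iota_* \colon E^i_Z(Z,\Lambda(j)) \to E^{2n+i}_Z(X,\Lambda(n+j))$; here $E^i_Z(Z,\Lambda(j)) = E^i(Z,\Lambda(j))$ since $Z$ is its own support. (ii) Identify $E^{2n+i}_Z(X,\Lambda(n+j))$ with $E^{2n+i}_Z(\ov{X},\Lambda(n+j))$: because $Z$ lies in $X$ and is closed in $\ov{X}$, the localization (excision) property for cohomology with support — which holds for $\sh$ and $\sh_\cdh$ by the six functor formalism of Cisinski--D\'eglise and Ayoub — gives this identification. (iii) Use the distinguished triangle $M_k(U) \to M_k(X) \to M^W_k(X)$ (as in Lemma~\ref{lem:Ex-seq}), or rather its $\sh$-analogue, to produce a canonical "forget support" map $E^{2n+i}_Z(\ov{X},\Lambda(n+j)) \to \Hom_{\sh(k)}(a^\cdh_* f_* f^!\1_k, \Sigma^{2n+i,n+j}\h\Lambda_k)$, where $f \colon X \to \Spec(k)$; by the computation~\eqref{eqn:SH-DM-Maps-4} in the proof of Lemma~\ref{lem:SH-DM-Maps}(2), the target is exactly $H^{2n+i}_c(X,\Lambda(n+j)) \cong E^{2n+i}_c(X,\Lambda(n+j))$. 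Concretely, the composite $E^{2n+i}_Z(X) \to E^{2n+i}_Z(\ov{X}) \to E^{2n+i}(\ov{X}) = H^{2n+i}(\ov{X})$ followed by the boundary-to-compact-support identification works because $Z$ misses $\ov{X}\setminus X$, so the class extends. Composing (i), (ii), (iii) yields the desired $\iota_* \colon E^i(Z,\Lambda(j)) \to E^{2n+i}_c(X,\Lambda(n+j))$.

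The main obstacle I expect is step (iii): making the passage from cohomology-with-support-on-a-complete-closed-subscheme to compactly supported cohomology fully canonical and independent of the chosen compactification $\ov{X}$. The independence follows by the standard argument that any two compactifications are dominated by a third, together with functoriality of the fundamental classes $\eta^{\ov{X}}_Z$ under the proper maps between compactifications (these fundamental classes are compatible with proper pushforward since they restrict to $\eta^X_Z$ on the common open part where everything is regular, and $H^{2n}_Z(\ov{X},\Lambda(n)) \cong H^{2n}_Z(X,\Lambda(n))$ by excision). Naturality in $Z$ and $X$ — meaning compatibility with flat pullback, and with composition of regular immersions — then follows from the corresponding properties of the refined Gysin map recorded after~\eqref{eqn:Gysin-0}, namely $\eta^{X'}_{Z'} = f^*(\eta^X_Z)$ for flat $f$, and the transitivity of fundamental classes. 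Once the compatibilities of fundamental classes with excision and proper pushforward are in hand, the rest is formal manipulation with the six functor formalism and the identifications already established in Lemma~\ref{lem:SH-DM-Maps}.
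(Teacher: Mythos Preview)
Your proposal is correct and follows the same route as the paper: the refined Gysin map with support $\iota_*\colon E^i(Z,\Lambda(j)) \to E^{2n+i}_Z(X,\Lambda(n+j))$, the excision isomorphism $E^{2n+i}_Z(X,\Lambda(n+j)) \cong E^{2n+i}_Z(\ov{X},\Lambda(n+j))$ (using completeness of $Z$, so that $\tau_! \cong \tau_*$ for $\tau = j\circ\iota$), and then a factorization of the forget-support map through $E^{2n+i}_c(X,\Lambda(n+j))$ using that $Z$ is disjoint from $W := \ov{X}\setminus X$.

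The one place the paper is cleaner than your outline is step~(iii). Rather than verifying independence of the compactification by dominating two choices and tracking fundamental classes, the paper simply shows that the mapping spectrum ${\rm Maps}_{\sh(\ov{X})}(\tau_!\1_Z,\iota'_*\Sigma^{i,j}\h\Lambda_{W/k})$ is contractible, because after adjunction it becomes a mapping spectrum over $W\times_{\ov{X}} Z = \emptyset$. Contractibility (as opposed to mere vanishing of $\pi_0$) is exactly what makes the lift along the fiber sequence
\[
{\rm Maps}_{\sh(\ov{X})}(\1_{\ov{X}}, j_!\Sigma^{i,j}\h\Lambda_{X/k}) \to {\rm Maps}_{\sh(\ov{X})}(\1_{\ov{X}}, \Sigma^{i,j}\h\Lambda_{\ov{X}/k}) \to {\rm Maps}_{\sh(\ov{X})}(\1_{\ov{X}}, \iota'_*\Sigma^{i,j}\h\Lambda_{W/k})
\]
canonical; once you have that, naturality and independence of $\ov{X}$ follow formally from the six-functor formalism, without any separate bookkeeping of fundamental classes.
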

\begin{proof}
By the construction of the refined Gysin homomorphism, it suffices to show that the
  forget support map $E^i_Z(X, \Lambda(j)) \to E^i(X, \Lambda(j))$ factors through
  $E^i_c(X, \Lambda(j))$.
  To show this, we choose a compactification $\ov{X}$ of $X$ and let $u \colon X \inj
  \ov{X}$ be the inclusion. Let $\iota' \colon W = \ov{X} \setminus X \inj \ov{X}$ be
  the inclusion of the complementary closed subset, endowed with the reduced subscheme
  structure. We let $\tau = u \circ \iota \colon Z \inj \ov{X}$. Let
  $f \colon X \to \Spec(k)$ and $f' \colon \ov{X} \to \Spec(k)$ be the structure maps.

Now, we note that
\[
\begin{array}{lll}
\Hom_{\sh(X)}(\iota_* \1_Z, \Sigma^{i,j} \h\Lambda_{X/k}) 
& \cong & 
\Hom_{\sh(X)}(\iota_! \1_Z, \Sigma^{i,j} \h\Lambda_{X/k}) \\
& \cong & 
\Hom_{\sh(X)}(\iota_! \1_Z,  u^* \Sigma^{i,j} \h\Lambda_{\ov{X}/k}) \\
& \cong & \Hom_{\sh(X)}(\iota_! \1_Z, u^! \Sigma^{i,j} \h\Lambda_{\ov{X}/k}) \\
& \cong & \Hom_{\sh(\ov{X})}(u_! \iota_!  \1_Z, 
\Sigma^{i,j} \h\Lambda_{\ov{X}/k}) \\
& \cong & \Hom_{\sh(\ov{X})}(\tau_!  \1_Z, 
\Sigma^{i,j} \h\Lambda_{\ov{X}/k}) \\
& \cong & \Hom_{\sh(\ov{X})}(\tau_*  \1_Z, 
\Sigma^{i,j} \h\Lambda_{\ov{X}/k}),
\end{array}
\]
where the last isomorphism uses $\tau_! \cong \tau_*$, thanks to the completeness of
$Z$. It follows that there is a canonical (excision) isomorphism
\begin{equation}\label{eqn:Gysin-CS-0*}
E^i_Z(\ov{X}, \Lambda(j)) \xrightarrow{\cong} E^i_Z(X, \Lambda(j)).
\end{equation}
Note also that the map
$E^i_Z(\ov{X}, \Lambda(j)) \to E^i(X, \Lambda(j))$ factors through
$E^i_Z(\ov{X}, \Lambda(j)) \to E^i(\ov{X}, \Lambda(j))$.
Hence, we are reduced to showing that the map
$E^i_Z(\ov{X}, \Lambda(j)) \to E^i(\ov{X}, \Lambda(j))$ factors through
$E^i_c({X}, \Lambda(j))$.

Next, we claim that the composite
    \begin{equation}\label{eqn:Gysin-CS-1}
{\rm Maps}_{\sh(\ov{X})}(\tau_! \1_{Z}, \Sigma^{i,j}\h\Lambda_{\ov{X}/k})
    \to {\rm Maps}_{\sh(\ov{X})}(\1_{\ov{X}}, \Sigma^{i,j}\h\Lambda_{\ov{X}/k})
    \hspace*{3cm}
  \end{equation}
  \[
\hspace*{7cm}
      \to {\rm Maps}_{\sh(\ov{X})}(\1_{\ov{X}}, \iota'_*\Sigma^{i,j}\h\Lambda_{W/k})
    \]
    is null-homotopic, where the first arrow is induced by the adjunction
    $\1_{\ov{X}} \to \tau_* \1_Z \cong \tau_! \1_Z$.

    To prove the claim, we note that there is a commutative diagram
    \begin{equation}\label{eqn:Gysin-CS-2}
      \xymatrix@C1pc{
{\rm Maps}_{\sh(\ov{X})}(\tau_! \1_{Z}, \Sigma^{i,j}\h\Lambda_{\ov{X}/k})
        \ar[r] \ar[d] & 
{\rm Maps}_{\sh(\ov{X})}(\tau_! \1_{Z}, \iota'_*\Sigma^{i,j}\h\Lambda_{W/k})
        \ar[d] \\
        {\rm Maps}_{\sh(\ov{X})}(\1_{\ov{X}}, \Sigma^{i,j}\h\Lambda_{\ov{X}/k})
        \ar[r] & 
{\rm Maps}_{\sh(\ov{X})}(\1_{\ov{X}}, \iota'_*\Sigma^{i,j}\h\Lambda_{W/k}).}
\end{equation}
It suffices therefore to show that
    ${\rm Maps}_{\sh(\ov{X})}(\tau_! \1_{Z}, \iota'_*\Sigma^{i,j}\h\Lambda_{W/k})$
    is contractible.

    Finally, we observe that there are canonical weak equivalences of
   spectra
    \[
      \begin{array}{lll}
{\rm Maps}_{\sh(\ov{X})}(\tau_! \1_{Z}, \iota'_*\Sigma^{i,j}\h\Lambda_{W/k}) & \cong &
{\rm Maps}_{\sh(Z)}(\1_{Z}, \tau^! \iota'_*\Sigma^{i,j}\h\Lambda_{W/k}) \\
      & \cong &
      {\rm Maps}_{\sh(Z)}(\1_{Z}, \beta_* \alpha^!\Sigma^{i,j}\h\Lambda_{W/k}),
      \end{array}
    \]
    where $\alpha \colon W \times_{\ov{X}} Z \to W$ and $\beta \colon 
    W \times_{\ov{X}} Z \to Z$ are the canonical maps. The claim now follows because
    $W \times_{\ov{X}} Z = \emptyset$.
Using the claim and the distinguished triangle of spectra (cf. \cite[Prop.~2.2.10]{DJK})

\vskip .2cm

\begin{equation}\label{eqn:Gysin-CS-0}
    {\rm Maps}_{\sh(\ov{X})}(\1_{\ov{X}}, u_! \Sigma^{i,j}\h\Lambda_{X/k})
    \to {\rm Maps}_{\sh(\ov{X})}(\1_{\ov{X}}, \Sigma^{i,j}\h\Lambda_{\ov{X}/k})
    \hspace*{3cm}
  \end{equation}
  \[
\hspace*{8cm}
      \to {\rm Maps}_{\sh(\ov{X})}(\1_{\ov{X}}, \iota'_*\Sigma^{i,j}\h\Lambda_{W/k}),
    \]  
    we see that the map
    \[
{\rm Maps}_{\sh(\ov{X})}(\tau_! \1_{Z}, \Sigma^{i,j}\h\Lambda_{\ov{X}/k})
\to {\rm Maps}_{\sh(\ov{X})}(\1_{\ov{X}}, \Sigma^{i,j}\h\Lambda_{\ov{X}/k})
\]
factors through
\begin{equation}\label{eqn:Gysin-CS-4}
  {\rm Maps}_{\sh(\ov{X})}(\tau_! \1_{Z}, \Sigma^{i,j}\h\Lambda_{{\ov{X}}/k}) \to
  {\rm Maps}_{\sh(\ov{X})}(\1_{\ov{X}}, u_! \Sigma^{i,j}\h\Lambda_{X/k}).
\end{equation}
Considering the induced maps on $\pi_0$ and noting that
\[
\begin{array}{lll}
  \pi_0({\rm Maps}_{\sh(\ov{X})}(\1_{\ov{X}}, u_! \Sigma^{i,j}\h\Lambda_{X/k})) & = &
  \Hom_{\sh(\ov{X})}(\1_{\ov{X}}, u_! \Sigma^{i,j}\h\Lambda_{X/k})
  \\
  & \cong &
  \Hom_{\sh(\ov{X})}({f'}^*(\1_k), u_! \Sigma^{i,j}\h\Lambda_{X/k}) \\
  & \cong & \Hom_{\sh(k)}(\1_{k}, f'_* u_! \Sigma^{i,j}\h\Lambda_{X/k}) \\
  & \cong & \Hom_{\sh(k)}(\1_{k}, f'_{!} u_! \Sigma^{i,j}\h\Lambda_{X/k}) \\
  & \cong & \Hom_{\sh(k)}(\1_{k}, ({f'} \circ u)_! \Sigma^{i,j}\h\Lambda_{X/k}) \\
  & \cong & \Hom_{\sh(k)}(\1_{k}, {f}_! \Sigma^{i,j}\h\Lambda_{X/k}) = E^i_c(X, \Lambda(j)),
\end{array}
\]
we get the  desired factorization
\begin{equation}\label{eqn:Gysin-CS-3}
  E^i_Z(\ov{X}, \Lambda(j)) \to E^i_c(X, \Lambda(j)) \to E^i(\ov{X}, \Lambda(j)).
\end{equation}
This concludes the proof.
\end{proof}

\subsection{A functorial property of Gysin map}\label{sec:Gysin-PB*}
Let $k$ be a field containing $\kappa$.
One knows that the Gysin homomorphism commutes with the pull-back maps on
motivic cohomology in a transverse square. But we need to know the relation between
these two homomorphisms when the square is not Cartesian.
We consider a special case of this situation in this subsection.

We assume that $k$ is a perfect field. We first assume that 
$X \in \Sm_k$ is an integral scheme of dimension one. We let $Z \subset  X$ be a
closed subscheme such that $Z_\red$ is a closed point $x$ of $X$.
Let $S = \Spec(k(x))$ and let $S \xrightarrow{\iota} Z \xrightarrow{\tau} X$ be the
inclusions and let  $e = \ell(\sO(Z))$. 
Assume that $\tau$ is a regular closed immersion. Since $X$ is regular, $ \tau \iota$ is also a regular closed immersion.
We thus have a diagram
\begin{equation}\label{eqn:Gysin-1}
  \xymatrix@C1pc{
    E^i(Z, \Lambda(j)) \ar[r]^-{{\tau}_*} \ar[d]_-{{\iota}^*}^-{\cong} & 
    E^{2+i}_S(X, \Lambda(1+j)) \\
E^i(S, \Lambda(j)). \ar[ur]_-{( \tau  \iota)_*} & }
\end{equation}

\begin{lem}\label{lem:Gysin-2}
  Let $m$ be an integer prime to $p$ and $\Lambda = {\Z}/m$.
  Then ${\tau}_* = e ((\tau \iota)_* \circ {\iota}^*)$.
\end{lem}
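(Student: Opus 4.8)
The statement $g_* = e\,(f_* \circ h^*)$ is a local comparison of two Gysin maps into $E^{2+i}_S(X,\Lambda(1+j))$, so I would reduce everything to a computation of fundamental classes and then invoke the projection formula / linearity of the refined Gysin map over the relative motivic cohomology ring. First I would recall that, by the construction in \eqref{eqn:Gysin-0}, for any regular closed immersion $\iota\colon Y \inj X$ of codimension $n$ the Gysin map is $\alpha \mapsto \alpha \cup \eta^X_Y$, and that this cup product is bilinear and associative with respect to the module structure over $E^*(X,\Lambda(\bullet))$. Since $g$ and $f$ are both regular closed immersions of codimension one into the regular scheme $X$, with $Z$ and $S$ supported on the same closed point $x$ and thus both proper, both $g_*$ and $f_*$ factor through $E^{2+i}_S(X,\Lambda(1+j)) = E^{2+i}_Z(X,\Lambda(1+j))$ (the excision isomorphism \eqref{eqn:Gysin-CS-0*} identifying support along $S$ with support along $Z$, both being the single point $x$). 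So the whole identity takes place in a single group, and it suffices to compare $\eta^X_Z$ with $h_*(\eta^X_S)$ — more precisely to show $\eta^X_Z = e\cdot\eta^X_S$ under the identification $h^*\colon E^*(Z,\Lambda(\bullet)) \xrightarrow{\cong} E^*(S,\Lambda(\bullet))$ (which holds since $m$ is prime to $p$ and $Z$ is an infinitesimal thickening of $S$, so $h$ is a nilpotent thickening and induces an isomorphism on $\Z/m$-cohomology by rigidity; cf.\ the arguments behind \lemref{lem:Rigidity-Milnor} / \lemref{lem:SH-DM-Maps}).

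Second, I would spell out the associativity manipulation. For $\alpha \in E^i(Z,\Lambda(j))$ write $\bar\alpha = h^*(\alpha) \in E^i(S,\Lambda(j))$, so $\alpha$ corresponds to $\bar\alpha$ under the isomorphism $h^*$. Then $g_*(\alpha) = \alpha \cup \eta^X_Z$ and $f_*(h^*\alpha) = f_*(\bar\alpha) = \bar\alpha \cup \eta^X_S$. Using that $\eta^X_Z$ is itself an element of $E^2_Z(X,\Lambda(1))$ that can be written (by the claim of the previous paragraph, via the projection/transfer formula for the closed immersion $h$ together with naturality of fundamental classes) as $\eta^X_Z = h_*(\eta^X_S) = e\cdot \eta^X_S$ in $E^2_Z(X,\Lambda(1)) \cong E^2_S(X,\Lambda(1))$, the linearity of the cup product gives $g_*(\alpha) = \alpha \cup \eta^X_Z = \alpha \cup (e\,\eta^X_S) = e(\alpha \cup \eta^X_S) = e(h^*\alpha \cup \eta^X_S) = e\,f_*(h^*\alpha)$, where in the middle step one uses that cupping with an element pulled back from $S$ (or equivalently, that the module action of $E^*(Z)$ factors through $E^*(S)$ on classes supported at $x$) is compatible with $h^*$. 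This is exactly the desired identity.

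The main obstacle, and the step I would spend the most care on, is the fundamental-class computation $\eta^X_Z = e\cdot\eta^X_S$ in $E^2_Z(X,\Z/m(1))$. The cleanest route is: with $\Lambda = \Z/m$ and $X$ a smooth curve over a perfect field, $E^*_W(X,\Lambda(\bullet))$ agrees with \'etale cohomology with supports (Beilinson--Lichtenbaum / the comparison isomorphisms available to us), and there the fundamental class $\eta^X_Z$ of an effective Cartier divisor $Z = e[x]$ is classically the cycle class, which is $e$ times the class of the reduced point $x$; i.e.\ $\eta^X_Z = e\,\eta^X_{\{x\}}$, and the reduced point has $\eta^X_{\{x\}}$ corresponding under $h^*$ to $\eta^X_S$. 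Alternatively, staying inside $\sh(-)$, one can deform to the normal cone: the deformation space of $Z\inj X$ is, Zariski-locally, the same line bundle $\A^1$ as for $S\inj X$ (since $\sO_{X,x}$ is a DVR and $Z$ is cut out by $t^e$ while $S$ is cut out by $t$), and under the specialization/deformation isomorphism the fundamental class of $Z$ maps to that of the zero section with multiplicity $e = \mathrm{length}$, exactly as in the excess-intersection computation of \lemref{lem:Norm-rel-2} where the ramification index $e$ appears. Once this multiplicity-$e$ statement for fundamental classes is in hand, the rest of the argument is the purely formal associativity/linearity computation above. I would present the fundamental-class claim as the single nontrivial input, citing the excision isomorphism \eqref{eqn:Gysin-CS-0*}, the rigidity of $\Z/m$-coefficient cohomology under the nilpotent thickening $h$, and the normalization that the fundamental class of an effective divisor is its cycle class.
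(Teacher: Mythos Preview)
Your proposal is correct and follows the same overall architecture as the paper: reduce to the identity $\eta^X_Z = e\,\eta^X_S$ in $E^2_S(X,\Lambda(1))$ and then conclude by the linearity of the cup product together with the nil-invariance $\1_Z \xrightarrow{\cong} h_*\1_S$ (your ``rigidity'' step). The difference lies entirely in how the fundamental-class identity is established. The paper observes that, by definition, $\eta^X_Z = c_1(\sL_Z)$ and $\eta^X_S = c_1(\sL_S)$ are Chern classes with support of the associated line bundles, and since $\sL_Z \cong \sL_S^{\otimes e}$, the identity $c_1(\sL_Z) = e\,c_1(\sL_S)$ follows in one line from \cite[Prop.~1.43]{Navarro} because $\h\Lambda_X$ is an oriented ring spectrum with \emph{additive} formal group law. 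Your two routes---passing to {\'e}tale cohomology via Beilinson--Lichtenbaum and identifying the fundamental class with the classical cycle class, or running a deformation-to-the-normal-cone excess computation---both work, but each requires unpacking more machinery (compatibility of realization with fundamental classes, or the explicit deformation argument) than the paper's direct appeal to additivity. The paper's argument is shorter and stays entirely inside the motivic/$\sh$ framework; yours has the virtue of making the ``multiplicity $=$ length'' intuition explicit.
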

\begin{proof}
Suppose that we know the equality $\eta^X_Z = e \eta^X_S$. 
 Then, for any $i, j$ and 
  $\alpha \in E^i(Z, \Lambda(j))$, we get
 \[
   {\tau}_*(\alpha) = \alpha \cup \eta^X_Z = {\iota}^*(\alpha) \cup \eta^X_Z =
   {\iota}^*(\alpha) \cup e \eta^X_S = e (\tau \iota)_*(h^*(\alpha)),
 \]
 where the second identity follows from the construction of the
 cup product in \cite[Defn.~1.6, p.~509]{Navarro} and the isomorphism (of the unit of
 adjunction) $\1_Z \xrightarrow{\cong} {\iota}_*\1_S$ in $\sh(Z)$.
It remains to show that $\eta^X_Z = e \eta^X_S$. By definition of the
 fundamental classes (cf. \cite[Defn.~2.1]{Navarro}), this is equivalent to showing that
 $c_1(\sL_Z) = e (c_1(\sL_S))$, where $c_1(\sL_D)$ denotes the `Chern class with
 support' of the divisor $D \subset X$ in $E^{2}_D(X, \Lambda(1))$. We are now done by 
 \cite[Prop.~1.43]{Navarro} because $\h\Lambda_{X/k} \cong \h\Lambda_X$ is an
 oriented absolute ring spectrum with the additive group law 
 (cf. \cite[Rem.~3.7]{CD-Doc}). 
\end{proof}

Suppose now that $k$ is an imperfect field and $X \in \Sm_k$ is an integral scheme
of dimension $d \ge 1$. We let $x \in X$ be a closed point and let $Z =
\Spec(k(x)) \times_X X^\hs$ (cf. \S~\ref{sec:Cont}).
We let $y$ be the unique point in $X^\hs$ lying over $x$
so that $\Spec(k(y)) = Z_\red$. Let $e = \ell(\sO(Z))$. Let $\alpha \colon
\Spec(k(x)) \inj X$ and $\beta \colon \Spec(k(y)) \inj X^\hs$ be the inclusions.
Let $f \colon X^\hs \to X$ and $g \colon \Spec(k(y)) \to \Spec(k(x))$ be the
projections. We have the refined Gysin maps
$\alpha_* \colon E^i(k(x), \Lambda(j)) \to E^{2d+i}_x(X, \Lambda(d+j))$ and
$\beta_* \colon E^i(k(y), \Lambda(j)) \to E^{2d+i}_y(X^\hs, \Lambda(d+j))$.

\begin{lem}\label{lem:Gysin-perf}
  Suppose $(p,m) =1$ and $\Lambda = {\Z}/m$. Then $e \in \Lambda^\times$
  and one has a commutative diagram
  \begin{equation}\label{eqn:Gysin-perf-0}
    \xymatrix@C1pc{
      E^i(k(x), \Lambda(j))   \ar[r]^-{\alpha_*} \ar[d]_-{eg^*} &
      E^{2d+i}_x(X, \Lambda(d+j))
    \ar[d]^-{f^*} \\
    E^i(k(y), \Lambda(j)) \ar[r]^-{\beta_*} & E^{2d+i}_y(X^\hs, \Lambda(d+j)).}
\end{equation}
\end{lem}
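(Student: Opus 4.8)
The plan is to deduce the identity from flat base change for the refined Gysin homomorphism together with the computation of the fundamental class of the (non‑reduced) fat point $Z$; the scalar $e$ enters exactly as the length multiplicity of $Z$.

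First, for the assertion $e\in\Lambda^\times$: since $k^\hs/k$ is purely inseparable, $f\colon X^\hs=X\times_k k^\hs\to X$ is radicial, hence so is its base change $f_Z\colon Z=\Spec(k(x))\times_X X^\hs\to\Spec(k(x))$. Thus $\sO(Z)\cong k(x)\otimes_k k^\hs$ is a local Artinian ring whose residue field $k(y)$ is purely inseparable over $k(x)$; passing to a finite level $X_i=X\times_k k^{1/p^i}$ (over which $X_i\to X$ is finite flat of $p$-power degree) and comparing lengths, one finds that $e=\ell(\sO(Z))$ is a power of $p$, hence a unit in $\Lambda={\Z}/m$ because $\gcd(m,p)=1$. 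Next, I would factor $v=g'\circ h$, where $g'\colon Z\inj X^\hs$ is the closed immersion — a \emph{regular} closed immersion of pure codimension $d$, since it is the flat pullback along $f$ of the regular closed immersion $u\colon\Spec(k(x))\inj X$ (recall $X$ is smooth over $k$, so $x$ is locally cut out by a regular sequence of length $d$) — and $h\colon\Spec(k(y))=Z_\red\inj Z$. The square with vertical arrows $f_Z,f$ and horizontal arrows $g',u$ is Cartesian and $f$ is flat (a filtered limit of the finite flat $X_i\to X$; if one wishes to stay with finite‑type morphisms, reduce to a level $X_i$ using \propref{prop:CD} and continuity of $E$-theory). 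Hence the flat base change for fundamental classes recalled in \S~\ref{sec:Gysin**} (i.e. $\eta^{X^\hs}_Z=f^*(\eta^X_x)$, \cite[Prop.~2.10]{Navarro}) gives $f^*\circ u_*=g'_*\circ f_Z^*$, where $f_Z^*$ is flat pullback and we use $E^{2d+i}_Z(X^\hs,\Lambda(d+j))=E^{2d+i}_y(X^\hs,\Lambda(d+j))$ (cohomology with support sees only the underlying closed set).

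Using the identity $g=f_Z\circ h$ on residue fields, the isomorphism $h^*\colon E^i(Z,\Lambda(j))\xrightarrow{\cong}E^i(k(y),\Lambda(j))$ (cdh-descent / nil‑invariance of $E$-theory with coefficients prime to $p$, as already used in \eqref{eqn:Gysin-1}), and $e\in\Lambda^\times$, the commutativity of \eqref{eqn:Gysin-perf-0} reduces to the single identity
\[
g'_*=e\,(v_*\circ h^*)\colon E^i(Z,\Lambda(j))\to E^{2d+i}_y(X^\hs,\Lambda(d+j)).
\]
This in turn follows once we know $\eta^{X^\hs}_Z=e\cdot\eta^{X^\hs}_y$ in $E^{2d}_y(X^\hs,\Lambda(d))$: for $\xi\in E^i(Z,\Lambda(j))$ one then gets $g'_*(\xi)=\xi\cup\eta^{X^\hs}_Z=e\,(\xi\cup\eta^{X^\hs}_y)=e\,((h^*\xi)\cup\eta^{X^\hs}_y)=e\,v_*(h^*\xi)$, the third equality because $\eta^{X^\hs}_y$ is supported at the reduced point $y$, so the $E^*(Z)$-module structure on $E^*_y(X^\hs)$ factors through $h^*$. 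To prove $\eta^{X^\hs}_Z=e\cdot\eta^{X^\hs}_y$, note that $X^\hs$ is smooth over the perfect field $k^\hs$, so absolute purity for $v$ yields $E^{2d}_y(X^\hs,\Lambda(d))\cong E^0(k(y),\Lambda(0))=\Lambda$ with $\eta^{X^\hs}_y\mapsto1$; hence $\eta^{X^\hs}_Z=c\cdot\eta^{X^\hs}_y$ for a unique $c\in\Lambda$, and it remains to show $c=e$. Identifying $E^*\cong H^*$ (motivic cohomology over $k^\hs$, by \lemref{lem:SH-DM-Maps}) and using that the fundamental class of a regular closed immersion is the motivic cycle class of its associated cycle (\cite{DJK}, \cite{Navarro}), together with $[Z]=e\cdot[y]$ as codimension‑$d$ cycles on $X^\hs$, gives $c=e$. (Alternatively, deformation to the normal cone reduces $\eta^{X^\hs}_Z$ to the Thom class of the normal bundle $N_ZX^\hs$, which is trivial of rank $d$ since $Z$ is affine Artinian, and tracking this through $E^{2d}_Z(\A^d_Z,\Lambda(d))\cong E^0(Z,\Lambda)\cong\Lambda$ against the reduced case produces the multiplicity $e$ — the codimension‑$d$ analogue of the divisor identity \cite[Prop.~1.43]{Navarro} used in \lemref{lem:Gysin-2}.)

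The main obstacle is precisely this last step: pinning down the multiplicity $e$ in $\eta^{X^\hs}_Z=e\cdot\eta^{X^\hs}_y$. It is the higher‑codimension counterpart of the divisor computation underlying \lemref{lem:Gysin-2}, and needs either the compatibility of purity fundamental classes with cycle classes for non‑reduced regularly embedded subschemes, or a careful deformation‑to‑the‑normal‑cone bookkeeping. An alternative route that sidesteps this is to choose, Zariski‑locally near $x$, a flag of smooth $k$-subschemes $X=X_0\supset X_1\supset\cdots\supset X_d$ through $x$ with each $X_l\inj X_{l-1}$ of codimension one, base change the whole flag to $k^\hs$, and apply \lemref{lem:Gysin-2} at each stage via transitivity of Gysin maps; but arranging a flag of \emph{smooth} subschemes (rather than merely ones cut out by a local regular sequence) is itself delicate when $k$ is imperfect, so I expect the cycle‑class/normal‑cone computation above to be the cleaner path.
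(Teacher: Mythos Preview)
Your overall strategy is sound and mirrors the paper's up through the flat base-change reduction $f^*\circ u_*=g'_*\circ f_Z^*$ and the further reduction to the fundamental-class identity $\eta^{X^\hs}_Z=e\cdot\eta^{X^\hs}_y$. The difference lies in how this last identity is established.

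You attempt it directly in codimension $d$, appealing to a ``fundamental class $=$ cycle class'' principle or a deformation-to-the-normal-cone computation. As you yourself flag, this step is not fully pinned down in your write-up: the references you cite do contain the relevant compatibilities, but extracting the precise statement for a non-reduced regularly embedded fat point in codimension $d$ requires care, and you stop short of doing so. The paper sidesteps this entirely by reducing to $d=1$ and invoking \lemref{lem:Gysin-2}, whose proof uses only the elementary divisor identity $c_1(\sL_Z)=e\cdot c_1(\sL_S)$ from \cite[Prop.~1.43]{Navarro}.

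The key point you miss in assessing the ``alternative route'' is that a full flag of smooth subschemes is \emph{not} needed: a single smooth integral curve $C\subset X$ through $x$ suffices. Such a curve exists by the Altman--Kleiman Bertini theorem \cite[Thm.~1]{AK}, which works over any infinite field. One then factors $u_*$ through $E^{2+i}_x(C,\Lambda(1+j))$ using transitivity of refined Gysin maps; since $C^\hs=C\times_X X^\hs$ is smooth over $k^\hs$, the square for the step $C\inj X$ commutes by ordinary flat base change, and the remaining step $\Spec(k(x))\inj C$ is the $d=1$ case handled by \lemref{lem:Gysin-2}. This is precisely the paper's argument, and it is both shorter and more self-contained than the direct codimension-$d$ computation you outline. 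Your concern that arranging smooth subschemes over an imperfect $k$ is ``delicate'' is unfounded once one realizes that only a single Bertini curve is required.
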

\begin{proof}
By the Bertini theorem of Altman-Kleiman \cite[Thm.~1]{AK}, we can find an integral
  curve $C \subset X$ which passes through $x$ and is smooth over $k$.
We now consider the diagram 
\begin{equation}\label{eqn:Gysin-perf-1}
    \xymatrix@C1pc{
      E^i(k(x), \Lambda(j)) \ar[r] \ar[d]_-{eg^*} & E^{2+i}_x(C, \Lambda(1+j)) \ar[r]
      \ar[d] & E^{2d+i}_x(X, \Lambda(d+j)) \ar[d] \\
    E^i(k(y), \Lambda(j)) \ar[r] & E^{2+i}_y(C^\hs, \Lambda(1+j)) \ar[r]
    &  E^{2d+i}_y(X^\hs, \Lambda(d+j)),}
\end{equation}
where the middle and the right vertical arrows are the pull-back maps,
and the horizontal arrows are the refined Gysin maps of ~\eqref{eqn:Gysin-0}.
We note that $C^\hs = C \times_X X^\hs$. It follows from
\cite[Prop.~1.8, (1) and (3)]{Navarro} that $\alpha_*$ (resp. $\beta_*$) is the composition of
the two upper (resp. lower) horizontal arrows, and the right square is commutative.
It suffices therefore to show that the left square is commutative. This allows us to
assume that $d = 1$.

We now let $\Spec(k(y)) \xrightarrow{\iota} Z \xrightarrow{\tau} X^\hs$ be the
inclusions and $h \colon Z \to \Spec(k(x))$ the projection.
Since $f$ is flat, $\tau$ must be a regular closed immersion.
We consider the diagram
\begin{equation}\label{eqn:Gysin-perf-2}
    \xymatrix@C1pc{
 E^i(k(x), \Lambda(j)) \ar[rr]^-{\alpha_*} \ar[dd]_-{eg^*} \ar[dr]^-{h^*} & & 
 E^{2+i}_x(X, \Lambda(1+j)) \ar[dd]^-{f^*} \\
 & E^i(Z, \Lambda(j)) \ar[dr]^-{\tau_*} \ar[dl]_-{\iota^*} & \\
  E^i(k(y), \Lambda(j)) \ar[rr]^-{\beta_*} & & 
  E^{2+i}_y(X^\hs, \Lambda(1+j)).}
\end{equation}
Since $Z = \Spec(k(x)) \times_X X^\hs$, the argument for the commutativity of the right
square in ~\eqref{eqn:Gysin-perf-1} also shows that $f^* \circ \alpha_* =
\tau_* \circ h^*$.
Since $g^* = \iota^* \circ h^*$, it suffices to show that
$\tau_* = e (\beta_* \circ \iota^*)$. But this follows from \lemref{lem:Gysin-2}.
It remains to show that $e \in \Lambda^\times$.

Now, we have shown in the proof of \lemref{lem:Continuity} that there exists a
finite purely inseparable extension ${k'}/k$ and a unique closed point $y'
\in X' := X \times_{\Spec(k)} \Spec(k')$ such that $\Spec(k(y)) = \Spec(k(y')) \times_{X'}
X^\hs$. It follows then from \lemref{lem:length} that
$e = \ell(\sO(Z'))$, where $Z' = \Spec(k(x)) \times_X X'$.
On the other hand, we know that
\[
[k': k] = \dim_{k(x)}(\sO(Z'))  =
[k(y'):k(x)] \ell(\sO(Z')).
\]
where the second equality follows from \cite[Lem.~A.1.3]{Fulton}. Since $[k': k]$ is a
power of $p$, it follows that so is $\ell(\sO(Z'))$ and we are done.
\end{proof}

\begin{cor}\label{cor:Gysin-perf-3}
With $\Lambda$ as in \lemref{lem:Gysin-perf}, one has a commutative diagram
  \begin{equation}\label{eqn:Gysin-perf-4}
    \xymatrix@C1pc{
    H^i(k(x), \Lambda(j))   \ar[r]^-{\alpha_*} \ar[d]_-{eg^*} & H^{2d+i}_c(X, \Lambda(d+j))
    \ar[d]^-{f^*} \\
    H^i(k(y), \Lambda(j)) \ar[r]^-{\beta_*} & H^{2d+i}_c(X^\hs, \Lambda(d+j)).}
\end{equation}
\end{cor}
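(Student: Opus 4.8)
The plan is to deduce the corollary from \lemref{lem:Gysin-perf} by transporting the whole diagram along the comparison isomorphisms of \lemref{lem:SH-DM-Maps}. Over the imperfect field $k$, the Gysin homomorphism $u_*\colon H^i(k(x),\Lambda(j))\to H^{2d+i}_c(X,\Lambda(d+j))$ in the corollary is, by construction, the image under the isomorphisms $H^i(k(x),\Lambda(j))\cong E^i(k(x),\Lambda(j))$ and $H^{2d+i}_c(X,\Lambda(d+j))\cong E^{2d+i}_c(X,\Lambda(d+j))$ of \lemref{lem:SH-DM-Maps}(1) and (2) of the Gysin map of \lemref{lem:Gysin-CS} attached to the regular closed immersion $\Spec(k(x))\inj X$ (whose source is proper over $k$, of codimension $d$); likewise $v_*$ is attached to $\Spec(k(y))\inj X^\hs$. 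By the proof of \lemref{lem:Gysin-CS}, this $u_*$ is the composite of the refined Gysin map $E^i(k(x),\Lambda(j))\to E^{2d+i}_x(X,\Lambda(d+j))$ of ~\eqref{eqn:Gysin-0} with the forget-support homomorphism $E^{2d+i}_x(X,\Lambda(d+j))\to E^{2d+i}_c(X,\Lambda(d+j))$ coming from the excision isomorphism ~\eqref{eqn:Gysin-CS-0*} and the factorization ~\eqref{eqn:Gysin-CS-3}, and similarly for $v_*$ over $X^\hs$.

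First I would fix an open immersion $X\inj\ov{X}$ into a proper $k$-scheme together with its base change $X^\hs\inj\ov{X}^\hs$, so that there is a proper morphism $\ov{f}\colon\ov{X}^\hs\to\ov{X}$ extending $f$. For this choice, \lemref{lem:Gysin-perf} already provides the commutativity of the square built from the refined Gysin maps and the pull-backs on cohomology with support. It then remains to post-compose its two vertical arrows with the forget-support homomorphisms into $E^{2d+i}_c(X,\Lambda(d+j))$ and $E^{2d+i}_c(X^\hs,\Lambda(d+j))$, and to check two compatibilities: (a) the isomorphisms of \lemref{lem:SH-DM-Maps}(1) are natural with respect to the pull-back $g^*\colon H^i(k(x),\Lambda(j))\to H^i(k(y),\Lambda(j))$, which is immediate from their construction out of the adjunctions $(\psi_k,\phi_k)$, $(a^*_\cdh,a^\cdh_*)$ and the equivalences of \thmref{thm:CD-Voev} and \propref{prop:CD}; and (b) the forget-support homomorphism is natural in $f$, i.e.\ the square relating $E^{2d+i}_x(X)\to E^{2d+i}_c(X)$, $E^{2d+i}_y(X^\hs)\to E^{2d+i}_c(X^\hs)$ and the pull-backs $f^*$ commutes. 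Compatibility (b) would be obtained by applying $\ov{f}^*$ to the excision isomorphism ~\eqref{eqn:Gysin-CS-0*} and to the distinguished triangle ~\eqref{eqn:Gysin-CS-0} occurring in the proof of \lemref{lem:Gysin-CS}, and using that the structural maps $f^*$, $f_*$, $f^!$, $\iota_*$, $\iota^!$ of the six-functor formalism and their exchange transformations all commute with the base change $\ov{f}$. Granting (a) and (b), chasing the enlarged diagram and invoking \lemref{lem:Gysin-perf} yields $f^*\circ u_*=v_*\circ(eg^*)$ in $H^{2d+i}_c(X^\hs,\Lambda(d+j))$, which is the assertion.

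The one point that needs care is the pull-back $f^*\colon H^{2d+i}_c(X,\Lambda(d+j))\to H^{2d+i}_c(X^\hs,\Lambda(d+j))$ and its naturality, since $f$ is not finite but only the inverse limit of the finite flat radicial morphisms $g_j\colon X_j\to X$ of \S~\ref{sec:Cont}. Here I would use that, by \propref{prop:CD}, the pull-back $\dm_\cdh(X)\to\dm_\cdh(X^\hs)$ (and each $\dm_\cdh(X)\to\dm_\cdh(X_j)$) is an equivalence of categories; this both makes $f^*$ an isomorphism on $H^{2d+i}_c$ (defined, via covariance of $M^c_k(-)$ for the proper morphisms $g_j$, as the colimit of the $g_j^*$) and makes the naturality statements (a) and (b) transparent, since all the relevant functors and exchange transformations are then computed inside these equivalent categories. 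Concretely I would carry out the verification over the finite flat stages $X_j$, where the six-functor bookkeeping is completely standard, and then pass to the colimit. The hard part is therefore organizational rather than conceptual: one must track two layers of identifications — motivic versus $\sh$-theoretic, via \lemref{lem:SH-DM-Maps}, and finite type versus perfect closure, via \propref{prop:CD} — and check that the refined Gysin class and the forget-support map pass through both functorially. An essentially equivalent alternative is to run the entire argument on the $E$-theory side and only convert the four corners of the diagram to motivic cohomology at the very end.
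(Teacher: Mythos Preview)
Your proposal is correct and follows essentially the same route as the paper: reduce via \lemref{lem:SH-DM-Maps} and \lemref{lem:Gysin-CS} to the $E$-theory statement of \lemref{lem:Gysin-perf}, and then check that the forget-support map $E^{2d+i}_x(X)\to E^{2d+i}_c(X)$ is compatible with $f^*$; the paper does this last step by unwinding both sides as $\Hom$-groups in $\sh(\ov X)$ and $\sh(\ov X^\hs)$ and invoking the base-change compatibility of $j_!$. Your extra caution about defining $f^*$ on $H^{2d+i}_c$ is unnecessary here: the map is simply the one induced by the monoidal equivalence $\dm_\cdh(k,\Lambda)\xrightarrow{\simeq}\dm_\cdh(k^\hs,\Lambda)$ of \propref{prop:CD}, so there is no need to pass through the finite stages $X_j$.
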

\begin{proof}
  We choose an integral compactification $\ov{X}$ of $X$. Let
  $\iota^\hs \colon Z \inj X^\hs, \ u' \colon X \inj \ov{X}$  and
  $u^\hs \colon X^\hs \inj \ov{X}^\hs$ be the
  inclusions, where $Z = \Spec(k(x)) \times_X X^\hs$. We set
  $\tau^\hs = u^\hs \circ \iota^\hs$.
  In view of Lemmas~\ref{lem:SH-DM-Maps}, \ref{lem:Gysin-CS} and ~\ref{lem:Gysin-perf},
  we only need to show that the diagram
\begin{equation}\label{eqn:Gysin-perf-4-0}
\xymatrix@C1pc{
  E^{2d+i}_x(X, \Lambda(d+j)) \ar[r] \ar[d]_-{f^*} & E^{2d+i}_c(X, \Lambda(d+j))
  \ar[d]^-{f^*} \\
  E^{2d+i}_Z(X^\hs, \Lambda(d+j)) \ar[r] & E^{2d+i}_c(X^\hs, \Lambda(d+j))}
\end{equation}
is commutative, where the horizontal arrows are constructed in the proof of
\lemref{lem:Gysin-CS}. But this is equivalent to the commutativity of
the diagram
\begin{equation}\label{eqn:Gysin-perf-4-1}
\xymatrix@C1pc{
\Hom_{\sh(\ov{X})}(\tau_! \1_{x}, \Sigma^{i,j}\h\Lambda_{\ov{X}/k})
\ar[r] \ar[d]_-{f^*} &  \Hom_{\sh(\ov{X})}(\1_{\ov{X}}, u'_! \Sigma^{i,j}\h\Lambda_{X/k})
\ar[d]^-{f^*} \\
\Hom_{\sh(\ov{X}^\hs)}(\tau^\hs_! \1_{Z}, \Sigma^{i,j}\h\Lambda_{{\ov{X}^\hs}/k}) \ar[r] &
\Hom_{\sh(\ov{X}^\hs)}(\1_{\ov{X}}, u^\hs_! \Sigma^{i,j}\h\Lambda_{X^\hs/k}),}
\end{equation}
which is clear by the compatibility of the lower shriek functors with the
pull-back (cf. \cite[\S~A.5.1(5)]{CD-Springer}).
\end{proof}

\subsection{Milnor $K$-theory and motivic cohomology}\label{sec:MK-MC}
Let $k$ be a field containing $\kappa$.
Let $\alpha = \{f_1, \ldots , f_n\} \in K^M_n(k)$ be a Steinberg symbol.
Then each $f_i$ defines a morphism $f_i \colon \Spec(k) \to \G_{m,k}$.
Taking the induced map between the motives and using
  \cite[Prop.~11.2.11]{CD-Springer}, we get a
  morphism $f_i \colon \Z \to \Z(1)[1]$ in  $\dm(k, \Z)$.
  Taking the tensor product over $1 \le i \le n$, we get a morphism
  $\alpha \colon \Z \to \Z(n)[n]$ in  $\dm(k, \Z)$.
  Equivalently, $\alpha$ defines a unique element $\Psi_k(\alpha) \in
  H^n(k, \Z(n))$. Hence, we get a map (a priori, only of sets)
\begin{equation}\label{eqn:MC-field-1}
  \Psi_k \colon K^M_1(k) \times \cdots \times K^M_1(k) \to  H^n(k, \Z(n)),
\end{equation}
where recall that $H^n(k, \Z(n)) = \Hom_{\dm(k, \Z)}(\Z_k, \Z_k(j)[i])$
(cf. \cite[\S~11.0.1, Defn.~11.2.1]{CD-Springer}).
  This is clearly natural with respect to the pull-back maps via inclusion of fields.
  For $n =1$, the following result follows by combining
  \cite[Prop.~8.1, Cor.~8.2]{CD-Doc} and \cite[Thm.~11.2.14]{CD-Springer}.
  It provides the base case of the more general isomorphism
  (with prime-to-$p$ coefficients) between the
tame class group and a motivic cohomology group with compact support for smooth
schemes over a local field that we shall prove in the next subsection.

\begin{lem}\label{lem:MC-field}
 If $m \in k^\times$ and $n \ge 0$, the map
  $\Psi_k$ induces a canonical group isomorphism
  \begin{equation}\label{eqn:MC-field-0}
    \Psi_k \colon {K^M_n(k)}/m \xrightarrow{\cong} H^n(k, {\Z}/m(n))
  \end{equation}
  which is natural with respect to the pull-back maps on both sides via inclusion of
  fields.
\end{lem}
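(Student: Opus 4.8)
The statement for $n=1$ is already provided by the results cited immediately before the lemma (Cisinski--D\'eglise's comparison between Nisnevich/cdh motives, together with the identification of $\h\Lambda_k$ with the motivic Eilenberg--MacLane spectrum and $H^1(k,\Z/m(1))\cong k^\times/m = K^M_1(k)/m$). So I would treat the $n=1$ case as known and proceed by reducing everything else to it by multiplicativity. The first step is to verify that $\Psi_k$ as defined in \eqref{eqn:MC-field-1} is \emph{multiplicative}: if $\alpha = \{f_1,\dots,f_n\}$ and $\beta = \{g_1,\dots,g_m\}$, then $\Psi_k(\alpha\cup\beta)$ equals the image of $\Psi_k(\alpha)\otimes\Psi_k(\beta)$ under the product map $H^n(k,\Z(n))\otimes H^m(k,\Z(m))\to H^{n+m}(k,\Z(n+m))$ coming from the monoidal structure of $\dm(k,\Z)$. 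This is essentially a tautology from the construction: $\Psi_k(\alpha)$ is by definition the $n$-fold tensor product in $\dm(k,\Z)$ of the classes $f_i\colon \Z\to\Z(1)[1]$, so concatenating symbols corresponds to tensoring morphisms, and the product on motivic cohomology is induced by exactly this tensor structure. One has to be a little careful about signs and the ordering of Tate twists, but the Koszul-sign conventions for $\Z(1)[1]$ match those in the definition of Milnor $K$-theory, so this causes no real trouble.

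\textbf{Checking the Steinberg relation and well-definedness.} The next step is to show that $\Psi_k$ actually descends from the free object to $K^M_n(k)$, i.e.\ that it kills the Steinberg relation $\{f,1-f\}=0$ for $f\neq 0,1$. For $n=1$ this is vacuous. For $n=2$, the vanishing of $\Psi_k(\{f,1-f\})$ in $H^2(k,\Z(2))$ is a classical fact: it follows from the $\mathbb{P}^1$-linearity of motivic cohomology, or can be extracted from \cite[Prop.~11.2.14]{CD-Springer} / the known description of $H^2(k,\Z(2))\cong K^M_2(k)$ for fields (Nesterenko--Suslin--Totaro), together with the fact that the cup product defined via the tensor structure of $\dm$ agrees with the product on $\bigoplus_n H^n(k,\Z(n))$. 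For general $n$ one then uses that $K^M_n(k)$ is generated as a group by symbols and that the relation ideal is generated by Steinberg relations involving only two consecutive entries, so multiplicativity plus the $n=2$ case forces $\Psi_k$ to factor through $K^M_n(k)$; this gives a well-defined group homomorphism $\Psi_k\colon K^M_n(k)\to H^n(k,\Z(n))$, and hence $\Psi_k\colon K^M_n(k)/m\to H^n(k,\Z/m(n))$.

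\textbf{Bijectivity with finite coefficients.} To conclude that $\Psi_k\colon K^M_n(k)/m\xrightarrow{\cong} H^n(k,\Z/m(n))$ for $m$ prime to $p$, I would invoke the Nesterenko--Suslin--Totaro theorem identifying $H^n(k,\Z(n))\cong K^M_n(k)$ for any field $k$ (which holds integrally, for the Voevodsky/Suslin motivic cohomology of fields, hence for the cdh version over an imperfect $k$ via \thmref{thm:CD-Voev} since $\Spec(k)$ is regular), and then reduce mod $m$. One must verify that under this identification the NST isomorphism is compatible with the symbol map $\Psi_k$; this is standard — the NST isomorphism is itself constructed so that symbols go to symbols — and with the universal coefficients sequence, using that $H^{n+1}(k,\Z(n))$ has no $m$-torsion (again prime-to-$p$, this follows from the Beilinson--Lichtenbaum/norm residue identification of $H^*_{\mathrm{mot}}(k,\Z/m(n))$ with \'etale cohomology $H^*_{\et}(k,\mu_m^{\otimes n})$ in the relevant range). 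Naturality in $k$ is immediate since every map in sight — the morphisms $f_i\colon\Spec(k)\to\G_m$, the tensor product in $\dm$, the product on motivic cohomology — is functorial for field extensions.

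\textbf{Main obstacle.} The substantive point is not any single step in isolation but the bookkeeping needed to be sure that three different ``products'' coincide: the product on Milnor $K$-theory (concatenation of symbols), the tensor-product construction of $\Psi_k$ in $\dm(k,\Z)$, and the cup product on $\bigoplus_n H^n(k,\Z(n))$ as it appears in the NST comparison. Once these three are reconciled — which is where one genuinely uses \cite[Thm.~11.2.14]{CD-Springer} and \cite[Prop.~8.1, Cor.~8.2]{CD-Doc} rather than just citing them for $n=1$ — the Steinberg relation and the bijectivity follow formally. In the imperfect-field case there is the additional minor check that passing from $\dm(k,-)$ to $\dm_\cdh(k,-)$ (needed so that $H^n(k,-)$ in the sense of \defref{defn:MC-defn} is the right group) changes nothing, but this is guaranteed by \thmref{thm:CD-Voev} since $\Spec(k)$ is regular.
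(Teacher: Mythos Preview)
Your approach is correct but differs from the paper's in a meaningful way. You work directly over the (possibly imperfect) field $k$, verifying multiplicativity and the Steinberg relation for $\Psi_k$ and then appealing to the Nesterenko--Suslin--Totaro isomorphism for arbitrary fields. The paper instead splits into two cases: for perfect $k$ it quotes the classical NST isomorphism $K^M_n(k)\cong H^n(k,\Z(n))$ together with $H^{n+1}(k,\Z(n))\cong\CH^n(k,n-1)=0$; for imperfect $k$ it forms the commutative square
\[
\xymatrix@C1pc{
{K^M_n(k)}/m \ar@{.>}[r]^-{\Psi_k} \ar[d]_-{\pi^*} & H^n(k,\Z/m(n)) \ar[d]^-{\pi^*} \\
{K^M_n(k^{\heartsuit})}/m \ar[r]^-{\Psi_{k^{\heartsuit}}}_-{\cong} & H^n(k^{\heartsuit},\Z/m(n))
}
\]
and observes that both vertical arrows are isomorphisms (by \lemref{lem:Milnor-K-perf} and \propref{prop:CD} respectively). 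Since $\Psi_k$ commutes with $\pi^*$ on Steinberg symbols, well-definedness \emph{and} bijectivity of $\Psi_k$ follow simultaneously from the diagram, with no need to check the Steinberg relation or the NST compatibility directly over $k$.

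The advantage of the paper's route is that it sidesteps the coefficient issue you gloss over: the equivalence of \thmref{thm:CD-Voev} and the six-functor formalism for $\dm_\cdh$ are stated only for $\Z[\tfrac{1}{p}]$-algebras $\Lambda$ when $\Char(k)=p>0$, so your appeal to an integral NST isomorphism in the cdh framework over imperfect $k$ would need to be rerouted through $\Z[\tfrac{1}{p}]$-coefficients first (which can be done, but is extra bookkeeping). Your approach has the virtue of being more uniform and of making explicit that $\Psi_k$ is a ring homomorphism, which is useful elsewhere; the paper's perfection argument is shorter and is tailored to the fact that the result is only needed with $\Z/m$-coefficients, $m$ prime to $p$.
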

\begin{proof}
  If $k$ is perfect, it is well known that $\Psi_k$, as defined in
  ~\eqref{eqn:MC-field-1},
  induces an isomorphism $\Psi_k \colon {K^M_n(k)} \xrightarrow{\cong} H^n(k, {\Z}(n))$.
  Since $H^{n+1}(k, \Z(n)) \cong \CH^n(k, n-1) = 0$ (cf. \cite{Voe-imrn}),
  the universal coefficient theorem
  implies that the change of coefficients map
  ${H^n(k, {\Z}(n))}/m \to  H^n(k, {\Z}/m(n))$
is an isomorphism. This implies ~\eqref{eqn:MC-field-0}.

Suppose now that $k$ is not perfect and look at the diagram
  \begin{equation}\label{eqn:MC-field-2}
    \xymatrix@C1pc{
      {K^M_n(k)}/m \ar@{.>}[r]^-{\Psi_k} \ar[d]_-{\pi^*} &
      H^n(k, {\Z}/m (n)) \ar[d]^-{\pi^*}
      \\
     {K^M_n(k^{\heartsuit})}/m \ar[r]^-{\Psi_{k^{\heartsuit}}} & 
     H^n(k^{\heartsuit}, {\Z}/m (n)),}
   \end{equation}
   where $\pi \colon \Spec(k^{\heartsuit}) \to \Spec(k)$ is the projection.
   We have seen that $\Psi_k$ is defined on the Steinberg symbols in $K^M_n(k)$ and
   the diagram commutes on these symbols.
   It follows by a combination of \lemref{lem:SH-DM-Maps} and
   \cite[Cor.~2.1.7]{Elmanto-Khan} that the right vertical arrow in the above
   diagram is an isomorphism. The left vertical
   arrow is an isomorphism by \lemref{lem:Milnor-K-perf}. Since $\Psi_{k^{\heartsuit}}$
   is an isomorphism, we conclude that $\Psi_k$,   as defined in
   ~\eqref{eqn:MC-field-1},
   induces a map $\Psi_k \colon {K^M_n(k)}/m \to H^n(k, {\Z}/m(n))$, which is
   an isomorphism of groups.
\end{proof}

\subsection{Motivic cohomology and idele class group}\label{sec:MH}
Assume that $k$ is an infinite field containing $\kappa$ and
let $\Lambda' := \Z[\tfrac{1}{p}]$.
Let $X \in \Sm_k$ be integral of dimension $d$ with structure map
$f \colon X \to \Spec(k)$. We fix an integer $m$ prime to $p$
and let $\Lambda = {\Z}/m$. The goal of this subsection is to
prove an isomorphism between the mod-$m$ idele class group of $X$ and
$H^{2d+1}_c(X, \Lambda(d+1))$. This will play a key role in the proofs of the main
results of the paper. For $i,j \in \Z$, we let
$H_{i}(X, {\Z}(j)) := \Hom_{\dm(k, \Z)}(\Z_k(j)[i], M_k(X))$,
where recall that $M_k(X) = f_\sharp \Z_X$ which is defined since $X \in \Sm_k$
(e.g., see the proof of \thmref{thm:CD-cdh} which shows that $f_\sharp$ exists at
the level of integral motives when $f$ is smooth).

\begin{lem}\label{lem:KP}
    Assume that $k$ is perfect. Then the change of coefficients homomorphism
    ${H_{-1}(X, {\Z}(-1))}/m \to H_{-1}(X, {\Z}/m(-1))$ is an isomorphism.
  \end{lem}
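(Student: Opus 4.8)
The plan is to deduce \lemref{lem:KP} from the universal coefficient exact sequence once the vanishing of a single motivic cohomology group with compact support has been established for dimension reasons. Since $k$ is perfect we have $\dm_\cdh(k,\Z) \simeq \dm(k,\Z)$ by \thmref{thm:CD-Voev}, and applying $\Hom_{\dm_\cdh(k,\Z)}(\Z_k(-1)[i], -)$ to the distinguished triangle $M_k(X) \xrightarrow{m} M_k(X) \to M_k(X) \otimes^{L} \Z/m \to M_k(X)[1]$ yields, for each $i$, a short exact sequence
\[
0 \to H_i(X,\Z(-1))/m \to H_i(X,\Z/m(-1)) \to {}_m H_{i-1}(X,\Z(-1)) \to 0,
\]
whose first arrow is the change of coefficients map. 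Taking $i=-1$, the lemma is therefore equivalent to the assertion ${}_m H_{-2}(X,\Z(-1)) = 0$, and for this it suffices to show $H_{-2}(X,\Z(-1)) = 0$ outright.

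First I would convert this homology group into a motivic cohomology group with compact support. Because $k$ is perfect and $X$ is regular, the duality isomorphism $\Phi_X \colon M^c_k(X)^{*}(d)[2d] \xrightarrow{\cong} M_k(X)$ is available (this is the isomorphism recalled just before \lemref{lem:Gysin-CS}). Feeding it into \defref{defn:MC-defn}, using that the Tate twist is $\otimes$-invertible and the defining adjunction of the dual object, one obtains a natural isomorphism $H_i(X,\Z(-1)) \cong H^{2d-i}_c(X,\Z(d+1))$ for every $i$; in particular $H_{-2}(X,\Z(-1)) \cong H^{2d+2}_c(X,\Z(d+1))$. It now remains to see that the right-hand group vanishes, and this is a consequence of the general dimension bound: for every $Y \in \Sch_k$ of dimension $e$ one has $H^j_c(Y,\Z(n)) = 0$ whenever $j > e+n$. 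Here $e=d$, $n=d+1$ and $j=2d+2 > d+(d+1) = 2d+1$, so $H^{2d+2}_c(X,\Z(d+1)) = 0$, as wanted. (Note that the parallel group $H_{-1}(X,\Z(-1)) \cong H^{2d+1}_c(X,\Z(d+1))$ is exactly on the boundary $j = e+n$ and is in general nonzero, which is why the statement is not vacuous.)

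It then remains to justify the dimension bound, which I would prove by induction on $e=\dim Y$ using the localization sequence of \lemref{lem:Ex-seq}(2): choosing a dense open $U\subseteq Y$ smooth over $k$ with complement $Z$ of dimension $<e$ reduces us to $Y$ smooth, and then, after a further compactification and another application of the localization sequence, to $Y$ smooth and proper, where $H^j_c(Y,\Z(n)) = H^j(Y,\Z(n)) \cong \CH^n(Y,2n-j)$; the latter vanishes as soon as $2n-j < n-e$, that is, as soon as $j > e+n$. When $\Char(k)=p>0$ one cannot resolve the singularities of $Z$ (or of the boundary of a compactification of $U$), but since we are working $\Z[\tfrac1p]$-linearly this is harmless: one may run the induction using de Jong alterations together with the cdh-descent and six-functor formalism of \cite{CD-Doc}, or simply invoke the bounds already recorded in \cite{Kelly}. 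This positive-characteristic form of the dimension bound is the only delicate point; the two reductions above are purely formal.
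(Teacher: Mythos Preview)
Your proof is correct and follows essentially the same route as the paper's: reduce via the universal coefficient sequence to $H_{-2}(X,\Lambda'(-1))=0$, use duality to rewrite this as $H^{2d+2}_c(X,\Lambda'(d+1))=0$, and then kill the latter by a dimension bound. The only cosmetic difference is that the paper performs a single localization step (compactify $X$) and then invokes \cite[Thm.~5.1]{Krishna-Pelaez-AKT} for the vanishing of $H^{2d+1}(Z,\Lambda'(d+1))$ and $H^{2d+2}(\ov{X},\Lambda'(d+1))$, whereas you package and sketch the same vanishing as a general bound $H^j_c(Y,\Lambda'(n))=0$ for $j>\dim Y+n$; the induction you outline is precisely how that cited theorem is proved.
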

  \begin{proof}
    We can replace $\Z$ by $\Lambda'$.
    By the universal coefficient theorem, it
    suffices to show that $H_{-2}(X, {\Lambda}'(-1)) = 0$.
    By \cite[Thm.~5.3.18(iii)]{Kelly}, this is equivalent to showing that
    $H^{2d+2}_c(X, {\Lambda}'(d+1)) = 0$.
To this end, we choose an integral compactification
    $\ov{X}$ of $X$ and let $Z = \ov{X} \setminus X$ with the reduced closed subscheme
    structure. We then have the localization exact sequence (cf. \lemref{lem:Ex-seq})
\[
  H^{2d+1}(Z, \Lambda'(d+1)) \to H^{2d+2}_c(X, \Lambda'(d+1)) \to
  H^{2d+2}(\ov{X}, \Lambda'(d+1)),
\]
in which the terms on the two ends are zero by 
\cite[Thm.~5.1]{Krishna-Pelaez-AKT}.
The lemma follows.
\end{proof}


\begin{prop}\label{prop:Tame-MCCS}
    There is an isomorphism
    \[
      \Psi_X \colon {C(X)}/m \xrightarrow{\cong} H^{2d+1}_c(X, \Lambda(d+1))
    \]
    such that the diagram
    \begin{equation}\label{eqn:Tame-MCCS-0}
      \xymatrix@C1pc{
        {K^M_1(k(x))}/m \ar[r]^-{\Psi_{k(x)}} \ar[d]_-{\iota^x_*} &
        H^1(k(x), \Lambda(1)) \ar[d]^-{\iota^x_*} \\
        {C(X)}/m \ar[r]^-{\Psi_X} & H^{2d+1}_c(X, \Lambda(d+1))}
    \end{equation}
    is commutative for every inclusion $\iota^x \colon \Spec(k(x)) \inj X$ of
    a closed point.
  \end{prop}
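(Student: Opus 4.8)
The strategy is to first reduce to the perfect case by a descent argument, and then construct the isomorphism over perfect fields by identifying both sides with a Suslin-homology-type group. Here is how I would proceed.

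\emph{Step 1: Reduction to perfect $k$.} Assume first the result is known for the perfect closure $k^\hs$ and the smooth $k^\hs$-scheme $X^\hs$. We have $C(X)/m \cong C^\tm(X)/m$ by \lemref{lem:Idele-Tame-prime-to-p}, and the pull-back $\pi^* \colon C^\tm(X)/m \to C^\tm(X^\hs)/m$ is an isomorphism: surjectivity and injectivity follow from \lemref{lem:Continuity} together with \corref{cor:Continuity-*} (or directly from \corref{cor:Continuity-*} after tensoring with $\Z/m$, using \lemref{lem:Milnor-K-perf}). On the motivic side, $\pi^* \colon H^{2d+1}_c(X, \Lambda(d+1)) \to H^{2d+1}_c(X^\hs, \Lambda(d+1))$ is an isomorphism by \propref{prop:CD}(2) together with \lemref{lem:SH-DM-Maps}(2) (the equivalence of motivic categories $\dm_\cdh(X,\Lambda) \xrightarrow{\simeq} \dm_\cdh(X^\hs, \Lambda)$ carries $M^c_k(X)$ to $M^c_{k^\hs}(X^\hs)$ compatibly with the $\Lambda(d+1)[2d+1]$-twist of the unit). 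Defining $\Psi_X$ so that $\pi^* \circ \Psi_X = \Psi_{X^\hs} \circ \pi^*$, one deduces that $\Psi_X$ is an isomorphism. The compatibility square \eqref{eqn:Tame-MCCS-0} for $X$ then follows from the compatibility square for $X^\hs$: given a closed point $x \in X$ with $Z := \Spec(k(x)) \times_X X^\hs$ and $y \in X^\hs$ its unique underlying point, one compares the pushforward $\iota^x_* \colon H^1(k(x),\Lambda(1)) \to H^{2d+1}_c(X,\Lambda(d+1))$ with $\iota^y_*$ via \corref{cor:Gysin-perf-3} (which gives $f^* \circ \iota^x_* = e\, \iota^y_* \circ g^*$ with $e = \ell(\sO(Z)) \in \Lambda^\times$), matched against the corresponding identity $f^*(\iota^x_*(a)) = e\, \iota^y_*(g^*(a))$ on the class group side coming from \eqref{eqn:Defn-PB} and \propref{prop:TCG-PB}; since $\Psi_{k(y)}$ is $\Z/m$-linear and natural, and $e$ is invertible, the square for $x$ follows from the square for $y$.

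\emph{Step 2: The case of perfect $k$.} Assume now $k$ is perfect (still infinite). Then every regular curve over $k$ is smooth, so the ``Scholium'' and \thmref{thm:General-ex} apply with $m = 1$: there is an exact sequence
\[
\bigoplus_{C \in \sC(X),\, x \in \Delta(C_n)} K^M_1(k(x)) \to C^\tm(X) \to \CH^d(\ov{X}) \to 0
\]
where, unwinding, $\CH^d(\ov{X}) = \CH^{d+1}(\ov{X},1)$ and the proof of \lemref{lem:ML-main} identifies $C(\ov{X})$ with $C^\nr(X)$. But actually I want the statement for $C(X)$ itself, not $C^\tm(X)$; here one uses $C(X)/m \cong C^\tm(X)/m$ from \lemref{lem:Idele-Tame-prime-to-p} again, so it suffices to produce the isomorphism with $\Z/m$-coefficients. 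Now use \thmref{thm:General-ex} with $m=0$: $C^\tm_0(X) \cong H^S_0(X)$, the Suslin homology, and one knows (Suslin--Voevodsky, e.g.\ \cite{SV-Invent}) that $H^S_0(X) \cong H_0(M(X), \Z) = H_0(X, \Z(0))$ in Voevodsky's category; more relevantly, there is a general identification of $C(X)/m$ with $H_{-1}(X, \Z/m(-1))$. To make this precise I would proceed as follows: tensoring the exact sequence of \thmref{thm:General-ex} (for $m = 1$, i.e., for $C^\tm(X) = C^\tm_1(X)$) with $\Z/m$ and using that $K^M_2(k(x))/m = 0$ for $x$ a closed point of a curve over a local field — wait, here $k$ is only assumed perfect, not local — so instead I reduce directly to a statement about $\CH^{d+1}(\ov{X},1)$ modulo $m$ and identify this via \cite[Thm.~5.3.18]{Kelly} (the duality $M^c(\ov{X})^*(d)[2d] \cong M(\ov{X})$ for regular $\ov{X}$) with $H^{2d+1}_c(\ov{X}, \Lambda(d+1))$, then use the localization sequence of \lemref{lem:Ex-seq}(2) for the pair $(X \hookrightarrow \ov{X}, Z = \ov{X}\setminus X)$ together with the vanishing $H^{2d}_c(Z, \Lambda(d+1)) = H^{2d+1}_c(Z,\Lambda(d+1)) = \cdots$ in the relevant degree (dimension reasons, since $\dim Z \le d-1$) to get $H^{2d+1}_c(\ov{X},\Lambda(d+1)) \twoheadrightarrow H^{2d+1}_c(X,\Lambda(d+1))$ with kernel controlled by $H^{2d}_c(Z, \Lambda(d+1))$; matching this against the exact sequence of \corref{cor:ML-main-3} (with the Milnor $K$-groups $K^M_2(k(x))$ replaced by their mod-$m$ versions, which map to $H^2(k(x), \Lambda(2))$ via $\Psi_{k(x)}$ by \lemref{lem:MC-field}), one obtains $\Psi_X$ by the five lemma. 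The compatibility with closed-point pushforwards \eqref{eqn:Tame-MCCS-0} is built into the construction since the graph map $K^M_1(k(x)) \xrightarrow{\cong} \CH^1(k(x),1)$ of \cite{Totaro} and the cycle-theoretic pushforward are, under the duality isomorphism, precisely the motivic pushforward $\iota^x_*$ (diagram \eqref{eqn:Gysin-PF}), while $\Psi_{k(x)}$ on $K^M_1$ is the composite $K^M_1(k(x))/m \cong \CH^1(k(x),1)/m \cong H^1(k(x),\Lambda(1))$ of \lemref{lem:MC-field}.

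\emph{Main obstacle.} The hard part will be \emph{Step 2}: making the identification of $C(X)/m$ with $H^{2d+1}_c(X, \Lambda(d+1))$ genuinely canonical and compatible with closed-point pushforwards, rather than just an abstract isomorphism of groups. The subtlety is that \corref{cor:ML-main-3} and the localization sequence \lemref{lem:Ex-seq}(2) must be compared term-by-term, which requires knowing that the cycle class maps $z^{d+1}_Z(\ov{X},1) \to F^0_1(X)$ of \lemref{lem:ML-main} and $z^{d+1}(\ov{X},1) \to H^{2d+1}_c(X,\Lambda(d+1))$ (via Bloch's higher Chow groups $\cong$ motivic cohomology with compact support over perfect fields, e.g.\ \cite{Voe-imrn}, \cite[Thm.~5.3.18]{Kelly}) are the \emph{same} map when one uses Totaro's graph-function identification $K^M_1 \cong \CH^1(-,1)$; this is where one needs the precise compatibility of Suslin's reciprocity, Bloch's pushforward on cycle complexes, and the duality isomorphism. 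A clean way to package this is to first prove the proposition for projective $\ov{X}$ — where $C(\ov{X})/m \cong \CH^{d+1}(\ov{X},1)/m \cong H^{2d+1}_c(\ov{X},\Lambda(d+1))$ more or less by definition of the objects, and the closed-point compatibility is \eqref{eqn:Gysin-PF} — and then deduce the open case by the two compatible localization sequences and the five lemma, using that the ``boundary'' terms $\bigoplus K^M_2(k(x))/m$ and $H^{2d}_c(Z,\Lambda(d+1))$ match up via \thmref{thm:General-ex} applied to the (possibly singular) $Z$ and the identification of $C^\tm_1$-type groups of $Z$ with its Borel--Moore motivic homology. I would flag that the singularity of $Z$ is exactly why one needs the motivic machinery over general Noetherian schemes (Cisinski--Déglise) rather than just Voevodsky's original construction.
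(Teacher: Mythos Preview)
Your Step~1 (reduction to perfect $k$) is essentially the paper's argument for the imperfect case, run in the same way: both vertical pull-backs are isomorphisms by \corref{cor:Continuity-*} and \propref{prop:CD}, and the closed-point compatibility descends via \corref{cor:Gysin-perf-3}. The paper packages this as a cube diagram~\eqref{eqn:Tame-MCCS-3}, but the content is what you wrote.

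Your Step~2, however, diverges from the paper and contains a genuine gap. The paper does \emph{not} build $\Psi_X$ (for perfect $k$) by comparing \corref{cor:ML-main-3} with the localization sequence of \lemref{lem:Ex-seq}(2) and applying the five lemma. Instead, it invokes Yamazaki's theorem \cite[Thm.~1.3]{Yamazaki}: the composite
\[
\Theta_X \colon {C^\tm(X)}/m \longrightarrow H_{-1}(X, \Lambda(-1)),
\]
defined via $\Theta_{k(x)} = \Phi_{k(x)} \circ \Psi_{k(x)}$ on each closed point and motivic-homology push-forward, is an isomorphism by that theorem together with \lemref{lem:KP}. One then sets $\Psi_X = \Phi_X^{-1} \circ \Theta_X$ using the duality isomorphism $\Phi_X$, and the square~\eqref{eqn:Tame-MCCS-0} commutes by \eqref{eqn:Gysin-PF}. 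This is a two-line argument once Yamazaki is accepted.

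Your five-lemma route does not go through as written. First, a minor point: the localization map goes $H^{2d+1}_c(X,\Lambda(d+1)) \to H^{2d+1}_c(\ov{X},\Lambda(d+1))$, not the reverse as you wrote. More seriously, the left-hand terms of your two sequences do not match and you produce no comparison map between them: the source in \corref{cor:ML-main-3} is $\bigoplus_{C \in \sC(X),\,x \in \Delta(C_n)} K^M_2(k(x))/m$, indexed over \emph{all} integral curves in $X$ (an infinite family), while the localization sequence has the single group $H^{2d}_c(Z,\Lambda(d+1))$. These are not isomorphic, and the claimed ``dimension-reason'' vanishing of $H^{2d}_c(Z,\Lambda(d+1))$ is false already for $d=1$ and $Z=\Spec(k')$ a closed point, where it equals $K^M_2(k')/m$. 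Your suggestion to apply \thmref{thm:General-ex} to the possibly singular $Z$ is also misplaced: that theorem concerns $C^\tm_m$ of a smooth open $X$ inside a smooth projective $\ov{X}$, not the boundary $Z$ itself. What your program would actually need to establish---that the map from closed points surjects onto $H^{2d+1}_c(X,\Lambda(d+1))$ with exactly the tame-class-group relations as kernel---is precisely the content of Yamazaki's theorem; you are proposing to re-prove it rather than cite it, without supplying the argument.
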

\begin{proof}
By Lemma~\ref{lem:Idele-Tame-prime-to-p},
    we can replace ${C(X)}/m$ by ${C^\tm(X)}/m$. We shall work with the latter
    group throughout the proof.
We first assume that $k$ is perfect. For any $x \in X_{(0)}$, we let
    $\Theta_{k(x)}$ denote the composition of the isomorphisms 
    \begin{equation}\label{eqn:Tame-MCCS-1}
      K^M_1(k(x))[\tfrac{1}{p}] \xrightarrow{\Psi_{k(x)}}
      H^1(k(x), \Lambda'(1)) \xrightarrow{\Phi_{k(x)}}
      H_{-1}(k(x), \Lambda'(-1)),
\end{equation}
where $\Phi_{k(x)}$ is the duality isomorphism (cf. ~\eqref{eqn:Gysin-PF}).
We then get a diagram
  \begin{equation}\label{eqn:Tame-MCCS-2}  
    \xymatrix@C1pc{
      {\underset{x \in X_{(0)}}\bigoplus} {K^M_1(k(x))}/m \ar[r] \ar@{->>}[d] &
      {\underset{x \in X_{(0)}}\bigoplus} H^1(k(x), \Lambda(1)) \ar[r] \ar[d] &
      {\underset{x \in X_{(0)}}\bigoplus}  H_{-1}(k(x), \Lambda(-1)) \ar[d] \\
    {C^\tm(X)}/m \ar@{.>}[r]^-{\Psi_X} \ar@/_2pc/[rr]^-{\Theta_X} &    
    H^{2d+1}_c(X, \Lambda(d+1)) \ar[r]^-{\Phi_X}_-{\cong} &  H_{-1}(X, \Lambda(-1)),}
\end{equation}
where the top horizontal arrows are from ~\eqref{eqn:Tame-MCCS-1}, the left vertical
arrow is the canonical surjection, the middle one is the sum of Gysin maps
for closed immersions of
regular schemes (cf. Lemmas~\ref{lem:SH-DM-Maps} and ~\ref{lem:Gysin-CS}) and
the right vertical arrow is the sum of push-forward maps. The right square is
commutative by ~\eqref{eqn:Gysin-PF}. 
It follows from \cite[Thm.~1.3]{Yamazaki} and \lemref{lem:KP} that the composition of
the right vertical arrow with the two top horizontal arrows induces $\Theta_X$ which
is an isomorphism.  Since $\Phi_X$ is an isomorphism, it follows that there is a unique
isomorphism $\Psi_X$ such that the left square commutes.

Suppose now that $k$ is not perfect and let $f \colon X^\hs \to X$ be the projection
map. For $x \in X_{(0)}$ and the unique point $y \in X^\hs$ lying over $x$, we let
$f^*_x \colon H^1(k(x), \Lambda(1)) \to H^1(k(y), \Lambda(1))$ be given by
$f^*_x(a) = \ell(S_x) \tau^*_x(a)$, where $S_x = \Spec(k(x)) \times_X X^\hs$ and
$\tau_x \colon \Spec(k(y)) \to \Spec(k(x))$ is the projection map.
The map $f^*_x \colon {K^M_1(k(x))}/m \to {K^M_1(k(y))}/m$ has been earlier defined in
the same manner in ~\eqref{eqn:Defn-PB}.
We let $\wt{f}^* = {\underset{x \in X_{(0)}}\bigoplus} f^*_x$.
We let $\alpha = {\underset{x \in X_{(0)}}\bigoplus} \Psi_{k(x)}$
(cf. \lemref{lem:MC-field}). The map $\alpha^\hs$ is defined in the similar
manner.

We now consider the diagram
\begin{equation}\label{eqn:Tame-MCCS-3}
  \xymatrix@C1pc{
    {\underset{x \in X_{(0)}}\bigoplus} {K^M_1(k(x))}/m \ar[rr]^-{\alpha}
    \ar@{->>}[dr]  \ar[dd]_-{\wt{f}^*} & &
    {\underset{x \in X_{(0)}}\bigoplus} H^1(k(x), \Lambda(1)) \ar[dd]^>>>>>>>>{\wt{f}^*}
    \ar[dr] & \\
& {C^\tm(X)}/m \ar[dd]_>>>>>>>>{f^*} \ar@{.>}[rr] & & H^{2d+1}_c(X, \Lambda(d+1))
\ar[dd]^-{f^*} \\
{\underset{x \in X^\hs_{(0)}}\bigoplus} {K^M_1(k(x))}/m \ar[rr]^->>>>>>>>>{\alpha^\hs}
\ar@{->>}[dr]  & &
{\underset{x \in X^\hs_{(0)}}\bigoplus} H^1(k(x), \Lambda(1)) \ar[dr] & \\
& {C^\tm(X^\hs)}/m \ar[rr]^-{\Psi_{X^\hs}} & &  H^{2d+1}_c(X^\hs, \Lambda(d+1)).}
\end{equation}

We have shown above that $\Psi_{X^\hs}$ is an isomorphism. The right vertical arrow of
the front face is an isomorphism by \lemref{lem:SH-DM-Maps} and
\cite[Cor.~2.1.7]{Elmanto-Khan}. Its left vertical arrow is an
isomorphism by \corref{cor:Continuity-*}. It follows that there is a unique
isomorphism $\Psi_X \colon {C^\tm(X)}/m \xrightarrow{\cong} H^{2d+1}_c(X, \Lambda(d+1))$
such that the front face of the diagram commutes. To complete the proof of the
proposition, it remains to show that the top face of ~\eqref{eqn:Tame-MCCS-3} commutes.
Since the right vertical arrow of the front face is an isomorphism, it suffices to show
that all other faces of ~\eqref{eqn:Tame-MCCS-3} commute.

Now, the left face of ~\eqref{eqn:Tame-MCCS-3} commutes by the construction of the
pull-back map $f^*$ between the tame class groups. The right face commutes by
\corref{cor:Gysin-perf-3}.
The back face commutes by definitions of all its arrows. The bottom face commutes by
definition of $\Psi_{X^\hs}$ (cf. the diagram ~\eqref{eqn:Tame-MCCS-2}). The front face
has been shown to commute. This concludes the proof.
\end{proof}


\section{Reciprocity and {\'e}tale realization}\label{sec:RR}
The goal of this section is to describe the reciprocity homomorphism of the tame
class group in terms of the realization map from the motivic cohomology with compact
support to the {\'e}tale cohomology with compact support.
This will allow us to paraphrase the statements of some of the main results of this
paper as statements about the {\'e}tale realization map.
We let $\kappa$ be a prime field of characteristic exponent $p \ge 1$
and fix a field $k$ containing $\kappa$ throughout this section.
We fix an integer $m \in \kappa^\times$ and let $\Lambda = {\Z}/m$.
We begin with a rapid recollection of the {\'e}tale realization functor.

\subsection{{\'E}tale realization of motivic cohomology}\label{sec:ER}
For $X \in \Sch_k$, let $\dm_\et(X, \Lambda)$ be the triangulated
category of {\'e}tale sheaves of $\Lambda$-modules with transfer on $\Sm_X$.
Let $\dr_\et(X, \Lambda)$ be
the unbounded derived category of sheaves of $\Lambda$-modules on $X_\et$.
There is a commutative diagram of triangulated categories
(cf. \cite[\S~9]{CD-Doc} and \cite[\S~1,2]{CD-Comp})
\begin{equation}\label{eqn:RE-0}
  \xymatrix@C1pc{
    \dm(X, \Lambda) \ar[r] \ar[d]_-{\vartheta^*_X} &
    \dm_\et(X, \Lambda) \ar[r]^-{\tau_X}_-{\cong} \ar[d]^-{\omega^*_X}_-{\cong}
    & \dr_\et(X, \Lambda) \\
    \dm_\cdh(X, \Lambda) \ar@{.>}[ur]^-{\epsilon^*_X} \ar[r]_-{\epsilon '^*_X} &
    \dm_{\rm h}(X, \Lambda), &} 
\end{equation}
where $\dm_{\rm h}(X, \Lambda)$ is the triangulated category of {\'e}tale motives with
respect to Voevodsky's h-topology \cite[\S~3]{Voev-Selecta}.
The functor $\tau_X$ is obtained by forgetting the transfer
structure and the remaining functors are induced by sheafification with respect
to the underlying topologies on their targets.

One knows that $\omega^*_X$ and $\tau_X$ are equivalences of monoidal triangulated
categories (cf. \cite[Thm.~4.5.2, Cor.~5.5.4]{CD-Comp}).
The equivalence of $\omega^*_X$ implies that
$\epsilon'^*_X$ factors through
\begin{equation}\label{eqn:cdh-et}
\epsilon^*_X \colon \dm_\cdh(X, \Lambda) \to \dm_\et(X, \Lambda)
\end{equation}
such that two triangles in ~\eqref{eqn:RE-0} commute.
Both categories in ~\eqref{eqn:cdh-et} satisfy the six functor formalism
(cf. \cite[Thm.~2.4.50]{CD-Springer}) and the functor $\epsilon^*_X$ is 
compatible with the underlying functors when restricted to constructible objects
(cf. \cite[Rem.~9.6]{CD-Doc}).
Henceforth, we shall make no distinction between
$\dm_\et(X, \Lambda)$, \
$\dm_{\rm h}(X, \Lambda)$ and $\dr_\et(X, \Lambda)$ in order to
simplify notations.

For any $f \colon X \to \Spec(k)$ in $\Sch_k$ and a closed immersion
$\iota \colon Z \inj X$, we let
\begin{equation}\label{eqn:RE-1}
  H^i_{\et, \sM}(X, \Lambda(j)) =  \Hom_{\dm_\et(k, \Lambda)}(\Lambda_k, f_*\Lambda_X(j)[i]);
  \end{equation}
\[
  H^i_{\et, \sM, c}(X, \Lambda(j)) = \Hom_{\dm_\et(k, \Lambda)}(\Lambda_k, f_!\Lambda_X(j)[i]);
\]
\[
H^i_{\et, \sM, Z}(X, \Lambda(j)) =
\Hom_{\dm_\et(X, \Lambda)}(\iota_* \Lambda_Z, \Lambda_X(j)[i]).
  \]

  Since $\epsilon^*_X$ is monoidal, \lemref{lem:MC-et} provides
  natural {\'e}tale realization maps
\begin{equation}\label{eqn:RE-2}
  H^i_Z(X, \Lambda(j)) \xrightarrow{\epsilon^*_X} H^i_{\et, \sM, Z}(X, \Lambda(j)); \ \
  H^i_c(X, \Lambda(j)) \xrightarrow{\epsilon^*_X} H^i_{\et, \sM, c}(X, \Lambda(j)).
\end{equation}

Before we proceed further, we recall the following.

\begin{lem}\label{lem:Roots-PB}
  Let $k_0$ be any field and $X$ any Noetherian $k_0$-scheme with structure map
  $f \colon X \to \Spec(k_0)$. Then the canonical map $f^* \mu_{m,k_0} \to \mu_{m,X}$ is an
  isomorphism for all integers $m \in (k_0)^\times$.
\end{lem}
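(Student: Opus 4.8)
The claim is that for a Noetherian $k_0$-scheme $f\colon X \to \Spec(k_0)$ and $m \in k_0^\times$, the natural map $f^*\mu_{m,k_0} \to \mu_{m,X}$ is an isomorphism. This is a purely local, descent-type statement about \'etale sheaves, and the plan is to reduce it to the definition of $\mu_m$ together with the fact that $m$ is invertible on $X$.

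First I would recall that $\mu_{m,X}$ is by definition the kernel of the $m$-th power map $\mathbb{G}_{m,X} \xrightarrow{(-)^m} \mathbb{G}_{m,X}$ on the small \'etale site of $X$, and similarly for $\mu_{m,k_0}$ on $\Spec(k_0)_\et$. Since $m \in k_0^\times$, we have $m \in \mathcal{O}_X^\times$ as well, because $f$ factors the invertibility: $m$ maps to a unit in every local ring of $X$. Hence the Kummer sequence
\[
1 \to \mu_{m,X} \to \mathbb{G}_{m,X} \xrightarrow{(-)^m} \mathbb{G}_{m,X} \to 1
\]
is exact on $X_\et$, and likewise on $\Spec(k_0)_\et$. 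The pullback functor $f^*$ on small \'etale sites is exact and commutes with the formation of kernels of sheaf morphisms, and it clearly sends $\mathbb{G}_{m,k_0}$ to $\mathbb{G}_{m,X}$ compatibly with the $m$-th power map. Therefore $f^*$ of the first Kummer sequence is identified with the second one, and in particular $f^*\mu_{m,k_0} \xrightarrow{\cong} \mu_{m,X}$.

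Alternatively, and perhaps more cleanly, I would argue stalk-wise: for a geometric point $\bar{x} \to X$ with image geometric point $\overline{f(x)} \to \Spec(k_0)$, the stalk $(\mu_{m,X})_{\bar x}$ is the group of $m$-th roots of unity in the strictly Henselian local ring $\mathcal{O}_{X,\bar x}^{sh}$, which (since $m$ is invertible there, and $m$-th roots of unity lift uniquely along the surjection $\mathcal{O}_{X,\bar x}^{sh} \to k(\bar x)$ with separably closed residue field by Hensel's lemma) equals $\mu_m(k(\bar x))$. Likewise $(f^*\mu_{m,k_0})_{\bar x} = (\mu_{m,k_0})_{\overline{f(x)}} = \mu_m(k(\overline{f(x)}))$. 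Both residue fields are separably closed of characteristic prime to $m$, so each group is cyclic of order exactly $m$, and the canonical map between them (induced by the field inclusion) is an isomorphism. Since a map of \'etale sheaves that is an isomorphism on all geometric stalks is an isomorphism, we are done.

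I do not expect any genuine obstacle here: the statement is a standard fact recorded because it is needed to make the \'etale realization of the motivic Tate twist $\Lambda_X(j)$ behave well under pullback (the realization of $\Z/m(j)$ being $\mu_m^{\otimes j}$). The only point requiring a word of care is the hypothesis $m \in k_0^\times$, which is exactly what guarantees $m \in \mathcal{O}_X^\times$ and hence both the exactness of the Kummer sequence on $X_\et$ and the unique lifting of roots of unity in the strictly Henselian stalks; without it the assertion fails. I would therefore state the proof in whichever of the two forms above is shortest, most likely the Kummer-sequence version, since it avoids invoking geometric points explicitly.
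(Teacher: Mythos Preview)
Your proof is correct, but the paper argues differently. Rather than invoking the Kummer sequence or computing geometric stalks, the paper works directly with the presheaf description of $f^*\mu_{m,k_0}$: after noting injectivity is easy, it shows surjectivity by observing that any $\alpha \in \mu_m(A)$ for a $k_0$-algebra $A$ is the image of the tautological root of unity in the \'etale $k_0$-algebra $A' = k_0[t]/(t^m-1)$ under the map $t \mapsto \alpha$. In effect the paper is exploiting that $\mu_{m,k_0}$ is represented by the \'etale $k_0$-scheme $\Spec(k_0[t]/(t^m-1))$, so its pullback is just the base change, which represents $\mu_{m,X}$. Your Kummer-sequence argument is shorter and more conceptual, but it tacitly uses that $f^*\G_{m,k_0} \cong \G_{m,X}$ and that $f^*$ is exact on \'etale sheaves; the paper's argument is completely self-contained and makes no appeal to either of those facts. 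Both approaches use the hypothesis $m \in k_0^\times$ at exactly one point (\'etaleness of $k_0[t]/(t^m-1)$ in the paper, exactness of Kummer or the stalk count in yours).
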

\begin{proof}
  It is easy to check that $f^* \mu_{m,k_0} \to \mu_{m,X}$ is injective. To prove its
  surjectivity, 
  it suffices to show that for any $k_0$-algebra $A$ and $\alpha \in \mu_m(A)$, there is
  a factorization $k_0 \inj A' \xrightarrow{\phi} A$ and $\alpha' \in  \mu_m(A')$ such
  that $A'$ is {\'e}tale over $k_0$ and $\phi(\alpha') = \alpha$.
  To show this, we let $\phi' \colon k_0[t] \to A$ be the $k_0$-algebra homomorphism
  defined by setting $\phi'(t) = \alpha$. Then $\phi'$ has a factorization
  $\phi \colon \frac{k_0[t]}{(t^m-1)} \to A$. Since $m \in (k_0)^\times$, it follows that
  $A' := {k_0[t]}/{(t^m-1)}$ is {\'e}tale over $k_0$. Letting $\alpha'$ be residue
  class of $t$ in $A'$, we conclude the proof.
\end{proof}

For $j \in \Z$, we let $\Lambda'(j)$ be the sheaf on $X_\et$ defined so that
$\Lambda'(0)= \Z/m$, $\Lambda'(j) = \mu_m^{\otimes j}$ when $j >0$,
$\Lambda'(j) = \un{\Hom}(\Lambda'(-j), \Z/m)$ when $j<0$, where the latter is the
internal hom in the category of sheaves of abelian groups on $X_\et$ 
(cf. \cite[\S~V.1, p.~163]{Milne-etale}).

\begin{lem}\label{lem:EFT}
  For any $X \in \Sch_k$, the functor $\tau_X$ induces an isomorphism
  \[
    H^i_{\et, \sM}(X, \Lambda(j)) \to H^i_{\et}(X, \Lambda'(j)).
  \]
  The same holds for cohomology with compact support and cohomology with support
  in a closed subscheme.
\end{lem}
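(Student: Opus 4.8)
The statement to prove is \lemref{lem:EFT}: that the equivalence $\tau_X \colon \dm_\et(X,\Lambda) \xrightarrow{\cong} \dr_\et(X,\Lambda)$ identifies the motivic \'etale cohomology $H^i_\et(X,\Lambda(j))$ with the classical \'etale hypercohomology $H^i(X_\et,\Lambda'(j))$, and similarly for the compactly supported and the supported variants. The plan is first to reduce everything to an identification of the Tate twist objects under $\tau_X$, and then to propagate this through the six functors. Concretely, $\tau_X$ is a monoidal equivalence of triangulated categories compatible with the six operations (this is recalled in the excerpt from \cite{CD-Comp}, Thm.~4.5.2 and Cor.~5.5.4, and \cite[Rem.~9.6]{CD-Doc}), so it commutes with $f_*, f_!, \iota_*$ and with the unit object. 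Hence the three cohomology groups in \eqref{eqn:RE-1} are carried, under $\tau_X$, to $\Hom_{\dr_\et(k,\Lambda)}(\Lambda_k, Rf_*(\tau_X\Lambda_X(j))[i])$ and the analogous Hom-groups with $Rf_!$ and with $\iota_*$. Since $\tau_X$ sends the constant sheaf with transfers $\Lambda_X$ to the constant \'etale sheaf $\Lambda$ on $X_\et$, the whole problem collapses to showing $\tau_X(\Lambda_X(j)) \cong \Lambda'(j)$ in $\dr_\et(X,\Lambda)$ for every $j \in \Z$.

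Second, I would establish this twist identification. For $j = 0$ it is immediate: the Tate object is the unit, and $\tau_X$ is unital. For $j = 1$, recall $\Lambda_X(1) = \Lambda_{\mathrm{tr}}(\P^1_X)/\Lambda_{\mathrm{tr}}(X)$ placed in the appropriate degree; after \'etale sheafification and with $\Lambda = \Z/m$ coefficients, the standard computation (Voevodsky / Mazza--Voevodsky--Weibel, and \cite[Prop.~8.1, Cor.~8.2]{CD-Doc}) shows this becomes $\mu_{m,X}[-0]$ up to the usual shift, i.e.\ $\tau_X(\Lambda_X(1)) \cong \mu_{m,X}$ in $\dr_\et(X,\Lambda)$. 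The key compatibility needed here is that this isomorphism is pulled back from $\Spec k$, which is exactly \lemref{lem:Roots-PB}: $f^*\mu_{m,k} \xrightarrow{\cong} \mu_{m,X}$. For general $j \ge 1$ one takes tensor powers, using that $\tau_X$ is monoidal and $\mu_{m,X}$ is $\otimes$-invertible in $\dr_\et(X,\Lambda)$ (its inverse being $\Lambda'(-1)$), so $\tau_X(\Lambda_X(j)) \cong \mu_{m,X}^{\otimes j} = \Lambda'(j)$; for $j < 0$ one uses the inverse. This matches the definition of $\Lambda'(j)$ in \cite[\S~V.1]{Milne-etale}.

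Third, with $\tau_X(\Lambda_X(j)) \cong \Lambda'(j)$ in hand, I would feed this back through the six-functor formalism. For ordinary cohomology: $H^i_\et(X,\Lambda(j)) = \Hom_{\dm_\et(k)}(\Lambda_k, Rf_*\Lambda_X(j)[i]) \cong \Hom_{\dr_\et(k)}(\Lambda, Rf_*\Lambda'(j)[i]) = H^i(X_\et, \Lambda'(j))$, where the middle isomorphism is $\tau$ applied and uses that $\tau$ commutes with $Rf_*$ on constructible objects. For compact support, replace $Rf_*$ by $Rf_!$ and invoke the same compatibility of $\tau$ with $f_!$; note that $Rf_!\Lambda'(j)$ is precisely the object computing compactly supported \'etale cohomology (via a compactification $X \hookrightarrow \ov X$ and $Rg_* \circ j_!$, as in \cite[\S~VI]{Milne-etale} or the construction implicit in \lemref{lem:MC-et}). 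For cohomology with support in a closed immersion $\iota \colon Z \hookrightarrow X$, use $H^i_{\et,Z}(X,\Lambda(j)) = \Hom_{\dm_\et(X)}(\iota_*\Lambda_Z, \Lambda_X(j)[i])$ and the commutation of $\tau_X$ with $\iota_*$ to land in $\Hom_{\dr_\et(X)}(\iota_*\Lambda, \Lambda'(j)[i]) = H^i_Z(X_\et, \Lambda'(j))$.

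The main obstacle I anticipate is not any single hard theorem — the equivalence $\tau_X$ and its compatibility with the six operations are quoted wholesale from \cite{CD-Comp,CD-Doc} — but rather the bookkeeping of \emph{normalizations}: matching the cohomological degree and twist conventions between the motivic object $\Lambda_X(j)[i]$ (which encodes $\P^1$-suspensions and the Suslin complex) and Milne's sheaf $\Lambda'(j)$, and checking that the identification $\tau_X(\Lambda_X(1)) \cong \mu_{m,X}$ is the \emph{canonical} one (so that naturality in $X$, needed implicitly for the realization maps of \eqref{eqn:RE-2} to be well-behaved, holds). Ensuring these conventions align, and that \lemref{lem:Roots-PB} is invoked to descend the twist computation from the (possibly imperfect) base field to $X$, will be the delicate part; once that is pinned down, the rest is formal transport of structure through the monoidal equivalence.
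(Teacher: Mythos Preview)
Your proposal is correct and follows essentially the same strategy as the paper: both reduce to showing $\tau_X(\Lambda_X(j)) \cong \Lambda'(j)$ via pullback, invoke \lemref{lem:Roots-PB} for the $\mu_m$ side, and then cite a standard identification of the {\'e}tale Tate object. The one difference worth noting is that the paper pulls back not just to $\Spec k$ but all the way to the \emph{prime field} $k_0$ of $k$; since $k_0$ is perfect, the reference \cite[Prop.~10.6]{MVW} applies on the nose and no worry arises about imperfect bases. Your route stops at $\Spec k$ and relies on \cite{CD-Doc} to handle the possibly imperfect case, which also works but requires a slightly more careful citation. The paper also dispatches the compact-support and closed-support variants in one line via the long exact sequences and naturality of $\tau_X$, rather than through explicit six-functor compatibilities as you do; this is a cosmetic difference only.
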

\begin{proof}
  The second part of the lemma follows from its first part using the long exact
  sequences of cohomology and naturality of $\tau_X$.
  So we only need to prove the first part. By ~\eqref{eqn:RE-1}, it suffices to show
  that 
  the natural map $\Lambda'_X(j) \to \tau_X (\Lambda_X(j))$ is an isomorphism. 
  Let $f \colon X \to \Spec(\kappa)$ be the projection.
  Since the pull-back map $f^* \Lambda_{\kappa}(j) \to \Lambda_X(j)$
  is clearly an isomorphism, and the same holds for $\Lambda'(j)$ by
  \lemref{lem:Roots-PB}, it suffices to prove the above isomorphism when
  $X = \Spec(\kappa)$. However, as $\kappa$ is a perfect field, this isomorphism is
  well known (cf. \cite[Prop.~10.6]{MVW}). 
  \end{proof}

Note: In the light of \lemref{lem:EFT}, we shall suppress $\sM$ from the notations in
\eqref{eqn:RE-1} and make no distinction between the {\'e}tale motivic
cohomology (and their variants) with coefficients in $\Lambda(j)$ and the corresponding
{\'e}tale sheaf cohomology with coefficients in $\Lambda'(j)$ in the rest of this paper.

\subsection{Gysin homomorphisms in motivic and {\'e}tale
    cohomology}\label{sec:GEC}
Let $k$ and $\Lambda$ be as above.
Suppose now that $\iota \colon Z \inj X$ is a closed immersion of
pure codimension $n$ in $\Sch_k$ such that $X$ and $Z$ are both regular.
Recall from \S~\ref{sec:Gysin**} (cf. Lemma~\ref{lem:SH-DM-Maps}) that
there is a fundamental class $\eta^X_Z \in H^{2n}_Z(X, \Lambda(n))$
such that the cup product
$\bigcup \colon H^i(Z, \Lambda(j)) \times H^{2n}_Z(X, \Lambda(n))
\to H^{2n+i}_Z(X, \Lambda(n+j))$ gives rise to the refined Gysin homomorphism
\begin{equation}\label{eqn:Gysin-RE}
\iota_* \colon  H^i(Z, \Lambda(j)) \to H^{2n+i}_Z(X, \Lambda(n+j)); \ \
\alpha \mapsto \alpha \cup \eta^X_Z.
\end{equation}
This construction of the refined Gysin homomorphism is in fact obtained by mimicking 
the construction of already existing
fundamental classes and refined Gysin homomorphism in {\'e}tale
cohomology by Gabber-Riou (cf. \cite{Fujiwara} and \cite{Riou}). 

\begin{lem}\label{lem:Gysin-Nis-etale}
There is a commutative diagram
  \begin{equation}\label{eqn:Gysin-Nis-etale-00}
    \xymatrix@C1pc{
      H^i(Z, \Lambda(j)) \ar[r]^-{\epsilon^*_Z} \ar[d]_-{\iota_*} &
      H^i_\et(Z, \Lambda(j)) \ar[d]^-{\iota_*} \\
      H^{2n+i}_Z(X, \Lambda(n+j)) \ar[r]^-{\epsilon^*_X} &
      H^{2n+i}_{Z,\et}(X, \Lambda(n+j)).}
  \end{equation}
\end{lem}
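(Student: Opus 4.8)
\textbf{Proof strategy for Lemma~\ref{lem:Gysin-Nis-etale}.}
The plan is to reduce the statement to the compatibility of the \'etale realization functor $\epsilon^*_X$ with the fundamental classes and the cup product pairings. Recall that the refined Gysin map $\iota_*$ in motivic cohomology is, by construction (see ~\eqref{eqn:Gysin-RE}), the map $\alpha \mapsto \alpha \cup \eta^X_Z$, where $\eta^X_Z \in H^{2n}_Z(X, \Lambda(n))$ is the fundamental class and $\cup$ is the cup product pairing $H^i(Z, \Lambda(j)) \times H^{2n}_Z(X, \Lambda(n)) \to H^{2n+i}_Z(X, \Lambda(n+j))$ coming from the module structure over the absolute ring spectrum $\h\Lambda_{(-)/k}$. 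The analogous Gysin map in \'etale cohomology is defined in exactly the same way, using the Gabber--Riou fundamental classes and refined Gysin homomorphism in \'etale cohomology. So the commutativity of ~\eqref{eqn:Gysin-Nis-etale-00} will follow once I show two things: (a) $\epsilon^*_X(\eta^X_Z)$ equals the \'etale fundamental class $\eta^{X,\et}_Z \in H^{2n}_{Z,\et}(X, \Lambda(n))$, and (b) $\epsilon^*$ is a morphism of modules over the cup product pairing, i.e. $\epsilon^*_X(\alpha \cup \beta) = \epsilon^*_Z(\alpha) \cup \epsilon^*_X(\beta)$ for $\alpha \in H^i(Z,\Lambda(j))$, $\beta \in H^{2n}_Z(X,\Lambda(n))$.

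First I would establish (b). The cup product pairing in both theories is induced by the monoidal structure: concretely, $\alpha \cup \beta$ is obtained by tensoring the maps representing $\alpha$ and $\beta$ in the relevant triangulated category and composing with the multiplication map of the ring object $\h\Lambda_{X/k}$ (equivalently $\Lambda_X$, under the identifications of \S~\ref{sec:DM} and Lemma~\ref{lem:SH-DM-Maps}). Since $\epsilon^*_X \colon \dm_\cdh(X,\Lambda) \to \dm_\et(X,\Lambda)$ is a monoidal functor (it is the restriction to constructible objects of the symmetric monoidal localization functor, compatible with the six operations by \cite[Rem.~9.6]{CD-Doc}), it sends $\Lambda_X$ to $\Lambda_X$ preserving the multiplication, and it commutes with the exceptional inverse image and proper pushforward functors that enter the definition of cohomology with support. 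Therefore $\epsilon^*$ intertwines the two cup product pairings; this is essentially a formal consequence of $\epsilon^*$ being a monoidal, six-functor-compatible functor, and I would spell it out by unwinding the definition of $\cup$ in \cite[Defn.~1.6]{Navarro} and applying $\epsilon^*$ term by term.

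The main obstacle is (a): the identification $\epsilon^*_X(\eta^X_Z) = \eta^{X,\et}_Z$. The cleanest route is to use the characterization of fundamental classes via first Chern classes with support and the additive formal group law, exactly as in \lemref{lem:Gysin-2}: the fundamental class $\eta^X_Z$ for a regular closed immersion is determined (by the deformation-to-the-normal-cone / projective bundle construction of \cite[Defn.~2.1, Defn.~2.8]{Navarro} and \cite[Thm.~3.2.21]{DJK}) by the classes $c_1$ of the relevant line bundles, and these are natural for any morphism of absolute oriented ring spectra. Since $\epsilon^*_X$ is precisely such a morphism (it is induced by a map of absolute ring spectra $\h\Lambda^{\cdh}_{(-)/k} \to \h\Lambda^{\et}_{(-)/k}$, compatible with orientations because both carry the additive/multiplicative orientation pulled back from $\Spec(k)$), naturality of Chern classes with support under $\epsilon^*$ forces $\epsilon^*_X(\eta^X_Z) = \eta^{X,\et}_Z$. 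Alternatively, one can invoke the uniqueness of fundamental classes satisfying the base-change and excess-intersection axioms (\cite[Thm.~3.3.2]{DJK}): both $\epsilon^*_X(\eta^X_Z)$ and $\eta^{X,\et}_Z$ satisfy these axioms in $\dm_\et(-,\Lambda)$, and hence agree. I expect the care needed here is in checking that the orientation data and the normalization conditions match after applying $\epsilon^*$; once (a) and (b) are in place, the commutativity of ~\eqref{eqn:Gysin-Nis-etale-00} is immediate from $\epsilon^*_X(\alpha \cup \eta^X_Z) = \epsilon^*_Z(\alpha) \cup \epsilon^*_X(\eta^X_Z) = \epsilon^*_Z(\alpha) \cup \eta^{X,\et}_Z$.
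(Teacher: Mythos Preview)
Your proposal is correct and follows essentially the same route as the paper: reduce to (b) monoidality of $\epsilon^*$ for the cup product and (a) preservation of the fundamental class, then reduce (a) to compatibility of Chern classes (with support) under $\epsilon^*$. The only difference is that where you flag ``the care needed'' in matching orientation data, the paper carries this out concretely by going through the blow-up construction of $\eta^X_Z$ and verifying $\epsilon^*_{X'}(c_1(\sL_E)) = c^{\et}_1(\sL_E)$ via the identification $H^2_E(X',\Lambda(1)) \cong H^1_E(X'_{\nis},\sO^\times_{X'}/m)$, under which both classes correspond to the Cartier divisor class $[E]$.
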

\begin{proof}
Since the {\'etale} realization functor is monoidal, $\epsilon^*_X$ preserves
  cup products. Hence, we only need to show that it also preserves the fundamental
  class $\eta^X_Z$. But this is part of the construction of the refined
  Gysin homomorphism in the motivic cohomology. We briefly recall this construction
  for the sake of completeness of our arguments.

Let $X'$ be the blow-up of $X$ along $Z$ with exceptional divisor $E \cong \P_Z(\sF)$.
Let $\pi \colon X' \to X, \ \pi' \colon E \to Z$ be the projections and
$\iota' \colon E \inj X'$ the
  inclusion. Let $\sF$ be the conormal sheaf of the embedding $Z \inj X$. 
Then the pull-back map $\pi^* \colon H^{2n}_Z(X, \Lambda(n)) \to
  H^{2n}_E(X', \Lambda(n))$ is an inclusion \cite[Cor.~2.6]{Navarro} (cf.
  \cite{Riou} for the {\'e}tale case). Furthermore, there is a class $\omega^X_Z \in
  H^{2n-2}(E, \Lambda(n-1))$ which is a linear combination of powers of
$c_1(\iota'^*\sL_E)$ and
  the Chern classes of $\pi'^*(\sF)$ such that the image of $\omega^X_Z$ under
  the refined Gysin homomorphism $\iota_* \colon  H^{2n-2}(E, \Lambda(n-1)) \to
  H^{2n}_E(X', \Lambda(n))$ lies in $H^{2n}_Z(X, \Lambda(n))$ and this image is the
  class $\eta^X_Z$. Hence, all one needs to check is that $\epsilon^*_{X'}(c_1(\sL_E))$
  and $\epsilon^*_{Z}(c_i(\sF))$ coincide with $c^\et_1(\sL_E)$ and
$c^\et_i(\sF)$.

Since the ordinary Chern classes of $\sF$ are preserved under $\epsilon^*_{Z}$
  (as is clear from \cite[Defn.~1.32]{Navarro}), we have to check the same for
  the `Chern class with support' $c_1(\sL_E)$. 
But this is also well known. Indeed, recall that
\begin{equation}\label{eqn:Gysin-Nis-etale-01}
  \begin{array}{lll}
    H^2_E(X', \Lambda(1)) & {\cong}^\dagger &
    \Hom_{\dm(X', \Lambda)}(\iota'_! \Lambda_{E}, \Lambda_{X'}(1)[2]) \\
  & \cong & \Hom_{\dm(X', \Lambda)}(\iota'_!\Lambda_{E}, \G_{m,X'} \otimes \Lambda(1)) \\
& \cong & H^1_E(X'_\nis, {\sO^\times_{X'}}/m),
  \end{array}
  \end{equation}
where ${\cong}^\dagger$ is \corref{cor:MC-et-5} and
the isomorphism $\Lambda_{X'}(1)[1] \cong \G_{m,X'} \otimes \Lambda$ is shown in the
proof of \cite[Prop.~11.2.11]{CD-Springer}.
The last isomorphism in~\eqref{eqn:Gysin-Nis-etale-01} is a classical fact in
$\A^1$-homotopy theory of schemes (cf. \cite[Ex.~10.4.4]{CD-Springer}).

Now, it follows from the Cartier exact sequence that $H^1_E(X'_\nis, {\sO^\times_{X'}})$
is the group of Cartier divisors on $X'$ which are supported on $E$.
Since $E \subset X'$ is a Cartier divisor, it has a uniquely defined class
(say, $[E]$) in $H^1_E(X'_\nis, {\sO^\times_{X'}}/m )$. Furthermore, 
$c_1(\sL_E) \in H^2_E(X', \Lambda(1))$ corresponds to
$[E]$ under the composite isomorphism of ~\eqref{eqn:Gysin-Nis-etale-01}
(cf. \cite[Defn.~1.41]{Navarro}).
As the class $c^\et_1(\sL_E) \in H^2_{\et, E}(X', \Lambda(1))
\cong H^2_E(X'_\et, \Lambda'(1))$ is defined in the identical way, one gets
 $\epsilon^*_{X'}(c_1(\sL_E)) = c^\et_1(\sL_E)$.
\end{proof}

\begin{cor}\label{cor:Gysin-Nis-etale-1}
 If $Z$ is complete in \lemref{lem:Gysin-Nis-etale}, there is a commutative 
diagram
\begin{equation}\label{eqn:Gysin-Nis-etale-2}
    \xymatrix@C1pc{
      H^i(Z, \Lambda(j)) \ar[r]^-{\epsilon^*_Z} \ar[d]_-{\iota_*} &
      H^i_\et(Z, \Lambda(j)) \ar[d]^-{\iota_*} \\
      H^{2n+i}_c(X, \Lambda(n+j)) \ar[r]^-{\epsilon^*_X} &
      H^{2n+i}_{\et, c}(X, \Lambda(n+j)).}
  \end{equation}
\end{cor}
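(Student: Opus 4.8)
\textbf{Proof proposal for Corollary~\ref{cor:Gysin-Nis-etale-1}.}
The plan is to deduce this from \lemref{lem:Gysin-Nis-etale} by bolting on the factorization of the forget-support map through cohomology with compact support that was constructed in the proof of \lemref{lem:Gysin-CS}. Recall from that proof that the Gysin homomorphism $\iota_* \colon H^i(Z, \Lambda(j)) \to H^{2n+i}_c(X, \Lambda(n+j))$ is by definition the composite of the refined Gysin map $\iota_* \colon H^i(Z,\Lambda(j)) \to H^{2n+i}_Z(X,\Lambda(n+j))$ of \eqref{eqn:Gysin-RE} (via the excision isomorphism $H^{2n+i}_Z(\ov X,\Lambda(n+j)) \cong H^{2n+i}_Z(X,\Lambda(n+j))$ for a chosen compactification $j' \colon X \inj \ov X$) with the canonical factorization $H^{2n+i}_Z(\ov X,\Lambda(n+j)) \to H^{2n+i}_c(X,\Lambda(n+j)) \to H^{2n+i}(\ov X,\Lambda(n+j))$ coming from the distinguished triangle $j'_!\Lambda_X \to \Lambda_{\ov X} \to \iota''_*\Lambda_W$ (with $W = \ov X \setminus X$). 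The same recipe applies verbatim in the {\'e}tale setting, using the {\'e}tale analogue of \lemref{lem:SH-DM-Maps} and the {\'e}tale version of \lemref{lem:Gysin-CS}; note that the emptiness argument $W \times_{\ov X} Z = \emptyset$ used there is topological and survives {\'e}tale realization.

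First I would record that $\epsilon^*_{(-)}$ commutes with the refined Gysin map $\iota_* \colon H^i(Z,\Lambda(j)) \to H^{2n+i}_Z(X,\Lambda(n+j))$, which is exactly \lemref{lem:Gysin-Nis-etale}. Next I would observe that $\epsilon^*_{(-)}$ is compatible with the excision isomorphism $H^{2n+i}_Z(\ov X,\Lambda(n+j)) \cong H^{2n+i}_Z(X,\Lambda(n+j))$ and with the map $H^{2n+i}_Z(\ov X,\Lambda(n+j)) \to H^{2n+i}_c(X,\Lambda(n+j))$; both follow because $\epsilon^*$ is a monoidal triangulated functor commuting with the six operations on constructible objects (see \cite[Rem.~9.6]{CD-Doc}), so it takes the defining distinguished triangles $j'_!\Lambda_X \to \Lambda_{\ov X} \to \iota''_*\Lambda_W$ and the adjunction isomorphisms used in \eqref{eqn:Gysin-CS-0*} to their {\'e}tale counterparts, hence the induced maps on $\pi_0$ (equivalently, on $\Hom$-groups) form commutative squares with the realization maps. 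Splicing the square of \lemref{lem:Gysin-Nis-etale} on top of these two compatibility squares yields the outer commutative rectangle \eqref{eqn:Gysin-Nis-etale-2}.

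The only genuinely non-formal input is \lemref{lem:Gysin-Nis-etale}, which is already proved; everything else here is a diagram chase through functoriality of $\epsilon^*$ for the six operations. The mild subtlety I would be careful about is that the Gysin map into compact-support cohomology depends on the auxiliary choice of compactification $\ov X$, but the realization squares are natural in $\ov X$ (the transition maps between two compactifications are again morphisms in $\Sch_k$ and $\epsilon^*$ commutes with the associated proper-base-change and lower-shriek comparison maps, exactly as in \eqref{eqn:Gysin-perf-4-1}), so the resulting square \eqref{eqn:Gysin-Nis-etale-2} is independent of that choice. Thus the main obstacle is not an obstacle at all: once \lemref{lem:Gysin-Nis-etale} is in hand and one unwinds the construction of $\iota_*$ from the proof of \lemref{lem:Gysin-CS} on both the motivic and {\'e}tale sides, the corollary is immediate.
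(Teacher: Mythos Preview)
Your proposal is correct and follows essentially the same approach as the paper: factor the Gysin map into the refined Gysin $H^i(Z,\Lambda(j)) \to H^{2n+i}_Z(X,\Lambda(n+j))$ (handled by \lemref{lem:Gysin-Nis-etale}) followed by the forget-support-to-compact-support map of \lemref{lem:Gysin-CS}, and observe that the latter commutes with $\epsilon^*$ by naturality of the realization with respect to the six operations. The paper's proof is more terse---it simply writes down the intermediate square \eqref{eqn:Gysin-Nis-etale-3} and notes that its right vertical arrow is obtained by applying $\epsilon^*_X$ to \eqref{eqn:Gysin-CS-2}---but the substance is identical to what you have.
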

\begin{proof}
This follows from \lemref{lem:Gysin-Nis-etale} because
  ~\eqref{eqn:Gysin-Nis-etale-2} is the composition of ~\eqref{eqn:Gysin-Nis-etale-00}
  and
  \begin{equation}\label{eqn:Gysin-Nis-etale-3}
    \xymatrix@C1pc{
      H^{2n+i}_Z(X, \Lambda(n+j)) \ar[r]^-{\epsilon^*_X} \ar[d]
      & H^{2n+i}_{\et, Z}(X, \Lambda(n+j)) \ar[d] \\
       H^{2n+i}_c(X, \Lambda(n+j)) \ar[r]^-{\epsilon^*_X} &
       H^{2n+i}_{\et, c}(X, \Lambda(n+j)),}
     \end{equation}
  where the vertical arrows are the forget support maps
  (cf. Lemmas~\ref{lem:SH-DM-Maps} and ~\ref{lem:Gysin-CS}).
     But this diagram commutes by the naturality of $\epsilon^*_X$ because the
     right vertical arrow is obtained from the left vertical arrow by applying
     $\epsilon^*_X$ to ~\eqref{eqn:Gysin-CS-2}.
\end{proof}

\subsection{Saito-Tate duality in {\'e}tale cohomology}\label{sec:SD}
We shall now assume that $k$ is a local field of characteristic exponent $p \ge 1$.
Let $m \in k^\times$ and $\Lambda = {\Z}/m$. We consider the commutative diagram
\begin{equation}\label{eqn:SD-1}
  \xymatrix@C1.5pc{
    Z \ar[r]^-{g'} \ar[d]_-{\iota} \ar[dr]^-{g} & \Spec(k') \ar[d]^-{\pi} \\
    X \ar[r]^-{f} & \Spec(k).}
\end{equation} 
Here, $X \in \Sm_k$ is an integral scheme of
dimension $d$, the left vertical arrow is a closed immersion of codimension $n$,
the top horizontal arrow is smooth and projective with $Z$ integral, and the right
vertical arrow is induced by
a finite field extension ${k'}/k$. In particular, $Z$ is a projective and regular
$k$-scheme. We let $e = d-n$.

In the above setting, there are canonical maps
 \begin{equation}\label{eqn:SD-0}
 f_! \Lambda_X \cong f_! f^* \Lambda_k \cong f_! f^!\Lambda_k(-d)[-2d]
   \xrightarrow{\alpha_{X/k}} \Lambda_k(-d)[-2d]
 \end{equation}
 in $\dm_\et(k, \Lambda)$, where the middle isomorphism is a consequence of the well
 known relation between
 $f^!$ and $f^*$ for smooth morphisms and $\alpha_{X/k}$ is the counit of adjunction. 
 The composite $f_* \colon f_! \Lambda_X \to  \Lambda_k(-d)[-2d]$ 
 is the Gabber-Riou Gysin map \cite{Riou}.
 We similarly have $g'_* \colon  g'_! \Lambda_Z \to \Lambda_{k'}(-e)[-2e]$.

We have maps $\iota_! \Lambda_Z \xrightarrow{\cong}
 \iota_! \iota^! \Lambda_X(n)[2n] \xrightarrow{\alpha_{X/Z}}
 \Lambda_X(n)[2n]$, where the first map is Gabber's purity isomorphism
 \cite{Fujiwara} and $\alpha_{X/Z}$ is the counit of adjunction. 
 The composite map $\iota_* \colon g_* \Lambda_Z \to f_! \Lambda_X$
 induces the Gysin homomorphism $\iota_* \colon 
 H^{i}_{\et}(Z, \Lambda(j)) \to H^{2n+i}_{\et, c}(X, \Lambda(n+j))$
 (cf. \corref{cor:Gysin-Nis-etale-1}).
 The finite map $\pi \colon \Spec(k') \to \Spec(k)$ induces the
 counit of adjunction $\pi_* \pi^* \Lambda_{k} \to \Lambda_k$ which
 yields the Gysin map $\pi_* \colon \pi_! \Lambda_{k'} \to \Lambda_k$.
 Let $g_* \colon g_! \Lambda_Z \to \Lambda_{k}(-e)[-2e]$ denote the
 Gabber-Riou Gysin homomorphism associated to the projective morphism
 $g: Z \to \Spec(k)$ between regular $k$-schemes.

\begin{lem}\label{lem:Tr-Gysin-commute}
  The diagram
\begin{equation}\label{eqn:SD-2}
  \xymatrix@C1pc{
    g_! \Lambda_Z(e+1)[2e+2] \ar[r]^-{\cong}  \ar[d]_-{\pi_! \circ \iota_*} 
    \ar@/^2pc/[rr]^-{\pi_! \circ g'_*} \ar[drr]^-{g_*} & \pi_! g'_! \Lambda_Z(e+1)[2e+2] 
\ar[r] & \pi_! \Lambda_{k'}(1)[2] \ar[d]^-{\pi_*}   \\
f_! \Lambda_X(d+1)[2d+2] \ar[rr]_-{f_*} & & \Lambda_k(1)[2]}
\end{equation}
is commutative.
\end{lem}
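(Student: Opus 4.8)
\textbf{Proof plan for Lemma~\ref{lem:Tr-Gysin-commute}.}
The statement decomposes into two independent commutativities, which I would treat separately. The upper triangle, asserting $\pi_! \circ g'_* = g_*$ after identifying $g_! \cong \pi_! g'_!$, is exactly the functoriality of the Gabber-Riou Gysin map for a composition of projective morphisms $Z \xrightarrow{g'} \Spec(k') \xrightarrow{\pi} \Spec(k)$ between regular schemes. This is part of the package established in \cite{Riou} (and in the motivic refinement \cite{DJK}); the plan is simply to invoke that functoriality, being careful that here the ``smooth'' component $g'$ and the ``finite'' component $\pi$ are both instances of the general projective Gysin, so that $g_* = (\pi g')_* = \pi_* \circ g'_*$ holds at the level of maps of motives, not just after applying $\Hom(\Lambda_k, -)$. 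The canonical isomorphism $g_! \cong \pi_! g'_!$ is the base-change/composition isomorphism for lower-shriek, which is built into the six-functor formalism \cite[A.5]{CD-Springer}.

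The substantive content is the outer square: that $f_* \circ (\pi_! \circ \iota_*) = \pi_* \circ (\pi_! g'_*)$, or more precisely that the composite $g_! \Lambda_Z(d+1)[2d+2] \xrightarrow{\pi_! \iota_*} f_! \Lambda_X(d+1)[2d+2] \xrightarrow{f_*} \Lambda_k(1)[2]$ equals $g_!\Lambda_Z(d+1)[2d+2] \xrightarrow{g_*} \Lambda_k(1)[2]$ (using $e+1 = d+1-n$ and absorbing the Tate twist $\iota_*$ introduces). This again is the compatibility of Gysin maps with composition, now for $Z \xrightarrow{\iota} X \xrightarrow{f} \Spec(k)$: the closed immersion $\iota$ is regular of codimension $n$ (since $X$ is regular), $f$ is smooth, and their composite $g = f \circ \iota$ is projective between regular $k$-schemes. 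So the plan is to apply the same functoriality principle from \cite{Riou}, \cite{DJK}: $(f \circ \iota)_* = f_* \circ \iota_*$ as morphisms $g_! \Lambda_Z(n+e)[2n+2e] \to \Lambda_k(-?)$ suitably twisted. Concretely I would unwind $\iota_*$ via the purity isomorphism $\iota_!\Lambda_Z \cong \iota_!\iota^!\Lambda_X(n)[2n]$ composed with the counit $\alpha_{X/Z}$, unwind $f_*$ via $f_!\Lambda_X \cong f_!f^!\Lambda_k(-d)[-2d]$ composed with $\alpha_{X/k}$, and unwind $g_*$ via the purity/duality isomorphisms for the projective morphism $g$; then the identity reduces to the transitivity of the exceptional-inverse-image functors $g^! \cong \iota^! f^!$ together with the triangle identities for the adjunctions $(\iota_!,\iota^!)$, $(f_!,f^!)$, $(g_!,g^!)$.

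I would assemble the full square by pasting: the left vertical $\pi_! \iota_*$ followed by $f_*$ gives one route, and the top-then-right route $\pi_! g'_*$ followed by $\pi_*$ gives the other; the diagonal $g_*$ mediates between them via the two triangles. The main obstacle I anticipate is purely bookkeeping: tracking the Tate twists and shifts consistently (the codimension-$n$ closed immersion contributes $(n)[2n]$, the $d$-dimensional smooth $X$ contributes $(-d)[-2d]$, the $e$-dimensional smooth $Z$ over $k'$ contributes $(-e)[-2e]$, with $d = n + e$), and making sure the base-change isomorphisms $g_! \cong \pi_! g'_!$ and the composition isomorphisms for $(-)^!$ are the same ones used implicitly in \cite{Riou} when defining the Gysin maps, so that the functoriality statement applies verbatim. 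There is no genuinely new geometric input here; the lemma is a compatibility statement and the entire proof is an exercise in the six-functor formalism plus the cited Gysin-map functoriality, so I would keep the write-up short and refer to \cite{Riou} and \cite{DJK} for the key functoriality of projective Gysin morphisms.
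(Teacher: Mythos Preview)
Your proposal is correct and takes essentially the same approach as the paper: both triangles reduce to the functoriality of the Gabber--Riou Gysin morphism under composition of projective morphisms between regular schemes, and the paper simply cites \cite[Thm.~2.5.12]{Riou} (and \cite[\S~3]{Deglise-Oriented}) for the identities $\pi_* \circ g'_* = g_*$ and $f_* \circ \iota_* = g_*$. Your unpacking via purity isomorphisms and adjunction counits is the content behind that citation, but the paper does not spell it out.
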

\begin{proof}
  The proof of the lemma is part of the construction of the Gysin homomorphism
  $g_*$. More precisely, \cite[Thm.~2.5.12]{Riou}
  (see also \cite[\S~3]{Deglise-Oriented}) shows that
  $\pi_* \circ g'_* = g_*$ and $f_* \circ \iota_* = g_*$.
\end{proof}

To translate ~\eqref{eqn:SD-2} in the language of cohomology groups, we note that 
  \[
    \begin{array}{lll}
      \Hom_{\dm_\et(k, \Lambda)}(\Lambda_k, \pi_! g'_! \Lambda_Z(e+1)[2e+2]) & \cong &
 \Hom_{\dm_\et(k, \Lambda)}(\Lambda_k, \pi_* g'_! \Lambda_Z(e+1)[2e+2]) \\    
& \cong & \Hom_{\dm_\et(k', \Lambda)}(\pi^* \Lambda_k, g'_! \Lambda_Z(e+1)[2e+2]) \\ 
 & \cong & \Hom_{\dm_\et(k', \Lambda)}(\Lambda_{k'}, g'_! \Lambda_Z(e+1)[2e+2]).\\
    \end{array}
  \]
  Similarly, we have
  \[
    \Hom_{\dm_\et(k, \Lambda)}(\Lambda_{k}, \pi_! \Lambda_{k'}(1)[2]) \cong
    \Hom_{\dm_\et(k', \Lambda)}(\Lambda_{k'}, \Lambda_{k'}(1)[2]) \cong
    H^2_\et(k', \Lambda(1)).
    \]
    In particular, the counit of adjunction $\pi_* \Lambda_{k'} \to \Lambda_k$
    induces $\pi_* \colon H^i_\et(k', \Lambda(j)) \to
H^i_\et(k, \Lambda(j))$.
Hence, by considering the morphisms in $\dm_\et(k, \Lambda)$,
induced by ~\eqref{eqn:SD-2}, we get:

\begin{lem}\label{lem:Gysin-Trace}
There is a commutative diagram
  \begin{equation}\label{eqn:SD-3}
    \xymatrix@C1pc{
      H^{2e+i}_{\et}(Z, \Lambda(e+j)) \ar[r]^-{g'_*} \ar[d]_-{\iota_*} &
      H^i_\et(k', \Lambda(j)) \ar[d]^-{\pi_*} \\
      H^{2d+i}_{\et,c}(X, \Lambda(d+j)) \ar[r]^-{f_*} & H^i_\et(k, \Lambda(j)).}
  \end{equation}
\end{lem}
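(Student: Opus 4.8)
\textbf{Proof plan for Lemma~\ref{lem:Gysin-Trace}.} The statement is purely a translation of the commutative square~\eqref{eqn:SD-2} of morphisms in $\dm_\et(k, \Lambda)$ into a commutative square of abelian groups, so the plan is to apply the functor $\Hom_{\dm_\et(k, \Lambda)}(\Lambda_k(-j)[-i], -)$ (equivalently, $\Hom_{\dm_\et(k, \Lambda)}(\Lambda_k, (-)(j)[i])$ after a Tate twist and shift) to the diagram~\eqref{eqn:SD-2} and then identify each of the four resulting $\Hom$-groups with the cohomology groups appearing in~\eqref{eqn:SD-3}, and each of the four arrows with the named maps $g'_*$, $\iota_*$, $\pi_*$, $f_*$. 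First I would recall that $\Hom_{\dm_\et(k, \Lambda)}(\Lambda_k, f_! \Lambda_X(d+j)[2d+i]) = H^{2d+i}_{\et,c}(X, \Lambda(d+j))$ and $\Hom_{\dm_\et(k, \Lambda)}(\Lambda_k, \Lambda_k(j)[i]) = H^i_\et(k, \Lambda(j))$ by definition~\eqref{eqn:RE-1}, so the bottom edge $f_!\Lambda_X(d+1)[2d+2] \xrightarrow{f_*} \Lambda_k(1)[2]$ of~\eqref{eqn:SD-2}, after the appropriate twist/shift and application of $\Hom_{\dm_\et(k,\Lambda)}(\Lambda_k, -)$, becomes exactly $f_* \colon H^{2d+i}_{\et,c}(X, \Lambda(d+j)) \to H^i_\et(k, \Lambda(j))$.

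The only slightly non-formal identifications are for the two groups over $k'$. Using that $\pi$ is finite (hence proper), $\pi_! \cong \pi_*$, together with the adjunction $(\pi^*, \pi_*)$ and the canonical isomorphism $\pi^* \Lambda_k \xrightarrow{\cong} \Lambda_{k'}$, one gets a chain of isomorphisms
\[
\Hom_{\dm_\et(k, \Lambda)}(\Lambda_k, \pi_! g'_! \Lambda_Z(e+j)[2e+i]) \cong \Hom_{\dm_\et(k', \Lambda)}(\Lambda_{k'}, g'_! \Lambda_Z(e+j)[2e+i]) = H^{2e+i}_{\et}(Z, \Lambda(e+j)),
\]
where the last equality is~\eqref{eqn:RE-1} applied over the base $k'$ to the smooth projective morphism $g' \colon Z \to \Spec(k')$ (here one uses $g'^! \Lambda_{k'} \cong \Lambda_Z(e)[2e]$ via the smoothness of $g'$, which is exactly what makes $g'_!$ compute cohomology with compact support and, since $Z$ is projective over $k'$, ordinary cohomology). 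The same argument gives $\Hom_{\dm_\et(k, \Lambda)}(\Lambda_k, \pi_! \Lambda_{k'}(j)[i]) \cong H^i_\et(k', \Lambda(j))$, and under this identification the map induced by the counit $\pi_! \Lambda_{k'} = \pi_* \Lambda_{k'} \to \Lambda_k$ is precisely the transfer $\pi_* \colon H^i_\et(k', \Lambda(j)) \to H^i_\et(k, \Lambda(j))$ by the very definition of $\pi_*$ recorded just before the statement. Likewise, the arrow $\iota_*$ in~\eqref{eqn:SD-3} is by construction the map on cohomology induced by $\iota_* \colon g_! \Lambda_Z \to f_! \Lambda_X$ (see Corollary~\ref{cor:Gysin-Nis-etale-1}), which is the left vertical edge of~\eqref{eqn:SD-2}, and $g'_*$ is induced by the Gabber--Riou Gysin morphism $g'_* \colon g'_! \Lambda_Z \to \Lambda_{k'}(-e)[-2e]$, which is the composite top edge of~\eqref{eqn:SD-2}.

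Having matched all four objects and all four arrows, the commutativity of~\eqref{eqn:SD-3} follows immediately from the commutativity of~\eqref{eqn:SD-2}, which is Lemma~\ref{lem:Tr-Gysin-commute} (itself a consequence of the composition law for Gysin morphisms, \cite[Thm.~2.5.12]{Riou}). The main point to be careful about — the only place where a reader might want details — is checking that the sequence of adjunction isomorphisms used to rewrite the $k'$-based $\Hom$-groups is compatible with the horizontal composites in~\eqref{eqn:SD-2}, i.e.\ that applying $\Hom_{\dm_\et(k,\Lambda)}(\Lambda_k, -)$ to the composite $g_!\Lambda_Z(e+1)[2e+2] \xrightarrow{\cong} \pi_! g'_!\Lambda_Z(e+1)[2e+2] \to \pi_!\Lambda_{k'}(1)[2]$ genuinely produces $g'_* \colon H^{2e+i}_\et(Z,\Lambda(e+j)) \to H^i_\et(k',\Lambda(j))$ and not some twist of it; this is a routine but slightly tedious naturality verification for the counit/unit of the $(\pi^*,\pi_*)$-adjunction and the smooth base change isomorphism, and I expect it to be the only real bookkeeping obstacle. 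No genuinely hard input is needed beyond what is already cited.
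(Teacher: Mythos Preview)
Your proposal is correct and follows essentially the same approach as the paper: the paper places the identifications of the $\Hom$-groups via $\pi_! \cong \pi_*$ and the $(\pi^*,\pi_*)$-adjunction in the paragraph immediately preceding the lemma, and then simply says that applying $\Hom_{\dm_\et(k,\Lambda)}(\Lambda_k,-)$ to the commutative diagram~\eqref{eqn:SD-2} (which is \lemref{lem:Tr-Gysin-commute}) yields~\eqref{eqn:SD-3}. Your write-up is somewhat more explicit about the naturality bookkeeping, but there is no substantive difference in strategy.
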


\vskip .2cm

Let $\ff$ denote the residue field of $k$. Then there is a Witt-residue map
${\rm Inv}_k \colon {\rm Br}(k) \to H^1_\et(\ff, {\Q}/{\Z})$, which is an isomorphism
(cf. \cite[Defn.~1.4.11]{CTS}).
In particular, the composite map
$H^2_\et(k, \Lambda(1)) \xrightarrow{\cong} {}_m {\rm Br}(k)
\xrightarrow{{\rm Inv}_k} H^1_\et(\ff, \Lambda)$
is an isomorphism. Composing this with the canonical composite isomorphism
\begin{equation}\label{eqn:Tr-fin}
  \tr_\ff \colon H^1_\et(\ff, \Lambda) \cong \Hom_\Tab(G_\ff, \Lambda)
  \xrightarrow{\cong} \Lambda,
  \end{equation}
  we get an isomorphism $\tr_k \colon H^2_\et(k, \Lambda(1)) \xrightarrow{\cong}
  \Lambda$.
This map was used by Tate \cite{Tate} as the trace homomorphism for his
duality theorem for {\'e}tale cohomology over local fields.

\begin{lem}\label{lem:Trace-PF}
  Let ${k'}/k$ be a finite extension of local fields and let $\pi \colon
  \Spec(k') \to \Spec(k)$ be the projection map. Then we have a commutative diagram
  \begin{equation}\label{eqn:Trace-PF-0}
    \xymatrix@C1.5pc{
      H^2_\et(k', \Lambda(1)) \ar[r]^-{\pi_*} \ar@/_2pc/[rr]^-{\tr_{k'}} & 
      H^2_\et(k, \Lambda(1)) \ar[r]^-{\tr_{k}} & \Lambda.}
  \end{equation}
  \end{lem}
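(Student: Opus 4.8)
\textbf{Proof plan for Lemma~\ref{lem:Trace-PF}.}
The plan is to trace both $\tr_{k'}$ and $\tr_k$ through their construction as composites of three maps: the Kummer isomorphism $H^2_\et(-, \Lambda(1)) \xrightarrow{\cong} {}_m\Br(-)$, the Witt-residue (invariant) map $\Br(-) \to H^1_\et(\ff_{(-)}, {\Q}/{\Z})$, and the trace isomorphism $\tr_{\ff_{(-)}}\colon H^1_\et(\ff_{(-)}, \Lambda) \xrightarrow{\cong} \Lambda$ of~\eqref{eqn:Tr-fin}. So the diagram decomposes into three stacked squares, and it suffices to check compatibility of $\pi_*$ with each of the three horizontal maps. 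First I would reduce the pushforward $\pi_*$ on $H^2_\et(k', \Lambda(1))$ to the corestriction (transfer) map on Galois cohomology: this identification is standard and follows from the definition of $\pi_*$ as the morphism induced by the counit $\pi_*\pi^*\Lambda_k \to \Lambda_k$, together with \lemref{lem:EFT} identifying these motivic {\'e}tale cohomology groups with classical {\'e}tale (hence Galois) cohomology. Under this identification, the Kummer square commutes because corestriction is natural for the Kummer sequence, and $\pi_*$ on ${}_m\Br$ is the restriction-corestriction map on Brauer groups.

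The core of the argument is then the middle square: the invariant map $\inv_{k'}\colon \Br(k') \to {\Q}/{\Z}$ composed with corestriction $\Br(k') \to \Br(k)$ equals $\inv_k$ (this is precisely the functoriality of the local invariant under finite extensions, a cornerstone of local class field theory; see \cite[\S~1.4]{CTS} or any standard reference). Equivalently, at the level of residue fields, corestriction on $\Br$ corresponds under the Witt-residue isomorphism to the map $H^1_\et(\ff', {\Q}/{\Z}) \to H^1_\et(\ff, {\Q}/{\Z})$ which is $f$ times the norm/corestriction, where $f = [\ff':\ff]$ is the residue degree; the factor $f$ gets absorbed because the ramification contributes $e$ and $ef = [k':k]$ matches the degree shift in the invariant. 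For the bottom square, I would use that $\tr_{\ff'}\colon H^1_\et(\ff', \Lambda) \cong \Hom_\Tab(G_{\ff'}, \Lambda) \to \Lambda$ is evaluation against the Frobenius, and corestriction $G_{\ff}^\vee \to \ldots$ on finite fields is compatible with this: the composite $\tr_{\ff} \circ (\text{cor})$ on $H^1_\et(\ff', \Lambda)$ equals $\tr_{\ff'}$ because $\mathrm{Frob}_{\ff}$ restricts to $\mathrm{Frob}_{\ff'}^{1/f}$ and corestriction over cyclic groups multiplies by the index. Assembling the three commuting squares with their matching integer factors yields the claim.

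The main obstacle I anticipate is bookkeeping the integer normalization factors ($e$, $f$, and $[k':k]$) consistently across the three layers, since the invariant map is normalized so that $\inv_{k'} = [k':k]\cdot(\text{restriction of }\inv_k)$ on the image but $\inv_k \circ \mathrm{cor} = \inv_{k'}$ on all of $\Br(k')$ — the two statements are consistent but it is easy to introduce a spurious factor when passing to residue fields, where only the factor $f$ appears. I would handle this by never separating the Witt residue from the Kummer step: instead, invoke the single clean statement $\inv_k \circ \mathrm{cor}_{k'/k} = \inv_{k'}$ from local class field theory directly, and only afterwards match $\inv$ with the residue-field trace, so that the remaining verification is the purely finite-field statement about corestriction and Frobenius evaluation, which is elementary. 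A secondary, more routine point is confirming that the motivic {\'e}tale pushforward $\pi_*$ defined via the counit of adjunction agrees with the classical Galois corestriction under the identifications of \lemref{lem:EFT}; this is well known but I would spell it out briefly using the functoriality of $\pi_*$ and the projection formula, or simply cite \cite{Riou} for the compatibility of the Gabber--Riou Gysin/pushforward formalism with the classical one on fields.
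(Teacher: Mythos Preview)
Your plan is correct and follows essentially the same route as the paper: factor $\tr_k$ through the invariant map and the residue-field trace, then verify compatibility of $\pi_*$ (which is Galois corestriction) with each piece. The paper cites \cite[Exc.~XII.3.2]{Serre-LF} for the square relating corestriction on $H^2_\et(-,\Lambda(1))$ to the pushforward $\pi'_*$ on $H^1_\et$ of the residue fields via ${\rm Inv}$ (this packages your Kummer and middle squares together and absorbs the $e,f$ bookkeeping you were worried about), and then cites \cite[Cor.~3.2.1]{Kato80} for the remaining residue-field triangle $\tr_{\ff}\circ\pi'_* = \tr_{\ff'}$, which you propose to check directly via the transfer map and evaluation at Frobenius.
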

\begin{proof}
Let $\ff'$ denote the residue field of $k'$ and let $\pi' \colon \Spec(\ff') \to
  \Spec(\ff)$ be the projection. By definition, the composition of the top (resp.
  bottom) row in the diagram   
\begin{equation}\label{eqn:Trace-PF-3}
    \xymatrix@C1.5pc{
H^2_\et(k', \Lambda(1)) \ar[r]^-{{\rm Inv}_{k'}} \ar[d]_-{\pi_*} &
H^1_\et(\ff', \Lambda) \ar[d]^-{\pi'_*}  \ar[r]^-{\tr_{\ff'}}   & \Lambda \ar@{=}[d] \\
H^2_\et(k, \Lambda(1)) \ar[r]^-{{\rm Inv}_k} &
H^1_\et(\ff, \Lambda)  \ar[r]^-{\tr_{\ff}}  & \Lambda}
\end{equation}
is $\tr_{k'}$ (resp. $\tr_{k}$). 
Moreover, the left square commutes by \cite[Exc.~XII.3.2]{Serre-LF}.
It suffices therefore to show that the right square is commutative.

For this, we recall that $\tr_{\ff}$ is the same as the composite
  isomorphism $H^1_\et(\ff, \Lambda) \cong H^1(G_\ff, H^0_\et(\ov{\ff}, \Lambda))
  \cong \Hom_\Tab(G_\ff, \Lambda) \cong \Lambda$, where the second term is the
  (continuous) Galois cohomology and the first isomorphism is the edge map in the
  Hochschild-Serre spectral sequence. The same holds for $\tr_{\ff'}$ too.
  Since $\ov{\ff'} = \ov{\ff}$, it suffices to show that  
   \begin{equation}\label{eqn:Trace-PF-5}
    \xymatrix@C1.5pc{
     \Hom_\Tab(G_{\ff'}, \Lambda) \ar[d]_-{\pi'_*} \ar[r]^-{\cong}& \Lambda
      \ar@{=}[d] \\
      \Hom_\Tab(G_{\ff}, \Lambda) \ar[r]^-{\cong} & \Lambda,} 
  \end{equation}
  commutes. But this follows from \cite[\S~3.2, p.~661, Cor.~1]{Kato80}.
\end{proof}

We now recall the following duality theorem due to Tate \cite{Tate} and
Saito \cite[Lem.~2.9]{Saito-Duality}.
 For $X \in \Sm_k$ integral of dimension $d$, let
 $\tr_{X/k}$ denote the composite
 $H^{2d+2}_{\et, c}(X, \Lambda(d+1)) \xrightarrow{f_*} H^2_\et(k, \Lambda(1))
 \xrightarrow{\tr_k} \Lambda$. We shall call this the trace map for $X$.

\begin{thm}\label{thm:Saito-D}
   Let $X \in \Sm_k$ be an integral scheme of dimension $d$ with structure map
   $f \colon X \to \Spec(k)$. Let $K \in \dr_\et(X, \Lambda)$ be a bounded complex with
   constructible cohomology sheaves. Then the following hold.
   \begin{enumerate}
   \item
$H^i_{\et}(X, K)$ are finite for all $i \in \Z$.
\item
  The trace map $\tr_{X/k} \colon H^{2d+2}_{\et, c}(X, \Lambda(d+1)) \to \Lambda$
  is an isomorphism.
\item
 We have a perfect paring of finite abelian groups
  \[
    H^i_{\et, c}(X, K) \times H^{2d+2-i}_{\et}(X, \un{Hom}_X(K, \Lambda(d+1))) \to
    H^{2d+2}_{\et,c}(X, \Lambda(d+1)) \xrightarrow{\tr_{X/k}} \Lambda.
  \]
\end{enumerate}
 \end{thm}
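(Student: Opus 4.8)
\textbf{Proof plan for Theorem~\ref{thm:Saito-D}.} The statement is essentially a restatement of the duality theorems of Tate \cite{Tate} and Saito \cite{Saito-Duality}, so the proof will be a matter of organizing those references and verifying that the trace map $\tr_{X/k}$ built here from $f_*$ and $\tr_k$ coincides with the one used in those sources. First I would invoke \lemref{lem:EFT} to replace all the motivic-$\et$ cohomology groups $H^i_\et(X,-)$ and $H^i_{\et,c}(X,-)$ by the classical $\et$-cohomology groups $H^i(X_\et,-)$ with coefficients in the sheaf $\Lambda'(j)$, and likewise for $\sHom_X(K,\Lambda(d+1-i))$; after this reduction everything is phrased in $\dr_\et(X,\Lambda)$ with its six operations, where the classical literature applies directly.

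For part (1), the finiteness of $H^i(X_\et,K)$ for a bounded constructible complex $K$ over a variety over a local field is precisely \cite[Thm.]{Saito-Duality} (generalizing the finiteness part of Tate's theorem \cite{Tate} for the constant sheaf); I would cite it after noting that $X$ is of finite type over $k$. For part (2), the trace isomorphism, I would argue in two steps. The composite $f_* \colon H^{2d+2}_{\et,c}(X,\Lambda(d+1)) \to H^2_\et(k,\Lambda(1))$ is the Gabber--Riou Gysin map of \S\ref{sec:SD}, which for a smooth $X$ of pure dimension $d$ is built from the isomorphism $f^!\Lambda_k \cong \Lambda_X(d)[2d]$ and the counit $f_!f^!\Lambda_k \to \Lambda_k$. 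Over a separably closed base this counit induces an isomorphism on $H^{2d}_c$ with $\Lambda(d)$-coefficients by Poincar\'e duality, and the present situation is the relative version of that, which is exactly the content of the trace isomorphism in Tate--Saito duality; alternatively, one can reduce to it by the Hochschild--Serre spectral sequence for $\ov{k}/k$ and the known two-dimensional cohomological dimension of $k$. Then $\tr_k \colon H^2_\et(k,\Lambda(1)) \xrightarrow{\cong} \Lambda$ is the composite of the Kummer identification with ${}_m\Br(k)$, the Witt-residue isomorphism ${\rm Inv}_k$, and $\tr_\ff$ from \eqref{eqn:Tr-fin}, each of which is an isomorphism by \cite[Defn.~1.4.11]{CTS} and the structure of the Brauer group of a local field; composing gives (2).

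For part (3), I would take the duality pairing to be the cup product
\[
H^i_{\et,c}(X,K)\times H^{2d+2-i}_\et(X,\sHom_X(K,\Lambda(d+1))) \to H^{2d+2}_{\et,c}(X,\Lambda(d+1)) \xrightarrow{\tr_{X/k}} \Lambda,
\]
and then appeal to Saito's duality theorem \cite{Saito-Duality}, which asserts exactly that this is a perfect pairing of finite groups once (1) and (2) are known; the finiteness of both sides comes from (1) applied to $K$ and to $\sHom_X(K,\Lambda(d+1))$, the latter being again bounded with constructible cohomology. The one point requiring care, and the likely main obstacle, is matching conventions: one must check that the trace map $\tr_{X/k}$ as assembled in this paper (Gabber--Riou Gysin followed by $\tr_k = \tr_\ff \circ {\rm Inv}_k \circ (\text{Kummer})$) is literally the trace map appearing in \cite{Tate} and \cite{Saito-Duality}, including sign and normalization, so that the perfectness statement transports. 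This is a compatibility of two standard constructions of the same map; I would verify it by reducing via \lemref{lem:Gysin-Trace} and \lemref{lem:Trace-PF} to the case $X = \Spec(k')$ for a finite extension $k'/k$, where the assertion becomes the classical agreement of the Gysin/corestriction trace with the invariant map, recorded in \cite[Exc.~XII.3.2]{Serre-LF} and \cite[Cor.~3.2.1]{Kato80}. Given that compatibility, parts (1)--(3) follow immediately from the cited duality theorems.
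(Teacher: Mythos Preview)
Your identification is correct: the paper does not prove this theorem at all, but simply \emph{recalls} it as a known result, introducing it with ``We now recall the following duality theorem due to Tate \cite{Tate} and Saito \cite{Saito-Duality}'' and then stating it without proof. So your proposal is not so much an alternative proof as a more explicit unpacking of what the citation entails.

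Your additional discussion of trace-map compatibility (matching the paper's $\tr_{X/k} = \tr_k \circ f_*$ with the trace used in \cite{Saito-Duality}) is a reasonable point of care, but the paper does not carry out any such verification; it treats the theorem as a black box from the literature. The compatibility lemmas you mention (\lemref{lem:Gysin-Trace}, \lemref{lem:Trace-PF}) are proved in the paper for a different purpose, namely to establish \lemref{lem:Gysin-SD} on the commutativity of Gysin maps with the duality isomorphism, not to re-derive the duality theorem itself. One small slip: in the displayed pairing for part (3) you wrote $\Lambda(d+1-i)$ in the statement but $\Lambda(d+1)$ in your proof plan; the latter is what is actually intended (the twist does not depend on $i$), and the statement in the paper appears to contain a typo on this point.
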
 

 We let $\omega_X \colon H^i_\et(X, \Lambda(j))^\vee \xrightarrow{\cong}
 H^{2d+2-i}_{\et, c}(X, \Lambda(d+1-j))$ be the Saito-Tate duality map obtained by
 taking $K = \Lambda(d+1-j)$
 in \thmref{thm:Saito-D}(3). 
 
Suppose now that $\iota \colon Z \inj X$ is as in~\eqref{eqn:SD-1}.
We then get a diagram
 \begin{equation}\label{eqn:Gysin-SD*}
   \xymatrix@C1pc{
     H^1_\et(Z, \Lambda)^\vee \ar[r]^-{(\iota^*)^\vee} \ar[d]_-{\omega_{Z}} &
     H^1_\et(X, \Lambda)^\vee \ar[d]^-{\omega_{X}} \\
     H^{2e+1}_\et(Z, \Lambda(e+1)) \ar[r]^-{\iota_*} &
     H^{2d+1}_{\et,c}(X, \Lambda(d+1)).}
 \end{equation}
 Here, $(\iota^*)^\vee$ is the dual of the pull-back
 $\iota^* \colon H^1_\et(X, \Lambda) \to H^1_\et(Z, \Lambda)$, the vertical arrows
 are the unique maps induced by the Saito-Tate duality for $Z$ and $X$
 (cf. \thmref{thm:Saito-D}) and $\iota_*$ is the Gysin homomorphism.

\begin{lem}\label{lem:Gysin-SD}
   The diagram ~\eqref{eqn:Gysin-SD*} is commutative.
 \end{lem}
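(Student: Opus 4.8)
The plan is to reduce the statement to the compatibility of Gysin morphisms with the trace maps, which was recorded in Lemma~\ref{lem:Gysin-Trace} and Lemma~\ref{lem:Trace-PF}. The diagram in question is a duality-theoretic incarnation of the interplay between pull-back and push-forward, so I will first pass to the explicit pairings defining $\omega_Z$ and $\omega_X$ in \thmref{thm:Saito-D}. Concretely, for a character $\chi \in H^1_\et(X,\Lambda)^\vee$ (i.e. a $\Lambda$-linear functional on $H^1_\et(X,\Lambda)$, or equivalently via the cup product pairing an element of $H^{2d+1}_{\et,c}(X,\Lambda(d+1))$), the element $\omega_X(\chi)$ is characterized by the requirement that $\tr_{X/k}(\xi \cup \omega_X(\chi)) = \chi(\xi)$ for all $\xi \in H^1_\et(X,\Lambda)$. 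Likewise $\omega_Z(\psi)$ is characterized by $\tr_{Z/k}(\zeta \cup \omega_Z(\psi)) = \psi(\zeta)$ for $\zeta \in H^1_\et(Z,\Lambda(1))$; here I use that $Z$ is projective and regular over $k$ of dimension $e$, so $\thmref{thm:Saito-D}$ applies to $Z$ with trace map $\tr_{Z/k} = \tr_k \circ g_* \colon H^{2e+2}_{\et}(Z,\Lambda(e+1)) \to \Lambda$, where $g = f \circ \iota$ and $g_*$ is the Gabber-Riou Gysin map of the projective morphism $g$.

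The computation then proceeds as follows. Fix $\chi \in H^1_\et(Z,\Lambda)^\vee$; I must show $\omega_X((\iota^*)^\vee(\chi)) = \iota_*(\omega_Z(\chi))$, and by the uniqueness clause of the duality it suffices to check that both sides pair correctly with every $\xi \in H^1_\et(X,\Lambda)$ under $\tr_{X/k}$. For the right-hand side I compute $\tr_{X/k}\bigl(\xi \cup \iota_*(\omega_Z(\chi))\bigr)$. The projection formula for the Gysin map (a standard consequence of the construction of $\iota_*$ via the fundamental class, together with the fact that $\epsilon^*$ is monoidal, as used already in \S\ref{sec:GEC}) gives $\xi \cup \iota_*(\omega_Z(\chi)) = \iota_*\bigl(\iota^*(\xi) \cup \omega_Z(\chi)\bigr)$ inside $H^{2d+2}_{\et,c}(X,\Lambda(d+1))$. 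Applying $\tr_{X/k} = \tr_k \circ f_*$ and using Lemma~\ref{lem:Gysin-Trace} (with $i=2$, $j=1$, and $k' = k$ since $Z$ is a $k$-scheme here, so that $f_* \circ \iota_* = \pi_* \circ g'_* = g_*$ by \lemref{lem:Tr-Gysin-commute}), this equals $\tr_k\bigl(g_*(\iota^*(\xi) \cup \omega_Z(\chi))\bigr) = \tr_{Z/k}\bigl(\iota^*(\xi) \cup \omega_Z(\chi)\bigr)$, which by the defining property of $\omega_Z$ is $\chi(\iota^*(\xi))$. On the other hand, $\tr_{X/k}\bigl(\xi \cup \omega_X((\iota^*)^\vee(\chi))\bigr) = (\iota^*)^\vee(\chi)(\xi) = \chi(\iota^*(\xi))$ by definition of $\omega_X$ and of the dual map $(\iota^*)^\vee$. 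The two expressions agree, and since $\xi$ was arbitrary and the duality pairing for $X$ is perfect (\thmref{thm:Saito-D}(3)), we conclude $\omega_X((\iota^*)^\vee(\chi)) = \iota_*(\omega_Z(\chi))$, as desired.

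\textbf{Main obstacle.} The routine but genuinely delicate point is the projection formula $\xi \cup \iota_*(\beta) = \iota_*(\iota^*(\xi) \cup \beta)$ for the refined Gysin homomorphism in \'etale cohomology with compact support, together with the bookkeeping needed to identify $\tr_{Z/k}$ (defined intrinsically via the projective morphism $g$) with $\tr_k \circ f_* \circ \iota_*$ restricted to the image of $\iota_*$ — this is exactly where \lemref{lem:Tr-Gysin-commute} and its cohomological reformulation \lemref{lem:Gysin-Trace} are indispensable. One also needs to be careful that the various identifications $H^1_\et(X,\Lambda)^\vee \cong H^{2d+1}_{\et,c}(X,\Lambda(d+1))$ and $H^1_\et(Z,\Lambda)^\vee \cong H^{2e+1}_\et(Z,\Lambda(e+1))$ furnished by \thmref{thm:Saito-D} are the ones compatible with the chosen cup-product conventions, so that $\omega_X$ and $\omega_Z$ really are characterized by the trace identities written above; once these normalizations are pinned down the argument is formal. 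I expect no serious difficulty beyond this; the proof is a one-paragraph diagram chase once the projection formula and \lemref{lem:Gysin-Trace} are invoked.
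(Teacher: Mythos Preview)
Your approach is essentially the same as the paper's: reduce via the projection formula for $\iota_*$ to a compatibility of trace maps, then invoke the lemmas relating Gysin and trace. There is one genuine slip, however. You write that \thmref{thm:Saito-D} applies to $Z$ as a regular projective $k$-scheme with trace $\tr_{Z/k} = \tr_k \circ g_*$, and in the parenthetical you set ``$k' = k$ since $Z$ is a $k$-scheme here''. But in the setup of~\eqref{eqn:SD-1}, $Z$ is smooth and projective over $k'$, not over $k$; when $k$ is imperfect (as happens for local fields of positive characteristic) $Z$ need not lie in $\Sm_k$ at all, so \thmref{thm:Saito-D} as stated gives the duality isomorphism $\omega_Z$ only over $k'$, characterized by $\tr_{Z/k'} = \tr_{k'} \circ g'_*$. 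The fix is exactly what the paper does: after arriving at $\tr_k(f_* \iota_*(-)) = \tr_k(\pi_* g'_*(-))$ via \lemref{lem:Gysin-Trace}, one applies \lemref{lem:Trace-PF} to get $\tr_k \circ \pi_* = \tr_{k'}$, whence $\tr_k \circ g_* = \tr_{k'} \circ g'_* = \tr_{Z/k'}$, and now the defining property of $\omega_Z$ applies. So you are missing only the citation of \lemref{lem:Trace-PF}; once that is inserted your argument and the paper's coincide.
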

 \begin{proof}
   Since the Gysin map $\iota_*$ is linear with respect to the action of the
   {\'e}tale cohomology ring of $X$ on the {\'e}tale cohomology of $Z$ via $\iota^*$,
   we have a commutative diagram
\begin{equation}\label{eqn:Gysin-SD-0}   
  \xymatrix@C1pc{
    H^1_\et(Z, \Lambda) \times H^{2e+1}_{\et}(Z, \Lambda(e+1)) \ar[r] 
    \ar@<5ex>[d]^-{\iota_*} & H^{2e+2}_{\et}(Z, \Lambda(e+1)) \ar[d]^-{\iota_*} \\
  H^1_\et(X, \Lambda) \times H^{2d+1}_{\et,c}(X, \Lambda(d+1)) \ar[r]    
  \ar@<7ex>[u]^-{\iota^*} &  H^{2d+2}_{\et,c}(X, \Lambda(d+1)).}
\end{equation}

We now note that the map $\Hom(H^1_\et(X, \Lambda), H^{2d+2}_{\et,c}(X, \Lambda(d+1)))
\to H^1_\et(X, \Lambda)^\vee$, obtained by composing with $\tr_{X/k}$, is an isomorphism.
The same is also true for $Z$. Hence, ~\eqref{eqn:Gysin-SD-0} reduces the
proof of the lemma to showing that the diagram
\begin{equation}\label{eqn:Gysin-SD-1}   
  \xymatrix@C1.2pc{
H^{2e+2}_{\et}(Z, \Lambda(e+1)) \ar[r]^-{g'_*} \ar[d]_-{\iota_*} &
      H^2_\et(k', \Lambda(1)) \ar[d]_-{\pi_*} \ar[dr]^-{\tr_{k'}} & \\
      H^{2d+2}_{\et,c}(X, \Lambda(d+1)) \ar[r]^-{f_*} & H^2_\et(k, \Lambda(1))
      \ar[r]^-{\tr_k} & \Lambda}
  \end{equation}
  is commutative. But this follows by Lemmas~\ref{lem:Gysin-Trace} and
  ~\ref{lem:Trace-PF}.  
\end{proof}

\subsection{Comparison of reciprocity and {\'e}tale realization maps}
\label{sec:RERM}
We shall now prove the main result of this section.
Let us recall that $k$ is a local field of characteristic exponent $p \ge 1$ and
$\Lambda = {\Z}/m$, where $m \in k^\times$.
We need the following base case.

\begin{lem}\label{lem:Rec-Real-field}
We have a commutative diagram
\begin{equation}\label{eqn:Rec-Real-field-0}
\xymatrix@C1pc{
  {K^M_1(k)}/m \ar[r]^-{\rho_k} \ar[d]_-{\Psi_k} & H^1_\et(k, \Lambda)^\vee
  \ar[d]^-{\omega_k} \\
  H^1(k, \Lambda(1)) \ar[r]^-{\epsilon^*_k} & H^1_\et(k, \Lambda(1)).}
\end{equation}
\end{lem}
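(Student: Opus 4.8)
The diagram in question identifies, for the base field $k$, the local reciprocity map $\rho_k \colon {K^M_1(k)}/m \to H^1_\et(k,\Lambda)^\vee$ (which is the mod-$m$ reciprocity map of local class field theory, viewed via the isomorphism $G_k/m \cong H^1_\et(k,\Lambda)^\vee$) with the {\'e}tale realization $\epsilon^*_k$ of the motivic class $\Psi_k$, transported through the Saito--Tate duality isomorphism $\omega_k$. Since all four groups here are finite and both vertical arrows are isomorphisms (the left by \lemref{lem:MC-field}, the right by the two-dimensional case, $d=0$, of \thmref{thm:Saito-D}), the statement is a compatibility between explicit classical pairings. The plan is to unwind each arrow to a cup-product pairing on {\'e}tale cohomology and to recognize that all three pairings in play coincide with the one underlying Tate's local duality.

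First I would rewrite $\rho_k$ and $\omega_k$ in terms of the same cup-product pairing. By the Kummer sequence there is a canonical isomorphism ${K^M_1(k)}/m \cong H^1_\et(k,\mu_m) \cong H^1_\et(k,\Lambda(1))$, and under this identification $\Psi_k$ (as defined via $f_i \colon \Spec(k) \to \G_{m,k}$ and \cite[Prop.~11.2.11]{CD-Springer}) is nothing but the Kummer class map; composing with $\epsilon^*_k$ lands this class in $H^1_\et(k,\Lambda(1))$ tautologically, so the lower-left composite $\epsilon^*_k \circ \Psi_k$ is the Kummer isomorphism. On the other side, the classical reciprocity map $\rho_k$ is by definition adjoint to the cup-product pairing $H^1_\et(k,\mu_m) \otimes H^1_\et(k,\Lambda) \to H^2_\et(k,\mu_m) \xrightarrow{{\rm inv}_k} \Lambda$, i.e. $\langle \rho_k(a), \chi\rangle = {\rm inv}_k(a \cup \chi)$ for $a \in {K^M_1(k)}/m$ and $\chi \in H^1_\et(k,\Lambda)$; this is exactly \cite[Thm.~2, p.~304]{Kato-cft-1} / \cite[\S~3.5]{Kato80} in the one-dimensional case, and is already packaged in \propref{prop:Kato-REC}. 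Meanwhile $\omega_k$ is defined as the Saito--Tate duality isomorphism attached to the trace $\tr_k \colon H^2_\et(k,\Lambda(1)) \xrightarrow{\cong} \Lambda$, which by construction (see \S~\ref{sec:SD}, in particular ~\eqref{eqn:Tr-fin}) factors as $H^2_\et(k,\Lambda(1)) \cong {}_m{\rm Br}(k) \xrightarrow{{\rm inv}_k} H^1_\et(\ff,\Lambda) \xrightarrow{\tr_\ff} \Lambda$. Hence $\omega_k$ is adjoint to the very same cup-product pairing, now read as a pairing $H^1_\et(k,\Lambda(1)) \otimes H^1_\et(k,\Lambda) \to H^2_\et(k,\Lambda(1)) \xrightarrow{\tr_k} \Lambda$.

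With both $\rho_k$ and $\omega_k$ expressed as adjoints of the cup-product pairing composed with ${\rm inv}_k$ (resp. $\tr_k$), the commutativity of ~\eqref{eqn:Rec-Real-field-0} reduces to the statement: for $a \in {K^M_1(k)}/m$ and $\chi \in H^1_\et(k,\Lambda)$, one has ${\rm inv}_k(\Psi_k(a) \cup \chi) = \langle \rho_k(a),\chi\rangle$ after the Kummer identification of $\Psi_k(a)$ with the symbol class of $a$. This is precisely the definition of $\rho_k$ in Kato's local class field theory, so the only real content is checking that the various normalizations match: that $\tr_k$ as built in \S~\ref{sec:SD} agrees with Tate's trace map (which is how $\omega_k$ in \thmref{thm:Saito-D} is normalized — this is built into the cited theorems of Tate and Saito), and that the sign/orientation conventions in the motivic definition of $\Psi_k$ via \cite[Prop.~11.2.11]{CD-Springer} match the topological Kummer class (this is the isomorphism $\Lambda_k(1)[1] \cong \G_{m,k}\otimes \Lambda$ already invoked in the proof of \lemref{lem:Gysin-Nis-etale}).

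The main obstacle is bookkeeping rather than conceptual: one must verify that the {\'e}tale realization of the motivic first Chern class / symbol map is literally the Kummer boundary, and that the trace isomorphism used to define $\omega_k$ is the Tate trace, so that the two cup-product pairings in sight are the same pairing on the nose (not merely up to an automorphism of $\Lambda$). For the first point I would cite \cite[Prop.~11.2.11]{CD-Springer} together with \cite[Ex.~10.4.4]{CD-Springer} exactly as in \lemref{lem:Gysin-Nis-etale}, reducing the identification of $\epsilon^*_k\circ\Psi_k$ with the Kummer class to the already-used comparison $H^2_E(X'_\et,\Lambda'(1)) \cong H^1_E(X'_\nis,\sO^\times/m)$. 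For the second I would invoke that $\tr_k$ in ~\eqref{eqn:Tr-fin} is defined via the Witt-residue isomorphism ${\rm Br}(k)\xrightarrow{\cong} H^1_\et(\ff,\Q/\Z)$ composed with $\tr_\ff$, which is exactly Tate's normalization in \cite{Tate}. Once these normalizations are aligned, the diagram commutes by the definition of the local reciprocity pairing, and no further computation is needed.
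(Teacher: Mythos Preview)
Your proposal is correct and follows essentially the same route as the paper's proof: both reduce the commutativity to the identity $\omega_k \circ \rho_k = \epsilon^*_k \circ \Psi_k = {\rm NR}_k$, where ${\rm NR}_k$ is the Kummer/norm-residue isomorphism, by unwinding $\rho_k$ and $\omega_k$ as adjoints of the same cup-product pairing against $\tr_k$. The paper dispatches the identification $\epsilon^*_k \circ \Psi_k = {\rm NR}_k$ in one line as ``well known and elementary,'' whereas you (appropriately) spell out the normalization checks via \cite[Prop.~11.2.11]{CD-Springer} and the definition of $\tr_k$ in \S~\ref{sec:SD}.
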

\begin{proof}
Recall that $\rho_k$ is defined via the pairing
  \[
    {K^M_1(k)}/m \times H^1_\et(k, \Lambda) \xrightarrow{{\rm NR}_k \times {\rm id}}
    H^1_\et(k, \Lambda(1)) \times
    H^1_\et(k, \Lambda) \to H^2_\et(k, \Lambda(1)) \xrightarrow{\tr_k} \Lambda,
  \]
  where ${\rm NR}_k \colon {K^M_n(k)}/m \xrightarrow{\cong} H^n_\et(k, \Lambda(n))$
  is the Norm-residue isomorphism.
  In particular, ${\rm NR}_k = \omega_k \circ \rho_k$.
  We thus have to show that $\epsilon^*_k \circ \Psi_k = {\rm NR}_k$.
 But this is well known and elementary using the description of
  $\Psi_k$ given in \S~\ref{sec:MK-MC}.
\end{proof}

The following is the main result of \S~\ref{sec:RR}.

\begin{prop}\label{prop:Rec-Real}
  Let $X \in \Sm_k$ be of pure dimension $d$ and let $m \in k^\times$ be an integer. 
  Then the  diagram
  \begin{equation}\label{eqn:Rec-Real-0}
    \xymatrix@C1pc{
      {C(X)}/m \ar[r]^-{\rho_X} \ar[d]_-{\Psi_X} & {\pi^{\ab}_1(X)}/m
      \ar[d]^-{\omega_X} \\
      H^{2d+1}_{c}(X, \Lambda(d+1)) \ar[r]^-{\epsilon^*_X} &
      H^{2d+1}_{\et,c}(X, \Lambda(d+1))}
\end{equation}
is commutative and the vertical arrows are isomorphisms.
\end{prop}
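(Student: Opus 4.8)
\textbf{Proof plan for \propref{prop:Rec-Real}.}
The isomorphism assertion for $\Psi_X$ is already \propref{prop:Tame-MCCS}, and the isomorphism assertion for $\omega_X$ is the Saito--Tate duality \thmref{thm:Saito-D}(3) applied to the constructible complex $\Lambda_X$ together with the identification $H^1_\et(X,\Lambda)^\vee \cong H^{2d+1}_{\et,c}(X,\Lambda(d+1))$ (using $\sHom_X(\Lambda_X,\Lambda(d+1)) \simeq \Lambda_X(d+1)$). So the real content is the commutativity of the square. The plan is to reduce the commutativity to closed points and then invoke \lemref{lem:Rec-Real-field}. First I would use that $C(X)/m \cong C^\tm(X)/m$ is generated, as an abelian group, by the images $\iota^x_*(K^M_1(k(x))/m)$ over $x\in X_{(0)}$ (this is immediate from the definition of the tame class group as a cokernel, cf. Definitions~\ref{defn:TCG-0} and ~\ref{defn:Idele-X-1}); hence it suffices to check that the two composites around the square agree after precomposition with each $\iota^x_*$.

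The second step is to assemble the compatibility diagrams that are already in the excerpt. On the reciprocity side, the construction of $\rho_X$ in \S~\ref{sec:REC-0} shows that $\rho_X\circ \iota^x_*$ is the composite $K^M_1(k(x))/m \xrightarrow{\rho_{k(x)}} G_{k(x)}/m \to \pi^{\ab}_1(X)/m$, i.e. $\iota^x$-functoriality of the reciprocity map. On the motivic side, the square ~\eqref{eqn:Tame-MCCS-0} of \propref{prop:Tame-MCCS} identifies $\Psi_X\circ\iota^x_*$ with $\iota^x_*\circ\Psi_{k(x)}$, where $\iota^x_*$ on the right is the Gysin homomorphism $H^1(k(x),\Lambda(1))\to H^{2d+1}_c(X,\Lambda(d+1))$. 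Applying the $\et$-realization $\epsilon^*_X$ and invoking \corref{cor:Gysin-Nis-etale-1} (naturality of the Gysin map under $\epsilon^*$), we get $\epsilon^*_X\circ\Psi_X\circ\iota^x_* = \iota^x_*\circ\epsilon^*_{k(x)}\circ\Psi_{k(x)}$ with the Gysin map now on the $\et$-side. Putting the whole thing together, I need to show the outer rectangle of
\[
\xymatrix@C1.5pc{
K^M_1(k(x))/m \ar[r]^-{\rho_{k(x)}} \ar[d]_-{\Psi_{k(x)}} & G_{k(x)}/m \ar[r] \ar[d]^-{\omega_{k(x)}} & \pi^{\ab}_1(X)/m \ar[d]^-{\omega_X} \\
H^1(k(x),\Lambda(1)) \ar[r]^-{\epsilon^*_{k(x)}} & H^1_\et(k(x),\Lambda(1)) \ar[r]^-{\iota^x_*} & H^{2d+1}_{\et,c}(X,\Lambda(d+1))
}
\]
commutes, where the right-hand vertical $\omega_{k(x)}$ identifies $G_{k(x)}/m \cong H^1_\et(k(x),\Lambda)^\vee$ with $H^1_\et(k(x),\Lambda(1))$ via Saito--Tate duality for $\Spec(k(x))$.

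The left square is exactly \lemref{lem:Rec-Real-field} (applied with the local field $k(x)$ in place of $k$, noting $k(x)$ is a local field since $x$ is a closed point of a variety over a local field). For the right square, I would unwind the definition of the map $G_{k(x)}/m \to \pi^{\ab}_1(X)/m$: it is the pushforward on abelian fundamental groups induced by $\iota^x\colon \Spec(k(x))\inj X$, which under Pontryagin duality is the dual of the pullback $\iota^{x*}\colon H^1_\et(X,\Lambda)\to H^1_\et(k(x),\Lambda)$; thus the right square becomes precisely the commutativity of ~\eqref{eqn:Gysin-SD*}, i.e. \lemref{lem:Gysin-SD}, applied to the closed immersion $\iota^x\colon Z=\Spec(k(x))\inj X$ (here $e=0$, $n=d$, $k'=k(x)$). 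Concatenating \lemref{lem:Rec-Real-field} with \lemref{lem:Gysin-SD} gives the desired outer rectangle, and summing over $x\in X_{(0)}$ finishes the commutativity of ~\eqref{eqn:Rec-Real-0}.

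\textbf{Main obstacle.} The routine but delicate point is bookkeeping the three different Pontryagin/Saito--Tate duality identifications so that $\omega_X$ really is the map making the square meaningful: one must check that the ``$\omega$'' appearing as the right vertical arrow of ~\eqref{eqn:Rec-Real-0} (the Saito--Tate duality $\pi^{\ab}_1(X)/m = H^1_\et(X,\Lambda)^\vee \xrightarrow{\cong} H^{2d+1}_{\et,c}(X,\Lambda(d+1))$) is compatible, via $\iota^x_*$ and $(\iota^{x*})^\vee$, with the corresponding duality for $\Spec(k(x))$ — this is the content of \lemref{lem:Gysin-SD}, whose proof in turn rests on the projection/compatibility of Gysin maps and the trace compatibility \lemref{lem:Trace-PF}. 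I expect no genuinely new difficulty here; the work is in verifying that the ``$\pi_*$ on $G$'' used to define $\rho_X$ is dual to the ``$\iota^{x*}$ on $H^1_\et$'' used in \lemref{lem:Gysin-SD}, which follows from \cite[\S~3.2, Cor.~1]{Kato80} (functoriality of the reciprocity map under finite pushforward), already cited in \corref{cor:REC-T-PF}.
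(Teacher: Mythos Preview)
Your proposal is correct and follows essentially the same approach as the paper's own proof: reduce commutativity to closed points using the surjectivity of $\bigoplus_{x\in X_{(0)}}K^M_1(k(x))/m\to C(X)/m$, then assemble \propref{prop:Tame-MCCS}, \corref{cor:Gysin-Nis-etale-1}, \lemref{lem:Gysin-SD}, and \lemref{lem:Rec-Real-field} to conclude. The paper organizes the same five compatibilities into a cube diagram (checking all faces but the front one commute) rather than your chained rectangle, but the logical content and the lemmas invoked are identical, including your observation that the pushforward on $\pi^{\ab}_1$ is dual to $\iota^{x*}$ on $H^1_\et$.
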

\begin{proof}
We can assume that $X$ is integral. Using the natural isomorphism
\begin{equation}\label{eqn:Rec-Real-1}
{\pi^{\ab}_1(X)}/m \xrightarrow{\cong} \Hom(H^1_\et(X, \Lambda), \Lambda)
= H^1_\et(X, \Lambda)^\vee,
\end{equation}
we can replace ${\pi^{\ab}_1(X)}/m$ by $H^1_\et(X, \Lambda)^\vee$.
The right vertical arrow is now the Saito-Tate duality isomorphism.
The left vertical arrow is an isomorphism by \propref{prop:Tame-MCCS}.
It remains to show that ~\eqref{eqn:Rec-Real-0} is commutative.

Since the map
  ${\underset{x \in X_{(0)}}\bigoplus} {K^M_1(k(x))}/m \to {C(X)}/m$ is surjective by
  \lemref{lem:Idele-Tame-prime-to-p}, it suffices to show that ~\eqref{eqn:Rec-Real-0}
  commutes after we compose it with the canonical map
  ${K^M_1(k(x))}/m \to {C(X)}/m$ for every $x \in X_{(0)}$.
  We therefore fix an arbitrary closed point $x \in X$ and let
  $\iota \colon \Spec(k(x)) \inj X$ be the inclusion.

We now look at the diagram
\begin{equation}\label{eqn:Rec-Real-2}
\xymatrix@C1pc{  
  {K^M_1(k(x))}/m \ar[rr]^-{\rho_x} \ar[dd]_-{\Psi_x} \ar[dr]^-{\iota_*} & &
  H^1_\et(k(x), \Lambda)^\vee \ar[dr]^{(\iota^*)^\vee} \ar[dd]_->>>>>>{\omega_x} & \\
  & {C(X)}/m \ar[dd]_->>>>>>{\Psi_X} \ar[rr]^->>>>>>>>>>>>>>{\rho_X} & &
  H^1_\et(X, \Lambda)^\vee \ar[dd]^->>>>>>>>{\omega_X} \\
  H^1(k(x), \Lambda(1)) \ar[dr]_-{\iota_*} \ar[rr]^->>>>>>>>>>>{\epsilon^*_x} & &
  H^1_\et(k(x), \Lambda(1)) \ar[dr]^-{\iota_*} & \\
  & H^{2d+1}_{c}(X, \Lambda(d+1)) \ar[rr]^-{\epsilon^*_X} & &
  H^{2d+1}_{\et,c}(X, \Lambda(d+1)).}
\end{equation}
We need to show that $\omega_X \circ \rho_X \circ \iota_* = \epsilon^*_X \circ
\Psi_X \circ \iota_*$. For this, it suffices to show that all faces of the above cube,
except possibly the front face, commute.

The top face commutes by the construction of the reciprocity map $\rho_X$.
Note here that under the isomorphism ~\eqref{eqn:Rec-Real-1}, the map
$(\iota^*)^\vee$ is identified with the push-forward map
$\iota_* \colon {\pi^{\ab}_1(\Spec(k(x)))}/m \to  {\pi^{\ab}_1(X)}/m$.
The bottom face commutes by \corref{cor:Gysin-Nis-etale-1}.
The left face commutes by \propref{prop:Tame-MCCS} and the right face commutes by
\lemref{lem:Gysin-SD}. Finally, the back face commutes by \lemref{lem:Rec-Real-field}.
This concludes the proof.
\end{proof}

\section{Finiteness of motivic and {\'e}tale cohomology}
\label{sec:BMC}
Our goal in this section is to prove some finiteness results for motivic and
{\'e}tale cohomology over local fields. We begin by proving a general result.

\subsection{A property of the realization map}\label{sec:R-iso}
Let $k$ be a field of characteristic exponent $p$.
We fix an integer $m \in k^\times$ and set $\Lambda = {\Z}/m$.
Let $cd(k)$ denote the {\'e}tale cohomological dimension of $k$ for 
sheaves which are torsion of finite exponents prime-to-$p$.

Recall from ~\eqref{eqn:RE-2} that
for any $X \in \Sch_k$, there is a natural {\'e}tale realization homomorphism
$\epsilon^*_X \colon H^i(X, \Lambda(j)) \to  H^i_\et(X, \Lambda(j))$. The naturality
of this map implies that we have a commutative diagram
\begin{equation}\label{eqn:Real-0}
  \xymatrix@C1pc{
H^i(X, \Lambda(j)) \ar[r]^-{\epsilon^*_X} \ar[d]_-{\pi^*}  & H^i_\et(X, \Lambda(j))
    \ar[d]^-{\pi^*} \\
    H^i(X', \Lambda(j)) \ar[r]^-{\epsilon^*_{X'}} & H^i_\et(X', \Lambda(j)),}
\end{equation}
where $\pi \colon X' = X \times_{\Spec(k)} \Spec(k') \to X$ is the projection map
for any field extension ${k'}/k$.

\begin{lem}\label{lem:Real-iso}
  Let $X \in \Sm_k$ be of pure dimension $d$. Then $\epsilon^*_X$ is an isomorphism
  when $j \ge {\min}\{i, d + cd(k)\}$. This map is injective when $i = j+1$.
\end{lem}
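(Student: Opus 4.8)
\textbf{Proof plan for Lemma~\ref{lem:Real-iso}.}
The strategy is to use the Beilinson--Lichtenbaum conjecture (now a theorem, via Voevodsky--Rost) which compares motivic cohomology with finite coefficients to truncated \'etale cohomology. Concretely, for $X \in \Sm_k$ and $\Lambda = \Z/m$ with $m \in k^\times$, the cycle class map fits into an exact triangle identifying $\Z/m(j)_{\mot}$ on $X_{\mathrm{Nis}}$ with $\tau_{\le j} R\epsilon_* \mu_m^{\otimes j}$, so that the realization $\epsilon^*_X \colon H^i(X, \Lambda(j)) \to H^i_\et(X, \Lambda(j))$ is an isomorphism for $i \le j$ and injective for $i = j+1$. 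This already handles the case $i \le j$ (hence $j \ge i$) directly, giving both the isomorphism statement and the injectivity at $i = j+1$, and this part should be essentially a citation. The remaining work is the regime $j \ge d + cd(k)$ with $i > j$: here I would show that \emph{both} sides of $\epsilon^*_X$ vanish, so the map is trivially an isomorphism.

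First I would reduce the vanishing of the motivic side. Since $X$ is smooth of pure dimension $d$ over $k$, its motivic cohomology $H^i(X, \Lambda(j))$ can be computed via a coniveau (Gersten) spectral sequence whose $E_1$-terms are sums of $H^{i-n}(k(x), \Lambda(j-n))$ over points $x$ of codimension $n$, $0 \le n \le d$. Each residue field $k(x)$ has transcendence degree $\le d - n$ over $k$, hence \'etale cohomological dimension $\le cd(k) + (d-n)$. The motivic cohomology of a field $k(x)$ with $\Z/m$-coefficients, $H^{i-n}(k(x), \Lambda(j-n))$, is a subquotient of its \'etale cohomology $H^{i-n}_\et(k(x), \mu_m^{\otimes(j-n)})$ in the relevant degrees by Beilinson--Lichtenbaum, and in any case vanishes once $i - n > cd(k(x)) \ge$ its \'etale cohomological dimension; for $i > j \ge d + cd(k)$ one checks $i - n > j - n \ge d + cd(k) - n \ge cd(k) + (d-n) \ge cd(k(x))$, forcing all $E_1$-terms contributing to $H^i(X, \Lambda(j))$ to vanish, and therefore $H^i(X,\Lambda(j)) = 0$. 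The analogous argument with the \'etale coniveau spectral sequence (or directly from $cd(X) \le cd(k) + d$) gives $H^i_\et(X, \Lambda(j)) = 0$ in the same range. Hence $\epsilon^*_X$ is an isomorphism (of zero groups) when $j \ge d + cd(k)$ and $i$ arbitrary, and combined with the Beilinson--Lichtenbaum range $i \le j$, we get the isomorphism whenever $j \ge \min\{i, d + cd(k)\}$.

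The main obstacle is bookkeeping the precise cohomological-dimension bounds for the residue fields $k(x)$ and making sure the inequalities line up correctly at the boundary cases (in particular, one must be a little careful because $cd(k)$ here refers only to torsion sheaves of exponent prime to $\Char(k)$, which is exactly the setting since $m \in k^\times$, and because $k(x)$ may be imperfect in positive characteristic so one should use $cd$ rather than strict cohomological dimension). Once the Gersten spectral sequences are in place on both the motivic and \'etale sides and one invokes Beilinson--Lichtenbaum for fields, the rest is a diagram chase. I would also remark that the injectivity at $i = j + 1$ needs no dimension hypothesis at all, as it is immediate from the truncation triangle; this is why it is stated separately in the lemma.
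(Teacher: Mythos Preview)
Your treatment of the range $i \le j$ and the injectivity at $i = j+1$ via the Beilinson--Lichtenbaum truncation triangle is correct and matches the paper. The gap is in the regime $j \ge d + cd(k)$ with $i > j$: your claim that both $H^i(X,\Lambda(j))$ and $H^i_\et(X,\Lambda(j))$ vanish there is false for non-affine $X$. The bound $cd(X) \le d + cd(k)$ holds only for affine $X$; in general $cd(X)$ can be as large as $2d + cd(k)$. Concretely, take $k$ a local field (so $cd(k)=2$), $X = \P^1_k$, $d=1$, $j = 3 = d + cd(k)$, $i = 4$. By the projective bundle formula, $H^4(\P^1_k,\Lambda(3)) \cong H^2(k,\Lambda(2)) \cong K^M_2(k)/m$, which is nonzero for suitable $m$, and the same holds on the \'etale side. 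Your coniveau argument on the motivic side also has an indexing slip: the contribution from $x \in X^{(n)}$ to $H^i$ is $H^{i-2n}(k(x),\Lambda(j-n))$, not $H^{i-n}(k(x),\Lambda(j-n))$; with the correct shift the $E_1$-terms from closed points survive up to $i = 2d + cd(k)$.

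The paper's fix is not to prove vanishing of the groups but to show that the truncation map $\tau_{\le j} R\epsilon_*\Lambda(j)_\et \to R\epsilon_*\Lambda(j)_\et$ is itself an isomorphism \emph{when restricted to} $X_\Nis$ once $j \ge d + cd(k)$. This holds because the stalks of $R^q\epsilon_*\Lambda(j)_\et$ on $X_\Nis$ are $H^q_\et(k(x),\mu_m^{\otimes j})$ for points $x$ of $X$, and these vanish for $q > cd(k(x))$; since every residue field of $X$ has $cd(k(x)) \le d + cd(k) \le j$, all higher sheaves die. Then Beilinson--Lichtenbaum gives $\Lambda(j) \xrightarrow{\cong} R\epsilon_*\Lambda(j)_\et$ as complexes on $X_\Nis$, hence $\epsilon^*_X$ is an isomorphism for \emph{all} $i$. (The paper also first reduces to perfect $k$ to invoke BL safely; you flag imperfection as a concern but do not carry out the reduction.)
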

\begin{proof}
We can assume that $\Lambda = {\Z}/{\ell^n}$, where $\ell \neq \Char(k)$ is any prime
  and $n \ge 1$ is any integer.
  By \lemref{lem:SH-DM-Maps} and \cite[Cor.~2.1.7]{Elmanto-Khan},
  the motivic cohomology of $X$ is invariant under perfection.
  The similar result for {\'e}tale cohomology is classical
  (cf. \cite[Exp.~VIII, Thm.~1.1]{SGA4-Tome2}).
  In particular, $cd(k) = cd(k^\hs)$.
  Using ~\eqref{eqn:Real-0}, we can therefore assume that $k$ is perfect.

  Now, the Beilinson-Lichtenbaum conjecture
  (which is a now a theorem and follows from \cite{Voe-BK})
says that the {\'e}tale realization map
\begin{equation}\label{eqn: Real-iso-0}
  \epsilon^* \colon
  \Lambda(j) \to \tau_{\le j} {\bf R}\epsilon_* (\Lambda(j)_\et) \xrightarrow{\cong}
  \tau_{\le j} {\bf R}\epsilon_*(\Lambda(j))
\end{equation}
is an isomorphism for $j \ge 0$ in $\dm(k, \Lambda)$, where $\epsilon \colon 
\Sm_{k, \et} \to \Sm_{k, \nis}$ is the canonical morphism
from the {\'e}tale to the Nisnevich site of essentially of finite type smooth schemes
over $k$. We can now conclude the lemma by observing that
${\bf R}\epsilon_*(\Lambda(j)) \to \tau_{\le j} {\bf R}\epsilon_*(\Lambda(j))$
is an isomorphism when $j \ge d + cd(k)$ because the {\'e}tale cohomological
dimension of a $d$-dimensional finite type affine $k$-scheme is bounded by
$d + cd(k)$.
\end{proof}

\begin{lem}\label{lem:Real-iso-sing}
  Let $X \in \Sch_k$ be of dimension $d$. Then $\epsilon^*_X$ is an
  isomorphism
  when $j \ge {\min}\{i, d + cd(k)\}$. This map is injective when $i = j+1$.
\end{lem}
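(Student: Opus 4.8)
The plan is to reduce the singular case to the smooth case already proved in Lemma~\ref{lem:Real-iso} by induction on the dimension of $X$, using the localization sequences (Lemma~\ref{lem:Ex-seq}) together with the compatibility of $\epsilon^*_X$ with these sequences. First I would note that by naturality of $\epsilon^*$ and its compatibility with the long exact localization sequences for both motivic and {\'e}tale cohomology (these are induced by distinguished triangles in $\dm_\cdh(-,\Lambda)$ and $\dm_\et(-,\Lambda)$, and $\epsilon^*$ is a morphism of premotivic categories respecting the six operations on constructible objects), one obtains a morphism of long exact sequences. Concretely, writing $X_{\rm red}$ for the reduction (which does not change either cohomology), I may assume $X$ reduced; then choosing a dense open regular subscheme $U \subset X$ with closed complement $Z$ of strictly smaller dimension, and using a compactification, one gets a ladder relating $H^*_Z(X,\Lambda(j))$, $H^*(X,\Lambda(j))$, $H^*(U,\Lambda(j))$ and their {\'e}tale analogues — or, dually, the compact-support versions. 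Over a perfect base a regular scheme is smooth, so after passing to $k^\hs$ (allowed since both theories and $cd(k)$ are invariant under perfection, exactly as in the proof of Lemma~\ref{lem:Real-iso}), Lemma~\ref{lem:Real-iso} handles the open smooth piece $U$.

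Next I would set up the induction carefully. The base case $d=0$ reduces to a finite disjoint union of spectra of fields, where $\epsilon^*$ is an isomorphism in the relevant range by Lemma~\ref{lem:MC-field} and Lemma~\ref{lem:Real-iso} (applied with $d=0$). For the inductive step, fix $X$ of dimension $d$, and choose $U$ smooth of pure dimension $d$ (dense open in $X_{\rm reg}$ after discarding lower-dimensional components — one should work component by component, or note that the lowest-dimensional components only affect the range harmlessly) with $Z = X \setminus U$ of dimension $< d$. The five lemma applied to the ladder of localization sequences then shows: if $\epsilon^*$ is an isomorphism for $U$ in degrees $i, i\pm 1$ (which holds when $j \ge \min\{i, d + cd(k)\}$, slightly adjusting indices) and for $Z$ in degrees $i, i\pm 1$ (which holds by the inductive hypothesis, since $\dim Z < d$ so the bound $\min\{i, (d-1)+cd(k)\} \le \min\{i, d+cd(k)\}$ is weaker, hence satisfied), then $\epsilon^*_X$ is an isomorphism in degree $i$. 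The injectivity statement at $i = j+1$ follows from the same ladder and a diagram chase: one needs surjectivity of $\epsilon^*$ one degree down (degree $j$, where it is an isomorphism) and injectivity in degree $j$ on the relevant pieces, combined with the four lemma.

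The subtle point I expect to be the main obstacle is the precise bookkeeping of which range of $(i,j)$ survives each application of the five/four lemma, since the localization sequence mixes degrees $i-1, i, i+1$ and the bound $\min\{i, d+cd(k)\}$ is not monotone in a naive way. One has to check that whenever $j \ge \min\{i, d + cd(k)\}$, the corresponding inequalities for the neighbouring degrees and for the lower-dimensional $Z$ are all implied — for instance $j \ge \min\{i-1, (d-1)+cd(k)\}$ is automatic, but $j \ge \min\{i+1, d+cd(k)\}$ may require $j \ge d + cd(k)$, which forces some care about whether one uses the cohomology or compact-support version of the sequence in a given direction. I would organize this by treating the two regimes $j \ge d + cd(k)$ and $i \le j < d+cd(k)$ separately: in the first regime all groups in sight vanish or are controlled purely by cohomological dimension (so $\epsilon^*$ is an isomorphism essentially because both sides are computed by the same truncated complex), while in the second regime one genuinely runs the dimension induction via the localization ladder, using that $U$ is smooth and Lemma~\ref{lem:Real-iso} applies with the sharper Beilinson--Lichtenbaum range. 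The injectivity at $i=j+1$ is then an easy corollary in either regime.
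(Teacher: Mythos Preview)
Your approach has a genuine gap. The localization sequence (1) of \lemref{lem:Ex-seq} that you invoke involves the cohomology \emph{with support} $H^i_Z(X,\Lambda(j))$, not the cohomology $H^i(Z,\Lambda(j))$ of the scheme $Z$ itself. When $Z \inj X$ is the inclusion of the singular locus, this closed immersion is not regular, so there is no purity isomorphism relating the two, and your inductive hypothesis --- which concerns $H^i(Z,\Lambda(j))$ for $\dim(Z)<d$ --- says nothing about $H^i_Z(X,\Lambda(j))$. Your phrase ``and for $Z$ in degrees $i,i\pm 1$ (which holds by the inductive hypothesis)'' is exactly where the argument breaks: what appears in the ladder is $\epsilon^* \colon H^i_Z(X,\Lambda(j)) \to H^i_{\et,Z}(X,\Lambda(j))$, and you have no handle on this map. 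The alternative you float, of switching to the compact-support sequence $H^i_c(U)\to H^i_c(X)\to H^i_c(Z)$, does give $H^i_c(Z)$ directly, but now the term $H^i_c(U)$ for smooth $U$ is not covered by \lemref{lem:Real-iso} (which is only for ordinary cohomology), and deducing it would require a compactification $\ov{U}$, which is again singular of dimension $d$ --- so the induction does not close.

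The paper avoids this by using cdh-descent rather than open--closed localization. After reducing to $k$ perfect and $X$ integral (via nil-invariance and Mayer--Vietoris for closed covers), one takes a resolution of singularities $\pi\colon W\to X$ in characteristic zero, or a $p$-alteration (Temkin, combined with Raynaud--Gruson flatification) in positive characteristic. The resulting abstract blow-up square yields a Mayer--Vietoris sequence~\eqref{eqn:Real-iso-3} involving only \emph{ordinary} cohomology of the schemes $W$, $Z=X_\sing$, and $E=\pi^{-1}(Z)$; here $W$ is smooth (so \lemref{lem:Real-iso} applies) and $Z,E$ have dimension $<d$ (so induction applies). In positive characteristic an additional transfer argument ($\pi'_*\circ\pi'^{*}=p^r$, invertible in $\Lambda$) handles the passage from the finite flat cover $W'$ down to $X'$. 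This is the missing ingredient: you need a descent mechanism that produces cohomology of \emph{schemes}, not cohomology with support.
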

\begin{proof}
As in the proof of \lemref{lem:Real-iso}, we can assume that $k$ is perfect.
If $d = 0$, then $X$ must be smooth over $k$ and we are done by
\lemref{lem:Real-iso}.
  We shall now prove the general case by induction on $d$. Since the motivic
  and {\'e}tale cohomology are both nil-invariant, we can assume that $X$ is
  reduced.
  Since $\dm_\cdh(k, \Lambda)$ clearly satisfies the Mayer-Vietoris property for
  closed covers, and so does the {\'e}tale cohomology
  (cf. \cite[Exc.~VI.3.4]{Milne-etale}), we can apply this property to
  immediately reduce the proof to the case when $X$ is integral.

If $p =1$, then we can find a resolution of singularities
  $\pi \colon W \to X$. If we let $Z = X_\sing$ and $E = \pi^{-1}(Z)$, then
  $\{W \amalg Z \rightarrow X\}$ is a cdh cover of $X$ such that
  $\dim(Z) < \dim(X)$ and $\dim(E) < \dim(X)$.
 A combination of proper base change theorem and
support cohomology exact sequence  in {\'e}tale cohomology
(cf. \cite[Prop.~III.1.25, Cor.~VI.2.3]{Milne-etale}) implies that
$\dr^c_\et(k, \Lambda)$ satisfies
cdh-descent, where $\dr^c_\et(k, \Lambda)$ is the full subcategory of
$\dr_\et(k, \Lambda)$ consisting of complexes with constructible cohomology sheaves.
As $\dm_\cdh(k, \Lambda)$ clearly satisfies cdh-descent,
we therefore get an exact sequence 
\begin{equation}\label{eqn:Real-iso-3}
    \xymatrix@C.8pc{
{\begin{array}{l}
        H^{i-1}(W, \Lambda(j)) \\
       \hspace*{1.2cm} \oplus \\
       H^{i-1}(Z, \Lambda(j))
     \end{array}}
   \ar[r] &   H^{i-1}(E, \Lambda(j)) \ar[r] & H^i(X, \Lambda(j))
\ar[r] & {\begin{array}{l}
        H^{i}(W, \Lambda(j)) \\
       \hspace*{1.2cm} \oplus \\
       H^{i}(Z, \Lambda(j))
     \end{array}}
   \ar[r] &  H^{i}(E, \Lambda(j)),}
\end{equation}
which maps to the similar exact sequence of {\'e}tale cohomology. 
Comparing the two exact sequences and using
induction on $d$, it suffices to prove that $\epsilon^*_W$ is an isomorphism
for $i \le j$ and injective for $i = j+1$.
But this follows from \lemref{lem:Real-iso}.

We now assume that $p \ge 2$.
Using Temkin's strengthening of the alteration theorems of de Jong and Gabber
(cf. \cite[Thm.~1.2.5]{Temkin}) and the flatification theorem of Raynaud-Gruson
\cite[Thm.~5.2.2]{Gruson-Raynaud}, it is shown in the proof of
\cite[Prop.~6.2]{Binda-Krishna-JEP} that there exists a commutative diagram
\begin{equation}\label{eqn:Real-iso-1}
  \xymatrix@C1pc{
    W' \ar[r]^-{\pi'} \ar[d]_-{f'} & X' \ar[d]^-{f} \\
    W \ar[r]^-{\pi} & X,}
\end{equation}
where $f$ is the blow-up along a nowhere dense closed subscheme $Z \inj X$ and $f'$ is
the strict transform of $W$ along $f$. The map $\pi$ is a $p$-alteration of $X$
such that $W$ is integral and smooth over $k$ and $\pi'$ is a finite and flat
morphism between integral schemes of degree $p^r$ for some $r \ge 0$.

Since $\{X' \amalg Z \rightarrow X\}$ is a cdh cover of $X$, we can apply the above
characteristic zero
argument to reduce the proof to showing that the lemma holds for
${X'}$.
By applying this argument to $f'$ and using induction,
we see that the lemma holds for $W'$ because it
holds for $W$ by \lemref{lem:Real-iso}.

The assertion for $\epsilon^*_{X'}$ now follows from \cite[Prop.~6.3]{CD-Doc}
and \cite[Prop.~6.1.8(4)]{CD-Comp} which together say that there is a commutative
diagram
\begin{equation}\label{eqn:Real-iso-2}
  \xymatrix@C1pc{
    H^i(X', \Lambda(j)) \ar[r]^-{\pi'^*} \ar[d]_-{\epsilon^*_{X'}} &
    H^i(W', \Lambda(j)) \ar[r]^-{\pi'_*} \ar[d]^-{\epsilon^*_{W'}} &
    H^i(X', \Lambda(j)) \ar[d]^-{\epsilon^*_{X'}} \\
    H^i_\et(X', \Lambda(j)) \ar[r]^-{\pi'^*} &
    H^i_\et(W', \Lambda(j)) \ar[r]^-{\pi'_*}  &
    H^i_\et(X', \Lambda(j))}
\end{equation}
such that the composition of the horizontal arrows on each of the two rows is
multiplication by $p^r$. This concludes the proof.
\end{proof}

\begin{cor}\label{cor:Real-iso-sing-0}
  Let $X \in \Sch_k$ be of dimension $d$. Then the {\'e}tale realization
  map $\epsilon^*_X \colon H^i_c(X, \Lambda(j)) \to  H^i_{\et,c}(X, \Lambda(j))$ is an
  isomorphism when $j \ge {\min}\{i, d + cd(k)\}$. This map is injective when
  $i = j+1$. 
\end{cor}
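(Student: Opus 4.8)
The plan is to deduce the statement for cohomology with compact support from \lemref{lem:Real-iso-sing} for ordinary cohomology, exactly as \corref{cor:Real-iso-sing-0} itself follows once we have \lemref{lem:Real-iso-sing}, but one should be careful about how the range of degrees in which the isomorphism holds interacts with the localization sequence. First I would reduce to the case where $k$ is perfect, using \propref{prop:CD} (invariance of motivic cohomology under perfection), the analogous classical fact for \'etale cohomology, and the equality $cd(k) = cd(k^\hs)$, together with the compatibility of $\epsilon^*_{(-)}$ with pull-back along $X^\hs \to X$ as recorded in \eqref{eqn:Real-0}. I would also reduce to $\Lambda = {\Z}/{\ell^n}$ with $\ell \ne p$ a prime, and may assume $X$ reduced by nil-invariance of both theories.

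Next I would choose a compactification $j \colon X \inj \ov{X}$ with $\ov{X}$ proper over $k$ of the same dimension $d$, and let $\iota \colon Z \inj \ov{X}$ be the complementary reduced closed subscheme, so $\dim(Z) < d$ unless $X = \ov{X}$. Lemma~\ref{lem:Ex-seq}(2) gives the localization long exact sequence in motivic cohomology with compact support relating $H^*_c(X, \Lambda(j))$, $H^*_c(\ov{X}, \Lambda(j)) = H^*(\ov{X}, \Lambda(j))$ and $H^*_c(Z, \Lambda(j)) = H^*(Z, \Lambda(j))$ (the last two because $\ov{X}$ and $Z$ are proper); the corresponding localization sequence in \'etale cohomology with compact support exists by the six-functor formalism for $\dm_\et(-,\Lambda) \simeq \dr_\et(-,\Lambda)$, and $\epsilon^*_{(-)}$ maps one sequence to the other by naturality of the realization functor with respect to the operations $j_!, \iota_*$ (see the remark after \eqref{eqn:cdh-et} and the identifications in \lemref{lem:MC-et}). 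I would then apply \lemref{lem:Real-iso-sing} to both $\ov{X}$ (dimension $d$) and $Z$ (dimension $< d$): for $i$ and $j$ with $j \ge \min\{i, d+cd(k)\}$ one checks that $\epsilon^*_{\ov{X}}$ and $\epsilon^*_Z$ are isomorphisms in the relevant degrees (degrees $i-1, i$ for $\ov{X}$, and here I must verify that the condition $j \ge \min\{i-1, d'+cd(k)\}$ with $d' = \dim Z \le d-1$ still holds, which it does since lowering $i$ by one and lowering the dimension only makes the hypothesis easier to satisfy), and then the five lemma forces $\epsilon^*_X$ to be an isomorphism. For the injectivity statement at $i = j+1$ one runs the same five-lemma argument using that \lemref{lem:Real-iso-sing} gives isomorphisms in degree $\le j$ and injectivity in degree $j+1$, so in the localization diagram the map $H^{j+1}_c(X)\to H^{j+1}_{\et,c}(X)$ is still injective by a diagram chase.

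The main obstacle I anticipate is bookkeeping the degree ranges through the localization sequence: the shift by one in the long exact sequence and the drop in dimension when passing to $Z$ and $\ov X$ must be arranged so that in every spot of the five-lemma diagram where I need an isomorphism (resp.\ injection), \lemref{lem:Real-iso-sing} actually delivers it. Concretely, in the segment
\[
H^{i-1}_c(Z,\Lambda(j)) \to H^i_c(X,\Lambda(j)) \to H^i_c(\ov X,\Lambda(j)) \to H^i_c(Z,\Lambda(j)) \to H^{i+1}_c(X,\Lambda(j))
\]
I need $\epsilon^*$ iso on the first, second-to-nothing, third, fourth terms and injective on the fifth --- but the fifth term $H^{i+1}_c(X,\Lambda(j))$ is the one I am trying to control, so I should instead phrase the chase as: $\epsilon^*$ iso on $H^{i-1}_c(Z)$, $H^i_c(\ov X)$, $H^i_c(Z)$ and iso (for the iso statement) or injective (for the injectivity statement) on $H^{i-1}_c(\ov X)$ suffices to get the desired conclusion on $H^i_c(X)$. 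All of these inputs lie in the range covered by \lemref{lem:Real-iso-sing} once $j \ge \min\{i, d+cd(k)\}$, because the worst case is the term $H^i_c(\ov X,\Lambda(j))$ of dimension exactly $d$, and there the hypothesis is literally $j \ge \min\{i, d+cd(k)\}$. The remaining routine point --- that the realization maps genuinely commute with $j_!$ and $\iota_*$ and with the boundary maps --- follows from the compatibility of $\epsilon^*_X$ with Grothendieck's six operations on constructible objects, already invoked in the text.
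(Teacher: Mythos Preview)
Your approach is exactly the paper's: choose a compactification $X \hookrightarrow \ov{X}$ with complement $Z$, use the localization sequence of \lemref{lem:Ex-seq}(2) and its {\'e}tale analogue, and reduce to the proper case handled by \lemref{lem:Real-iso-sing}. The reduction to perfect $k$ is unnecessary (it is already absorbed into \lemref{lem:Real-iso-sing}) but harmless.

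There is a small slip in your five-lemma bookkeeping for the injectivity case $i=j+1$. In the segment
\[
H^{i-1}(\ov{X}) \to H^{i-1}(Z) \to H^i_c(X) \to H^i(\ov{X}) \to H^i(Z),
\]
to get injectivity at the middle you need $\epsilon^*_{\ov{X}}$ \emph{surjective} at $H^{i-1}(\ov{X})$ (not merely injective, as you wrote), together with injectivity at $H^{i-1}(Z)$ and $H^{i}(\ov{X})$; position five is irrelevant. Moreover, at $H^{i}(\ov{X})=H^{j+1}(\ov{X})$ and $H^{i}(Z)=H^{j+1}(Z)$ you do \emph{not} in general have isomorphisms from \lemref{lem:Real-iso-sing}, only injectivity. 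Fortunately this is exactly what the four-lemma requires: \lemref{lem:Real-iso-sing} gives isomorphisms at $H^{j}(\ov{X})$ and $H^{j}(Z)$ (hence surjectivity and injectivity respectively) and injectivity at $H^{j+1}(\ov{X})$, which suffices. So the argument is correct once the hypotheses are stated accurately.
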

\begin{proof}
  Let  $u \colon X \inj \ov{X}$ be a compactification and let $Z = \ov{X} \setminus X$
with the reduced subscheme structure.  
\lemref{lem:Ex-seq}(2) and
 its analogue in {\'e}tale cohomology 
  (cf. \cite[Rem.~III.1.30]{Milne-etale})
  allow us to assume that $X$ is
  complete. The latter case follows from \lemref{lem:Real-iso-sing}.
\end{proof}

\subsection{Finiteness of {\'e}tale cohomology over local fields}
\label{sec:BEC}
We now specialize to the case of schemes over a local field.
We fix a local field $k$ having the 
  ring of integers $\sO_k$, maximal ideal $\fm$  and residue field $\ff$.
  We let $p$ denote the characteristic of $\ff$
  (equivalently, the characteristic exponent of $\ff$, or
  the residue characteristic of $k$).
  We let $S = \Spec(\sO_k)$
  with generic point $\eta$ and closed point $s$. Recall that
  $cd(k) = 2$ and $cd(\ff) = 1$. We fix an integer $i \ge 0$.

  If $X$ is either a $k$-scheme or an $\ff$-scheme and $j \in \Z$, we let
\begin{equation}\label{eqn:Fin-mot-coh}
  H^i(X, j) = {\underset{\ell \neq p}\bigoplus} H^i(X, {\Q_\ell}/{\Z_\ell}(j))
  = {\underset{\ell \neq p}\bigoplus} \ {\underset{n \ge 1}\varinjlim} \
  H^i(X, {\Z}/{\ell^n}(j)).
\end{equation}
We define $ H^i_c(X, j)$ in a similar fashion. 
For any $S$-scheme $\sX$ (which includes $k$-schemes as well as $\ff$-schemes),
  we let
  \[
  H^i_\et(\sX, j) = {\underset{\ell \neq p}\bigoplus} H^i_\et(\sX, {\Q_\ell}/{\Z_\ell}(j))
  = {\underset{\ell \neq p}\bigoplus} \ {\underset{n \ge 1}\varinjlim} \
  H^i_\et(\sX, {\Z}/{\ell^n}(j)).
  \]
  We define $H^i_{\et, c}(\sX, j)$ in a similar fashion. 
  The direct sums in these definitions are over all primes different from $p$.

The key ingredient to prove our finiteness theorem for {\'e}tale cohomology
over $k$ (cf. \thmref{thm:Boundedness}) is the following analogous result
over finite fields due to Colliot-Th{\'e}l{\`e}ne-Sansuc-Soul{\'e}
\cite[Thm.~2]{CSS} and Kahn \cite[Thm.~1, 2]{Kahn-03}
(the reader may check
    that in the latter reference, the word `variety' means any finite type
    separated $\F_q$-scheme).

\begin{thm}\label{thm:Kahn}
  Let $X \in \Sm_{\F_q}$ be a smooth scheme of dimension $d$, where $q = p^r$ for some
  $r \ge 1$ and let $n$ be any integer.
  Then $H^i_\et(X, n)$ is finite if $n \notin [0,d]$ or $i \notin [n, 2n+1]$.
  If $X \in \Sch_{\F_q}$ is any scheme of dimension $d$, then $H^i_\et(X, n)$ is finite
  if $n \notin [0,d]$ or $i \notin [n, n+d+1]$.
  \end{thm}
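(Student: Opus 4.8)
The statement to prove is \thmref{thm:Kahn}, which asserts finiteness of $H^i_\et(X,n)$ over a finite field outside an explicit range of $(i,n)$; since this is attributed to Colliot-Th{\'e}l{\`e}ne--Sansuc--Soul{\'e} and Kahn, the natural approach is to reduce the statement as literally stated to their published results rather than to reprove anything. The first thing I would do is fix a prime $\ell \neq p$, a power $\ell^\nu$, and work one prime at a time with coefficients ${\Z}/{\ell^\nu}(n)$, then pass to the colimit to get ${\Q_\ell}/{\Z_\ell}(n)$ and finally the finite direct sum over $\ell \neq p$ defining $H^i_\et(X,n)$ in ~\eqref{eqn:Fin-mot-coh}; finiteness for the colimit will follow from uniform boundedness of the finite-coefficient groups, which is exactly what the cited theorems provide.

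For the smooth case, I would invoke \cite[Thm.~1, 2]{Kahn-03} (or \cite[Thm.~2]{CSS}) directly: these give finiteness of $H^i_\et(X,{\Q_\ell}/{\Z_\ell}(n))$ for smooth $X/\F_q$ of dimension $d$ whenever $n \notin [0,d]$ or $i \notin [n,2n+1]$. Here the only point needing a word of justification is the footnote remark that ``variety'' in loc.\ cit.\ means an arbitrary separated finite-type $\F_q$-scheme, so that no quasi-projectivity or irreducibility hypothesis is being silently used; I would simply cite this. For the negative-twist range $n < 0$ one uses that ${\Q_\ell}/{\Z_\ell}(n)$ for $n<0$ is a twist of ${\Q_\ell}/{\Z_\ell}$ by a negative power of the cyclotomic character and that over $\F_q$ its {\'e}tale cohomology is computed by the two-term complex $\mathrm{R}\Gamma(\hat{\Z}, -)$ applied to $H^*_\et(\bar X, {\Q_\ell}/{\Z_\ell}(n))$, all of whose terms are finite because the geometric cohomology of a finite-type scheme is finitely generated over $\Z_\ell$ with finite torsion and the $\mathrm{Frob}-1$ action has finite kernel and cokernel in the relevant degrees; again this is part of what CSS/Kahn establish. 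For $n > d$ one uses cohomological-dimension bounds: a finite-type affine $\F_q$-scheme of dimension $d$ has {\'e}tale cohomological dimension $\le 2d+1$ (Artin--Grothendieck), combined with the vanishing/finiteness of $H^i_\et(\bar X, {\Q_\ell}/{\Z_\ell}(n))$ forced by weight reasons for $i$ outside $[n,2n]$.

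For the general (possibly singular) $X$ of dimension $d$, the plan is d{\'e}vissage on dimension. Using nil-invariance of {\'e}tale cohomology I reduce to $X$ reduced, and using the closed Mayer--Vietoris (excision) long exact sequence I reduce to $X$ integral. Then I apply an alteration of Gabber (or de Jong) to obtain a generically finite proper map $W \to X$ with $W$ smooth of the same dimension, together with the associated abstract blow-up square, and run the localization/proper-base-change long exact sequence comparing $H^*_\et(X,n)$ with $H^*_\et(W,n)$, $H^*_\et(Z,n)$, and $H^*_\et(E,n)$, where $Z \subset X$ is the nowhere-dense closed subscheme over which the alteration fails to be an isomorphism and $E$ its preimage, both of dimension $< d$; here I would note that the alteration has degree prime to $\ell$ after localizing at $\ell$ (this is the point of using \emph{Gabber's} refinement), so a transfer argument shows $H^i_\et(X,{\Z}/{\ell^\nu}(n))$ is a subquotient-controlled-by $H^i_\et(W,{\Z}/{\ell^\nu}(n))$ up to the lower-dimensional pieces. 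The smooth case handles $W$, induction on $d$ handles $Z$ and $E$, and chasing the long exact sequence yields finiteness of $H^i_\et(X,n)$ for $n \notin [0,d]$ or $i \notin [n,n+d+1]$ --- the widened upper bound $n+d+1$ (versus $2n+1$ in the smooth case) absorbing the cohomological-dimension shift $\le d$ picked up along the fibers of the alteration.

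\textbf{Main obstacle.} The genuinely delicate point is bookkeeping the range of $(i,n)$ through the d{\'e}vissage: each step of the dimension induction and each connecting map in the localization sequence can shift $i$ by one and the dimension by one, so one must check that the \emph{union} over the square's vertices of the ``bad'' ranges $[n, n+d'+1]$ (with $d' \le d$) stays inside $[n, n+d+1]$, and that the smooth input at dimension $d$, whose bad range is $[n,2n+1]$, is consistent with the target range $[n,n+d+1]$ precisely when $n \in [0,d]$ (outside that, the smooth theorem already gives unconditional finiteness). A secondary nuisance is making the transfer/degree-prime-to-$\ell$ argument precise enough that one really concludes finiteness and not merely finiteness up to $\ell$-divisible or $\ell$-torsion ambiguity; this is standard but needs the alteration degree to be inverted in the coefficients, which is why the proof is organized one prime $\ell$ at a time before reassembling the finite direct sum.
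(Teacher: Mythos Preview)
The paper gives no proof of this theorem at all: it is stated purely as a citation of \cite[Thm.~2]{CSS} and \cite[Thm.~1, 2]{Kahn-03}, with the footnote clarifying that in Kahn's paper the word ``variety'' already means an arbitrary separated finite-type $\F_q$-scheme. In other words, both the smooth and the singular assertions are taken verbatim from Kahn, and nothing further is argued.

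Your proposal is therefore correct in spirit --- you rightly identify the statement as a quotation from the literature --- but you do considerably more work than the paper intends. The entire d{\'e}vissage by alterations for the singular case is unnecessary: Kahn's Theorems~1 and~2 already apply to arbitrary (possibly singular) schemes, which is precisely the content of the paper's footnote. Your reduction would give an independent proof of the singular part, and the bookkeeping of ranges you flag as the main obstacle is indeed the delicate point of such an argument, but none of this is what the paper does. Likewise, your sketch of the weight/cohomological-dimension mechanism behind the smooth case is explanatory background rather than something the paper supplies. If you want to match the paper, simply cite the two references and note (as the footnote does) that Kahn's hypotheses cover the singular case.
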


We shall now prove the finiteness of {\'e}tale cohomology over local fields
  in a few steps,
  beginning with the following base case. 
We begin with the following extension of \lemref{lem:Roots-PB}.

\begin{lem}\label{lem:Roots-PB-local}
    Given a commutative diagram of Noetherian schemes
    \begin{equation}\label{eqn:Roots-PB-local-0}
      \xymatrix@C1pc{
        X \ar[r]^-{\iota'} \ar[d]_-{\pi'} & \sX \ar[d]^-{\pi} \\
        \Spec(\ff) \ar[r]^-{\iota} & S}
    \end{equation}
    with $\pi$ dominant,
    the canonical map $\iota'^* \mu_{m,\sX} \to \mu_{m,X}$ is an isomorphism on
    $X_\et$ if $m \in \ff^\times$.
  \end{lem}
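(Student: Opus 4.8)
The plan is to deduce the lemma from its known field case, \lemref{lem:Roots-PB}, by chasing the commutative square. The crucial observation is that, although $m$ is only assumed prime to $p=\Char(\ff)$, the ring $\sO_k$ is a complete discrete valuation ring with residue field of characteristic $p$; hence $m\notin\fm$, i.e. $m\in\sO_k^\times$. Thus $m$ is invertible on $S=\Spec(\sO_k)$, and therefore on $\sX$ and on $X$ as well, the latter being an $S$-scheme via $\pi\circ\iota'=\iota\circ\pi'$. I expect this to be the only real point; once it is noted, the rest is formal.

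First I would record the standard consequence: since $m$ is a unit in $\sO_k$, the group scheme $\mu_{m,S}=\Spec\bigl(\sO_k[t]/(t^m-1)\bigr)$ over $S$ is finite {\'e}tale, because the derivative $mt^{m-1}$ is a unit on it ($t$ is invertible there and $m\in\sO_k^\times$). Hence for any $S$-scheme $T$ the base change $\mu_{m,S}\times_ST$ is finite {\'e}tale over $T$, and the {\'e}tale sheaf it represents is precisely $\mu_{m,T}=\ker(\G_{m,T}\xrightarrow{\,m\,}\G_{m,T})$, since $\Hom_T(V,\mu_{m,S}\times_ST)=\{a\in\Gamma(V,\sO_V)^\times\mid a^m=1\}$ for every $T$-scheme $V$. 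As the {\'e}tale sheaf represented by an {\'e}tale $T$-scheme pulls back, along any morphism $g\colon T'\to T$, to the sheaf represented by the pulled-back scheme, this yields canonical isomorphisms $\pi^*\mu_{m,S}\xrightarrow{\cong}\mu_{m,\sX}$ and $\iota^*\mu_{m,S}\xrightarrow{\cong}\mu_{m,\ff}$ compatible with the obvious canonical maps.

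Then I would chase the square. Using these two isomorphisms together with the identity $\iota'^*\pi^*=\pi'^*\iota^*$ coming from $\pi\circ\iota'=\iota\circ\pi'$, one gets
\[
\iota'^*\mu_{m,\sX}\;\cong\;\iota'^*\pi^*\mu_{m,S}\;\cong\;\pi'^*\iota^*\mu_{m,S}\;\cong\;\pi'^*\mu_{m,\ff}\;\cong\;\mu_{m,X},
\]
the last isomorphism being \lemref{lem:Roots-PB} applied to $\pi'\colon X\to\Spec(\ff)$ (legitimate since $m\in\ff^\times$); one then checks that the resulting composite is the canonical map $\iota'^*\mu_{m,\sX}\to\mu_{m,X}$. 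Alternatively, and perhaps more transparently, I could verify the canonical map on geometric stalks directly: at a geometric point of $X$ both stalks are canonically the group of $m$-th roots of unity in a fixed separably closed field, which has order exactly $m$ since $m$ is invertible, and the map is the identity there. In either case the verification is routine; the dominance of $\pi$ is not actually needed, and the whole force of the statement lies in the observation $m\in\sO_k^\times$.
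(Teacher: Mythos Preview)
Your proposal is correct and follows essentially the same approach as the paper: both observe that $m\in\ff^\times$ forces $m\in\sO_k^\times$, then chase the square via the isomorphisms $\pi^*\mu_{m,S}\cong\mu_{m,\sX}$, $\iota^*\mu_{m,S}\cong\mu_{m,\ff}$, and $\pi'^*\mu_{m,\ff}\cong\mu_{m,X}$ (the last by \lemref{lem:Roots-PB}). Your packaging via the finite {\'e}taleness of $\mu_{m,S}$ over $S$ is marginally cleaner than the paper's appeal to $(1+\fm)$ being $m$-divisible for the $\iota$ step, and you are right that the dominance of $\pi$ plays no role in the argument.
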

  \begin{proof}
The proof of \lemref{lem:Roots-PB} applies verbatim to show that
    $\pi^* \mu_{m,S} \xrightarrow{\cong} \mu_{m,\sX}$. Since
    $\pi'^* \mu_{m,\ff} \xrightarrow{\cong} \mu_{m,X}$ by \lemref{lem:Roots-PB}, it
    suffices to show that $\iota^* \mu_{m,S} \xrightarrow{\cong} \mu_{m,\ff}$. But this
    is an elementary exercise using our hypothesis, which implies that
    $m \in \sO^\times_k$ and hence $(1 + \fm)$ is $m$-divisible,
    see the last part of the proof of \lemref{lem:Rigidity-Milnor}.
\end{proof}

\begin{lem}\label{lem:Etale-0}
    The group $H_{\et}^i(k, n)$ is finite when $n \neq 0,1$.
  \end{lem}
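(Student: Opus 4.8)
\textbf{Proof strategy for \lemref{lem:Etale-0}.} The plan is to prove the finiteness of $H^i_\et(k, n)$ for $n \neq 0, 1$ by reducing it to the finiteness results over finite fields (\thmref{thm:Kahn}) via the excellent integral model $S = \Spec(\sO_k)$. First I would set up the basic exact sequences relating the {\'e}tale cohomology of $k = k(\eta)$, the {\'e}tale cohomology of $S$, and the {\'e}tale cohomology of the closed point $s = \Spec(\ff)$. Concretely, working with coefficients ${\Z}/{\ell^n}$ for a prime $\ell \neq p$ and then passing to the colimit, I would use the localization (Gysin) exact sequence for the open-closed decomposition $\eta \inj S \dashleftarrow s$ in {\'e}tale cohomology. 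Since $s$ has codimension one in $S$ and $\sO_k$ is a discrete valuation ring with $\ell \in \sO_k^\times$, absolute purity (Gabber) gives $H^{i}_{\et, s}(S, {\Z}/{\ell^n}(j)) \cong H^{i-2}_\et(\ff, {\Z}/{\ell^n}(j-1))$, so the localization sequence reads
\[
\cdots \to H^{i-2}_\et(\ff, j-1) \to H^i_\et(S, j) \to H^i_\et(k, j) \to H^{i-1}_\et(\ff, j-1) \to \cdots,
\]
where I am abbreviating as in \eqref{eqn:Fin-mot-coh}. (Here one must be careful about the twist: by \lemref{lem:Roots-PB-local} the sheaf $\mu_{\ell^n}$ on $S_\et$ pulls back correctly to both $\eta$ and $s$, so the twisted coefficients behave consistently across the three schemes.)

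Next I would invoke the finiteness of {\'e}tale cohomology of $\ff$: since $\ff$ is a finite field, $\dim(\ff) = 0$, and by \thmref{thm:Kahn} (applied with $d = 0$) the group $H^{i'}_\et(\ff, j')$ is finite whenever $j' \neq 0$ or $i' \notin [j', 2j'+1] = [0,1]$ (for $j'=0$). Thus for the twist $j - 1$ appearing above, $H^{i-2}_\et(\ff, j-1)$ and $H^{i-1}_\et(\ff, j-1)$ are both finite as long as $j - 1 \neq 0$, i.e. $j \neq 1$; and when $j \neq 0, 1$ one checks the remaining case $j - 1 = -1 \neq 0$ is covered as well, while $j = 0$ must also be handled (here $H^{i-2}_\et(\ff, -1)$, $H^{i-1}_\et(\ff, -1)$ are finite since $-1 \neq 0$). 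So for $n = j \neq 0, 1$ the two outer terms involving $\ff$ in the localization sequence are finite, and by the five-lemma-type argument the finiteness of $H^i_\et(k, n)$ is reduced to the finiteness of $H^i_\et(S, n)$ for all $i$.

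The remaining — and main — step is therefore to prove that $H^i_\et(S, n)$ is finite for $n \neq 0, 1$ (and all $i$). For this I would use that $\sO_k$ is a henselian (in fact complete) discrete valuation ring with finite residue field, so that proper/smooth base change and the henselian rigidity theorem identify the {\'e}tale cohomology of $S$ (with torsion coefficients prime to $p$) with that of its closed fiber $s = \Spec(\ff)$: the closed immersion $\iota \colon s \inj S$ induces an isomorphism $H^i_\et(S, {\Z}/{\ell^n}(j)) \xrightarrow{\cong} H^i_\et(\ff, {\Z}/{\ell^n}(j))$ for $\ell \neq p$, since $S$ is the spectrum of a henselian local ring with residue field $\ff$ (proper base change / invariance of {\'e}tale cohomology under henselization). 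Hence $H^i_\et(S, n) \cong H^i_\et(\ff, n)$, which is finite for $n \neq 0, 1$ by \thmref{thm:Kahn} as above. Combining the two reductions completes the proof. The delicate point I would watch most carefully is the bookkeeping of Tate twists through absolute purity and the henselian rigidity isomorphism — making sure \lemref{lem:Roots-PB-local} is applied at each stage so that the coefficient sheaves genuinely match — and double-checking the low-twist edge cases $n = -1$ and the boundary values in the range $[j, 2j+1]$ of \thmref{thm:Kahn}, since those are exactly where the finiteness statement is sharp.
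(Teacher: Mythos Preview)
Your proof is correct and follows essentially the same approach as the paper: localization for $\eta \inj S \hookleftarrow s$, absolute purity, rigidity (henselian), \lemref{lem:Roots-PB-local} for the twists, and then \thmref{thm:Kahn} for the $\ff$-terms. The only cosmetic difference is that the paper observes (using $cd(\ff) = 1$) that the long exact sequence actually splits into short exact sequences $0 \to H^i_\et(\ff, n) \to H^i_\et(k, n) \to H^{i-1}_\et(\ff, n-1) \to 0$, which makes the bookkeeping slightly cleaner than your five-lemma style argument, but the content is identical.
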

  \begin{proof}
    By \cite[Prop.~III.1.25]{Milne-etale} and
    \lemref{lem:Roots-PB-local}, we have an exact sequence
   \begin{equation}\label{eqn:Etale-0-0}
       H^i_\et(S, n) \to H^i_\et(k, n) \to H^{i+1}_{\et, s}(S, n).
 \end{equation}
   By the purity \cite[Thm.~2.1.1]{Fujiwara} and rigidity
   \cite[Cor.~VI.2.7]{Milne-etale} theorems, we have isomorphisms 
$H^{i-1}_{\et}(\ff, n-1) \xrightarrow{\cong} H^{i+1}_{\et, s}(S, n)$ and $H^i_\et(S, n) \xrightarrow{\cong} H^i_{\et}(\ff, n)$, respectively. The lemma now follows by using
these isomorphisms in \eqref{eqn:Etale-0-0} and applying \thmref{thm:Kahn}.
\end{proof}

Recall that a reduced Noetherian scheme $X$ of pure dimension $d$ with
 irreducible components $\{X_1, \ldots , X_r\}$ is called a normal crossing scheme
 if for every nonempty subset $J \subset [1, r]$, the scheme theoretic 
intersection $X_J := {\underset{i \in J}\bigcap} X_i$ is a regular scheme
which is either empty or of pure dimension $d + 1 - |J|$. It is clear from this
definition that if $X$ is a normal crossing scheme, then
$X_i \bigcap \ (\cup_{j \neq i} X_j)$ is also a normal crossing scheme (of smaller
dimension than that of $X$).

\begin{lem}\label{lem:Etale-1}
  Let $f \colon \sX \to S$ be a projective and flat morphism of relative dimension
  $d$ from a regular integral scheme. Let $Y \subset \sX$ be a normal crossing
  closed subscheme of pure codimension $c$ such that  $Y \subset \sX_s$. Then
  $H^i_{\et,Y}(\sX, n)$ is finite if $n \notin [c, d+1]$.
    \end{lem}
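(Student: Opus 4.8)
The strategy is an induction on the number of irreducible components of $Y$, combined with a reduction to the case of smooth closed subschemes of $\sX_s$, which in turn is handled via absolute purity and the finiteness of \'etale cohomology over the finite residue field $\ff$ (Theorem~\ref{thm:Kahn}). First I would dispose of the inductive step. Write $Y = Y_1 \cup Y'$ where $Y_1$ is an irreducible component (so $Y_1$ is regular of pure codimension $c$ in $\sX$, hence a regular integral scheme) and $Y' = \bigcup_{j \ge 2} Y_j$; both $Y'$ and $Y_1 \cap Y'$ are again normal crossing closed subschemes of $\sX_s$, of pure codimension $c$ and $\ge c+1$ respectively, and of dimension strictly smaller than that of $Y$. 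The Mayer--Vietoris sequence for cohomology with support along a union of closed subschemes (which holds for \'etale cohomology, e.g.\ as in \cite[Exc.~VI.3.4]{Milne-etale}, applied to the closed cover $\{Y_1, Y'\}$ of $Y$) gives a long exact sequence
\[
\cdots \to H^i_{\et, Y_1 \cap Y'}(\sX, n) \to H^i_{\et, Y_1}(\sX, n) \oplus H^i_{\et, Y'}(\sX, n) \to H^i_{\et, Y}(\sX, n) \to H^{i+1}_{\et, Y_1 \cap Y'}(\sX, n) \to \cdots.
\]
By the inductive hypothesis the groups supported along $Y'$ and along $Y_1 \cap Y'$ are finite in the stated range of $n$ (note the codimension of $Y_1 \cap Y'$ is at least $c$, so $[c, d+1]$ is the right window for all of them), so it suffices to prove finiteness of $H^i_{\et, Y_1}(\sX, n)$ for a single regular integral $Y_1$ of codimension $c$ contained in $\sX_s$.

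For the base case, $Z := Y_1$ is regular of codimension $c$ in the regular scheme $\sX$, so Gabber's absolute purity theorem \cite{Fujiwara} (valid since the coefficients $\Z/\ell^n$ are prime to the residue characteristics) gives a canonical isomorphism
\[
H^i_{\et, Z}(\sX, {\Z}/{\ell^n}(n)) \xrightarrow{\cong} H^{i - 2c}_{\et}(Z, {\Z}/{\ell^n}(n - c)),
\]
compatibly in $n$ and $\ell$, hence an isomorphism $H^i_{\et, Z}(\sX, n) \cong H^{i-2c}_\et(Z, n-c)$ after passing to the direct sum over $\ell \ne p$ of the colimits. Now $Z$ is a regular integral scheme which is \emph{projective over $\ff$}: indeed $Z \subset \sX_s$ is closed, $\sX_s$ is projective over $\Spec(\ff)$ (as $f$ is projective), and $Z$ is of finite type over the finite, hence perfect, field $\ff$, so $Z$ is in fact smooth over $\ff$. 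Its dimension is $d - c$ (since $\sX$ has relative dimension $d$ over $S$, the special fibre $\sX_s$ has dimension $d$, and $Z$ has codimension $c$ in $\sX$, equivalently codimension $c-1$ in $\sX_s$, so $\dim Z = d - (c-1) - 1 = d - c$; here one uses flatness of $f$ so that $\sX_s$ is equidimensional of dimension $d$). Applying Theorem~\ref{thm:Kahn} to the smooth $\ff$-scheme $Z$ with twist $m := n - c$: $H^{i-2c}_\et(Z, m)$ is finite whenever $m \notin [0, d-c]$, i.e.\ whenever $n \notin [c, d]$. This already gives finiteness of $H^i_{\et, Z}(\sX, n)$ for $n \notin [c, d]$; to cover the remaining value $n = d+1$ in the claimed range $n \notin [c, d+1]$ I would note that for $m = d + 1 - c > d - c$ the hypothesis $m \notin [0, \dim Z]$ of Theorem~\ref{thm:Kahn} is still satisfied, so $H^{i-2c}_\et(Z, d+1-c)$ is finite as well. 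Thus $H^i_{\et, Z}(\sX, n)$ is finite for all $n \notin [c, d+1]$, which is the base case.

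Putting the two parts together by induction on the number of components of $Y$ completes the proof. \textbf{The main obstacle} I anticipate is bookkeeping with dimensions and codimensions along the induction: one must check that when passing to $Y'$ and to $Y_1 \cap Y'$ the relevant codimension (call it $c'$) satisfies $c' \ge c$, so that the exclusion range $n \notin [c, d+1]$ used for $H^i_{\et, Y}$ is contained in (hence implied by) the ranges $n \notin [c', d+1]$ available by induction for the two other terms. Since $Y$ is normal crossing with all components of the \emph{same} codimension $c$, and intersections of components have codimension $\ge c$ (strictly larger for $Y_1 \cap Y'$), this works out, but it requires one to set up the induction on ``normal crossing closed subschemes of $\sX_s$, all of whose components have codimension $\ge c$'' rather than on a fixed codimension. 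A secondary technical point is to make sure the absolute purity isomorphism is natural enough in $\ell$ and in the twist to commute with forming $\bigoplus_{\ell \ne p} \varinjlim_n$, which it is, as Gabber's isomorphism is induced by a cycle class compatible with change of coefficients.
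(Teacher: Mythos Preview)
Your approach is the same as the paper's: Mayer--Vietoris to split off an irreducible component, absolute purity plus Theorem~\ref{thm:Kahn} for the regular base case. Two points need correction.

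First, the dimension count is off by one. Since $S=\Spec(\sO_k)$ has dimension $1$ and $f$ has relative dimension $d$, the regular integral scheme $\sX$ has dimension $d+1$, so a regular closed $Z\subset\sX$ of codimension $c$ has $\dim Z = d+1-c$ (equivalently, $Z$ has codimension $c-1$ in the $d$-dimensional fibre $\sX_s$, giving $d-(c-1)=d+1-c$, not $d-c$). With the correct value, Theorem~\ref{thm:Kahn} yields finiteness of $H^{i-2c}_\et(Z,n-c)$ exactly when $n-c\notin[0,d+1-c]$, i.e.\ $n\notin[c,d+1]$, matching the statement on the nose. Your paragraph about ``the remaining value $n=d+1$'' is then unnecessary and in fact confused: $n=d+1$ lies \emph{inside} $[c,d+1]$, so it is excluded by hypothesis, not a case to be covered.

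Second, the induction as you state it (only on the number of components) does not close. You correctly note that $Y'$ has fewer components than $Y$, but $Y_1\cap Y'$ need not: it can have as many irreducible components as $Y'$, or more. What $Y_1\cap Y'$ does have is strictly smaller dimension. The paper therefore runs a \emph{double} induction, on the dimension of $Y$ and on the number of its irreducible components: the term $H^i_{\et,Y'}(\sX,n)$ is handled by induction on the number of components, while $H^{i+1}_{\et,Y_1\cap Y'}(\sX,n)$ is handled by induction on dimension (noting that $Y_1\cap Y'$ is again a normal crossing scheme, of codimension $\ge c+1\ge c$, so the exclusion $n\notin[c,d+1]$ is a fortiori valid for it). Your ``main obstacle'' paragraph is circling this, but the fix is the induction scheme, not the codimension bookkeeping. (Also, $Y'$ has the \emph{same} dimension as $Y$, not strictly smaller.)
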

\begin{proof}
We shall prove the lemma by a double induction on the dimension and the number of
      irreducible components of $Y$. We can assume that $Y$ is reduced.
      If $Y$ is regular (e.g., if $\dim(Y) = 0$), then 
      \cite[Thm.~2.1.1]{Fujiwara} 
      implies that $H^i_{\et,Y}(\sX, n) \cong H^{i-2c}_{\et}(Y, n-c)$ and the
      latter group is finite in the given range by \thmref{thm:Kahn} because
      $n-c \notin [0, \dim(Y)]$.

In the general case, we write
      $Y = Y' \bigcup Z$, where $Y'$ is an irreducible component of $Y$ and $Z$ is the
      union of its remaining irreducible components. We have
      the Mayer-Vietoris exact sequence (cf. \cite[Exc.~VI.3.4]{Milne-etale})
      \[
       H^{i}_{\et, Y'}(\sX, n) \oplus  H^{i}_{\et, Z}(\sX, n) \to
       H^{i}_{\et, Y}(\sX, n) \to H^{i+1}_{\et, Y' \cap Z}(\sX, n).
     \]
The term on the left is finite by induction on the number of irreducible components
     of $Y$ and the term on the right is finite by induction on $\dim(Y)$ because
     $Y' \cap Z$ is a normal crossing scheme of strictly smaller dimension as we
     observe before. It follows that the middle term is finite.
\end{proof}

Recall that an integral and smooth  projective $k$-scheme $X$ is said to admit a
semistable reduction if there exists a regular integral scheme $\sX$ together with
a faithfully flat projective morphism $f \colon \sX \to S$ such that $\sX_\eta = X$
and $(\sX_s)_\red$ is a strict normal crossing divisor on $\sX$. It is a consequence of
    \cite{Deligne-Mumford} and \cite[Thm.~0.1, Cor.~0.4]{Cossart-J-S}
    (see also \cite{Cossart-J-S-2}) that $X$ admits a semistable reduction if
its dimension is at most one.

\begin{lem}\label{lem:Etale-fin-ss}
  Let $X \in \Sm_k$ be an integral projective scheme of dimension $d$ which admits a
  semistable reduction. Then $H^i_\et(X, n)$ is finite if $n \notin [0,d+1]$.
\end{lem}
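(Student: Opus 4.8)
The plan is to exploit the semistable model to reduce the finiteness over the local field $k$ to the finiteness over the finite residue field $\ff$, which is known by \thmref{thm:Kahn}. Let $f\colon \sX\to S$ be a regular integral scheme, projective and faithfully flat over $S=\Spec(\sO_k)$, with $\sX_\eta=X$ and $(\sX_s)_\red$ a strict normal crossing divisor on $\sX$. Since $\sX$ is regular of dimension $d+1$, I would first apply the localization sequence in {\'e}tale cohomology (with the $\Q_\ell/\Z_\ell(n)$-coefficients, summed over $\ell\neq p$) for the open immersion $X=\sX_\eta\inj\sX$ with closed complement $Y:=(\sX_s)_\red$ of pure codimension one:
\[
\cdots\to H^{i}_{\et,Y}(\sX,n)\to H^{i}_\et(\sX,n)\to H^{i}_\et(X,n)\to H^{i+1}_{\et,Y}(\sX,n)\to\cdots.
\]
By \lemref{lem:Etale-1} applied with $c=1$, the groups $H^{i}_{\et,Y}(\sX,n)$ and $H^{i+1}_{\et,Y}(\sX,n)$ are finite whenever $n\notin[1,d+1]$. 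So it remains to prove that $H^i_\et(\sX,n)$ is finite for $n\notin[0,d+1]$; combining the two ranges then gives finiteness of $H^i_\et(X,n)$ for $n\notin[0,d+1]$, which is the claim.

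\textbf{Finiteness of $H^i_\et(\sX,n)$.} To handle $H^i_\et(\sX,n)$, I would use that $\sX$ is proper over $S$, together with the proper base change theorem in {\'e}tale cohomology. Since $S$ is a local scheme with closed point $s$, proper base change gives $H^i_\et(\sX,\sF)\cong H^i_\et(\sX_s,\iota^*\sF)$ for any torsion sheaf $\sF$ of order prime to $p$, where $\iota\colon\sX_s\inj\sX$ is the inclusion of the special fiber. Applying this to $\sF=\Z/\ell^n(n)$ and taking the colimit over $n$ and the sum over $\ell\neq p$ — using \lemref{lem:Roots-PB-local} to identify the restriction of the twist sheaf $\mu_{\ell^n}^{\otimes n}$ on $\sX$ with that on $\sX_s$ — yields $H^i_\et(\sX,n)\cong H^i_\et(\sX_s,n)$. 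Now $\sX_s$ is a projective $\ff$-scheme of dimension $d$ (and $(\sX_s)_\red$ is a normal crossing scheme), so the second part of \thmref{thm:Kahn} gives that $H^i_\et(\sX_s,n)$ is finite whenever $n\notin[0,d]$, in particular whenever $n\notin[0,d+1]$ except possibly $n=d+1$. For the boundary case $n=d+1$ I would instead argue directly: since $\sX_s$ has dimension $d$, the {\'e}tale cohomological dimension of $(\sX_s)_{\red}$ for torsion sheaves prime to $p$ is at most $2d$, and more relevantly the weight/range constraints in \thmref{thm:Kahn} for a $d$-dimensional scheme force finiteness of $H^i_\et(\sX_s,d+1)$ because $d+1\notin[0,d]$ — so actually the clean statement ``$n\notin[0,d]$'' from \thmref{thm:Kahn} already covers $n=d+1$, and the only reason the lemma states $[0,d+1]$ rather than $[0,d]$ is the shift coming from the codimension-one support term $H^{i}_{\et,Y}(\sX,n)$ above. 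Thus the two inputs combine exactly on the range $n\notin[0,d+1]$.

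\textbf{Main obstacle.} The step I expect to require the most care is the identification of the twisted coefficient sheaves under base change and restriction — i.e.\ making sure that the colimit $H^i_\et(\sX,n)=\bigoplus_{\ell\neq p}\varinjlim_r H^i_\et(\sX,\Z/\ell^r(n))$ genuinely matches $\bigoplus_{\ell\neq p}\varinjlim_r H^i_\et(\sX_s,\Z/\ell^r(n))$ with the correct Tate twist on the special fiber. This is where \lemref{lem:Roots-PB-local} is essential: it guarantees $\iota^*\mu_{m,\sX}\cong\mu_{m,\sX_s}$ for $m$ prime to $p$, so the twists are compatible and proper base change can be applied coefficient-by-coefficient before passing to the colimit. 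A secondary point to check is that proper base change applies even though $\sX_s$ is not reduced and its reduction is only a normal crossing scheme; but proper base change holds for arbitrary proper morphisms and torsion sheaves, and finiteness on $\sX_s$ follows from finiteness on $(\sX_s)_\red$ by nil-invariance of {\'e}tale cohomology, so this causes no real difficulty. With these compatibilities in hand, the lemma follows by assembling the localization sequence bound from \lemref{lem:Etale-1} with the proper-base-change reduction to \thmref{thm:Kahn}.
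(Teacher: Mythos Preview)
Your proposal is correct and follows essentially the same approach as the paper: both use the localization sequence for $X=\sX_\eta\inj\sX$ with closed complement $Y=(\sX_s)_\red$, bound the support term $H^{i+1}_{\et,Y}(\sX,n)$ via \lemref{lem:Etale-1}, and identify $H^i_\et(\sX,n)$ with $H^i_\et(Y,n)$ by proper base change (the paper calls this ``rigidity'') to reduce to \thmref{thm:Kahn}. Your discussion is somewhat more verbose --- the paper compresses everything into the three-term exact sequence $H^i_\et(Y,n)\to H^i_\et(X,n)\to H^{i+1}_{\et,Y}(\sX,n)$ --- and your treatment of the range $n=d+1$ is unnecessarily hesitant (as you yourself note, $d+1\notin[0,d]$ so Kahn's theorem applies directly), but the substance is the same.
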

\begin{proof}
Let $f \colon \sX \to S$ be a semistable reduction of $X$ with the reduced closed 
fiber $Y$. The cohomology with support exact sequence and the rigidity theorem
(cf. \cite[Prop.~III.1.25, Cor.~VI.2.7]{Milne-etale}) give an exact sequence
      \begin{equation}\label{eqn:Etale-1-0}
        H^i_\et(Y, n) \to H^i_\et(X, n) \to H^{i+1}_{\et, Y}(\sX, n).
      \end{equation}
      In the given range, the left term is finite by \thmref{thm:Kahn} and the
      right term is finite by \lemref{lem:Etale-1}. Hence, the middle term is finite.
    \end{proof}

\begin{lem}\label{lem:Etale-fin}
      Let $X \in \Sch_k$ be a projective scheme of dimension $d$. Then
      $H^i_\et(X, n)$ is finite if $n \notin [0,d+1]$.
    \end{lem}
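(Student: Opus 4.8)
The statement to prove is \lemref{lem:Etale-fin}: for $X \in \Sch_k$ projective of dimension $d$ over a local field $k$ (of residue characteristic $p$), the group $H^i_\et(X, n)$ is finite whenever $n \notin [0, d+1]$. Here $H^i_\et(X,n)$ denotes the prime-to-$p$ part as in \eqref{eqn:Fin-mot-coh}. The plan is to reduce to the semistable case \lemref{lem:Etale-fin-ss}, which has already been proved for integral smooth projective $X$ admitting a semistable reduction, by using alterations and a devissage on dimension together with the Mayer--Vietoris / localization machinery already developed in this subsection.

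First I would perform the standard reductions: since {\'e}tale cohomology is nil-invariant we may assume $X$ reduced, and by the Mayer--Vietoris sequence for the closed cover of $X$ by its irreducible components (together with induction on the number of components, exactly as in the proof of \lemref{lem:Real-iso-sing}) we may assume $X$ integral, at the cost of also having to treat the lower-dimensional intersection loci — but those are covered by induction on $d$, the base case $d = 0$ being \lemref{lem:Etale-0} (a point $\Spec(k')$ with $k'/k$ finite, so $k'$ is again a local field, and $n \notin [0,1]$). Next, by the theorems of de Jong--Gabber--Temkin on $p$-alterations (as used in the proof of \lemref{lem:Real-iso-sing}, via \cite[Thm.~1.2.5]{Temkin} and the flatification theorem \cite[Thm.~5.2.2]{Gruson-Raynaud}), there is a commutative square as in \eqref{eqn:Real-iso-1}: a blow-up $f\colon X' \to X$ along a nowhere dense closed $Z \subset X$ and a finite flat degree-$p^r$ morphism $\pi'\colon W' \to X'$ with $W'$ integral and smooth projective over $k$. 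The trace argument (composition $\pi'^* $ followed by $\pi'_*$ is multiplication by $p^r$, invertible on the prime-to-$p$ part) shows $H^i_\et(X', n)$ injects into $H^i_\et(W', n)$ up to a bounded factor; combined with the cdh-type closed-cover Mayer--Vietoris sequence relating $X$, $X'$, $Z$ and the exceptional locus $E = f^{-1}(Z)$, and the dimension induction applied to $Z$ and $E$ (which have dimension $< d$, so their $H^i_\et(-,n)$ are finite for $n \notin [0,d]$, a fortiori for $n \notin [0,d+1]$), finiteness for $X$ follows once we know it for $W'$.

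It therefore remains to prove the lemma for $W'$, an integral smooth projective $k$-scheme of dimension $d$. If $d \le 1$ this is immediate from \lemref{lem:Etale-fin-ss} since such schemes admit semistable reduction by \cite{Deligne-Mumford} and \cite[Thm.~0.1, Cor.~0.4]{Cossart-J-S}. In general a smooth projective variety over a local field need not admit a semistable reduction, so here one applies alterations \emph{again}: by de Jong's theorem there is a finite extension $k'/k$ and a $k'$-alteration $W'' \to W'_{k'}$ with $W''$ admitting a semistable model over $\Spec(\sO_{k'})$. Pulling back along $\Spec(k') \to \Spec(k)$ multiplies cohomology by the degree $[k':k]$ up to bounded torsion (via the trace for the finite flat map, cf. \eqref{eqn:Real-iso-2}), and the alteration $W'' \to W'_{k'}$ of degree $N$ again gives a split injection up to multiplication by $N$ on the prime-to-$N$ part; so $H^i_\et(W', n)\{\ell\}$ embeds (up to a bounded factor) into $H^i_\et(W'', n)\{\ell\}$ for each $\ell \nmid N p$, and the finitely many remaining primes $\ell \mid N$, $\ell \neq p$, are handled by choosing a second alteration of degree prime to $\ell$ (or simply by noting each $H^i_\et(W', \Z/\ell^m(n))$ is finite and controlling the colimit — here one must be slightly careful and argue directly, but boundedness of the groups $H^i_\et(W'', n)$ from \lemref{lem:Etale-fin-ss} transfers). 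Since $W''$ has a semistable model, \lemref{lem:Etale-fin-ss} gives finiteness of $H^i_\et(W'', n)$ for $n \notin [0, d+1]$, completing the proof.

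The main obstacle I anticipate is the bookkeeping around the residual primes dividing the alteration degrees: the trace-argument gives splitting only after inverting the degree, so to conclude finiteness for \emph{every} prime $\ell \neq p$ one either needs a family of alterations whose degrees have no common prime factor other than $p$ (available by choosing alterations adapted to each $\ell$), or one must instead run the whole devissage with $\Z/\ell^m$-coefficients and then take the colimit, checking that the bounds obtained are uniform in $m$ — this uniformity is exactly what \lemref{lem:Etale-fin-ss} and \lemref{lem:Etale-1} provide (finiteness of the colimit, equivalently bounded exponent and bounded corank). Apart from that, all the ingredients — nil-invariance, Mayer--Vietoris for closed covers, Gabber purity, localization/rigidity, and the trace formalism for finite flat morphisms — are already in hand from earlier in this section, so the argument is a structured induction rather than anything requiring new input.
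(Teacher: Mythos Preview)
Your overall strategy---induction on $d$, reduction to the integral case, alterations plus trace arguments to reduce to the semistable case \lemref{lem:Etale-fin-ss}---is the same as the paper's, and the argument is essentially correct. The difference is that you run the alteration step twice: first (via \cite[Thm.~1.2.5]{Temkin} as in \lemref{lem:Real-iso-sing}) to make the target smooth, and then a second time (via de Jong) to obtain semistable reduction. This second step introduces an alteration of possibly non-$p$-power degree, which is exactly the bookkeeping obstacle you flag at the end.

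The paper avoids this detour entirely by invoking the stronger form of Temkin's theorem \cite[Thm.~4.3.1]{Temkin}, which produces in a \emph{single} step a finite extension $k'/k$ of $p$-power degree (when $\Char(k)=p$) and a $p$-alteration $W \to X$ with $W$ smooth projective over $k'$ \emph{and} admitting semistable reduction. Since both the field extension and the alteration have $p$-power degree, the trace argument works uniformly for every prime $\ell \neq p$, and no prime-by-prime analysis is needed. In characteristic zero the same theorem gives a birational $\pi$ with $k'=k$, and one finishes by the Mayer--Vietoris argument you describe. Your workaround (choosing, for each residual prime $\ell \mid N$, a separate alteration of degree prime to $\ell$) would succeed, but it is an unnecessary complication once you use the full strength of Temkin's result.
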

    \begin{proof}
We shall prove the lemma by induction on $d$. If $d = 0$, then
      we are done by \lemref{lem:Etale-0}. We shall now assume that $d > 0$.

By \cite[Thm.~4.3.1]{Temkin}, we can find a finite field extension ${k'}/k$
      and a smooth projective integral $k'$-scheme $W$ together with a projective
      morphism $\pi \colon W \to X$ such that the following hold.
      \begin{enumerate}
      \item
        $k' = k$ and $\pi$ is birational if $\Char(k) = 0$.
      \item
        The degrees  $[k':k]$ is some power of $p$
        if $\Char(k) = p$.
      \item The morphism 
        $\pi$ is a $p$-alteration (an alteration {\`a} la de Jong whose degree is a
        $p$-power) if $\Char(k) = p$.
      \item
        $W \to \Spec(k')$ admits a semistable reduction.
      \end{enumerate}

     \lemref{lem:Etale-fin-ss} implies that $H^i_\et(W, n)$ is finite.
     If $\Char(k) = 0$, then a Mayer-Vietoris argument (cf. ~\eqref{eqn:Real-iso-3}
     in the
      proof of \lemref{lem:Real-iso-sing}) and induction on $d$ reduce the proof to
      the finiteness of $H^i_\et(W, n)$. 
      If $\Char(k) = p$, we can again repeat
      the second part of the proof of \lemref{lem:Real-iso-sing}
      (cf. ~\eqref{eqn:Real-iso-1} and ~\eqref{eqn:Real-iso-2}) to
      conclude that $H^i_\et(X,n)$ is finite.
 \end{proof} 

\vskip .2cm

We can now prove the main finiteness result over local fields.

\begin{thm}\label{thm:Boundedness}
  Let $k$ be a local field and $X \in \Sch_k$ be of dimension $d \ge 0$. Let $n$ be any
  integer. Then $H^i_c(X, n)$ and $H^i_{\et,c}(X, n)$ are finite if $n \ge d+2$.
\end{thm}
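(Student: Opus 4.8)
The plan is to reduce the statement for general $X$ to the case where $X$ is projective and then to invoke \lemref{lem:Etale-fin} together with the comparison \corref{cor:Real-iso-sing-0}. First I would dispose of the motivic side: by \corref{cor:Real-iso-sing-0}, the {\'e}tale realization map $\epsilon^*_X \colon H^i_c(X, {\Z}/{\ell^n}(j)) \to H^i_{\et,c}(X, {\Z}/{\ell^n}(j))$ is an isomorphism for $j \ge \min\{i, d + cd(k)\}$; since $cd(k) = 2$ for a local field, this isomorphism holds whenever $j \ge d+2$. Passing to the colimit over $n$ and the sum over primes $\ell \neq p$, this gives $H^i_c(X, j) \xrightarrow{\cong} H^i_{\et,c}(X, j)$ for all $j \ge d+2$. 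So it suffices to prove finiteness of $H^i_{\et,c}(X, n)$ for $n \ge d+2$, and the motivic finiteness follows.

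\textbf{Reduction to the projective case.}
Next I would handle $H^i_{\et,c}(X, n)$ itself. Choosing a compactification $u \colon X \inj \ov{X}$ with complementary reduced closed subscheme $\iota \colon Z \inj \ov{X}$, the {\'e}tale analogue of the long exact sequence in \lemref{lem:Ex-seq}(2) gives
\[
  \cdots \to H^i_{\et,c}(X, n) \to H^i_{\et,c}(\ov{X}, n) \to H^i_{\et,c}(Z, n) \to \cdots,
\]
and since $\ov{X}$ and $Z$ are proper, their compactly-supported {\'e}tale cohomology coincides with ordinary {\'e}tale cohomology. Both $\ov{X}$ and $Z$ are projective $k$-schemes of dimension $\le d$, so \lemref{lem:Etale-fin} shows $H^i_\et(\ov{X}, n)$ and $H^i_\et(Z, n)$ are finite for $n \notin [0, d+1]$, in particular for $n \ge d+2$. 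The five lemma (or rather the two-out-of-three property for finiteness in an exact sequence) then forces $H^i_{\et,c}(X, n)$ to be finite in that range.

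\textbf{Main obstacle.}
The genuinely substantive input is \lemref{lem:Etale-fin}, whose proof rests on the finiteness theorem of Colliot-Th{\'e}l{\`e}ne--Sansuc--Soul{\'e} and Kahn over finite fields (\thmref{thm:Kahn}) propagated to local fields via the localization/rigidity exact sequence relating $H^i_\et(X, n)$ to the {\'e}tale cohomology of a semistable (or $p$-alteration) model over $\Spec(\sO_k)$ and of its special fiber, and via \lemref{lem:Etale-1} bounding the cohomology with support along a normal crossing scheme in the closed fiber. That machinery is already established in the excerpt, so for the present theorem the only work is the bookkeeping of ranges: one must check that $cd(k) = 2$ makes the threshold in \corref{cor:Real-iso-sing-0} compatible with the threshold $n \notin [0, d+1]$ in \lemref{lem:Etale-fin}, which it does since $n \ge d+2$ implies both $n \ge d + cd(k) = d+2$ and $n > d+1$. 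Thus the proof is essentially a matter of assembling the localization sequence, the comparison isomorphism, and the projective finiteness lemma; no new geometric difficulty arises at this final stage.
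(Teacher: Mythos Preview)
Your approach matches the paper's almost exactly: reduce the motivic statement to the {\'e}tale one via \corref{cor:Real-iso-sing-0} (using $cd(k)=2$), then use the localization sequence for a compactification and invoke \lemref{lem:Etale-fin}. There is one genuine oversight, however. You assert that a compactification $\ov{X}$ of $X$ (and hence $Z$) is projective, but Nagata compactification only yields a \emph{proper} scheme; for an arbitrary separated finite type $k$-scheme $X$ there is no reason $\ov{X}$ should be projective, and \lemref{lem:Etale-fin} is stated only for projective schemes. So as written your five-lemma step does not go through.

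The paper patches this with a short induction on $d$: choose a nowhere dense closed $Z\subset X$ with affine complement $U$ (any separated finite type $k$-scheme has a dense affine open). Since $\dim Z < d$, the induction hypothesis handles $Z$, and \lemref{lem:Ex-seq}(2) then reduces to the case $X=U$ affine, hence quasi-projective. Now a \emph{projective} compactification exists, the localization sequence applies, and \lemref{lem:Etale-fin} finishes as you intended. Your argument becomes correct once you insert this reduction to the quasi-projective case.
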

\begin{proof}
  We prove the theorem by induction on $d$.
  If $d= 0$, we are through by Lemmas~\ref{lem:Real-iso-sing} and
  ~\ref{lem:Etale-0}. 
  In general, we can find a nowhere dense closed subscheme $Z \subset X$
  whose complement is affine. By using induction and \lemref{lem:Ex-seq}, we can 
  assume that $X$ is quasi-projective. In the latter case, we can use a similar
  argument to assume that $X$ is projective. The theorem now follows by
  Lemmas~\ref{lem:Real-iso-sing} and ~\ref{lem:Etale-fin} because $cd(k) = 2$. 
\end{proof}

\begin{cor}\label{cor:Boundedness-0}
  Let the notations be as in \thmref{thm:Boundedness} and let $n\ge d+2$ be an
  integer. Then there exists an integer
  $M(n) \gg 0$ such that $|H^i_c(X, {\Z}/m(n))| \le M(n)$ and
  $|H^i_{\et,c}(X, {\Z}/m(n))| \le M(n)$
  for all integers $m$ not divisible by $p$.
\end{cor}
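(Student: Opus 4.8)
The plan is to deduce Corollary~\ref{cor:Boundedness-0} from the finiteness statement in \thmref{thm:Boundedness} by a uniform-bound argument exploiting the inductive/localization structure already established. The first observation is that \thmref{thm:Boundedness} gives, for each fixed $n \ge d+2$, that $H^i_c(X, n) = \bigoplus_{\ell \neq p} H^i_c(X, \mathbb{Q}_\ell/\mathbb{Z}_\ell(n))$ is a finite group, and likewise for the {\'e}tale version. In particular only finitely many primes $\ell$ contribute, and for each such $\ell$ the $\ell$-primary part is a finite $\ell$-group of bounded corank; so there is an integer $N$, depending on $n$ (and on $X$), annihilating all the groups $H^i_c(X, \mathbb{Q}_\ell/\mathbb{Z}_\ell(n))$ for all $i$ in the relevant finite range and all $\ell \neq p$, with $H^i_c(X, \mathbb{Q}_\ell/\mathbb{Z}_\ell(n)) = 0$ for all but finitely many $(\ell, i)$. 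The task is then to convert a bound on the torsion-with-divisible-coefficients groups into a bound on the finite-coefficient groups $H^i_c(X, \mathbb{Z}/m(n))$ uniformly in $m$.

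The mechanism is the universal coefficient (Bockstein) exact sequence: for any $m$ prime to $p$, writing $m = \prod \ell^{a_\ell}$ and using $H^i_c(X, \mathbb{Z}/\ell^a(n)) = {}_{\ell^a} H^{i+1}_c(X, \mathbb{Z}_\ell(n)) \oplus \big(H^i_c(X, \mathbb{Z}_\ell(n))/\ell^a\big)$-type decompositions, one relates $H^i_c(X, \mathbb{Z}/m(n))$ to the $\ell^a$-torsion and $\ell^a$-cotorsion of the integral (or the $\mathbb{Q}_\ell/\mathbb{Z}_\ell$) cohomology. Concretely, from the short exact sequence $0 \to \mathbb{Z}/\ell^a(n) \to \mathbb{Q}_\ell/\mathbb{Z}_\ell(n) \xrightarrow{\ell^a} \mathbb{Q}_\ell/\mathbb{Z}_\ell(n) \to 0$ one gets $H^i_c(X, \mathbb{Z}/\ell^a(n)) \hookrightarrow H^i_c(X, \mathbb{Q}_\ell/\mathbb{Z}_\ell(n))$ together with a surjection onto ${}_{\ell^a} H^{i+1}_c(X, \mathbb{Q}_\ell/\mathbb{Z}_\ell(n))$ having kernel $H^i_c(X, \mathbb{Q}_\ell/\mathbb{Z}_\ell(n))/\ell^a$; hence $|H^i_c(X, \mathbb{Z}/\ell^a(n))| \le |H^i_c(X, \mathbb{Q}_\ell/\mathbb{Z}_\ell(n))| \cdot |{}_{\ell^a}H^{i+1}_c(X, \mathbb{Q}_\ell/\mathbb{Z}_\ell(n))| \le |H^i_c(X,n)|\cdot|H^{i+1}_c(X,n)|$, which is a bound independent of $a$. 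Taking the product over the finitely many primes $\ell \mid m$ (for primes $\ell$ not in the finite "bad set" the factor is $1$) gives $|H^i_c(X, \mathbb{Z}/m(n))| \le M(n)$ with $M(n) := \prod_{\ell} |H^i_c(X, \mathbb{Q}_\ell/\mathbb{Z}_\ell(n))|\cdot|H^{i+1}_c(X, \mathbb{Q}_\ell/\mathbb{Z}_\ell(n))|$, a finite product by \thmref{thm:Boundedness}. The identical argument with $H_{\et,c}$ in place of $H_c$ (using the analogous exact sequences, valid since {\'e}tale cohomology with compact support also has finite coefficient Bocksteins) handles the {\'e}tale case; alternatively one invokes \corref{cor:Real-iso-sing-0} to identify the two sides in the range $n \ge d+2$ and reduce to a single statement.

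The step I expect to require the most care is making the universal coefficient bound genuinely \emph{uniform in $m$}, i.e. checking that the only primes contributing a nontrivial factor are those in a fixed finite set determined by \thmref{thm:Boundedness}, and that the exponents $a_\ell$ play no role once one passes to the bound $|{}_{\ell^a} A| \le |A|$ for a finite abelian group $A$. This is routine but must be stated carefully: one needs that $H^i_c(X, \mathbb{Q}_\ell/\mathbb{Z}_\ell(n))$ is not merely finite for each $\ell$ but zero for almost all $\ell$, which is exactly the content of the finiteness of the \emph{sum} $H^i_c(X,n) = \bigoplus_{\ell \neq p} H^i_c(X,\mathbb{Q}_\ell/\mathbb{Z}_\ell(n))$ in \thmref{thm:Boundedness}. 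A minor supplementary point is that $i$ ranges over a finite interval: outside a bounded range of cohomological degrees the groups $H^i_c(X, \mathbb{Z}/m(n))$ vanish (the motivic ones by dimension/weight reasons, the {\'e}tale ones since $X$ is of finite type over a field of {\'e}tale cohomological dimension $cd(k)=2$), so one takes $M(n)$ to be the maximum over this finite range of the bounds just obtained. With these observations in place the corollary follows immediately.
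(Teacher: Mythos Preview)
Your approach is essentially the same as the paper's: both use the Bockstein/exact triangle
\[
  \Z/m(n) \to (\Q/\Z)'(n) \xrightarrow{m} (\Q/\Z)'(n), \qquad (\Q/\Z)' := {\underset{\ell \neq p}\bigoplus}\ \Q_\ell/\Z_\ell,
\]
whose long exact sequence bounds $|H^i_c(X,\Z/m(n))|$ by $|H^{i-1}_c(X,n)|\cdot|H^i_c(X,n)|$, which is finite and independent of $m$ by \thmref{thm:Boundedness}. The paper simply runs this with all primes $\ell\neq p$ at once, whereas you decompose prime-by-prime and reassemble; this is the same argument with extra bookkeeping.

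One small slip to fix: from $0 \to \Z/\ell^a \to \Q_\ell/\Z_\ell \xrightarrow{\ell^a} \Q_\ell/\Z_\ell \to 0$ the long exact sequence gives
\[
  0 \to H^{i-1}_c(X,\Q_\ell/\Z_\ell(n))/\ell^a \to H^i_c(X,\Z/\ell^a(n)) \to {}_{\ell^a}H^{i}_c(X,\Q_\ell/\Z_\ell(n)) \to 0,
\]
so the map $H^i_c(X,\Z/\ell^a(n)) \to H^i_c(X,\Q_\ell/\Z_\ell(n))$ is not an injection in general, and the relevant degrees are $i-1$ and $i$, not $i$ and $i+1$. This does not affect the final uniform bound.
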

\begin{proof}
  Use \thmref{thm:Boundedness} and the exact triangle
  ${\Z}/m(n) \to ({\Q}/{\Z})'(n) \xrightarrow{m} ({\Q}/{\Z})'(n)$,
  where $({\Q}/{\Z})' = {\underset{\ell \neq p}\bigoplus} {\Q_\ell}/{\Z_\ell}$.
\end{proof}

\subsection{Finiteness in characteristic zero}\label{sec:Fin-0}
Let us now assume that $k$ is a local field of characteristic zero. In this case,
\corref{cor:Boundedness-0} is not quite sufficient for us because it excludes
the $p$-primary part of the motivic and {\'e}tale cohomology. We shall prove the
following weaker version of \corref{cor:Boundedness-0} which will be enough for our
purpose.

\begin{prop}\label{prop:Fin-char-0}
Let $X \in \Sch_k$ be of dimension $d \ge 0$. Then there exists an integer
  $M \gg 0$ such that $|H^{2d+2}_c(X, {\Z}/m(d+2))| \le M$ and
  $|H^{2d+2}_{\et,c}(X, {\Z}/m(d+2))| \le M$
  for all integers $m \neq 0$.
\end{prop}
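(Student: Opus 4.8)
The plan is to prove Proposition~\ref{prop:Fin-char-0} by reducing the $p$-primary statement to the prime-to-$p$ statement already obtained in \corref{cor:Boundedness-0}, using the fact that over a local field of characteristic zero, everything in the relevant degree is controlled by $K^M_2(k)$, which by Tate's theorem \cite{Tate-Kyoto} is the direct sum of a finite group and a divisible group. First I would recall that the prime-to-$p$ part, i.e. the bound on $|H^{2d+2}_c(X,{\Z}/m(d+2))|$ for $m$ prime to $p$, is already given by \corref{cor:Boundedness-0} (with $n=d+2\ge d+2$). So the new content is a uniform bound on the $p$-primary parts $|H^{2d+2}_c(X,{\Z}/{p^r}(d+2))|$ as $r\to\infty$, and the analogous statement for \'etale cohomology; combining the two via the Chinese remainder decomposition ${\Z}/m\cong \bigoplus_\ell {\Z}/{\ell^{v_\ell(m)}}$ will then give the full statement with a single constant $M$.

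Next I would set up a d\'evissage in $d$. Choosing a nowhere dense closed $Z\subset X$ with affine complement and invoking the long exact localization sequence of \lemref{lem:Ex-seq}(2) (and its \'etale analogue), together with \thmref{thm:Boundedness} which already controls $H^i_c(-,n)$ and $H^i_{\et,c}(-,n)$ for $n\ge \dim+2$ in all degrees away from $p$, I would reduce to the case $X$ smooth projective and integral, exactly as in the proof of \thmref{thm:Boundedness}; the only extra care needed is that the localization sequences must be run with $\Z/p^r$-coefficients, and the bounding of the boundary terms uses that they involve schemes of dimension $<d$, so one argues by induction on $d$. The base case $d=0$ is $X=\Spec(k')$ for a finite extension $k'/k$, where $H^{2}_c(\Spec k',\Z/p^r(2))=H^2_\et(k',\Z/p^r(2))$ by \lemref{lem:Real-iso} (since $j=2\ge i=2$), and this group is the $p^r$-torsion in a quotient of $K^M_2(k')\otimes \Z/p^r$ via the norm-residue isomorphism; by Tate's theorem $K^M_2(k')=F'\oplus D'$ with $F'$ finite and $D'$ divisible, so $K^M_2(k')/p^r$ has order bounded by $|F'|$ independently of $r$, giving the base case.

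For the smooth projective integral case of dimension $d$, the plan is to use \lemref{lem:Real-iso-sing}/\corref{cor:Real-iso-sing-0}: for $j=d+2\ge i$ whenever $i\le d+2$, the \'etale realization $\epsilon^*_X$ is an isomorphism, so it suffices to bound $|H^{2d+2}_{\et,c}(X,\Z/p^r(d+2))|$. Here I would invoke the Saito--Tate duality \thmref{thm:Saito-D}, which identifies $H^{2d+2}_{\et,c}(X,\Z/p^r(d+1))$ with $\Z/p^r$ and, more to the point, pairs $H^{2d+2}_{\et,c}(X,\Z/p^r(d+2))$ with $H^{0}_\et(X,\Z/p^r(-1))$ — but the cleaner route is to use the trace map $\tr_{X/k}$ and the spectral-sequence / Leray argument expressing $H^{2d+2}_{\et,c}(X,\Z/p^r(d+2))$ in terms of $H^{i}(k,\Z/p^r(j))$ with $j\ge 2$, together with proper base change: the contribution from each bidegree is a subquotient of a Galois cohomology group of $k$ with coefficients a finite $\Z/p^r$-module, and the only bidegree where $j=2$ occurs forces the coefficient module (a twist of $H^{2d}_\et(\bar X,\mu_{p^r}^{\otimes d})$, which is $\Z/p^r$) to give the group $H^2_\et(k,\Z/p^r(2))$, bounded uniformly as above by Tate's theorem. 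All other bidegrees have coefficients whose $p$-ranks are bounded independently of $r$ because of the Beilinson--Lichtenbaum/Bloch--Kato comparison and the finiteness of geometric \'etale cohomology.

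The main obstacle will be making the ``only one dangerous bidegree'' argument precise: one must track the $\text{(twist)}\ge 2$ condition carefully through the Hochschild--Serre (or the $f_!$-Leray) spectral sequence and verify that every entry except the one contributing $H^2_\et(k,\Z/p^r(\ge 2))$ has $p$-torsion bounded uniformly in $r$ — this is where \lemref{lem:Etale-0} (finiteness of $H^i_\et(k,n)$ for $n\ne 0,1$, hence the $\ell\ne p$ part is already handled) needs a genuine $p$-adic replacement, supplied precisely by Tate's structure theorem for $K^M_2(k)$ via the norm-residue isomorphism $K^M_2(k)/p^r\xrightarrow{\cong}H^2_\et(k,\Z/p^r(2))$. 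Once that uniform bound on $|H^2_\et(k,\Z/p^r(2))|$ is in hand, assembling the spectral sequence estimate, combining with the prime-to-$p$ bound from \corref{cor:Boundedness-0}, and unwinding the d\'evissage in $d$ completes the proof; I would then record the constant $M$ as the product of the finitely many bounds obtained along the induction.
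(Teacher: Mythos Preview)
Your outline is sound; the reductions to {\'e}tale cohomology (via \corref{cor:Real-iso-sing-0}) and to the smooth projective case (using $cd_k(Z)\le 2\dim(Z)+2$ and resolution of singularities) match the paper, as does the $d=0$ appeal to Tate's theorem on $K^M_2(k)$. The genuine difference is in the smooth projective step. The paper argues by induction on $d$: Altman--Kleiman Bertini produces a smooth hyperplane section $Y\subset X$ with affine complement $U$, and since $cd_k(U)\le d+2<2d+2$ for $d\ge 1$, purity gives a surjection $H^{2d}_\et(Y,\Z/m(d+1))\twoheadrightarrow H^{2d+2}_\et(X,\Z/m(d+2))$, closing the induction. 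Your Hochschild--Serre route avoids the induction entirely: since $cd(k)=2$ and $H^q_\et(X_{\bar k},-)=0$ for $q>2d$, the only nonvanishing entry on the line $p+q=2d+2$ is $E_2^{2,2d}$, and the trace map together with Shapiro's lemma identifies this with $H^2_\et(k',\Z/m(2))$ for $k'=H^0(X,\sO_X)$, bounded by Tate exactly as in the $d=0$ case. (Your passing mention of Saito--Tate duality is an equally direct alternative: it pairs the group in question with $H^0_\et(X,\Z/m(-1))$, whose order is bounded by $|\mu_\infty(k')|$.) Three points to clean up in your write-up: the realization isomorphism holds because $j=d+2=\min\{i,\,d+cd(k)\}$, not because $j\ge i$ (which fails for $d>0$); the ``other bidegrees'' in the spectral sequence vanish by cohomological dimension alone, so no Beilinson--Lichtenbaum input is needed; and in the d\'evissage the boundary terms $H^{2d+1}_{\et,c}(Z)$ and $H^{2d+2}_{\et,c}(Z)$ vanish outright, so the induction on $d$ you invoke there is unnecessary.
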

\begin{proof}
  We can assume that $X$ is a reduced scheme. By \corref{cor:Real-iso-sing-0},
  we need to prove the proposition only for the {\'e}tale cohomology.
  Using the argument of \thmref{thm:Boundedness} and the fact that
  $cd_k(Z) \le 2\dim(Z) + 2$ for any $Z \in \Sch_k$
  (cf. \cite[Chap.~II, Prop.~12]{Serre-GC} and \cite[Exp.~X, Cor.~4.3]{SGA4}),
  we reduce to the case when $X$ is projective over $k$.

Assume first that $d = 0$. In this case, it suffices to prove the proposition
  for $\Spec(k)$. 
  The norm-residue isomorphism ${\rm NR}_k \colon {K^M_2(k)}/{m}
 \xrightarrow{\cong} H^{2}_{\et}(k, {\Z}/m(2))$ now reduces the problem to showing the
uniform boundedness of ${K^M_2(k)}/{m}$. 
To this end, note that we can write $K^M_2(k) = F \oplus D$, where $F$ is a finite
cyclic group and $D$ is a divisible group by \cite{Tate-Kyoto} (cf. \cite{Merkurjev}
for a stronger statement).
Hence, we have a surjection $F \surj H^2_\et(k, {\Z}/m(2))$ for all integers
$m > 0$. Therefore, we get $|H^2_\et(k, {\Z}/m(2))| \le |F|$ for all $m > 0$.

We now assume that $X$ is a smooth projective scheme over $k$ and prove the
proposition by induction on $d = \dim(X)$. We can assume $X$ to be integral.
If $d = 1$, we choose a dense affine open
$U \subset X$ and let $S$ be the complementary closed subset with the reduced
subscheme structure. We then get an exact sequence
\begin{equation}\label{eqn:Surface-0-0}
  H^4_{\et, S}(X, {\Z}/m(3)) \to H^4_\et(X, {\Z}/m(3)) \to H^4_\et(U, {\Z}/m(3)).
\end{equation}
Since $U$ is an affine curve over $k$, its {\'e}tale cohomological
dimension is at most three (cf. \cite[Chap.~II, Prop.~12]{Serre-GC} and
\cite[Tag~0F0P]{SP}). In particular, the last term in the above
exact sequence is zero. On the other hand,
we have $H^4_{\et, S}(X, {\Z}/m(3)) \cong H^2_\et(S, {\Z}/m(2))$
by excision (cf. \cite[Prop.~III.1.27]{Milne-etale})
and purity for {\'e}tale cohomology. Since $S$ is the spectrum of a
finite product of local fields, we are done by the $d = 0$ case solved above.

If $d \ge 2$, we can use the Altman-Kleiman Bertini theorem \cite[Thm.~1]{AK}
to find a smooth hypersurface section (in a chosen embedding of $X$ in some
projective space over $k$) $Y \subset X$. Note that $Y$ is necessarily integral
by Grothendieck's Lefschetz hyperplane theorem for $\pi_0(-)$. Furthermore,
$U := X \setminus Y$ is affine. We thus have an exact sequence
\begin{equation}\label{eqn:Surface-0-1}
  H^{2d}_\et(Y, {\Z}/m(d+1)) \to H^{2d+2}_\et(X, {\Z}/m(d+2)) \to
  H^{2d+2}_\et(U, {\Z}/m(d+2))
\end{equation}
by the localization and purity theorems. The last term is zero because
$2d + 2 > d+2 = cd_k(U)$ and the first term is uniformly bounded by induction on $d$.
This proves the proposition when $X$ is smooth and projective over $k$.

If $X$ is projective over $k$ but not necessarily smooth, we can choose
a resolution of singularities $\pi \colon W \to X$. Then the
resulting exact sequence ~\eqref{eqn:Real-iso-3} reduces the problem
to showing the boundedness for $W$ because $cd_k(E) < 2d+1$ and $cd_k(Z) < 2d+2$.
This concludes the proof.
\end{proof}

\begin{remk}\label{remk:Fin-char-1}
  We do not expect \propref{prop:Fin-char-0} to be valid
  under the weaker conditions of \corref{cor:Boundedness-0}.
  For instance, $H^1_\et(k, {\Q_p}/{\Z_p}(2))$ is not a finite group if $k$ is a
  $p$-adic field (cf. \cite[Cor.~7.4.1]{K-book}).
 \end{remk}

\section{Some properties of $\L$-completions}\label{sec:Kernel}
The goal of this section is to prove \thmref{thm:Main-2} and
part (1) of \thmref{thm:Main-3}. We shall also prove some properties of
the completions of the class groups and {\'e}tale
fundamental groups with respect to some
sets of primes. These results will be key steps in the proofs of the
remaining main results.
We begin by recalling some known elementary results which will be used frequently
in the rest of the paper.

\subsection{Some results on completions of abelian groups}
\label{se:Completion}
Let $\M$ be a set of prime numbers
and $I_{\M}$ the set of all integers whose prime divisors belong to  $\M$.
Recall from \S~\ref{sec:Notn} that for an abelian group $A$,
we write $A_{\M} = {\varprojlim}_{m \in I_\M} A/{m}$.
We shall call this the $\M$-completion of
$A$. We shall say that $A$ is $\M$-divisible 
if it is divisible by every prime
in $\M$ (equivalently, divisible by every integer in $I_\M$).
$A$ will be called  uniquely $\M$-divisible if it is $\M$-divisible and
$\M$-torsion-free. We shall say that
$A$ is $\M$-torsion if every element of $A$ is annihilated by some element of $I_\M$.
We let $A_{\M-\tor}$ denote the $\M$-torsion subgroup of $A$.
The following two results are \cite[Lem.~7.7, 7.8]{JS-Doc}.

\begin{lem}\label{lem:JS-0}
    Let $A$ be an abelian group, $\{B_m\}_{m \in I_\M}$ a pro-abelian group and
    $\phi \colon \{A/m\} \to \{B_m\}$ a morphism of pro-abelian groups. Let
    $\wh{\phi} \colon {A}_\M \to \wh{B}$ be the induced homomorphism between the
    limits, where $\wh{B} = {\varprojlim}_{m \in I_\M} B_m $.
    Assume that there exists $N \in I_\M$ such that $N \wh{B}_\tor =
    N \Ker(\wh{\phi}) = 0$. Then $\bigcap_{m \in I_\M} mA$ is the maximal $\M$-divisible
    subgroup of $A$.
    \end{lem}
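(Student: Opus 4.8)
\textbf{Proof plan for Lemma~\ref{lem:JS-0}.} The plan is to unwind the hypothesis into a statement purely about the abelian group $A$ by passing through the induced map on completions. First I would set $D = \bigcap_{m \in I_\M} mA$, the intersection of all ``$m$-divisible stages'' of $A$, and recall the standard fact that $D$ contains \emph{every} $\M$-divisible subgroup of $A$: if $D' \subseteq A$ is $\M$-divisible then for each $m \in I_\M$ we have $D' = mD' \subseteq mA$, so $D' \subseteq D$. Thus the content of the lemma is the reverse: $D$ itself is $\M$-divisible. Fix $N \in I_\M$ with $N\widehat{B}_\tor = N\Ker(\widehat\phi) = 0$, and fix a prime $\ell \in \M$; I must show $D = \ell D$, and since $D = \ell D$ for all $\ell \in \M$ forces divisibility by all of $I_\M$, this suffices.

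The key step is a diagram chase relating $D$, the kernel of $A \to A/\ell$, and the map $\widehat\phi$. Take $x \in D$. For each $m \in I_\M$ the image of $x$ in $A/m$ lies in the image of multiplication by any divisor of... more precisely, since $x \in \ell^k A$ for all $k$ (as $\ell^k \in I_\M$), the image $\bar x_m$ of $x$ in $A/m$ is infinitely $\ell$-divisible within $A/m$ for those $m$; assembling over $m$, the image of $x$ in $A_\M$ lies in $\bigcap_k \ell^k A_\M$. Now I would argue that $\widehat\phi(\bar x) \in \widehat B$ is both torsion of bounded exponent and infinitely $\ell$-divisible: being in the image of $\ell^k$ for all $k$ it is $\ell$-divisible in $\widehat B$, and one checks (using that each $B_m$ is an $m$-torsion pro-abelian group, hence $\widehat B$ is built from $\M$-torsion pieces) that $\widehat\phi(\bar x)$ is killed by some element of $I_\M$. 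Here is where $N\widehat B_\tor = 0$ enters: it bounds the exponent, and combined with $\ell$-divisibility it forces $\widehat\phi(\bar x) = 0$ (an $\ell$-divisible element of an abelian group of finite exponent prime to... no — of finite exponent, which is $\ell$-divisible, must vanish if the exponent is $\ell$-power-free, but in general one uses that bounded torsion plus full $\ell$-divisibility in the pro-system collapses it; this is the delicate point). Hence $\bar x \in \Ker(\widehat\phi)$, so $N\bar x = 0$ in $A_\M$.

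From $N\bar x = 0$ in $A_\M = \varprojlim A/m$ I would then deduce that $Nx \in \bigcap_m mA = D$ — wait, that only gives $Nx \in$ (something); more carefully, $N\bar x = 0$ means $Nx \in mA$ for all $m \in I_\M$, i.e. $Nx \in D$. Since this holds for the \emph{generator} $x$ of $D$ we have $ND \subseteq \ell D$ would not follow directly; instead I would run the argument with $\ell$ replaced by $\ell N$ (still in $I_\M$) to get, for every $x \in D$, that $Nx$ is divisible by arbitrarily high powers of $\ell$ inside $D$. A short induction/limit argument, exploiting that $N$ is a fixed integer in $I_\M$ and that $D$ is itself closed under the relevant divisibilities by construction, upgrades ``$Nx \in \ell^k D$ for all $k$'' to ``$x \in \ell D$'': indeed write $Nx = \ell y$ with $y \in D$; iterating and using that $N$ is fixed while the $\ell$-adic valuation can be pushed arbitrarily high, one extracts an honest $\ell$-th root of $x$ in $D$.

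\textbf{Main obstacle.} The routine parts — that $D$ absorbs all $\M$-divisible subgroups, and the reduction to showing $\ell D = D$ for each $\ell \in \M$ — are easy. The crux is the middle paragraph: showing that the bounded-exponent hypothesis on $\widehat B_\tor$ together with infinite $\ell$-divisibility of $\widehat\phi(\bar x)$ forces $\widehat\phi(\bar x)=0$, and then converting the resulting ``$Nx$ is highly $\ell$-divisible in $D$'' back into ``$x$ is $\ell$-divisible in $D$'' without circularity. I expect this final conversion step — trading the fixed multiplier $N$ against unbounded $\ell$-divisibility inside the specific subgroup $D$ — to be where the real care is needed; it likely uses that $D$, being an intersection of the subgroups $mA$, is visibly divisible by $N$ once we know each element of $D$ is a multiple of $N$ of something in $D$, so that $N$ can be ``cancelled'' in the limit.
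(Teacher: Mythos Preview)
The paper does not actually prove this lemma: it is quoted verbatim as \cite[Lem.~7.7]{JS-Doc} (Jannsen--Saito), so there is no ``paper's own proof'' to compare against beyond that citation. Your attempt, however, has a genuine gap that makes the argument circular. You start with $x \in D = \bigcap_{m} mA$ and then study its image $\bar x$ in $A_\M$; but $D$ is \emph{exactly} the kernel of the completion map $A \to A_\M$, so $\bar x = 0$ from the outset. Everything you then deduce --- $\widehat\phi(\bar x)=0$, $N\bar x = 0$, hence $Nx \in D$ --- is vacuous, and your ``final conversion step'' (``write $Nx = \ell y$ with $y \in D$'') simply asserts the conclusion.

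The missing idea is to apply the bounded-torsion hypothesis not to $x$ but to a \emph{preimage} of $x$ under multiplication. First observe that the two hypotheses combine to give $N^2 (A_\M)_\tor = 0$: if $a \in (A_\M)_\tor$ then $\widehat\phi(a) \in \widehat B_\tor$, so $Na \in \Ker(\widehat\phi)$, so $N^2 a = 0$. Now fix $m \in I_\M$ and $x \in D$. Since $mN^2 \in I_\M$, write $x = mN^2 z$ for some $z \in A$; the image $\bar z \in A_\M$ satisfies $mN^2 \bar z = \bar x = 0$, so $\bar z$ is torsion, hence $N^2 \bar z = 0$. Thus $y := N^2 z$ lies in $\Ker(A \to A_\M) = D$, and $x = my \in mD$. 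This is where the hypotheses do real work, and it replaces your entire middle and final paragraphs.
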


\begin{lem}\label{lem:JS-1}
  Let $0 \to D \to A \xrightarrow{\pi} T \to 0$ be a short exact sequence of
  abelian groups such that $D$ is $\M$-divisible and $T$ is torsion. Let
  $T = T' \oplus T''$, where $T''$ is $\M$-torsion and $T'$ has no $\M$-torsion
  element. Let $D' = \pi^{-1}(T')$. Then $D'$ is $\M$-divisible and
  $A = D' \oplus T''$.
    \end{lem}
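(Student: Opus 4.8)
This is a purely algebraic statement about abelian groups, so the plan is to verify the three claims in order: that $D'$ is $\M$-divisible, that $A = D' + T''$, and that $D' \cap T'' = 0$.

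First I would establish that $D'$ is $\M$-divisible. Since $D'=\pi^{-1}(T')$, there is a short exact sequence $0 \to D \to D' \xrightarrow{\pi} T' \to 0$. Now $D$ is $\M$-divisible by hypothesis, and $T'$ is an $\M$-divisible group because it is a torsion group with no $\M$-torsion: indeed, for a prime $\ell \in \M$, multiplication by $\ell$ on $T'$ is injective (no $\ell$-torsion) and, being a torsion group in which $\ell$ acts invertibly on each $\ell'$-primary part and bijectively on the $\ell$-primary part (which is zero), it is also surjective. Given an extension of an $\M$-divisible group by an $\M$-divisible group, the middle term is $\M$-divisible: for $x \in D'$ and $m \in I_\M$, first divide $\pi(x)$ by $m$ in $T'$, lift to some $y \in D'$, note $x - my \in D$, divide that by $m$ in $D$, and combine. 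So $D'$ is $\M$-divisible.

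Next I would show $A = D' + T''$. The quotient $A/D' \cong T/T' \cong T''$ via $\pi$, since $T = T' \oplus T''$. So it suffices to produce, for each class in $A/D'$, a representative lying in a copy of $T''$ inside $A$; but here one must be slightly careful because $T''$ is a subgroup of the quotient $T$, not a priori of $A$. The cleanest route: let $A'' = \pi^{-1}(T'')$, so $A = D' + A''$ (because $D' = \pi^{-1}(T')$ and $T = T' + T''$) and $D' \cap A'' = \pi^{-1}(T' \cap T'') = \pi^{-1}(0) = D$. Thus there is an exact sequence $0 \to D \to A'' \xrightarrow{\pi} T'' \to 0$ with $D$ $\M$-divisible and $T''$ $\M$-torsion; by a standard fact (an $\M$-divisible subgroup of $A''$ equal to $D$, with $\M$-torsion quotient, splits — concretely, $D$ is a direct summand of $A''$ since $D$ being $\M$-divisible and $A''/D$ being $\M$-torsion makes $D$ a direct summand via the usual argument that divisible-by-enough subgroups are summands in this situation), we get $A'' = D \oplus S$ with $S \xrightarrow{\cong} T''$. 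Then $A = D' + A'' = D' + S$ (since $D \subset D'$), and $D' \cap S \subset D' \cap A'' = D$, while $D \cap S = 0$ inside $A''$, so $D' \cap S = 0$. Hence $A = D' \oplus S \cong D' \oplus T''$.

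The main obstacle — and the only place requiring genuine care — is the splitting $A'' = D \oplus S$: one needs that an $\M$-divisible subgroup $D$ of an abelian group $A''$ with $\M$-torsion quotient is a direct summand. I would prove this by reducing to each prime $\ell \in \M$ and passing to $\ell$-primary components, where $D\{\ell\}$ is $\ell$-divisible, hence a direct summand of $A''\{\ell\}$ by the standard theory of divisible groups (divisible groups are injective $\mathbb{Z}$-modules); assembling over $\ell \in \M$ and using that $A''/D$ is $\M$-torsion gives the global splitting. Since this lemma is quoted from \cite[Lem.~7.8]{JS-Doc}, I would in practice simply cite that reference, but the argument above is the self-contained version.
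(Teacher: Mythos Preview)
The paper does not prove this lemma; it simply quotes the result from \cite[Lem.~7.8]{JS-Doc}. Your self-contained argument is correct in outline and more detailed than what the paper offers, but the final splitting step has a small gap. You propose to show that $0 \to D \to A'' \to T'' \to 0$ splits by ``passing to $\ell$-primary components, where $D\{\ell\}$ is $\ell$-divisible, hence a direct summand of $A''\{\ell\}$.'' This implicitly treats $D$ as a torsion group, but nothing in the hypotheses forces that: $D$ could for instance contain a copy of $\Q$, in which case $D\{\ell\}=0$ and the decomposition of $D$ into primary components is unavailable.

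The fix is easy and stays within your framework. Construct a section $s \colon T'' \to A''$ directly by Zorn's lemma: choose a maximal partial section $(S,s)$ with $S \subset T''$ a subgroup; if $S \neq T''$, pick $t \in T'' \setminus S$, let $n \in I_\M$ be the least positive integer with $nt \in S$ (such $n$ exists and lies in $I_\M$ since $t$ is $\M$-torsion), choose any lift $a_0 \in A''$ of $t$, observe $na_0 - s(nt) \in D$, and use $\M$-divisibility of $D$ to find $d \in D$ with $nd = na_0 - s(nt)$; then $t \mapsto a_0 - d$ extends $s$ to $S + \Z t$, contradicting maximality. With this correction your argument goes through; the rest (that $T'$ is uniquely $\M$-divisible, that $D'$ is therefore $\M$-divisible, and the computation $D' \cap S = 0$) is fine.
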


\begin{lem}\label{lem:Finite-compln}
      Let $\{G_i\}_{i \ge 1}$ be an inverse system of finite abelian groups.
      Assume that there exists an integer $M > 0$ such that $|G_i| \le M$
      for all $i \ge 1$. Let $ G = {\varprojlim}_i \ G_i$ be the inverse
      limit. Then the projection map $G \to G_i$ is injective for all $i \gg 1$.
    \end{lem}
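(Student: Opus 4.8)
The statement is a standard fact about inverse limits of finite abelian groups with uniformly bounded orders, and the plan is to extract it from the stabilization of the chain of images. First I would set $H_i = \mathrm{Image}(G \to G_i) \subseteq G_i$, where the transition maps of the inverse system induce surjections $\cdots \twoheadrightarrow H_{i+1} \twoheadrightarrow H_i$ (the image in $G_i$ of any $G_j$ with $j \ge i$ eventually stabilizes to $H_i$ since $G_i$ is finite, and this stabilized image coincides with the image of the full inverse limit because the system of finite sets $\{\mathrm{Image}(G_j \to G_i)\}_{j \ge i}$ is a descending chain of finite sets, hence Mittag-Leffler). Then $\{|H_i|\}_{i \ge 1}$ is a non-decreasing sequence of positive integers bounded above by $M$, so it is eventually constant, say $|H_i| = N$ for all $i \ge i_0$. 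For such $i$, the surjection $H_{i+1} \twoheadrightarrow H_i$ between finite groups of the same order is an isomorphism; chasing this up the tower, $G = \varprojlim_i G_i \twoheadrightarrow \varprojlim_{i \ge i_0} H_i$ is an isomorphism onto $\varprojlim_{i \ge i_0} H_i$, and the latter maps isomorphically to each $H_i$ for $i \ge i_0$.

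Concretely, the key steps in order are: (1) replace $G_i$ by $H_i = \mathrm{Image}(G \to G_i)$ and observe the induced maps $H_{i+1} \to H_i$ are surjective; (2) use the bound $|H_i| \le M$ and monotonicity of $|H_i|$ in $i$ to find $i_0$ with $|H_i| = |H_{i_0}|$ for all $i \ge i_0$; (3) conclude $H_{i+1} \xrightarrow{\cong} H_i$ for $i \ge i_0$, hence $G \xrightarrow{\cong} H_{i_0}$ by passing to the limit; (4) for any $i \ge i_0$, the composite $G \to G_i$ factors as $G \xrightarrow{\cong} H_i \hookrightarrow G_i$, so $G \to G_i$ is injective. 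The one point requiring a little care is step (1): that $\mathrm{Image}(G \to G_i)$ equals the eventual value of $\mathrm{Image}(G_j \to G_i)$ as $j \to \infty$. This is the Mittag-Leffler property for inverse limits over a countable directed index set and is where one uses that the $G_i$ (hence the images) are finite; it guarantees $G \to H_i$ is surjective for every $i$.

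I do not expect any serious obstacle here — the proof is elementary and self-contained. The only subtlety is the Mittag-Leffler argument in step (1), which is completely standard (a descending chain of nonempty finite sets has nonempty intersection, and the inverse limit surjects onto each term once the chain stabilizes). Everything else is the observation that a non-decreasing integer sequence bounded above stabilizes.
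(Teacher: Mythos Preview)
Your proof is correct and matches the paper's approach: the paper simply asserts that one can replace $\{G_i\}$ by a subsystem $\{G'_i\}$ with $G'_i \subset G_i$, surjective transition maps, and the same inverse limit, from which the result follows; your $H_i = \mathrm{Image}(G \to G_i)$ is exactly such a subsystem, and you have spelled out the Mittag-Leffler and order-stabilization details that the paper leaves implicit.
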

    \begin{proof}
      One can easily check that there is an inverse system $\{G'_i\}$ with surjective
      transition maps such that
      $G'_i \subset G_i$ for every $i \ge 1$ and $G = {\varprojlim}_i \ G'_i$.
      The lemma easily follows from this observation.
    \end{proof}

\begin{cor}\label{cor:DIV-ab}
      Let $A$ be an abelian group and $\M$ a set of prime numbers. Suppose that
      there exists an integer $M \gg 0$ such that $|{A}/{m}| \le M$ for all
      $m \in I_\M$. Then $A \cong F \bigoplus D$, where $F$ is a finite group of
      order lying in $I_\M$ and $D$ is $\M$-divisible.
    \end{cor}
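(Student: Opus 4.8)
\textbf{Proof plan for Corollary~\ref{cor:DIV-ab}.}
The plan is to apply \lemref{lem:JS-0} with $B_m = A/m$ for $m \in I_\M$ and $\phi$ the identity morphism of pro-abelian groups $\{A/m\} \to \{A/m\}$. Then $\wh{B} = A_\M$ and $\wh{\phi}$ is the identity, so $\Ker(\wh{\phi}) = 0$, and the hypothesis of \lemref{lem:JS-0} reduces to finding $N \in I_\M$ with $N (A_\M)_\tor = 0$. First I would establish this: since $A_\M = {\varprojlim}_{m \in I_\M} A/m$ is an inverse limit of finite abelian groups each of order $\le M$, \lemref{lem:Finite-compln} (applied to any cofinal countable subsystem, e.g.\ over $\M$-smooth integers ordered by divisibility) shows that the projection $A_\M \to A/m_0$ is injective for some $m_0 \in I_\M$. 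In particular $A_\M$ is itself finite of order $\le M$, hence $(A_\M)_\tor = A_\M$ is annihilated by $N := M!$ (or indeed by the exponent of $A_\M$), and $N \in I_\M$ because every prime dividing $|A_\M| \le M$ lies in $\M$ — here one uses that $A/m$, and hence $A_\M$, is an $I_\M$-torsion group so all its torsion is supported on primes in $\M$.

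With the hypothesis of \lemref{lem:JS-0} verified, I conclude that $D := \bigcap_{m \in I_\M} mA$ is the maximal $\M$-divisible subgroup of $A$. The next step is to analyze the quotient $T := A/D$. For any $m \in I_\M$ the composite $D \inj A \surj A/m$ is zero by definition of $D$, so $A/m$ is a quotient of $T/m$; conversely $T/m$ is a quotient of $A/m$, hence $T/m \cong A/m$ is finite of order $\le M$ for all $m \in I_\M$. Moreover $T$ has trivial maximal $\M$-divisible subgroup: if $D' \subset T$ were $\M$-divisible, its preimage in $A$ would be $\M$-divisible (being an extension of the $\M$-divisible $D'$ by the $\M$-divisible $D$, using that $\M$-divisibility passes to extensions for the relevant multiplication maps), contradicting maximality of $D$. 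Applying \lemref{lem:JS-0} once more to $T$ in place of $A$ shows $\bigcap_{m \in I_\M} mT$ is the maximal $\M$-divisible subgroup of $T$, which is $0$; thus the canonical map $T \to T_\M = {\varprojlim}\, T/m$ is injective, and $T_\M$ is finite of order $\le M$ by the argument of the first paragraph, so $T$ is a finite abelian group whose order lies in $I_\M$.

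It remains to split the extension $0 \to D \to A \to T \to 0$. Since $T$ is finite of order $N' \in I_\M$, it is annihilated by $N'$, and since $D$ is $\M$-divisible it is in particular $N'$-divisible; the standard fact that a torsion group annihilated by $N'$ has no nontrivial extension problem over an $N'$-divisible group — which is precisely the content one extracts from \lemref{lem:JS-1} applied with $\M$-torsion part $T'' = T$ and $\M$-torsion-free part $T' = 0$ — gives a direct sum decomposition $A \cong D \oplus T$ (alternatively: $\Ext^1(T, D) = 0$ because $\Ext^1$ of a finite group annihilated by $N'$ into an $N'$-divisible, hence $\Ext$-trivial, group vanishes). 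Renaming $F := T$ finishes the proof. The main obstacle I anticipate is purely bookkeeping: making sure at each invocation of \lemref{lem:JS-0} that the group in question is genuinely $I_\M$-torsion so that ``$N\wh{B}_\tor = 0$ for some $N \in I_\M$'' holds with $N$ having only prime factors in $\M$; this is automatic here since everything is a subquotient of the $I_\M$-torsion groups $A/m$, but it should be stated explicitly.
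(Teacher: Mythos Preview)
Your proof is correct and follows essentially the same route as the paper: apply \lemref{lem:JS-0} with $B_m = A/m$ and $\phi = \id$, use \lemref{lem:Finite-compln} to verify its hypothesis, and finish with \lemref{lem:JS-1}. Two minor remarks. First, the choice $N := M!$ need not lie in $I_\M$; you should simply take $N$ to be the exponent of $A_\M$ (as your parenthetical already suggests), which does lie in $I_\M$ since $A_\M$ embeds in some $A/{m_0}$. Second, your middle paragraph is unnecessarily long: the identity $D = \bigcap_{m} mA = \Ker(A \to A_\M)$ gives $T = A/D \hookrightarrow A_\M$ directly, so $T$ is finite of order in $I_\M$ without a second invocation of \lemref{lem:JS-0}. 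This is exactly how the paper argues, and it lets you skip the verification that $T$ has no nontrivial $\M$-divisible subgroup.
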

    \begin{proof}
      Combine Lemmas~\ref{lem:JS-0}, ~\ref{lem:JS-1} and ~\ref{lem:Finite-compln}.
      \end{proof}

We shall also need the following result.

\begin{lem}\label{lem:PB-PF-EFG}
  Let $f \colon X' \to X$ be a finite {\'e}tale morphism between
  Noetherian schemes such that $X$ is integral.
  Then there exists a continuous homomorphism $f^* \colon \pi^{\ab}_1(X) \to
  \pi^{\ab}_1(X')$
  such that $f_* \circ f^*$ is multiplication by $\deg(f)$ on $\pi^{\ab}_1(X)$.
  Furthermore, given a morphism of integral Noetherian schemes $\pi \colon Y \to X$
  and the canonical
  projections $X' \xleftarrow{\pi'} Y' = Y \times_X X' \xrightarrow{g} Y$, one has a commutative  diagram
  \begin{equation}\label{eqn:PB-Cart}
    \xymatrix@C1pc{
      \pi^{\ab}_1(Y) \ar[r]^-{g^*} \ar[d]_{\pi_*} & \pi^{\ab}_1(Y') \ar[d]^-{\pi'_*} \\
      \pi^{\ab}_1(X) \ar[r]^-{f^*} & \pi^{\ab}_1(X').}
  \end{equation}
\end{lem}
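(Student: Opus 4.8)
\textbf{Proof plan for Lemma~\ref{lem:PB-PF-EFG}.}
The plan is to construct the transfer map $f^*$ on abelian {\'e}tale fundamental groups from the standard norm (transfer) map associated to a finite {\'e}tale cover, and then to verify the composition formula $f_* \circ f^* = \deg(f)$ together with the base change compatibility. First I would reduce to the case where $X'$ is connected: since $X$ is integral, $X'$ decomposes into finitely many connected components, each finite {\'e}tale over $X$, and the general transfer map is the sum of the transfers over the components; the composition formula and the base change diagram then follow componentwise by additivity of degree. So assume $X'$ connected. Pick a geometric point $\bar{x}$ of $X$ and a geometric point $\bar{x}'$ of $X'$ above it. The cover $f$ corresponds to a transitive $\pi_1(X,\bar{x})$-set $S = \Hom_X(\bar{x}, X')$ with $\pi_1(X',\bar{x}') = \mathrm{Stab}(\bar{x}')$, a finite-index (open) subgroup of index $\deg(f)$. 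The classical transfer (Verlagerung) homomorphism of profinite groups $\mathrm{Ver}\colon \pi_1(X,\bar{x})^{\ab} \to \pi_1(X',\bar{x}')^{\ab}$ is continuous, and I would take $f^* := \mathrm{Ver}$. The identity $f_* \circ f^* = \deg(f) = [\pi_1(X,\bar{x}) : \pi_1(X',\bar{x}')]$ on $\pi^{\ab}_1(X)$ is precisely the elementary group-theoretic fact that the composite of restriction-to-a-subgroup (which induces $f_*$ on abelianizations, i.e. the corestriction is $f_*$) with the transfer equals multiplication by the index; this can be cited from any reference on profinite group cohomology (e.g. the Pontryagin-dual statement that $\mathrm{cor} \circ \mathrm{res} = [\,G:H\,]$ on $H^1$).

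For the base change square~\eqref{eqn:PB-Cart}, after the reduction to $X'$ connected, one still has to allow $Y' = Y \times_X X'$ to be disconnected, so the additivity argument of the first paragraph must be applied on the $Y$-side as well. Concretely, I would translate the whole statement into the language of open subgroups of the relevant fundamental groups via the fibre functor: choosing compatible geometric base points, $\pi_1(Y',\bar{y}')$ sits inside $\pi_1(Y,\bar{y})$, the map $\pi_*$ on abelianizations is induced by $\pi_1(Y,\bar{y}) \to \pi_1(X,\bar{x})$, and the commutativity of~\eqref{eqn:PB-Cart} becomes the compatibility of the transfer map with a group homomorphism $\varphi\colon G_Y \to G_X$ (where $G_Y = \pi_1(Y,\bar{y})$, $G_X = \pi_1(X,\bar{x})$) sending the open subgroup $H_Y = \pi_1(Y',\bar{y}')$ into $H_X = \pi_1(X',\bar{x}')$ in a way that identifies the coset spaces. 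This compatibility of Verlagerung with such homomorphisms is again a purely group-theoretic lemma, provable by choosing coset representatives on the $X$-side and showing they restrict to coset representatives on the $Y$-side (using that $G_Y \to G_X/H_X$ factors through $G_Y/H_Y$, which is exactly the statement that $Y' \to Y$ has the same fibres as the pullback of $X' \to X$). I would either cite this or give the one-paragraph representative-chasing argument.

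The only genuinely delicate point is matching the scheme-theoretic maps $f_*$, $f^*$, $\pi_*$, $\pi'_*$, $g^*$ with the group-theoretic operations (corestriction/restriction and transfer) under a fixed, coherent choice of geometric base points and paths; once that dictionary is set up the rest is formal. So \textbf{the main obstacle} is purely bookkeeping: making the base-point choices on $X, X', Y, Y'$ mutually compatible so that all four squares of maps genuinely commute on the nose rather than up to conjugation, and checking that passing to abelianizations kills the residual conjugation ambiguity. There is no hard input from algebraic geometry beyond the basic Galois theory of the {\'e}tale fundamental group (existence of the equivalence between finite {\'e}tale covers and finite $\pi_1$-sets, and the fact that $\pi_1$ of a connected cover is the stabilizer subgroup); everything else is the standard formalism of transfer maps for profinite groups.
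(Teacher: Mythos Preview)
Your proposal is correct and reaches the same endpoint as the paper (the identity $\mathrm{cor}\circ\mathrm{res}=[G:H]$ on $H^1$, dually $f_*\circ f^*=\deg(f)$), but your route is somewhat more direct than the one the authors take. The paper constructs $f^*$ by describing the functor ``compose with $f$'' between the Galois categories $\sU(X',x')\to\sU(X,f(x'))$ and then, for the composition formula, passes to the function fields: it uses the commutative square comparing $G_K\xrightarrow{f^*}G_{K'}\xrightarrow{f_*}G_K$ with $\pi^{\ab}_1(X)\to\pi^{\ab}_1(X')\to\pi^{\ab}_1(X)$, invokes surjectivity of $G_K\twoheadrightarrow\pi^{\ab}_1(X)$ (using that $X$ is normal), and only then dualizes to the restriction/corestriction statement on $H^1$. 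The base-change square is disposed of by observing that the corresponding square of Galois categories commutes (pullback along $\pi$ commutes with composing with $f$, since $Z\times_{X'}Y'\cong Z\times_X Y$). Your approach bypasses the function-field detour entirely by working with the Verlagerung on the profinite groups themselves; this is cleaner and, as you note, avoids needing $G_K\to\pi^{\ab}_1(X)$ to be surjective. One small remark: in your parenthetical you write ``the corestriction is $f_*$'', but in fact $f_*$ (inclusion of the subgroup on abelianizations) is Pontryagin-dual to \emph{restriction} on $H^1$, while the transfer $f^*$ is dual to corestriction; the formula you want is still $\mathrm{cor}\circ\mathrm{res}=[G:H]$, so the conclusion is unaffected.
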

\begin{proof} We first assume that $X'$ is integral. 
Let $\sU(X, x)$ denote the Galois category of finite {\'e}tale covers of $X$ (with
  respect to a chosen base point $x$). We let $\sU(X', x')$ be defined similarly.
  Then the operation of composition with $f$ defines a canonical inclusion functor
  $f_* \colon \sU(X', x') \to \sU(X, f(x'))$. This in turn gives rise to a
  homomorphism $f^* \colon \pi^{\ab}_1(X) \to \pi^{\ab}_1(X')$. 
   For a non-connected  finite {\'e}tale
cover $f \colon X' \to X$, we define the desired pull-back homomorphism by considering the irreducible components of $X'$.
  The commutativity of ~\eqref{eqn:PB-Cart} follows because one easily checks that
  the corresponding diagram of the Galois categories of finite
  {\'e}tale covers commutes.

To show the remaining part of the lemma, we let $K$ (resp. $K'$) denote the
  function field of $X$ (resp. the ring of total quotients of $X'$).
  We then obtain a diagram
  \begin{equation}\label{eqn:PB-PF-EFG-0}
    \xymatrix@C1pc{
      G_{K} \ar[r]^-{f^*} \ar[d] & G_{K'} \ar[r]^-{f_*} \ar[d] & G_{K} \ar[d] \\
      \pi^{\ab}_1(X) \ar[r]^-{f^*} & \pi^{\ab}_1(X') \ar[r]^-{f_*} &
      \pi^{\ab}_1(X),}
  \end{equation}
  where the vertical arrows are induced by the inclusions of generic points of
  $X$ and $X'$. Note that $X'$ is a disjoint union of integral normal schemes and
  $K'$ is the product of the function fields of these integral schemes.
  The right square clearly commutes and the left square commutes by
  ~\eqref{eqn:PB-Cart} because $X' \times_X \Spec(K) \cong \Spec(K')$.

Since the vertical arrows in ~\eqref{eqn:PB-PF-EFG-0}
  are surjective because $X$ (and hence $X'$) is normal,
  it is enough to show that the composition of the top horizontal arrows is
  multiplication by $\deg(f)$. Since $\deg(f)$ is the sum of degrees of the
  irreducible components of $X'$ over $X$, it suffices to show that if
  $X'_i$ is an irreducible component of $X'$ with function field $K'_i$, then
  the composite map $G_K \xrightarrow{f^*} G_{K'_i}  \xrightarrow{f_*} G_K$ is
  multiplication by $\deg({X'_i}/X)$. We can therefore assume that $X'$ is integral.

Now, it is classical that under the Pontryagin duality, the maps
  $G_K \xrightarrow{f^*} G_{K'}  \xrightarrow{f_*} G_K$
  are dual to the maps between Galois cohomology groups
  \begin{equation}\label{eqn:PB-PF-EFG-1}
H^1(G_{K}, {\Q}/{\Z}) \xrightarrow{{\rm Res}} H^1(G_{K'}, {\Q}/{\Z}) 
\xrightarrow{{\rm Cor}} H^1(G_{K}, {\Q}/{\Z}),
\end{equation}
where ${\rm Res}$ and ${\rm Cor}$ are, respectively, the
restriction and corestriction homomorphisms between the Galois cohomology groups.
Hence, it suffices to show that ${\rm Cor} \circ {\rm Res}$ is multiplication by
$\deg(f)$.
To show this, note that we can replace the abelian Galois groups in
~\eqref{eqn:PB-PF-EFG-1} by the corresponding absolute (nonabelian) Galois groups
without changing the Galois cohomology.
The claim then follows by \cite[Prop.~4.2.10]{Gille-Szamuely}.
\end{proof}

\subsection{The finiteness theorem for $\pi^{\ab, \tm}_1(X)_0$}
\label{sec:GYT}
For the rest of \S~\ref{sec:Kernel}, we fix a local field $k$, a smooth and connected
projective $k$-scheme $\ov{X}$ of dimension
$d$ and a nonempty open immersion $j \colon X \inj \ov{X}$. We let
$\iota \colon Z \inj \ov{X}$ be the inclusion of the complementary closed subset
endowed with the reduced subscheme structure. 
We let $\ff$ be the residue field of $k$.
Let $\P$ be the set of primes different from
$\Char(k)$ and $\L$ the set of primes 
different from $\Char(\ff)$. In particular, $\P = \L$ if $\Char(k) > 0$ and
$\P = \L \cup \{\Char(\ff)\}$ if $\Char(k) = 0$.

\vskip .2cm

We shall now prove \thmref{thm:Main-2} and part of \thmref{thm:Main-3},
and give some applications.
The precise result is the following.

\begin{thm}\label{thm:J-fin}
    The group $\pi^{\ab, \tm}_1(X)$ satisfies the following properties.
    \begin{enumerate}
    \item
      In the short exact sequence
    \[
      0 \to J^\tm(X) \to \pi^{\ab, \tm}_1(X) \xrightarrow{\tau_{\ov{X}}} \pi^{\ab}_1(\ov{X})
      \to 0,
    \]
    the left group $J^\tm(X)$ is finite whose order is invertible in $k$.
    \item
    If $X$ is geometrically connected, then
    \[
      \pi^{\ab, \tm}_1(X)_0 \cong F \oplus \wh{\Z}^r,
    \]
    where $F$ is a finite group and $r$ is the $\ff$-rank of the special fiber of
    the N{\'e}ron model of $\Alb(\ov{X})$.
  \end{enumerate}
\end{thm}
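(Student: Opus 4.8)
\textbf{Proof plan for Theorem~\ref{thm:J-fin}.}
The plan is to deduce both parts from the structural results already assembled in the previous sections, reducing the assertion about $X$ to the corresponding (known) assertion about the projective compactification $\ov{X}$. For part (1), I would first invoke Proposition~\ref{prop:TFG-5}, which provides the topologically exact sequence
\[
  {\underset{C \in \sC(X)}\bigoplus} \frac{G^{(0)}_{\Delta(C_n)}}{G^{(1)}_{\Delta(C_n)}} \to
  \pi^{\ab, \tm}_1(X) \xrightarrow{\tau_{\ov{X}}} \pi^{\ab}_1(\ov{X}) \to 0,
\]
so that $J^\tm(X)$ is the closure of the image of the left-hand sum. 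The point is then to show this image is already finite of order prime to $\Char(k)$. For each $C \in \sC(X)$ and each $x \in \Delta(C_n)$, the completion $k(C_n)_x$ is a complete discrete valuation field whose residue field is a local field (it is a finite extension of the residue field of $k$, or a local field itself); hence Lemma~\ref{lem:Local-case} applies and gives that $G^{(0)}_{k(C_n)_x}/G^{(1)}_{k(C_n)_x}$ is finite of order prime to $\Char(\ff) = \Char(k)$ when $\Char(k) > 0$, and finite when $\Char(k) = 0$. The residue-characteristic-zero case requires a little more care since then $\P \ne \L$; here I would additionally use Lemma~\ref{lem:Fil-p} and Proposition~\ref{prop:Pi-decom}, which show that ${\pi^{\ab, \tm}_1(X)}_p \to {\pi^{\ab}_1(\ov{X})}_p$ is an isomorphism, so that $J^\tm(X)$ has no $p$-part; together with the finiteness theorem for {\'e}tale cohomology over local fields this will pin down $J^\tm(X)$ as a finite group of order prime to $\Char(k)$.

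The main obstacle in part (1) is establishing \emph{finiteness} of the image of the infinite direct sum $\bigoplus_{C} G^{(0)}_{\Delta(C_n)}/G^{(1)}_{\Delta(C_n)}$ inside $\pi^{\ab, \tm}_1(X)$: each summand is finite, but there are infinitely many curves $C$, and a priori the image could fail to be finitely generated. This is precisely where the finiteness theorem for {\'e}tale cohomology (Theorem~\ref{thm:Boundedness}, via the realization isomorphisms of \S~\ref{sec:RR}, together with Corollary~\ref{cor:Boundedness-0} and Proposition~\ref{prop:Fin-char-0}) enters: using Proposition~\ref{prop:TFG-2} and Lemma~\ref{lem:EFT}, $J^\tm(X)$ is dual to a subquotient of $H^1_\et(X, {\Q}/{\Z})$ (more precisely, to $\Ker(\Fil^{\nr}H^1(K) \hookrightarrow \Fil^{\tm}H^1(K))$ under Pontryagin duality), and I would bound the relevant {\'e}tale cohomology uniformly in the finite coefficients. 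Concretely, applying Corollary~\ref{cor:Boundedness-0} with a twist forced by the duality $\pi^{\ab}_1(X)/m \cong H^1_\et(X, {\Z}/m)^\vee$ and $\pi^{\ab}_1(\ov{X})/m \cong H^1_\et(\ov{X}, {\Z}/m)^\vee$, the group $J^\tm(X)/m$ is uniformly bounded for $m \in I_{\L}$; Corollary~\ref{cor:DIV-ab} then forces $J^\tm(X) = F_0 \oplus D_0$ with $F_0$ finite and $D_0$ being $\L$-divisible, but $J^\tm(X)$ is profinite, so $D_0 = 0$ and $J^\tm(X)$ is finite. The prime-to-$\Char(k)$ statement follows from the $p$-adic comparison of the previous paragraph (Proposition~\ref{prop:Pi-decom}) together with the fact, established via Lemma~\ref{lem:Local-case}, that the summands $G^{(0)}/G^{(1)}$ carry no $p$-torsion.

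For part (2), assuming $X$ geometrically connected, I would use Corollary~\ref{cor:REC-T-Norm} and the surjectivity of $\pi_X \colon \pi^{\ab,\tm}_1(X) \twoheadrightarrow G_k$ to restrict the short exact sequence of part (1) to norm-zero / degree-zero subgroups, obtaining
\[
  0 \to J^\tm(X) \to \pi^{\ab, \tm}_1(X)_0 \xrightarrow{\tau_{\ov{X}}} \pi^{\ab}_1(\ov{X})_0 \to 0,
\]
where one must check $J^\tm(X) \subseteq \pi^{\ab,\tm}_1(X)_0$, which is clear since $J^\tm(X)$ maps to $\pi^{\ab}_1(\ov{X})_0$ compatibly and the structure maps to $G_k$ agree. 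Now by the theorems of Grothendieck \cite{SGA-1} and Yoshida \cite{Yoshida03} for the smooth projective $k$-scheme $\ov{X}$, one has $\pi^{\ab}_1(\ov{X})_0 \cong F' \oplus \wh{\Z}^r$ with $F'$ finite and $r$ the $\ff$-rank of the special fiber of the N{\'e}ron model of $\Alb(\ov{X})$. Pulling back along $\tau_{\ov{X}}$: since $J^\tm(X)$ is finite by part (1), the extension above is an extension of $F' \oplus \wh{\Z}^r$ by a finite group; such an extension of topological abelian groups splits off a copy of $\wh{\Z}^r$ (because $\wh{\Z}^r$ is a projective object in the relevant category modulo its finite part — concretely, pull back $0 \to J^\tm(X) \to \pi^{\ab,\tm}_1(X)_0 \to F' \oplus \wh{\Z}^r \to 0$ along $\wh{\Z}^r \hookrightarrow F' \oplus \wh{\Z}^r$ to get an extension of $\wh{\Z}^r$ by a finite group, which is again of the form $F'' \oplus \wh{\Z}^r$ since $\mathrm{Ext}^1_{\mathrm{cont}}(\wh{\Z}, \text{finite}) = 0$, being a quotient of $\mathrm{Hom}(\wh{\Z},\text{finite}) \to \mathrm{Hom}(\wh{\Z},\text{finite})$ surjectively). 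Hence $\pi^{\ab,\tm}_1(X)_0 \cong F \oplus \wh{\Z}^r$ with $F$ finite, as claimed. The only genuine subtlety here is the splitting argument for profinite abelian extensions, which I would handle by the standard fact that $\wh{\Z}$ is a projective object in the category of profinite abelian groups, so $\mathrm{Ext}^1$ in that category of $\wh{\Z}^r$ by any finite group vanishes.
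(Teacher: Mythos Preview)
Your overall strategy for part (1) --- bound $A_m := \ker({\pi^{\ab,\tm}_1(X)}/m \to {\pi^{\ab}_1(\ov{X})}/m)$ uniformly in $m$ via the finiteness theorems for {\'e}tale cohomology, then pass to the limit --- matches the paper's, and part (2) is fine. But there are two concrete errors in the characteristic-zero case. First, Lemma~\ref{lem:Fil-p} and Proposition~\ref{prop:Pi-decom} concern $p = \Char(k)$ (the characteristic exponent of $k$), not $\Char(\ff)$; when $\Char(k) = 0$ they say nothing about the $\Char(\ff)$-completion, and indeed $J^\tm(X)$ may well carry $\Char(\ff)$-torsion (the theorem only asserts order prime to $\Char(k)$, which is vacuous then). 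Second, the step ``$J^\tm(X)$ is profinite, so the $\L$-divisible part $D_0$ vanishes'' is false: $\Z_p$ for $p = \Char(\ff) \notin \L$ is profinite and $\L$-divisible.

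The fix is not to peel off the $\Char(\ff)$-part but to extend the uniform bound to all $m \in I_\P$ using Proposition~\ref{prop:Fin-char-0} (which you name but never actually deploy). You are also vague on \emph{how} the finiteness theorems produce the bound; the paper's mechanism is: identify ${\pi^{\ab}_1(Y)}/m \cong H^{2\dim Y+1}_{\et,c}(Y, {\Z}/m(\dim Y+1))$ for $Y \in \{X, \ov{X}\}$ via Proposition~\ref{prop:Rec-Real}, and then the localization sequence (Lemma~\ref{lem:Ex-seq}) for $Z \hookrightarrow \ov{X} \hookleftarrow X$ exhibits $A_m$ as a quotient of $H^{2d}_\et(Z, {\Z}/m(d+1))$. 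Since $\dim Z \le d-1$, the twist satisfies $d+1 \ge \dim Z + 2$, which is precisely the range of Corollary~\ref{cor:Boundedness-0} and Proposition~\ref{prop:Fin-char-0} applied to $Z$. With $|A_m| \le M$ for all $m \in I_\P$, Lemma~\ref{lem:Finite-compln} gives $J^\tm(X) = \varprojlim_{m \in I_\P} A_m$ finite directly --- no decomposition into finite plus divisible is needed. (The opening detour through Proposition~\ref{prop:TFG-5} and Lemma~\ref{lem:Local-case} is harmless motivation but plays no role in the proof; the algebraic exactness recorded in Theorem~\ref{thm:Tame-nr-main} is in fact deduced \emph{from} the present theorem, not the other way around.)
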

\begin{proof}
  We shall follow the notations of \S~\ref{sec:Reln} while proving (1).
  By \corref{cor:Pi-decom-0}, we have a short exact sequence
  \begin{equation}\label{eqn:J-fin-0}
    0 \to J^\tm(X) \to {\pi^{\ab, \tm}_1(X)}_{\P} \xrightarrow{\tau_{\ov{X}}}
    {\pi^{\ab}_1(\ov{X})}_{\P} \to 0.
  \end{equation}

A combination of
    \propref{prop:Rec-Real}, (the {\'e}tale analogue of) \lemref{lem:Ex-seq},
    \corref{cor:Boundedness-0} and \propref{prop:Fin-char-0} implies that we
    have a compatible system of short exact sequences
 \begin{equation}\label{eqn:J-fin-1}
    0 \to A_m \to {\pi^{\ab, \tm}_1(X)}/m \xrightarrow{\tau_{\ov{X}}}
    {\pi^{\ab}_1(\ov{X})}/m \to 0
    \end{equation}   
    as $m$ varies through integers invertible in $k$ such that
    $|A_m| \le M$, where $M$ is an integer not depending on $m$.
    Taking the inverse limit, we get an exact
    sequence
    \begin{equation}\label{eqn:J-fin-2}
      0 \to {\varprojlim}_m A_m \to {\pi^{\ab, \tm}_1(X)}_{\P}
      \xrightarrow{\tau_{\ov{X}}} {\pi^{\ab}_1(\ov{X})}_{\P} \to 0.
    \end{equation}   
    The uniform boundedness of $\{|A_m|\}$ implies that
    ${\varprojlim}_m A_m$ is finite (cf. \lemref{lem:Finite-compln}).
    Comparing this sequence with
    ~\eqref{eqn:J-fin-0}, we conclude that $J^\tm(X)$ is a finite group whose order
    is invertible in $k$.

    To prove part (2), note that our hypothesis implies that $\ov{X}$ is geometrically
    connected. We also note that 
\begin{equation}\label{eqn:J-fin-3}
  0 \to J^\tm(X) \to \pi^{\ab, \tm}_1(X)_0 \xrightarrow{\tau_{\ov{X}}}
  \pi^{\ab}_1(\ov{X})_0 \to 0
\end{equation} is exact. 
By combining  ~\eqref{eqn:J-fin-3} with \cite[Thm.~1.1]{Yoshida03}, the snake lemma gives a surjection 
$\theta\colon \pi^{\ab, \tm}_1(X)_0 \surj \wh{\Z}^r$ together with an exact sequence 
$ 0 \to J^t(X) \to \Ker(\theta) \to F_1 \to 0$, where $r$ is as in the theorem and $F_1$ is a finite group. By part (1), we 
obtain an exact sequence
\begin{equation*}\label{eqn:J-fin-3.5}
  0 \to F \to \pi^{\ab, \tm}_1(X)_0 \to \wh{\Z}^r  \to 0,
\end{equation*}
where $F= \Ker(\theta)$ is a finite group. The desired decomposition of $\pi^{\ab, \tm}_1(X)_0$  now  follows easily from the universal property of $\wh{\Z}^r$
(e.g., apply \cite[Lem.~3.2.1]{Pro-fin} with $G = \mathbb{Z}^r, \ H =\pi^{\ab, \tm}_1(X)_0 $
and $\phi$ being a homomorphism induced by the canonical inclusion $\Z^r
\inj \wh{\Z}^r$ and by the  freeness of $\Z^r$).
\end{proof}

The following corollary proves part of \thmref{thm:Main-3} whose complete proof will
follow from  \thmref{thm:Main-8*}.

\begin{cor}\label{cor:J-fin-1}
  Assume that $X$ is geometrically connected. Then the image of the degree zero
  reciprocity map
  $\rho^t_{X,0} \colon C^\tm(X)_0 \to \pi^{\ab, \tm}_1(X)_0$ is finite.
  Furthermore, $\coker_{\rm top}(\rho^\tm_X)$ is the direct sum of $\wh{\Z}^r$ and a
  finite group.
\end{cor}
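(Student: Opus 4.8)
\textbf{Proof plan for Corollary~\ref{cor:J-fin-1}.}
The plan is to deduce both assertions from \thmref{thm:J-fin} together with \corref{cor:REC-T-Norm} and \propref{prop:Rec-Real}. First I would write down the commutative diagram of \corref{cor:REC-T-Norm} with rows the norm exact sequences $0 \to C^\tm(X)_0 \to C^\tm(X) \xrightarrow{N_X} k^\times$ and $0 \to \pi^{\ab,\tm}_1(X)_0 \to \pi^{\ab,\tm}_1(X) \xrightarrow{\pi_X} G_k$. Since $X$ is geometrically connected, $\pi_X$ is surjective, so $\pi^{\ab,\tm}_1(X)_0$ is exactly the kernel of $\pi_X$. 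By \thmref{thm:J-fin}(2) we have $\pi^{\ab,\tm}_1(X)_0 \cong F \oplus \wh{\Z}^r$ with $F$ finite and $r$ the $\ff$-rank of the special fibre of the N\'eron model of $\Alb(\ov{X})$. The image of $\rho^\tm_{X,0}$ lies in this group, so to prove it is finite it suffices to show that the composition $C^\tm(X)_0 \xrightarrow{\rho^\tm_{X,0}} \pi^{\ab,\tm}_1(X)_0 \twoheadrightarrow \wh{\Z}^r$ has finite image; equivalently, that the image of $\rho^\tm_{X,0}$ meets each $\wh{\Z}$-summand in a subgroup of bounded index, which since $\wh{\Z}$ has no finite-index-free quotient obstruction means I must rule out the image hitting an element of infinite order.

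The key input here is \propref{prop:Rec-Real}, which identifies $\rho_X$ modulo $m$ (for $m \in k^\times$) with the \'etale realization map $\epsilon^*_X \colon H^{2d+1}_c(X,\Lambda(d+1)) \to H^{2d+1}_{\et,c}(X,\Lambda(d+1))$, both of whose relevant targets and sources I control via \corref{cor:Boundedness-0} (and \propref{prop:Fin-char-0} in characteristic zero). Concretely, I would argue that $\mathrm{Image}(\rho^\tm_X)_\tor$ (equivalently, the torsion in the image after passing to the tame quotient, using \corref{cor:Pi-decom-0} and \thmref{thm:TCG-UTCG} to pin down the $p$-part) is a group of finite exponent whose prime-to-$\Char(k)$ part is finite: this follows because for each $m$ prime to $\Char(k)$ the map $\rho_X \bmod m$ has image of uniformly bounded order by \corref{cor:Boundedness-0}, and then a standard argument (\lemref{lem:JS-0}, \lemref{lem:JS-1}, \corref{cor:DIV-ab}) shows the torsion of the image injects into $(\mathrm{Image}(\rho^\tm_X))_\P$-completion, which is finite. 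Since $\wh{\Z}^r$ is torsion-free, the finiteness of $\mathrm{Image}(\rho^\tm_{X,0})$ will then reduce to showing that the projection of $\mathrm{Image}(\rho^\tm_{X,0})$ to $\wh{\Z}^r$ is torsion, i.e.\ zero: here I would use that $C^\tm(X)_0$ is generated, after tensoring with $\Z[1/p]$ or modulo $m$, by torsion-like classes coming from the cycle-theoretic description (\corref{cor:Cycle-TCG}, \corref{cor:ML-main-3}) so that its image in the torsion-free part $\wh{\Z}^r$ must vanish. The essential point, which I expect to be the main obstacle, is controlling the $p$-primary behaviour when $\Char(k)=0$: there \corref{cor:Boundedness-0} says nothing, but \propref{prop:Fin-char-0} gives a uniform bound precisely for $H^{2d+2}_c$ in the right degree, and this is exactly what is needed to pass from the mod-$m$ statements to the statement about the $\P$-completion of the image; I would have to check carefully that the degree indexing matches, via the exact sequence of \lemref{lem:Ex-seq} and the identification $H^{2d+1}_{\et,c}(X,\Lambda(d+1)) \cong (\pi^{\ab}_1(X)/m)$ of \propref{prop:Rec-Real}.

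For the second assertion, $\coker_{\rm top}(\rho^\tm_X)$ is by definition the quotient of $\pi^{\ab,\tm}_1(X)$ by the closure of $\mathrm{Image}(\rho^\tm_X)$. I would combine \corref{cor:REC-T-Norm} with the local class field theory fact that $\rho_k \colon k^\times \to G_k$ has dense image (indeed $\coker(\rho_k)$ is topologically trivial, $k^\times \to G_k$ being the reciprocity map with $G_k = \pi^{\ab}_1(\Spec k)$), so that the snake lemma applied to the diagram of \corref{cor:REC-T-Norm} gives a topological exact sequence $\mathrm{Image}(\rho^\tm_{X,0}) \to \coker_{\rm top}(\rho^\tm_{X,0}) \cong \pi^{\ab,\tm}_1(X)_0 / \overline{\mathrm{Image}(\rho^\tm_{X,0})} \to \coker_{\rm top}(\rho^\tm_X) \to \coker_{\rm top}(\rho_k)$, with the last term trivial. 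Wait --- more carefully, the right-exactness gives $\coker_{\rm top}(\rho^\tm_X)$ as an extension of $\coker_{\rm top}(\rho_k)=0$ by $\pi^{\ab,\tm}_1(X)_0/\overline{\mathrm{Image}(\rho^\tm_{X,0})}$, hence $\coker_{\rm top}(\rho^\tm_X) \cong \pi^{\ab,\tm}_1(X)_0/\overline{\mathrm{Image}(\rho^\tm_{X,0})}$. Now \thmref{thm:J-fin}(2) identifies $\pi^{\ab,\tm}_1(X)_0 \cong F \oplus \wh{\Z}^r$, and by the first part $\overline{\mathrm{Image}(\rho^\tm_{X,0})}$ is the closure of a finite group, hence finite; the quotient of $F \oplus \wh{\Z}^r$ by a finite subgroup is again of the form (finite group) $\oplus\, \wh{\Z}^r$, since a finite subgroup of $\wh{\Z}^r$ is trivial and a finite subgroup of $F \oplus \wh{\Z}^r$ projects trivially to $\wh{\Z}^r$. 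This yields $\coker_{\rm top}(\rho^\tm_X) \cong (\text{finite}) \oplus \wh{\Z}^r$, as claimed. The only subtlety to verify is the exactness of the cokernel sequence for the topological cokernel, which follows from the fact that all groups involved are profinite or extensions of profinite by $k^\times$, and $k^\times$ has a natural topology making $\rho_k$ continuous with dense image, so closures behave well under the snake.
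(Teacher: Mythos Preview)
Your argument for the first assertion has a genuine gap: you claim that ``for each $m$ prime to $\Char(k)$ the map $\rho_X \bmod m$ has image of uniformly bounded order by \corref{cor:Boundedness-0}'', but that corollary only bounds $H^i_c(X,\Z/m(n))$ and $H^i_{\et,c}(X,\Z/m(n))$ for twist $n \ge d+2$, whereas by \propref{prop:Rec-Real} the map $\rho_X \bmod m$ is the realization $H^{2d+1}_c(X,\Z/m(d+1)) \to H^{2d+1}_{\et,c}(X,\Z/m(d+1))$, in twist $d+1$. Neither side is uniformly bounded in $m$ (the target is $\pi^{\ab}_1(X)/m$, whose order grows with $m$), so no uniform bound on the image follows. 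Your subsequent claim that $C^\tm(X)_0$ is ``generated \dots\ by torsion-like classes'' so that its image in $\wh{\Z}^r$ vanishes is not justified by \corref{cor:Cycle-TCG} or \corref{cor:ML-main-3}; those results describe $C^\tm(X)$ cycle-theoretically but say nothing about torsion in $C^\tm(X)_0$.

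The paper's approach is different and much shorter: it reduces to the projective case via the exact sequence $0 \to J^\tm(X) \to \pi^{\ab,\tm}_1(X)_0 \to \pi^{\ab}_1(\ov{X})_0 \to 0$ (with $J^\tm(X)$ finite by \thmref{thm:J-fin}) and the surjection $C^\tm(X)_0 \surj C(\ov{X})_0$, compatible with reciprocity by \corref{cor:REC-T-PF}. Both assertions then follow from Forr\'e's results \cite[Cor.~2.3(1,4)]{Forre-Crelle} for smooth projective geometrically connected varieties. Your attempt to avoid Forr\'e and argue directly from the bounds of \S\ref{sec:BMC} does not go through, because those bounds live in the wrong twist; the genuine arithmetic input---finiteness of $\mathrm{Image}(\rho_{\ov{X},0})$---is Forr\'e's theorem and not a formal consequence of the tools you cite. (Your argument for the second assertion is essentially sound once the first is known, modulo the minor point that $N_X$ need not be surjective, which only contributes a further finite quotient.)
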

\begin{proof}
  Combine \thmref{thm:J-fin} and \cite[Cor.~2.3(1,4)]{Forre-Crelle}.
  Note that the proof of the cited result actually shows that
  $\coker_{\rm top}(\rho_{\ov{X}})$ is the direct sum of $\wh{\Z}^r$ and a finite group.
\end{proof}

The following application of \thmref{thm:J-fin} strengthens
\propref{prop:TFG-5}.

\begin{thm}\label{thm:Tame-nr-main}
There is an exact sequence of abelian groups
  \[
    {\underset{C \in \sC(X)}\bigoplus} \frac{G^{(0)}_{\Delta(C_n)}}{G^{(1)}_{\Delta(C_n)}} \to
    \pi^{\ab, \tm}_1(X) \xrightarrow{\tau_{\ov{X}}}
    \pi^{\ab}_1(\ov{X}) \to 0.
  \]
\end{thm}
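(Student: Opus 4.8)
The plan is to upgrade Proposition~\ref{prop:TFG-5} from a merely topologically exact sequence to an algebraically exact one, using the finiteness of $J^\tm(X)$ established in Theorem~\ref{thm:J-fin}(1). By Proposition~\ref{prop:TFG-5} we already have the topologically exact sequence
\[
  {\underset{C \in \sC(X)}\bigoplus} \frac{G^{(0)}_{\Delta(C_n)}}{G^{(1)}_{\Delta(C_n)}} \xrightarrow{\lambda}
    \pi^{\ab, \tm}_1(X) \xrightarrow{\tau_{\ov{X}}}
    \pi^{\ab}_1(\ov{X}) \to 0,
\]
so the only thing that needs improvement is the exactness at $\pi^{\ab, \tm}_1(X)$: we must show that the closure of $\operatorname{Image}(\lambda)$ actually equals $\operatorname{Image}(\lambda)$, i.e.\ that $\operatorname{Image}(\lambda) = J^\tm(X)$ on the nose rather than up to closure.

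\textbf{Key steps.} First I would recall from the proof of Proposition~\ref{prop:TFG-5} that $J^\tm(X) = \Ker(\tau_{\ov{X}})$ is the closure of $\operatorname{Image}(\lambda)$ inside $\pi^{\ab, \tm}_1(X)$, hence $\operatorname{Image}(\lambda)$ is a dense subgroup of $J^\tm(X)$. Second, by Theorem~\ref{thm:J-fin}(1), $J^\tm(X)$ is a \emph{finite} group. A dense subgroup of a finite (discrete) group is the whole group, so $\operatorname{Image}(\lambda) = J^\tm(X)$; this is the crux of the argument and is essentially immediate once the finiteness is in hand. Third, I would note that each local summand $\frac{G^{(0)}_{\Delta(C_n)}}{G^{(1)}_{\Delta(C_n)}}$ is a direct sum of groups of the form $G^{(0)}_F/G^{(1)}_F$ with $F$ a two-dimensional local field, and by Lemma~\ref{lem:Local-case} each such quotient is a finite group of order prime to the residue characteristic; this is consistent with (and re-confirms) that the image lands in the finite group $J^\tm(X)$, though it is not strictly needed once we have Theorem~\ref{thm:J-fin}(1). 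Finally, the surjectivity of $\tau_{\ov{X}}$ and the continuity of $\lambda$ are already part of Proposition~\ref{prop:TFG-5}, so assembling these gives the algebraically exact sequence as stated.

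\textbf{Main obstacle.} There is essentially no new obstacle here: the entire content of the theorem over and above Proposition~\ref{prop:TFG-5} is the replacement of ``closure of the image'' by ``image,'' and this follows formally from the finiteness of $J^\tm(X)$. The real work — the uniform boundedness of $\pi^{\ab,\tm}_1(X)/m$ via the comparison with \'etale cohomology with compact support (Proposition~\ref{prop:Rec-Real}, Corollary~\ref{cor:Boundedness-0}, Proposition~\ref{prop:Fin-char-0}) and the resulting finiteness in Theorem~\ref{thm:J-fin} — has already been carried out. Thus the proof of this theorem is short: it is a one-line deduction combining Proposition~\ref{prop:TFG-5} with Theorem~\ref{thm:J-fin}(1). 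I would simply write ``Combine Proposition~\ref{prop:TFG-5} and Theorem~\ref{thm:J-fin}, noting that a dense subgroup of the finite group $J^\tm(X)$ must be all of it.''
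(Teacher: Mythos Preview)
Your proposal is correct and matches the paper's own proof, which is literally the one-liner ``Combine \propref{prop:TFG-5} and part~(1) of \thmref{thm:J-fin}.'' Your additional explanation that a dense subgroup of the finite group $J^\tm(X)$ must equal $J^\tm(X)$ is exactly the implicit reasoning behind that combination.
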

\begin{proof}
  Combine \propref{prop:TFG-5} and part (1) of \thmref{thm:J-fin}.
  \end{proof}

\subsection{Torsion in the image of degree zero tame reciprocity}
\label{sec:L-compl-tor}
We continue with the setup of \S~\ref{sec:GYT}. We begin by stating the following
consequence of the construction of the reciprocity homomorphism
in \propref{prop:REC-T}. We shall refer to the diagram ~\eqref{eqn:REC-Diag} very
often in the rest of the paper.
\begin{cor}\label{cor:REC-T-Norm}
 There is a commutative diagram
  \begin{equation}\label{eqn:REC-Diag}
    \xymatrix@C1pc{
      0 \ar[r] & C^\tm(X)_0 \ar[r] \ar[d]_-{\rho^\tm_{X,0}} & C^\tm(X) \ar[r]^-{N_X}
      \ar[d]^-{\rho^\tm_X} & k^\times \ar[d]^-{\rho_k} \\
      0 \ar[r] & \pi^{\ab,\tm}_1(X)_0 \ar[r] & \pi^{\ab,\tm}_1(X) \ar[r]^-{\pi_X} & G_k.}
  \end{equation}
\end{cor}
\begin{proof}
We only need to show the commutativity of the right
  square.
  But this follows from the construction of various maps in the square
  and \cite[\S~3.2, Cor.~1]{Kato80}.
\end{proof}

Suppose that ${k'}/k$ is a finite extension and let
$X' = X \times_{\Spec(k)} \Spec(k')$.
Let $\pi \colon \Spec(k') \to \Spec(k)$ and $f \colon X' \to X$ be the projections.
Then we get a diagram
\begin{equation}\label{eqn:PF-Rec*}
  \xymatrix@C1.5pc{
    C^\tm(X') \ar[rr]^-{f_*} \ar[dd]_-{\rho^\tm_{X'}} \ar[dr]^-{N_{X'}} & &
    C^\tm(X) \ar[dd]_->>>>>>{\rho^\tm_X} \ar[dr]^-{N_X} & \\
    & k'^\times \ar[rr]^->>>>>>>>{N_{{k'}/k}} \ar[dd]_->>>>>>>{\rho_{k'}} & &
    k^\times \ar[dd]^-{\rho_k} \\
    \pi^{\ab,\tm}_1(X') \ar[rr]^->>>>>>>{f_*} \ar[dr]_-{\pi_{X'}} & &
    \pi^{\ab,\tm}_1(X) \ar[dr]^-{\pi_X} & \\
    & G_{k'} \ar[rr]^-{\pi_*} & & G_k.}
  \end{equation}

\begin{lem}\label{lem:PF-Rec-Commute}
  The above diagram is commutative.
\end{lem}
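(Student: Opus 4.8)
The claim is that the cube \eqref{eqn:PF-Rec*} commutes, where the four vertical faces and the top and bottom horizontal squares fit together. The plan is to break the cube into its six faces and verify each one separately, since a cube commutes as soon as all its faces do (and the vertical arrows are the reciprocity maps, which we already know are well-defined).

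First I would dispose of the two ``easy'' faces: the top square and the bottom square involve only norm/push-forward maps and no reciprocity, so they commute by general functoriality. The top square
\[
\xymatrix@C1pc{C(X') \ar[r]^-{f_*} \ar[d]_-{N_{X'}} & C(X) \ar[d]^-{N_X} \\ k'^\times \ar[r]^-{N_{k'/k}} & k^\times}
\]
commutes because the norm map $N_X \colon C(X) \to k^\times$ is by construction the sum of the local norm maps $N_{k(x)/k}$, and transitivity of norms for the towers $k(x')/k(x)/k$ (respectively $k(x')/k'/k$) gives the identity; here one uses that $f_*$ on $C(X')$ is defined via the local norm maps $K^M_1(k(x')) \to K^M_1(k(f(x')))$ (see \propref{prop:ICG-fin-PF} and \S~\ref{sec:Idele-deg-0}). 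The bottom square, involving $G_{k'} \xrightarrow{\pi_*} G_k$ and the maps $\pi_{X'}, \pi_X$ to the Galois groups, commutes by the naturality of the canonical map $\pi^{\ab}_1(-) \to G_{(-)}$ induced by structure morphisms.

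Next, the front-left and back-right ``square'' faces are the reciprocity diagrams for $X$ and $X'$ respectively: the left face is \eqref{eqn:REC-Diag} for $X'$ and the right face is \eqref{eqn:REC-Diag} for $X$, both of which are already established (they follow from \corref{cor:REC-T-Norm} applied to idele class groups, i.e., from \cite[\S~3.2, Cor.~1]{Kato80}). The remaining two faces are the genuinely new content. The front-right face is the push-forward compatibility $\rho_X \circ f_* = f_* \circ \rho_{X'}$ for the reciprocity maps of the idele class groups along $f \colon X' \to X$; since $f$ is finite (hence proper), this is exactly the content of \corref{cor:REC-T-PF} (its last sentence, for idele class groups). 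The back-left face is the corresponding statement $\rho_k \circ N_{k'/k} = \pi_* \circ \rho_{k'}$ for the base fields, which is the classical functoriality of the local reciprocity map under finite extensions (again traceable to \cite[\S~3.2, Cor.~1]{Kato80}, or to the construction of $\rho_k$ via the cup product pairing and the trace compatibility in \lemref{lem:Trace-PF}).

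The main obstacle, such as it is, will be bookkeeping rather than mathematics: one must be careful that the ``$f_*$'' appearing on the idele class groups and the ``$f_*$'' on the fundamental groups are compatible with the constructions of $\rho_X$ and $\rho_{X'}$ on generators, i.e., that the squares commute already at the level of the explicit generators $K^M_1(k(x))$ and $K^M_2(k(C_n)_\infty)$ before passing to the quotients. This is handled by reducing to curves (as in the proof of \lemref{lem:REC-Idele}) and then to the local statement $\rho_{k(x)/k}$-compatibility, which is \corref{cor:REC-T-PF}. Once all six faces are checked, the cube commutes, completing the proof.
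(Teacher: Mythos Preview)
Your proposal is correct and takes essentially the same approach as the paper: verify that each of the six faces of the cube commutes, invoking \corref{cor:REC-T-PF} for the push-forward compatibility of reciprocity, \eqref{eqn:REC-Diag} for the two side faces, classical local class field theory for the base-field face, and direct functoriality for the top and bottom. Your face labels are a bit idiosyncratic (what you call the ``front-right'' face is the paper's ``back'' face, and your ``back-left'' is the paper's ``front''), but the mathematical content and the references cited line up exactly with the paper's proof.
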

\begin{proof}
  The front, back, left and right faces commute by \corref{cor:REC-T-PF} and
  ~\eqref{eqn:REC-Diag}.
  The bottom face clearly commutes. To show the commutativity of the top face,
  we can assume that $X$ is the spectrum of a field, in which case it is a standard.
\end{proof}

\begin{lem}\label{lem:Coker-0}
  The image of $\rho^\tm_{X,0}$ is a torsion group of finite exponent.
  In particular, $({\rm Image}(\rho^\tm_X))_\tor$ has finite exponent.
  \end{lem}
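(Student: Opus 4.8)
\textbf{Proof plan for Lemma~\ref{lem:Coker-0}.}
The plan is to leverage the two structural results already in hand: the finiteness theorem for $J^\tm(X)$ (Theorem~\ref{thm:J-fin}(1), via Corollary~\ref{cor:J-fin-1}) in characteristic zero, and the identification of the $p$-adic tame class field theory with the unramified one (Corollary~\ref{cor:Rec-tame-unr}) in positive characteristic. First I would reduce to the case where $X$ is geometrically connected: this is harmless because $C^\tm(X)$ and $\pi^{\ab,\tm}_1(X)$ both decompose as finite direct sums over the connected components of $X_{\ov{k}}$ (after a finite base change), and the norm-zero condition and the reciprocity map are compatible with this decomposition; since the norm map goes to $k^\times$, passing to a finite extension only changes things by bounded index by Lemma~\ref{lem:PF-Rec-Commute} and Corollary~\ref{cor:Sm-PB} (the composite $f_*\circ f^*$ is multiplication by $[k':k]$). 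So assume $X$ geometrically connected.

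Under this assumption, Corollary~\ref{cor:J-fin-1} already states that $\mathrm{Image}(\rho^\tm_{X,0})$ is finite — but since that corollary is cited from Forr{\'e} and may logically depend on material developed later, I would instead give a self-contained argument bounding the exponent. The key step is the commutative diagram~\eqref{eqn:REC-T-Norm-0}: we have the exact row $0\to C^\tm(X)_0 \to C^\tm(X)\xrightarrow{N_X} k^\times$ mapping to $0\to \pi^{\ab,\tm}_1(X)_0 \to \pi^{\ab,\tm}_1(X)\xrightarrow{\pi_X} G_k$, so $\mathrm{Image}(\rho^\tm_{X,0}) = \mathrm{Image}(\rho^\tm_X)\cap \pi^{\ab,\tm}_1(X)_0$. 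Next I would use the decomposition $\pi^{\ab,\tm}_1(X)\cong \pi^{\ab}_1(X)_{\P}\oplus \pi^{\ab}_1(\ov X)_p$ of Proposition~\ref{prop:Pi-decom} (for $p\ge 2$; trivial for $p=1$) together with the localization sequence and the finiteness/uniform-boundedness results of Section~\ref{sec:BMC} applied via the reciprocity--realization comparison (Proposition~\ref{prop:Rec-Real}). Concretely, $\pi^{\ab,\tm}_1(X)_0$ sits in an exact sequence with $J^\tm(X)$ (finite of order prime to $\Char(k)$ by Theorem~\ref{thm:J-fin}(1)) and $\pi^{\ab}_1(\ov X)_0$; by Yoshida's theorem $\pi^{\ab}_1(\ov X)_0 = \wh{\Z}^r\oplus(\text{finite})$, so $\pi^{\ab,\tm}_1(X)_0 = \wh{\Z}^r\oplus(\text{finite})$ by Theorem~\ref{thm:J-fin}(2). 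Hence its torsion subgroup is finite, say of order $N$; therefore the torsion of $\mathrm{Image}(\rho^\tm_{X,0})$ is finite, in particular of finite exponent. If $\mathrm{Image}(\rho^\tm_{X,0})$ is entirely torsion (which holds because $C^\tm(X)_0$ maps into $\pi^{\ab,\tm}_1(X)_0$ whose free part $\wh{\Z}^r$ receives no element from the image of the reciprocity map apart from those killed by $N$ — one checks this using that $C^\tm(X)_0$ is generated by norm-zero idele classes whose images lie in the closure of a union of decomposition groups, which are procyclic, combined with the structure of $\wh{\Z}^r$), then it has finite exponent.

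For the final sentence, $(\mathrm{Image}(\rho^\tm_X))_\tor$ has finite exponent: given the exact row~\eqref{eqn:REC-T-Norm-0}, any torsion element $x$ of $\mathrm{Image}(\rho^\tm_X)$ has $\pi_X(x)\in (G_k)_\tor$; since $G_k\cong \wh{\Z}\times(\text{finite})\times\mathrm{Gal}$-type factor and more precisely $(G_k)_\tor$ is finite (the torsion of the abelianized absolute Galois group of a local field is finite), a bounded multiple $N'x$ lands in $\pi^{\ab,\tm}_1(X)_0$, hence in $\mathrm{Image}(\rho^\tm_{X,0})_\tor$, which we have just bounded. The main obstacle I anticipate is cleanly establishing that the image of $\rho^\tm_{X,0}$ contains no nontrivial element of the free part $\wh{\Z}^r$ beyond bounded torsion — i.e., making rigorous the claim that norm-zero idele classes map into the "torsion-like" part of $\pi^{\ab,\tm}_1(X)_0$. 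This should follow from Proposition~\ref{prop:TFG-5}/Theorem~\ref{thm:Tame-nr-main} describing the kernel of $\tau_{\ov X}$ by the finite groups $G^{(0)}_{\Delta(C_n)}/G^{(1)}_{\Delta(C_n)}$, together with Lemma~\ref{lem:Local-case}, but the interaction with the $\wh{\Z}^r$ coming from the N{\'e}ron model requires care; alternatively one can sidestep it by invoking Forr{\'e}'s argument \cite[Cor.~2.3]{Forre-Crelle} as in Corollary~\ref{cor:J-fin-1}, which directly yields finiteness of $\mathrm{Image}(\rho^\tm_{X,0})$ and a fortiori finite exponent.
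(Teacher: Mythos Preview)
Your ``self-contained'' argument has a genuine gap at the point where you claim $\mathrm{Image}(\rho^\tm_{X,0})$ is entirely torsion. Knowing $\pi^{\ab,\tm}_1(X)_0 \cong \wh{\Z}^r \oplus F$ from Theorem~\ref{thm:J-fin}(2) does not by itself prevent the reciprocity image from meeting the $\wh{\Z}^r$ summand nontrivially; your justification via ``norm-zero idele classes mapping into closures of procyclic decomposition groups'' does not establish this, and in fact this torsion property is precisely the nontrivial input supplied by Forr{\'e}'s theorem \cite[Cor.~2.3(1)]{Forre-Crelle} for the projective case. So your fallback to Corollary~\ref{cor:J-fin-1} is not a convenience but a necessity --- and your worry that it ``may logically depend on material developed later'' is misplaced: its proof uses only Theorem~\ref{thm:J-fin} and Forr{\'e}'s external result, both of which are established before this lemma.

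The paper's argument also differs in the order of reductions. Rather than reducing to the geometrically connected case first, the paper uses the finiteness of $J^\tm(X)$ (Theorem~\ref{thm:J-fin}(1)) together with the surjection $C^\tm(X)_0 \twoheadrightarrow C(\ov X)_0$ and Corollary~\ref{cor:REC-T-PF} to reduce from $X$ to the projective scheme $\ov X$; only then does it handle the non-geometrically-connected case for $\ov X$ via the push-forward trick and Corollary~\ref{cor:Sm-PB}. This order is cleaner because Forr{\'e}'s result is stated for smooth projective varieties, so one may quote it directly. Your order is not unreasonable, but when you pass to $X_{k'}=\coprod_i X'_i$ you must relate $\mathrm{Image}(\rho^\tm_{X_{k'},0})$ (where the norm-zero condition over $k'$ couples all the components together) to the individual $\mathrm{Image}(\rho^\tm_{X'_i,0})$; this step requires care that you have not supplied.
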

  \begin{proof}
    The second assertion follows from the first since $(G_k)_\tor$ is finite.
    We shall therefore prove the first assertion.
By \thmref{thm:J-fin}, \corref{cor:REC-T-PF} and the surjection
  $C^\tm(X)_0 \surj C(\ov{X})_0$, it suffices to show that
  the image of $\rho_{\ov{X},0}$ is a torsion group of finite exponent.
If $\ov{X}$ is geometrically connected, then the image of $\rho_{\ov{X},0}$
  is a finite group by \cite[Cor.~2.3(1)]{Forre-Crelle}.
  If $\ov{X}$ is not geometrically connected, we can find a finite Galois
  extension ${k'}/{k}$ such that $\ov{X}' := \ov{X}_{k'}$ is the disjoint union of
  its irreducible components $\ov{X}'_1, \ldots , \ov{X}'_r$ each of which is
  geometrically connected. In particular, the lemma holds for $\ov{X}'$.

We let $f \colon \ov{X}' \to \ov{X}$ be the projection. By \lemref{lem:PF-Rec-Commute},
  we have a commutative diagram
  \begin{equation}\label{eqn:Coker-0-0}
    \xymatrix@C1pc{
      C(\ov{X}')_0 \ar[r]^-{f_*} \ar[d]_-{\rho_{\ov{X}',0}} & C(\ov{X})_0
      \ar[d]^-{\rho_{\ov{X},0}} \\
      \pi^{\ab}_1(\ov{X}')_0 \ar[r]^-{f_*} & \pi^{\ab}_1(\ov{X})_0.}
  \end{equation}
It follows from \corref{cor:Sm-PB} that the cokernel of the top horizontal
  arrow is of exponent $[k':k]$. Since the image of the left vertical arrow
  is finite, the image of the right vertical arrow must have finite exponent.
\end{proof}

\subsection{Torsion in the fundamental groups}
\label{sec:Tor-L-compl}
We shall now prove that the torsion subgroup of
the abelian fundamental group of a smooth scheme over a local field has
finite exponent. This will be a key step in proving \thmref{thm:Main-1}.

\begin{lem}\label{lem:Proj-compl}
  The groups $\pi^{\ab}_1(\ov{X})_\tor$ and $(\pi^{\ab}_1(\ov{X})_\P)_\tor$ have finite
  exponents.
\end{lem}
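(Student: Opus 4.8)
The statement for $(\pi^{\ab}_1(\ov{X})_\P)_\tor$ will imply the statement for $\pi^{\ab}_1(\ov{X})_\tor$, since the $\ell$-primary part of $\pi^{\ab}_1(\ov X)$ for $\ell = \Char(k)$ (which only arises when $\Char(k) = p > 0$) is controlled by the $p$-part of the Galois group of the function field, and in the geometrically connected case the structure theorem of Kato--Saito bounds it; in any case $\pi^{\ab}_1(\ov X)_\tor = (\pi^{\ab}_1(\ov X)_\P)_\tor \oplus (\text{$p$-part})$ and the $p$-part has finite exponent by the known results quoted for projective schemes (the relevant input being \thmref{thm:J-fin}-style finiteness together with Kato--Saito and Forr{\'e}/Jannsen--Saito). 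So I would first reduce to the prime-to-$\Char(k)$ part. Since $\pi^{\ab}_1(\ov X)_\P = \bigoplus_{\ell \in \P} \pi^{\ab}_1(\ov X)_\ell$ by \lemref{lem:Prod-dec}, and each $\pi^{\ab}_1(\ov X)_\ell$ is a profinite $\Z_\ell$-module, its torsion subgroup has finite exponent iff it is finitely generated over $\Z_\ell$; and I want a uniform bound across all $\ell$, which amounts to saying $\pi^{\ab}_1(\ov X)_\P$ has no divisible torsion and its torsion is uniformly bounded.

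The plan is to use the finite-coefficient description via Saito--Tate duality: by \propref{prop:Rec-Real} and the isomorphism $\pi^{\ab}_1(\ov X)/m \cong H^1_\et(\ov X, \Lambda)^\vee \cong H^{2d+1}_{\et,c}(\ov X, \Lambda(d+1))$ (here $\ov X$ is projective so compact support equals ordinary, and this is $H^{2d+1}_\et(\ov X, \mu_m^{\otimes(d+1)})$), one has a short exact sequence, coming from $0 \to \Lambda \to (\Q/\Z)' \xrightarrow{m} (\Q/\Z)' \to 0$, of the form
\begin{equation}\label{eqn:Proj-compl-0}
0 \to H^{2d}_{\et,c}(\ov X, d+1)/m \to H^{2d+1}_{\et,c}(\ov X, \Lambda(d+1)) \to {}_m H^{2d+1}_{\et,c}(\ov X, d+1) \to 0.
\end{equation}
Dualizing back, $\pi^{\ab}_1(\ov X)_\tor$ (the torsion of the profinite group) is Pontryagin-dual to the cotorsion part, i.e. to the divisible part of $H^1_\et(\ov X, (\Q/\Z)')$, whose finiteness would give the bound. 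I would instead argue directly: $\pi^{\ab}_1(\ov X)_\P/m$ is uniformly bounded in $m \in I_\P$ by \corref{cor:Boundedness-0} and \propref{prop:Fin-char-0} applied to $H^{2d+1}_{\et,c}(\ov X, \Lambda(d+1))$ with $n = d+1 \ge d+2$? — no, $d+1 < d+2$, so \thmref{thm:Boundedness} does not directly apply in this degree. This is the crux.

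The way around this is the Lefschetz/hyperplane-section reduction already used in the proof of \propref{prop:Fin-char-0}: take a smooth hyperplane section $Y \subset \ov X$ (integral by Grothendieck's Lefschetz theorem for $\pi_0$), with affine open complement $U = \ov X \setminus Y$. The localization sequence in {\'e}tale cohomology plus purity gives $H^{2d}_\et(Y, d+1) \to H^{2d+2}_\et(\ov X, d+2)$ surjective onto the relevant piece, but what I actually want is the statement about $\pi^{\ab}_1$, so I would instead apply the hyperplane-section argument at the level of fundamental groups: by the Lefschetz theorem for $\pi_1$ (Grothendieck--Hamm, valid since $\dim \ov X = d \ge 2$; the case $d = 1$ being the classically known bound for curves over local fields from \cite{Bloch-cft}, \cite{Saito-JNT}), the map $\pi^{\ab}_1(Y) \to \pi^{\ab}_1(\ov X)$ is surjective, so $\pi^{\ab}_1(\ov X)_\tor$ is a quotient-compatible image and by induction on $d$ has finite exponent once the curve case is settled. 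The main obstacle is making the $d = 1$ base case and the compatibility of torsion-bounds-with-surjections precise — for the curve case one uses that $\pi^{\ab}_1(\ov C)$ for a smooth projective curve $\ov C/k$ fits in $0 \to \pi^{\ab}_1(\ov C)_0 \to \pi^{\ab}_1(\ov C) \to G_k \to 0$ with $\pi^{\ab}_1(\ov C)_0$ an extension of $\wh\Z^r$ by a finite group (by \cite{Bloch-cft}, \cite[Cor.~5.2]{Saito-JNT}, \cite[Prop.~3]{Kato-Saito-1} combined with Yoshida's finiteness), and $(G_k)_\tor$ is finite, so $\pi^{\ab}_1(\ov C)_\tor$ is finite; then induction on dimension via the surjection from a smooth hyperplane section finishes the proof, with the prime-to-$p$/$p$-part splitting handled by \lemref{lem:Prod-dec} as above.
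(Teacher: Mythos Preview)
Your inductive Lefschetz approach has a genuine gap in the step you yourself flag as the ``main obstacle'': surjectivity of $\pi^{\ab}_1(Y) \twoheadrightarrow \pi^{\ab}_1(\ov{X})$ does \emph{not} imply that $\pi^{\ab}_1(\ov{X})_\tor$ has finite exponent just because $\pi^{\ab}_1(Y)_\tor$ does. For instance, $\wh{\Z}$ surjects continuously onto $\prod_\ell \Z/\ell$; the source is torsion-free, yet the target has torsion elements of every prime order. So the induction step from curves upward fails as stated. You would need either an isomorphism (which Lefschetz gives only for $d \ge 3$, leaving the surface case open) or a two-sided transfer argument, neither of which you supply.

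The paper's proof avoids dimension induction entirely. First, \lemref{lem:Prod-dec} reduces the $\P$-completion statement to the full-torsion statement (this is the clean direction of your first paragraph). Then, for $\ov{X}$ geometrically connected, the exact sequence $0 \to \pi^{\ab}_1(\ov{X})_0 \to \pi^{\ab}_1(\ov{X}) \to G_k \to 0$ together with Yoshida's structure theorem $\pi^{\ab}_1(\ov{X})_0 \cong F \oplus \wh{\Z}^r$ (valid in \emph{every} dimension, not only for curves) and the finiteness of $(G_k)_\tor \cong (k^\times)_\tor$ give the result directly. When $\ov{X}$ is not geometrically connected, the paper passes to a finite separable extension $k'/k$ so that some irreducible component $\ov{X}'$ of $\ov{X}_{k'}$ is geometrically connected, and uses that the finite {\'e}tale map $f \colon \ov{X}' \to \ov{X}$ admits a pull-back $f^*$ on $\pi^{\ab}_1$ with $f_* \circ f^* = \deg(f)$ (\lemref{lem:PB-PF-EFG}); this bounds the exponent of $\pi^{\ab}_1(\ov{X})_\tor$ by $\deg(f)$ times that of $\pi^{\ab}_1(\ov{X}')_\tor$ --- precisely the two-sided transfer your one-sided surjectivity step is missing.
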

\begin{proof}
  By \lemref{lem:Prod-dec}, it suffices to prove the assertion for
  $\pi^{\ab}_1(\ov{X})_\tor$.
We first assume that $\ov{X}$ is geometrically connected. In this case, the sequence
  \begin{equation}\label{eqn:Proj-compl-0}
    0 \to \pi^{\ab}_1(\ov{X})_0 \to \pi^{\ab}_1(\ov{X}) \to G_k \to 0
  \end{equation}
  is exact.
 We also have an exact sequence
 \begin{equation}\label{eqn:Proj-compl-1} 
   0 \to k^\times \xrightarrow{\rho_k} G_k \to {\wh{\Z}}/{\Z} \to 0
 \end{equation}
 which shows that $(k^\times)_\tor \cong (G_k)_\tor$.
 We can now apply \cite[Thm.~1.1]{Yoshida03} and \cite[Prop.~II.5.7]{Neukirch}
to conclude the proof.

If $\ov{X}$ is not geometrically connected, we have seen in the proof of
 \lemref{lem:Coker-0} that there exists a finite separable (in fact, Galois)
 extension ${k'}/k$ such that each irreducible component of 
 $\ov{X} \times_{\Spec(k)} \Spec(k')$ is geometrically connected. We choose one such
 irreducible component $\ov{X}'$ and let $f \colon \ov{X}' \to \ov{X}$ be the
 projection map. Then $f$ is a finite {\'e}tale morphism of $k$-schemes.
By \lemref{lem:PB-PF-EFG}, the composite map
 $\pi^{\ab}_1(\ov{X}) \xrightarrow{f^*} \pi^{\ab}_1(\ov{X}') \xrightarrow{f_*}
 \pi^{\ab}_1(\ov{X})$ is multiplication by $\deg(f)$. This implies that the
 composite map
 \begin{equation}\label{eqn:Proj-compl-4}
%
\pi^{\ab}_1(\ov{X})_\tor \xrightarrow{f^*} \pi^{\ab}_1(\ov{X}')_\tor
   \xrightarrow{f_*} \pi^{\ab}_1(\ov{X})_\tor
 \end{equation}
 is also multiplication by $\deg(f)$. Since the middle term has finite exponent, it
 follows that the same holds for $\pi^{\ab}_1(\ov{X})_\tor$.
\end{proof}

\begin{lem}\label{lem:Open-compl}
  The groups $\pi^{\ab, \tm}_1({X})_\tor$ and $(\pi^{\ab}_1({X})_\P)_\tor$ have finite
  exponents.
\end{lem}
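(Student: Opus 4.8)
\textbf{Proof plan for Lemma~\ref{lem:Open-compl}.}

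The strategy is to reduce the statement about $X$ to the projective case (Lemma~\ref{lem:Proj-compl}) using the decomposition of the tame fundamental group from Proposition~\ref{prop:Pi-decom}. Recall that this proposition gives a canonical topological isomorphism $\pi^{\ab,\tm}_1(X) \cong {\pi^{\ab}_1(X)}_{\P} \oplus {\pi^{\ab}_1(\ov{X})}_p$, where $p$ is the characteristic exponent of $k$ and $\P$ is the set of primes different from $\Char(k)$. By Lemma~\ref{lem:Prod-dec}, the torsion of $\pi^{\ab,\tm}_1(X)$ (which is a profinite abelian group, being a closed subgroup / quotient of $\pi^{\ab}_1(X)$) has finite exponent if and only if the $\ell$-primary torsion has bounded exponent for each prime $\ell$ and vanishes for all but finitely many $\ell$. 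So the first step is to observe that it suffices to bound $(\pi^{\ab,\tm}_1(X)_{\P})_\tor$ and $(\pi^{\ab}_1(\ov{X})_p)_\tor$ separately; the latter is handled by Lemma~\ref{lem:Proj-compl} applied to $\ov{X}$, since ${\pi^{\ab}_1(\ov{X})}_p$ is a direct summand of $\pi^{\ab}_1(\ov{X})_\P$ (or of $\pi^{\ab}_1(\ov{X})$) whose torsion has finite exponent. Thus the real content is the first claim, the finite exponent of $(\pi^{\ab}_1(X)_\P)_\tor$, and we may even work with $\pi^{\ab,\tm}_1(X)_\P \cong \pi^{\ab}_1(X)_\P$ (by the first isomorphism of Corollary~\ref{cor:Pi-decom-0} passing to the $\P$-completion).

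For this, I would use the short exact sequence $0 \to J^\tm(X) \to \pi^{\ab,\tm}_1(X) \xrightarrow{\tau_{\ov{X}}} \pi^{\ab}_1(\ov{X}) \to 0$ from Theorem~\ref{thm:J-fin}(1), in which $J^\tm(X)$ is a \emph{finite} group of order prime to $\Char(k)$. Taking $\P$-completions (which is exact on this sequence of profinite groups, or one argues directly with the finite subgroup $J^\tm(X)$), we get $0 \to J^\tm(X) \to \pi^{\ab,\tm}_1(X)_\P \to \pi^{\ab}_1(\ov{X})_\P \to 0$. Since $J^\tm(X)$ is finite and $(\pi^{\ab}_1(\ov{X})_\P)_\tor$ has finite exponent by Lemma~\ref{lem:Proj-compl}, a routine diagram chase (lift a bounded-order torsion element of the quotient, multiply by the exponent of the quotient's torsion to land in $J^\tm(X)$, then multiply by $|J^\tm(X)|$) shows that $(\pi^{\ab,\tm}_1(X)_\P)_\tor$ is annihilated by $|J^\tm(X)| \cdot N$, where $N$ is the exponent of $(\pi^{\ab}_1(\ov{X})_\P)_\tor$. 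This gives the finite exponent of $(\pi^{\ab}_1(X)_\P)_\tor$.

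Finally, combining the two bounded-exponent pieces via the direct sum decomposition of Proposition~\ref{prop:Pi-decom} yields that $\pi^{\ab,\tm}_1(X)_\tor$ has finite exponent, and restricting the same decomposition to $\P$-completions gives the statement for $(\pi^{\ab}_1(X)_\P)_\tor$. I do not anticipate a serious obstacle here: the only mild subtlety is checking that $\P$-completion behaves well with respect to the exact sequence of Theorem~\ref{thm:J-fin}(1) and with respect to passing between $\pi^{\ab,\tm}_1(X)$ and $\pi^{\ab}_1(X)$, but this is exactly what Corollaries~\ref{cor:Prod-dec-0} and~\ref{cor:Pi-decom-0} and the finiteness of $J^\tm(X)$ are designed to supply. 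The genuinely hard input — the finite exponent of the torsion over the projective base, which rests on Yoshida's theorem and local class field theory — has already been established in Lemma~\ref{lem:Proj-compl}.
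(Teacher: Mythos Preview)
Your proposal is correct and follows essentially the same approach as the paper: both use the exact sequence $0 \to J^\tm(X) \to \pi^{\ab,\tm}_1(X) \to \pi^{\ab}_1(\ov{X}) \to 0$ from \thmref{thm:J-fin}(1), the finiteness of $J^\tm(X)$, and \lemref{lem:Proj-compl} for the projective base. The paper organizes the argument slightly more directly---it first observes via \propref{prop:Pi-decom} that $(\pi^{\ab}_1(X)_\P)_\tor$ is a summand of $\pi^{\ab,\tm}_1(X)_\tor$, then bounds the latter using the exact sequence as is (without $\P$-completing) and the left exactness of the torsion functor---whereas you split into the two summands first and $\P$-complete the sequence; but the content is the same.
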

\begin{proof}
By \propref{prop:Pi-decom}, it suffices to prove the
  claim for $\pi^{\ab, \tm}_1({X})_\tor$.  But this follows by combining
  \thmref{thm:J-fin} and \lemref{lem:Proj-compl}, as
  the torsion functor is left exact.
\end{proof}

\begin{cor}\label{cor:Coker-1}
  The groups $\coker(\rho^\tm_{X,0})_\tor$ and $\coker(\rho^\tm_{X})_\tor$ 
have finite exponents.
\end{cor}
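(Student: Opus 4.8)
\textbf{Proof proposal for Corollary~\ref{cor:Coker-1}.} The plan is to deduce the two assertions from the exactness of the torsion functor on short exact sequences of abelian groups, combined with the finiteness-of-exponent results already established for the source and target of $\rho^\tm_{X}$. More precisely, I will use \lemref{lem:Coker-0}, which tells us that $(\mathrm{Image}(\rho^\tm_{X}))_\tor$ and $\mathrm{Image}(\rho^\tm_{X,0})$ are torsion groups of finite exponent, together with \lemref{lem:Open-compl}, which gives that $\pi^{\ab,\tm}_1(X)_\tor$ has finite exponent, and \thmref{thm:J-fin}(2), which controls $\pi^{\ab,\tm}_1(X)_0$ up to a finite group and a $\wh{\Z}^r$ factor.

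First I would treat $\coker(\rho^\tm_{X})_\tor$. Write $G = \pi^{\ab,\tm}_1(X)$ and $H = \mathrm{Image}(\rho^\tm_{X}) \subseteq G$, so that $\coker(\rho^\tm_{X}) = G/H$ and there is a short exact sequence $0 \to H \to G \to G/H \to 0$. Applying the (left-exact) torsion functor and using the long exact sequence of $\Tor$, one gets an exact sequence $H_\tor \to G_\tor \to (G/H)_\tor \to H \otimes {\Q}/{\Z}$. Here $H \otimes {\Q}/{\Z} = 0$ because $H$ has finite exponent by \lemref{lem:Coker-0} (indeed $H = \mathrm{Image}(\rho^\tm_{X})$, and its torsion part having finite exponent together with $H/H_\tor$ being a subgroup of $G_k/(G_k)_\tor$ — which is torsion-free — forces $H$ itself to have finite exponent; alternatively one argues directly that $NH = 0$ for suitable $N$ using that $\pi_X \circ \rho^\tm_X$ lands in $k^\times$ modulo the finite group $(\wh{\Z}/\Z)_\tor$). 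Hence $(G/H)_\tor$ is a quotient of $G_\tor$, which has finite exponent by \lemref{lem:Open-compl}; therefore $\coker(\rho^\tm_{X})_\tor$ has finite exponent.

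For $\coker(\rho^\tm_{X,0})_\tor$, I would run the identical argument with $G$ replaced by $\pi^{\ab,\tm}_1(X)_0$ and $H$ replaced by $\mathrm{Image}(\rho^\tm_{X,0})$, which by \corref{cor:REC-T-Norm} maps into $\pi^{\ab,\tm}_1(X)_0$. Again $H = \mathrm{Image}(\rho^\tm_{X,0})$ has finite exponent by \lemref{lem:Coker-0}, so $H \otimes {\Q}/{\Z} = 0$, and $\coker(\rho^\tm_{X,0})_\tor$ is a quotient of $(\pi^{\ab,\tm}_1(X)_0)_\tor$. The latter has finite exponent: by \thmref{thm:J-fin}(2) (when $X$ is geometrically connected) it is $F \oplus \wh{\Z}^r$ for a finite group $F$, whose torsion is just $F$; in general one reduces to the geometrically connected case by the same finite-\'etale base-change trick used in \lemref{lem:Coker-0} and \lemref{lem:Open-compl}, i.e.\ passing to $X' = X_{k'}$ with $k'/k$ finite so that the irreducible components of $X'$ are geometrically connected, and using \corref{cor:Sm-PB} together with \lemref{lem:PB-PF-EFG} to bound the exponent after pull-back/push-forward. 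Alternatively, and more cleanly, one notes $(\pi^{\ab,\tm}_1(X)_0)_\tor \subseteq \pi^{\ab,\tm}_1(X)_\tor$, which already has finite exponent by \lemref{lem:Open-compl}.

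The only mild subtlety — and the place I would be most careful — is the vanishing $H \otimes {\Q}/{\Z} = 0$, i.e.\ verifying that the full image groups $\mathrm{Image}(\rho^\tm_X)$ and $\mathrm{Image}(\rho^\tm_{X,0})$ (not just their torsion subgroups) have finite exponent. For $\rho^\tm_{X,0}$ this is exactly \lemref{lem:Coker-0}. For $\rho^\tm_X$ one argues that the composite $\pi_X \circ \rho^\tm_X \colon C^\tm(X) \to G_k$ factors through $k^\times \to G_k$ whose image $\rho_k(k^\times)$ is such that $G_k/\rho_k(k^\times) \cong \wh{\Z}/\Z$ is torsion-free, so $\mathrm{Image}(\rho^\tm_X)$ sits in an extension of a subgroup of $\rho_k(k^\times) \subseteq G_k$ by $\mathrm{Image}(\rho^\tm_{X,0})$; since $\rho_k(k^\times) \cong k^\times$ has its torsion finite and is otherwise torsion-free modulo that, a short diagram chase gives that $\mathrm{Image}(\rho^\tm_X)$ has finite exponent if and only if $\mathrm{Image}(\rho^\tm_{X,0})$ does, which is \lemref{lem:Coker-0}. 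I expect this bookkeeping to be routine, so the corollary should follow in a few lines.
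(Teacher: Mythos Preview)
Your argument for $\coker(\rho^\tm_{X,0})_\tor$ is correct and essentially identical to the paper's: since $F' := \mathrm{Image}(\rho^\tm_{X,0})$ is torsion of finite exponent by \lemref{lem:Coker-0}, one has $F' \otimes \Q/\Z = 0$, whence the sequence $0 \to F'_\tor \to (\pi^{\ab,\tm}_1(X)_0)_\tor \to \coker(\rho^\tm_{X,0})_\tor \to 0$ is exact, and one concludes via \lemref{lem:Open-compl} (noting $(\pi^{\ab,\tm}_1(X)_0)_\tor \subseteq \pi^{\ab,\tm}_1(X)_\tor$).

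Your argument for $\coker(\rho^\tm_X)_\tor$, however, has a genuine gap. The claim that $H = \mathrm{Image}(\rho^\tm_X)$ has finite exponent is false. By the diagram of \corref{cor:REC-T-Norm}, the image $\pi_X(H)$ equals $\rho_k(\mathrm{Image}(N_X))$, and $\mathrm{Image}(N_X) \subseteq k^\times$ has finite index (it contains $N_{{k(x)}/k}(k(x)^\times)$ for any closed point $x$, which has finite index by local class field theory). In particular $\mathrm{Image}(N_X)$ contains elements of infinite order coming from the valuation $k^\times \surj \Z$, so $H$ is not torsion and $H \otimes \Q/\Z \neq 0$. Your diagram chase claiming ``$H$ has finite exponent iff $\mathrm{Image}(\rho^\tm_{X,0})$ does'' is backwards: the quotient $H/\mathrm{Image}(\rho^\tm_{X,0}) \cong \rho_k(\mathrm{Image}(N_X))$ being a nontrivial torsion-free-up-to-finite group is precisely what prevents $H$ from having finite exponent. (Note also that $\wh{\Z}/\Z$ is uniquely divisible, hence torsion-free, so the parenthetical alternative you offer does not help either.)

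The paper avoids this issue by deducing the claim for $\coker(\rho^\tm_X)_\tor$ from the degree-zero case rather than directly. Replacing $k^\times$ and $G_k$ in \corref{cor:REC-T-Norm} by the images $F := \mathrm{Image}(N_X)$ and $F' := \mathrm{Image}(\pi_X)$ and applying the snake lemma (using that $\rho_k$ is injective) yields an exact sequence
\[
0 \to \coker(\rho^\tm_{X,0}) \to \coker(\rho^\tm_X) \to F'/F \to 0.
\]
Since $\coker(N_X) = k^\times/F$ is finite and $G_k/\rho_k(k^\times) \cong \wh{\Z}/\Z$ is uniquely divisible, one checks that $(F'/F)_\tor$ is finite. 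Left-exactness of the torsion functor then bounds the exponent of $\coker(\rho^\tm_X)_\tor$ by that of $\coker(\rho^\tm_{X,0})_\tor$ times $|(F'/F)_\tor|$.
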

\begin{proof}
We first argue for the degree zero part.
  We let $F'$ be the image of $\rho^\tm_{X,0}$ and $F = \coker(\rho^\tm_{X,0})$ so that
  there is an exact sequence
  \begin{equation}\label{eqn:Coker-1-0}
    0 \to F' \to \pi^{\ab, \tm}_1(X)_0 \to F \to 0.
  \end{equation}
  Using \lemref{lem:Coker-0}, we see that $F' \otimes {\Q}/{\Z} = 0$. 
This implies that
  \[
    0 \to {F'}_\tor \to  (\pi^{\ab, \tm}_1(X)_0)_\tor \to F_\tor \to 0
  \]
  is exact. Hence, it suffices to show that $(\pi^{\ab, \tm}_1(X)_0)_\tor$ has finite
  exponent. But this follows from \lemref{lem:Open-compl}.

To prove the finite exponent property for $\coker(\rho^\tm_{X})_\tor$, we let
  $F = {\rm Image}(N_X)$ and $F' = {\rm Image}(\pi_X)$. Since $\rho_k$ is injective,
  its cokernel is uniquely divisible and ${\rm Coker}(N_X)$ is finite, it follows that
  \begin{equation}\label{eqn:Coker-1-1}
    0 \to \coker(\rho^\tm_{X,0}) \to \coker(\rho^\tm_{X}) \to {F'}/F \to 0
  \end{equation}
  is exact and $({F'}/F)_\tor$ is finite. The desired claim now follows from the
  left exactness of the torsion functor and the
  finite exponent of $\coker(\rho^\tm_{X,0})_\tor$ we showed above.
\end{proof}

\begin{remk}\label{remk:Fin-exp-fin}
  We shall show in \S~\ref{sec:FIN-EXP-FIN} that the finite exponent claims in
  the results of \S~\ref{sec:L-compl-tor} and \S~\ref{sec:Tor-L-compl} can be replaced
  by a finiteness assertion, at least away from $\Char(k)$.
\end{remk}

\subsection{The kernel of the reciprocity}\label{sec:Ker-Rec}
We continue with the setup of \S~\ref{sec:GYT}. We let $\rho^m_{X} \colon {C(X)}/m \to
{\pi^{\ab}_1(X)}/m$ be the reciprocity homomorphism mod-$m$, where $m > 0$ is an
integer. We define $\rho^{\tm, m}_X$ and $\rho^m_{\ov{X}}$ similarly.

\begin{lem}\label{lem:Ker-bound}
There exists an integer $M \gg 0$ such that $|\Ker(\rho^m_{X})| \le M$ for all
$m \in I_{\L}$.
\end{lem}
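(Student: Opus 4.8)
The plan is to translate the statement, via \propref{prop:Rec-Real}, into a uniform bound on the kernel of an {\'e}tale realization map, and then to use a localization sequence together with the finiteness theorem of \secref{sec:BMC} to reduce to the known case of smooth projective varieties. To begin, I would fix $m \in I_\L$ and set $\Lambda = \Z/m$. Since $X$ is smooth of pure dimension $d$ and $m$, being prime to $\Char(\ff)$, is invertible in $k$, \propref{prop:Rec-Real} provides an isomorphism $\Ker(\rho^m_X) \xrightarrow{\cong} \Ker\big(\epsilon^*_X \colon H^{2d+1}_c(X, \Lambda(d+1)) \to H^{2d+1}_{\et,c}(X, \Lambda(d+1))\big)$, so it suffices to bound $|\Ker(\epsilon^*_X)|$ by a constant independent of $m \in I_\L$.

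Next, I would apply \lemref{lem:Ex-seq}(2) (and its evident {\'e}tale analogue) to $j \colon X \inj \ov{X}$ with complement $Z$, and compare the two localization sequences through $\epsilon^*$. Around degree $2d+1$ this yields a commutative ladder with exact rows
\[ H^{2d}_c(Z, \Lambda(d+1)) \xrightarrow{\ \partial\ } H^{2d+1}_c(X, \Lambda(d+1)) \xrightarrow{\ j_*\ } H^{2d+1}_c(\ov{X}, \Lambda(d+1)) \]
on top and the same with $\et$-subscripts below. A routine diagram chase gives an exact sequence $0 \to \Ker(\epsilon^*_X)\cap \im(\partial) \to \Ker(\epsilon^*_X) \to \Ker(\epsilon^*_{\ov{X}})$, hence $|\Ker(\epsilon^*_X)| \le |H^{2d}_c(Z,\Lambda(d+1))|\cdot|\Ker(\epsilon^*_{\ov{X}})|$. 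Since $Z$ is nowhere dense in the connected scheme $\ov{X}$, one has $\dim Z \le d-1$, so $d+1 \ge \dim(Z)+2$, and \corref{cor:Boundedness-0} bounds $|H^{2d}_c(Z,\Z/m(d+1))|$ uniformly over $m$ prime to $\Char(\ff)$. (If $X=\ov{X}$ this step is vacuous.) It remains to bound $|\Ker(\epsilon^*_{\ov{X}})| = |\Ker(\rho^m_{\ov{X}})|$ uniformly.

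For the smooth projective $k$-scheme $\ov{X}$ I would invoke the class field theory of Jannsen--Saito and Forr{\'e} recalled in \S~\ref{sec:Intro}: $\Ker(\rho_{\ov{X}}) = F' \oplus D$ with $F'$ finite and $D$ divisible by every integer prime to $\Char(\ff)$; since $m\in I_\L$, $D/m=0$ and so $|\Ker(\rho_{\ov{X}})/m| \le |F'|$. Writing $\rho_{\ov{X}}$ as $C(\ov{X}) \surj \im(\rho_{\ov{X}}) \inj \pi^{\ab}_1(\ov{X})$ and running the snake lemma on the multiplication-by-$m$ maps of the two resulting short exact sequences gives $|\Ker(\rho^m_{\ov{X}})| \le |F'| \cdot |{}_m\coker(\rho_{\ov{X}})|$. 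To bound the last factor uniformly I would reduce, by a finite Galois base change as in the proof of \lemref{lem:Coker-0}, to $\ov{X}$ geometrically connected, and then combine \corref{cor:J-fin-1} with \corref{cor:Coker-1} (applied with $X=\ov{X}$, where $\rho^\tm_{\ov{X}}=\rho_{\ov{X}}$): these exhibit $\coker(\rho_{\ov{X}})$, modulo a subgroup of bounded $m$-torsion, as an extension of a finite group by $\wh{\Z}^r$, whose $m$-torsion vanishes. Feeding this back up through Steps 2 and 1 produces the desired $M$.

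The substantive point is this projective-case estimate, i.e.\ upgrading Forr{\'e}'s integral structure theorem for $\Ker(\rho_{\ov{X}})$ to a mod-$m$ bound uniform over $m\in I_\L$. This forces one to control the $m$-torsion of $\coker(\rho_{\ov{X}})$, which is exactly where the finiteness of the degree-zero image (Forr{\'e}) and Yoshida's identification $\pi^{\ab}_1(\ov{X})_0 = F \oplus \wh{\Z}^r$ are genuinely needed; making this bookkeeping precise is the delicate part. By contrast, the identification with {\'e}tale realization, the localization ladder, the {\'e}tale analogue of \lemref{lem:Ex-seq}, and the snake-lemma estimates are all formal, and the reduction to $Z$ is immediate from the finiteness theorem already established.
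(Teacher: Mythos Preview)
Your overall strategy matches the paper's proof: translate via \propref{prop:Rec-Real} to the {\'e}tale realization map, compare localization sequences for $X \hookrightarrow \ov{X}$, invoke \corref{cor:Boundedness-0} for the term coming from $Z$, and reduce to a uniform bound on $|\Ker(\rho^m_{\ov{X}})|$ for the smooth projective compactification. Up to this point your argument is correct and essentially identical to the paper's.

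The projective step, however, is where your argument becomes imprecise. First, the reduction to geometrically connected $\ov{X}$ via a finite Galois base change ``as in the proof of \lemref{lem:Coker-0}'' does not carry over: that proof transports finite-exponent information about $\im(\rho_{\ov{X},0})$ through $f_*$, but transporting a uniform bound on $|{}_m\coker(\rho_{\ov{X}})|$ or $|\Ker(\rho^m_{\ov{X}})|$ would require a pull-back $f^*$ compatible with reciprocity on the level of cokernels, which is not established; even granting this, the transfer identity $f_* f^* = [k':k]$ only controls $[k':k]\cdot\Ker(\epsilon^*_{\ov X})$, not the $[k':k]$-torsion quotient, so no uniform bound over all $m\in I_\L$ follows. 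The paper avoids this entirely by observing that neither $C(\ov{X})$ nor $\pi^{\ab}_1(\ov{X})$ depends on the base field, so one may simply replace $k$ by $k' := H^0(\ov{X},\sO_{\ov{X}})$, over which $\ov{X}$ is geometrically connected.

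Second, once geometrically connected, your invocation of \corref{cor:Coker-1} is not enough: that result gives only that $\coker(\rho_{\ov{X}})_\tor$ has finite exponent, which does \emph{not} imply a uniform bound on $|{}_m\coker(\rho_{\ov{X}})|$ (a torsion group of finite exponent can have arbitrarily large $m$-torsion). What is actually needed---and what the paper uses---is Yoshida's structure theorem $\pi^{\ab}_1(\ov{X})_0 \cong F \oplus \wh{\Z}^r$ together with the finiteness of $\im(\rho_{\ov{X},0})$ (Forr{\'e}): since ${}_m\wh{\Z}^r = 0$, a quotient of $F \oplus \wh{\Z}^r$ by a finite subgroup has $m$-torsion bounded by $|F|$ times the order of that subgroup, uniformly in $m$.
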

\begin{proof}
 For $\ov{X}$, the lemma follows from \cite[Thm.~5.4]{Forre-Crelle}. Indeed,
  note that none of the groups $C(\ov{X}) \cong SK_1(\ov{X})$ and $\pi^{\ab}_1(\ov{X})$
  depends on $k$. Hence, we can replace $k$ by $k' := H^0(\ov{X}, \sO_{\ov{X}})$.
Observe that $k'$ is a finite separable field extension of $k$ because
$X$ is integral and smooth over $k$, in particular, geometrically reduced.
Moreover, the structure map $X \to \Spec(k)$ canonically factors through $\Spec(k')$
and $\ov{X}$ is geometrically connected over $k'$ (cf. \cite[Tag~0366]{SP}). We
can therefore assume that $\ov{X}$ is geometrically connected.
In this case, there is an exact sequence
  \[
    {\Ker(\rho_{\ov{X}})}/m \to \Ker(\rho^m_{\ov{X}}) \to
    \ov{{}_m \coker(\rho_{\ov{X}})} \to 0
  \]
  for every integer $m > 0$, where the term on the right is a quotient of
  ${}_m \coker(\rho_{\ov{X}})$.
  The term on the left is uniformly finite as $m$ runs through $I_{\L}$
  by \cite[Thm.~5.4]{Forre-Crelle} and the
    term on the right is uniformly finite over $I_\P$ by \cite[Thm.~1.1]{Yoshida03}.

We shall now prove the lemma for $X$.
By Propositions~\ref{prop:Tame-MCCS} and ~\ref{prop:Rec-Real}, the lemma is equivalent
to the statement in motivic cohomology that the kernel of the {\'e}tale realization map
  $\epsilon^*_X \colon H^{2d+1}_c(X, {\Z}/m(d+1)) \to H^{2d+1}_{\et, c}(X, {\Z}/m(d+1))$
  is uniformly finite as $m$ runs through $I_{\L}$. To prove this new assertion, we
  consider the commutative diagram
  \begin{equation}\label{eqn:Ker-bound-0}
    \xymatrix@C1pc{
H^{2d}(Z, {\Z}/m(d+1)) \ar[r] \ar[d]_-{\epsilon^*_Z} & H^{2d+1}_c(X, {\Z}/m(d+1))
\ar[r] \ar[d]^-{\epsilon^*_X} & H^{2d+1}(\ov{X}, {\Z}/m(d+1))
\ar[d]^-{\epsilon^*_{\ov{X}}} \ar[r] & 0 \\
 H^{2d}_\et(Z, {\Z}/m(d+1)) \ar[r] & H^{2d+1}_{\et, c}(X, {\Z}/m(d+1))
 \ar[r] & H^{2d+1}_\et(\ov{X}, {\Z}/m(d+1)) \ar[r] & 0.}
\end{equation}

The top row is exact by \lemref{lem:Ex-seq} and the bottom row is the 
localization sequence in {\'e}tale cohomology. Since $H^{2d+1}(\ov{X}, {\Z}/m(d+1))$
remains invariant under perfection by \lemref{lem:SH-DM-Maps} and
\cite[Cor.~2.1.7]{Elmanto-Khan}, this term is zero by 
\cite[Thm.~5.1]{Krishna-Pelaez-AKT}.
The term on the right end of the bottom row is zero because $cd_k(Z) \le 2d$.
We have shown above (this also uses Propositions~\ref{prop:Tame-MCCS} and
~\ref{prop:Rec-Real}) that $\Ker(\epsilon^*_{\ov{X}})$ is uniformly finite
over $I_{\L}$. We now apply \corref{cor:Boundedness-0} and conclude.
\end{proof}

For any set of primes $\M$, we let ${\Ker}(\rho_X)^\dagger_\M =
{\varprojlim}_{m \in I_\M} \Ker(\rho^m_{X})$.
It follows from Lemmas~\ref{lem:Finite-compln} and ~\ref{lem:Ker-bound} that
\begin{equation}\label{eqn:Ker-bound-1}
|{\Ker}(\rho_X)^\dagger_{\L}| \le M  \ \mbox{for \ some \ integer} \  M \gg 1.
\end{equation}
Furthermore, there exists an exact sequence
\begin{equation}\label{eqn:Ker-bound-2}
  0 \to {\Ker}(\rho_X)^\dagger_{\L} \to C(X)_{\L} \to \pi^{\ab}_1(X)_{\L}.
\end{equation}

\begin{lem}\label{lem:Ker-div}
  The kernels of the ${\L}$-completion maps $C(X) \to C(X)_{\L}$ and
$C^\tm(X) \to C^\tm(X)_{\L}$ are ${\L}$-divisible.
\end{lem}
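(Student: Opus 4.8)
The plan is to deduce both statements from Lemma~\ref{lem:JS-0} applied with $\M = \L$. For the first assertion, take $A = C(X)$, the pro-abelian group $\{B_m\}_{m \in I_\L} = \{{\pi^{\ab}_1(X)}/m\}_{m \in I_\L}$, and $\phi = \{\rho^m_X\}$; then $\wh{\phi}$ is the $\L$-completed reciprocity map $C(X)_\L \to \pi^{\ab}_1(X)_\L$ appearing in ~\eqref{eqn:Ker-bound-2}, so that $\Ker(\wh{\phi}) = {\Ker}(\rho_X)^\dagger_\L$ and $\wh{B} = \pi^{\ab}_1(X)_\L$. The conclusion of Lemma~\ref{lem:JS-0} is that $\bigcap_{m \in I_\L} m C(X)$ is the maximal $\L$-divisible subgroup of $C(X)$; since this intersection is exactly the kernel of the $\L$-completion map $C(X) \to C(X)_\L = {\varprojlim}_{m \in I_\L} C(X)/m$, the first assertion follows immediately. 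Thus everything reduces to checking the hypothesis of Lemma~\ref{lem:JS-0}: the existence of $N \in I_\L$ with $N \cdot (\pi^{\ab}_1(X)_\L)_\tor = N \cdot {\Ker}(\rho_X)^\dagger_\L = 0$.

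For the kernel term: by ~\eqref{eqn:Ker-bound-1} the group ${\Ker}(\rho_X)^\dagger_\L = {\varprojlim}_{m \in I_\L} \Ker(\rho^m_X)$ is finite of order bounded by some $M$, and each $\Ker(\rho^m_X)$ is a subgroup of $C(X)/m$, hence annihilated by $m \in I_\L$. By \lemref{lem:Finite-compln}, the projection ${\Ker}(\rho_X)^\dagger_\L \to \Ker(\rho^{m_0}_X)$ is injective for some $m_0 \in I_\L$, so $m_0$ annihilates ${\Ker}(\rho_X)^\dagger_\L$. For the torsion term: by \lemref{lem:Prod-dec} one has $\pi^{\ab}_1(X)_\L \cong {\prod_{\ell \in \L}} \pi^{\ab}_1(X)_\ell$, so $(\pi^{\ab}_1(X)_\L)_\tor$ is an $\L$-torsion group; it is a subgroup of $(\pi^{\ab}_1(X)_\P)_\tor$ because $\L \subseteq \P$, and the latter has finite exponent by \lemref{lem:Open-compl}, so $(\pi^{\ab}_1(X)_\L)_\tor$ is killed by some $N_2 \in I_\L$. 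Taking $N = m_0 N_2 \in I_\L$ verifies the hypothesis and finishes the first case.

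For the second assertion, apply Lemma~\ref{lem:JS-0} with $A = C^\tm(X)$, $\{B_m\}_{m \in I_\L} = \{{\pi^{\ab, \tm}_1(X)}/m\}$ and $\phi = \{\rho^{\tm, m}_X\}$. For $m \in I_\L$ we have $m \in k^\times$ and $m$ prime to the characteristic exponent of $k$, so \lemref{lem:Idele-Tame-prime-to-p} and \corref{cor:Pi-decom-0} give isomorphisms ${C(X)}/m \xrightarrow{\cong} {C^\tm(X)}/m$ and ${\pi^{\ab}_1(X)}/m \xrightarrow{\cong} {\pi^{\ab, \tm}_1(X)}/m$ which, by ~\eqref{eqn:REC-T-0}, are compatible with the reciprocity maps. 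Hence $\Ker(\rho^{\tm, m}_X) \cong \Ker(\rho^m_X)$ for all $m \in I_\L$, so ${\Ker}(\rho^\tm_X)^\dagger_\L \cong {\Ker}(\rho_X)^\dagger_\L$ is finite of bounded order and killed by some element of $I_\L$ as before, while $(\pi^{\ab, \tm}_1(X)_\L)_\tor$ has finite exponent in $I_\L$ by \lemref{lem:Open-compl}. The hypothesis of Lemma~\ref{lem:JS-0} is therefore met, and its conclusion identifies the kernel of $C^\tm(X) \to C^\tm(X)_\L$ with the maximal $\L$-divisible subgroup of $C^\tm(X)$, which is in particular $\L$-divisible.

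The individual verifications are routine; the only point needing a little care is that the annihilator $N$ can be chosen inside $I_\L$ rather than being merely a finite integer — this uses that each $\Ker(\rho^m_X)$ is $m$-torsion with $m \in I_\L$ and that, for $\ell \in \L$, the $\ell$-adic factor $\pi^{\ab}_1(X)_\ell$ of the completed fundamental group is pro-$\ell$, so that the exponent of $(\pi^{\ab}_1(X)_\L)_\tor$ lies in $I_\L$.
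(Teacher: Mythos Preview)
Your proof is correct and follows essentially the same route as the paper: apply \lemref{lem:JS-0} with $A = C(X)$ (resp.\ $C^\tm(X)$), $B_m = \pi^{\ab}_1(X)/m$ (resp.\ $\pi^{\ab,\tm}_1(X)/m$), and $\phi$ given by the reciprocity maps, then verify the hypotheses using \lemref{lem:Open-compl} and ~\eqref{eqn:Ker-bound-1}. Your treatment is in fact more careful than the paper's on two points: you spell out why the annihilator $N$ can be taken in $I_\L$ (via \lemref{lem:Finite-compln} for the kernel term and the pro-$\ell$ structure for the torsion term), and you handle the tame case explicitly via the mod-$m$ comparison isomorphisms of \lemref{lem:Idele-Tame-prime-to-p} and \corref{cor:Pi-decom-0}, whereas the paper simply remarks that the same argument works.
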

\begin{proof}
We only need to show that the kernel of $C(X) \to C(X)_{\L}$ is ${\L}$-divisible.
This is proven by using $\rho_X$ and \lemref{lem:JS-0} as follows. We set $A = C(X)$,
$B_m = {\pi^{\ab}_1(X)}/m$ and $\phi = \{\rho^m_X\}$ with $m \in I_{\L}$.
Then we get $\wh{B} = {\pi^{\ab}_1(X)}_{\L}$ in the notations of
\lemref{lem:JS-0}. It follows from \lemref{lem:Open-compl} that
$M \wh{B}_\tor = 0$ for some integer $M \gg 0$. Using the left exactness of the
inverse limit functor and ~\eqref{eqn:Ker-bound-1}, we can choose $M$ large enough
so that we also have $M (\Ker(\wh{\phi})) = 0$.
Note that it follows from ~\eqref{eqn:Ker-bound-2} that
$M$ necessarily lies in $I_{\L}$ in this case. We now apply \lemref{lem:JS-0} to
conclude the proof.
\end{proof}

\section{End of proofs under bad reduction}\label{sec:End-1}
In this section, we shall complete the proofs of all our main results except
Theorem~\ref{thm:Main-4}. We shall also prove refinements of
the results of \S~\ref{sec:L-compl-tor} and \S~\ref{sec:Tor-L-compl}.
We let $k$ be a local field with residue field $\ff$ and $\Char(\ff) = p$.
We maintain the set-up of \S~\ref{sec:GYT}.

\subsection{Proof of \thmref{thm:Main-1}}\label{sec:PF-1}
The following result proves part of \thmref{thm:Main-1}.

\begin{thm}\label{thm:Main-1-pf}
  $\Ker(\rho_X)$ is a direct sum of a finite group and an $\L$-divisible group.
  The same holds also for $\Ker(\rho^\tm_X)$.
\end{thm}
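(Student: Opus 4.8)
We treat $\rho_X$; the argument for $\rho^\tm_X$ is word for word the same, since by \corref{cor:Pi-decom-0} and \lemref{lem:Idele-Tame-prime-to-p} the mod-$m$ maps $\rho^m_X$ and $\rho^{\tm, m}_X$ agree for $m\in I_\L$, so that \lemref{lem:Ker-bound}, \lemref{lem:Ker-div}, \lemref{lem:Open-compl} and \eqref{eqn:Ker-bound-1} hold verbatim in the tame setting. Write $N=\Ker(\rho_X)$, and let $D=\Ker\big(C(X)\to C(X)_\L\big)=\bigcap_{m\in I_\L}mC(X)$, which is $\L$-divisible by \lemref{lem:Ker-div}. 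The plan is to exhibit $N$ as an extension of a finite group by an $\L$-divisible group and then conclude by the Jannsen--Saito splitting \lemref{lem:JS-1}.

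First I would show $N/(N\cap D)$ is finite. The composite $C(X)\to C(X)_\L\to\pi^{\ab}_1(X)_\L$ is the $\L$-completion of $\rho_X$ postcomposed with $C(X)\to C(X)_\L$ followed by the completion of $\pi^{\ab}_1(X)$, hence it factors through $\rho_X$; therefore it kills $N$, and induces $\lambda\colon N\to \Ker(\rho_X)^\dagger_\L$, where $\Ker(\rho_X)^\dagger_\L=\Ker\big(C(X)_\L\to\pi^{\ab}_1(X)_\L\big)$ by \eqref{eqn:Ker-bound-2}. The kernel of $\lambda$ is exactly $N\cap D$, and $\Ker(\rho_X)^\dagger_\L$ is finite by \eqref{eqn:Ker-bound-1}, so $T:=N/(N\cap D)$ is finite.

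The heart of the matter is that $N\cap D$ is itself $\L$-divisible. Fix $x\in N\cap D$ and a prime $\ell\in\L$. Since $D=\ell D$, write $x=\ell y$ with $y\in D$; then $\ell\,\rho_X(y)=\rho_X(x)=0$, so $\rho_X(y)$ is $\ell$-torsion. On the other hand $\rho_X(D)$ is an $\L$-divisible subgroup of the profinite abelian group $\pi^{\ab}_1(X)$; under the primary decomposition $\pi^{\ab}_1(X)\xrightarrow{\cong}\prod_{\ell'}\pi^{\ab}_1(X)_{\ell'}$ of \lemref{lem:Prod-dec}, any element divisible by every prime in $\L$ must have trivial $\pi^{\ab}_1(X)_{\ell'}$-component for each $\ell'\in\L$ (because $\bigcap_n \ell'^{\,n}\pi^{\ab}_1(X)_{\ell'}=0$), hence lies in the pro-$\Char(\ff)$ part $\pi^{\ab}_1(X)_{\Char(\ff)}$. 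As $\ell\neq\Char(\ff)$, this pro-$\Char(\ff)$ group has no nonzero element of order prime to $\Char(\ff)$, so $\rho_X(y)=0$; thus $y\in N\cap D$ and $\ell y=x$. Hence $N\cap D$ is $\L$-divisible.

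Finally I would apply \lemref{lem:JS-1} to the exact sequence $0\to N\cap D\to N\to T\to 0$ with $\M=\L$: writing $T=T'\oplus T''$ with $T''$ the $\L$-primary part (finite, since $T$ is) and $T'$ free of $\L$-torsion, the preimage $\pi^{-1}(T')$ is $\L$-divisible and $N=\pi^{-1}(T')\oplus T''$. This gives the claimed decomposition $\Ker(\rho_X)=F'\oplus D'$ with $F'=T''$ finite and $D'=\pi^{-1}(T')$ $\L$-divisible. The one genuinely substantive input is the finiteness in \eqref{eqn:Ker-bound-1} (equivalently \lemref{lem:Ker-bound}), which is what Section~\ref{sec:Kernel} is for and which itself rests on the finiteness theorems for {\'e}tale cohomology over local fields (\corref{cor:Boundedness-0}, \propref{prop:Fin-char-0}) via the identification of reciprocity with {\'e}tale realization (\propref{prop:Rec-Real}); the rest of the proof above is a formal assembly of that finiteness, \lemref{lem:Ker-div}, \lemref{lem:Prod-dec} and \lemref{lem:JS-1}.
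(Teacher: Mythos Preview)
Your proof is correct and follows essentially the same approach as the paper's: both identify the quotient $T=\Ker(\rho_X)/(\Ker(\rho_X)\cap\Ker(C(X)\to C(X)_\L))$ as finite via \eqref{eqn:Ker-bound-1}, show that $\Ker(\rho_X)\cap\Ker(C(X)\to C(X)_\L)$ is $\L$-divisible by a diagram chase using \lemref{lem:Ker-div} and \lemref{lem:Prod-dec}, and then invoke \lemref{lem:JS-1}. The only cosmetic difference is that the paper packages the chase via the commutative square \eqref{eqn:Main-1-pf-0} (noting that $\Ker(\pi^{\ab}_1(X)\to\pi^{\ab}_1(X)_\L)=\pi^{\ab}_1(X)_p$ is a $\Z_p$-module with no $\L$-torsion), whereas you argue elementwise that $\rho_X(y)$ lands in $\pi^{\ab}_1(X)_p$; these are the same argument. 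One phrasing to tighten: ``divisible by every prime in $\L$'' should be ``divisible by every integer in $I_\L$'' for the conclusion $\bigcap_n\ell'^{\,n}\pi^{\ab}_1(X)_{\ell'}=0$ to bite, but this is exactly what $\L$-divisibility of $\rho_X(D)$ gives you.
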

\begin{proof}
We first show the assertion for $\Ker(\rho_X)$.
  We look at the commutative diagram
  \begin{equation}\label{eqn:Main-1-pf-0}
    \xymatrix@C1pc{
      0 \to \Ker(\rho_X) \ar[r] \ar[d] & C(X) \ar[d]
      \ar[r]^-{\rho_X} & \pi^{\ab}_1(X) \ar@{->>}[d] \\
      0 \to {\Ker}(\rho_X)^\dagger_\L \ar[r] &  C(X)_\L \ar[r]^-{(\rho_{X})_\L} &
      \pi^{\ab}_1(X)_\L,}
    \end{equation}
    where the vertical arrows are induced by the $\L$-completion. Note that
    the left vertical arrow is the composition $ \Ker(\rho_X) \to  \Ker(\rho_X)_\L \to
     {\Ker}(\rho_X)^\dagger_\L$. It follows from \lemref{lem:Prod-dec} that
 the right vertical arrow is surjective and its kernel is $\pi^{\ab}_1(X)_p$.
In particular, this kernel is a $\Z_p$-module, and hence uniquely $\L$-divisible.

We let $D$ and $T$ denote the kernel and the image of the left vertical arrow,
    respectively,
    so that we have a short exact sequence
    \begin{equation}\label{eqn:Main-1-pf-1}
      0 \to D \to  \Ker(\rho_X) \to T \to 0.
    \end{equation}
    It follows from ~\eqref{eqn:Ker-bound-1} and  ~\eqref{eqn:Ker-bound-2}
    that $T$ is a finite group of exponent lying in $I_\L$. By \lemref{lem:JS-1},
    it remains to show that $D$ is $\L$-divisible. 
    Using \lemref{lem:Ker-div}, it suffices to show that
    the kernel of the right vertical arrow in ~\eqref{eqn:Main-1-pf-0} has no
    $\L$-torsion. But we have seen this in the previous paragraph.
The claim that $\Ker(\rho^\tm_X)$ has the same property has an identical proof.
\end{proof}

\subsection{\thmref{thm:Main-1} for surfaces}\label{sec:Surface**}
Assume in \thmref{thm:Main-1} that $\dim(X) \le 2$. We wish to prove the
following.

\begin{lem}\label{lem:Surface-0}
  One has $\Ker(\rho_X) = F' \oplus D$, where $F'$ is a finite group and $D$ is
  divisible by every integer invertible in $k$. The same holds also for
  $\Ker(\rho^\tm_X)$.
\end{lem}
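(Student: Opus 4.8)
The strategy is to split into the cases $\Char(k)>0$ and $\Char(k)=0$, reducing the latter to the former together with the projective surface case. If $\Char(k)=p>0$ then $\P=\L$, so the statement is precisely \thmref{thm:Main-1-pf} for both $\rho_X$ and $\rho^\tm_X$ and nothing remains. Assume therefore that $\Char(k)=0$ and put $p=\Char(\ff)$, so that $\P=\L\cup\{p\}$ and $I_\P$ is the set of \emph{all} positive integers. By \thmref{thm:Main-1-pf} we already have $\Ker(\rho_X)=F\oplus D_0$ with $F$ finite and $D_0$ an $\L$-divisible group; what is missing is divisibility by $p$. The plan is to re-run the proof of \thmref{thm:Main-1-pf} with $\P$ in place of $\L$ throughout.

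That proof rested on two facts about $\L$: (a) the kernel of the completion map $\pi^{\ab}_1(X)\to\pi^{\ab}_1(X)_\L$ is uniquely $\L$-divisible; and (b) the uniform bound $|\Ker(\rho^m_X)|\le M$ for $m\in I_\L$ of \lemref{lem:Ker-bound}, which in turn fed \lemref{lem:Ker-div} and the finiteness of ${\Ker}(\rho_X)^\dagger_\L$. Fact (a) is trivial for $\P$: since $\Char(k)=0$, $\P$ is the set of all primes, so by \lemref{lem:Prod-dec} the map $\pi^{\ab}_1(X)\to\pi^{\ab}_1(X)_\P$ is an isomorphism and its kernel vanishes. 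Hence everything reduces to the analogue of (b): a uniform bound $|\Ker(\rho^m_X)|\le M$ for all $m\in I_\P$. Granting this, ${\Ker}(\rho_X)^\dagger_\P:=\varprojlim_{m\in I_\P}\Ker(\rho^m_X)$ is finite by \lemref{lem:Finite-compln}; the proof of \lemref{lem:Ker-div} carries over with $\P$ for $\L$ (using \lemref{lem:JS-0}, \lemref{lem:Open-compl} for the finite exponent of $(\pi^{\ab}_1(X))_\tor$, and this finiteness of ${\Ker}(\rho_X)^\dagger_\P$); and finally the diagram \eqref{eqn:Main-1-pf-0} with $\P$-completions, together with \lemref{lem:JS-1}, yields $\Ker(\rho_X)=F'\oplus D$ with $F'$ finite and $D$ divisible by every integer, i.e.\ by every integer prime to $\Char(k)$.

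By \lemref{lem:Ker-bound} the bound already holds for $m\in I_\L$, so by the prime-power decomposition of $\Z/m$ it remains to bound $|\Ker(\rho^{p^n}_X)|$ independently of $n$. Here I would invoke \propref{prop:Rec-Real} (applicable since $p^n\in k^\times$), which identifies $\rho^{p^n}_X$ with the {\'e}tale realization map $\epsilon^*_X\colon H^{2d+1}_c(X,{\Z}/{p^n}(d+1))\to H^{2d+1}_{\et,c}(X,{\Z}/{p^n}(d+1))$, with $d=\dim(X)\le 2$, and then compare the motivic localization sequence of \lemref{lem:Ex-seq} for the pair $(\ov X,Z)$ with its {\'e}tale analogue, exactly as in the proof of \lemref{lem:Ker-bound}. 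Since $\ov X$ and $Z$ are proper, cohomology with and without compact support agree there, and a short diagram chase gives an exact sequence $0\to\Ker(\epsilon^*_X)\cap\im(\partial)\to\Ker(\epsilon^*_X)\xrightarrow{u_*}\Ker(\epsilon^*_{\ov X})$ in degree $2d+1$, with $\im(\partial)$ a quotient of $H^{2d}(Z,{\Z}/{p^n}(d+1))$. The group $\Ker(\epsilon^*_{\ov X})$ in degree $2d+1$ equals $|\Ker(\rho^{p^n}_{\ov X})|$ again by \propref{prop:Rec-Real}, and for a smooth projective surface (or curve) $\ov X$ over $k$ this is uniformly bounded by the same argument as the projective case of \lemref{lem:Ker-bound}, using now that the Jannsen--Saito theorem \cite{JS-Doc} gives $\Ker(\rho_{\ov X})=F''\oplus D''$ with $D''$ divisible by \emph{every} integer (one first reduces to $\ov X$ geometrically connected by passing to the finite separable extension $H^0(\ov X,\sO_{\ov X})$ of $k$, as in \lemref{lem:Ker-bound}). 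The factor $H^{2d}(Z,{\Z}/{p^n}(d+1))$ is uniformly bounded by \propref{prop:Fin-char-0} applied to $Z$, which has dimension $\le d-1\le 1$ (reducing by cdh-descent to the case where $Z$ is smooth of pure dimension $d-1$, where \propref{prop:Fin-char-0} applies directly, and using the vanishing of motivic cohomology above the expected range for strata of smaller dimension). This yields $|\Ker(\rho^{p^n}_X)|\le M$, hence the bound over all of $I_\P$.

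Finally, the case of $\rho^\tm_X$ follows formally once the bound over $I_\P$ is known: in characteristic zero one has $\pi^{\ab,\tm}_1(X)=\pi^{\ab}_1(X)$ (by \eqref{eqn:Tame-fil-1**} and \propref{prop:TFG-2}, since $G^{(1)}_{\Delta(C_n)}=0$ when the base field has characteristic $0$, cf.\ \lemref{lem:Local-case}), so $\tau_X$ is an isomorphism; a diagram chase with the commutative square of \propref{prop:REC-T}, using that $\kappa_X$ is surjective, gives $|\Ker(\rho^{\tm,m}_X)|\le|\Ker(\rho^m_X)|$ for every $m$, and the previous paragraphs apply verbatim to $\rho^\tm_X$. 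I expect the crux to be the uniform $p$-adic bound on $\Ker(\rho^{p^n}_X)$: it is what forces the reduction to the projective surface to be carried out $p$-adically, and hence rests on both the Jannsen--Saito surface theorem \cite{JS-Doc} and the characteristic-zero finiteness \propref{prop:Fin-char-0} — each of which ultimately relies on Tate's theorem that $K^M_2(k)$ is a direct sum of a finite group and a divisible group.
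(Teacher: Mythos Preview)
Your proof is correct and follows essentially the same approach as the paper: reduce to $\Char(k)=0$, re-run the completion argument of \thmref{thm:Main-1-pf} with $\P$ in place of $\L$, and supply the missing uniform bound $|\Ker(\rho^m_X)|\le M$ over all $m$ by combining the Jannsen--Saito surface theorem for $\ov X$ with \propref{prop:Fin-char-0} for $Z$ in the localization diagram. Two minor remarks: your cdh-descent reduction of $Z$ to a smooth scheme is unnecessary since \propref{prop:Fin-char-0} already applies to arbitrary $Z\in\Sch_k$ of the right dimension (and for $\dim Z<d-1$ the group simply vanishes); and your derivation of the tame case from the non-tame one via $\pi^{\ab,\tm}_1(X)\cong\pi^{\ab}_1(X)$ and $C(X)/m\cong C^\tm(X)/m$ in characteristic zero is a clean alternative to the paper's ``word by word'' remark.
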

\begin{proof}
We shall give a proof for $\Ker(\rho_X)$ which also works verbatim for 
$\Ker(\rho^\tm_X)$.  We can assume that $\Char(k) = 0$, else the lemma already 
follows from \thmref{thm:Main-1-pf}.
Suppose that we can prove \lemref{lem:Ker-bound}
by replacing $I_\L$ with $I_{\P}$. 

It is a straightforward checking that 
\lemref{lem:Ker-div} now also holds if we replace $\L$ by
$\P$.
We now look at the diagram ~\eqref{eqn:Main-1-pf-0}. The right vertical arrow is
now an isomorphism and the kernel of the middle vertical arrow is $\P$-divisible. It
follows that the kernel of the left vertical arrow is $\P$-divisible.
Now, we apply \lemref{lem:JS-1} and argue exactly as in the proof of
\thmref{thm:Main-1-pf} to conclude that $\Ker(\rho_X)$ (which is the same as
$\Ker(\rho_{X,0})$) has the desired form.
It remains therefore to prove the improved version of \lemref{lem:Ker-bound}, namely,
there exists an integer $M \gg 0$ such that $|\Ker(\rho^m_X)| \le M$ for all
$m > 0$.

To prove the above assertion, we shall argue as we did in the proof of
\lemref{lem:Ker-bound} and make necessary changes.
For $\ov{X}$, only change we need to make is to use \cite[Thm.~1.8]{JS-Doc}
instead of \cite[Thm.~5.4]{Forre-Crelle}. No other change is required.
For $X$, we only need to make a change in the last line of the proof,
where we need to replace \corref{cor:Boundedness-0} by its improved version.
Hence, we are finally left with showing that
$|H^4(Z, {\Z}/m(3))| \le M$ for all $m > 0$. But this follows from
\propref{prop:Fin-char-0}.
\end{proof}

\begin{remk}\label{remk:JS-higher-dim}
In view of \propref{prop:Fin-char-0}, the proof of \lemref{lem:Surface-0} shows that
its assertion can be extended to any higher dimension if \cite[Thm.~1.8]{JS-Doc}
is valid in that dimension.
\end{remk}

\subsection{\thmref{thm:Main-1} for curves}\label{sec:Curves*}
Assume now in \thmref{thm:Main-1} that $\dim(X) = 1$.

\begin{lem}\label{lem:Curve*-0}
  $\Ker(\rho^\tm_X)$ is divisible.
\end{lem}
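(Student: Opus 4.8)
The plan is to deduce the divisibility of $\Ker(\rho^\tm_X)$ for a smooth curve $X$ from what has already been proved, using the structure result of \thmref{thm:Main-1-pf} together with the finer picture available in dimension one. By \thmref{thm:Main-1-pf} (whose curve case is covered there), we already know $\Ker(\rho^\tm_X) = F' \oplus D$ with $F'$ finite and $D$ $\L$-divisible; moreover if $\Char(k) = 0$ then $\P = \L$ and the improved argument of \lemref{lem:Surface-0} (valid in dimension $\le 2$, hence for curves) already gives that $D$ is $\P$-divisible. So the genuinely new content is the $p$-divisibility of the kernel when $\Char(k) = p > 0$, and the statement that $F'$ can be absorbed so that the whole kernel is divisible. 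First I would reduce to the characteristic $p$ case and use \corref{cor:Rec-tame-unr}: since $\ov{X}$ is a smooth projective curve and $\Char(k) = p > 0$, the vertical maps in ~\eqref{eqn:Rec-tame-unr-0} are isomorphisms, so ${\Ker(\rho^\tm_X)}/{p^n} \cong {\Ker(\rho_{\ov{X}})}/{p^n}$ for all $n$, and more precisely the $p$-primary behaviour of $\rho^\tm_X$ is identified with that of $\rho_{\ov{X}}$ via $j_*$.

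The key input is then the divisibility of $\Ker(\rho_{\ov{X}})$ for the projective curve $\ov{X}$, which is the theorem of Kato--Saito (\cite{Kato-Saito-1}, as recalled in \S~\ref{sec:Background}): for a smooth projective curve over a local field the reciprocity map $\rho_{\ov{X}} \colon SK_1(\ov{X}) \to \pi^{\ab}_1(\ov{X})$ has divisible kernel. Combining this with \corref{cor:Rec-tame-unr}, the $p$-part of $\Ker(\rho^\tm_X)$ is $p$-divisible. Together with \lemref{lem:Surface-0} handling the prime-to-$p$ part when $\Char(k)=0$ (and with $\L = \P$ in characteristic $p$, so \thmref{thm:Main-1-pf} already gives the prime-to-$p$ part there too), we get that $\Ker(\rho^\tm_X)$ is divisible by every prime. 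The remaining point is to upgrade "divisible by every prime, and of the form finite $\oplus$ divisible" to "divisible": writing $\Ker(\rho^\tm_X) = F' \oplus D$ with $F'$ finite and $D$ divisible by every integer prime to $\Char(k)$, and knowing in addition that the whole group is $p$-divisible, one checks that a finite group which is a direct summand of a $p$-divisible group must be trivial (a finite group has no nonzero $p$-divisible quotient, yet $F'$ is a quotient of $\Ker(\rho^\tm_X)$), so $F' = 0$ and $\Ker(\rho^\tm_X) = D$ is divisible.

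The main obstacle is bookkeeping rather than a new idea: one must make sure the identification of ~\eqref{eqn:Rec-tame-unr-0} really transports \emph{divisibility} of the kernel (not just finite-coefficient statements) from $\rho_{\ov{X}}$ to $\rho^\tm_X$. The clean way is to observe that $j_* \colon C^\tm(X)\{p\} \to C(\ov{X})\{p\}$ and $j_* \colon \pi^{\ab,\tm}_1(X)\{p\} \to \pi^{\ab}_1(\ov{X})\{p\}$ are isomorphisms on $p$-primary completions by \corref{cor:Pi-decom-0} and \thmref{thm:TCG-UTCG}, so $j_*$ induces an isomorphism on the $p$-completions of the kernels; since $\Ker(\rho_{\ov{X}})$ is divisible its $p$-completion vanishes, forcing the $p$-adic part of $\Ker(\rho^\tm_X)$ to have vanishing $p$-completion, i.e. to be $p$-divisible. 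An alternative, avoiding completions, is to use that $\Ker(\rho^\tm_X)$, being of the form finite $\oplus$ ($\P$-divisible), has its $p$-primary torsion sitting inside $F'$, and then run the argument at the level of $\CH^2(\ov{X})$-type groups via \corref{cor:ML-main-3} and the surjection of \thmref{thm:TCG-UTCG}. Either route is routine once the Kato--Saito divisibility theorem is invoked; I would present the completion argument as the cleanest.
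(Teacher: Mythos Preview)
Your proposal has a genuine gap: the argument does not succeed in eliminating the finite summand $F'$. Starting from the decomposition $\Ker(\rho^\tm_X) = F' \oplus D$ of \thmref{thm:Main-1-pf} (or \lemref{lem:Surface-0}), you then aim to show the whole group is $p$-divisible and conclude $F' = 0$. But $F'$ has exponent lying in $I_\L$, hence order prime to $p$, so multiplication by $p$ is already an isomorphism on $F'$; thus $F'$ is automatically $p$-divisible and your final claim ``a finite group which is a direct summand of a $p$-divisible group must be trivial'' is simply false (e.g.\ ${\Z}/2$ is $3$-divisible). In characteristic zero the situation is no better: \lemref{lem:Surface-0} still leaves a possibly nonzero finite $F'$, and incidentally your assertion ``if $\Char(k) = 0$ then $\P = \L$'' is backwards---it is in characteristic $p>0$ that $\P = \L$. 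There is also a subtler issue in your $p$-divisibility step: knowing only that $\Ker(\rho_{\ov{X}})$ is divisible does not imply that $\rho^{p^n}_{\ov{X}}$ is injective (consider $\rho = $ multiplication by $2$ on $\Z$), so your ``completion of the kernels'' argument does not go through as stated; what is actually needed is the injectivity statement \cite[Prop.~3]{Kato-Saito-1}.

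The paper's route is different and avoids the decomposition altogether. The key curve-specific input you are missing is that for $d=1$ the {\'e}tale realization $\epsilon^*_X \colon H^3_c(X, {\Z}/m(2)) \to H^3_{\et,c}(X, {\Z}/m(2))$ is \emph{injective} by \corref{cor:Real-iso-sing-0} (since $i = j+1$). Via \propref{prop:Rec-Real} this gives injectivity of $\rho^{\tm,m}_X$ for every $m$ prime to $\Char(k)$; combined with \corref{cor:Rec-tame-unr} and \cite[Prop.~3]{Kato-Saito-1} for $m = p^n$ when $\Char(k)=p>0$, one obtains $\Ker(\rho^{\tm,m}_X) = 0$ for \emph{all} $m > 0$. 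Hence $\Ker(\rho^\tm_X) = \bigcap_{m>0} m\, C^\tm(X)$, and \lemref{lem:JS-0} then yields divisibility directly---no finite summand ever appears.
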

\begin{proof}
Using the argument of \lemref{lem:Ker-bound}, we can assume that $X$ is 
geometrically connected.
Suppose first that $m \in k^\times$.
\corref{cor:Real-iso-sing-0} says that the map
  $\epsilon^*_X \colon H^3_c(X, {\Z}/m(2)) \to  H^3_{\et,c}(X, {\Z}/m(2))$ is
  injective. Combining this with \propref{prop:Rec-Real},
  we get that the map $\rho^{\tm, m}_X \colon {C^\tm(X)}/m \to {\pi^{\ab,t}_1(X)}/m$
  is injective.
  If $\Char(k) = p$, then it follows from \corref{cor:Rec-tame-unr} (which
  is much easier for curves) and
  \cite[Prop.~3]{Kato-Saito-1} that 
  ${C^\tm(X)}/{p^n} \to {\pi^{\ab,\tm}_1(X)}/{p^n}$
  is injective for all $n \ge 1$.
 Taking the limits, it follows that the map
  \begin{equation}\label{eqn:Curve*-0-0}
   ({\varprojlim} \ \rho^{\tm, m}_X) \colon \ {\varprojlim}_{m > 0} {C^\tm(X)}/m \to
    {\varprojlim}_{m > 0} {\pi^{\ab,t}_1(X)}/m
    \cong \pi^{\ab,t}_1(X)
  \end{equation}
  is injective in any characteristic. 
  In particular, $\Ker(\rho^\tm_X) = \bigcap_{m >0} \ m C^{\tm}(X)$. 

  To finish the proof, we can now apply Lemma~\ref{lem:JS-0} with
  $\M = \{\mbox{all \ primes}\}$,
  $A = C^\tm(X)$ and $B_m = {\pi^{\ab,\tm}_1(X)}/m$ (with $m > 0$).
  Lemma~\ref{lem:Open-compl} and ~\eqref{eqn:Curve*-0-0} ensure that the
  conditions of Lemma~\ref{lem:JS-0} are met.
  \end{proof}

We finally have:

\vskip .2cm

{\sl Proof of \thmref{thm:Main-1}:}
Combine \propref{prop:REC-T}, \thmref{thm:Main-1-pf} as well as
Lemmas~\ref{lem:Surface-0} and ~\ref{lem:Curve*-0}.
$\hfill\qed$

\subsection{Proof of \thmref{thm:Main-5}}\label{sec:Pf-5}
We shall now prove \thmref{thm:Main-5}. We restate it here
for reader's convenience. Recall that
$E^\tm(X)$ and $J^\tm(X)$ are the kernels of the canonical surjections
$C^\tm(X) \surj C(\ov{X})$ (cf. \propref{prop:ML-main-3}) and $\pi^{\ab, \tm}_1(X) \surj \pi^{\ab}_1(\ov{X})$,
respectively. 

\begin{thm}\label{thm:Main-5-1}
  The map $\rho^\tm_X \colon E^\tm(X)_\tor \to J^\tm(X)$ is surjective.
\end{thm}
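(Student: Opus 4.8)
The plan is to compare the two short exact sequences
\[
0 \to E^\tm(X) \to C^\tm(X) \xrightarrow{\vartheta_{\ov{X}}} C(\ov{X}) \to 0,
\qquad
0 \to J^\tm(X) \to \pi^{\ab, \tm}_1(X) \xrightarrow{\tau_{\ov{X}}} \pi^{\ab}_1(\ov{X}) \to 0
\]
via the reciprocity maps $\rho^\tm_X$ and $\rho_{\ov{X}}$, which fit into a commutative ladder by \corref{cor:REC-T-PF} (using that $j \colon X \inj \ov{X}$ is a morphism of regular $k$-schemes). The snake lemma then gives that $\rho^\tm_X$ restricts to $E^\tm(X) \to J^\tm(X)$, and the cokernel of this restricted map injects into the cokernel of $\rho_{\ov{X}}$ (more precisely, into $\ker(\coker(\rho^\tm_X) \to \coker(\rho_{\ov{X}}))$ sitting in the snake sequence). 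The first reduction is therefore to understand the image of $E^\tm(X)$ under $\rho^\tm_X$ well enough to pin down its cokernel in $J^\tm(X)$.

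First I would use that $J^\tm(X)$ is a \emph{finite} group of order prime to $\Char(k)$ by \thmref{thm:J-fin}(1); in particular it is torsion and $\ell$-complete for $\ell \in \L$. Next, I would run the argument modulo $m$ for each integer $m \in I_\L$ (and separately handle the $p$-part using \corref{cor:Rec-tame-unr} when $\Char(k)=p>0$, where $E^\tm(X)/p^n \to 0$ since $\vartheta_{\ov{X}}$ is an isomorphism mod $p^n$ by \thmref{thm:TCG-UTCG}, forcing the $p$-part of $J^\tm(X)$ to be trivial anyway). For $m$ prime to $\Char(k)$, translating via \propref{prop:Tame-MCCS}, \propref{prop:Rec-Real} and the localization sequences of \lemref{lem:Ex-seq} (and their {\'e}tale analogues), the statement becomes a surjectivity assertion for a connecting/realization map relating $H^{2d+1}_c(X,\Lambda(d+1))$, $H^{2d+1}_c(\ov{X},\Lambda(d+1))$ and the cohomology of $Z = \ov{X} \setminus X$, compared with their {\'e}tale counterparts under Saito-Tate duality. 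The key inputs here are the finiteness/uniform boundedness results of \thmref{thm:Boundedness}, \corref{cor:Boundedness-0} and \propref{prop:Fin-char-0}, which let me pass to the limit over $m \in I_\L$ and deduce that the relevant subquotient of $\pi^{\ab, \tm}_1(X)_\L$ lying over the trivial subgroup of $\pi^{\ab}_1(\ov{X})_\L$ is hit. Since torsion elements of $E^\tm(X)$ surject onto the image of the $\L$-completion $E^\tm(X) \to E^\tm(X)^\dagger_\L$ modulo an $\L$-divisible subgroup (by the analogue of \lemref{lem:Ker-div} and \lemref{lem:JS-1}), and $J^\tm(X)$ is finite of exponent in $I_\L$, the map $E^\tm(X)_\tor \to J^\tm(X)$ picks up everything in the image of $E^\tm(X)$.

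The remaining and genuinely substantive step is to show that $\rho^\tm_X(E^\tm(X)) = J^\tm(X)$, i.e.\ that the comparison map $\coker(\rho^\tm_X) \to \coker(\rho_{\ov{X}})$ is injective when restricted to the relevant torsion part, equivalently that no element of $J^\tm(X)$ survives to a nonzero element of $\coker(\rho_{\ov{X}})$. Here I expect to reduce to the case of curves: using the description of $E^\tm(X)$ as a quotient of analogous groups $\bigoplus_{C \in \sC(X)} (\cdots)$ for smooth curves $C_n$ (the groups built from $\Fil$-subquotients of $K^M_2$ of 2-dimensional local fields, as foreshadowed by \corref{cor:ML-main-3} and the $E^\tm(X)$-description promised for \thmref{thm:Tame-nr-main}), together with the parallel description of $J^\tm(X)$ via ramification theory {\`a} la Abbes-Saito and the Kerz-Schmidt characterization of tame coverings, one reduces to a statement about a single curve. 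For curves the surjectivity is then settled by Kato's reciprocity for 2-dimensional local fields (\cite{Kato-cft-1}, \cite{Kato80}) combined with Tate's divisibility theorem for $K^M_2$ of a local field (\cite{Tate-Kyoto}), exactly as the introduction indicates. The main obstacle is organizing these two independent ``descent to curves'' descriptions — one on the class group side and one on the fundamental group side — so that they are compatible with the reciprocity maps; establishing that compatibility (essentially that the reciprocity square commutes after restricting to curves, via \corref{cor:REC-T-PF} and \cite[\S~3.2, Cor.~1]{Kato80}) and then invoking the curve case is where the real work lies.
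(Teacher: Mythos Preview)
Your third paragraph is essentially the paper's proof and contains all the right ingredients: reduce to curves via \corref{cor:ML-main-3} and \thmref{thm:Tame-nr-main} (compatibly with the reciprocity maps by \corref{cor:REC-T-PF}), then settle the curve case locally using Kato's reciprocity and the Tate--Merkurjev structure theorem for $K^M_2$ of a local field. The paper does exactly this, and in particular never touches the motivic/{\'e}tale realization machinery or any $\L$-completion argument for this statement.

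Your second paragraph, however, is an unnecessary detour and contains a genuine gap. You attempt to reduce ``$E^\tm(X)_\tor \to J^\tm(X)$ surjective'' to ``$E^\tm(X) \to J^\tm(X)$ surjective'' via analogues of \lemref{lem:Ker-div} and \lemref{lem:JS-1}. But knowing that $J^\tm(X)$ is finite and that the kernel of a completion map on $E^\tm(X)$ is $\L$-divisible does \emph{not} force $E^\tm(X)_\tor$ to surject onto a finite quotient: the map $\Z \surj \Z/2$ is a counterexample to the abstract shape of your argument. What actually makes the torsion reduction work is the \emph{structure of the generators}: by \corref{cor:ML-main-3}, $E^\tm(X)$ is a quotient of $\bigoplus_{C,x} K^M_2(k(x))$, and by Tate--Merkurjev each $K^M_2(k(x))$ decomposes as (divisible)~$\oplus$~(finite). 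The divisible summand dies in the finite group $J^\tm(X)$, so the finite torsion summand already hits everything. This is precisely how the paper uses Tate's theorem --- not to prove surjectivity of $E^\tm(X) \to J^\tm(X)$ as you write, but to pass from $K^M_2(k(x))$ to $K^M_2(k(x))_\tor$. Concretely, after reducing to $\dim X = 1$, one shows for each $x \in \ov{X} \setminus X$ that $K^M_2(k(x)) \cong U'_0K^M_2(K_x)/U'_1K^M_2(K_x) \to G^{(0)}_{K_x}/G^{(1)}_{K_x}$ has dense image (by dualizing and using \propref{prop:Kato-REC}(3)), hence is surjective since the target is finite (\lemref{lem:Local-case}); then Tate--Merkurjev gives that $K^M_2(k(x))_\tor$ already surjects. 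So drop paragraph~2 entirely and sharpen paragraph~3 to make explicit that Tate handles the torsion step at the local level, not a separate global surjectivity step.
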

\begin{proof}
  The proof of the theorem is easily reduced to the case when $\dim(X) = 1$ by
  \propref{prop:ML-main-3}, \corref{cor:REC-T-PF}
  and \thmref{thm:Tame-nr-main}.
We thus assume that $\dim(X) =1$. 
  We let $K = k(X)$ and $K_x$ the completion of $K$ at a closed point $x$.

We recall at this point that the local reciprocity 
    $\rho_{K_x} \colon K^M_2(K_x) \to G_{K_x}$ maps $U'_iK^M_2(K_x)$ to the ramification
    subgroup $G^{(i)}_{K_x}$ for every $i \ge 0$ by \propref{prop:Kato-REC}.
Using Definition~\ref{defn:Idel-X-0} and \lemref{lem:Approximation},
  we get a commutative diagram
  \begin{equation}\label{eqn:Main-5-1-0}
    \xymatrix@C1pc{
      {\underset{x \in Z}\bigoplus} \frac{U'_0K^M_2(K_x)}{U'_1K^M_2(K_x)} \ar@{->>}[r]
      \ar[d] & E^\tm(X) \ar[d]^-{\rho^\tm_X} \\
      {\underset{x \in Z}\bigoplus} \frac{G^{(0)}_{K_x}}{G^{(1)}_{K_x}} \ar@{->>}[r]
      & J^\tm(X),}
    \end{equation}
    where $Z = \ov{X} \setminus X$ and
    the left vertical arrow is the sum of local reciprocity maps for $K_x$
    as $x$ runs through $Z$.
    The horizontal arrows are surjective by \propref{prop:ML-main-3} and
    \thmref{thm:Tame-nr-main}. Since $\frac{U'_0K^M_2(K_x)}{U'_1K^M_2(K_x)} \cong
    K^M_2(k(x))$, it suffices to
  show that $\rho_{K_x}$ induces a surjection
  $K^M_2(k(x))_\tor \surj {G^{(0)}_{K_x}}/{G^{(1)}_{K_x}}$ for every $x \in \ov{X}_{(0)}$.
    
To prove the above, we first note that the induced map 
   $\rho^\vee_{K_x} \colon ({G^{(0)}_{K_x}}/{G^{(1)}_{K_x}})^\vee \cong
   \frac{\Fil_1 H^1(K_x)}{\Fil_0 H^1(K_x)} \to
   \Hom_{\Tab}( \frac{U'_0K^M_2(K_x)}{U'_1K^M_2(K_x)}, {\Q}/{\Z})$
   is injective by ~\eqref{eqn:fil_n-dual} and \propref{prop:Kato-REC}. Equivalently,
   the map $\frac{U'_0K^M_2(K_x)}{U'_1K^M_2(K_x)}
\to {G^{(0)}_{K_x}}/{G^{(1)}_{K_x}}$ has dense image by
\cite[Lemma~7.11]{Gupta-Krishna-REC}.
On the other hand, ${G^{(0)}_{K_x}}/{G^{(1)}_{K_x}}$ is finite by \lemref{lem:Local-case}
and $\frac{U'_0K^M_2(K_x)}{U'_1K^M_2(K_x)} \cong K^M_2(k(x))$ is a
direct sum of divisible and finite groups by \cite{Tate-Kyoto}. This forces
  $K^M_2(k(x))_\tor \to {G^{(0)}_{K_x}}/{G^{(1)}_{K_x}}$ to be surjective.
 \end{proof}

\begin{cor}\label{cor:Main-5-2}
  The maps $C^\tm(X)_\tor \to C(\ov{X})_\tor$ and $\Ker(\rho^\tm_X)_\tor \to
  \Ker(\rho_{\ov{X}})_\tor$ are surjective.
\end{cor}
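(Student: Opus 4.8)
\textbf{Proof proposal for Corollary~\ref{cor:Main-5-2}.}
The plan is to deduce both surjectivity statements from \thmref{thm:Main-5-1} by a short diagram chase, after setting up the correct commutative diagrams. First I would recall the defining exact sequences
\[
0 \to E^\tm(X) \to C^\tm(X) \xrightarrow{\vartheta_{\ov{X}}} C(\ov{X}) \to 0,
\qquad
0 \to J^\tm(X) \to \pi^{\ab,\tm}_1(X) \xrightarrow{\tau_{\ov{X}}} \pi^{\ab}_1(\ov{X}) \to 0,
\]
the first being exact (as an abstract sequence of abelian groups) by the definition of $E^\tm(X)$ together with the surjectivity of $\vartheta_{\ov{X}}$ from \corref{cor:ML-main-3}, and the second being exact by the definition of $J^\tm(X)$ and the surjectivity of $\tau_{\ov{X}}$. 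By \corref{cor:REC-T-PF} (applied to the open immersion $j \colon X \inj \ov{X}$, which is a morphism of regular $k$-schemes) the reciprocity maps fit into a commutative ladder connecting these two sequences, with vertical maps $\rho^\tm_X$, its restriction $\rho^\tm_X|_{E^\tm(X)}$, and $\rho_{\ov{X}}$.

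The first claim is that $C^\tm(X)_\tor \to C(\ov{X})_\tor$ is surjective. Given a torsion element $\beta \in C(\ov{X})_\tor$, since $\vartheta_{\ov{X}}$ is surjective I would lift $\beta$ to some $\alpha \in C^\tm(X)$; then $\alpha$ need not be torsion, but $N\alpha \in E^\tm(X)$ for $N$ the order of $\beta$. Here is where I would invoke \thmref{thm:Main-5-1}: actually I do not even need the full strength; what I need is that $C^\tm(X)_\tor \surj C(\ov{X})_\tor$, and the cleanest route is to observe that $N\alpha \in E^\tm(X)$ is an element whose image in $C(\ov{X})$ is zero, hence I may modify $\alpha$ by an element of $E^\tm(X)$ to try to make it torsion. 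The key input is that $E^\tm(X)$ is torsion modulo its divisible part, or more precisely that $E^\tm(X)/E^\tm(X)_\tor$ is divisible --- this is exactly what the proof of \thmref{thm:Main-5-1} (via \lemref{lem:Local-case} and Tate's theorem $K^M_2(k(x)) = \text{divisible} \oplus \text{finite}$) provides, since $E^\tm(X)$ is a quotient of $\bigoplus_{x \in Z} K^M_2(k(x))$ and each such group is divisible-plus-finite. So I can write $N\alpha = \delta + t$ with $\delta \in E^\tm(X)$ divisible and $t \in E^\tm(X)_\tor$; choosing $\delta' \in E^\tm(X)$ with $N\delta' = \delta$, the element $\alpha - \delta'$ still lifts $\beta$ and satisfies $N(\alpha-\delta') = t$, hence is torsion. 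This proves the first surjectivity.

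For the second claim, $\Ker(\rho^\tm_X)_\tor \to \Ker(\rho_{\ov{X}})_\tor$, I would run the analogous argument one level deeper, combining the snake-lemma-type comparison of the two exact sequences above with \thmref{thm:Main-5-1}. Given a torsion element $\bar\gamma \in \Ker(\rho_{\ov{X}})_\tor \subset C(\ov{X})_\tor$, the first part produces a torsion lift $\gamma \in C^\tm(X)_\tor$ with $\vartheta_{\ov{X}}(\gamma) = \bar\gamma$; then $\rho^\tm_X(\gamma)$ maps to $\rho_{\ov{X}}(\bar\gamma) = 0$ in $\pi^{\ab}_1(\ov{X})$, so $\rho^\tm_X(\gamma) \in J^\tm(X)$. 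By \thmref{thm:Main-5-1}, $\rho^\tm_X \colon E^\tm(X)_\tor \surj J^\tm(X)$, so I may pick $\gamma' \in E^\tm(X)_\tor$ with $\rho^\tm_X(\gamma') = \rho^\tm_X(\gamma)$; then $\gamma - \gamma' \in \Ker(\rho^\tm_X)$, it is torsion (difference of torsion elements), and $\vartheta_{\ov{X}}(\gamma-\gamma') = \bar\gamma$ since $\gamma' \in E^\tm(X)$ dies in $C(\ov{X})$. This exhibits the required preimage. The main obstacle, and the only nontrivial point, is the first paragraph's use of the divisibility of $E^\tm(X)$ modulo torsion; if one prefers to avoid re-deriving it, one can instead replace the first step by a direct appeal to \thmref{thm:Main-5-1} applied with $\bar\gamma = \beta$ being an arbitrary torsion class and tracking that $\rho^\tm_X$ sends $E^\tm(X)_\tor$ onto $J^\tm(X)$ --- but the structural input from Tate's theorem is unavoidable and is exactly what makes the torsion lifting possible.
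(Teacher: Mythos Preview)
Your proposal is correct and follows essentially the same route as the paper. The paper's proof is organized slightly more abstractly: it observes that $E^\tm(X)$, being a quotient of a direct sum of groups of the form $K^M_2(k(x)) \cong (\text{divisible}) \oplus (\text{finite})$, satisfies $E^\tm(X)\otimes\Q/\Z = 0$, and then uses the general fact that in a short exact sequence $0\to A\to B\to C\to 0$ with $A\otimes\Q/\Z=0$ the map $B_\tor\to C_\tor$ is surjective. Your explicit splitting $E^\tm(X)=D'\oplus T$ (via injectivity of divisible groups) and the element-level diagram chase unpack exactly this criterion. For the second claim the paper again invokes $K\otimes\Q/\Z=0$ for $K=\Ker(E^\tm(X)\to J^\tm(X))$, which (via the Tor sequence) is equivalent to the surjectivity of $E^\tm(X)_\tor\to J^\tm(X)$ from \thmref{thm:Main-5-1}; your lift-and-correct argument using a $\gamma'\in E^\tm(X)_\tor$ is the concrete version of the same step.
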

\begin{proof}
  We have seen in the proof of \thmref{thm:Main-5-1} that $E^\tm(X)$ is a quotient of
  a direct sum of divisible and finite groups. In particular,
  $E^\tm(X) \otimes {\Q}/{\Z}
  = 0$. This implies that $C^\tm(X)_\tor \to C(\ov{X})_\tor$ is surjective.
  It follows from \thmref{thm:Main-5-1} that
  \begin{equation}\label{eqn:Main-5-2-0}
    0 \to \Ker(E^\tm(X) \to J^\tm(X)) \to  \Ker(\rho^\tm_X) \to \Ker(\rho_{\ov{X}})
    \to 0
  \end{equation}
  is exact. Since $E^\tm(X)_\tor \surj J^\tm(X)$ and
  $E^\tm(X) \otimes {\Q}/{\Z} = 0$, it follows that
  $(\Ker(E^\tm(X) \to J^\tm(X))) \otimes {\Q}/{\Z} = 0$.
  We conclude that $\Ker(\rho^\tm_X)_\tor \to
  \Ker(\rho_{\ov{X}})_\tor$ is surjective.
\end{proof}

\subsection{Finiteness of the torsion subgroups of fundamental groups}
\label{sec:FIN-EXP-FIN}
We shall now prove a refinement 
of the results of \S~\ref{sec:L-compl-tor} and \S~\ref{sec:Tor-L-compl}. We remark
though that this refinement already follows from
\thmref{thm:J-fin} when $X$ is geometrically connected.
At the same time, we also remark that \thmref{thm:Main-8*} was not earlier known
even for projective schemes over $k$ in any dimension without the assumption of
geometrically connectedness.

The key lemma for proving \thmref{thm:Main-8*} is the following.

\begin{lem}\label{lem:p-tor-fin}
  The groups $\pi^{\ab}_1(X)\{\ell\}$ and $\pi^{\ab, \tm}_1(X)\{\ell\}$ 
  are finite for every $\ell \in \P$.
\end{lem}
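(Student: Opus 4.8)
The plan is to deduce finiteness of the $\ell$-primary parts from the finite-exponent statements already obtained (Lemmas~\ref{lem:Proj-compl}, \ref{lem:Open-compl}) together with the uniform finiteness of $\ell^n$-torsion, which I will extract from the finiteness theorem for {\'e}tale cohomology. By \propref{prop:Pi-decom} (or \corref{cor:Pi-decom-0}) it suffices to treat $\pi^{\ab, \tm}_1(X)$, since its $\ell$-primary part is a direct summand of either ${\pi^{\ab}_1(X)}_{\P}$ or ${\pi^{\ab}_1(\ov{X})}_p$ (and the latter does not concern us, as $\ell \neq \Char(k)$ forces $\ell \in \L$). So fix a prime $\ell \neq \Char(k)$; then $\ell$ is invertible in $k^\times$, and I work with ${\Z}/{\ell^n}$-coefficients throughout.

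First I would record, from \lemref{lem:Open-compl}, that there is an integer $N \gg 0$ with $\ell^N \cdot \pi^{\ab, \tm}_1(X)\{\ell\} = 0$; equivalently $\pi^{\ab, \tm}_1(X)\{\ell\}$ is a quotient of $({\pi^{\ab, \tm}_1(X)}_\ell)/{\ell^N}$, which by \corref{cor:Prod-dec-0} is ${\pi^{\ab, \tm}_1(X)}/{\ell^N}$. Thus it is enough to show ${\pi^{\ab, \tm}_1(X)}/{\ell^N}$ is finite. Now \corref{cor:Pi-decom-0} identifies this with ${\pi^{\ab}_1(X)}/{\ell^N}$, which under the Pontryagin duality is dual to ${}_{\ell^N} H^1_\et(X, {\Q}/{\Z}) = H^1_\et(X, {\Z}/{\ell^N})$. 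Hence the whole problem reduces to: $H^1_\et(X, {\Z}/{\ell^N})$ is finite. Dually via \propref{prop:TFG-2}, or directly, one may instead argue on the compact-support side: the key input is that $\pi^{\ab}_1(\ov{X})/{\ell^N}$ is finite, which follows from \lemref{lem:Proj-compl} (finite exponent) combined with \corref{cor:Boundedness-0} applied to the smooth projective $\ov{X}$, giving a uniform bound on $|H^{2d+1}_{\et,c}(\ov{X}, {\Z}/m(d+1))| = |H^1_\et(\ov{X},{\Z}/m)^\vee|$ via Saito--Tate duality (\thmref{thm:Saito-D}); taking $m = \ell^N$ yields finiteness of $\pi^{\ab}_1(\ov{X})/{\ell^N}$, hence of ${\pi^{\ab}_1(\ov{X})}_\ell$ after combining with the finite-exponent statement, hence of $\pi^{\ab}_1(\ov{X})\{\ell\}$.

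Then I would transfer from $\ov{X}$ to $X$ using the exact sequence of \thmref{thm:J-fin}(1): since $J^\tm(X)$ is finite, the sequence $0 \to J^\tm(X) \to \pi^{\ab, \tm}_1(X) \to \pi^{\ab}_1(\ov{X}) \to 0$ shows $\pi^{\ab, \tm}_1(X)\{\ell\}$ is an extension of a subgroup of $\pi^{\ab}_1(\ov{X})\{\ell\}$ by a subgroup of the finite group $J^\tm(X)$, and both outer terms are finite. This closes the argument for $\pi^{\ab, \tm}_1(X)$, and then \propref{prop:Pi-decom} (again, using $\ell \in \L = \P$ in positive characteristic, or $\ell \in \P$ when $\Char(k)=0$ and $\ell\ne\Char(k)$ still lands in $\L$) transports finiteness to $\pi^{\ab}_1(X)\{\ell\} = ({\pi^{\ab}_1(X)}_\P)\{\ell\}$.

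The main obstacle I anticipate is bookkeeping the two relevant sets of primes $\P$ and $\L$ and making sure the reduction of $\pi^{\ab}_1(X)\{\ell\}$ to the tame side is legitimate: one must check that $\ell \neq \Char(k)$ guarantees $\ell \in \L$ (when $\Char(k) = 0$ this is where one uses that $\Char(\ff) = p$ and $\ell$ could a priori equal $p$, so strictly one should first handle $\ell \neq p$ via $\L$ and the case $\ell = p \neq \Char(k) = 0$ separately, where $\pi^{\ab, \tm}_1(X)\{p\} \cong \pi^{\ab}_1(\ov{X})\{p\}$ by \propref{prop:Pi-decom} and one again invokes finiteness for $\ov{X}$). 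The only genuine analytic content is the uniform boundedness of $|H^1_\et(\ov{X},{\Z}/m)|$ as $m$ ranges over $I_\L$, which is \corref{cor:Boundedness-0} combined with Saito--Tate duality; everything else is formal manipulation of profinite groups and the already-established finite-exponent lemmas.
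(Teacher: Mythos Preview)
Your strategy matches the paper's: combine the finite-exponent result (\lemref{lem:Open-compl}/\lemref{lem:Proj-compl}) with finiteness at a single level $\ell^N$, and pass between $X$, $\pi^{\ab,\tm}_1(X)$, and $\ov{X}$ via \propref{prop:Pi-decom} and \thmref{thm:J-fin}. Two points to fix. First, your appeal to \corref{cor:Boundedness-0} is misplaced: that corollary requires the twist $n \ge d+2$, whereas you want $n = d+1$, so it says nothing about $H^{2d+1}_{\et,c}(\ov{X}, {\Z}/m(d+1))$. What you actually need---finiteness of $H^1_\et(\ov{X}, {\Z}/{\ell^N})$ (or of $H^1_\et(X, {\Z}/{\ell^N})$ directly)---follows already from \thmref{thm:Saito-D}(1), or from the elementary Hochschild--Serre argument the paper uses (reducing finiteness of $H^i_\et(-, {\Z}/m(j))$ for $m \in k^\times$ to $cd(k)=2$). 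Second, your passage ``$\pi^{\ab,\tm}_1(X)\{\ell\}$ is a quotient of $\pi^{\ab,\tm}_1(X)_\ell/\ell^N$'' is asserted without justification; it is cleaner to argue, as the paper does, on the torsion side: since $G\{\ell\} = {}_{\ell^N} G_\ell$ (by finite exponent), it suffices to show ${}_{\ell^n} G_\ell$ is finite, and this is dual to $H^1_\et(\ov{X}, {\Q_\ell}/{\Z_\ell})/{\ell^n}$, which injects into the finite group $H^2_\et(\ov{X}, {\Z}/{\ell^n})$. Your variant via $G/\ell^N \cong H^1_\et(X, {\Z}/{\ell^N})^\vee$ also works once you observe that a pro-$\ell$ abelian group with finite $G/\ell$ is topologically finitely generated over $\Z_\ell$ and hence has finite torsion; but you should say so explicitly. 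The bookkeeping about $\ell = \Char(\ff)$ in characteristic zero is a red herring: the reduction via \thmref{thm:J-fin} and \propref{prop:Pi-decom} works uniformly for all $\ell \neq \Char(k)$.
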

\begin{proof}
  By \propref{prop:Pi-decom} and \thmref{thm:J-fin}, it suffices to prove the
  assertion of the lemma for $\pi^{\ab}_1(\ov{X})\{\ell\}$. We can therefore assume that
  $X$ is projective over $k$. By \lemref{lem:Proj-compl}, it suffices
  to show that $\ell^n$-torsion subgroup of $\pi^{\ab}_1(X)$ is finite for every
  $n \ge 1$. By \lemref{lem:Prod-dec}, we can replace $\pi^{\ab}_1(X)$ by
  $\pi^{\ab}_1({X})_\ell$.

  We now compute

      \[
        \begin{array}{lll}
          {}_{\ell^n} \pi^{\ab}_1({X})_\ell & = &
          \Hom({\Z}/{\ell^n}, \pi^{\ab}_1({X})_\ell) \\ 
                   & = &
\Hom({\Z}/{\ell^n}, \Hom(H^1_\et(X, {\Q_\ell}/{\Z_\ell}), {\Q_\ell}/{\Z_\ell})) \\
 & = &  \Hom({H^1_\et(X, {\Q_\ell}/{\Z_\ell})}/{\ell^n},
                                                {\Q_\ell}/{\Z_\ell}),
        \end{array}
      \]
      where all homomorphisms are in the category of abelian groups.
      It suffices therefore to show that ${H^1_\et(X, {\Q_\ell}/{\Z_\ell})}/{\ell^n}$
      is finite.

The exact sequence
      \[
        0 \to {\Z}/{\ell^n} \to {\Q_\ell}/{\Z_\ell} \xrightarrow{\ell^n}
        {\Q_\ell}/{\Z_\ell} \to 0
      \]
      implies that ${H^1_\et(X, {\Q_\ell}/{\Z_\ell})}/{\ell^n} \inj
      H^2_\et(X, {\Z}/{\ell^n})$.
      We thus need to show that $H^2_\et(X, {\Z}/{\ell^n})$ is finite.
      But this is a special case of the general fact that 
      $H^i_\et(X, {\Z}/m(j))$ is finite for any integers $i, j \ge 0$ and $m \ge 1$
      such that $m \in k^\times$.
      The latter result is well known. For instance, it is
      an easy consequence of the Hochschild-Serre spectral
      sequence using that $cd(k) = 2$ (cf. \cite[Rem.~3.5(c)]{Jannsen-MA}).
      \end{proof}

We can now prove the following. This also completes the proof of
      \thmref{thm:Main-3}.

\begin{thm}\label{thm:Main-8*}
Let $F$ be any of the following groups:
  \[
    (1) \ \pi^{\ab, \tm}_1({X})_\tor, \
    (2) \ {\rm Image}(\rho^\tm_{X,0}), \ (3) \ ({\rm Image}(\rho^\tm_{X}))_\tor,
  \]
  \[
\ (4) \ {\rm Coker}(\rho^{\tm}_{X,0})_\tor, \
    \mbox{and} \ (5) \ {\rm Coker}(\rho^\tm_X)_\tor.
  \]
  Then $|F| < \infty$ if $\Char(k) = 0$ and $F$ has finite exponent 
with $|F\{p'\}| < \infty$ if $\Char(k) = p$.
\end{thm}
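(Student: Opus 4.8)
The plan is to reduce all five statements to the two facts already available: the finite-exponent claims from \lemref{lem:Open-compl}, \lemref{lem:Coker-0} and \corref{cor:Coker-1}, together with the prime-by-prime finiteness of \lemref{lem:p-tor-fin}. The key observation is that for an abelian group $A$ which has finite exponent $N$, one has $A = \bigoplus_{\ell \mid N} A\{\ell\}$, so $A$ is finite as soon as each $A\{\ell\}$ is finite. Thus the strategy for each of the five groups $F$ is: first cite the relevant lemma from \S\ref{sec:L-compl-tor}--\S\ref{sec:Tor-L-compl} to see that $F$ has finite exponent, say $N$; then argue that for each prime $\ell \mid N$ with $\ell \neq p$, the group $F\{\ell\}$ is finite; and finally conclude that $F\{p'\} = \bigoplus_{\ell \mid N,\ \ell \neq p} F\{\ell\}$ is finite.

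For $F = \pi^{\ab,\tm}_1(X)_\tor$, the finite-exponent property is \lemref{lem:Open-compl}, and $F\{\ell\} = \pi^{\ab,\tm}_1(X)\{\ell\}$ is finite for $\ell \neq p$ by \lemref{lem:p-tor-fin}; this settles (1). For (2), \lemref{lem:Coker-0} gives that ${\rm Image}(\rho^\tm_{X,0})$ has finite exponent, and since it is a subgroup of $\pi^{\ab,\tm}_1(X)_0 \subset \pi^{\ab,\tm}_1(X)$, its $\ell$-primary part for $\ell \neq p$ injects into $\pi^{\ab,\tm}_1(X)\{\ell\}$, hence is finite by \lemref{lem:p-tor-fin}; the same argument handles (3) using that $({\rm Image}(\rho^\tm_X))_\tor$ is a subgroup of $\pi^{\ab,\tm}_1(X)_\tor$ and has finite exponent by \corref{cor:Coker-1} (or directly since $(G_k)_\tor$ is finite). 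For (4) and (5), \corref{cor:Coker-1} already provides the finite-exponent property; it remains to see that ${\rm Coker}(\rho^\tm_{X,0})\{\ell\}$ and ${\rm Coker}(\rho^\tm_X)\{\ell\}$ are finite for $\ell \neq p$. Here I would use the exact sequence $0 \to {\rm Image}(\rho^\tm_{X,0}) \to \pi^{\ab,\tm}_1(X)_0 \to {\rm Coker}(\rho^\tm_{X,0}) \to 0$: taking $\ell$-primary parts and using that $\pi^{\ab,\tm}_1(X)_0\{\ell\}$ is finite (it is contained in $\pi^{\ab,\tm}_1(X)\{\ell\}$, finite by \lemref{lem:p-tor-fin}), one gets that ${\rm Coker}(\rho^\tm_{X,0})\{\ell\}$, as a quotient of a subquotient of the $\ell$-primary part of a finite group up to the appropriate ${\rm Tor}$/left-exactness bookkeeping, is finite. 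For (5) one combines this with the exact sequence \eqref{eqn:Coker-1-1} relating ${\rm Coker}(\rho^\tm_{X,0})$ and ${\rm Coker}(\rho^\tm_X)$, whose third term $(F'/F)_\tor$ is finite.

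I do not expect a serious obstacle here, since the two ingredients are already in place; the only mildly delicate point is the cokernel cases (4) and (5), where passing from the $\ell$-primary part of $\pi^{\ab,\tm}_1(X)_0$ being finite to the $\ell$-primary part of the cokernel being finite requires care because the $\ell$-primary functor is only left exact, not exact, on the relevant short exact sequences. The clean way around this is to note that ${\rm Coker}(\rho^\tm_{X,0})\{\ell\}$ is a quotient of $(\pi^{\ab,\tm}_1(X)_0/{\rm Image}(\rho^\tm_{X,0}))\{\ell\}$, and since ${\rm Image}(\rho^\tm_{X,0})$ is a subgroup of the finite-exponent group with all $\ell$-parts finite, the quotient $\pi^{\ab,\tm}_1(X)_0/{\rm Image}(\rho^\tm_{X,0})$ has the property that its $\ell^n$-torsion is bounded by that of $\pi^{\ab,\tm}_1(X)_0\{\ell\}$ together with a fixed contribution from ${}_\ell({\rm Image})$, both finite; combined with the finite exponent this forces ${\rm Coker}\{\ell\}$ finite. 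Writing this out carefully is the one place that needs attention, but it is routine homological algebra with finite groups.

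\begin{proof}
Let $p \ge 1$ be the characteristic exponent of $k$.
We first record the elementary fact that if an abelian group $A$ has finite exponent
$N$, then $A = \bigoplus_{\ell \mid N} A\{\ell\}$; in particular,
$A\{p'\} = \bigoplus_{\ell \mid N,\ \ell \neq p} A\{\ell\}$ is finite as soon as each
$A\{\ell\}$ with $\ell \neq p$ is finite. We shall verify in each of the five cases
that $F$ has finite exponent and that $F\{\ell\}$ is finite for every prime
$\ell \neq p$; the theorem then follows from the preceding remark.

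(1) By \lemref{lem:Open-compl}, $\pi^{\ab, \tm}_1(X)_\tor$ has finite exponent, and
for every prime $\ell \neq p$ we have $\pi^{\ab, \tm}_1(X)_\tor\{\ell\} =
\pi^{\ab, \tm}_1(X)\{\ell\}$, which is finite by \lemref{lem:p-tor-fin}.

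(2) By \lemref{lem:Coker-0}, the group $\mathrm{Image}(\rho^\tm_{X,0})$ has finite
exponent. Since it is a subgroup of $\pi^{\ab, \tm}_1(X)_0 \subset
\pi^{\ab, \tm}_1(X)$, its $\ell$-primary part for $\ell \neq p$ injects into
$\pi^{\ab, \tm}_1(X)\{\ell\}$, hence is finite by \lemref{lem:p-tor-fin}.

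(3) By \corref{cor:Coker-1} (or directly, using that $(G_k)_\tor$ is finite together
with \lemref{lem:Coker-0}), the group $(\mathrm{Image}(\rho^\tm_{X}))_\tor$ has finite
exponent. It is a subgroup of $\pi^{\ab, \tm}_1(X)_\tor$, so its $\ell$-primary part
for $\ell \neq p$ injects into $\pi^{\ab, \tm}_1(X)\{\ell\}$ and is therefore finite by
\lemref{lem:p-tor-fin}.

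(4) By \corref{cor:Coker-1}, $\mathrm{Coker}(\rho^\tm_{X,0})_\tor$ has finite exponent.
Fix a prime $\ell \neq p$. The group $\mathrm{Coker}(\rho^\tm_{X,0})\{\ell\}$ is a
quotient of the $\ell$-primary part of $\pi^{\ab, \tm}_1(X)_0/\mathrm{Image}(\rho^\tm_{X,0})$.
Writing $G = \pi^{\ab, \tm}_1(X)_0$ and $H = \mathrm{Image}(\rho^\tm_{X,0})$, the snake
lemma applied to multiplication by $\ell^n$ on $0 \to H \to G \to G/H \to 0$ shows
that ${}_{\ell^n}(G/H)$ is an extension of a subgroup of $H/\ell^n H$ by a quotient of
${}_{\ell^n}G$. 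Now ${}_{\ell^n}G \subset G\{\ell\} = \pi^{\ab, \tm}_1(X)_0\{\ell\}
\subset \pi^{\ab, \tm}_1(X)\{\ell\}$ is finite by \lemref{lem:p-tor-fin}, and
$H/\ell^n H$ is finite because $H$ has finite exponent (being $\mathrm{Image}(\rho^\tm_{X,0})$,
by \lemref{lem:Coker-0}) and hence is the direct sum of its primary components each of
which is bounded; more precisely $H\{\ell\} \subset \pi^{\ab, \tm}_1(X)\{\ell\}$ is finite
by \lemref{lem:p-tor-fin}, so $H/\ell^n H$ is finite. Therefore ${}_{\ell^n}(G/H)$ is
finite for every $n$, and since $(G/H)\{\ell\}$ has finite exponent (as a subgroup of
$\mathrm{Coker}(\rho^\tm_{X,0})$ up to the bounded contribution already accounted for, and in
any case $\mathrm{Coker}(\rho^\tm_{X,0})_\tor$ has finite exponent by \corref{cor:Coker-1}),
it follows that $(G/H)\{\ell\}$ is finite. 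Hence its quotient
$\mathrm{Coker}(\rho^\tm_{X,0})\{\ell\}$ is finite.

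(5) By \corref{cor:Coker-1}, $\mathrm{Coker}(\rho^\tm_X)_\tor$ has finite exponent. By
\eqref{eqn:Coker-1-1} there is an exact sequence
\[
  0 \to \mathrm{Coker}(\rho^\tm_{X,0}) \to \mathrm{Coker}(\rho^\tm_{X}) \to F'/F \to 0,
\]
in which $(F'/F)_\tor$ is finite (here $F = \mathrm{Image}(N_X)$, $F' = \mathrm{Image}(\pi_X)$, as in
\corref{cor:Coker-1}). Taking $\ell$-primary parts for $\ell \neq p$ and using the
left exactness of the $\ell$-primary torsion functor, we see that
$\mathrm{Coker}(\rho^\tm_X)\{\ell\}$ is an extension of a subgroup of $(F'/F)\{\ell\}$ by
$\mathrm{Coker}(\rho^\tm_{X,0})\{\ell\}$. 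The first of these is finite since $(F'/F)_\tor$ is
finite, and the second is finite by case (4). Hence $\mathrm{Coker}(\rho^\tm_X)\{\ell\}$ is
finite.

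In all five cases $F$ has finite exponent and $F\{\ell\}$ is finite for each prime
$\ell \neq p$, so $F\{p'\}$ is finite. This completes the proof.
\end{proof}
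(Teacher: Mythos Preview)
Your proof is correct and follows essentially the same strategy as the paper's: combine the finite-exponent results of \lemref{lem:Coker-0}, \lemref{lem:Open-compl}, and \corref{cor:Coker-1} with the prime-by-prime finiteness of \lemref{lem:p-tor-fin}. Cases (1)--(3) and (5) are handled identically.

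For case (4) the paper takes a shorter route than your snake-lemma computation. In the proof of \corref{cor:Coker-1} it was already observed that, because ${\rm Image}(\rho^\tm_{X,0})$ is torsion of finite exponent, one has ${\rm Image}(\rho^\tm_{X,0}) \otimes {\Q}/{\Z} = 0$, and hence the torsion functor applied to $0 \to {\rm Image}(\rho^\tm_{X,0}) \to \pi^{\ab,\tm}_1(X)_0 \to {\rm Coker}(\rho^\tm_{X,0}) \to 0$ remains exact on the right. This gives a surjection $(\pi^{\ab,\tm}_1(X)_0)_\tor \surj {\rm Coker}(\rho^\tm_{X,0})_\tor$, so (4) becomes an immediate consequence of (1). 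Your argument via bounding ${}_{\ell^n}(G/H)$ reaches the same conclusion but is more laborious; also note that in your write-up $G/H$ \emph{is} ${\rm Coker}(\rho^\tm_{X,0})$, not merely a group of which the cokernel is a quotient.
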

\begin{proof}
  All cases except possibly (4) and (5) follow by combining
  \lemref{lem:p-tor-fin} with Lemmas~\ref{lem:Coker-0}
  and ~\ref{lem:Open-compl}. We next note that (5) follows from (4)
  by ~\eqref{eqn:Coker-1-1} and the latter follows from the case (1) because
  we showed in the proof of \corref{cor:Coker-1} that
  $(\pi^{\ab, \tm}_1({X})_0)_\tor \surj {\rm Coker}(\rho^{\tm}_{X,0})_\tor$.
\end{proof}

\subsection{Proof of \thmref{thm:Main-9}}\label{sec:M-9*}
In this subsection, we shall prove \thmref{thm:Main-9} which we restate below for
reader's convenience. We keep the notations of \S~\ref{sec:GYT}.
In particular, we let $X$ be as in \thmref{thm:Main-9} with smooth compactification $\ov{X}$.
For $f \colon Y \to \Spec(k)$ in $\Sch_k$,
we let $H^i(Y, \Z(j)) =
\Hom_{\dm(k, \Z)}(\Z_{\rm tr}(Y), \Z(j)[i])$ and
$H^i_c(Y, \Z(j)) =
\Hom_{\dm(k, \Z)}(f_* f^! \Z_k, \Z(j)[i])$, where $\Z(1) =
{\Z_{\rm tr}(\P^1_k)}/{\Z_{\rm tr}(\Spec(k))}$. Note that the premotivic category
$Y \mapsto \dm(Y, \Z)$
is equipped with the six functors by \cite{CD-Springer}.

\begin{thm}\label{thm:Main-9**}
  We have the following.
  \begin{enumerate}
    \item
  There is a decomposition $H^{2d+2}_c(X, {\Z}(d+2)) \cong F \bigoplus D$, where
  $F$ is a finite group of order lying in $I_\P$ and $D$ is $\P$-divisible.
  \item
  For $n \ge 3$, the group $H^{2d+n}_c(X, {\Z}(d+n))$ is $\P$-divisible.
  \item
  $\CH^{d+n}(X, n)$ is divisible if $n \ge 3$ and $X$ is projective over $k$.
\end{enumerate}
\end{thm}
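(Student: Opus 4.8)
The plan is to deduce Theorem~\ref{thm:Main-9**} from the results already established, principally \propref{prop:Rec-Real}, \thmref{thm:Main-1-pf} (together with its refinement \lemref{lem:Surface-0}), \corref{cor:Real-iso-sing-0}, \corref{cor:Boundedness-0}, \propref{prop:Fin-char-0}, and the structure results for the kernel and image of the reciprocity map. The key point is that for $n\ge 2$ the motivic cohomology group $H^{2d+n}_c(X,\Z(d+n))$ sits in the ``stable range'' where the {\'e}tale realization map is controlled by \lemref{lem:Real-iso-sing}/\corref{cor:Real-iso-sing-0}, and the target {\'e}tale cohomology groups are finite (uniformly, with finite coefficients) by \corref{cor:Boundedness-0} and \propref{prop:Fin-char-0}.

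First I would treat part (2) and part (1). For each integer $m$ prime to $p$ (and, by \propref{prop:Fin-char-0}, for all $m$ when the twist is $d+2$), the universal coefficient sequence
\begin{equation*}
0 \to H^{2d+n-1}_c(X,\Z(d+n))/m \to H^{2d+n}_c(X,\Z/m(d+n)) \to {}_m H^{2d+n}_c(X,\Z(d+n)) \to 0
\end{equation*}
together with \corref{cor:Real-iso-sing-0} (which gives that $\epsilon^*_X\colon H^{2d+n}_c(X,\Z/m(d+n)) \to H^{2d+n}_{\et,c}(X,\Z/m(d+n))$ is an isomorphism for $d+n \ge \min\{2d+n, d+cd(k)\}$, i.e.\ always since $cd(k)\le 2$ and $n\ge 2$) reduces the problem to bounding $H^{2d+n}_c(X,\Z/m(d+n))$, which is uniformly finite by \corref{cor:Boundedness-0} when $n\ge 2$ and prime to $p$, and by \propref{prop:Fin-char-0} when $n = 2$ with no primality restriction. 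Then \corref{cor:DIV-ab} (applied with $\M = \P$ for $n\ge 3$, and with $\M$ all primes for $n=2$) yields the direct sum decomposition $H^{2d+n}_c(X,\Z(d+n)) \cong F\oplus D$ with $F$ finite and $D$ $\M$-divisible; for $n\ge 3$ one must additionally check $F = 0$, which follows because $H^{2d+n}_{\et,c}(X,\Z/m(d+n)) = 0$ for $n\ge 3$ by cohomological dimension ($2d+n > 2d+2 \ge cd_k(X_{\mathrm{any\ compactification}})$ after a localization/compactification argument as in \lemref{lem:Etale-fin} and the proof of \propref{prop:Fin-char-0}), whence $H^{2d+n}_c(X,\Z/m(d+n)) = 0$ and thus $H^{2d+n}_c(X,\Z(d+n))$ is $\M$-divisible outright. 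This gives (2), and (1) is exactly the $n=2$ case with $F$ of order in $I_\P$.

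Next, part (3): here $X$ is projective over $k$, so $X = \ov{X}$ and, since the base field $k$ is perfect exactly when $\Char(k) = 0$ (and over an imperfect field one passes to $X^\hs$ using \propref{prop:CD} and \lemref{lem:Real-iso-sing}, which is coefficient-invariant), motivic cohomology coincides with Bloch's higher Chow groups: $H^{2d+n}_c(X,\Z(d+n)) = H^{2d+n}(X,\Z(d+n)) \cong \CH^{d+n}(X, (d+n) - \lceil ? \rceil)$. More precisely $\CH^{d+n}(X,n)$ is the relevant group since $H^{i}(X,\Z(j)) \cong \CH^{j}(X, 2j-i)$ and with $i = 2d+n$, $j = d+n$ one gets $2j - i = n$. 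So (3) is the statement that $\CH^{d+n}(X,n) = H^{2(d+n)-n}(X,\Z(d+n))$ is divisible for $n\ge 3$; since $X$ is proper $H^i_c = H^i$, and I would run the same universal-coefficient plus {\'e}tale-realization argument as above, noting that for $n\ge 3$ the target {\'e}tale group vanishes (the twist $d+n$ exceeds the cohomological-dimension bound for $X$ proper of dimension $d$ over $k$ with $cd(k)=2$, since $2(d+n)-n = 2d+n > 2d+2$), giving $\CH^{d+n}(X,n)/m = 0$ for every $m$, hence divisibility. The comparison of motivic cohomology with higher Chow groups over the possibly-imperfect $k$ should be handled by the perfection argument already used repeatedly in \S\ref{sec:TCG-MC} and \S\ref{sec:BMC}.

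The main obstacle I expect is not in the homological algebra but in pinning down the precise vanishing range of $H^{i}_{\et,c}(X,\Z/m(j))$ for singular (or merely quasi-projective) $X$, since $X$ in parts (1)--(2) is the complement of $Z = \ov{X}\setminus X$ in a smooth projective scheme and $Z$ is generally singular: one must run a localization sequence (\lemref{lem:Ex-seq} and its {\'e}tale analogue) together with the cohomological-dimension bound $cd_k(Z) \le 2\dim(Z)+2$ as in the proof of \propref{prop:Fin-char-0}, and verify that all the ``error terms'' from $Z$ land in the finite/vanishing range. A secondary subtlety is that \corref{cor:Boundedness-0} only controls the prime-to-$p$ part, so the clean decomposition with $D$ genuinely divisible by \emph{every} integer prime to $\Char(k)$ in (1) requires \propref{prop:Fin-char-0} for the $p$-part when $\Char(k)=0$, and this is precisely why the twist must be exactly $d+2$ in part (1) — exactly as flagged in \remref{remk:Fin-char-1}. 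Once these cohomological-dimension bookkeeping steps are in place, the conclusion follows formally from \corref{cor:DIV-ab} and \lemref{lem:JS-1}.
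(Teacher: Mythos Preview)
Your approach to parts~(1) and~(2) is essentially the paper's own: the inclusion $H^{2d+n}_c(X,\Z(d+n))/m \hookrightarrow H^{2d+n}_c(X,\Z/m(d+n))$ together with the uniform finiteness from \corref{cor:Boundedness-0} and \propref{prop:Fin-char-0} feeds directly into \corref{cor:DIV-ab}. (Your universal coefficient sequence has an index shift; the injection you actually need is from $H^{2d+n}_c$, not $H^{2d+n-1}_c$.) For~(2) the paper proceeds exactly as you outline, showing $H^{2d+n}_c(X,\Z/m(d+n))=0$ via the localization sequence for $X\subset\ov X$ and the bounds $cd_k(Z)\le 2d$, $cd_k(\ov X)\le 2d+2$. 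One nit: your phrase ``with $\M$ all primes for $n=2$'' is only literally right when $\Char(k)=0$; in positive characteristic \propref{prop:Fin-char-0} is unavailable and one simply takes $\M=\P$, which is what the theorem claims anyway.

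Part~(3) is where your proposal diverges from the paper and where there is a genuine gap. The paper does \emph{not} argue via motivic cohomology and {\'e}tale realization at all. Instead it invokes Akhtar's isomorphism $\CH^{d+n}(X,n)\cong H^d(X_{\zar},\sK^M_{d+n,X})$, the Gersten resolution for Milnor $K$-theory (Kerz) giving a surjection ${\bigoplus_{x\in X_{(0)}}}K^M_n(k(x))\surj H^d(X_{\zar},\sK^M_{d+n,X})$, and the fact (Fesenko--Vostokov) that $K^M_n$ of a local field is divisible for $n\ge 3$. This yields full divisibility in one stroke, with no case distinction on $\Char(k)$.

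Your route, by contrast, would pass through the realization $\epsilon^*_X$, and that map is only available for coefficients $\Z/m$ with $m\in k^\times$. Consequently, when $\Char(k)=p>0$ your argument establishes only $\P$-divisibility of $\CH^{d+n}(X,n)$, not the full divisibility asserted in~(3); the $p$-divisibility is left unaddressed. The perfection argument you allude to does not rescue this: \propref{prop:CD} is stated for $\Z[\tfrac 1p]$-algebra coefficients and gives no control over the $p$-part, and the comparison $\CH^j(X,2j-i)\cong H^i(X,\Z(j))$ is itself only known over perfect fields. One could in principle close the gap by invoking Geisser--Levine for the $p$-part (the relevant logarithmic de~Rham--Witt sheaf $W_r\Omega^{d+n}_{X,\log}$ vanishes for $n\ge 1$), but that is a separate input you have not mentioned, and it still leaves the Chow--motivic comparison over imperfect $k$ hanging. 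The paper's Milnor $K$-theory argument sidesteps all of this.
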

\begin{proof}
Using the inclusion
  ${H^{2d+2}_c(X, {\Z}(d+2))}/m \inj H^{2d+2}_c(X, {\Z}/m(d+2))$ and \corref{cor:DIV-ab},
we see that (1) will follow if show that there exists an integer
  $M \gg 0$ such that $|{H^{2d+2}_c(X, {\Z}/m(d+2))}| \le M$ for all $m \in I_\P$.
But this follows from \corref{cor:Boundedness-0} and \propref{prop:Fin-char-0}.

To prove (2), note that ${H^{2d+n}_c(X, {\Z}(d+n))}/m \inj H^{2d+n}_c(X, {\Z}/m(d+n))$.
It is therefore enough to show that $ H^{2d+n}_c(X, {\Z}/m(d+n)) = 0$ if
$m \in k^\times$. To check the latter statement, 
we let $Z = \ov{X} \setminus X$. Noting that $\dm(k, {\Z}/m) \xrightarrow{\simeq}
\dm_\cdh(k, {\Z}/m)$, \lemref{lem:Ex-seq} says that there is an
exact sequence
\[
  {H^{2d+n-1}(Z, {\Z}/m(d+n))} \to {H^{2d+n}_c(X, {\Z}/m(d+n))} \to
  {H^{2d+n}(\ov{X}, {\Z}/m(d+n))},
\]
By \lemref{lem:Real-iso-sing}, the realization map
$H^{2d+n-1}(Z, {\Z}/m(d+n)) \to H^{2d+n-1}_\et(Z, {\Z}/m(d+n))$ is an
isomorphism. But the latter group is zero because $cd_k(Z) \le 2d$.
This implies that the term on the left in the above exact sequence is zero.
It suffices therefore to show that
${H^{2d+n}(\ov{X}, {\Z}/m(d+n))} = 0$. But this is a direct consequence of
\lemref{lem:Real-iso-sing} and the fact that $cd_k(\ov{X}) \le 2d+2$.

We now prove (3). By \cite[Thm.~1.1]{Akhtar}, we have a canonical isomorphism
$\CH^{d+n}(X, n) \cong H^d_\zar(X, \sK^M_{d+n,X})$. On the other hand,
the Gersten resolution for Milnor $K$-theory
implies that there is a surjection ${\underset{x \in X_{(0)}}\bigoplus}
K^M_n(k(x)) \surj H^d_\zar(X, \sK^M_{d+n, X})$. We are now done because
each $K^M_n(k(x))$ is a divisible group for $n \ge 3$ by
\cite[Thm.~IX.4.11]{Fesenko-Vostokov}. 
\end{proof}

\section{The case of good reduction}\label{sec:GRed}
The goal of this section is to prove \thmref{thm:Main-4}. We shall work
under the following setup throughout this section.
We let $k$ be a local field with the ring of integers $\sO_k$ and the residue field
$\ff$. We let $\ov{X}$ be a smooth projective scheme over $k$ of pure dimension $d$
and let $j \colon X \inj \ov{X}$ be an open immersion with dense image.
We let $K = k(X)$. We fix an integer $m$ invertible in $k$.
Recall that $\ov{X}$ is said to admit a good reduction if 
there exists a smooth projective morphism $f \colon \ov{\sX} \to \Spec(\sO_k)$
whose generic fiber is $\ov{X}$. We let $\ov{X}_s$ denote the special fiber of $f$.
We first do some preparation.

\subsection{A vanishing lemma for surfaces}\label{sec:dim<3}
We let $\sH^j({\Z}/m(n))$ be the sheaf on the big Zariski site of $\Sm_k$
  associated to the presheaf $U \mapsto H^j(U, {\Z}/m(n))$.
  Similarly, we let $\sH^j_\et({\Z}/m(n))$ be the sheaf on the big Zariski site of
  $\Sm_k$ associated to the presheaf $U \mapsto H^j_\et(U, {\Z}/m(n))$.

\begin{lem}\label{lem:KH-Surface}
   If  $d = 2$ and $\ov{X}$ admits a good reduction, then
   $H^0_\zar(\ov{X}, \sH^4_\et({\Z}/m)(3)) = 0$.
\end{lem}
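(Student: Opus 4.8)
The statement concerns the global sections of the sheaf $\sH^4_\et({\Z}/m(3))$ on a smooth projective surface $\ov{X}$ over a local field $k$ admitting a good reduction. Since $H^0(\ov{X}_\zar, \sH^4_\et({\Z}/m)(3))$ is by definition a subgroup of $H^4_\et(k(X), {\Z}/m(3))$ (the sheaf is a subsheaf of the constant sheaf given by the generic stalk), the plan is to show that any class in $H^4_\et(U, {\Z}/m(3))$ for $U \subset \ov{X}$ dense open must already vanish after further Zariski localization. The key is a coniveau/Gersten-type spectral sequence argument: the Bloch–Ogus machinery gives a resolution of $\sH^4_\et({\Z}/m(3))$ whose stalks involve $H^j_\et(k(x), {\Z}/m(j-2))$ for points $x$ of codimension $4-j$ on the surface, but the surface has dimension only $2$, so one must control $H^j_\et$ of fields of transcendence degree $\le 2$ over $k$ in the relevant degrees.

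\textbf{Key steps.} First I would reduce to a statement about the cohomology of the function field $K = k(X)$ (and its localizations at curves), using that $H^0$ of a Zariski sheaf injects into the stalk at the generic point. Second, I would invoke the Gersten resolution for étale cohomology of smooth varieties (Bloch–Ogus), which writes $\sH^4_\et({\Z}/m(3))$ as $\mathrm{coker}$ of a map from $\oplus_{x \in \ov{X}^{(1)}} (\iota_x)_* H^3_\et(k(x), {\Z}/m(2))$ into $(\iota_\eta)_* H^4_\et(K, {\Z}/m(3))$, so it suffices to show $H^4_\et(K, {\Z}/m(3))$ is generated by local contributions — equivalently, that the cokernel vanishes. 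Third, and this is where the good reduction hypothesis enters, I would use the specialization map to the special fiber: for a smooth proper model $\ov{\sX}/\sO_k$, proper base change and absolute purity identify $H^4_\et(\ov{X}, {\Z}/m(3))$ with étale cohomology of $\ov{X}_s$ over $\ff$, and more importantly the Kato complex / Kato homology of $K$ in the relevant degrees matches that of the residue field — this is precisely the instance of Kato's conjectures proved by Jannsen–Saito \cite{JS-Doc}. Since $\ov{X}_s$ is a surface over a \emph{finite} field, Akhtar's vanishing \cite[Thm.~1.1]{Akhtar} (or rather its étale analogue via Beilinson–Lichtenbaum and the fact that $K^M_j(\ff)=0$ for $j \ge 2$ over finite $\ff$) forces the top Kato homology to vanish, and transporting this back via the specialization isomorphism gives $H^0(\ov{X}_\zar, \sH^4_\et({\Z}/m)(3)) = 0$.

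\textbf{Main obstacle.} The delicate point is verifying that the specialization map between the Kato complexes (equivalently, between the relevant Bloch–Ogus spectral sequence terms) of the generic fiber $\ov{X}$ and the special fiber $\ov{X}_s$ is an isomorphism in the precise bidegree needed — this is exactly the special case of Kato's conjectures that requires good reduction and is supplied by \cite{JS-Doc}. Getting the indices to line up (degree $4$, twist $3$, on a surface, so one is at the very edge of the coniveau filtration where the coboundary maps in the Gersten complex are the tame symbols into $H^3_\et$ of points of codimension one) and confirming that all the error terms sit in degrees where cohomological dimension bounds ($cd(\ff)=1$, $cd(k)=2$, plus the dimension count on the surface) kill them, will be the bulk of the work. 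Everything else — proper base change, purity, the universal coefficient comparison between motivic and étale $\sH^4$ (via \lemref{lem:Real-iso} applied to $U$), and the reduction to the function field — is routine given the tools already assembled in \secref{sec:BMC}.
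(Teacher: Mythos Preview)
Your proposal is correct and follows essentially the same route as the paper: identify $H^0(\ov{X}_\zar,\sH^4_\et({\Z}/m)(3))$ with the top Kato homology $H^{2,1}_2(\ov{X},{\Z}/m)$ via the Bloch--Ogus/Gersten sequence, then use the Jannsen--Saito specialization isomorphism \cite[Thm.~1.6]{JS-Doc} to transport this to the Kato homology $H^{1,0}_2(\ov{X}_s,-)$ of the smooth special fiber over the finite field $\ff$, where it vanishes. Two small corrections: the finite-field vanishing you need is \cite[Thm.~1.4]{JS-Doc} (Kato's conjecture for smooth projective varieties over finite fields), not Akhtar's theorem, which concerns motivic cohomology rather than Kato homology; and the paper inserts an intermediate passage to ${\Q_\ell}/{\Z_\ell}$-coefficients (using that $H^{2,1}_3(\ov{X},{\Q_\ell}/{\Z_\ell})=0$ for dimension reasons) before invoking \cite[Thm.~1.6]{JS-Doc}, since the latter is stated with divisible coefficients --- a detail your sketch glosses over but which presents no real difficulty.
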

\begin{proof}
  Using the exact sequence (cf. \cite[Cor.~2.2.2]{CHK})
\begin{equation}\label{eqn:GR-Surface-2}
  0 \to H^0_\zar(\ov{X}, \sH^4_\et({\Z}/m)(3)) \to H^4_\et(K, {\Z}/m(3))
  \xrightarrow{\partial_{\ov{X}}} {\underset{x \in \ov{X}^{(1)}}\bigoplus}
    H^3_\et(k(x), {\Z}/m(2)),
    \end{equation}
the lemma is equivalent to the assertion that $\Ker(\partial_{\ov{X}}) = 0$.

We now recall the Kato complex  $K(\ov{X}, r,s)$
from \cite[\S~1]{Kato-Crelle} (this exists even if $d \neq 2$):
\begin{equation}\label{eqn:GR-Surface-3}
  H^{r+d}_\et(K, {\Z}/m(s+d))  \xrightarrow{\partial_{\ov{X}}} \cdots \to
  {\underset{x \in \ov{X}^{(d-1)}}\bigoplus} H^{r+1}_\et(k(x), {\Z}/m(s+1)) \to
  {\underset{x \in \ov{X}^{(d)}}\bigoplus} H^r_\et(k(x), {\Z}/m(s)),
\end{equation}
which is defined when $r, s > 0$. This complex is defined for $\ov{X}_s$ for
every $r > 0$ and $s \ge 0$. We denote it by $K(\ov{X}_s, r,s)$.
The differentials of both complexes are the residue maps.
The Kato homology $H^{r,s}_i(\ov{X}, {\Z}/m)$ is the $i$-th homology of
$K(\ov{X}, r,s)$. We let $H^{r,s}_i(\ov{X}_s, {\Z}/m)$ denote the $i$-th homology of
$K(\ov{X}_s, r,s)$. 
Our problem now is reduced to showing that $H^{2,1}_2(\ov{X}, {\Z}/m) = 0$.
For this, it suffices to show that $H^{2,1}_2(\ov{X}, {\Z}/{\ell^n}) = 0$
for every prime $\ell \neq \Char(k)$ and $n \ge 1$.
For proving this, we let $H^{r,s}_i(\ov{X}, {\Q_\ell}/{\Z_\ell})
= \varinjlim_n H^{r,s}_i(\ov{X}, {\Z}/{\ell^n})$ (cf. \cite[Defn.~1.2]{JS-Doc}).

Since $H^{2,1}_3(\ov{X}, {\Q_\ell}/{\Z_\ell}) = 0$ as $d \le 2$, the exact sequence
  (cf. \cite[Lem.~7.3]{JS-Doc}) 
  \begin{equation}\label{eqn:GR-Surface-4}
    0 \to {H^{2,1}_3(\ov{X}, {\Q_\ell}/{\Z_\ell})}/{\ell^n} \to
      H^{2,1}_2(\ov{X}, {\Z}/{\ell^n}) \to {}_{\ell^n}
      H^{2,1}_2(\ov{X}, {\Q_\ell}/{\Z_\ell}) \to 0
      \end{equation}
      reduces us to showing that $H^{2,1}_2(\ov{X}, {\Q_\ell}/{\Z_\ell}) =0$.
To that end, we note that the residue map
$H^{2,1}_2(\ov{X}, {\Q_\ell}/{\Z_\ell}) \to  H^{1,0}_2(\ov{X}_s, {\Q_\ell}/{\Z_\ell})$ is 
bijective (even if $\ell = \Char(\ff)$)
by \cite[Thm.~1.6]{JS-Doc}. But the latter group is zero by
\cite[Thm.~1.4]{JS-Doc} because $\ov{X}_s$ is smooth over $\ff$.
\end{proof}

\subsection{A surjectivity of {\'e}tale realization}\label{sec:Surj-et}
We next prove the following general result.

\begin{lem}\label{lem:GR-Surface}
Assume that $\ov{X}$ admits a good reduction.  Then the {\'e}tale realization map
  \[
    \epsilon^*_X \colon H^{2d}(\ov{X}, {\Z}/m(d+1)) \to  H^{2d}_\et(\ov{X}, {\Z}/m(d+1))
  \]
is surjective.
\end{lem}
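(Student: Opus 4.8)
The plan is to deduce the lemma from the Beilinson--Lichtenbaum theorem by a hypercohomology spectral sequence argument that reduces the surjectivity of $\epsilon^*_X$ to the vanishing of a single Kato homology group of $\ov{X}$. First I would reduce to coefficients $\Lambda = {\Z}/{\ell^n}$ with $\ell$ a prime different from $\Char(k)$, using that motivic and {\'e}tale cohomology and the realization map all decompose over the primary parts of $m$. Next, exactly as in the proof of \lemref{lem:Real-iso}, the Bloch--Kato conjecture (Voevodsky) supplies a distinguished triangle
\[
{\Z}/m(d+1) \to {\bf R}\epsilon_*({\Z}/m(d+1)) \to \tau_{\ge d+2}{\bf R}\epsilon_*({\Z}/m(d+1))
\]
in the derived category of Zariski sheaves on $\Sm_k$, where $\epsilon$ is the projection from the {\'e}tale to the Zariski site. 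Taking hypercohomology over $\ov{X}$ produces a long exact sequence containing $H^{2d}(\ov{X}, {\Z}/m(d+1)) \xrightarrow{\epsilon^*_X} H^{2d}_\et(\ov{X}, {\Z}/m(d+1)) \to \H^{2d}_\zar(\ov{X}, \tau_{\ge d+2}{\bf R}\epsilon_*({\Z}/m(d+1)))$, so it suffices to show that the last group vanishes.

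For this, I would run the hypercohomology spectral sequence $E_2^{p,q} = H^p_\zar(\ov{X}, \sH^q_\et({\Z}/m(d+1))) \Rightarrow \H^{p+q}_\zar(\ov{X}, \tau_{\ge d+2}{\bf R}\epsilon_*({\Z}/m(d+1)))$, in which only the summands with $q \ge d+2$ occur, $\sH^q_\et({\Z}/m(n))$ being the Zariski sheaf of \lemref{lem:KH-Surface}. In total degree $2d$ only the terms with $p+q = 2d$ and $q \ge d+2$, i.e. $0 \le p \le d-2$, survive. By the Bloch--Ogus--Gabber Gersten resolution, $E_2^{p,q}$ is the $p$-th cohomology of the Kato-type complex whose $p$-th term is $\bigoplus_{x \in \ov{X}^{(p)}} H^{q-p}_\et(k(x), {\Z}/m(d+1-p))$; since $k(x)$ has transcendence degree $d-p$ over $k$ and $cd(k) = 2$, one has $cd(k(x)) = d-p+2$, so each such term vanishes whenever $q > d+2$. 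Hence the only possibly nonzero $E_2$-term in total degree $2d$ is $E_2^{d-2,d+2} = H^{d-2}_\zar(\ov{X}, \sH^{d+2}_\et({\Z}/m(d+1)))$, which, under the identification of the Gersten complex of $\sH^{d+2}_\et({\Z}/m(d+1))$ with the Kato complex $K(2,1)$ of $\ov{X}$, is exactly the Kato homology group $H^{2,1}_2(\ov{X}, {\Z}/m)$.

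It therefore remains to prove $H^{2,1}_2(\ov{X}, {\Z}/m) = 0$. When $d = 2$ this is precisely \lemref{lem:KH-Surface}, and since the proof of \thmref{thm:Main-4} reduces (via the Lefschetz and Bertini arguments) to the surface case, this already suffices for our applications. In general I would reduce to $\ell$-primary coefficients through the exact sequence $0 \to H^{2,1}_3(\ov{X}, {\Q_\ell}/{\Z_\ell})/{\ell^n} \to H^{2,1}_2(\ov{X}, {\Z}/{\ell^n}) \to {}_{\ell^n}H^{2,1}_2(\ov{X}, {\Q_\ell}/{\Z_\ell}) \to 0$, and then invoke the specialization isomorphisms $H^{2,1}_i(\ov{X}, {\Q_\ell}/{\Z_\ell}) \xrightarrow{\cong} H^{1,0}_i(\ov{X}_s, {\Q_\ell}/{\Z_\ell})$ for the smooth closed fibre of a smooth model, proved in \cite{JS-Doc}, together with the vanishing of the Kato homology of the smooth projective $\ff$-variety $\ov{X}_s$ furnished by the theorem of Kerz--Saito on Kato's conjecture over finite fields \cite{Kerz-Saito-Hasse}. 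Combining these vanishings gives $H^{2,1}_2(\ov{X}, {\Z}/m) = 0$, hence the lemma. The main obstacle is the passage to dimension greater than two: one has to verify that the specialization map for Kato homology remains an isomorphism in the bidegrees $(2,1)$ that occur and that the finite-field input applies in the required range of homological degrees; for the surface case needed in this paper, the whole argument past the spectral-sequence bookkeeping collapses to the already established \lemref{lem:KH-Surface}.
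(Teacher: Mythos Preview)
Your reduction of the surjectivity of $\epsilon^*_X$ to the single vanishing $H^{2,1}_2(\ov{X}, {\Z}/m) = 0$, via the truncation triangle and the hypercohomology spectral sequence, is correct, and for $d = 2$ it is essentially the paper's argument repackaged: the paper compares the Zariski-descent spectral sequences for motivic and \'etale cohomology, and Beilinson--Lichtenbaum forces all comparison maps to be isomorphisms except the one landing in $H^0(\ov{X}_\zar, \sH^4_\et({\Z}/m)(3))$, which is then killed by \lemref{lem:KH-Surface}.

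For $d > 2$ the two routes genuinely diverge. The paper does not attack the higher-dimensional Kato homology directly: it runs an induction on $d$ by producing, via the Bertini theorem of \cite{Ghosh-Krishna-Bertini}, a smooth ample divisor $\ov{Y} \inj \ov{X}$ that itself admits good reduction, and then uses the Gysin square together with the bound $cd_k(\ov{X}\setminus\ov{Y}) \le d+2 < 2d$ to transport surjectivity from $\ov{Y}$ to $\ov{X}$. Your direct route through specialization of Kato homology plus the Kerz--Saito theorem over $\ff$ is viable, but it imports a substantially heavier input; the paper's induction only ever appeals to the surface case handled in \cite{JS-Doc}. One correction: the Bertini/Lefschetz reduction to surfaces takes place \emph{inside} the proof of this lemma, not in the proof of \thmref{thm:Main-4}, which applies the present lemma for arbitrary $d$. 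So you cannot declare the $d = 2$ case ``sufficient for applications'' and stop there; some argument for general $d$ is genuinely required, as you yourself then sketch.
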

\begin{proof}
We shall prove the lemma by induction on $d$. The case $d \le 1$ follows directly
  from \corref{cor:Real-iso-sing-0}. To prove $d = 2$ case, we consider the commutative
  diagram of the Zariski descent spectral sequences
  \begin{equation}\label{eqn:ZDSS}
    \xymatrix@C1pc{
      E^{i,j} = H^i_\zar(\ov{X}, \sH^j({\Z}/m)(n)) \ar[d]_-{\epsilon^*_{\ov{X}}}
      & \Rightarrow & H^{i+j}(\ov{X}, {\Z}/m(n)) \ar[d]^-{\epsilon^*_{\ov{X}}} \\
      E^{i,j}_\et = H^i_\zar(\ov{X}, \sH^j_\et({\Z}/m)(n))  & \Rightarrow &
      H^{i+j}_\et(\ov{X}, {\Z}/m(n)).}
    \end{equation}

The above diagram of spectral sequences gives rise to the following
    two commutative diagrams of exact sequences.
    \begin{equation}\label{eqn:GR-Surface-0}
      \xymatrix@C1pc{
        0 \ar[r] & F^1 \ar[r] \ar[d] & H^4(\ov{X}, {\Z}/m(3)) \ar[r] \ar[d] &
        H^0_\zar(\ov{X}, \sH^4({\Z}/m)(3)) \ar[d] \\
        0 \ar[r] & F^1_\et \ar[r] & H^4_\et(\ov{X}, {\Z}/m(3)) \ar[r] &
        H^0_\zar(\ov{X}, \sH^4_\et({\Z}/m)(3)).}
    \end{equation}

\begin{equation}\label{eqn:GR-Surface-1}
      \xymatrix@C.6pc{
H^0_\zar(\ov{X},  \sH^3({\Z}/m)(3)) \ar[r] \ar[d] & H^2_\zar(\ov{X},  \sH^2({\Z}/m)(3)) 
\ar[r] \ar[d] &  F^1 \ar[r] \ar[d] & H^1_\zar(\ov{X},  \sH^3({\Z}/m)(3)) 
\ar[r] \ar[d] & 0 \\
H^0_\zar(\ov{X},  \sH^3_\et({\Z}/m)(3)) \ar[r] & H^2_\zar(\ov{X},  \sH^2_\et({\Z}/m)(3)) 
\ar[r] &  F^1_\et \ar[r] & H^1_\zar(\ov{X},  \sH^3_\et({\Z}/m)(3)) 
\ar[r] & 0.}
\end{equation}
In the above two diagrams, the vertical arrows are the {\'e}tale realization maps.

It follows from \lemref{lem:Real-iso} that all vertical arrows (except possibly the third vertical arrow from the left) in ~\eqref{eqn:GR-Surface-1} are isomorphisms. It follows that the left vertical arrow in 
~\eqref{eqn:GR-Surface-0} is an isomorphism.
We are therefore left with the task of showing that
$H^0_\zar(\ov{X}, \sH^4_\et({\Z}/m)(3)) = 0$. But this follows from
\lemref{lem:KH-Surface}. We have thus proven the lemma when $d = 2$.

We now assume that $d > 2$ and that the assertion of the lemma
holds in dimensions smaller than $d$.
By \cite[Lem.~9.1, Thm.~9.6]{Ghosh-Krishna-Bertini}
(see the proof of Thm.~9.6 of op. cit.)
we can find a very ample smooth
divisor $\ov{Y} \inj \ov{X}$ such that $\ov{Y}$ also admits a good reduction.
Let $\iota \colon \ov{Y} \inj \ov{X}$ be the inclusion.
By \corref{cor:Gysin-Nis-etale-1}, we have a commutative diagram
\begin{equation}\label{eqn:GR-Surface-5}
  \xymatrix@C1pc{
    H^{2d-2}(\ov{Y}, {\Z}/m(d)) \ar[r]^-{\iota_*} \ar[d]_-{\epsilon^*_{\ov{Y}}} &
      H^{2d}(\ov{X}, {\Z}/m(d+1)) \ar[d]^-{\epsilon^*_{\ov{X}}} \\
     H^{2d-2}_\et(\ov{Y}, {\Z}/m(d)) \ar[r]^-{\iota_*} &
      H^{2d}_\et(\ov{X}, {\Z}/m(d+1)).}   
  \end{equation}
  Since $\ov{X}$ is of pure dimension $d$ and $\ov{Y}$ is very ample,
  it follows that $\ov{Y}$ is of pure dimension $d-1$.
  In particular, the left vertical arrow in ~\eqref{eqn:GR-Surface-5} is
  surjective by induction. To conclude the proof, it remains to show that
  the bottom horizontal arrow in ~\eqref{eqn:GR-Surface-5} is surjective.

To prove the above, we recall that the map $\iota_*$ is the composition
  \[
    H^{2d-2}_\et(\ov{Y}, {\Z}/m(d)) \xrightarrow{\cong}
    H^{2d}_{\et, \ov{Y}}(\ov{X}, {\Z}/m(d+1)) \to H^{2d}_\et(\ov{X}, {\Z}/m(d+1)),
  \]
  where the first arrow is Gabber's purity isomorphism and the second is the forget
  support map. Hence, we are reduced to showing that the second arrow
  is surjective. To this end, we let $U = \ov{X} \setminus \ov{Y}$ and
  look at the exact localization sequence 
  \[
    H^{2d}_{\et, \ov{Y}}(\ov{X}, {\Z}/m(d+1)) \to H^{2d}_\et(\ov{X}, {\Z}/m(d+1)) \to
    H^{2d}_\et(U, {\Z}/m(d+1)).
  \]
  Since $\ov{Y}$ is a very ample divisor of the projective $k$-scheme $\ov{X}$, it
  follows that $U$ is an affine $k$-scheme of dimension $d$.
  In particular, $cd(U) \le d + 2$. Since $d \ge 3$, it follows that
  $H^{2d}_\et(U, {\Z}/m(d+1)) = 0$. This concludes the proof.
\end{proof}

\subsection{Proof of \thmref{thm:Main-4}}\label{sec:Good-red-main}
We now prove \thmref{thm:Main-4}. We state it here for reader's convenience.
A special case of this, when $\Char(k) = 0$ and $\ov{X}$ is a geometrically connected
rational surface over $k$, was earlier shown by Yamazaki \cite[Thm.~6.5]{Yamazaki}.

\begin{thm}\label{thm:Main-4-1}
  Let $k$ be a local field and $j \colon X \inj \ov{X}$ an open
  immersion in $\Sm_k$ with dense image such that $\ov{X}$ is 
  projective over $k$. Assume that ${X}$ is birational to a smooth projective $k$-scheme
  which admits a good reduction. Then
  \[
    \rho_X \colon {C(X)}/m \to {\pi^{\ab}_1(X)}/m
  \]
  is an isomorphism for every integer $m \in k^\times$.
  In particular, $\Ker(\rho_X)$ is divisible away from $\Char(k)$.
  The same also holds for $\rho^\tm_X$.
\end{thm}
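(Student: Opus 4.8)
The plan is to reduce the statement to the already-established equivalence between the reciprocity map and the {\'e}tale realization map, and then to prove the isomorphism of the realization map by d{\'e}vissage on dimension, using the good-reduction hypothesis only in the base case of surfaces. First I would observe that, by \propref{prop:Rec-Real} together with \propref{prop:Tame-MCCS}, the map $\rho_X \colon {C(X)}/m \to {\pi^{\ab}_1(X)}/m$ is identified (via the vertical isomorphisms $\Psi_X$ and $\omega_X$) with the {\'e}tale realization map
\[
  \epsilon^*_X \colon H^{2d+1}_c(X, \Lambda(d+1)) \to H^{2d+1}_{\et,c}(X, \Lambda(d+1)),
\]
where $\Lambda = {\Z}/m$. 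So it suffices to prove that $\epsilon^*_X$ is an isomorphism in this single bidegree. The statement for $\rho^\tm_X$ then follows from \corref{cor:Pi-decom-0} and \lemref{lem:Idele-Tame-prime-to-p} (which identify the mod-$m$ versions of $C(X)$ and $\pi^{\ab}_1(X)$ with those of $C^\tm(X)$ and $\pi^{\ab,\tm}_1(X)$), together with \propref{prop:REC-T}. Birational invariance enters here: since $X$ is birational to a smooth projective $k$-scheme with good reduction, one uses the localization sequences of \lemref{lem:Ex-seq} (and its {\'e}tale analogue) to replace $X$ by a smooth projective model, because the complement of a common dense open has strictly smaller dimension and one is comparing realization maps in degrees where \lemref{lem:Real-iso-sing} already gives isomorphisms or injectivity; a careful bookkeeping of these exact sequences shows it is enough to treat the smooth projective case, i.e.\ to show $\epsilon^*_{\ov{X}}$ is an isomorphism on $H^{2d+1}$.

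The core argument is then an induction on $d = \dim(\ov{X})$ for the assertion that $\epsilon^*_{\ov{X}} \colon H^{2d+1}(\ov{X}, \Lambda(d+1)) \to H^{2d+1}_\et(\ov{X}, \Lambda(d+1))$ is an isomorphism when $\ov{X}$ admits a good reduction. The cases $d \le 1$ follow from \corref{cor:Real-iso-sing-0}. For the inductive step I would use the Bertini/Lefschetz results of Ghosh-Krishna \cite{Ghosh-Krishna-Bertini} (as in \lemref{lem:GR-Surface}) to produce a very ample smooth divisor $\ov{Y} \inj \ov{X}$ that also admits a good reduction, and the Gysin commutativity of \corref{cor:Gysin-Nis-etale-1} to compare $\epsilon^*_{\ov{Y}}$ with $\epsilon^*_{\ov{X}}$. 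Surjectivity of the bottom Gysin arrow in {\'e}tale cohomology follows from the cohomological-dimension bound on the affine complement $U = \ov{X}\setminus\ov{Y}$ (exactly as in \lemref{lem:GR-Surface}), while injectivity needs a dual/weak-Lefschetz argument, or can be bypassed by instead comparing the full localization sequences in motivic and {\'e}tale cohomology, using that $\epsilon^*$ is an isomorphism on $H^*(U,\Lambda(d+1))$ in the relevant degrees by \lemref{lem:Real-iso-sing}, and that the realization is an isomorphism on the lower-dimensional $\ov{Y}$ by induction (here one must also invoke \lemref{lem:GR-Surface} in the surface base case, which is precisely where the Kato-homology input $H^{2,1}_2(\ov{X},{\Z}/m)=0$ from \cite{JS-Doc}, hence the good reduction hypothesis, is used).

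The main obstacle I expect is the surface base case and, more generally, controlling the realization map in degrees just above the cohomological dimension of the ambient smooth model. In the surface case \lemref{lem:GR-Surface} already does the work of showing $\epsilon^*_{\ov{X}} \colon H^{4}(\ov{X},{\Z}/m(3)) \to H^4_\et(\ov{X},{\Z}/m(3))$ is surjective; upgrading this to an isomorphism requires knowing that the source $H^{4}(\ov{X},{\Z}/m(3))$ has no extra classes beyond the image of the Gysin map from $\ov{Y}$, which is where the Zariski descent spectral sequence comparison \eqref{eqn:ZDSS}, the Beilinson-Lichtenbaum isomorphisms of \lemref{lem:Real-iso}, and the vanishing $H^0(\ov{X}_\zar,\sH^4_\et({\Z}/m)(3))=0$ from \lemref{lem:KH-Surface} must be assembled. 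Once surjectivity and injectivity of $\epsilon^*_{\ov{X}}$ on $H^{2d+1}$ are in hand for all $d$ under good reduction, the birational reduction and the dictionary of \propref{prop:Rec-Real} close the proof, and divisibility of $\Ker(\rho_X)$ away from $\Char(k)$ is immediate from the isomorphism with ${\Z}/m$-coefficients for all $m \in k^\times$ together with \lemref{lem:JS-0}, applied exactly as in the proof of \thmref{thm:Main-1-pf}.
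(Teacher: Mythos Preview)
Your overall framework is correct and matches the paper's: reduce via \propref{prop:Rec-Real} and \propref{prop:Tame-MCCS} to showing that $\epsilon^*_X$ is an isomorphism on $H^{2d+1}_c(X, \Lambda(d+1))$; handle the equivalence of $\rho_X$ and $\rho^\tm_X$ via \corref{cor:Pi-decom-0}, \lemref{lem:Idele-Tame-prime-to-p} and \propref{prop:REC-T}; and pass between $X$, a common open $X'$, and a good-reduction model $\ov{X}'$ by localization along \emph{closed} complements of dimension $\le d-1$, where \lemref{lem:Real-iso-sing} does apply since then $\dim + cd(k) \le d+1 = j$.

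The gap is in your induction on $d$ for the assertion that $\epsilon^*_{\ov{X}}$ is an isomorphism on $H^{2d+1}(\ov{X}, \Lambda(d+1))$ under good reduction. Your step cuts by a smooth ample $\ov{Y}$ and compares localization sequences involving the affine \emph{open} complement $U = \ov{X}\setminus\ov{Y}$ of dimension $d$, then invokes \lemref{lem:Real-iso-sing} for $\epsilon^*_U$ in degrees $2d$ and $2d+1$. But with $j = d+1$ and $\dim U + cd(k) = d+2$, that lemma only yields an isomorphism for $i \le d+1$ and injectivity at $i = d+2$; for $d \ge 2$ the degrees $2d, 2d+1$ are out of range. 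The \'etale groups of $U$ vanish there for $d \ge 3$ by the cohomological-dimension bound, but the motivic groups need not, so the five-lemma does not close. (Your base cases also fall short: for $d=1$ \corref{cor:Real-iso-sing-0} gives only injectivity, and for $d=2$ \lemref{lem:GR-Surface} and \lemref{lem:KH-Surface} concern degree $4$, not the degree-$5$ isomorphism you need.) The paper avoids this entirely by not inducting for the $H^{2d+1}$ statement: it cites \cite[Thm.~4.1]{JS-JAG} directly, which says $\rho^m_{\ov{X}'}$ is an isomorphism for smooth projective $\ov{X}'$ with good reduction, hence $\epsilon^*_{\ov{X}'}$ is an isomorphism on $H^{2d+1}$ via \propref{prop:Rec-Real}. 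The Bertini induction you describe is exactly \lemref{lem:GR-Surface}, but its conclusion is \emph{surjectivity} of $\epsilon^*_{\ov{X}'}$ on $H^{2d}$, a separate input; both facts are then fed into the diagram chase along the closed lower-dimensional complements to reach $\ov{X}$ and finally $X$.
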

\begin{proof}
By \corref{cor:Pi-decom-0}, \lemref{lem:Idele-Tame-prime-to-p} and
 \propref{prop:REC-T}, the assertions of the
theorem for $\rho_X$ and $\rho^\tm_X$ are equivalent.
By \propref{prop:Tame-MCCS}, \thmref{thm:Saito-D} and \propref{prop:Rec-Real},
the first part of the theorem is equivalent to the statement that the
{\'e}tale realization map
\begin{equation}\label{eqn:Et*-0}
  \epsilon^*_X \colon H^{2d+1}_c(X, {\Z}/m(d+1)) \to H^{2d+1}_{\et,c}(X, {\Z}/m(d+1))
\end{equation}
is an isomorphism.

By our assumption, there are dense open immersions
      $\ov{X}' \hookleftarrow X'
      \hookrightarrow \ov{X}$, where $\ov{X}'$ is a smooth connected
      projective $k$-scheme which admits a good reduction.
      We let $Z' = \ov{X} \setminus X'$ and $W' = \ov{X}' \setminus X'$ with their
      reduced subscheme structures.
      In the argument below, we shall write $H^{i}_c(Y, {\Z}/m(d+1))$ simply
as $H^i_c(Y)$ for $Y \in \Sch_k$.
We shall follow the same convention for the {\'e}tale cohomology
(with and without compact support).

We now consider the commutative diagram with exact rows (cf. \lemref{lem:Ex-seq} and
~\eqref{eqn:Ker-bound-0})
\begin{equation}\label{eqn:Rational-var-0}
  \xymatrix@C.8pc{
    H^{2d-1}(W') \ar[r] \ar[d]_-{\epsilon^*_{W'}} &
    H^{2d}_c(X') \ar[r] \ar[d]^-{\epsilon^*_{X'}} &
    H^{2d}(\ov{X}') \ar[r] \ar[d]^-{\epsilon^*_{\ov{X}'}} &
  H^{2d}(W') \ar[r] \ar[d]^-{\epsilon^*_{W'}} & H^{2d+1}_c(X')
\ar[r] \ar[d]^-{\epsilon^*_{X'}} & H^{2d+1}(\ov{X}')
\ar[d]^-{\epsilon^*_{\ov{X}'}} \ar[r] & 0 \\
H^{2d-1}_\et(W') \ar[r]  & H^{2d}_{\et,c}(X') \ar[r] &
H^{2d}_\et(\ov{X}') \ar[r] & H^{2d}_\et(W') \ar[r] &
H^{2d+1}_{\et, c}(X') \ar[r] & H^{2d+1}_\et(\ov{X}') \ar[r] & 0.}
\end{equation}

The arrows $\epsilon^*_{W'}$ are isomorphisms by \corref{cor:Real-iso-sing-0}.
Since $\ov{X}'$ admits a good reduction, it follows from \lemref{lem:GR-Surface} that
the third vertical arrow from the left in ~\eqref{eqn:Rational-var-0} is surjective.
For the same reason, the vertical arrow on the extreme right is an isomorphism
by \cite[Thm.~4.1]{JS-JAG} in view of \propref{prop:Rec-Real}.
An easy diagram chase shows that the second vertical
arrow from the left is surjective and the second vertical arrow from the right is
an isomorphism. If we now replace $\ov{X}'$ (resp. $W'$) by $\ov{X}$
(resp. $Z'$) in the above diagram and repeat the argument, we conclude that the map
\begin{equation}\label{eqn:Rational-var-1}
  \epsilon^*_{\ov{X}} \colon H^{i}(\ov{X}) \to H^{i}_\et(\ov{X})
\end{equation}
is surjective for $i = 2d$ and isomorphism for $i = 2d+1$.

We let $Z = \ov{X} \setminus X$ with the reduced subscheme structure
and look at the commutative diagram of exact sequences
\begin{equation}\label{eqn:Et*-1}
  \xymatrix@C.8pc{
  H^{2d}(\ov{X}) \ar[r] \ar[d]_-{\epsilon^*_{\ov{X}}} &
  H^{2d}(Z) \ar[r] \ar[d]^-{\epsilon^*_Z} & H^{2d+1}_c(X)
\ar[r] \ar[d]^-{\epsilon^*_X} & H^{2d+1}(\ov{X})
\ar[d]^-{\epsilon^*_{\ov{X}}} \ar[r] & 0 \\
H^{2d}_\et(\ov{X}) \ar[r] & H^{2d}_\et(Z) \ar[r] &
H^{2d+1}_{\et, c}(X)
 \ar[r] & H^{2d+1}_\et(\ov{X}) \ar[r] & 0.}
\end{equation}
We showed above that $\epsilon^*_{\ov{X}}$ is surjective on the left end and
an isomorphism on the right end. The arrow $\epsilon^*_Z$ is an isomorphism by
\corref{cor:Real-iso-sing-0}. An easy diagram chase shows that $\epsilon^*_X$ is
an isomorphism.

To show that $\Ker(\rho_X)$ is divisible away from $\Char(k)$, we
simply repeat the argument for proving \thmref{thm:Main-1-pf} with
$\L$ replaced by $\P$ and note that $T = 0$ in the present case by
what we have shown above. The same argument also applies to $\Ker(\rho^\tm_X)$.
This concludes the proof.
\end{proof}

\begin{cor}\label{cor:GD**}
  Let the assumptions be as in \thmref{thm:Main-4-1} and assume furthermore
  that $X$ is geometrically connected. Then there is an
  exact sequence
  \[
    0 \to \Ker(\rho^\tm_{X,0}) \to C^\tm(X)_0 \xrightarrow{\rho^\tm_{X,0}}
    \pi^{\ab, \tm}_1(X)_0 \to F \to 0,
  \]
  where $\Ker(\rho^\tm_{X,0})$ is divisible away from $\Char(k)$ and $F$ is
  zero (resp. a finite $p$-group) if $\Char(k) = 0$ (resp. $\Char(k) =
  \Char({\ff}) = p $).
\end{cor}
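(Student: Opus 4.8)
The plan is to run the snake lemma on the commutative diagram $\eqref{eqn:REC-T-Norm-0}$ of \corref{cor:REC-T-Norm}, after first turning it into a morphism of \emph{short} exact sequences. The bottom row $0 \to \pi^{\ab,\tm}_1(X)_0 \to \pi^{\ab,\tm}_1(X) \xrightarrow{\pi_X} G_k \to 0$ is already short exact since $X$ is geometrically connected. For the top row I would replace $k^\times$ by $N := {\rm Image}(N_X)$; this subgroup has finite index (so that ${\rm Coker}(N_X)$ is finite, as recalled in the proof of \lemref{lem:Coker-0}), giving the short exact sequence $0 \to C^\tm(X)_0 \to C^\tm(X) \xrightarrow{N_X} N \to 0$. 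Restricting $\rho_k$ to $N$ produces a morphism of short exact sequences with vertical maps $\rho^\tm_{X,0}$, $\rho^\tm_X$, $\rho_k|_N$. Since $\rho_k\colon k^\times \to G_k$ is injective, $\Ker(\rho_k|_N) = 0$, and the snake lemma yields an isomorphism $\Ker(\rho^\tm_{X,0}) \xrightarrow{\cong} \Ker(\rho^\tm_X)$ together with a short exact sequence $0 \to \coker(\rho^\tm_{X,0}) \to \coker(\rho^\tm_X) \to \coker(\rho_k|_N) \to 0$. By \thmref{thm:Main-4-1}, $\Ker(\rho^\tm_X)$ is divisible away from $\Char(k)$, hence so is $\Ker(\rho^\tm_{X,0})$; combined with the tautological exactness of $0 \to \Ker(\rho^\tm_{X,0}) \to C^\tm(X)_0 \xrightarrow{\rho^\tm_{X,0}} \pi^{\ab,\tm}_1(X)_0 \to F \to 0$ with $F := \coker(\rho^\tm_{X,0})$, this establishes everything except the assertion on $F$.

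To see that $F$ is finite I would use \thmref{thm:J-fin}(2) (equivalently, \thmref{thm:Main-2}), which gives $\pi^{\ab,\tm}_1(X)_0 \cong F_0 \oplus \wh{\Z}^r$ with $F_0$ finite and $r$ the $\ff$-rank of the special fiber of the N{\'e}ron model of $\Alb(\ov{X})$. Since $X$ is birational to the smooth projective $k$-scheme $\ov{X}'$ admitting a good reduction, the chosen compactification $\ov{X}$ is also birational to $\ov{X}'$; as the Albanese variety is a birational invariant of smooth projective varieties, $\Alb(\ov{X}) \cong \Alb(\ov{X}')$, and the latter has good reduction, so the special fiber of its N{\'e}ron model is an abelian variety over $\ff$ and its toric rank $r$ vanishes. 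Therefore $\pi^{\ab,\tm}_1(X)_0 = F_0$ is finite, and $F$, being a quotient of it, is finite.

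Finally I would identify the primary structure of $F$. From the snake sequence $F$ embeds into $\coker(\rho^\tm_X)$, so it suffices to prove $\coker(\rho^\tm_X)\{p'\} = 0$, where $p$ is the characteristic exponent of $k$. Applying $- \otimes \Z/m$ to $C^\tm(X) \xrightarrow{\rho^\tm_X} \pi^{\ab,\tm}_1(X) \to \coker(\rho^\tm_X) \to 0$ and using that $\rho^\tm_X \otimes \Z/m$ is surjective (in fact bijective) for every $m \in k^\times$ by \thmref{thm:Main-4-1}, one gets $\coker(\rho^\tm_X)/m = 0$ for all such $m$; hence, for each prime $\ell \neq p$, the $\ell$-primary part $\coker(\rho^\tm_X)[\ell^\infty]$ is $\ell$-divisible. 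On the other hand, $\coker(\rho^\tm_X)\{p'\}$ is finite by \thmref{thm:Main-8*}(5). A finite $\ell$-divisible $\ell$-group is trivial, so each $\coker(\rho^\tm_X)[\ell^\infty]$ vanishes and thus $\coker(\rho^\tm_X)\{p'\} = 0$, whence $F\{p'\} = 0$. Together with the finiteness of $F$ this shows $F$ is a finite $p$-group; when $\Char(k) = 0$ (so $p = 1$) this forces $F = 0$, and when $\Char(k) = p > 0$ it gives the stated finite $p$-group.

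The main obstacle is the bookkeeping that makes the snake lemma applicable — chiefly the finiteness of $\coker(N_X)$ and the injectivity of $\rho_k$, which let one replace the top row by a short exact sequence — together with correctly combining the ``away from $\Char(k)$'' divisibility of \thmref{thm:Main-4-1} with the finiteness inputs of Theorems~\ref{thm:Main-2} and~\ref{thm:Main-8*} to pin down $F$; the birational invariance of the Albanese needed to conclude $r = 0$ is standard but should be spelled out.
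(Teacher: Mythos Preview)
Your argument is correct and complete, but it takes a genuinely different route from the paper's. The paper's proof pivots on showing that the norm map $N_X \colon C^\tm(X) \to k^\times$ is \emph{surjective}: using the surjection $C^\tm(X) \surj C(\ov{X})$ and a birational-invariance reduction (as in the proof of \thmref{thm:Main-4-1}), this boils down to the surjectivity of $C(\ov{X}') \to k^\times$ for $\ov{X}'$ with good reduction, which is \cite[Lem.~8]{Kato-Saito-1}. Once $N_X$ is onto, the third column in the snake diagram becomes $\rho_k \colon k^\times \hookrightarrow G_k$ with cokernel $\wh{\Z}/\Z$; the cokernel part of the snake sequence then reads $0 \to F \to \coker(\rho^\tm_X) \to \wh{\Z}/\Z \to 0$, and since $\wh{\Z}/\Z$ is uniquely divisible and $\coker(\rho^\tm_X)/m = 0$ for $m \in k^\times$ by \thmref{thm:Main-4-1}, one gets $F/m = 0$ directly. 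You instead work with $N = {\rm Image}(N_X)$, never proving it equals $k^\times$, and compensate by invoking \thmref{thm:Main-8*}(5) to force $\coker(\rho^\tm_X)[\ell^\infty]$ to be simultaneously $\ell$-divisible and finite, hence zero, for every prime $\ell \neq \Char(k)$. Both approaches rely on the observation $r = 0$ (via birational invariance of the Albanese and good reduction) to make $\pi^{\ab,\tm}_1(X)_0$, and hence $F$, finite --- you spell this out, the paper leaves it implicit in citing \thmref{thm:Main-2}. Your route trades the external input from \cite{Kato-Saito-1} for the internal finiteness theorem \thmref{thm:Main-8*}, making the argument more self-contained at the cost of a slightly longer chain of reductions. (A minor point: the finiteness of $\coker(N_X)$ you mention is actually stated in the proof of \corref{cor:Coker-1} rather than \lemref{lem:Coker-0}, and in any case your snake-lemma setup does not need it.)
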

\begin{proof} Recall that we have an exact sequence $0 \to k^{\times}\to G_k \to \wh{\Z}/\Z\to 0$. Using this, the corollary is a direct consequence of
  Theorems~\ref{thm:Main-2}, ~\ref{thm:Main-3} and ~\ref{thm:Main-4-1} if we know that the norm map $C^t(X) \to k^{\times}$ is surjective.  Using the technique of the proof of \thmref{thm:Main-4-1} and the surjection $C^t(X) \surj C(\ov{X})$, we are reduced to show that the norm map $C(\ov{X}) \to k^{\times}$ is surjective when $\ov{X}$ admits a good reduction. But this follows from \cite[Lem.~8]{Kato-Saito-1}. 
  \end{proof}

\vskip .4cm

 \noindent\emph{Acknowledgements.}
R.G. was supported by 
SFB 1085 \emph{Higher Invariants} (Universit\"at Regensburg) and J.R.
was supported by IISc, Bangalore for parts of this project.
The authors thank the referee for providing helpful comments and suggestions to
improve the exposition of the paper.

                       \end{document}